\def\XXint#1#2#3{{\setbox0=\hbox{$#1{#2#3}{\int}$ }
\vcenter{\hbox{$#2#3$ }}\kern-.6\wd0}}
\newtheorem{thm}{Theorem}[section]
\newtheorem{prop}[thm]{Proposition}
\newtheorem{lem}[thm]{Lemma}
\newtheorem{cor}[thm]{Corollary}
\theoremstyle{definition}
\newtheorem{definition}[thm]{Definition}
\newtheorem{assumption}[thm]{Assumption}
\theoremstyle{remark}
\newtheorem{remark}[thm]{Remark}
\numberwithin{equation}{section}
\newcommand{\R}{\mathbb{R}}  			% The reals
\renewcommand{\phi}{\varphi}
\newcommand{\tl}[1]{\tilde{#1}} 		% tilde 
\newcommand{\ol}[1]{\overline{#1}} 		% overline
\newcommand{\cl}[1]{\mathcal{#1}} 
\newcommand{\cone}{\mathcal{C}} 		% cone shortcut
\DeclareMathOperator{\dist}{dist} 		% The distance.
\DeclareMathOperator{\tr}{tr}			% trace
\let \div \relax
\DeclareMathOperator{\div}{div}			% divergence
\DeclareMathOperator{\supp}{supp}		% support
\DeclareMathOperator{\inj}{inj}			% injectivity radius
\title[Ricci Flow Singularities Modeled on AC Shrinkers]{Closed Ricci Flows with Singularities Modeled on Asymptotically Conical Shrinkers}
\author{Maxwell Stolarski}
\email{max.stolarski@warwick.ac.uk}
\urladdr{https://homepages.warwick.ac.uk/~u2175999/}
\address{Warwick Mathematics Institute, University of Warwick}
\begin{document}
%%%%%%%%%%%%%%%%%%%%%%%%%%%%%%%%%%%%%%%%%%%%%%%%%%%%%%%%%
\begin{abstract}
	Given an asymptotically conical, shrinking, gradient Ricci soliton, we show that there exists a Ricci flow solution on a closed manifold that forms a finite-time singularity modeled on the given soliton.
	No symmetry or K{\"a}hler assumptions on the soliton are required.
	The proof provides a precise asymptotic description of the singularity formation.
\end{abstract}

\maketitle
\tableofcontents

%%%%%%%%%%%%%%%%%%%%%%%%%%%%%%%%%%%%%%%%%%%%%%%%%%%%%%%%
\section{Introduction}
%%%%%%%%%%%%%%%%%%%%%%%%%%%%%%%%%%%%%%%%%%%%%%%%%%%%%%%%

A smooth collection of Riemannian metrics $\{ g(t) \}_{t \in [t_0, T)}$ on a manifold $M$ is said to evolve by Ricci flow if
	$$\partial_t g = -2 Rc.$$
The Ricci flow equation is invariant under parabolic scaling and pullback by diffeomorphisms.
Considered as a dynamical system on the space of metrics modulo scaling and pullback by diffeomorphism, the Ricci flow has generalized fixed points given by Ricci solitons, self-similar Ricci flow solutions that evolve by scaling and pullback by diffeomorphism.
Ricci solitons $(M, g, X, \lambda)$ may be specified by a Riemannian manifold $(M, g)$, a vector field $X$ on $M$, and a real number $\lambda \in \R$ such that
	$$Rc + \cl{L}_X g = \lambda g.$$
As generalized fixed points, Ricci solitons model singularities of the Ricci flow.
For example, \cite{EMT11} showed that any type I singularity of the Ricci flow is modeled by a nontrivial gradient shrinking Ricci soliton.
More generally, \cite{Bamler21} showed that parabolic rescalings of the flow based at any singular point subsequentially converge in a weak sense to a possibly non-smooth soliton.

However, a certain converse question has no clear answer.
Namely, given a smooth, complete, \emph{noncompact} Ricci soliton $(M, g, X, \lambda)$, it is not always clear if there exists a Ricci flow $\{ G(t) \}_{t \in [ 0, T)}$ on a \emph{closed} manifold $\cl{M}$ that forms a singularity modeled on the Ricci soliton $(M, g, X, \lambda)$.
Of course, the self-similar Ricci flow solution $(M = \cl{M}, (1 - 2 \lambda t)\phi_t^* g)$ associated to a shrinking Ricci soliton $(M, g, X, \lambda)$ forms a finite-time singularity modeled on $(M, g, X, \lambda)$, 
so the restriction that $M$ is noncompact and $\cl{M}$ compact here is key.
One obstruction comes from Perelman's no local collapsing Theorem \cite{Perelman02}, which in particular shows a singularity for a Ricci flow solution on a closed manifold cannot be modeled by the product of Hamilton's cigar soliton with Euclidean space.

In this article, we show that, given any asymptotically conical, shrinking, gradient Ricci soliton, there exists a Ricci flow solution on a closed manifold that forms a singularity modeled by that soliton.
\begin{thm} \label{Main Thm}
	Let $(M, \ol{g}, \ol{\nabla} f, \lambda = \frac{1}{2} )$ be a smooth, complete, asymptotically conical, shrinking, gradient Ricci soliton.
	Then there exists a closed manifold $\cl{M}$ and
	a Ricci flow solution $\{ G(t) \}_{t \in [t_0, 1)}$ on $\cl{M}$
	that forms a local, type I singularity at time $t=1$
	such that,
	for any sequence $t_j \nearrow 1$ and $p_\infty \in M$, 
	there exists a sequence $p_j \in \cl{M}$
	such that the parabolically rescaled Ricci flows
		$$\left( \cl{M}, G_j(t) \doteqdot \frac{1}{1 - t_j} G( t_j + t ( 1 - t_j)), p_j \right)$$
	converge to $( M, ( 1 - t) \phi_t^* \ol{g}, p_\infty)$ in the pointed Cheeger-Gromov sense as $j \nearrow \infty$.
	
	Moreover, there exists an open subset $U \subset \cl{M}$ and $\Phi : U \times [t_0, 1) \to M$
	such that:
	\begin{enumerate}

	 	\item for every $t \in [t_0, 1)$, 
			$\Phi_t = \Phi( \cdot, t) : U \to M$ is a diffeomorphism onto its image,
		\item for any compact subset $K \subset M$,
			$K \subset \Phi_t (U)$ for all $t$ in a neighborhood of $1$ with $t < 1$,
		\item  
		$$\frac{1}{1 -t} (\Phi_t^{-1})^*  G(t) 
		\xrightarrow[t \nearrow 1]{C^\infty_{loc}(M)} \ol{g},
		$$
		and
		\item $Rm$ is uniformly bounded outside $U$
			$$\sup_{(x,t) \in (\cl{M} \setminus U) \times  [t_0, 1)} | Rm|_g(x,t) < \infty.$$
	\end{enumerate}
\end{thm}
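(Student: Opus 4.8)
The plan is to build $\cl{M}$ by gluing a fixed compact cap $\cl{C}$ onto a large truncation $\{ f \le A \}$ of the shrinker $(M, \ol{g})$ --- where $A$ will be one of several free parameters --- and to realize the desired Ricci flow as a controlled perturbation of the self-similar solution $(1-t)\phi_t^* \ol{g}$ on the truncated region. It is convenient to pass to rescaled, soliton-adapted coordinates: setting $\tau = -\log(1-t)$ and pulling back by the diffeomorphisms generated by $\ol{\nabla} f$, the self-similar shrinker solution becomes the stationary metric $\ol{g}$, and a Ricci flow on $\cl{M}$ near it corresponds to a solution $\tl{g}(\tau) = \ol{g} + h(\tau)$ of the normalized (shrinker) Ricci flow $\partial_\tau \tl{g} = -2\, Rc(\tl{g}) + \tl{g} - \cl{L}_{\ol{\nabla} f} \tl{g}$, for which $\ol{g}$ is stationary by the soliton equation. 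Adding a DeTurck term with background $\ol{g}$ to make the flow strictly parabolic, the equation for $h$ takes the form
\[
 \partial_\tau h = \cl{L} h + \cl{Q}[h], \qquad \cl{L} h \doteqdot \Delta_f h + 2\, Rm(h, \cdot),
\]
where $\Delta_f \doteqdot \Delta - \ol{\nabla}_{\ol{\nabla} f}$, $\cl{L}$ is the stability (Lichnerowicz-type) operator of the shrinker, and $\cl{Q}$ collects the quadratic-and-higher nonlinearity together with the error term arising because the glued cap and transition region are not exactly self-similar. The cap and the gluing annulus must be arranged so that this error is supported in $\{ f \gtrsim A \}$ and decays in the norms below; the transition region, where $\ol{g}$ is nearly the conical metric $dr^2 + r^2 g_\Sigma$, is where the ``inner'' shrinker and the cap get matched.

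Second, I would develop the linear theory of $\cl{L}$. Because $(M, \ol{g})$ is asymptotically conical with $f \sim \tfrac14 r^2$, the drift term is confining: after the substitution $h \mapsto e^{-f/2} h$, $-\cl{L}$ becomes a Schr{\"o}dinger-type operator on symmetric $2$-tensors whose potential grows quadratically at the conical end, so $\cl{L}$ has compact resolvent on $L^2(e^{-f}\, dV_{\ol{g}})$ and hence discrete spectrum, with only finitely many nonnegative eigenvalues $\mu_1 \ge \cdots \ge \mu_k \ge 0$; write $V_+$ for the span of the corresponding eigentensors and $V_-$ for its $L^2$-orthogonal complement, on which $\cl{L}$ is strictly negative. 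I would then set up weighted parabolic H{\"o}lder spaces on $M \times [\tau_0, \infty)$ whose spatial weight is tied to the asymptotically conical rate of $(M,\ol{g})$, forcing perturbations to decay fast enough at the conical end for the nonlinear terms to close, and whose temporal weight is $e^{\delta\tau}$ for small $\delta > 0$; the key linear estimate is that $\partial_\tau - \cl{L}$ is invertible with bounded inverse and the expected Schauder gain on the subspace modeled on $V_-$. Proving these parabolic estimates uniformly up to the conical end --- exploiting the drift to absorb the dangerous terms --- is substantial but by now fairly standard technology for drift Laplacians on shrinkers.

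Third, and most delicate, is closing the argument without any stability hypothesis on the soliton. Decomposing $h = h_+ + h_-$ along $V_+ \oplus V_-$, a contraction-mapping argument in the weighted spaces solves for $h_-$ globally in $\tau$, for arbitrary small unstable data $h_+(\tau_0)$, with $h_-(\tau) \to 0$ exponentially; what remains is the finite system of ODEs for the unstable coefficients $a(\tau) \in \R^k$, which to leading order reads $a' = \operatorname{diag}(\mu_i)\, a + O(|a|^2) + (\text{error})$ and therefore generically escapes. To defeat it I would construct a family of approximate solutions depending on $\ge k$ parameters --- the truncation radius $A$, the cap metric, and the gluing data --- verify that the resulting map from the parameters to $a(\tau_0)$ covers a neighborhood of $0$ up to controllable error, and then run a topological shooting argument (a Brouwer-degree / exit-set argument on the unstable cone) to select parameters for which $a(\tau)$ stays bounded, hence $a \equiv 0$ and the perturbed flow exists for all $\tau \in [\tau_0, \infty)$ with $\tl{g}(\tau) \to \ol{g}$ in the weighted space. \emph{This is the main obstacle}: one must keep the cap flexible enough to span the unstable directions yet rigid enough that the error term in $\cl{Q}$ remains admissible in the weighted norm, and verify the nondegeneracy of the parameter-to-data map --- essentially the only place the global topology of the closed manifold $\cl{M}$ enters.

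Finally, unwinding the change of variables converts the global estimate $\tl{g}(\tau) \to \ol{g}$ into the asserted conclusions. Translating back to $t \in [t_0, 1)$, $G(t)$ is the sought closed Ricci flow; one takes $U$ to be the truncated region and $\Phi_t : U \to M$ the composition of the truncation inclusion with the soliton flow over $[t_0, t]$, and since the parabolic rescaling sends the fixed truncation radius to $\infty$ as $t \nearrow 1$, the images $\Phi_t(U)$ exhaust $M$, giving (1) and (2), while $h(\tau) \to 0$ in $C^\infty_{loc}$ gives (3). Pointed Cheeger-Gromov convergence of the rescalings $G_j$ to $(M, (1-t)\phi_t^* \ol{g})$ then follows after recentering at $p_j \doteqdot \Phi_{t_j}^{-1}(p_\infty)$ (defined for $j$ large by (2)). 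The type I bound, the locality of the singularity, and (4) --- uniform boundedness of $Rm$ on $\cl{M} \setminus U$ --- come from the bounded-curvature construction on the cap $\cl{C}$, which is untouched by the parabolic rescaling.
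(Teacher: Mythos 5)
Your skeleton matches the paper's (rescaled soliton time $\tau=-\log(1-t)$, pullback by the soliton diffeomorphisms, a DeTurck/harmonic-map gauge, spectral splitting of $\ol{\Delta}_f+2\ol{Rm}$ in the Gaussian-weighted space, stable-part estimate plus a finite-dimensional topological shooting), but there is a decisive gap in where your shooting parameters come from. You propose to span the unstable and neutral directions by varying the truncation radius $A$, the cap metric, and the gluing data, and you yourself flag that the surjectivity of the resulting parameter-to-data map onto a neighborhood of $0$ in $V_+$ is unverified. There is no reason it should hold: differentiating the glued initial data in $A$ or in the cap produces variations supported in the far transition region, and their projections onto the $L^2(e^{-f}dV)$-eigentensors are exponentially small in $A$ because of the Gaussian weight --- exactly the wrong behavior for covering $V_+$. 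The paper avoids this issue entirely by building the parameters into the initial metric itself: the initial data is the self-similar slice plus $\eta_{\gamma_0}\sum_{j\le K}p_j h_j$, a cut-off combination of the low eigenmodes, with $\mathbf p\in\R^K$ the shooting parameter; then the map $\mathbf p\mapsto$ (unstable coefficients at $\tau_0$) is the identity up to an error of size $e^{-c\gamma_0 e^{\tau_0}}$ (the $L^2_f$ estimates at $t_0$), and the Wa{\.z}ewski/degree argument closes. Without replacing your geometric parameters by such direct eigenmode perturbations, the step you call the ``main obstacle'' is not merely delicate but unsupported.

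A second gap concerns the cap and grafting region for all $t\in[t_0,1)$. You assert that the type I rate, the locality of the singularity, conclusion (4), and the admissibility of the gluing error all ``come from the bounded-curvature construction on the cap,'' but bounded curvature of the initial data does not propagate for free while a singularity forms in $U$. The paper needs Perelman's pseudolocality theorem, Shi's local derivative estimates, and a drift estimate for the gauge maps (so the grafting annulus, which in rescaled variables moves out to $\{f\sim e^\tau\}$, never enters the region where the solution degenerates) to keep uniform curvature bounds near the equator for all $t<1$; the same machinery is what makes the error term $\cl{E}_2$ quantitatively small --- of size $O(\Gamma_0^{-1}e^{-\tau})$ and supported in $\{f\gtrsim e^\tau\}$, hence negligible in the Gaussian-weighted norms. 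Your proposal simply postulates that the gluing error ``decays in the norms below''; supplying that, and conclusion (4), requires this pseudolocality-plus-drift argument or a substitute for it.
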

We shall obtain a more precise asymptotic description of the singularity formation in the course of the proof of Theorem \ref{Main Thm}.
We note that, unlike many previous constructions of Ricci flow singularity formation (e.g. \cite{AngenentKnopf04, Maximo14, Wu14, AIK15, Appleton19, Stolarski19, DiGiovanni20} among others), no symmetry or K{\"a}hler condition on $(M, \ol{g})$ or $(\cl{M}, G(t))$ is assumed.
Recently, \cite{LeeZhao24} adapted the proof of Theorem \ref{Main Thm} to the mean curvature flow setting.
Precise definitions of asymptotically conical, shrinking gradient Ricci soliton are provided in Section \ref{Sect Prelims on Shrinkers}.

Examples of complete, noncompact Ricci solitons constructed in \cite{AngenentKnopf21} provide additional motivation for Theorem \ref{Main Thm}.
In dimensions $n \ge 5$,
\cite{AngenentKnopf21} construct a self-similar Ricci flow solution $(M^n_-, g(t))_{t \in (-\infty, 0)}$ that converges to a cone $( \cl{C}, g_{\cl{C}} )$ as $t \nearrow 0$,
and distinct self-similar Ricci flow solutions $(M^n_+, g(t) )_{t \in (0, +\infty) }$, $(M'^n_+, g'(t) )_{t \in (0, +\infty) }$
that converge to the cone $( \cl{C}, g_{\cl{C}})$ as $t \searrow 0$.
Concatenating the time intervals, these solutions demonstrate nonuniqueness of the Ricci flow through singularities in dimensions $n \ge 5$.
These nonuniqueness examples, however, occur on \emph{noncompact} topologies $M_-, \cl{C}, M_+$.
\cite{AngenentKnopf21} thus leaves open whether this nonuniqueness of the Ricci flow through singularities can \emph{only} occur on noncompact topologies, as is the case for the heat equation, 
or if there exist Ricci flows on closed manifolds that demonstrate such nonuniqueness behavior.

The shrinking Ricci solitons in the above example \cite{AngenentKnopf21} are smooth, complete, asymptotically conical, and gradient.
By demonstrating that these shrinking solitons model singularities of Ricci flows on closed manifolds, Theorem \ref{Main Thm} thereby provides a first step toward showing that nonuniqueness of the Ricci flow through singularities occurs on compact topologies.
Indeed, \cite{AIV, IlmanenWhite24, LeeZhao24} have analogous results for the mean curvature flow.
We expect that such nonuniqueness behavior will inform the nascent theory of weak Ricci flows in high dimensions.

Further applications of Theorem \ref{Main Thm} come from taking the soliton $(M, \ol{g}, \ol{\nabla} f, \lambda = \frac{1}{2} )$ to be the FIK shrinkers \cite{FIK03} or their generalizations constructed in \cite{DW11}.
Theorem \ref{Main Thm} thereby provides an alternative proof of \cite[Theorem 1.2]{Maximo14}.
The construction of the Ricci flow $G(t)$ in Theorem \ref{Main Thm} is sufficiently flexible as to conclude $G(t)$ is non-K{\"a}hler as well (see Remark \ref{Rmk Initially Non-Kahler}).
While the only known examples of smooth, complete, asymptotically conical, gradient shrinking Ricci solitons come from \cite{FIK03, DW11, AngenentKnopf21}, we expect Theorem \ref{Main Thm} may provide an additional dynamical perspective that can contribute to the construction of new shrinking Ricci solitons.

The outline of the paper is as follows: 
We begin in Section \ref{Sect Prelims on Shrinkers} by recording properties of shrinking Ricci solitons that will be used throughout the rest of the paper.
Section \ref{Sect Strat of Proof} then outlines the Wa{\.z}ewski retraction principle argument \cite{Wazewski47} used to prove Theorem \ref{Main Thm}.
The remaining Sections \ref{Sect Setup}--\ref{Sect Proof of Main Thm} carry out the proof in detail.
First, Section \ref{Sect Setup} describes the manifold $\cl{M}$, initial metric $G(t_0)$, and map $\Phi$ that arise in the statement of Theorem \ref{Main Thm} as well as sets $\cl{B}, \cl{P}$ used to carry out the Wa{\.z}ewski retraction principle argument.
Section \ref{Sect Prelim Ests} records some coarse preliminary estimates on the Ricci flow of $G(t_0)$,
and then Section \ref{Sect Preserve and Improve C^2 Bounds} provides more precise $C^2$ control on the evolving metrics.
We use these estimates in Section \ref{Sect Behavior of L^2_f} to describe the behavior of the evolving metrics in a weighted $L^2$ space.
Finally, Section \ref{Sect Proof of Main Thm} combines the estimates of the prior sections to complete the proof of Theorem \ref{Main Thm}.
Appendix \ref{App Rounding Out Cones} provides explicit examples of Riemannian manifolds $(\cl{M}, G(t_0))$ considered in Section \ref{Sect Setup}.
Appendix \ref{Appdix HarMapFlow} records the properties of the harmonic map heat flow used throughout the paper.
Appendix \ref{Holder Ests in Eucl Space} states interior H{\"o}lder estimates for parabolic equations that are used for the interior estimates in Section \ref{Sect Preserve and Improve C^2 Bounds}.
Appendix \ref{App Constants} lists the constants used throughout the proof and their dependencies for the readers' convenience.

\noindent \textbf{Acknowledgements:} I would like to thank Dan Knopf for initially suggesting a version of this problem and Brett Kotschwar for useful discussions on asymptotically conical shrinking solitons.
I am grateful for Richard Bamler's observation that led to the removal of a technical assumption from Theorem \ref{Main Thm}.

The author is supported by a Leverhulme Trust Early Career Fellowship (ECF-2023-182).
%note: uncomment for journal submission
%For the purpose of open access, the author has applied a CC BY public copyright licence to any Author Accepted Manuscript version arising from this submission.

%%%%%%%%%%%%%%%%%%%%%%%%%%%%%%%%%%%%%%%%%%%%%%%%%%%%%%%%
\section{Preliminaries on Shrinking Solitons} \label{Sect Prelims on Shrinkers}
%%%%%%%%%%%%%%%%%%%%%%%%%%%%%%%%%%%%%%%%%%%%%%%%%%%%%%%%

\begin{definition}
	A \emph{shrinking, gradient Ricci soliton} is a triple $(M, g, f)$ consisting of a smooth Riemannian manifold $(M, g)$ and a smooth function $f : M \to \R$, called the \emph{potential function}, such that 
	\begin{equation} \label{Soliton Eqns}
		Rc + \nabla^2 f = \frac{1}{2} g \quad   \text{ and }  \quad
		R + | \nabla f|^2 = f
		\quad \text{ on } M.
	\end{equation}
	Henceforth, we will simply call $(M, g, f)$ a \emph{shrinker}.
\end{definition}

\begin{remark} \label{Rmk Normalization}
	More generally, a shrinking, gradient Ricci soliton could be defined by the equation
		$$Rc + \nabla^2 f = \lambda g.$$
	The case of $\lambda = \frac{1}{2}$ may then be achieved by rescaling $g$.
	
	Additionally, $Rc + \nabla^2 f = \frac{1}{2} g$ implies that $ \nabla ( R + | \nabla f|^2 - f ) \equiv 0 $ throughout $M$ (see for example \cite[Proposition 1.15]{ChowEtAl07}).
	Hence, the second equation in \eqref{Soliton Eqns} may be achieved from the first by adding an appropriate constant to $f$ on each connected component of $M$.
\end{remark}

By taking the trace of $Rc + \nabla^2 f = \frac{1}{2} g$, we see
\begin{prop} \label{Prop Lapl f Eqn}
	If $(M^n, g, f)$ is a shrinking, gradient Ricci soliton, then
		$$R + \Delta f = \frac{n}{2} \qquad \text{ on } M.$$
	
	In particular, $\Delta f - \nabla_{\nabla f} f + f = \frac{n}{2}$ on $M$.
\end{prop}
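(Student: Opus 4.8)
The plan is to simply take the metric trace of the first soliton equation in \eqref{Soliton Eqns}. With respect to a local $g$-orthonormal frame $\{ e_i \}$ one has $\tr_g Rc = R$, $\tr_g \nabla^2 f = \Delta f$, and $\tr_g g = n$. Applying $\tr_g$ to both sides of $Rc + \nabla^2 f = \frac{1}{2} g$ therefore gives
\begin{equation*}
	R + \Delta f = \frac{n}{2} \qquad \text{ on } M,
\end{equation*}
which is the first assertion. There is nothing to estimate here; the only thing to be careful about is the trace and sign conventions for $Rc$ and $\nabla^2 f$.

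For the ``in particular'' statement, I would feed in the second soliton equation $R + |\nabla f|^2 = f$. Rewriting it as $R = f - |\nabla f|^2$ and using $|\nabla f|^2 = g(\nabla f, \nabla f) = \nabla_{\nabla f} f$, substitution into the identity just obtained yields
\begin{equation*}
	\Delta f - \nabla_{\nabla f} f + f = \frac{n}{2} \qquad \text{ on } M.
\end{equation*}
This is exactly the soliton equation rewritten for the drift Laplacian $\Delta - \nabla_{\nabla f}(\cdot)$, which is the form that will be convenient in the weighted $L^2$ analysis later in the paper. I do not anticipate any genuine obstacle: the proposition is a one-line trace computation followed by an algebraic substitution, recorded here only for later reference.
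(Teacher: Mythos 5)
Your proof is correct and is exactly the paper's argument: the first identity is obtained by tracing $Rc + \nabla^2 f = \tfrac{1}{2} g$, and the second follows by substituting $R = f - |\nabla f|^2$ from the normalization in \eqref{Soliton Eqns} together with $|\nabla f|^2 = \nabla_{\nabla f} f$. Nothing further is needed.
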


\begin{definition} \label{Defn phi}
	Given a shrinker $(M, g, f)$, $\phi: M \times ( -\infty, 1) \to M$
	will denote the one-parameter family of diffeomorphisms generated by $\frac{1}{ 1 - t} \nabla f $, that is
	\begin{equation} \label{phi Evol Eqn}
		\partial_t \phi = \frac{1}{1-t} \nabla f \circ \phi \text{ on } M \times ( -\infty, 1)
		\qquad \text{ with } 
		\phi( \cdot, 0) = Id_M.
	\end{equation}
\end{definition}

\cite{Zhang09} ensures that $\phi$ exists when $(M, g)$ is complete.
Using $\phi$, one can construct a self-similar Ricci flow solution from a shrinker.

\begin{prop} \label{Prop RF from Shrinker}
	If $(M, \ol{g} , f)$ is a shrinker and $(M, \ol{g} )$ is complete,
	then $g(t) \doteqdot (1 - t) \phi_t^* \ol{g}$ evolves by Ricci flow $\partial_t g = -2Rc$ for all $t \in ( - \infty, 1)$.
\end{prop}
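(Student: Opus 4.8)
The plan is to verify $\partial_t g(t) = -2\,Rc_{g(t)}$ by a direct computation: differentiate the self-similar ansatz $g(t) = (1-t)\phi_t^*\ol{g}$ in $t$, substitute the first soliton equation of \eqref{Soliton Eqns}, and use that the Ricci tensor is natural under diffeomorphisms and invariant under constant rescalings. Before that, I would note that the statement tacitly uses completeness of $(M,\ol{g})$ to make sense of $g(t)$: one needs $\phi_t$ to be defined on all of $M$ for every $t \in (-\infty,1)$, which is exactly what \cite{Zhang09} guarantees when $(M,\ol{g})$ is complete, so that each $g(t)$ is a well-defined complete metric on $M$.

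For the computation itself, I would apply the Leibniz rule in $t$ together with the standard identity $\partial_t(\phi_t^* h) = \phi_t^*(\cl{L}_{V_t} h)$ governing the pullback of a fixed tensor field $h$ along the flow of a time-dependent vector field $V_t$; here $V_t = \frac{1}{1-t}\nabla f$, and $\nabla$, $\cl{L}$, and $Rc$ are all taken with respect to $\ol{g}$. Pulling the scalar $\frac{1}{1-t}$ out of the Lie derivative gives
$$\partial_t g(t) = -\phi_t^*\ol{g} + (1-t)\,\phi_t^*\!\left(\cl{L}_{\frac{1}{1-t}\nabla f}\,\ol{g}\right) = -\phi_t^*\ol{g} + \phi_t^*\!\left(\cl{L}_{\nabla f}\,\ol{g}\right).$$
Since $\cl{L}_{\nabla f}\,\ol{g} = 2\nabla^2 f$, the soliton equation $Rc + \nabla^2 f = \frac{1}{2}\,\ol{g}$ gives $\cl{L}_{\nabla f}\,\ol{g} = \ol{g} - 2\,Rc_{\ol{g}}$, so the two $\phi_t^*\ol{g}$ terms cancel and $\partial_t g(t) = -2\,\phi_t^* Rc_{\ol{g}}$.

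Finally I would identify $\phi_t^* Rc_{\ol{g}}$ with $Rc_{g(t)}$: by naturality of the Ricci tensor, $\phi_t^* Rc_{\ol{g}} = Rc_{\phi_t^*\ol{g}}$, and since Ricci is unchanged when the metric is multiplied by a positive constant, $Rc_{\phi_t^*\ol{g}} = Rc_{(1-t)\phi_t^*\ol{g}} = Rc_{g(t)}$. Hence $\partial_t g(t) = -2\,Rc_{g(t)}$ on $M \times (-\infty,1)$, as claimed. I do not expect any real obstacle here: the argument is a routine self-similar-solution calculation once signs and scalings are pinned down, and its only genuinely analytic input — global-in-time existence of $\phi_t$ on the complete manifold $M$ — is already supplied by the cited result of \cite{Zhang09}.
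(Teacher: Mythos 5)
Your proof is correct and is exactly the standard self-similar computation that the paper leaves implicit (it states the proposition without proof, citing \cite{Zhang09} only for global existence of $\phi_t$ on the complete manifold). The differentiation of the pullback, the substitution $\cl{L}_{\nabla f}\,\ol{g} = 2\nabla^2 f = \ol{g} - 2\,Rc_{\ol{g}}$ via \eqref{Soliton Eqns}, and the naturality plus scale-invariance of the Ricci tensor are all handled correctly, so nothing is missing.
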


\begin{prop} \label{Prop Nonneg Scalar Curv}
	If $(M, g, f)$ is a complete shrinker, then the scalar curvature $R \ge 0$ is nonnegative throughout $M$.
	
	If $M$ is also connected, then $R = 0$ somewhere if and only if $(M, g)$ is Ricci-flat.
\end{prop}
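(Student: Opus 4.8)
The plan is to transfer both assertions to the self-similar Ricci flow generated by the shrinker. By Proposition~\ref{Prop RF from Shrinker}, $g(t) \doteqdot (1-t)\phi_t^* g$ is a smooth Ricci flow on the complete manifold $M$ for all $t \in (-\infty, 1)$, so its restriction to $(-\infty, 0]$ is a complete ancient Ricci flow with $g(0) = g$. First I would invoke B.-L.~Chen's theorem \cite{Chen09} that every complete ancient Ricci flow has nonnegative scalar curvature; this needs no a priori curvature bounds and follows from a localized lower bound of the form $R_{g(t)} \ge -\tfrac{n}{2(t-s)}$, valid at time $t$ on any interval $[s,t]$ on which a complete flow is defined, upon letting $s \searrow -\infty$. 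Applied at $t = 0$, where $g(0) = g$, this gives $R \ge 0$ on $M$, which is the first assertion. (Alternatively, one can argue intrinsically: the soliton equations~\eqref{Soliton Eqns} and the contracted second Bianchi identity yield $\nabla R = 2\,Rc(\nabla f, \cdot)$ and hence $\Delta_f R \doteqdot \Delta R - \langle \nabla f, \nabla R\rangle = R - 2|Rc|^2 \le R - \tfrac{2}{n}R^2$; since the Bakry--\'Emery tensor $Rc + \nabla^2 f = \tfrac12 g$ is bounded below and $f$ grows at most quadratically on a complete shrinker, an Omori--Yau type maximum principle for the drift Laplacian $\Delta_f$ then forces $\inf_M R \ge 0$.)

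For the rigidity statement, assume $M$ is connected. If $(M,g)$ is Ricci-flat, then $R \equiv 0$, so $R$ certainly vanishes somewhere. Conversely, suppose $R(q) = 0$ for some $q \in M$. Along the ancient flow $g(t)$, $t \le 0$, the scalar curvature satisfies $\partial_t R = \Delta R + 2|Rc|^2 \ge \Delta R$, so $R$ is a nonnegative supersolution of the heat equation on $M \times (-\infty, 0]$ that attains its minimum value $0$ at the space-time point $(q,0)$. The strong maximum principle then forces $R \equiv 0$ on $M \times (-\infty, 0]$, and feeding this back into the evolution equation gives $|Rc|^2 \equiv 0$; hence $(M, g) = (M, g(0))$ is Ricci-flat.

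The one genuinely delicate point is the nonnegativity of $R$ on the \emph{noncompact} manifold $M$, where the naive maximum principle does not apply --- this is precisely the content of Chen's theorem, or, in the intrinsic approach, of the quadratic growth of $f$ combined with a drift-Laplacian maximum principle. By contrast, the rigidity half uses only the \emph{local} strong maximum principle for the heat equation, which is unaffected by noncompactness, together with the elementary observation that $2|Rc|^2 \ge 0$.
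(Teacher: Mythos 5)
Your proof is correct, and the nonnegativity half is exactly the paper's argument: pass to the self-similar ancient flow $g(t)=(1-t)\phi_t^*g$ of Proposition~\ref{Prop RF from Shrinker} and invoke Chen's result \cite{Chen09} for complete ancient Ricci flows. For the rigidity half you take a slightly different, but equivalent, route: you run the parabolic strong maximum principle for $\partial_t R = \Delta R + 2|Rc|^2$ along the ancient flow, whereas the paper stays static and applies the elliptic strong maximum principle to the soliton identity $\Delta_f R - \tfrac12 R = -2|Rc|^2$ (cf.\ \cite[proposition 1.13]{ChowEtAl07}). Both are purely local arguments, so noncompactness is harmless either way; the paper's version avoids re-introducing the time variable, while yours avoids quoting the drift identity. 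One small slip in your parenthetical alternative for $R\ge 0$: the correct identity is $\Delta_f R = \tfrac12 R - 2|Rc|^2$, not $R - 2|Rc|^2$; this only changes a constant and does not affect the Omori--Yau sketch, and in any case your main argument does not use it.
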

\begin{proof}
	The fact that $R \ge 0$ is a direct consequence of \cite[Corollary 2.5]{Chen09} and Proposition \ref{Prop RF from Shrinker}.
	
	The statement regarding equality follows from the fact that $R$ satisfies 
		$$\Delta R - \nabla_{\nabla f} R   - \frac{1}{2} R = - 2 |Rc|^2 \le 0	\qquad \text{ on } M$$
	(see for example \cite[Proposition 1.13]{ChowEtAl07}) and the strong maximum principle.
\end{proof}

The following result of \cite{CaoZhou10} says that the potential function of a complete, non-compact shrinker grows quadratically at infinity: 
\begin{prop}[Theorem 1.1 of \cite{CaoZhou10}] \label{Prop Potential Func Grows Quadratically}
	Let $(M, g,f)$ be a complete, non-compact, connected shrinker and let $p_0 \in M$.
	Then the potential function $f : M \to \R$ satisfies 
		$$\frac{1}{4} \left( \dist_{g} ( p_0, x) - c_1 \right)^2 \le f(x) \le 	\frac{1}{4} \left( \dist_{g} ( p_0, x) + c_2 \right)^2		\qquad \text{for all } x \in M$$
	where $c_1,c_2$ are positive constants depending only on $n$ and the geometry of the unit ball $B_{g}( p_0, 1) \subset M$ centered at $p_0$.
\end{prop}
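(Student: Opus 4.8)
The plan is to prove the two inequalities separately; the upper bound is elementary, while the lower bound is the substance of \cite[Theorem 1.1]{CaoZhou10}, whose argument I would follow. For the upper bound, note that by Proposition \ref{Prop Nonneg Scalar Curv} we have $R \ge 0$, so the second equation in \eqref{Soliton Eqns} gives $|\nabla f|^2 = f - R \le f$, and in particular $f \ge 0$. Hence $\sqrt{f+\epsilon}$ is $\tfrac12$-Lipschitz for every $\epsilon > 0$, since $|\nabla\sqrt{f+\epsilon}| = |\nabla f|/(2\sqrt{f+\epsilon}) \le \sqrt{f}/(2\sqrt{f+\epsilon}) \le \tfrac12$. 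Integrating along a unit-speed minimal geodesic from $p_0$ to $x$ and letting $\epsilon \searrow 0$ gives $\sqrt{f(x)} \le \sqrt{f(p_0)} + \tfrac12\dist_g(p_0,x)$, i.e. $f(x) \le \tfrac14\big(\dist_g(p_0,x) + 2\sqrt{f(p_0)}\big)^2$, so one may take $c_2 = 2\sqrt{f(p_0)}$; the number $f(p_0) = R(p_0) + |\nabla f(p_0)|^2$ is part of the given data, and is bounded by $\tfrac n2$ if $p_0$ is chosen to be a minimum point of $f$ (evaluate $\Delta f - \nabla_{\nabla f} f + f = \tfrac n2$ of Proposition \ref{Prop Lapl f Eqn} there, where $\nabla f = 0$).

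For the lower bound, let $\gamma : [0,r] \to M$ be a unit-speed minimal geodesic from $p_0$ to $x$, $r = \dist_g(p_0,x)$, and set $X = \gamma'$. Restricting the first soliton equation to $X$ gives $(f\circ\gamma)''(s) = \nabla^2 f(X,X) = \tfrac12 - Rc(X,X)$ along $\gamma$, which integrates to
\begin{equation*}
	\langle\nabla f, X\rangle(x) = \langle\nabla f, X\rangle(p_0) + \frac{r}{2} - \int_0^r Rc(X,X)\, ds .
\end{equation*}
Since $\langle\nabla f, X\rangle(x) \le |\nabla f|(x) \le \sqrt{f(x)}$ and $|\langle\nabla f, X\rangle(p_0)| \le \sqrt{f(p_0)}$, the lower bound reduces to a uniform upper bound
\begin{equation*}
	\int_0^r Rc(X,X)\, ds \le c ,
\end{equation*}
where $c$ depends only on $n$ and the local data at the base point (and here it is convenient to take that base point to be a minimizer of $f$, transferring to a general $p_0$ afterwards by the triangle inequality and the bounds already obtained). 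To establish this I would use the second variation of arc length: since $\gamma$ is minimizing, summing the nonnegativity of the index form over a parallel orthonormal frame $\{E_i\}_{i=1}^{n-1}$ perpendicular to $X$ gives
\begin{equation*}
	\int_0^r \phi^2\, Rc(X,X)\, ds \le (n-1)\int_0^r (\phi')^2\, ds
\end{equation*}
for every Lipschitz $\phi$ with $\phi(0) = \phi(r) = 0$. Taking $\phi$ equal to $1$ away from the endpoints and linear near them, and absorbing the resulting boundary contributions with the soliton identity $Rc(X,X) = \tfrac12 - (f\circ\gamma)''$ (so as to integrate by parts) together with the quadratic upper bound for $f$ along $\gamma$ already proved, yields the asserted estimate. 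Feeding it back in gives $\sqrt{f(x)} \ge \tfrac12\dist_g(p_0,x) - \tfrac12 c_1$ once $\dist_g(p_0,x)$ is large, and enlarging $c_1$ absorbs the bounded range.

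The step I expect to be the main obstacle is precisely this uniform control of $\int_0^r Rc(X,X)\,ds$: the Ricci curvature of a shrinker is in general neither sign-definite nor a priori bounded, so the contributions near the far endpoint $x$ in the second-variation argument are the delicate point. Handling them — trading the radial Hessian of $f$ for $Rc$ via the soliton equation, basing the geodesic at a critical point of $f$, and invoking the quadratic upper bound for $f$ — is the heart of \cite[Theorem 1.1]{CaoZhou10}, whose proof I would reproduce.
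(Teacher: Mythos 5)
The paper itself offers no proof of this proposition: it is quoted verbatim from \cite{CaoZhou10}, so the only thing to compare your sketch against is Cao--Zhou's argument, which you say you would follow. Your upper bound is complete and correct ($R\ge 0$ gives $|\nabla f|^2\le f$, so $\sqrt{f+\epsilon}$ is $\tfrac12$-Lipschitz), and your reduction of the lower bound to an estimate on $\int_0^r Rc(X,X)\,ds$ via $\nabla^2 f(X,X)=\tfrac12-Rc(X,X)$ is the right starting point.

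The gap is in how you propose to close it. You claim the lower bound reduces to a \emph{uniform} bound $\int_0^r Rc(X,X)\,ds\le c$ with $c$ depending only on $n$ and the data near $p_0$, to be obtained from second variation plus integration by parts plus the quadratic upper bound on $f$. Carried out literally, this fails: writing $Rc(X,X)=\tfrac12-(f\circ\gamma)''$ on $[r-1,r]$ and integrating by parts against $1-\phi^2$, the far-endpoint contribution is $-(f\circ\gamma)'(r)+2\int_{r-1}^{r}(r-s)(f\circ\gamma)'(s)\,ds$, and the only a priori control on $(f\circ\gamma)'$ is $|\nabla f|\le\sqrt f\le\tfrac12\big(s+c_2\big)$, which makes these terms of order $r$, not $O(1)$. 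So this route does not give $\int_0^r Rc\,ds\le c$; feeding the resulting $O(r)$ bound back into your reduction produces nothing. Cao--Zhou neither prove nor need such a uniform bound. The actual mechanism is a cancellation: substitute the second-variation estimate, with the surviving $(f\circ\gamma)'$-terms kept, into the integrated identity $\langle\nabla f,X\rangle(x)=\langle\nabla f,X\rangle(p_0)+\tfrac r2-\int_0^r Rc(X,X)\,ds$; the $(f\circ\gamma)'(r)$ terms cancel, leaving $2\int_{r-1}^{r}(r-s)(f\circ\gamma)'(s)\,ds\ \ge\ \tfrac r2-c$, and the left side is then bounded above by $\sqrt{f(x)}+\tfrac12$ using the $\tfrac12$-Lipschitz property of $\sqrt f$ \emph{relative to the far endpoint} $x$ (not the crude linear-in-$s$ bound from $p_0$). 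This yields $\sqrt{f(x)}\ge\tfrac r2-c'$ directly, with $c'$ depending only on $n$, $\sup_{B_g(p_0,1)}|Rc|$ and $f(p_0)$. One further caveat: basing the geodesic at a minimizer of $f$ is circular at this stage, since attainment of $\min f$ is deduced from the very properness you are proving; it is also unnecessary, because the bound $|\langle\nabla f,X\rangle(p_0)|\le\sqrt{f(p_0)}$ that you already use suffices.
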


\cite[theorem 1.2]{CaoZhou10} also shows that complete, non-compact, connected shrinkers have at most Euclidean volume growth. Combined with Proposition \ref{Prop Potential Func Grows Quadratically}, this fact yields the following moment estimates for $f$ (see also \cite[Corollary 1.1]{CaoZhou10}):

\begin{prop}[Moment estimates] \label{Prop Moment Ests}
	Let $(M, g, f)$ be a complete, connected shrinker.
	Then, for any $c > 0$ and  $\kappa \ge 0$,
		$$\int_M f^{\kappa} e^{- c f} dV_{g} < \infty.$$
\end{prop}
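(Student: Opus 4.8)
The plan is to reduce to the non-compact case and then combine the quadratic growth of the potential function with the at-most-Euclidean volume growth of a shrinker. If $M$ is compact the statement is immediate, since then $f$ is bounded and $\operatorname{Vol}_g(M) < \infty$; so I may assume $(M,g)$ is complete, non-compact, and connected, so that Proposition \ref{Prop Potential Func Grows Quadratically} and \cite[Theorem 1.2]{CaoZhou10} both apply. I note first that $f \ge 0$ on $M$: indeed $R + |\nabla f|^2 = f$ and $R \ge 0$ by Proposition \ref{Prop Nonneg Scalar Curv}, so $f = R + |\nabla f|^2 \ge 0$ (this also follows directly from the lower bound in Proposition \ref{Prop Potential Func Grows Quadratically}). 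Hence $f^\kappa$ is a well-defined, nonnegative, continuous function for every real $\kappa \ge 0$, and so is the integrand $f^\kappa e^{-cf}$.

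Next I would fix $p_0 \in M$ and set $r(x) \doteqdot \dist_g(p_0,x)$. The lower bound $f(x) \ge \tfrac14 (r(x) - c_1)^2$ from Proposition \ref{Prop Potential Func Grows Quadratically} shows that every sublevel set $\{ f \le L \}$ is contained in the metric ball $B_g(p_0, c_1 + 2\sqrt{L})$; by completeness this ball is precompact, and by the Euclidean volume growth \cite[Theorem 1.2]{CaoZhou10} its volume is bounded by $c_3 (c_1 + 2\sqrt{L})^n$, with $c_3$ depending only on $n$ and the geometry of $B_g(p_0,1)$. In particular every sublevel set of $f$ has finite volume.

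Finally I would decompose $M = \{ f \le 1 \} \cup \bigcup_{m \ge 1} \{ m < f \le m+1 \}$. On $\{ f \le 1 \}$ the integrand is bounded by $\sup_{s \in [0,1]} s^\kappa e^{-cs}$ and the region has finite volume, so that piece contributes a finite amount; on the shell $\{ m < f \le m+1 \}$ one has $f^\kappa e^{-cf} \le (m+1)^\kappa e^{-cm}$ and volume at most $c_3(c_1 + 2\sqrt{m+1})^n$, so
\[
	\int_M f^\kappa e^{-cf}\, dV_g \;\le\; C \;+\; c_3 \sum_{m=1}^\infty (m+1)^\kappa e^{-cm} \bigl( c_1 + 2\sqrt{m+1} \bigr)^n \;<\; \infty ,
\]
the series converging because $e^{-cm}$ beats the polynomial growth in $m$. (Equivalently, one may rewrite $\int_M f^\kappa e^{-cf}\, dV_g$ via the co-area formula in terms of $V(r) \doteqdot \operatorname{Vol}_g(B_g(p_0,r)) \le c_3 r^n$; the dyadic decomposition simply avoids any discussion of the regularity of the level sets of $r$.)

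There is no serious obstacle here: the result is a soft consequence of the two cited theorems of \cite{CaoZhou10} (quadratic growth of $f$ from below, and Euclidean volume growth). The only points that need a little care are recording that $f \ge 0$, so that $f^\kappa$ is meaningful for non-integer $\kappa$, and that the sublevel sets of $f$ are genuinely precompact, which uses completeness together with the quadratic lower bound.
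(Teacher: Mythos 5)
Your proof is correct and follows essentially the same route the paper takes: the paper presents this proposition as a direct consequence of the quadratic growth of $f$ (Proposition \ref{Prop Potential Func Grows Quadratically}) combined with the at-most-Euclidean volume growth from \cite{CaoZhou10}, which is exactly what you carry out, with the compact case, the nonnegativity of $f$, and the shell decomposition filled in carefully. No gaps.
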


%%%%%%%%%%%%%%%%%%%%%%%%%%%%%%%%%%%%%%%%%%%%%%%%%%%%%%%%
\subsection{Functional Analysis on Shrinking Solitons}
%%%%%%%%%%%%%%%%%%%%%%%%%%%%%%%%%%%%%%%%%%%%%%%%%%%%%%%%
\begin{definition}
Let $(M, g, f)$ be a shrinker. For any tensor $T$, define the operators
	\begin{gather*}
		\div_f (T) \doteqdot \div(T) - \langle \nabla f, T \rangle 	\\
		\Delta_f T \doteqdot \div_f (\nabla T) = \Delta T - \nabla_{\nabla f} T.
	\end{gather*}
\end{definition}
Observe that these operators satisfy the integration by parts formula
	$$\int_M \langle S, \div_f(T) \rangle e^{-f} dV_g = - \int_M \langle \nabla S, \nabla T \rangle e^{-f} dV_g$$
for any smooth, compactly supported tensors $S,T$ of the same type.

\begin{prop} \label{RcInKerDiv_f}
	If $(M, g, f)$ is a shrinker, then $\div_f Rc \equiv 0$.
\end{prop}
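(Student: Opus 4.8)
The plan is to derive the identity directly from the two soliton equations \eqref{Soliton Eqns} together with the contracted second Bianchi identity. First I would take the divergence of the first soliton equation $Rc + \nabla^2 f = \frac{1}{2} g$. Since $g$ is parallel, the right-hand side has zero divergence, so $\div Rc + \div \nabla^2 f = 0$, i.e. $\div Rc = - \div \nabla^2 f$. Now $\div \nabla^2 f$ is computed by commuting covariant derivatives: for a function $f$ one has $(\div \nabla^2 f)_j = \nabla^i \nabla_i \nabla_j f = \nabla_j \Delta f + R_{jk} \nabla^k f$ (the Ricci term arising from the Bochner-type commutation). Hence $\div Rc = -\nabla \Delta f - Rc(\nabla f, \cdot)$.

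Next I would use the contracted second Bianchi identity $\div Rc = \frac{1}{2} \nabla R$, so that $\frac{1}{2}\nabla R = -\nabla \Delta f - Rc(\nabla f, \cdot)$. To handle $\nabla \Delta f$, invoke proposition \ref{Prop Lapl f Eqn}, which gives $R + \Delta f = \frac{n}{2}$ and therefore $\nabla \Delta f = - \nabla R$. Substituting, $\frac{1}{2}\nabla R = \nabla R - Rc(\nabla f, \cdot)$, which rearranges to $Rc(\nabla f, \cdot) = \frac{1}{2}\nabla R$. Now compute $\div_f Rc = \div Rc - Rc(\nabla f, \cdot) = \frac{1}{2}\nabla R - \frac{1}{2}\nabla R = 0$, which is exactly the claim. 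Alternatively, one can phrase the last two steps using the well-known soliton identity $\nabla R = 2 Rc(\nabla f, \cdot)$ (equivalently $2\, Rc(\nabla f) = \nabla R$), which is itself a standard consequence of \eqref{Soliton Eqns}; either route reaches $\div_f Rc = \div Rc - \langle \nabla f, Rc\rangle = \frac12 \nabla R - Rc(\nabla f,\cdot) = 0$.

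This argument is essentially a bookkeeping computation, so there is no serious obstacle; the only point requiring a little care is the commutation formula for $\div \nabla^2 f$ and keeping the index contractions consistent with the sign conventions for $Rc$ and $R$ used in the paper. Since the identity $\nabla R = 2\, Rc(\nabla f,\cdot)$ on shrinkers is classical (see e.g. the references already cited, such as \cite{ChowEtAl07}), the cleanest exposition is probably to quote that identity and then observe that it says precisely $\langle \nabla f, Rc \rangle = \div Rc$, so their difference vanishes.
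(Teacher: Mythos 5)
Your proposal is correct and follows essentially the same route as the paper: take the divergence of the soliton equation and commute derivatives to get $(\div \nabla^2 f)_j = \nabla_j \Delta f + R_{jk}\nabla^k f$, and combine this with the contracted Bianchi identity and the traced equation $R + \Delta f = \tfrac{n}{2}$. The paper simply subtracts the two resulting identities directly rather than isolating $\nabla R = 2\,Rc(\nabla f,\cdot)$ as an intermediate step, but the computation is the same.
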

\begin{proof}
	Tracing $Rc + \nabla^2 f = \frac{1}{2} g$ and taking $\nabla$ yields
		$$R + \Delta f = \frac{n}{2} \implies \nabla_j R + \nabla_j \Delta f = 0.$$
	The contracted Bianchi identity then implies that
	\begin{equation} \label{Eqn RcInKerDiv_f 1}
		2 \nabla^i Rc_{ij} + \nabla_j \Delta f = 0.
	\end{equation}
	On the other hand, taking the divergence of $Rc + \nabla^2 f = \frac{1}{2} g$ and commuting covariant derivatives on $f$ yields
	\begin{equation} \label{Eqn RcInKerDiv_f 2}
		0 = \nabla^i Rc_{ij} + \nabla^i \nabla_i \nabla_j f  = \nabla^i Rc_{ij} + \nabla_j \Delta f + Rc_{jk} \nabla^k f. 
	\end{equation}
	Subtracting \eqref{Eqn RcInKerDiv_f 2} from \eqref{Eqn RcInKerDiv_f 1} shows that $\div_f Rc \equiv 0$.	
\end{proof}

\begin{remark}
	The proof of Proposition \ref{RcInKerDiv_f} generalizes to steady and expanding gradient Ricci solitons as well.
\end{remark}

\begin{definition}[Weighted Sobolev Spaces]
	Let $(M, g, f)$ be a shrinker.
	Let $E$ be a vector bundle on an open set $\Omega \subset M$ and assume $g$ induces a metric on $E$.
	Let $\Gamma( \Omega, E)$ denote the set of smooth sections of $E$.
	$L^2_f(\Omega, E)$ is the completion of 
		$$ \left\{ T \in \Gamma(\Omega, E)  : \int_\Omega | T|^2_g e^{-f} dV_g < \infty  \right\}$$
	with respect to the inner product
		$$( S , T )_{L^2_f(\Omega)} \doteqdot  \int_\Omega \langle S, T \rangle e^{-f} dV_g .$$
	
	For any $m \in \mathbb{N}$, $H^m_f(\Omega, E)$ is the completion of 
		$$ \left\{ T \in \Gamma(\Omega, E)  : \sum_{j=0}^m \int_\Omega | \nabla^j T|^2_g e^{-f} dV_g < \infty  \right\}$$	
	with respect to the inner product
		$$( S , T )_{H^m_f(\Omega) } \doteqdot  \sum_{j=0}^m \int_\Omega \langle \nabla^j S,  \nabla^j T \rangle e^{-f} dV_g .$$
\end{definition}

	Note $L^2_f (\Omega, E) = H^0_f( \Omega, E)$.
	In practice, we will often consider the case where $\Omega = M$ and simplify the notation by writing $\Gamma (E) = \Gamma(M, E),$ $L^2_f (E) = L^2_f( M, E)$, $( \cdot, \cdot)_{L^2_f }  = ( \cdot, \cdot)_{L^2_f(M) }$, and similarly for $H^m_f(E)$.
	If also the vector bundle $E$ is implicit from context, we may analogously write $L^2_f(M)$ or $L^2_f$.
	
%	
%\begin{definition}
%%	The $L^2_f$-adjoint of $\div_f : \Gamma( Sym^2 T^*M) \to \Gamma( T^*M)$ is given by
%	On a shrinker $(M, g, f)$, define the operator
%		$$\div_f^* : \Gamma(M, T^*M) \to \Gamma( M, Sym^2 T^*M) \quad \text{by} \quad
%		 \div_f^*(\alpha)_{ij} = -\frac{1}{2} \left(  \nabla_i \alpha_j + \nabla_j \alpha_i \right) .$$
%\end{definition}
%Observe that
%	$$( \alpha, \div_f h)_{L^2_f(M)} = (\div_f^* \alpha, h)_{L^2_f(M)}$$
%for all compactly supported $\alpha \in \Gamma( M, T^*M)$ and $h \in \Gamma( M, Sym^2 T^*M )$.

\begin{lem}		\label{multGradfEst}
	Let $(M^n, g, f)$ be a complete shrinker.
	Then there exists a constant $C > 0$ such that 
		$$\| \nabla f \otimes T \|_{L^2_f} \le C \| T \|_{H^1_f}$$
	for any smooth tensor $T$ on $M$.
\end{lem}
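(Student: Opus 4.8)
The plan is to estimate $\|\nabla f \otimes T\|_{L^2_f}^2 = \int_M |\nabla f|^2 |T|^2 e^{-f}\,dV_g$ directly by integration by parts, exploiting the elementary identity $|\nabla f|^2 e^{-f} = -\langle \nabla f, \nabla e^{-f}\rangle$ together with the soliton equations. Since there is nothing to prove when $\|T\|_{H^1_f} = \infty$, I may assume $\|T\|_{H^1_f} < \infty$. To avoid assuming a priori that $\int_M |\nabla f|^2 |T|^2 e^{-f}\,dV_g$ is finite, I would first work with a cutoff: fix $p_0 \in M$ and choose $\eta = \eta_k \in C_c^\infty(M)$ with $\eta_k \equiv 1$ on $B_g(p_0, k)$, $\supp \eta_k \subset B_g(p_0, 2k)$, and $|\nabla \eta_k| \le C_0/k$ for a universal constant $C_0$; such functions exist because $(M,g)$ is complete. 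Applying the divergence theorem to the compactly supported vector field $|T|^2 \eta^2 e^{-f} \nabla f$ and rearranging gives
\[
\int_M \eta^2 |\nabla f|^2 |T|^2 e^{-f}\,dV_g = \int_M \Big( \eta^2 \langle \nabla |T|^2, \nabla f \rangle + 2\eta |T|^2 \langle \nabla \eta, \nabla f \rangle + \eta^2 |T|^2 \,\Delta f \Big) e^{-f}\,dV_g .
\]

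Next I would bound each term on the right. For the last term, Proposition \ref{Prop Lapl f Eqn} and the nonnegativity of scalar curvature (Proposition \ref{Prop Nonneg Scalar Curv}) give $\Delta f = \frac n2 - R \le \frac n2$, so it contributes at most $\frac n2 \|T\|_{L^2_f}^2$. The first term equals $2\eta^2 \langle \nabla_{\nabla f} T, T\rangle$ and is bounded pointwise by $2\eta^2 |\nabla f|\,|\nabla T|\,|T|$, while the cross term is bounded by $\frac{2C_0}{k}\, \eta\, |\nabla f|\, |T|^2$. Both of these reintroduce $|\nabla f|$, but that is harmless: by Young's inequality each splits into a small multiple (say $\frac14$ each) of the integrand $\eta^2 |\nabla f|^2 |T|^2 e^{-f}$ plus a controlled multiple of $|\nabla T|^2 e^{-f}$, respectively $|T|^2 e^{-f}$ (using $k \ge 1$). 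Absorbing the two $\eta^2|\nabla f|^2|T|^2 e^{-f}$ contributions into the left-hand side yields
\[
\int_M \eta_k^2 |\nabla f|^2 |T|^2 e^{-f}\,dV_g \le C_1 \Big( \|\nabla T\|_{L^2_f}^2 + \|T\|_{L^2_f}^2 \Big),
\]
with $C_1$ depending only on $n$ and $C_0$, and crucially uniform in $k$.

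Finally, letting $k \to \infty$ and applying the monotone convergence theorem gives $\int_M |\nabla f|^2 |T|^2 e^{-f}\,dV_g \le C_1 \|T\|_{H^1_f}^2$, which is the assertion with $C = \sqrt{C_1}$. I expect the only genuine subtlety to be the cutoff bookkeeping in the previous step: the term $\eta\,|\nabla f|\,|T|^2$ arising from $\nabla \eta$ must be \emph{absorbed} into the left-hand side rather than merely estimated, since before passing to the limit one does not yet know the target integral is finite; the uniform-in-$k$ bound is exactly what makes the limiting argument legitimate. Everything else — the Young's inequality applications and the expansion of $\nabla(|T|^2\eta^2)$ — is routine.
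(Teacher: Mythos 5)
Your proof is correct and is essentially the paper's argument: both rest on the same integration by parts against $e^{-f}\nabla f$, the identity $\Delta f - |\nabla f|^2 \le \frac n2 - |\nabla f|^2$ (via $\Delta f = \frac n2 - R$ and $R \ge 0$), and absorption of the $|\nabla f|^2|T|^2$ term into the left-hand side — the paper packages the absorption as $0 \le |2\nabla T - \nabla f\otimes T|^2$ rather than Young's inequality. The only real difference is that you replace the paper's one-line density reduction to compactly supported $T$ by an explicit cutoff argument, which is a slightly more careful way of handling the same technical point.
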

\begin{proof}
	By density, we can additionally assume that $T$ is compactly supported.
	For a constant $C > 0$ to be determined, consider 
		$$0 \le | C \nabla T -  \nabla f \otimes T |^2$$
	Integrating both sides yields
	\begin{gather*} \begin{aligned} 
		0 	
		&\le \int_M | C \nabla T -  \nabla f \otimes T |^2 e^{-f} dV \\
		&= \int_M \left[ C^2 | \nabla T|^2 + | \nabla f|^2 |T|^2 \right] e^{-f} dV	
		- C \int_M \langle 2 \nabla T , \nabla f \otimes T \rangle e^{-f} dV\\
		&= \int_M \left[ C^2 | \nabla T|^2 + | \nabla f|^2 |T|^2 \right] e^{-f} dV	
		- C \int_M \left \langle  \nabla \left( |T|^2 \right) , \nabla f  \right \rangle e^{-f} dV\\
		&= \int_M \left[ C^2 | \nabla T|^2 + | \nabla f|^2 |T|^2 \right] e^{-f} dV	
		+ C \int_M   |T|^2  \div_f(\nabla f) e^{-f} dV\\
		&= \int_M \left[ C^2 | \nabla T|^2 + | \nabla f|^2 |T|^2 \right] e^{-f} dV	
		+ C \int_M   |T|^2  \left( \Delta f - |\nabla f|^2 \right) e^{-f} dV\\
		&= \int_M \left[ C^2 | \nabla T|^2 + | \nabla f|^2 |T|^2 \right] e^{-f} dV	
		+ C \int_M   |T|^2  \left( - R + \frac{n}{2} - |\nabla f|^2 \right) e^{-f} dV\\
		&\le \int_M \left[ C^2 | \nabla T|^2 + | \nabla f|^2 |T|^2 \right] e^{-f} dV	
		+ C \int_M   |T|^2  \left(   \frac{n}{2} - |\nabla f|^2 \right) e^{-f} dV\\
	\end{aligned} \end{gather*}	
	where the last inequality follows from Proposition \ref{Prop Nonneg Scalar Curv}.
	Therefore,
		$$(C -1) \int_M |T|^2 |\nabla f|^2 e^{-f} dV 
		\le  \int_M \left[C^2 | \nabla T|^2+C \frac{n}{2}|T|^2 \right]  e^{-f} dV $$
	Taking $C = 2$ yields the desired conclusion.
\end{proof}

\begin{lem}	\label{multRootfEst}
	Let $(M, g , f)$ be a complete shrinker with bounded curvature.
	Then there exists a constant $C> 0$ such that
		$$\| \sqrt{f} T \|_{L^2_f} \le C \| T \|_{H^1_f}$$
	for any smooth tensor $T$ on $M$.
\end{lem}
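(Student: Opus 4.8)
The plan is to reduce this to Lemma \ref{multGradfEst} by using the second soliton equation in \eqref{Soliton Eqns} to trade a factor of $f$ for a factor of $|\nabla f|^2$, up to a bounded error. First I would observe that, since $(M,g,f)$ is a complete shrinker, Proposition \ref{Prop Nonneg Scalar Curv} gives $R \ge 0$; combined with $R + |\nabla f|^2 = f$ this shows $f \ge 0$ on all of $M$, so $\sqrt{f}$ is well-defined. The bounded curvature hypothesis moreover gives $R_{\max} \doteqdot \sup_M R < \infty$, whence the pointwise bound
$$0 \le f = R + |\nabla f|^2 \le |\nabla f|^2 + R_{\max} \qquad \text{on } M.$$

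Next, given a smooth tensor $T$, I would multiply this inequality by $|T|_g^2$ and integrate against $e^{-f}\, dV_g$. This yields
$$\| \sqrt{f}\, T \|_{L^2_f}^2 = \int_M f\, |T|^2 e^{-f}\, dV_g \le \int_M |\nabla f|^2 |T|^2 e^{-f}\, dV_g + R_{\max} \int_M |T|^2 e^{-f}\, dV_g = \| \nabla f \otimes T \|_{L^2_f}^2 + R_{\max} \| T \|_{L^2_f}^2.$$
If the right-hand side is infinite there is nothing to prove, so one may assume $T \in H^1_f$ (or argue by density assuming $T$ compactly supported, exactly as in the proof of Lemma \ref{multGradfEst}).

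Finally, I would invoke Lemma \ref{multGradfEst} to bound $\| \nabla f \otimes T \|_{L^2_f} \le C' \| T \|_{H^1_f}$ for a constant $C' = C'(M,g,f) > 0$, together with the trivial inequality $\| T \|_{L^2_f} \le \| T \|_{H^1_f}$, to obtain
$$\| \sqrt{f}\, T \|_{L^2_f}^2 \le \left( (C')^2 + R_{\max} \right) \| T \|_{H^1_f}^2.$$
Taking square roots gives the claim with $C = \sqrt{(C')^2 + R_{\max}}$. I do not expect any real obstacle here: all of the analytic content sits in Lemma \ref{multGradfEst}, and the only new ingredients are the soliton identity $R + |\nabla f|^2 = f$, the sign $R \ge 0$, and the hypothesis that the curvature — hence $R$ — is bounded. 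The one point worth flagging is precisely that this curvature bound is what renders the error term $R_{\max}\,\| T \|_{L^2_f}^2$ harmless; without it the same argument would only control $\| \, |\nabla f|\, T \|_{L^2_f}$ rather than $\| \sqrt{f}\, T \|_{L^2_f}$.
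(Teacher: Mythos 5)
Your proof is correct and follows essentially the same route as the paper: both use $R \ge 0$ together with $R + |\nabla f|^2 = f$ and boundedness of $R$ to bound $\int_M f|T|^2 e^{-f}\,dV$ by $\|\nabla f \otimes T\|_{L^2_f}^2 + (\sup_M R)\,\|T\|_{L^2_f}^2$, then invoke Lemma \ref{multGradfEst}. The minor bookkeeping differences (reducing to compactly supported $T$ by density versus assuming $T \in H^1_f$) are immaterial.
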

\begin{proof}
	By density, we can additionally assume that $T$ is compactly supported.
	By Proposition \ref{Prop Nonneg Scalar Curv},
		$$0 \le R + | \nabla f|^2  = f .$$
	Multiplying both sides by $|T|^2$ and integrating, it follows that
	\begin{gather*}\begin{aligned}
		& \qquad \| \sqrt{f} T \|_{L^2_f}^2	\\
		&= \int_M f | T|^2 e^{-f} dV_g	\\
		&= \int_M  R |T|^2  e^{-f} dV_g + \int_M | \nabla f|^2 |T|^2 e^{-f} dV_g	\\
		&\le \left( \sup_{M} |R| \right)   \| T\|^2_{L^2_f} + \| \nabla f \otimes T \|^2_{L^2_f}	\\
		&\le \left( \sup_{M} |R| \right) \| T \|_{L^2_f}^2 + \| T \|_{H^1_f}^2		&& (\text{Lemma \ref{multGradfEst}}).
	\end{aligned} \end{gather*}
\end{proof}

Combining Proposition \ref{Prop Potential Func Grows Quadratically} and Lemma \ref{multRootfEst} yields the following result:
\begin{cor} \label{momentEst}
	Let $(M, g, f)$ be a complete, connected shrinker with bounded curvature
	and let $p_0 \in M$.
	Then there exists a constant $C > 0$ such that
		$$\| \dist_g(\cdot, p_0) T\|_{L^2_f} \le C \| T \|_{H^1_f}$$
	for any smooth tensor $T$ on $M$.
\end{cor}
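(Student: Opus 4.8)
The plan is to reduce the claimed weighted $L^2_f$ bound to Lemma \ref{multRootfEst} by showing that $\dist_g(\cdot, p_0)$ is controlled pointwise by $\sqrt{f}$ plus a constant. First I would dispose of the case where $M$ is compact: then $\dist_g(\cdot, p_0) \le \operatorname{diam}_g(M) < \infty$, so $\| \dist_g(\cdot, p_0) T \|_{L^2_f} \le \operatorname{diam}_g(M) \| T \|_{L^2_f} \le \operatorname{diam}_g(M) \| T \|_{H^1_f}$ and we are done. So assume henceforth that $M$ is non-compact (and connected, complete, with bounded curvature), so that Proposition \ref{Prop Potential Func Grows Quadratically} applies.

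Next I would establish the pointwise inequality
\begin{equation*}
	\dist_g(x, p_0) \le 2 \sqrt{f(x)} + c_1 \qquad \text{for all } x \in M,
\end{equation*}
where $c_1$ is the constant from Proposition \ref{Prop Potential Func Grows Quadratically}. Indeed, recall $f \ge 0$ on $M$, since $f = R + |\nabla f|^2 \ge 0$ by Proposition \ref{Prop Nonneg Scalar Curv}. For points with $\dist_g(x,p_0) \ge c_1$, the lower bound $\tfrac{1}{4}(\dist_g(p_0,x) - c_1)^2 \le f(x)$ from Proposition \ref{Prop Potential Func Grows Quadratically} rearranges to $\dist_g(x,p_0) \le 2\sqrt{f(x)} + c_1$; for points with $\dist_g(x,p_0) < c_1$ the inequality holds trivially since $\sqrt{f} \ge 0$.

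Finally I would combine this with Lemma \ref{multRootfEst}. Multiplying the displayed inequality by $|T|_g$ and taking $L^2_f$ norms, the triangle inequality gives
\begin{equation*}
	\| \dist_g(\cdot, p_0) T \|_{L^2_f} \le 2 \| \sqrt{f}\, T \|_{L^2_f} + c_1 \| T \|_{L^2_f} \le 2 C' \| T \|_{H^1_f} + c_1 \| T \|_{H^1_f},
\end{equation*}
where $C'$ is the constant from Lemma \ref{multRootfEst} (which applies since $(M,g,f)$ is a complete shrinker with bounded curvature) and where $\| T \|_{L^2_f} \le \| T \|_{H^1_f}$ by definition of the $H^1_f$ norm. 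Taking $C = 2C' + c_1$ finishes the proof. There is essentially no serious obstacle here: the content is entirely the quadratic growth of $f$ from \cite{CaoZhou10} (recorded in Proposition \ref{Prop Potential Func Grows Quadratically}) together with Lemma \ref{multRootfEst}; the only points requiring a moment's care are the separate treatment of the compact case and checking that the pointwise bound $\dist_g(\cdot,p_0) \le 2\sqrt f + c_1$ holds on the region $\dist_g(\cdot,p_0) < c_1$ as well.
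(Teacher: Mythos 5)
Your proof is correct and follows exactly the route the paper intends: the corollary is obtained by combining the quadratic growth of $f$ (Proposition \ref{Prop Potential Func Grows Quadratically}) with Lemma \ref{multRootfEst}, which is precisely your pointwise bound $\dist_g(\cdot,p_0)\le 2\sqrt f + c_1$ followed by the triangle inequality. Your separate treatment of the compact case (where the quadratic-growth proposition does not apply) is a reasonable extra precaution but does not change the substance of the argument.
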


\begin{lem} \label{weightedRellich}
	Let $(M, g, f)$ be a complete, connected shrinker with bounded curvature
	and let $E$ be a tensor bundle on $M$.
	Then the embedding 
		$H^1_f(M, E) \to L^2_f(M, E)$
	is compact.
\end{lem}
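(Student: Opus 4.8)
The plan is to reduce the weighted embedding $H^1_f(M,E)\to L^2_f(M,E)$ to the classical (unweighted) Rellich--Kondrachov theorem on bounded domains, using the moment estimate to control the tails. Let $\{T_k\}$ be a bounded sequence in $H^1_f(M,E)$, say $\|T_k\|_{H^1_f}\le \Lambda$. Fix $p_0\in M$ and for $r>0$ let $B_r = B_g(p_0,r)$. Since $f\ge 0$ and $f$ grows quadratically (proposition \ref{Prop Potential Func Grows Quadratically}), the weight $e^{-f}$ is bounded above and below by positive constants on each $B_r$, and $g$ has bounded curvature, so on a fixed relatively compact smooth domain $B_r$ the norms $\|\cdot\|_{H^1_f(B_r)}$ and $\|\cdot\|_{H^1(B_r)}$ (the ordinary Sobolev norm with respect to $g$) are equivalent. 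Hence on each $B_r$ the restrictions $T_k|_{B_r}$ are bounded in $H^1(B_r,E)$, and by the classical Rellich--Kondrachov compactness theorem there is a subsequence converging in $L^2(B_r,E)$, equivalently in $L^2_f(B_r,E)$.

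Next I would run a diagonal argument over an exhaustion $B_1\subset B_2\subset\cdots$: extract nested subsequences converging in $L^2_f(B_r)$ for each $r\in\mathbb{N}$, and pass to the diagonal subsequence, still denoted $\{T_k\}$, which then converges in $L^2_f(B_r,E)$ for every $r$. It remains to upgrade this to convergence in $L^2_f(M,E)$, which requires a uniform smallness estimate on the tails. For this I invoke corollary \ref{momentEst}: for any tensor $T$,
$$\int_{M\setminus B_r} |T|^2 e^{-f}\,dV_g
\le \frac{1}{r^2}\int_{M\setminus B_r} \dist_g(\cdot,p_0)^2 |T|^2 e^{-f}\,dV_g
\le \frac{1}{r^2}\,\|\dist_g(\cdot,p_0)\,T\|_{L^2_f}^2
\le \frac{C^2}{r^2}\,\|T\|_{H^1_f}^2.$$
Applying this to $T = T_k - T_\ell$ gives
$$\|T_k - T_\ell\|_{L^2_f(M)}^2
\le \|T_k - T_\ell\|_{L^2_f(B_r)}^2 + \frac{C^2}{r^2}\|T_k-T_\ell\|_{H^1_f}^2
\le \|T_k - T_\ell\|_{L^2_f(B_r)}^2 + \frac{4C^2\Lambda^2}{r^2}.$$
Given $\varepsilon>0$, first choose $r$ so that $4C^2\Lambda^2/r^2 < \varepsilon/2$, then use the $L^2_f(B_r)$-convergence of the diagonal subsequence to make the first term $<\varepsilon/2$ for $k,\ell$ large. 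Thus $\{T_k\}$ is Cauchy in $L^2_f(M,E)$, hence convergent, proving compactness.

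The only mild subtlety — and the step I would be most careful about — is the equivalence of $\|\cdot\|_{H^1_f(B_r)}$ with the ordinary $\|\cdot\|_{H^1(B_r)}$ and the applicability of classical Rellich on $B_r$: the metric balls $B_r$ need not have smooth boundary, so I would instead work with a slightly enlarged relatively compact open set with smooth boundary containing $\overline{B_r}$ (or simply cover $\overline{B_r}$ by finitely many coordinate charts with the help of bounded curvature and completeness), where the standard extension and compactness theorems apply; the weight $e^{-f}$ is smooth, positive, and bounded above and below there, so it changes the $L^2$ and $H^1$ norms only by fixed constants. Everything else is the routine diagonal-plus-tail-estimate argument, with corollary \ref{momentEst} doing the essential work of converting the $H^1_f$ bound into tail decay.
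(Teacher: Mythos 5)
Your proof is correct and follows essentially the same route as the paper: classical Rellich on a compact exhaustion plus a diagonal argument, with corollary \ref{momentEst} providing the uniform tail estimate that upgrades local convergence to convergence in $L^2_f(M,E)$. The only cosmetic difference is that you conclude via the Cauchy criterion instead of first identifying a limit $T_\infty \in H^1_f$ as the paper does, which if anything slightly streamlines the argument.
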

\begin{proof}
	Let $\{ T_i \}_{i \in \mathbb{N}}$ be a sequence of tensors of the same type and assume the sequence is uniformly bounded in $H^1_f(M)$.
	After passing to a subsequence obtained through a diagonalization argument,
	the usual Rellich theorem implies that there exists $T_\infty \in H^1_{f, loc}(M)$  
	such that
		$$ T_i \xrightarrow[i\to \infty]{L^2_{f,loc} } T_\infty$$
	By weak compactness of balls in $H^1_f(M)$, in fact $T_\infty \in H^1_f(M)$.
	
	Fix $p_0 \in M$ and let $\ol{r}(p) \doteqdot \dist_g(p , p_0)$. 
	For any $R> 0$,
	\begin{gather*} \begin{aligned}
		&\qquad \| T_i - T_\infty \|_{L^2_f(M)}^2 \\
		&= \int_M | T_i - T_\infty|^2 e^{-f} dV_g	\\
		&= \int_{B_g(p_0, R)} | T_i - T_\infty|^2 e^{-f} dV 
		+ \int_{M \setminus B_g(p_0, R)} | T_i - T_\infty|^2 e^{-f} dV	\\
		&\le \| T_i - T_\infty \|_{L^2_f( B_g(p_0, R)) }^2
		+\frac{1}{R^2} \int_{M \setminus B_g(p_0, R)} \ol{r}^2 | T_i - T_\infty|^2 e^{-f} dV	\\
		&\le \| T_i - T_\infty \|_{L^2_f ( B_g(p_0, R) )}^2 
		+ \frac{1}{R^2} \| T_i - T_\infty \|_{H^1_f(M) }^2 		&& (\text{Corollary \ref{momentEst}})	\\
		&\le \| T_i - T_\infty \|_{L^2_f ( B_g(p_0, R) )}^2 
		+ \frac{C}{R^2} 
	\end{aligned} \end{gather*}
	where $C = \sup_i \| T_i \|_{H^1_f}^2 + \| T_\infty \|_{H^1_f}^2 < \infty$.
	Since the last line may be made arbitrarily small by taking $R$ and $i$ large,
	it follows that in fact 
		$$T_i  \xrightarrow[i\to \infty]{L^2_f } T_\infty.$$
\end{proof}

%%%%%%%%%%%%%%%%%%%%%%%%%%%%%%%%%%%%%%%%%%%%%%%%%%%%%%%%%%
\subsection{The $\Delta_f + 2 Rm$ Operator}
%%%%%%%%%%%%%%%%%%%%%%%%%%%%%%%%%%%%%%%%%%%%%%%%%%%%%%%%%%

Let $(M, g, f)$ be a complete shrinker.
In this subsection, we analyze the weighted Lichnerowicz Laplacian $\Delta_f + 2 Rm$ associated to the soliton that acts on symmetric 2-tensors $h \in \Gamma( M, Sym^2 T^*M )$ in coordinates by
	$$\Delta_f h_{ij} + 2 Rm[h]_{ij} = \nabla^a \nabla_a h_{ij} - \nabla^a f \nabla_a h_{ij} + 2 R\indices{^a_i^b_j} h_{ab}.$$

Define
	$$D(\Delta_f ) \doteqdot \left \{ h \in H^1_f(M, Sym^2 T^*M) \ | \ \Delta_f h \in L^2_f(M, Sym^2 T^*M)  \right\}$$
where here $\Delta_f h$ is taken in the sense of distributions.
By \cite[theorem 4.6]{Grigoryan09}, $\Delta_f$ extends to a self-adjoint operator $\Delta_f : D( \Delta_f) \to L^2_f(M, Sym^2 T^*M)$ that agrees with the usual $\Delta_f$ operator on smooth, compactly supported 2-tensors.
If the shrinker $(M, g, f)$ has bounded curvature, then $\Delta_f + 2Rm$ also defines a self-adjoint operator
	$$\Delta_f + 2 Rm : D( \Delta_f ) \to L^2_f ( M, Sym^2 T^*M).$$

\begin{lem} \label{Lem Op Bdd Above}
	Let $(M, g, f)$ be a complete shrinker with bounded curvature.
	Then $\Delta_f + 2 Rm$ is bounded above on $L^2_f(M, Sym^2 T^*M)$. In other words,  there exists a constant $C$ such that
		$$(h , \Delta_f h + 2 Rm[h])_{L^2_f} \le C \| h \|_{L^2_f}		\qquad 
		\text{ for all } h \in D(\Delta_f ).$$
\end{lem}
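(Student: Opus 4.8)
The plan is to show that the quadratic form $(h, \Delta_f h + 2Rm[h])_{L^2_f}$ is bounded above by $C\|h\|_{L^2_f}^2$ by integrating by parts to trade the second-order term for a (good-signed) gradient term, and then absorbing the curvature term using the $H^1_f$-estimates already established, in particular lemma \ref{multGradfEst}. First I would reduce to the case that $h$ is smooth and compactly supported: such $h$ are dense in $D(\Delta_f)$ in the graph norm (this is the content of the self-adjointness statement via \cite{Grigoryan09}, or one can note that the desired inequality for the quadratic form extends by continuity once the graph-norm density is invoked), so it suffices to prove the estimate for $h \in \Gamma_c(M, Sym^2 T^*M)$.

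For such $h$, integration by parts with the weight $e^{-f}$ gives
\begin{equation*}
	(h, \Delta_f h)_{L^2_f} = \int_M \langle h, \div_f(\nabla h)\rangle\, e^{-f}\, dV_g = - \int_M |\nabla h|^2\, e^{-f}\, dV_g = - \|\nabla h\|_{L^2_f}^2 \le 0.
\end{equation*}
So the $\Delta_f$ term contributes nothing positive and may simply be discarded. It remains to bound the curvature term. Since the shrinker has bounded curvature, $\sup_M |Rm|_g \doteqdot \Lambda < \infty$, and pointwise $|2Rm[h]| \le C_n \Lambda |h|$ for a dimensional constant $C_n$, so by Cauchy-Schwarz
\begin{equation*}
	(h, 2Rm[h])_{L^2_f} \le \int_M |h|\,|2Rm[h]|\, e^{-f}\, dV_g \le C_n \Lambda \int_M |h|^2\, e^{-f}\, dV_g = C_n \Lambda \|h\|_{L^2_f}^2.
\end{equation*}
Combining, $(h, \Delta_f h + 2Rm[h])_{L^2_f} \le C_n \Lambda \|h\|_{L^2_f}^2$, which is the claim with $C = C_n \Lambda$.

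The only genuine subtlety — and the step I expect to need the most care — is the passage from compactly supported $h$ to general $h \in D(\Delta_f)$, since $M$ is noncompact; one must justify that the integration by parts has no boundary term at infinity and that both $(h,\Delta_f h + 2Rm[h])_{L^2_f}$ and $\|\nabla h\|_{L^2_f}^2$ are continuous along an approximating sequence. This follows from the self-adjoint extension theory already cited (for $h \in D(\Delta_f)$ one has $\nabla h \in L^2_f$ by definition of $H^1_f$, and $-\|\nabla h\|_{L^2_f}^2 = (h, \Delta_f h)_{L^2_f}$ is the standard Friedrichs/quadratic-form identity), so in fact the bounded-curvature hypothesis is exactly what is needed to make $2Rm[h]$ a bounded multiplication operator and close the argument. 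Note that lemma \ref{multGradfEst}, though not strictly required for this crude bound, is what would be needed if one wanted a sharper constant accounting for the $\nabla^2 f$ terms hidden in the geometry; here the bounded-curvature assumption lets us avoid that refinement entirely.
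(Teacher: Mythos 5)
Your proof is correct and follows essentially the same route as the paper: integrate by parts to obtain $-\|\nabla h\|_{L^2_f}^2 \le 0$, discard it, and bound the curvature term by $\sup_M|Rm|$ times $\|h\|_{L^2_f}^2$ (your attention to the density/quadratic-form justification for general $h \in D(\Delta_f)$ is a harmless elaboration of what the paper leaves implicit via the self-adjoint extension).
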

\begin{proof}
	Observe that
	\begin{gather*} \begin{aligned}
		& \left( h, \Delta_f h + 2 Rm[h] \right)_{L^2_f}	\\
		={}&  - \int_M | \nabla h|^2 e^{-f} dV_g + \int_M 2 \langle Rm[h], h \rangle e^{-f} dV_g	\\
		\le{}& 2 \left( \sup_M | Rm | \right) \int_M  | h|^2 e^{-f} dV_g	.
	\end{aligned}	\end{gather*}
	Since $Rm$ is uniformly bounded, taking 
		$C = 2 \left( \sup_M | Rm | \right) < \infty$
	completes the proof.
\end{proof}

\begin{lem} \label{Lem Compact Resolvent}
	Let $(M, g, f)$ be a complete, non-compact shrinker with bounded curvature.
	Then the resolvent operator
		$$(-\lambda + \Delta_f + 2Rm )^{-1} : L^2_f(M, Sym^2 T^*M) \to L^2_f(M, Sym^2 T^*M)$$
	is a compact operator for all $\lambda \in \rho( \Delta_f + 2 Rm)$, the resolvent of $\Delta_f + 2 Rm$.
\end{lem}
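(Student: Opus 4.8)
The plan is to deduce compactness of the resolvent from the compact Rellich-type embedding $H^1_f(M, Sym^2 T^*M) \hookrightarrow L^2_f(M, Sym^2 T^*M)$ established in lemma \ref{weightedRellich}. The key observation is that the resolvent $(-\lambda + \Delta_f + 2Rm)^{-1}$, viewed as an operator from $L^2_f$ into $L^2_f$, actually factors through $H^1_f$: since $\Delta_f + 2Rm$ is self-adjoint and bounded above (lemma \ref{Lem Op Bdd Above}) with domain $D(\Delta_f)$, one can write the resolvent as a composition of a bounded operator $L^2_f \to H^1_f$ with the compact inclusion $H^1_f \to L^2_f$. So it suffices to show that for $\lambda$ in the resolvent set, the graph norm bound on $D(\Delta_f)$ controls the $H^1_f$ norm.

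First I would fix $\lambda \in \rho(\Delta_f + 2Rm)$ and take any $u \in L^2_f$, setting $h = (-\lambda + \Delta_f + 2Rm)^{-1}u \in D(\Delta_f)$, so that $\Delta_f h + 2Rm[h] - \lambda h = u$. Pairing against $h$ in $L^2_f$ and integrating by parts (using the integration-by-parts formula for $\div_f$ recorded after the definition of the weighted operators, applied to the smooth compactly supported approximations dense in $D(\Delta_f)$), I get
\begin{equation*}
-\int_M |\nabla h|^2 e^{-f}\, dV_g + 2\int_M \langle Rm[h], h\rangle e^{-f}\, dV_g - \lambda \|h\|_{L^2_f}^2 = (u, h)_{L^2_f}.
\end{equation*}
Rearranging and using $|\langle Rm[h],h\rangle| \le (\sup_M |Rm|)|h|^2$ together with the resolvent bound $\|h\|_{L^2_f} \le C_\lambda \|u\|_{L^2_f}$ and Cauchy--Schwarz on $(u,h)_{L^2_f}$, I obtain $\|\nabla h\|_{L^2_f}^2 \le C(\lambda, \sup|Rm|)\|u\|_{L^2_f}^2$. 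Hence $\|h\|_{H^1_f} \le C' \|u\|_{L^2_f}$, i.e. $(-\lambda + \Delta_f + 2Rm)^{-1} : L^2_f \to H^1_f$ is bounded.

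Then the resolvent into $L^2_f$ is the composition of this bounded map with the compact embedding $H^1_f(M, Sym^2 T^*M) \to L^2_f(M, Sym^2 T^*M)$ from lemma \ref{weightedRellich}, and a bounded operator followed by a compact operator is compact; this gives the claim for all $\lambda \in \rho(\Delta_f + 2Rm)$. (One can of course first establish it for a single convenient $\lambda$ — e.g. $\lambda$ very negative, which lies in the resolvent set by lemma \ref{Lem Op Bdd Above} — and then invoke the resolvent identity $(-\lambda + A)^{-1} - (-\mu + A)^{-1} = (\lambda - \mu)(-\lambda + A)^{-1}(-\mu + A)^{-1}$ to propagate compactness to all of $\rho(A)$, though the direct argument above already covers every $\lambda$.) The main technical point to be careful about is the integration by parts on $D(\Delta_f)$: a priori elements of $D(\Delta_f)$ are only in $H^1_f$ with distributional $\Delta_f h \in L^2_f$, so one should justify the Green's identity by approximation, which is where completeness of $(M,g)$ and the bounded-curvature hypothesis (ensuring $\Delta_f$ is essentially self-adjoint on compactly supported tensors, cf. the discussion preceding lemma \ref{Lem Op Bdd Above}) are used.
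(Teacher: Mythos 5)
Your proposal is correct and is essentially the paper's own argument: the paper also pairs the equation $\Delta_f h + 2Rm[h] - \lambda h = u$ against $h$, uses the boundedness of the resolvent on $L^2_f$ to conclude an $H^1_f$ bound, and then invokes the compact embedding $H^1_f \to L^2_f$ of lemma \ref{weightedRellich} (phrased sequentially rather than as an operator factorization, which is an immaterial difference).
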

\begin{proof}
	Let $k_j$ be a bounded sequence in $L^2_f$ and define $h_j$ such that
		$$(-\lambda + \Delta_f + 2Rm)^{-1} k_j = h_j		\qquad \text{or} \qquad k_j = \Delta_f h_j + 2 Rm[h_j] - \lambda h_j.$$
	By definition of the resolvent,  $\lambda \in \rho( \Delta_f + 2 Rm)$ implies
		$(-\lambda + \Delta_f + 2Rm)^{-1} : L^2_f \to L^2_f$
	is bounded and thus
	$h_j$ is a bounded sequence in $L^2_f$.
	By integrating both sides of
		$$k_j = \Delta_f h_j + 2 Rm[h_j] - \lambda h_j$$	
	 against $-h_j$ and using the uniform bounds on $\| h_j \|_{L^2_f}$ and $\| k_j \|_{L^2_f}$, it follows that $h_j$ is a bounded sequence in $H^1_f$.
	By Lemma \ref{weightedRellich}, $h_j$ then has a convergent subsequence in $L^2_f$.
\end{proof}

Lemmas \ref{Lem Op Bdd Above} and \ref{Lem Compact Resolvent} show that $\Delta_f + 2 Rm : D( \Delta_f) \to L^2_f$ is a self-adjoint operator that's bounded above and has compact resolvent.
\cite[theorem XIII.64]{ReedSimonIV} therefore yields the following description of its spectrum:

\begin{thm} \label{Thm Op Spectrum}
	Assume $(M, g, f)$ is a complete, connected shrinker with bounded curvature.
	Then there exists an orthonormal basis $\{ h_j \}_{j = 1}^\infty$ of $L^2_f( M, Sym^2 T^*M)$ 
	such that, for all $j \in \mathbb{N}$, $h_j \in D(\Delta_f)$ is an eigenmode of $\Delta_f + 2Rm$ with eigenvalue $\lambda_j \in \R$,
	and the eigenvalues $\{ \lambda_j \}_{j= 1}^\infty$ satisfy $\lambda_1 \ge \lambda_2 \ge \dots$.
	
	Moreover, the eigenvalues $\{ \lambda_j \}_{j = 1}^\infty$ are each of finite-multiplicity, are given by the min-max principle, and tend to $-\infty$ as $j \to \infty$.
	The spectrum $\sigma ( \Delta_f + 2 Rm )$ of $\Delta_f + 2 Rm : D( \Delta_f) \to L^2_f$ equals $\{\lambda_j \}_{j = 1}^\infty$.
\end{thm}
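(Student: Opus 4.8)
The plan is to read the theorem off the standard spectral theorem for semibounded self-adjoint operators with compact resolvent, namely \cite[theorem XIII.64]{ReedSimonIV} together with its companion min-max characterization. Essentially all of the analytic content has already been assembled in the preceding discussion: the operator $\Delta_f + 2Rm : D(\Delta_f) \to L^2_f(M, Sym^2 T^*M)$ is self-adjoint (\cite[theorem 4.6]{Grigoryan09} makes $\Delta_f$ self-adjoint, and $h \mapsto 2Rm[h]$ is a bounded symmetric operator since $\sup_M |Rm| < \infty$, so adding it preserves both self-adjointness and the domain, as recorded above); it is bounded above by Lemma \ref{Lem Op Bdd Above}; and it has compact resolvent by Lemma \ref{Lem Compact Resolvent}.

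First I would observe that the resolvent set $\rho(\Delta_f + 2Rm)$ is nonempty, so that Lemma \ref{Lem Compact Resolvent} is not vacuous. Self-adjointness forces $\sigma(\Delta_f + 2Rm) \subset \R$, and combining this with Lemma \ref{Lem Op Bdd Above} (call the upper bound $C$) gives $\sigma(\Delta_f + 2Rm) \subset (-\infty, C]$, hence $(C, \infty) \subset \rho(\Delta_f + 2Rm)$, where the resolvent is therefore defined and compact.

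Next I would apply the spectral theorem to $A \doteqdot -(\Delta_f + 2Rm)$, which on the domain $D(\Delta_f)$ is self-adjoint, bounded below by $-C$, and has compact resolvent. Then \cite[theorem XIII.64]{ReedSimonIV} furnishes an orthonormal basis $\{h_j\}_{j=1}^\infty$ of $L^2_f(M, Sym^2 T^*M)$ consisting of eigenfunctions of $A$, with eigenvalues $\mu_1 \le \mu_2 \le \cdots$ each of finite multiplicity, $\mu_j \to +\infty$, $\sigma(A) = \{\mu_j\}_{j=1}^\infty$, and with the $\mu_j$ given by the min-max principle; since eigenfunctions of a self-adjoint operator lie in its domain, $h_j \in D(\Delta_f)$. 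Setting $\lambda_j \doteqdot -\mu_j$ and keeping the same eigenfunctions translates everything back to $\Delta_f + 2Rm$: the $h_j$ are eigenmodes with eigenvalues $\lambda_1 \ge \lambda_2 \ge \cdots$ of finite multiplicity, $\lambda_j \to -\infty$, $\sigma(\Delta_f + 2Rm) = \{\lambda_j\}_{j=1}^\infty$, and $\lambda_j = \sup_{\dim V = j}\ \inf_{0 \ne h \in V}\ \frac{(h, \Delta_f h + 2Rm[h])_{L^2_f}}{\|h\|_{L^2_f}^2}$ over subspaces $V \subset D(\Delta_f)$, which is the min-max formula for the $j$-th largest eigenvalue. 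That there are infinitely many eigenvalues — so that they genuinely diverge rather than terminate — follows since $L^2_f(M, Sym^2 T^*M)$ is infinite-dimensional while the compact resolvent precludes any finite accumulation point.

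I do not expect a serious obstacle here; the statement is a repackaging of Lemmas \ref{Lem Op Bdd Above} and \ref{Lem Compact Resolvent}. The only points demanding care are bookkeeping: verifying that the resolvent set is nonempty so that "compact resolvent" is meaningful, and correctly dualizing the eigenvalue ordering and the min-max convention when passing from the bounded-below operator in \cite{ReedSimonIV} to the bounded-above operator $\Delta_f + 2Rm$ of interest here.
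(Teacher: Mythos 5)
Your proposal is correct and follows essentially the same route as the paper: the paper likewise derives the theorem by combining Lemma \ref{Lem Op Bdd Above} and Lemma \ref{Lem Compact Resolvent} with \cite[theorem XIII.64]{ReedSimonIV} applied to the self-adjoint operator $\Delta_f + 2Rm$. Your extra bookkeeping (nonemptiness of the resolvent set, negating to pass between the bounded-above and bounded-below conventions) is sound and simply makes explicit what the paper leaves implicit.
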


Note also that classical elliptic regularity theory implies the eigenmodes $h_j$ in Theorem \ref{Thm Op Spectrum} are smooth as well.

%%%%%%%%%%%%%%%%%%%%%%%%%%%%%%%%%%%%%%%%%%%%%%%%%%%%%%%%%%
\subsubsection{Geometric Eigenmodes}
%%%%%%%%%%%%%%%%%%%%%%%%%%%%%%%%%%%%%%%%%%%%%%%%%%%%%%%%%%

We can specify a few of the eigenmodes and eigenvalues from Theorem \ref{Thm Op Spectrum} explicitly.

\begin{prop} \label{Prop Rc +1 Eigmode}
	Let $(M, g, f)$ be a shrinker.
	Then
		$$\Delta_f Rc  + 2 Rm [Rc] = Rc		\qquad \text{ on } M.$$
	In other words, $Rc$ is an eigenmode of the weighted Lichnerowicz Laplacian with eigenvalue $+1$ so long as $Rc \not \equiv 0$.
\end{prop}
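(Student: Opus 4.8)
The statement is a Bochner-type identity for the Ricci tensor of a shrinker, namely $\Delta_f Rc + 2Rm[Rc] = Rc$ holding pointwise on $M$; the clause ``so long as $Rc\not\equiv 0$'' merely excludes the Ricci-flat case, in which $Rc$ is the zero section and the assertion is vacuous. The plan is to prove the identity using the self-similar Ricci flow attached to $(M,g,f)$. By (the proof of) Proposition \ref{Prop RF from Shrinker} --- which is a local computation, so completeness is not needed for the pointwise conclusion below --- the metrics $g(t) = (1-t)\phi_t^* g$ solve $\partial_t g = -2Rc$ near $t=0$, with $g(0) = g$ and, by Definition \ref{Defn phi}, $\partial_t\phi|_{t=0} = \nabla f$.

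First I would compute $\frac{d}{dt}\big|_{t=0}Rc[g(t)]$ in two ways. Since the Ricci tensor is scale-invariant and natural under diffeomorphisms, $Rc[g(t)] = \phi_t^*\!\big(Rc[g]\big)$, whence $\frac{d}{dt}\big|_{t=0}Rc[g(t)] = \cl{L}_{\nabla f}Rc$. On the other hand, Hamilton's variation formula for the Ricci tensor under Ricci flow gives, at $t=0$, $\partial_t Rc_{ij} = \Delta Rc_{ij} + 2Rm[Rc]_{ij} - 2Rc_{ik}Rc^{k}{}_{j}$ (with the sign convention for $Rm[\cdot]$ fixed as in the paper). Equating the two expressions yields
\[
\cl{L}_{\nabla f}Rc_{ij} = \Delta Rc_{ij} + 2Rm[Rc]_{ij} - 2Rc_{ik}Rc^{k}{}_{j}.
\]

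Next I would expand the left-hand side. For any symmetric $(0,2)$-tensor, $(\cl{L}_{\nabla f}Rc)_{ij} = \nabla_{\nabla f}Rc_{ij} + Rc_{kj}\nabla_i\nabla^k f + Rc_{ik}\nabla_j\nabla^k f$, and substituting $\nabla_i\nabla^k f = \tfrac12\delta_i^k - Rc_i{}^k$ from the soliton equation \eqref{Soliton Eqns} turns the last two terms into $Rc_{ij} - 2Rc_{ik}Rc^{k}{}_{j}$. Hence $\cl{L}_{\nabla f}Rc_{ij} = \nabla_{\nabla f}Rc_{ij} + Rc_{ij} - 2Rc_{ik}Rc^{k}{}_{j}$; the quadratic curvature terms cancel against those in the displayed identity, leaving $\nabla_{\nabla f}Rc_{ij} + Rc_{ij} = \Delta Rc_{ij} + 2Rm[Rc]_{ij}$, i.e.\ $Rc_{ij} = \Delta_f Rc_{ij} + 2Rm[Rc]_{ij}$ since $\Delta_f = \Delta - \nabla_{\nabla f}$.

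The only genuinely delicate point is bookkeeping of sign and index conventions: one must use a single consistent convention for $Rm[\cdot]$ and for Hamilton's formula, which is most safely pinned down by verifying both on the round shrinking sphere $S^n$ of radius $\sqrt{2(n-1)}$, where $Rc = \tfrac12 g$ and every term is explicit. If one prefers to avoid the self-similar flow entirely, the same identity can instead be obtained by a purely local computation: differentiate the soliton equation, commute covariant derivatives to extract a Riemann curvature term, and contract, using the twice-contracted second Bianchi identity together with the standard shrinker identities $\nabla_i R = 2Rc_{ij}\nabla^j f$ and $R + \Delta f = \tfrac{n}{2}$ (Propositions \ref{RcInKerDiv_f} and \ref{Prop Lapl f Eqn}) to reorganize the first-order terms; this route involves more calculation but assumes nothing beyond \eqref{Soliton Eqns}.
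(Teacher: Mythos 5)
Your proof is correct, but it takes a genuinely different route from the paper. The paper proves the identity by the ``purely local computation'' you mention only in passing at the end: it applies $\Delta$ to $R_{ij}=\tfrac12 g_{ij}-\nabla_i\nabla_j f$, commutes covariant derivatives, and reorganizes using the contracted Bianchi identity together with $\nabla_i R = 2R_{ij}\nabla^j f$ (Proposition \ref{RcInKerDiv_f}), arriving directly at $\Delta R_{ij} = -2R_{kilj}R^{kl}+\nabla_{\nabla f}R_{ij}+R_{ij}$. You instead exploit the self-similar solution $g(t)=(1-t)\phi_t^*g$: computing $\partial_t|_{t=0}Rc[g(t)]$ once by scale-invariance and naturality of Ricci (giving $\cl{L}_{\nabla f}Rc$) and once by Hamilton's evolution equation $\partial_t Rc = \Delta Rc + 2Rm[Rc]-2Rc\cdot Rc$, then expanding the Lie derivative via the soliton equation so the quadratic $Rc\cdot Rc$ terms cancel. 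All the steps check out, including your observation that only the local, short-time existence of the flow of $\nabla f$ and the pointwise validity of $\partial_t g=-2Rc$ are needed, so completeness is not an issue; the Lie-derivative expansion and the cancellation are exactly right. What your route buys is conceptual clarity --- it exhibits $Rc$ as the eigenmode generated by the self-similar time translation, parallel to how $\nabla^2 f$ arises in Corollary \ref{Cor nabla^2 f 0 Eigmode} --- at the cost of importing Hamilton's variation formula for the Ricci tensor (whose derivation is itself a computation of roughly the same length as the paper's proof), whereas the paper's argument is self-contained from \eqref{Soliton Eqns} and the Bianchi identities. Your caution about fixing a single sign convention for $Rm[\cdot]$, checked against the round shrinking sphere, is well placed but is bookkeeping rather than a gap.
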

\begin{proof}
	We first note that Proposition \ref{RcInKerDiv_f} and the contracted Bianchi identity imply
	\begin{equation} \label{equation2.1}
		 \nabla_i R = 2 R_{ij} \nabla^j f
	\end{equation} 
	Applying $\Delta$ to the soliton equation \eqref{Soliton Eqns} 
		$R_{ij} = \frac{1}{2} g_{ij} - \nabla_i \nabla_j f$
	and commuting derivatives, we obtain
	\begin{equation*} \begin{aligned}
		\Delta R_{ij} =& - \nabla^k \nabla_k \nabla_i \nabla_j f \\
		=& - \nabla^k \left( \nabla_i \nabla_k \nabla_j f + R_{kij} {}^l \nabla_l f \right)\\
		=& -\nabla^k \nabla_i \nabla_j \nabla_k f - R_{kijl} \nabla^k \nabla^l f - (\nabla^k R_{kijl} ) \nabla^l f\\\
		=& -\nabla^k \nabla_i \nabla_j \nabla_k f - R_{kijl} \nabla^k \nabla^l f + ( \nabla_l R_{ji} - \nabla_j R_{li}) \nabla^l f.\\
	\end{aligned} \end{equation*}
	Also, using the contracted Bianchi identity and (\ref{equation2.1}),
	\begin{equation*} \begin{aligned}
		\nabla^k \nabla_i \nabla_j \nabla_k f =& \nabla_i \nabla^k \nabla_k \nabla_j f + R_{kij} {}^l \nabla^k \nabla_l f + R_{il} \nabla_j \nabla^l f \\
		=& -\nabla_i \nabla^k R_{kj} + R_{kij} {}^l \nabla^k \nabla_l f + R_{il} \nabla_j \nabla^l f \\
		=& - \frac{1}{2} \nabla_i \nabla_j R + R_{kij} {}^l \nabla^k \nabla_l f + R_{il} \nabla_j \nabla^l f  \\
		=& - \nabla_j ( R_{ik} \nabla^k f) + R_{kij} {}^l \nabla^k \nabla_l f + R_{il} \nabla_j \nabla^l f  \\
		=& - \nabla_j R_{ik} \nabla^k f - R_{ik} \nabla_j \nabla^k f  + R_{kij} {}^l \nabla^k \nabla_l f + R_{il} \nabla_j \nabla^l f  \\
		=&- \nabla_j R_{ik} \nabla^k f  + R_{kijl} \nabla^k \nabla^l f.\\
	\end{aligned} \end{equation*}
	Therefore,
	\begin{equation*} \begin{aligned}
		\Delta R_{ij} =&  \nabla_j R_{ik} \nabla^k f - 2R_{kijl} \nabla^k \nabla^l f + ( \nabla_l R_{ji} - \nabla_j R_{li}) \nabla^l f\\
		=& - 2R_{kijl} ( -R^{kl} + \frac{1}{2} g^{kl} ) + \nabla_l R_{ji} \nabla^l f\\
		=& -2R_{kilj} R^{kl} + \nabla^l f \nabla_l R_{ij} + R_{ij}.
	\end{aligned} \end{equation*}
	This completes the proof.
\end{proof}

Proposition \ref{Prop Rc +1 Eigmode} and the soliton equations \eqref{Soliton Eqns} also imply the following corollary:

\begin{cor} \label{Cor nabla^2 f 0 Eigmode}
	If $(M, g, f)$ is a shrinker then 
		$$\Delta_f \nabla^2 f + 2 Rm [ \nabla^2 f ] = 0		\qquad \text{ on } M.$$
	In other words, $\nabla^2 f$ is an eigenmode of the weighted Lichnerowicz Laplacian with eigenvalue $0$ so long as $\nabla^2 f \not \equiv 0$.
\end{cor}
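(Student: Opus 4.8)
The plan is to exploit the linearity of the operator $\Delta_f + 2 Rm$ together with the first soliton equation in \eqref{Soliton Eqns}, which lets us write $\nabla^2 f = \frac{1}{2} g - Rc$. Then
$$\Delta_f \nabla^2 f + 2 Rm[ \nabla^2 f ] = \frac{1}{2}\left( \Delta_f g + 2 Rm[g] \right) - \left( \Delta_f Rc + 2 Rm[Rc] \right),$$
so it suffices to evaluate the operator separately on the two building blocks $g$ and $Rc$.

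First I would handle the $g$ term. Since $g$ is parallel, $\nabla g \equiv 0$ and hence $\Delta_f g = 0$, while the curvature term contracts as $Rm[g]_{ij} = R\indices{^a_i^b_j} g_{ab} = R_{ij}$; therefore $\Delta_f g + 2 Rm[g] = 2 Rc$. For the $Rc$ term, Proposition \ref{Prop Rc +1 Eigmode} gives directly $\Delta_f Rc + 2 Rm[Rc] = Rc$.

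Substituting these two identities into the displayed formula yields $\Delta_f \nabla^2 f + 2 Rm[\nabla^2 f] = \frac{1}{2}(2 Rc) - Rc = 0$, which is the claim; in particular, when $\nabla^2 f \not\equiv 0$ this exhibits $\nabla^2 f$ as a genuine eigenmode with eigenvalue $0$. I do not anticipate any real obstacle: the entire argument is two lines of linear algebra built on top of Proposition \ref{Prop Rc +1 Eigmode}, and the only point requiring care is the index bookkeeping showing that the contraction $Rm[g]$ produces exactly $+Rc$ under the sign convention fixed in the definition of the operator $\Delta_f + 2 Rm$.
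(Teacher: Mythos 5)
Your proposal is correct and is essentially the paper's own argument: the paper derives the corollary directly from the soliton identity $\nabla^2 f = \tfrac{1}{2}g - Rc$, linearity of $\Delta_f + 2Rm$, the fact that $\Delta_f g + 2Rm[g] = 2Rc$, and Proposition \ref{Prop Rc +1 Eigmode}. Your sign check on $Rm[g] = Rc$ matches the contraction convention used in the paper (e.g.\ in the proof of Corollary \ref{Cor h(t) Evol Eqn}), so nothing further is needed.
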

%\begin{proof}
%	\begin{gather*} \begin{aligned}
%		& \quad \Delta_f \nabla^2 f + 2 Rm * \nabla^2 f	\\
%		&= - \Delta_f Rc -2 Rm * Rc + \frac{1}{2} \Delta_f g + Rm * g	&& (\text{soliton equation})\\
%		&= -Rc + 0 + Rc	\\
%		&= 0
%	\end{aligned} \end{gather*} 
%\end{proof}

\begin{remark}
	For a shrinker $(M, g, f)$, the metric $g$ is \emph{not} an eigenmode of $\Delta_f + 2 Rm $ in general.
\end{remark}

%%%%%%%%%%%%%%%%%%%%%%%%%%%%%%%%%%%%%%%%%%%%%%%%%%%%%%%%
\subsubsection{The Eigenmode Growth Condition} \label{Subsubsect Eigmode Growth Assumption}
%%%%%%%%%%%%%%%%%%%%%%%%%%%%%%%%%%%%%%%%%%%%%%%%%%%%%%%%

Recall that Theorem \ref{Thm Op Spectrum} says that the spectrum of the weighted Lichnerowicz Laplacian on a complete, connected shrinker with bounded curvature consists entirely of finite multiplicity eigenvalues.
We now introduce the eigenmode growth condition which amounts to a pointwise bound at infinity for the eigenmodes $h_j$.

\begin{definition}[Eigenmode Growth Condition] \label{Defn Eigmode Growth Assumption}
	A complete, connected shrinker $(M^n, g, f)$ with bounded curvature is said to satisfy the \emph{eigenmode growth condition} if, for all $\lambda \in \mathbb{R}$ and $\delta > 0$,
	there exists $C = C(n, M, g, f, \lambda, \delta)$ such that 
	if $h \in D( \Delta_f )$
	has $\| h \|_{L^2_f}  = 1$ 
	and satisfies
	\begin{equation} \label{Eigval Eqn}
		\Delta_{f} h + 2 Rm[h] = \lambda h	,
	\end{equation}
	then
	\begin{equation} 
		| h |_{g}(x) \le C f(x)^{ \max \{ -\lambda, 0 \} + \delta }		\qquad \text{ for all } x \in M.
	\end{equation}
\end{definition}

We make a few remarks about this condition before continuing:

\begin{itemize}

\item Informally, one can think of the eigenmode growth condition as saying,
	$$``\Delta_f h + 2 Rm[h] = \lambda h \implies | h| \le Cf^{- \lambda}"$$
	but where we instead allow for slightly larger growth $f^{- \lambda + \delta}$ and add a caveat in the case $\lambda > 0$.

\item By Theorem \ref{Thm Op Spectrum}, the eigenmode growth condition holds so long as it holds for all $\lambda = \lambda_j \in \sigma ( \Delta_f + 2 Rm)$ and all $h = h_j$.

\item When $(M, g, f)$ is also not Ricci-flat, the eigenmode growth condition automatically holds \emph{locally}.
Indeed, Proposition \ref{Prop Nonneg Scalar Curv} and \eqref{Soliton Eqns} imply $0 < R + | \nabla f|^2 = f$.
Since $f$ grows quadratically at infinity (Proposition \ref{Prop Potential Func Grows Quadratically}), $\inf_M f > 0$. 
Thus, for any compact $\Omega \subset M$ and continuous $h$, there exists a constant $C = C( M, g, f, \lambda, \delta, h, \Omega)$ such that
	$$| h |_{g} (x) \le C f(x)^{\max\{ -\lambda, 0 \} + \delta } 	\qquad \text{ for all } x \in \Omega.$$	

\item Shrinking generalized cylinder solitons $M^n = \R^{n-k} \times \mathbb{S}^k$ do \emph{not} satisfy the eigenmode growth condition.
Indeed, the weighted Lichnerowicz Laplacian $\Delta_f + 2 Rm$ has eigenmodes of the form $h = ( \mathbf z \cdot \mathbf e) g_{\mathbb{S}^k}$ where $\mathbf z$ denotes the usual coordinates on $\R^{n-k}$ and $\mathbf e$ is any fixed vector in $\mathbb{R}^{n-k}$.
These eigenmodes have eigenvalue $\lambda = + \frac{1}{2}$ but grow linearly at infinity, and thus
	$$| h | \sim f^{ \frac{1}{2} } \not \le C f^\delta = C f^{ \max \{ - \frac{1}{2} , 0 \} + \delta }
	\qquad \text{ for $0 < \delta < \frac{1}{2}$}.$$

\item In the next subsection, we shall see that $(M^n, g, f)$ satisfies the eigenmode growth condition whenever it is asymptotically conical (Proposition \ref{Prop Asymp Conical Implies Eigmode Growth Assumption}).
This fact follows from recent results in \cite{CM21} which uses a weighted version of the frequency functions introduced in \cite{Almgren79}.

\item It is worth mentioning some additional related work.
When $(M, g, f)$ is asymptotically conical (see Definition \ref{Defn Asymply Conical} below), \cite[Theorem 1.2]{Bernstein17} showed that a stronger version of the eigenmode growth condition holds for scalar-valued $h = u : M \to \R$.
Also, \cite{DS21} have a related result for tensors on expanding solitons when \eqref{Eigval Eqn} contains an additional Lie derivative term.

\end{itemize}

%%%%%%%%%%%%%%%%%%%%%%%%%%%%%%%%%%%%%%%%%%%%%%%%%%%%%%%%
\subsection{Asymptotically Conical Shrinking Solitons}
%%%%%%%%%%%%%%%%%%%%%%%%%%%%%%%%%%%%%%%%%%%%%%%%%%%%%%%%

In this subsection, we introduce the notion of an asymptotically conical manifold and state some basic results contained in \cite{KotschwarWang15}.

\begin{definition} \label{Defn Cone}
	Let $(\Sigma, g_\Sigma)$ be a smooth, closed Riemannian manifold (not necessarily connected).
	The (regular) \emph{cone} on $\Sigma$, denoted $(\mathcal{C}( \Sigma), g_{\mathcal{C}})$, is the space $\mathcal{C}(\Sigma) = ( 0, \infty) \times \Sigma$ with the metric
		$$g_{\mathcal{C}} = dr^2 + r^2 g_\Sigma$$
	where $r$ is the standard coordinate on $(0, \infty)$.

	We write $\mathcal{C}_R (\Sigma) = (R, \infty) \times \Sigma$ for any $R \ge 0$.
	When the \emph{link} $\Sigma$ of the cone is clear from context, we shall simply write $\mathcal{C}_R$ or $\mathcal{C} = \mathcal{C}_0$.
	
	For any $\lambda > 0$, the \emph{dilation map} is defined by
	\begin{gather*}
		\rho_\lambda : \mathcal{C}_0 \to \mathcal{C}_0		\\
		\rho_\lambda( r, \sigma ) = (\lambda r,  \sigma).
	\end{gather*}
\end{definition}

\begin{definition} \label{Defn End}
	An \emph{end} of a smooth Riemannian manifold $(M, g)$ is an unbounded, connected component of $M \setminus \Omega$ for some compact $\Omega \subset M$.
\end{definition}

\begin{definition} \label{Defn Asymply Conical}
	Let $(M,g)$ be a smooth Riemannian manifold and let $V$ be an end of $M$.
	We say that $V$ is \emph{asymptotic to the cone} $(\mathcal{C}(\Sigma), g_\mathcal{C} )$ if 
	for some $R \ge 0$
	there is a diffeomorphism $\Psi: \mathcal{C}_R \to V$
	such that
		$$\lim_{\lambda \to +\infty} \lambda^{-2} \rho_\lambda^* \Psi^* g = g_\mathcal{C}		\qquad
		 \text{ in } C^2_{loc} \left( \mathcal{C}_0, g_{\mathcal{C}} \right).$$
	We say $V$ is an \emph{asymptotically conical end} if it's asymptotic to some regular cone.
	
	We say $(M, g)$ is \emph{asymptotically conical} if $M$ is noncompact and has finitely many ends 
	all of which are asymptotically conical ends.
\end{definition}

\begin{remark}
	By taking the disjoint union of the links and diffeomorphisms for each end $V$ in Definition \ref{Defn Asymply Conical}, one can show that
	 $(M, g)$ is asymptotically conical if and only if there exists some compact $\Omega \subset M$ and some smooth, closed $( \Sigma, g_\Sigma)$ (not necessarily connected) such that 
	for some $R \ge 0$ there is a diffeomorphism $\Psi : \cl{C}_R \to M \setminus \Omega$
	such that 
		$$\lim_{\lambda \to + \infty} \lambda^{-2} \rho_\lambda^* \Psi^* g = g_{\cl{C}} 
		\qquad \text{ in } C^2_{loc}( \cl{C}_0 , g_{\cl{C}} ).$$
	In this case, $\Sigma$ is connected if and only if $M$ has exactly one end.
\end{remark}

To conclude this subsection, we record some basic facts regarding asymptotically conical shrinkers.
The first result here is from \cite{KotschwarWang15}.

\begin{lem}[Lemma A.1 in \cite{KotschwarWang15}] \label{Lem A.1 KW15}
	Let $V$ be an end of a smooth Riemannian manifold $(M,g)$.
	Let $(\mathcal{C}(\Sigma), g_{\mathcal{C}} )$ be a (regular) cone and let
	$\Psi_{R_0} : \mathcal{C}_{R_0} \to V$ be a diffeomorphism for some $R_0 \ge 0$.
	For all $k \ge 0$, define the proposition
		$$(AC_k) \qquad \lim_{\lambda \to +\infty} \lambda^{-2} \rho_\lambda^* \Psi^* g = g_{\mathcal{C}} \quad
		 \text{ in } C^k_{loc} ( \mathcal{C}_0, g_\mathcal{C} ).$$
	Then
	\begin{enumerate}
		\item \label{Lem A.1 KW15 1}
		$(AC_k)$ holds if and only if
			$$\lim_{R \to +\infty} R^l \| \nabla_{g_\mathcal{C}}^{(l)} ( \Psi^* g - g_\mathcal{C} ) \|_{C^0 ( \mathcal{C}_R, g_\mathcal{C} )} = 0$$ 
			for every $l = 0, 1, \dots, k$.
	
		\item \label{Lem A.1 KW15 2}
		If $(AC_0)$ holds, then 
		the metrics $\Psi^* g$ and $g_\cone$ are uniformly equivalent on $\overline{\cone_R}$ for any $R > R_0$,
		and, for any $\epsilon> 0$ there exists $R > R_0$ such that 
			$$(1 - \epsilon) |r-R| \le
			  \ol{r}_R( r,\sigma) 
			  \le (1+\epsilon) |r - R|$$
		for all $(r, \sigma) \in \cone_R$.
		Here and below $\ol{r}_R( r,\sigma)  = \dist_{\Psi^*g} ( (r, \sigma) , \partial \cone_R )$.
		
		\item \label{Lem A.1 KW15 3}
		If $(AC_2)$ holds, then
		for any $R > R_0$
		there exists a constant $C$ depending on $R$ and $g_\Sigma$ such that
			$$\left( 1 + \ol{r}^2_R (p) \right) | Rm [{\Psi^*g} ] |_{\Psi^*g}(p) \le C$$
		for all $p \in \cone_R$.
	\end{enumerate}
\end{lem}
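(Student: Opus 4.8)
The statement to prove is Lemma A.1 of \cite{KotschwarWang15}, which characterizes $C^k$-asymptotic convergence to a cone in terms of weighted $C^0$ decay of $\Psi^*g - g_{\mathcal{C}}$ and extracts metric-equivalence and curvature-decay consequences. The key structural fact I would exploit is the behavior of the dilation map $\rho_\lambda$ under the cone metric: since $g_{\mathcal{C}} = dr^2 + r^2 g_\Sigma$ is \emph{exactly} scale-invariant in the sense that $\rho_\lambda^* g_{\mathcal{C}} = \lambda^2 g_{\mathcal{C}}$, the rescaled pullback $\lambda^{-2}\rho_\lambda^* \Psi^* g - g_{\mathcal{C}} = \lambda^{-2}\rho_\lambda^*(\Psi^*g - g_{\mathcal{C}})$ is just the difference tensor dilated and rescaled. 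Because covariant derivatives (with respect to $g_{\mathcal{C}}$) and norms transform homogeneously under $\rho_\lambda$, evaluating $\lambda^{-2}\rho_\lambda^*(\Psi^*g - g_{\mathcal{C}})$ and its $g_{\mathcal{C}}$-covariant derivatives on a fixed compact set $K \subset \mathcal{C}_0$ is equivalent to evaluating $\nabla_{g_{\mathcal{C}}}^{(l)}(\Psi^*g - g_{\mathcal{C}})$ with the correct power of $r \sim \lambda$ in front, on the dilated set $\rho_\lambda(K) \subset \mathcal{C}_{c\lambda}$. Unwinding this homogeneity bookkeeping gives part (1): $C^k_{loc}$ convergence of the rescalings is equivalent to $R^l\|\nabla_{g_{\mathcal{C}}}^{(l)}(\Psi^*g - g_{\mathcal{C}})\|_{C^0(\mathcal{C}_R)} \to 0$ for $0 \le l \le k$.

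For part (2), assume $(AC_0)$. The weighted decay $\|\Psi^*g - g_{\mathcal{C}}\|_{C^0(\mathcal{C}_R)} \to 0$ with $l=0$ immediately gives, for $R$ large, $(1-\epsilon)g_{\mathcal{C}} \le \Psi^*g \le (1+\epsilon)g_{\mathcal{C}}$ as bilinear forms on $\overline{\mathcal{C}_R}$, which is uniform equivalence. For the distance comparison I would compare $\overline{r}_R = \dist_{\Psi^*g}(\cdot, \partial\mathcal{C}_R)$ to $\dist_{g_{\mathcal{C}}}(\cdot, \partial\mathcal{C}_R) = |r - R|$: the metric equivalence gives length-of-curve comparisons $(1-\epsilon)^{1/2} \le$ (ratio of $\Psi^*g$-length to $g_{\mathcal{C}}$-length of any curve in $\mathcal{C}_R$) $\le (1+\epsilon)^{1/2}$, and taking infima over curves joining a point to $\partial\mathcal{C}_R$ yields the stated two-sided bound (after adjusting $\epsilon$). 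One mild point of care: curves realizing or nearly realizing the $\Psi^*g$-distance could a priori wander outside $\mathcal{C}_R$, so I would note that the radial ray already gives the upper bound $\overline{r}_R \le (1+\epsilon)|r-R|$ and that for the lower bound any competitor curve, upon its last exit from $\mathcal{C}_R$, can be truncated without increasing length.

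For part (3), assume $(AC_2)$. The curvature tensor is a universal algebraic expression in the metric, its inverse, and its first two derivatives; since the regular cone $g_{\mathcal{C}}$ satisfies $|Rm[g_{\mathcal{C}}]|_{g_{\mathcal{C}}} \le C(g_\Sigma)\,\overline{r}^{-2}$ (curvature of a cone decays quadratically, with the constant governed by the sectional curvatures of the link), I would expand $Rm[\Psi^*g]$ around $Rm[g_{\mathcal{C}}]$ and control the difference using the $C^0$, $C^1$, $C^2$ weighted decay of $\Psi^*g - g_{\mathcal{C}}$ from part (1) together with the metric equivalence from part (2). Schematically, each term in $Rm[\Psi^*g] - Rm[g_{\mathcal{C}}]$ carries at least one factor of $\Psi^*g - g_{\mathcal{C}}$ or its derivatives, hence at least one factor of the form (something tending to $0$) $\times\, \overline{r}^{-l}$ for the appropriate $l \le 2$, giving $|Rm[\Psi^*g]|_{\Psi^*g} \le C\overline{r}^{-2}$, i.e. $(1 + \overline{r}_R^2)|Rm[\Psi^*g]|_{\Psi^*g} \le C$ on $\mathcal{C}_R$.

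The main obstacle is purely organizational rather than conceptual: part (1) requires carefully tracking how $g_{\mathcal{C}}$-covariant derivatives and $g_{\mathcal{C}}$-norms scale under the non-isometric (only conformal, with constant conformal factor) dilation $\rho_\lambda$, and converting "$C^k_{loc}$ convergence as $\lambda \to \infty$" into "decay as $R \to \infty$" without losing or gaining powers of $r$. Once the homogeneity accounting in (1) is done correctly, parts (2) and (3) are routine consequences of metric equivalence and the algebraic structure of curvature.
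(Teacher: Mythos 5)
The paper offers no proof of this lemma at all: it is imported verbatim as Lemma A.1 of \cite{KotschwarWang15}, and your sketch follows essentially the same route as the proof in that reference — the homothety identity $\rho_\lambda^* g_{\mathcal C} = \lambda^2 g_{\mathcal C}$ and the attendant scaling bookkeeping for part (1), length-of-curve comparison under uniform metric equivalence for part (2), and the expansion of $Rm[\Psi^* g]$ around $Rm[g_{\mathcal C}]$ with quadratic cone-curvature decay for part (3). Your outline is correct modulo the homogeneity accounting you acknowledge and the small observation that uniform equivalence on all of $\overline{\cone_R}$ for an arbitrary $R > R_0$ (and boundedness of $Rm[\Psi^*g]$ there) uses compactness of the intermediate annulus $[R,R']\times\Sigma$ in addition to the decay at infinity.
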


When $(M, g, f)$ is also a shrinker, the quadratic curvature decay in Lemma \ref{Lem A.1 KW15} \eqref{Lem A.1 KW15 3} improves to decay of all derivatives of curvature.

\begin{lem} \label{Lem Soliton Curv Decay}
	Let $(M^n, g, f)$ be an asymptotically conical, complete, connected shrinker.
	Then for all $m \in \mathbb{N}$, there exists a constant $C_m = C_m ( n , M, g, f, m)$ such that
		$$\sup_M f^{1 + \frac{m}{2}} | \nabla^m Rm |_g \le C_m.$$
\end{lem}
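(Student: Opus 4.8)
The plan is to combine the quadratic curvature decay from Lemma \ref{Lem A.1 KW15}\eqref{Lem A.1 KW15 3} with a parabolic-rescaling / Shi-type interior estimate argument that upgrades this to decay of all covariant derivatives of curvature. First I would record that, since $(M,g,f)$ is asymptotically conical, it has bounded curvature, so Lemma \ref{Lem A.1 KW15}\eqref{Lem A.1 KW15 3} applies on an end and, combined with Proposition \ref{Prop Potential Func Grows Quadratically} (which says $f(x) \sim \frac14 \dist_g(p_0,x)^2$ at infinity), gives $\sup_M f \cdot |Rm|_g \le C_0$ on all of $M$ (the compact core contributes only a bounded correction, and $\inf_M f > 0$). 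This establishes the $m=0$ case.

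For $m \ge 1$ I would exploit the self-similar Ricci flow $g(t) = (1-t)\phi_t^*\bar g$ from Proposition \ref{Prop RF from Shrinker}. Fix a point $x$ far out on an end with $r := \dist_g(p_0,x)$ large, so $f(x) \approx r^2/4$. On the parabolic cylinder $B_g(x, c\, r) \times [-c\, r^2, 0]$ one has, by the $m=0$ bound together with distance-distortion estimates for the flow (the curvature is $O(r^{-2})$ on this ball, so distances and the metric are comparable to those of $g=g(0)$ throughout the time interval), a uniform bound $|Rm|_{g(t)} \le C_0 r^{-2}$. Rescaling parabolically by $r^{-2}$, i.e. setting $\tilde g(s) = r^{-2} g(r^2 s)$ on a unit-size parabolic cylinder, the curvature is $O(1)$, so Shi's local derivative estimates yield $|\nabla^m Rm|_{\tilde g} \le C(m)$ on a smaller cylinder; unscaling gives $|\nabla^m Rm|_g(x) \le C(m)\, r^{-2-m} \le C_m f(x)^{-1-m/2}$. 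Since this holds for all $x$ outside a compact set, and the compact set again contributes only a bounded term (using $\inf_M f > 0$), we obtain $\sup_M f^{1+m/2}|\nabla^m Rm|_g \le C_m$.

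An alternative, avoiding the flow, is a purely elliptic bootstrap: differentiate the soliton identity $Rc = \frac12 g - \nabla^2 f$ and the Bianchi-type relations (as in Proposition \ref{Prop Rc +1 Eigmode}) to get that $Rm$, or equivalently $\nabla^2 f$, satisfies an elliptic system of the schematic form $\Delta_f Rm = Rm * Rm$, then run scaled Schauder estimates on balls $B_g(x, c\,r)$ where the rescaled equation has bounded coefficients. Either way, the one genuine point requiring care — and the step I expect to be the main obstacle — is justifying that the rescaled geometry on these balls (or parabolic cylinders) is uniformly controlled: one needs the quadratic curvature decay of Lemma \ref{Lem A.1 KW15}\eqref{Lem A.1 KW15 3} to propagate into uniform $C^0$ control of the rescaled metric (hence injectivity-radius lower bounds, needed for Shi's estimates) and, in the parabolic version, into uniform metric equivalence across the time interval $[-c\,r^2,0]$. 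This is where the asymptotically conical hypothesis is essential: it is what makes the rescaled pictures converge (to a piece of the cone), so the constants $C_m$ coming out of Shi/Schauder can be taken independent of the basepoint $x$.
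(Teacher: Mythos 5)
Your proposal is essentially correct, but it takes a genuinely different route from the paper: the paper's proof is two lines — it cites \cite[Proposition 2.1(3)]{KotschwarWang15}, which already gives $\sup_M (1+\dist_g(p_0,\cdot)^{2+m})|\nabla^m Rm|_g \le C_m$, and then converts distance to $f$ via Proposition \ref{Prop Potential Func Grows Quadratically}. What you propose is, in effect, a self-contained reproof of that cited result via the self-similar flow $g(t)=(1-t)\phi_t^*\bar g$, parabolic rescaling, and Shi's local derivative estimates; this buys independence from the reference at the cost of redoing work the paper outsources. Two remarks on your execution. First, the "main obstacle" is not injectivity radius or convergence of the rescaled pictures to the cone: Shi's \emph{local} derivative estimates need only a curvature bound on a parabolic ball, no injectivity-radius lower bound, so asymptotic conicality enters solely through the $m=0$ quadratic decay and the quadratic growth of $f$. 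Second, your justification of the curvature bound on the backward cylinder by "distance-distortion" is not quite the right mechanism: since the flow is self-similar, $|Rm[g(t)]|_{g(t)}(y)=\tfrac{1}{1-t}|Rm[g]|_g(\phi_t(y))$, and for $t\le 0$ the points $\phi_t(y)$ drift \emph{inward}, toward larger curvature. What saves you is the drift estimate $f(\phi_t(y))\ge f(y)\,e^{\tau}=f(y)/(1-t)$ for $t\le 0$ (which follows from $\partial_\tau(\phi_\tau^*f)=|\nabla f|^2\circ\phi_\tau\le f\circ\phi_\tau$, i.e.\ from $R\ge 0$ — this is the curvature-free half of Lemma \ref{Lem Flow Est 1}); combined with the scale-invariant bound $f\,|Rm|\le C$ it gives $|Rm[g(t)]|_{g(t)}(y)\le C/f(y)\lesssim r^{-2}$ on $B_g(x,cr)\times[-cr^2,0]$, after which the rescaled Shi estimate yields $|\nabla^m Rm|_g(x)\lesssim r^{-2-m}\lesssim f(x)^{-1-m/2}$ as you state. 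With that repair (and the compact core handled by $\inf_M f>0$ as you note), your argument closes.
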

\begin{proof}
	Choose an element $p_0$ in the compact set $\{ p \in M : f(p) \le 1 +  \inf_M f \}$.
	\cite[Proposition 2.1(3)]{KotschwarWang15} implies that for all $m \in \mathbb{N}$, there exists $C_m$ such that
		$$\sup_{p \in M} \left( 1 + \dist_g( p, p_0 )^{2 + m} \right) | \nabla^m Rm |_g \le C_m.$$	
	Since $f$ grows quadratically by Proposition \ref{Prop Potential Func Grows Quadratically}, the statement of the lemma follows.
\end{proof}

\begin{lem} \label{Lem Nonflat and f>0}
	Let $(M^n, g, f)$ be an asymptotically conical, complete, connected shrinker that is not flat.
	Then $(M^n, g)$ is not Ricci-flat, the scalar curvature $R > 0$ is positive, and $\inf_M f > 0$.
\end{lem}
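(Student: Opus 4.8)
The plan is to argue by contradiction using the rigidity statement already established in proposition \ref{Prop Nonneg Scalar Curv}, together with the asymptotically conical structure. First I would observe that the three assertions are essentially equivalent reformulations: by proposition \ref{Prop Nonneg Scalar Curv}, for a complete connected shrinker one has $R \ge 0$ everywhere, and $R$ vanishes somewhere if and only if $(M,g)$ is Ricci-flat; since $M$ is connected this shows $R > 0$ everywhere unless $(M,g)$ is Ricci-flat. So it suffices to show that an asymptotically conical, complete, connected, \emph{non-flat} shrinker cannot be Ricci-flat, and then to deduce $\inf_M f > 0$ from $R > 0$.

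For the Ricci-flat step, suppose toward a contradiction that $Rc \equiv 0$. Then the soliton equation \eqref{Soliton Eqns} reduces to $\nabla^2 f = \frac{1}{2} g$. The standard consequence (see e.g. \cite{ChowEtAl07}) is that $f$ is (up to an additive constant) $\frac14 \dist_g(\cdot, p)^2$ for the unique critical point $p$ of $f$, and that $(M,g)$ is isometric to Euclidean space $\R^n$: indeed $\nabla^2 f = \frac12 g$ forces the gradient flow of $f$ to dilate the metric, and completeness plus this rigidity identifies $(M,g,f)$ with the Gaussian shrinker on $\R^n$. In particular $(M,g)$ is flat, contradicting the hypothesis. (Alternatively, one can invoke that an asymptotically conical shrinker that is Ricci-flat has its link forced to be a round sphere by the cone being Ricci-flat, hence is $\R^n$ itself; either route gives flatness.) Hence $(M,g)$ is not Ricci-flat, and by the previous paragraph $R > 0$ on all of $M$.

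Finally, to see $\inf_M f > 0$: from the second soliton equation in \eqref{Soliton Eqns}, $f = R + |\nabla f|^2 \ge R > 0$ pointwise, so $f > 0$ everywhere. To upgrade this to a positive infimum, note that $M$ is noncompact (being asymptotically conical), so by proposition \ref{Prop Potential Func Grows Quadratically} the potential $f$ grows quadratically at infinity; in particular $f(x) \to \infty$ as $x$ leaves every compact set, so the sublevel set $\{ f \le 1 + \inf_M f\}$ is compact, $f$ attains its infimum there, and that minimum value is $> 0$ by the pointwise bound. Thus $\inf_M f > 0$.

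The main obstacle is the Ricci-flat step: one must rule out the possibility that a non-flat asymptotically conical shrinker happens to be Ricci-flat. The cleanest justification is the classical rigidity of $\nabla^2 f = \frac12 g$ on a complete manifold forcing $(M,g) \cong (\R^n, g_{eucl})$; I would cite this (e.g. via the splitting/rigidity results in \cite{ChowEtAl07} or \cite{CaoZhou10}) rather than reprove it, since the remaining steps are immediate consequences of results already recorded in this section.
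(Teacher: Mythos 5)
Your proof is correct, but for the crucial step (not flat $\Rightarrow$ not Ricci-flat) you take a genuinely different route from the paper. You reduce to the classical rigidity statement: if $Rc\equiv 0$ then the soliton equation gives $\ol{\nabla}^2 f=\tfrac12 \ol{g}$ on a complete manifold, and by Tashiro-type rigidity (the Gaussian soliton rigidity; this is the correct attribution, rather than \cite{ChowEtAl07} or \cite{CaoZhou10}, though it is standard and frequently quoted in the soliton literature) $(M,\ol{g})$ is isometric to flat $\R^n$, a contradiction. This argument is more elementary and does not even use the asymptotically conical hypothesis at that step. The paper instead runs a Ricci-flow uniqueness argument: if $Rc\equiv 0$, then both the static flow $g$ and the self-similar flow $(1-t)\phi_t^*\ol{g}$ are complete bounded-curvature Ricci flows out of $\ol{g}$ (boundedness of curvature coming from lemma \ref{Lem A.1 KW15}, i.e.\ from asymptotic conicality), so by Chen--Zhu uniqueness they coincide, and the scaling identity $\sup_M|Rm|=\frac{1}{1-t}\sup_M|Rm|$ forces flatness. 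So the paper trades the classical rigidity theorem for Chen--Zhu uniqueness plus the AC curvature decay; your version buys independence from the conical structure at the cost of citing Tashiro's theorem. The remaining steps ($R>0$ via proposition \ref{Prop Nonneg Scalar Curv}, and $\inf_M f>0$ from $f=R+|\ol{\nabla} f|^2>0$ together with quadratic growth of $f$) are exactly the paper's argument, and your extra detail about compact sublevel sets is fine.

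One caution: your parenthetical alternative, that a Ricci-flat asymptotically conical shrinker must have round-sphere link ``because the cone is Ricci-flat,'' is not sound as stated: Ricci-flat cones can have non-round Einstein links, so Ricci-flatness of the asymptotic cone alone does not identify $M$ with $\R^n$. Since this is only an aside and your main route through the $\ol{\nabla}^2 f=\tfrac12\ol{g}$ rigidity stands on its own, the proof is unaffected, but you should drop or rephrase that remark.
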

\begin{proof}
	Suppose for the sake of contradiction that $(M, g)$ is Ricci-flat.
	Consider the family of diffeomorphisms $\phi( \cdot, t) = \phi_t : M \to M$ from Definition \ref{Defn phi}.
	Then $t \mapsto ( 1 - t) \phi_t^* g$ and $t \mapsto g$ are both Ricci flow solutions with initial condition $g$.
	Note that Lemma \ref{Lem A.1 KW15} implies asymptotically conical manifolds $(M, g)$ have bounded curvature and Euclidean volume growth.
	By the uniqueness of Ricci flow solutions that are complete with bounded curvature \cite{ChenZhu06}, it follows that $(1-t) \phi_t^* g = g$ for all $t \in ( - \infty, 1)$.
	Therefore, for all $ t \in ( - \infty , 1)$,
		$$\sup_M | Rm[g] |_{g} = \sup_M | Rm[ ( 1 - t) \phi_t^* g] |_{( 1 - t) \phi_t^* g} = \frac{1}{1 - t} \sup_M | Rm [g] |_g,$$
	which forces $(M, g)$ to be flat.
	
	Now, since $(M, g)$ is complete, connected, and not Ricci-flat, Proposition \ref{Prop Nonneg Scalar Curv} implies the scalar curvature $R > 0$ is positive.
	\eqref{Soliton Eqns} then yields
		$$0 < R + | \nabla f|^2 = f		\qquad \text{ on } M.$$
	Since $f$ also grows quadratically by Proposition \ref{Prop Potential Func Grows Quadratically}, 
	it follows that $\inf_M f > 0$.
\end{proof}

\begin{prop}[Weighted Volume Decay] \label{Prop Weighted Vol Decay}
	Let $(M^n, g, f)$ be an asymptotically conical, complete, connected shrinker.
	There exists a constant $C = C(n, M, g,f)$ such that 
	if $f_0 \gg 1$ is sufficiently large depending on $n, M, g, f$,
	then
		$$\int_{ \{ x \in M : f(x) \ge f_0 \} } e^{-f} dV_{\ol{g}} \le C f_0^{ \frac{n}{2} - 1} e^{-\frac{1}{8} f_0}.$$
	%Note: can expect actually \lesssim f_0^{-1 + n/2} e^{- f_0} but I won't need such a fine estimate.
\end{prop}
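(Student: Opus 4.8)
The plan is to estimate the weighted volume integral by using the quadratic growth of $f$ (proposition \ref{Prop Potential Func Grows Quadratically}), together with the fact (from \cite{CaoZhou10}, as recorded in the discussion preceding proposition \ref{Prop Moment Ests}) that a complete, connected shrinker has at most Euclidean volume growth. Concretely, I would first fix a basepoint $p_0 \in M$ and write $\ol{r}(x) = \dist_{\ol g}(p_0, x)$. By proposition \ref{Prop Potential Func Grows Quadratically}, there is a constant $c_1 > 0$ with $f(x) \ge \tfrac14 (\ol r(x) - c_1)^2$, so the sublevel-complement $\{ f \ge f_0 \}$ is contained in $\{ \ol r \ge 2\sqrt{f_0} - c_1 \}$; once $f_0$ is large this is contained in $\{\ol r \ge \sqrt{f_0}\}$ (or, more conveniently, in $\{\ol r \ge (2-\epsilon)\sqrt{f_0}\}$ for a small fixed $\epsilon$). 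On that region the \emph{upper} bound $f(x) \le \tfrac14 (\ol r(x) + c_2)^2$ is what lets us reinstate control of the weight from below on thin shells, but for the exponential decay it is the lower bound on $f$ that matters.

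Next I would slice $\{ f \ge f_0 \}$ into dyadic (or unit-width) annuli $A_k = \{ x : 2^k\sqrt{f_0} \le \ol r(x) < 2^{k+1}\sqrt{f_0}\}$, or more simply $A_j = \{ s_j \le \ol r < s_{j+1}\}$ with $s_j = \sqrt{f_0} + j$. On $A_j$ we have $f \ge \tfrac14(s_j - c_1)^2 \ge \tfrac14 s_j^2 - \tfrac{c_1}{2} s_j$, which for $s_j \gtrsim \sqrt{f_0}$ large is at least, say, $\tfrac{1}{4}(1-\epsilon) s_j^2$; hence $e^{-f} \le e^{-\frac{1}{4}(1-\epsilon)s_j^2}$ on $A_j$. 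Meanwhile the Euclidean volume growth gives $\mathrm{Vol}_{\ol g}(B(p_0, s)) \le C_V s^n$ for all $s$, so $\mathrm{Vol}(A_j) \le C_V s_{j+1}^n$. Summing,
\begin{equation*}
	\int_{\{f \ge f_0\}} e^{-f}\, dV_{\ol g} \;\le\; \sum_{j \ge 0} C_V\, s_{j+1}^n\, e^{-\frac{1}{4}(1-\epsilon) s_j^2}.
\end{equation*}
The leading term of this sum ($j = 0$, i.e.\ $s_0 = \sqrt{f_0}$) is of order $f_0^{n/2} e^{-\frac{1}{4}(1-\epsilon) f_0}$, and the geometric-type decay of the remaining terms (since $s_j^2$ grows quadratically in $j$ while $s_j^n$ only polynomially) makes the tail a bounded multiple of the first term. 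Choosing $\epsilon$ small enough that $\tfrac14(1-\epsilon) > \tfrac18$ — in fact with plenty of room — and absorbing the extra power of $f_0$ (we can trade $f_0^{n/2} e^{-\frac{1}{4}(1-\epsilon)f_0} \le C f_0^{n/2 - 1} e^{-\frac{1}{8} f_0}$ for $f_0$ large since $\tfrac14(1-\epsilon) - \tfrac18 > 0$) yields the claimed bound $C f_0^{n/2 - 1} e^{-\frac18 f_0}$. The crude exponent $\tfrac18$ (versus the sharp $\tfrac14$) is exactly the slack that makes these polynomial-factor manipulations painless.

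The main obstacle — really the only nonroutine point — is making sure the two geometric inputs are cited in the correct form: that proposition \ref{Prop Potential Func Grows Quadratically} gives genuinely quadratic \emph{lower} growth of $f$ (so that $\{f \ge f_0\}$ is an exterior region on which $e^{-f}$ is genuinely small), and that the at-most-Euclidean volume growth of complete connected shrinkers from \cite[Theorem 1.2]{CaoZhou10} applies here (it does, since asymptotically conical shrinkers are complete and connected by hypothesis). After that, the estimate is just a comparison of a convergent series against its first term, with the losses in the constants chosen generously enough that no delicate bookkeeping is needed; the dependence of $C$ on $n, M, \ol g, f$ enters only through $c_1$ and the volume-growth constant $C_V$, both of which depend only on $n$ and the geometry of the unit ball $B_{\ol g}(p_0, 1)$.
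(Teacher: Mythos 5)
Your argument is correct, but it takes a different route from the paper. The paper exploits the asymptotically conical hypothesis directly: by lemma \ref{Lem A.1 KW15}~(2) the metric on the end is uniformly equivalent to the cone metric and distances are comparable to the radial coordinate, so after pulling back to $\cone(\Sigma)$ the integral becomes the one-dimensional radial integral $C\int_{\sqrt{f_0}}^{\infty} r^{n-1} e^{-r^2/8}\,dr$, which is then estimated by $Cf_0^{\frac{n}{2}-1}e^{-f_0/8}$. You instead avoid the cone entirely, using only the two Cao--Zhou inputs (quadratic growth of $f$, proposition \ref{Prop Potential Func Grows Quadratically}, and at-most-Euclidean volume growth) together with an annular decomposition and comparison of the resulting series with its first term. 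Your version is more general --- it proves the estimate for any complete, connected, noncompact shrinker with no conical hypothesis --- at the cost of a slightly longer bookkeeping argument, while the paper's version is shorter because the cone volume form $r^{n-1}\,dr\,dV_\Sigma$ makes the radial reduction immediate; the generous exponent $\tfrac18$ absorbs the slack in both approaches. One small slip to fix: the inclusion $\{f\ge f_0\}\subset\{\ol r\gtrsim \sqrt{f_0}\}$ follows from the \emph{upper} bound $f\le\tfrac14(\ol r+c_2)^2$ (small $\ol r$ forces small $f$), not from the lower bound $f\ge\tfrac14(\ol r-c_1)^2$ as you state; the lower bound is what you then correctly use on each shell to control $e^{-f}$. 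Since proposition \ref{Prop Potential Func Grows Quadratically} supplies both inequalities, this is cosmetic and does not affect the proof.
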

\begin{proof}
	If $f_0 \gg 1$ is sufficiently large depending on $n, M, g, f$, 
	then Proposition \ref{Prop Potential Func Grows Quadratically} and Lemma \ref{Lem A.1 KW15} (2) imply that, after pulling back to the cone, the integral can be estimated by
	\begin{align*}
		\int_{\{ x \in M : f(x) \ge f_0 \}} e^{ - f} dV_g
		&\le C \int_{\frac{1}{2} \sqrt{4 f_0}}^\infty r^{n-1} e^{- \frac{1}{2} \frac{ r^2 }{4} } dr	\\
		&\le C \sqrt{ f_0}^{n - 2} e^{- \frac{1}{8} \sqrt{f_0}^2 } && ( f_0 \gg 1)	\\
		&= C f_0^{\frac{n}{2} - 1} e^{ - \frac{1}{8} f_0 }.
	\end{align*}
	where the constants $C$ above may change from line to line but depend only on $n, M, g, f$.
\end{proof}

\begin{remark}
	One ought to be able to improve Proposition \ref{Prop Weighted Vol Decay} to 
		$$\int_{ \{ x \in M : f(x) \ge f_0 \} } e^{-f} dV_{\ol{g}} \le C f_0^{ \frac{n}{2} - 1} e^{- f_0}.$$
	However, Proposition \ref{Prop Weighted Vol Decay} as written will suffice for our applications.
\end{remark}

\begin{lem} \label{Lem Flow Est 1}
	Let $(M^n, g, f)$ be an asymptotically conical, complete, connected shrinker.
	Recall that the one-parameter family $\{ \phi( \cdot, t) = \phi_t : M \to M \}_{t \in ( -\infty, 1)}$ of diffeomorphisms from Definition \ref{Defn phi} is defined by
	\begin{equation} \tag{\ref{phi Evol Eqn}}
		\partial_t \phi_t = \frac{1}{1 - t} \nabla f \circ \phi_t	,	\qquad \phi_0 = Id_M.
	\end{equation}
	There exists a constant $C = C(n, M, g, f)$ such that
		$$\max \left\{ 0, \phi_t^* f - \frac{ C}{ \phi_t^* f} \right\}\le ( 1 - t) \partial_t \left( \phi_t^* f \right) \le \phi_t^* f
		\qquad \text{ for all } t \in ( - \infty, 1).$$
\end{lem}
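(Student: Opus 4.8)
The plan is to reduce the whole statement to two elementary pointwise facts about the shrinker, via the identity $(1-t)\,\partial_t(\phi_t^* f) = \phi_t^*(|\nabla f|^2)$. First I would differentiate $\phi_t^* f = f\circ \phi_t$ along the flow \eqref{phi Evol Eqn}: by the chain rule and $\partial_t \phi_t = \frac{1}{1-t}\nabla f\circ\phi_t$,
$$\partial_t\big(\phi_t^* f\big)(x) = df_{\phi_t(x)}\!\left(\tfrac{1}{1-t}\,\nabla f\big(\phi_t(x)\big)\right) = \tfrac{1}{1-t}\,|\nabla f|^2\big(\phi_t(x)\big),$$
so, since $1-t>0$ for $t<1$,
$$(1-t)\,\partial_t\big(\phi_t^* f\big) = \phi_t^*\big(|\nabla f|^2\big) = \phi_t^* f - \phi_t^* R,$$
where the last step uses the second soliton equation in \eqref{Soliton Eqns} and the fact that pullback commutes with products of functions.

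From here the two ends follow at the pointwise level. Proposition \ref{Prop Nonneg Scalar Curv} gives $R\ge 0$ on $M$, hence $0 \le |\nabla f|^2 = f - R \le f$; pulling back this chain of inequalities yields $0 \le \phi_t^* f - \phi_t^* R \le \phi_t^* f$, which is precisely the upper bound $(1-t)\partial_t(\phi_t^* f)\le \phi_t^* f$ together with the nonnegativity incorporated into the left-hand side of the lower bound.

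For the quantitative part of the lower bound I need $\phi_t^* R \le C/\phi_t^* f$ wherever $\phi_t^* f>0$, equivalently the pointwise bound $fR \le C$ on $M$. This comes from Lemma \ref{Lem Soliton Curv Decay} with $m=0$, i.e. $\sup_M f\,|Rm|_g \le C_0$, combined with the standard inequality $0\le R \le c(n)\,|Rm|_g$; so $fR \le c(n)C_0$, and one may take $C = c(n)C_0+1$, depending only on $n,M,g,f$. Since pullback commutes with products, $(\phi_t^* f)(\phi_t^* R) = \phi_t^*(fR) \le C$, so on $\{\phi_t^* f>0\}$ we obtain $\phi_t^* R \le C/\phi_t^* f$ and therefore $(1-t)\partial_t(\phi_t^* f) = \phi_t^* f - \phi_t^* R \ge \phi_t^* f - C/\phi_t^* f$; on $\{\phi_t^* f = 0\}$ the soliton equation forces $\phi_t^* R = \phi_t^*|\nabla f|^2 = 0$, so $(1-t)\partial_t(\phi_t^* f) = 0$ and the truncated left-hand side $\max\{0,\phi_t^* f - C/\phi_t^* f\}$ is $0$ there as well. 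Combining the two cases gives the claimed two-sided estimate.

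I expect no serious obstacle here: the content is just the chain-rule identity for $\partial_t(\phi_t^* f)$ and the soliton relations $R+|\nabla f|^2 = f$, $R\ge 0$, plus the conical curvature decay $f|Rm|_g \le C_0$. The only point requiring any care is that $f$ may vanish — which happens exactly for the flat Gaussian shrinker, where $R\equiv 0$ — and this is precisely what the truncation $\max\{0,\cdot\}$ in the statement accommodates, so no case distinction on whether $(M,g)$ is flat is actually needed (in particular Lemma \ref{Lem Nonflat and f>0} is not invoked).
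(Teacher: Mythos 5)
Your proof is correct and follows essentially the same route as the paper: differentiate $\phi_t^*f$ along the flow to get $(1-t)\partial_t(\phi_t^*f)=\phi_t^*|\nabla f|^2=\phi_t^*(f-R)$, then use $R\ge 0$ (proposition \ref{Prop Nonneg Scalar Curv}) for the upper bound and the $m=0$ case of lemma \ref{Lem Soliton Curv Decay} for the quantitative lower bound $fR\le C$. The only difference is cosmetic: you treat the zero set of $f$ explicitly (which the truncation $\max\{0,\cdot\}$ indeed absorbs), whereas the paper states $0\le R\le C/f$ directly and also cites proposition \ref{Prop Potential Func Grows Quadratically}, which your argument shows is not really needed.
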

\begin{proof}
	First, observe that
	\begin{align*}
		\partial_t ( \phi_t^* f ) 
		&= \frac{ \partial}{\partial t} \left( f \circ \phi_t \right)	\\
		&= \langle \nabla f \circ \phi_t , \partial_t \phi_t \rangle	\\
		&= \frac{1}{ 1 -t} | \nabla f|^2_g \circ \phi_t	
		&& \eqref{phi Evol Eqn}\\
		&\ge 0.
	\end{align*}
	By \eqref{Soliton Eqns} and Proposition \ref{Prop Nonneg Scalar Curv}, 
		$$0 \le | \nabla f |^2 \le R + | \nabla f |^2 = f.$$
	Hence,
		$$\partial_t ( \phi_t^* f ) =  \frac{1}{1 - t} | \nabla f|_g^2 \circ \phi_t \le \frac{1}{1 - t} f \circ \phi_t = \frac{1}{1 - t} \phi_t^* f$$
	which proves the upper bound on $( 1 - t) \partial_t \phi_t^* f$.
	
	For the other lower bound, 
	first note that Lemma \ref{Lem Soliton Curv Decay} and Proposition \ref{Prop Potential Func Grows Quadratically} imply there exists $C = C(n, M , g ,f )$ such that
		$$0 \le R(x) \le \frac{C}{f(x)} 		\qquad \text{ for all } x \in M.$$
	Thus, by \eqref{Soliton Eqns},
		$$| \nabla f|^2_g = f  - R \ge f - \frac{C}{f},$$
	and so
		$$\partial_t ( \phi_t^* f ) = \frac{1}{1 - t} | \nabla f|^2_g \circ \phi_t \ge \left( f -  \frac{C}{f} \right) \circ \phi_t 
		= \phi_t^*f - \frac{ C}{ \phi_t^* f} .$$ 
\end{proof}

\begin{prop} \label{Prop Asymp Conical Implies Eigmode Growth Assumption}
	If $(M^n, g, f)$ is an asymptotically conical, complete, connected shrinker that is not flat,
	then $(M^n, g, f)$ satisfies the eigenmode growth condition (Definition \ref{Defn Eigmode Growth Assumption}).
\end{prop}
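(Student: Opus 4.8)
The plan is to reduce the eigenmode growth condition to a statement about the asymptotic behavior of $L^2_f$-solutions of $\Delta_f h + 2Rm[h] = \lambda h$ on an asymptotically conical end, and then invoke the frequency-function machinery of \cite{CM21}. By the second bullet following definition \ref{Defn Eigmode Growth Assumption}, it suffices to treat a single eigenmode $h = h_j$ with eigenvalue $\lambda = \lambda_j$ and $\|h\|_{L^2_f} = 1$; and by the third bullet (using that $(M,g,f)$ is not Ricci-flat, so $\inf_M f > 0$ by lemma \ref{Lem Nonflat and f>0}), the desired pointwise bound $|h|_g(x) \le C f(x)^{\max\{-\lambda,0\}+\delta}$ holds automatically on any fixed compact set. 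Hence the whole content is to establish the bound on an asymptotically conical end $V \cong \cl{C}_R(\Sigma)$, where by lemma \ref{Lem A.1 KW15}(2) we have $f \sim \frac{1}{4}\ol{r}^2 \sim \frac14 r^2$, so the claim is equivalent to a polynomial growth bound $|h|_g \le C r^{2\max\{-\lambda,0\} + 2\delta}$ at spatial infinity.

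The key steps, in order: (1) Rewrite the eigenmode equation on the end in the conical coordinates $(r,\sigma) \in \cl{C}_R(\Sigma)$. Since $\nabla f = \frac12 r\,\partial_r + o(r)$ on the end (this follows from $|\nabla f|^2 = f - R$, the quadratic growth of $f$, the curvature decay of lemma \ref{Lem Soliton Curv Decay}, and the asymptotically conical structure), the drift term $-\nabla_{\nabla f}h$ behaves like $-\frac{r}{2}\partial_r h$, and the equation $\Delta_f h + 2Rm[h] = \lambda h$ becomes, to leading order, a Hermite-type equation in $r$ with the angular Laplacian on $\Sigma$ and a curvature term that decays like $r^{-2}$. (2) Apply the weighted Almgren frequency monotonicity of \cite{CM21}: for a solution of such a drift-elliptic equation on a conical end that lies in the weighted space $L^2_f$ (equivalently, has finite Gaussian-weighted energy), the frequency function is monotone and bounded, which forces polynomial growth of $h$ at a rate controlled by $\lambda$. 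This is exactly the tensor analogue of \cite[theorem 1.2]{Bernstein17} for scalars, and \cite{CM21} supplies the tensor version. (3) Convert the integral (frequency/energy) growth bound into the claimed pointwise bound via interior parabolic/elliptic estimates (Moser iteration or Schauder on unit-size balls whose geometry is uniformly controlled by lemma \ref{Lem A.1 KW15} and lemma \ref{Lem Soliton Curv Decay}), which costs only an arbitrarily small loss $r^{2\delta}$, accounting for the $\delta$ slack and the $\max\{-\lambda,0\}$ caveat (when $\lambda > 0$ the decay $f^{-\lambda}$ need not hold — only boundedness, i.e. $f^{0}$, up to $f^\delta$). (4) Assemble: on compact sets use the third bullet, on the end use steps (1)–(3), and take $C$ to be the larger of the two constants.

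The main obstacle is step (2): making precise how the results of \cite{CM21} apply to \emph{tensor-valued} solutions of the weighted Lichnerowicz eigenvalue equation on an asymptotically conical end, rather than to scalar-valued eigenfunctions as in \cite{Bernstein17}. One must check that the curvature perturbation $2Rm[h]$ — which decays quadratically by lemma \ref{Lem Soliton Curv Decay} — is a low-order perturbation that does not destroy the frequency monotonicity, and that the bundle $Sym^2 T^*M$ over the end can be trivialized compatibly with the conical structure so that the frequency function is well-defined; the quadratic curvature decay is precisely what makes the perturbation integrable against the frequency estimate. A secondary technical point is ensuring the $L^2_f$ membership of $h$ (guaranteed since $h_j \in D(\Delta_f) \subset L^2_f$) translates into the finite-weighted-energy hypothesis that \cite{CM21} requires to start the monotonicity; this uses lemma \ref{multRootfEst} and corollary \ref{momentEst} to control $\|h\|_{H^1_f}$ and weighted moments. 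Everything else is routine given the preliminaries already established.
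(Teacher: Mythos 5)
Your high-level strategy (reduce to the asymptotically conical end and invoke the weighted frequency-function results of \cite{CM21}) is the same as the paper's, but you leave the decisive step unresolved: you yourself flag as the ``main obstacle'' the question of how \cite{CM21} applies to the \emph{tensor-valued} equation $\Delta_f h + 2Rm[h] = \lambda h$, whether the curvature term destroys frequency monotonicity, and how to trivialize $Sym^2 T^*M$ over the end, and your proposed workaround (rewriting the equation in conical coordinates as a Hermite-type equation, checking integrability of the perturbation against the frequency estimate, then Moser/Schauder to pass from integral to pointwise bounds) is never carried out. As written, the argument does not close; everything hinges on an assertion that ``\cite{CM21} supplies the tensor version'' of the monotonicity you need, without saying in what form.

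The paper's proof dissolves this obstacle with a one-line reduction that you should note: taking the $g$-inner product of the eigenvalue equation with $h_j$ gives $\langle \Delta_f h_j, h_j\rangle_g = (\lambda_j - C_n|Rm|_g)|h_j|^2$, and since $|Rm|_g$ decays quadratically (lemma \ref{Lem A.1 KW15}\eqref{Lem A.1 KW15 3}) one can choose a compact set outside of which $C_n|Rm|_g \le \delta$, yielding the scalar differential inequality $\langle \Delta_f h_j, h_j\rangle_g \ge -(\max\{-\lambda_j,0\}+\delta)|h_j|_g^2$. This is exactly the hypothesis of \cite[Theorem 0.7]{CM21}, which then directly gives pointwise polynomial growth of $|h_j|$ at rate $2\max\{-\lambda_j,0\}+2\delta$; combining with $\inf_M f>0$ (lemma \ref{Lem Nonflat and f>0}) and the quadratic growth of $f$ (proposition \ref{Prop Potential Func Grows Quadratically}) converts this into the claimed bound $|h_j|_g \le C_j f^{\max\{-\lambda_j,0\}+\delta}$, and finite multiplicity of the eigenvalues handles general $\lambda$. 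In particular there is no need for your conical-coordinate rewriting of the drift term, no separate verification that $2Rm[h]$ is a harmless perturbation of the frequency monotonicity (the quadratic curvature decay is used only to absorb it into the $\delta$ slack), and no Moser iteration step, since the conclusion drawn from \cite{CM21} is already pointwise; the $\delta$ loss in the exponent comes from absorbing the curvature term, not from elliptic estimates as in your step (3). Your treatment of the compact region and of the $L^2_f$/$H^1_f$ membership of $h_j$ is fine and matches the paper.
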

\begin{proof}
	Let $\delta > 0$ and
	$j \in \mathbb{N}$.
	As in Theorem \ref{Thm Op Spectrum},
	denote the corresponding eigenvalue $\lambda_j \in \sigma ( \Delta_f + 2 Rm )$ 
	and its eigenmode 
	$h_j$ with $\| h_j \|_{L^2_f} = 1$ and
	\begin{equation} \tag{\ref{Eigval Eqn}}
		\Delta_f h_j + 2 Rm [h_j] = \lambda_j h_j.
	\end{equation}
	By standard elliptic regularity and unique continuation results,
	$h_j$ is smooth and cannot vanish on any open set.
	
	Taking the inner product of \eqref{Eigval Eqn} with $h_j$ implies
		$$\langle \Delta_f h_j, h_j \rangle_g 
		= ( \lambda_j - C_n | Rm|_g ) | h_j|^2 
		\ge - ( \max \{ - \lambda_j, 0 \} + C_n |Rm|_g ) | h_j|^2 $$
	for some constant $C_n> 0$ that depends only on $n$.
	Since $|Rm|_g$ decays quadratically (Lemma \ref{Lem A.1 KW15} \eqref{Lem A.1 KW15 3}),
	there exists a compact set $\Omega \subset M$ such that
		$$C_n  | Rm|_g ( x) \le \delta 	\qquad \text{for all } x \in M \setminus \Omega.$$
	Thus,
		$$\langle \Delta_f h_j, h_j \rangle_g \ge - ( \max \{ - \lambda_j, 0 \} + \delta ) | h |_g^2
		\qquad \text{for all } x \in M \setminus \Omega.$$
	\cite[Theorem 0.7]{CM21}	now applies to show $|h_j|$ grows at most polynomially with rate $2 \max \{ - \lambda_j , 0 \} + 2\delta $ at infinity.
	Because $ \inf_M f > 0$ (Lemma \ref{Lem Nonflat and f>0}) and $f$ grows quadratically (Proposition \ref{Prop Potential Func Grows Quadratically}),
	it follows that, for some $C_j = C_j(n, M, g, f, j, \delta)$,
		$$|h_j |_g(x) \le C_j f(x)^{\max \{ -\lambda_j, 0 \} + \delta} 		\qquad \text{for all } x \in M.$$
	Since the eigenspaces of $\Delta_f + 2Rm$ have finite dimension (Theorem \ref{Thm Op Spectrum}),
	the eigenmode growth condition (Definition \ref{Defn Eigmode Growth Assumption}) for general $\lambda \in \R$ follows with $C = C( n, M, g, f, \lambda, \delta ) = \max_{\lambda_j = \lambda} C_j$.
\end{proof}

For asymptotically conical shrinkers, the pointwise bound in the eigenmode growth condition can be bootstrapped to derivative bounds of any order.
%The bulk of the proof follows from a rescaling argument and interior estimates that will be presented in a later section.
%Nonetheless, we state the result precisely here:

\begin{prop} \label{Prop Eigmode Derivative Growth}
	Let $(M^n, g, f)$ be an asymptotically conical, complete, connected shrinker that is not flat.
	Let $h_j$ be an eigenmode of the weighted Lichnerowicz Laplacian $\Delta_f + 2 Rm$ with eigenvalue $\lambda_j$ as in Theorem \ref{Thm Op Spectrum}.
	Then for all $\delta > 0$ and all $m \in \mathbb{N}$, there exists $C_m = C_m( n, M, g, f, j, \delta, m)$ such that
		$$| \nabla^m h_j |_{g} (x) \le C_m f(x)^{\max\{-\lambda_j, 0 \} + \delta} 		\qquad \text{ for all } x \in M.$$
\end{prop}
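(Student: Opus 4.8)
The plan is to bootstrap the pointwise bound $|h_j|_g(x) \le C f(x)^{\max\{-\lambda_j,0\}+\delta}$ from the eigenmode growth condition (which holds by Proposition \ref{Prop Asymp Conical Implies Eigmode Growth Assumption}) to derivative bounds using interior elliptic estimates, where the "interior" is measured at scales comparable to the curvature scale, which in turn is comparable to $f^{1/2}$ by Lemma \ref{Lem Soliton Curv Decay}. The eigenmode equation $\Delta_f h_j + 2Rm[h_j] = \lambda_j h_j$, i.e. $\Delta h_j = \nabla_{\nabla f} h_j - 2Rm[h_j] + \lambda_j h_j$, is a linear elliptic equation whose coefficients ($\nabla f$, $Rm$) and their derivatives are controlled in terms of $f$ by \eqref{Soliton Eqns}, Proposition \ref{Prop Potential Func Grows Quadratically}, and Lemma \ref{Lem Soliton Curv Decay}.

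First I would fix a point $x_0 \in M$ with $f(x_0) =: \rho^2$ large (the bound on any fixed compact set is automatic by smoothness of $h_j$, as noted after Definition \ref{Defn Eigmode Growth Assumption}), and work in the geodesic ball $B_g(x_0, c\rho)$ for a small dimensional constant $c$. On this ball: $f$ is comparable to $\rho^2$ (since $|\nabla f|^2 \le f$ gives $|\nabla \sqrt f| \le \tfrac12$, so $\sqrt f$ is $\tfrac12$-Lipschitz and $\sqrt f \in [\tfrac12\rho, \tfrac32\rho]$ there for $c \le 1$); the curvature and its derivatives satisfy $|\nabla^m Rm|_g \le C_m \rho^{-2-m}$; and the rescaled metric $\tilde g := \rho^{-2} g$ has uniformly bounded geometry with injectivity radius bounded below on the unit ball, so harmonic coordinates of a definite size exist. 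Rescaling the eigenmode equation, $\tilde h_j := \rho^{-2}$ times... more precisely, writing everything in these coordinates for $\tilde g$, the equation for $h_j$ becomes a uniformly elliptic equation with coefficients bounded in every $C^k$ norm independent of $x_0$, and with right-hand side absorbed: the zeroth-order term coefficient $\rho^2(\lambda_j - 2Rm) = O(1)$ and the first-order coefficient $\rho \cdot \rho^{-1}\nabla f$ ... here one uses $|\nabla f|_{\tilde g} = \rho^{-1}|\nabla f|_g \le \rho^{-1}\sqrt f = O(1)$ and similarly all $\tilde g$-derivatives of $\nabla f$ are $O(1)$ by differentiating $\nabla^2 f = \tfrac12 g - Rc$. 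Then standard interior Schauder / $L^2$ elliptic estimates give, for each $m$,
$$
\sup_{B_{\tilde g}(x_0, c/2)} |\tilde\nabla^m h_j|_{\tilde g} \le C_m \sup_{B_{\tilde g}(x_0,c)} |h_j|_{\tilde g} = C_m \sup_{B_g(x_0, c\rho)} |h_j|_g.
$$
Undoing the rescaling, $|\nabla^m h_j|_g = \rho^{-m}|\tilde\nabla^m h_j|_{\tilde g}$, so
$$
|\nabla^m h_j|_g(x_0) \le C_m \rho^{-m} \sup_{B_g(x_0, c\rho)} |h_j|_g \le C_m \rho^{-m} \cdot C f(x_0)^{\max\{-\lambda_j,0\}+\delta/2} \le C_m' f(x_0)^{\max\{-\lambda_j,0\}+\delta},
$$
where I used that $f$ is comparable to $\rho^2$ on the ball (so the pointwise bound from the eigenmode growth condition, applied with exponent $\delta/2$, transfers to the supremum with a controlled constant) and that $\rho^{-m} = f(x_0)^{-m/2}$ only helps.

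The main obstacle, and the only real content beyond bookkeeping, is establishing uniform bounded geometry for the rescaled metrics $\rho^{-2}g$ on balls of a fixed size — i.e. a uniform lower injectivity radius bound at scale $f^{1/2}$ — so that the interior elliptic estimates have constants independent of the basepoint $x_0$. This follows from the asymptotically conical structure: by Lemma \ref{Lem A.1 KW15}, near infinity $g$ is $C^2$-close, after the dilation $\rho_\lambda$, to the cone metric $g_{\mathcal C} = dr^2 + r^2 g_\Sigma$, whose rescalings $\rho^{-2}g_{\mathcal C}$ are all isometric; since $r \sim 2\sqrt f$ by Proposition \ref{Prop Potential Func Grows Quadratically} and Lemma \ref{Lem A.1 KW15}\eqref{Lem A.1 KW15 2}, the ball $B_g(x_0,c\rho)$ sits in a region where $\rho^{-2}g$ is uniformly close to a fixed model, giving the needed injectivity radius and harmonic-coordinate bounds. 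With that in hand, the bootstrap is routine; alternatively one can cite the interior estimates of Appendix \ref{Holder Ests in Eucl Space} directly in the harmonic coordinate charts. I would present the argument for general $m$ by a single application of interior estimates of order $m$ rather than an induction, since all coefficient derivative bounds are available simultaneously.
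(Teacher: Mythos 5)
There is a genuine gap at the heart of your argument: after rescaling to the curvature scale $\rho = f(x_0)^{1/2}$, the eigenmode equation does \emph{not} have uniformly bounded coefficients, because the drift term is not lower order at that scale. Writing the equation for the metric $\tilde g = \rho^{-2}g$, one must multiply through by $\rho^{2}$ to normalize the principal part, and the equation becomes
$$\Delta_{\tilde g} h_j \;=\; \nabla_{\nabla^{\tilde g} f}\, h_j \;-\; 2\,Rm_{\tilde g}[h_j] \;+\; \rho^{2}\lambda_j\, h_j ,$$
whose drift coefficient is the $\tilde g$-gradient of $f$, with $\bigl|\nabla^{\tilde g} f\bigr|_{\tilde g} = \rho\,|\nabla^{g} f|_{g} \sim \rho^{2} = f(x_0)$, and whose zeroth-order coefficient $\rho^{2}\lambda_j$ is likewise of size $f(x_0)$ (your claim ``$\rho^2(\lambda_j - 2Rm)=O(1)$'' is only true for the curvature part). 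Your computation $|\nabla f|_{\tilde g} = \rho^{-1}|\nabla f|_g = O(1)$ measures the $\tilde g$-length of the fixed vector field $\nabla^{g}f$, which is not the coefficient that enters the unit-scale equation; equivalently, the scale-invariant quantity $R\,\sup_{B_g(x_0,R)}|\nabla f|_g$ with $R = c\rho$ is of order $f(x_0)$, so the Schauder/$L^2$ interior constants are not independent of $x_0$. This loss is not merely technical: for an equation $\Delta u = b\cdot\nabla u + cu$ with $|b|\sim\Lambda$ one cannot in general do better than $|\nabla u|\lesssim \Lambda\sup|u|$ (consider $u=e^{\Lambda x_1}$), so without exploiting the special structure of the drift your scheme only yields bounds with an extra factor of order $f^{m}$ (or $f^{m/2}$ if one instead works at the natural drift scale $f^{-1/2}$), which is weaker than the stated conclusion. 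The bounded-geometry/injectivity-radius point you flag as the ``main obstacle'' is in fact the harmless part.

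The paper circumvents exactly this difficulty by making the problem parabolic: it sets $\cl{H}(t) = (1-t)\phi_t^* h_j$, so that the drift $\ol{\nabla}_{\ol{\nabla} f}$ is absorbed by the soliton flow $\phi_t$ and, using \eqref{Soliton Eqns}, $\cl{H}$ satisfies the equation \eqref{Deriv Eigmode Growth Rescaled Eqn} whose coefficients (curvature of $\check g(t)=(1-t)\phi_t^*g$ and $\lambda_j/(1-t)$) are uniformly bounded, together with all derivatives, for $t\in[-1,0]$. Parabolic interior estimates (lemma \ref{Lem Lin Int Est}) at a \emph{fixed} spatial scale set by the conjugate radius of $(M,g)$ then give $|\nabla^m h_j|_g(p)\lesssim \sup_{B_g(p,2r)\times[-1,0]}|h_j|_g\circ\phi_t$, and the pointwise bound of proposition \ref{Prop Asymp Conical Implies Eigmode Growth Assumption} is transported back because $f\circ\phi_t\le f$ for $t\le 0$ (lemma \ref{Lem Flow Est 1}) and $f$ grows quadratically with $\inf_M f>0$. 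If you want to keep an elliptic flavor you would need some substitute for this mechanism (e.g.\ integrating along the flow lines of $\nabla f$, or weighted/frequency-function estimates as in the cited work of Colding--Minicozzi); as written, the claim of uniformly bounded coefficients after rescaling is false and the proof does not go through.
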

\begin{proof}
	We shall add a time variable $t$ and apply a rescaling argument to bootstrap the $C^0$-estimates 
		$$| h_j |_{g} \le C_0 (n, M, g, f, j, \delta) f^{ \max \{ -\lambda_j , 0 \} + \delta}$$
	from Proposition \ref{Prop Asymp Conical Implies Eigmode Growth Assumption} to $C^m$-estimates.
	Consider the family $\cl{H}(t)$ of symmetric 2-tensors on $M$ given by
		$$\cl{H}(t) \doteqdot (1 - t) \phi_t^* h_j$$
	where $\phi_t : M \to M$ is the family of diffeomorphisms solving
	\begin{equation} \tag{\ref{phi Evol Eqn}}
		\partial_t \phi_t = \frac{1}{1 -t} \nabla f  \circ \phi_t 
		\qquad \text{with}	\qquad
		\phi_0 = Id_M : M \to M.
	\end{equation}
	Then $\cl{H}$ satisfies the evolution equation
	\begin{align*}
		&\partial_t \cl{H} 	\\
		={}& - \phi_t^* h + ( 1 - t) \phi_t^* \cl{L}_{\partial_t \phi \circ \phi_t^{-1} } h_j	\\
		={}& - \phi_t^* h +  \phi_t^* \cl{L}_{\nabla f} h_j	
		+ \phi_t^* \left( \Delta_f h_j + 2 Rm [ h_j ] - \lambda_j h_j \right)\\
		={}& - \phi_t^* h
		+ \phi_t^* \left( 
		 \nabla_{\nabla f} h_j + (h_j)_{ac} \nabla_b \nabla^c f +  (h_j)_{bc} \nabla_a \nabla^c f \right)	\\
		&+ \phi_t^* \left( \Delta h_j - \nabla_{\nabla f} h_j + 2 Rm[h_j] 
		- \lambda_j h_j \right) \\
		={}& \phi_t^* \left( \Delta h_j + 2 Rm[h_j] - (h_j)_{ac} Rc_b^c - (h_j)_{bc} Rc_a^c - \lambda_j h_j \right)
	%	&& \left( \text{since } Rc + \nabla^2 f = \frac{1}{2} g \right)
	\end{align*}
	where we use the soliton equation \eqref{Soliton Eqns} $Rc + \nabla^2 f = \frac{1}{2} g$ in the last equality.
	With respect to the metrics $\check g(t) \doteqdot ( 1 - t) \phi_t^* g$, it follows that
	\begin{equation} \label{Deriv Eigmode Growth Rescaled Eqn}
		\partial_t \cl{H} = \check \Delta \cl{H} + 2 \check{Rm} [ \cl{H} ] - \check{Rc} * \cl{H} - \frac{ \lambda_j}{ 1 - t} \cl{H}
	\end{equation}
	where $\check \Delta = \Delta_{\check g} , \check{Rm} = Rm_{\check g},$ and $\check{Rc} = Rc_{\check g}$.
	Note that $\check g(t) = (1- t) \phi_t^* g$ is the Ricci flow solution with $\check g(0) = g$
	and is smooth with uniformly bounded curvature for all $t \in [ -1, 0]$.
	Hence, for all $t \in [-1, 0]$, the coefficients in \eqref{Deriv Eigmode Growth Rescaled Eqn} and their derivatives of order at most $m$ can be uniformly bounded by constants that depend only on $n, M, g, f, j, m$. 
	
	Since $|Rm_g |_g$ is uniformly bounded, the conjugate radius of $( M, g )$ is bounded below by say $2 r = 2 r( n, M, g, f) \in \left( 0, 1 \right]$.
	After possibly passing to a local cover and pulling back to $\R^n$ using the exponential map (with respect to $g$) based at an arbitrary $p \in M$,
	Lemma \ref{Lem Lin Int Est} applies to equation \eqref{Deriv Eigmode Growth Rescaled Eqn} 
	with $\Omega' = B_{2r} \times [ -4 r^2, 0 ] \subset \R^n \times [-1,0]$
	and no inhomogeneous term.
	Since the coefficients of \eqref{Deriv Eigmode Growth Rescaled Eqn} and $r$ have bounds that depend only on $n, M, g, f, j$, it follows from Lemma \ref{Lem Lin Int Est} that
	\begin{align*}
		& \quad \, \, | \nabla^m h_j |_g (p)	\\
		&= | \check \nabla^m \cl{H} |_{\check g} (p, t = 0 )	\\
		&\le C(n, M, g, f, j, m) \sup_{ (x,t) \in B_g (p, 2r) \times [ -1, 0 ] } | \cl{H} |_{\check g} (x,t)	
		&& (\text{Lemma \ref{Lem Lin Int Est}})\\
		&= C(n, M, g, f, j, m) \sup_{ (x,t) \in B_g (p, 2r) \times [ -1, 0 ] } | h_j |_g ( \phi_t (x) )	\\
		&\le C(n, M, g, f, j, \delta, m) \sup_{ (x,t) \in B_g (p, 2r) \times [ -1, 0 ] } f( \phi_t(x))^{ \max \{ - \lambda_j, 0 \} + \delta}
		&& (\text{Proposition \ref{Prop Asymp Conical Implies Eigmode Growth Assumption}}).
	\end{align*}
	By Lemma \ref{Lem Flow Est 1}, $0 \le \partial_t \phi_t^* f$ and thus $f( \phi_t(x) ) \le f( \phi_0 (x) ) = f(x)$ for all $t \le 0$.
	Hence,
	\begin{align*}
		| \nabla^m h_j |_g (p)
		&\le C(n, M, g, f, j, \delta, m) \sup_{ (x,t) \in B_g (p, 2r) \times [ -1, 0 ] } f( \phi_t(x))^{ \max \{ - \lambda_j, 0 \} + \delta}	\\
		&\le  C(n, M, g, f, j, \delta, m) \sup_{ x \in B_g (p, 2r)  } f( x)^{ \max \{ - \lambda_j, 0 \} + \delta}	\\
		&\le C(n, M, g, f, j, \delta, m) f(p)^{ \max \{ - \lambda_j, 0 \} + \delta}
	\end{align*}
	where the last equality follows from the fact that $f$ grows quadratically (Proposition \ref{Prop Potential Func Grows Quadratically}) and has $\inf_M f > 0$ (Lemma \ref{Lem Nonflat and f>0}).
	This completes the proof since $p \in M$ was arbitrary and no constants depend on $p$.	
\end{proof}

%%%%%%%%%%%%%%%%%%%%%%%%%%%%%%%%%%%%%%%%%%%%%%%%%%%%%%%%
\section{Strategy of the Proof of Theorem \ref{Main Thm}} \label{Sect Strat of Proof}
%%%%%%%%%%%%%%%%%%%%%%%%%%%%%%%%%%%%%%%%%%%%%%%%%%%%%%%%

Having completed our preliminary discussion of gradient shrinking Ricci solitons, we shall now fix a few choices for the remainder of the paper.
\begin{assumption} \label{Assume Shrinker}
Henceforth, $(M, \ol{g}, f)$ will denote an asymptotically conical, complete, connected shrinker that is not flat.
We shall use overlines to refer to geometric objects associated to $(M, \ol{g}, f)$, e.g. $\ol{Rm}, \ol{\nabla},$ etc.
As in Theorem \ref{Thm Op Spectrum}, fix an $L^2_f$-orthonormal basis $\{ h_j \}_{j \in \mathbb{N} }$ of eigenmodes with corresponding eigenvalues $\{ \lambda_j \}_{j \in \mathbb{N}}$ for the weighted Lichnerowicz Laplacian $\ol{\Delta}_f  + 2 \ol{Rm}$.
Fix $\lambda_* \in (-\infty, 0) \setminus \{ \lambda_j \}_{j \in \mathbb{N}}$ and let $K \in \mathbb{N}$ denote the eigenmode such that $\lambda_K > \lambda_* > \lambda_{K+1}$.
\end{assumption}

We now proceed to sketch the proof of Theorem \ref{Main Thm}.
Outside of some compact set, $(M, \ol{g})$ looks approximately like a cone $(\cl{C}( \Sigma) , g_{\cl{C}})$.
We can therefore find a suitably large precompact set $M' \subset M$ whose closure $\ol{M'}$ is a smooth manifold with boundary.
Identifying two copies of $\ol{M'}$ along their boundary yields a closed manifold $\cl{M}$ known as the double of $\ol{M'}$.
%We shall refer to the gluing boundary as the \emph{equator} in $\cl{M}$.

After adjusting the metric $\ol{g}$ near $\partial \ol{M'}$, $\ol{g}$ induces a $\mathbb{Z}_2$-invariant Riemannian metric on $\cl{M}$ which equals $\ol{g}$ away from $\partial \ol{M'} \subset \cl{M}$.
Here, the $\mathbb{Z}_2$ symmetry interchanges the two copies of $M'$ in the double $\cl{M}$.

We can repeat this same process with the family of metrics 
	$$G_{\mathbf p }(t_0) = ( 1 - t_0 ) \phi_{t_0}^* \left[ \ol{g} +  \sum_{j= 1}^{K} p_j h_j  \right]$$
indexed by $\mathbf p = ( p_1, p_2, \dots, p_K )  \in \R^{K}$ and $t_0 \in [0, 1)$ 
to obtain a family of $\mathbb{Z}_2$-invariant metrics $G_{\mathbf p} (t_0)$ on $\cl{M}$.

Consider the evolution of each of these metrics by Ricci flow.
In other words, let $ \{ G(t) = G_{\mathbf p } ( t) \}_{t \in [t_0, T( \mathbf p ) ) }$ denote the maximal solution to the Ricci flow on $\cl{M}$
	$$\partial_t G = - 2 Rc 		\qquad \text{ with initial condition } G(t_0) =G_{\mathbf p} (t_0) .$$
These metrics on $\cl{M}$ remain $\mathbb{Z}_2$-invariant and can thus be identified with a Ricci flow solution on  $M' \subset M$.
We extend these to \emph{approximate} Ricci flow solutions $\acute G_{\mathbf p }(t)$ on $M$ by extending $G_{\mathbf p } ( t )$ by $(1 - t) \phi_t^* \ol{g}$ on $M \setminus M'$ and smoothing near $\partial \ol{M'}$.

We then rescale and reparametrize $\acute G_{\mathbf p} (t ) $ to
	$$g_{\mathbf p }( t ) \doteqdot \frac{1}{1 - t} (\Phi_t^{-1} )^* \acute G_{\mathbf p } ( t)$$
where $\Phi_t : M \to M$ 
%is a family of diffeomorphisms that also depends on $\mathbf p $ and $t_0$ as well as other gluing and smoothing parameters.
is given by $\Phi_t = \phi_t \circ \tl{\Phi}_t$.
Here $\phi_t : M \to M$ is the family of diffeomorphisms associated to the soliton
	$$\partial_t \phi_t = \frac{1}{1  -t} \nabla f \circ \phi_t
	\qquad \text{ with } \phi_0 = Id_M$$
and $\tl{\Phi}_t : M \to M$ solves the harmonic map heat flow
	$$\partial_t \tl{\Phi}_t = \Delta_{\acute G_{\mathbf p}(t), ( 1 - t) \phi_t^* \ol{g}} \tl{\Phi}_t
	\qquad \text{ with } \tl{\Phi}_{t_0} = Id_M.$$

\begin{remark}
Note that $g_{\mathbf p}(t)$ is only well-defined when $\Phi_t$ or equivalently $\tl{\Phi}_t$ is well-defined and a diffeomorphism.
We address this issue by, in effect, developing well-posedness results for the solution $\tl{\Phi}_t$ of the harmonic map heat flow that hold so long as $h_{\mathbf p } = g_{\mathbf p} - \ol{g}$ is small.
Subsection \ref{Subsect Must Exit Through clB} and Appendix \ref{Appdix HarMapFlow} establish these well-posedness results.
This well-posedness when $h_{\mathbf p}$ is small will be sufficient for our Wa{\. z}ewski box argument below to show that $h_{\mathbf p }$ must exit the box $\cl{B}$ before $\tl{\Phi}_t$ fails to exist, be unique, or be a diffeomorphism.
\end{remark}

In order to prove Theorem \ref{Main Thm}, we will show that for some $\mathbf p$ and $t_0$, $g_{\mathbf p } ( t )$ converges to the soliton metric $\ol{g}$ as $t \nearrow 1$.
To do so, we consider the difference $h_{\mathbf p} ( t ) \doteqdot g_{\mathbf p } (t ) - \ol{g}$.
This difference satisfies an equation of the form
	$$(1 - t) \partial_t h_{\mathbf p} = \ol{\Delta}_f h_{\mathbf p} + 2 \ol{Rm} [h_{\mathbf p}] + \cl{E}_1 + \cl{E}_2$$
where $\cl{E}_1, \cl{E}_2$ are small error terms that arise from linearizing the Ricci tensor at $\ol{g}$ and the fact that $\acute G_{\mathbf p }(t )$ is only an approximate Ricci flow solution, respectively.
The details of the construction thus far are provided in detail in Subsections \ref{Subsect The Manifold}--\ref{Subsect The Flow}.

Because the linearized operator $\ol{\Delta}_f  + 2 \ol{Rm}$ has positive eigenvalues (Proposition \ref{Prop Rc +1 Eigmode}), one should not expect $h_{\mathbf p}(t)$ to converge to $0$ for general choices of $\mathbf p$ and $t_0$.
If the flow were given purely by the linearized equation
	$$(1 - t) \partial_t h_{\mathbf p } \approx \ol{\Delta}_f h_{\mathbf p } + 2 \ol{Rm} [ h_{\mathbf p }]$$
one could simply take $\mathbf p = \mathbf 0$ and show $h_{\mathbf 0}(t)$ limits to $0$.
However, the nonlinear error terms $\cl{E}_1, \cl{E}_2$ preclude such an argument from going through.

Beginning in Subsection \ref{Subsect Defn of the Box}, we implement a Wa{\.z}ewski box argument \cite{Wazewski47} to show that $h_{\mathbf p }(t)$ converges to $0$ as $t \nearrow 1$ for \emph{some} small $\mathbf p$ and $t_0 < 1$ sufficiently close to $1$.
Informally, the general strategy is as follows: we design a time-dependent set $\cl{B}$ of time-dependent symmetric 2-tensors that becomes more restrictive as $t \nearrow 1$.
Namely, the set $\cl{B}$ consists of time-dependent symmetric 2-tensors $h(t)$ on $M$ that display an $L^2_f(M)$ decay and uniform $C^2(M)$ bounds on a given time interval.
Through Sections \ref{Sect Prelim Ests}--\ref{Sect Proof of Main Thm}, we prove a series of estimates designed to show that, for a suitable choice of parameters, if $h_{\mathbf p}$ exits the set $\cl{B}$ before time $t = 1$ then it must exit through a $(K-1)$-dimensional sphere $S^{K-1}$ in the boundary $\partial \cl{B}$ of $\cl{B}$.
If, for all $\mathbf p$ in $K$-dimensional ball $B^K \subset \R^K$, $h_{\mathbf p}(t)$ were to exit $\cl{B}$ at some time $t^*( \mathbf p ) < 1$, then the exit map $\mathbf p \mapsto h_{\mathbf p}(t^*( \mathbf p))$ would yield a continuous map $B^K \to S^{K-1}$ that, by construction, restricts to a map $S^{K - 1} \approx \partial B^K \to S^{K-1}$ on the boundary which is homotopic to the identity.
However, such a map cannot exist as the identity map $Id : S^{K-1} \to S^{K-1}$ is not null-homotopic.
Therefore there must exist some $\mathbf p^*$ such that $h_{\mathbf p^*}(t)$ remains in $\cl{B}$ for all $t \in [t_0, 1)$.
By the construction of $\cl{B}$, the corresponding Ricci flow $G_{\mathbf p^*}(t)$ then yields the desired Ricci flow in Theorem \ref{Main Thm}.

Sections \ref{Sect Prelim Ests}--\ref{Sect Behavior of L^2_f} provide the estimates necessary to make the above argument rigorous.
We complete the proof of Theorem \ref{Main Thm} in Section \ref{Sect Proof of Main Thm}.
\cite{AV97, AIK15, LeeZhao24} have successfully used a similar strategy in other singularity constructions for the mean curvature flow and Ricci flow.
\cite{HV, V94, Mizoguchi04, BS17, Stolarski19} among others have also applied a closely related technique to exhibit singularity formation for parabolic partial differential equations and geometric flows.

\begin{remark} \label{remark generalizing topologies}
For concreteness, we outlined the proof strategy above when $\cl{M}$ is the double of $\ol{M}' \subset M$ and the metrics $G_{\mathbf p}(t)$ on $\cl{M}$ are $\mathbb{Z}_2$-invariant.
However, because the analysis mainly depends on the behavior of the rescaled and reparameterized metrics $g_{\mathbf p}(t)$ on the soliton manifold $M$, the proof in fact generalizes to a broader class of closed manifolds $\cl{M}$ and symmetries $\cl{S}$ defined in Subsection \ref{Subsect The Manifold} below.
\end{remark}

%%%%%%%%%%%%%%%%%%%%%%%%%%%%%%%%%%%%%%%%%%%%%%%%%%%%%%%%
\section{Setup} \label{Sect Setup}
%%%%%%%%%%%%%%%%%%%%%%%%%%%%%%%%%%%%%%%%%%%%%%%%%%%%%%%%

%%%%%%%%%%%%%%%%%%%%%%%%%%%%%%%%%%%%%%%%%%%%%%%%%%%%%%%%
\subsection{The Manifold} \label{Subsect The Manifold}
%%%%%%%%%%%%%%%%%%%%%%%%%%%%%%%%%%%%%%%%%%%%%%%%%%%%%%%%
Consider the asymptotically conical shrinker $(M^n, \ol{g}, f)$.
For any $\Gamma_0 > 1$, define the open submanifold $M' = M'_{\Gamma_0} \doteqdot \{ x \in M : f(x) < 100 \Gamma_0 \} \subset M$.

Assume $\cl{M}^n$ is a closed $n$-manifold that contains (finitely many) disjoint diffeomorphic copies $\cl{M}_\omega'' \subsetneq \cl{M}$ of $M$ indexed by $\omega$ in some (finite) set $A$.
Fix diffeomorphisms $\iota_\omega'' : \cl{M}_\omega'' \to M$ for each $\omega \in A$ and let $\iota''$ be the naturally induced diffeomorphism $\iota'' : \sqcup_{\omega \in A} \cl{M}_\omega'' \to \sqcup_{\omega \in A} M$.
For all $\omega \in A$, these maps restrict to diffeomorphisms 
$$\iota_\omega : \cl{M}_\omega' \to M' = \{ x \in M : f(x) < 100 \Gamma_0 \}$$
 on some open subset $\cl{M}_\omega' \subset \cl{M}_\omega'' \subset \cl{M}$.
Let 
$$\cl{M}' \doteqdot \bigsqcup_{\omega \in A} \cl{M}_\omega' \subset \cl{M} \quad \text{and} \quad \iota : \cl{M}' \to \bigsqcup_{\omega \in A} M'$$
denote the diffeomorphism naturally obtained from the $\iota_\omega$.
Note that the $\iota_\omega$ and $\iota$ depend on $\Gamma_0$, but the $\iota_\omega''$ and $\iota''$ are independent of $\Gamma_0$.

While $f: M \to \R$ is strictly speaking a function on $M$, we will abuse notation and use $f$ to also refer to the continuous, piecewise smooth function $\cl{M} \to [0, 100 \Gamma_0] \subset \R$ defined by $\iota_\omega^* f$ on every $\cl{M}_\omega' \approx M'$ and extended by $100 \Gamma_0$ on $\cl{M} \setminus \cl{M}'$.
Note that $f : \cl{M} \to \R$ is smooth on the complement of $\partial \cl{M}' \subset \cl{M}$ and also depends on $\Gamma_0$.

Assume furthermore that $\cl{S} \le \text{Diff}(\cl{M})$ is a subgroup of the diffeomorphism group of $\cl{M}$ that respects the identifications $\iota_\omega'' : \cl{M}''_\omega \to M$.
Namely, for all $\psi \in \cl{S}$ and all $\omega \in A$, there exists $\tilde \omega \in A$ such that
\begin{equation} \label{eqn S respects the soliton identifications 1}
	\psi(\cl{M}''_\omega) = \cl{M}''_{\tilde \omega} \text{ and } \iota_{\tilde \omega}'' \circ \psi|_{\cl{M}''_\omega} = \iota_\omega'' : \cl{M}''_\omega \to  M .
\end{equation}
By restriction, the same properties hold for every $\cl{M}'_\omega \subset \cl{M}''_\omega$, that is, for any $\psi \in \cl{S}$ and any $\omega \in A$ there exists $\tilde \omega \in A$ such that 
\begin{equation} \label{eqn S respects the soliton identifications 2}
	\psi(\cl{M}'_\omega) = \cl{M}'_{\tilde \omega} \text{ and } \iota_{\tilde \omega} \circ \psi|_{\cl{M}'_\omega} = \iota_\omega : \cl{M}'_\omega \to  M .
\end{equation}
In particular, $\psi( \cl{M}' ) = \cl{M}'$, $\iota \circ \psi = \iota : \cl{M}' \to \sqcup_{\omega \in A} M'$, and $\psi^* f = f : \cl{M} \to [0, 100\Gamma_0]$.
It follows that there is also an induced action of $\cl{S}$ on $A$ by permutations where, given $\psi \in \cl{S}$ and $\omega \in A$, $\psi \cdot \omega$ is the unique element of $A$ such that $\psi(\cl{M}_\omega'') = \cl{M}_{\psi \cdot \omega}''$.
By replacing $A$ with an orbit of this $\cl{S}$ action, we shall additionally assume without loss of generality that $\cl{S}$ acts transitively on $A$.

This data is summarized in Figure \ref{figure closed manifold} below.

\begin{remark}
	For the simplest situation, one can consider the case where $\cl{S} = \{ Id \}$ and $A$ is a set of one element.
	In this case, $\cl{M}$ is a closed manifold that contains a diffeomorphic copy $\cl{M}''$ of $M$.
	There is a fixed diffeomorphism $\iota'' : \cl{M}'' \to M$ and the restriction $\iota : \cl{M}' \to M'$.
	The function $f: \cl{M} \to [0 ,100 \Gamma_0]$ is defined to be $\iota^* f$ on $\cl{M}'$ and $100 \Gamma_0$ on $\cl{M} \setminus \cl{M}'$.

	In Appendix \ref{App Rounding Out Cones}, we present a construction where $\cl{S} \cong \mathbb{Z}_2$ and $A$ is a set of two elements.
	The manifold $\cl{M}$ in this case is the double of a large subset of $M$, and $\cl{S}$ permutes the two pieces of the double. (See also Remark \ref{remark G_0(t_0) exists}.)
\end{remark}

%this defines a pic {soliton} that captures the solitons topology
\tikzset{
  soliton/.pic={   
  \begin{scope}[transform shape, rotate=90]
    \pic[tqft/cap, name = a,
    at={(0,6)}, circle x radius = 0.5cm,
    cobordism edge/.style={draw},
    ];
    
    \pic[tqft/cylinder, name = b, genus =1,
    anchor=incoming boundary 1, at=(a-outgoing boundary 1),
    cobordism height = 0.5cm,
    circle x radius = 0.5cm,
    hole 1/.style = {draw, transform shape, rotate=-90, xshift=0.25cm, yshift=0.25cm},
    cobordism edge/.style={draw},
    ];
    
    \draw (-0.5,3.5) .. controls (-0.5,3) .. 
        node (-d90) [pos=0.95, coordinate] {}
        (-2,0) coordinate (-d100);
    \draw (0.5, 3.5) .. controls (0.5,3)  .. (2,0)
        node (-a90) [pos=0.95, coordinate] {}
        coordinate (-a100);
    \draw (-0.5,0) coordinate (-c100) .. controls (0,3.5) ..
        node (-c90) [pos=0.05, coordinate] {}
        node (-b90) [pos=0.95, coordinate] {}
        (0.5,0) coordinate (-b100);
    
    \draw[style=dashed] (-c90) .. controls (-1,0.6) .. (-d90);
    \draw[style=dashed] (-b90) .. controls (1, 0.6) .. (-a90);

    \node (-upper_pt) at (2,2.25) [coordinate] {};
    \node (-lower_pt) at (-2,2.25) [coordinate] {};
    \node (-left_pt) at (0, 4.5) [coordinate] {};
    \node (-right_pt) at (0,0) [coordinate] {};

    \end{scope}
  }
}

\tikzset{
    hole/.pic={
    \begin{scope}[scale=0.5]
        \draw (-1,0) to[bend left] (1,0);
        \draw (-1.2,0.1) to[bend right] (1.2,0.1);
    \end{scope}
    }
}

\begin{figure} \label{figure closed manifold} 
	\centering
\begin{tikzpicture}[transform shape, scale = 1.5]
    %reference soliton
    \pic[scale=0.4] (manifold0) at (0,-2.5) {soliton};

    %copies of soliton in closed manifold
    \pic[scale=0.4] (manifold1) at (0,0) {soliton};
    \pic[scale=0.4] (manifold2) at (0,2) {soliton};
    \pic[scale=0.4] (manifold3) at (0,4) {soliton};

    %connecting curves
    \draw (manifold1-c100) to [out=15, in=-15] (manifold1-b100);
    \draw (manifold1-a100) to [out=15, in=-15] (manifold2-d100);

    \draw (manifold2-c100) to [out=15, in=-15] (manifold2-b100);
    \draw (manifold2-a100) to [out=15, in=-15] (manifold3-d100);

    \draw (manifold3-c100) to [out=15, in=-15] (manifold3-b100);
    \draw (manifold3-a100) to [out=15, in=90] 
    node [midway, anchor=south west] {$\mathcal{M}$}
    (3,2)
    to [out=-90, in=-15]
    (manifold1-d100);

    \draw[<-, thick] (manifold0-upper_pt) -- 
    node (mid_arrow) [coordinate] {}
    (manifold1-lower_pt);
    \node at (mid_arrow) [anchor=east] {\small $\approx$};
    \node at (mid_arrow) [anchor=west] {\small $\iota_\omega : \mathcal{M}_\omega' \to M'$};

    \node at (manifold1-upper_pt) [] {\tiny $\mathcal{M}'_\omega \subset \mathcal{M}''_\omega$};
    \node at (manifold0-lower_pt) [] {\small $M' \subset M$};
    \node (clM'_label) at (manifold3-upper_pt) [anchor=south] {\tiny $\mathcal{M}' = \bigsqcup_{\omega \in A} \mathcal{M}_\omega'$};

    \draw[->, thick] (manifold2-left_pt) ++(-0.5,-0.5) arc 
        (300:60:0.5) node (mid_arc) [pos=0.5, anchor=east] {$\mathcal{S}$};

    \pic[scale=0.8, rotate=30] at (1,2.2) {hole};
    \pic[scale=0.5, rotate=0]  at (1.5,4) {hole};
    \pic[scale=0.5, rotate=0]  at (2, 2) {hole};
    \pic[scale=0.4, rotate=-30] at (1.8, 1) {hole};
    \pic[scale=0.5, rotate=00] at (1.2,0.25) {hole};

	\node at (manifold2-a90) [anchor=south west, xshift=1cm] {\tiny $f \equiv 100 \Gamma_0$};
    \node at (manifold2-a90) [anchor=south east] {\tiny $0 < f < 100 \Gamma_0$};
\end{tikzpicture}
	\caption{The closed manifold $\cl{M}$}
\end{figure}

%%%%%%%%%%%%%%%%%%%%%%%%%%%%%%%%%%%%%%%%%%%%%%%%%%%%%%%%
\subsection{The Initial Data} \label{Subsect The Initial Data}
%%%%%%%%%%%%%%%%%%%%%%%%%%%%%%%%%%%%%%%%%%%%%%%%%%%%%%%%

Here and for the remainder of the paper, we continue to let 
	$$M' = M'_{\Gamma_0} = \{ x \in M : f(x) < 100 \Gamma_0 \} \subset M,$$
	$$\cl{M}' = \bigsqcup_{\omega \in A} \cl{M}'_\omega \subset \cl{M}'' = \bigsqcup_{\omega \in A} \cl{M}_\omega'' \subset \cl{M}$$
	$$\left\{ \iota_\omega'' : \cl{M}_\omega'' \xrightarrow[]{\approx} M \right\}_{\omega \in A} , \qquad \iota_\omega = \iota_\omega'' |_{\cl{M}'_\omega} : \cl{M}'_\omega \xrightarrow[]{\approx} M',$$
	$$f : \cl{M} \to [0, 100 \Gamma_0] , \qquad \cl{S} \le \text{Diff}(\cl{M})$$
be data as specified in Subsection \ref{Subsect The Manifold}.

\begin{definition} \label{Defn G_0(t_0)}
For every $\Gamma_0 \gg 1$ sufficiently large (depending on $n, M, \ol{g}, f$) and every $0 \le t_0 < 1$,
assign a Riemannian metric $G_{\mathbf 0}( t_0) = G_{\mathbf 0}( \Gamma_0, t_0)$ to the closed manifold $\cl{M}$ as in Subsection \ref{Subsect The Manifold} above such that:
\begin{enumerate}
	\item \label{Defn G_0(t_0) Soliton Metric}
	(Soliton Metric Near ``Singular Points'')\footnote{In this item and throughout the paper, we shall slightly abuse notation and use $\phi_{t_0}^* \ol{g}$ to also mean the metric $\phi_{t_0}^* \ol{g}$ on each copy of $M$ in $\bigsqcup_{\omega \in A} M$. A similar abuse of notation applies to general tensors on $M$.}
		$$G_{\mathbf 0} (t_0) = ( 1 -t_0) \iota^* \phi_{t_0}^* \ol{g}
		\qquad \text{on } \{ x \in \cl{M} : f(x) \le \Gamma_0 \} \subset \cl{M}',$$

	\item \label{Defn G_0(t_0) Time Invt}
	(Independent of $t_0$ Away from ``Singular Points'')
	throughout the subset $\{ x \in \cl{M} :  f(x) \ge 16 \Gamma_0 \}$,
		$$G_{\mathbf 0 } ( t_0) = G_{\mathbf 0 } ( t_0 ' )	\qquad 
	\text{for all } 0 \le t_0, t_0 ' < 1,$$

	\item \label{Defn G_0(t_0) Cone Cgce}
	(Convergence to Cone in an Annular Neighborhood)
	for every $\Gamma_0$, the assignment $t_0 \mapsto G_{\mathbf 0} (t_0)$ is smooth in $t_0$ 
	and, for every $\omega \in A$, there exists an embedding $\Psi_\omega : \{ x \in \cl{M}'_\omega : \frac{\Gamma_0}{2} < f(x) < 32 \Gamma_0 \} \to \cone_1( \Sigma)$
	such that $G(t_0)$ smoothly converges to $\Psi^*_\omega g_{\cone}$ on this region as $t_0 \nearrow 1$,

	\item \label{Defn G_0(t_0) Curv Ests}
	(Curvature Estimates Away from ``Singular Points'')
	for all $m \in \mathbb{N}$, there exists $C$ depending only on $n, M, \ol{g}, f, m$ 
	such that,
	if $0 < 1 - t_0 \ll 1$ is sufficiently small,
	then the curvature $Rm = Rm[ G_{\mathbf 0 }(t_0) ]$ satisfies
		$$| \nabla^m Rm | \le  \frac{C}{ \Gamma_0^{1 + m/2} } \qquad \text{on } \{x \in \cl{M} : f(x) \ge \Gamma_0 /2 \} ,$$

	\item \label{Defn G_0(t_0) Non-Collapsed}
	(Non-Collapsed Away from ``Singular Points'')
	there exists $c > 0$ depending only on $n, M, \ol{g}, f$ such that
		$$Vol_{G_{\mathbf 0}(t_0)} ( B_{G_{\mathbf 0 }(t_0)} (x, 1) ) \ge c >0 \qquad \text{for all } x \in \{ y \in \cl{M} : f(y) > \Gamma_0 /2 \}$$
	for all $0 < 1-t_0 \ll 1$ sufficiently small,
	and 
	\item \label{Defn G_0(t_0) S Invariant}
	($\cl{S}$-Invariant)
		$$\psi^* G_{\mathbf 0}(t_0) = G_{\mathbf 0}(t_0) \qquad \text{for all } \psi \in \cl{S}.$$
\end{enumerate}
\end{definition}

Informally, these metrics spatially interpolate from the soliton metric $(1-t_0) \phi_{t_0}^* \ol{g}$ to a cone metric to a $t_0$-independent metric at scales determined by $\Gamma_0$.
Definition \ref{Defn G_0(t_0)} is summarized in Figure \ref{Figure G_0(t_0)}.
For the remainder of the paper, $G_{\mathbf 0}(t_0) = G_{\mathbf 0 }(\Gamma_0, t_0)$ always denotes an assignment of metrics as in Definition \ref{Defn G_0(t_0)}.

%solitonLong pic -- defines the soliton topolgy with nodes along the end
\tikzset{
    solitonLong/.pic ={
    \begin{scope}[transform shape, rotate=90]

    %the tip
    \pic[tqft/cap, name = a,
    at={(0,6)}, circle x radius = 0.5cm,
    cobordism edge/.style={draw},
    ];
    
    \pic[tqft/cylinder, name = b, genus =1,
    anchor=incoming boundary 1, at=(a-outgoing boundary 1),
    cobordism height = 0.5cm,
    circle x radius = 0.5cm,
    hole 1/.style = {draw, transform shape, rotate=-90, xshift=0.25cm, yshift=0.25cm},
    cobordism edge/.style={draw},
    ];

    %edge a
    \draw[rounded corners] (0.5, 3.5) 
    to [out=-90, in=117]
    (1,2) --
    node (-a0)  [at start, coordinate] {}
    node (-a/4) [pos=0.1, coordinate] {}
    node (-a/2) [pos=0.2, coordinate] {}
    node (-a1)  [pos=0.4, coordinate] {}
    node (-a16) [pos=0.6, coordinate] {}
    node (-a32) [pos=0.8, coordinate] {}
    node (-a100) [at end, coordinate] {}
    (3,-2) ;
    
    %edges b, c
    \draw[rounded corners] (-0.8,-2) 
    -- 
    node (-c100) [at start, coordinate] {}
    node (-c32)  [pos=0.2, coordinate] {}
    node (-c16)  [pos=0.4, coordinate] {}
    node (-c1)   [pos=0.6, coordinate] {}
    node (-c/2)  [pos=0.8, coordinate] {}
    node (-c0)   [at end, coordinate] {}
    (-0.2,2)
    -- (0,3) 
    -- (0.2, 2)
    -- 
    node (-b0) [at start, coordinate] {}
    node (-b/4) [pos=0.1, coordinate] {}
    node (-b/2) [pos=0.2, coordinate] {}
    node (-b1)  [pos=0.4, coordinate] {}
    node (-b16) [pos=0.6, coordinate] {}
    node (-b32) [pos=0.8, coordinate] {}
    node (-b100) [at end, coordinate] {}
    (0.8,-2);

    %edge d
    \draw[rounded corners] (-0.5, 3.5) 
    to [out=-90, in=63]
    (-1,2) --
    node (-d0) [at start, coordinate] {}
    node (-d/4) [pos=0.1, coordinate] {}
    node (-d/2) [pos=0.2, coordinate] {}
    node (-d1)  [pos=0.4, coordinate] {}
    node (-d16) [pos=0.6, coordinate] {}
    node (-d32) [pos=0.8, coordinate] {}
    node (-d100) [at end, coordinate] {}
    (-3,-2) ;

    \end{scope}
    }
}

\begin{figure} \label{Figure G_0(t_0)}
	\centering
\begin{tikzpicture}[transform shape, scale = 1.5]
    \pic (manifold1) {solitonLong};
    \draw[dotted] (manifold1-a/2) node (a/2) [anchor=south] {}
        to [out=240, in=120] 
        node (mid_ab/2) [midway, coordinate] {}
        node (mid2_ab/2) [pos=0.75, coordinate] {}
        (manifold1-b/2);
    \draw[dotted] (manifold1-a1) node (a1) [anchor=south] {}
        to [out=240, in=120] 
        node (mid_ab1) [midway, coordinate] {}
        (manifold1-b1);
    \draw[dotted] (manifold1-a16) node (a16) [anchor=south] {}
        to [out=240, in=120] 
        node (mid_ab16) [pos=0.4, coordinate] {}
        (manifold1-b16);
    \draw[dotted] (manifold1-a32) 
        node (a16) [anchor=south] {}
        to [out=240, in=120] 
        node (mid2_ab32) [pos=0.75, coordinate] {}
        (manifold1-b32);
    \path[] (manifold1-a100) to 
    node (mid_ab100) [pos=0.4, coordinate] {} 
    (manifold1-b100);

%soliton metric region
    \draw[thick, blue] (a1) -- 
    node [sloped, above, near end] {\tiny $(1-t_0) \iota^* \varphi_{t_0}^* \overline{g}$}
    ++(-4.2,0);

%convergence to cone region
    \draw[thick, blue] (mid2_ab/2) -- 
    node [sloped, above] {\tiny convergence to cone}
    (mid2_ab32);

%t_0 independent region
    \draw[thick, blue, ->] (mid_ab16) -- node [sloped, above] {\tiny independent of $t_0$} (mid_ab100);

%curvature estimates and non-collapsed region
    \path[] (manifold1-b/2) to 
    node (mid_ab/2) [coordinate] {}
    (manifold1-c/2);

    \path[] (manifold1-b100) to 
    node (mid_ab100) [coordinate] {}
    (manifold1-c100);
    
    \draw[thick, blue, ->] (mid_ab/2) -- 
    node [sloped, above] {\tiny curvature estimates}
    node [sloped, below] {\tiny \& non-collapsed}
    (mid_ab100);
   
%bottom dashed lines and \Gamma_0 labels
    \draw[dotted] (manifold1-d/2) node [anchor=north] {\small $\frac12 \Gamma_0$}
        to [out=120, in=240] (manifold1-c/2);
    \draw[dotted] (manifold1-d1) node [anchor=north] {\small $\Gamma_0$}
        to [out=120, in=240] (manifold1-c1);
    \draw[dotted] (manifold1-d16) node [anchor=north] {\small $16 \Gamma_0$} 
        to [out=120, in=240] (manifold1-c16);
    \draw[dotted] (manifold1-d32) node [anchor=north] {\small $32 \Gamma_0$}
        to [out=120, in=240] 
        (manifold1-c32) node (c32) [anchor=south] {};
\end{tikzpicture}
\caption[]{The metrics $G(\Gamma_0, t_0)$ on $\cl{M}$}
\end{figure}

\begin{remark}
	Since $\cl{M}$ is closed, the metrics $G_{\mathbf 0}(t_0)$ are necessarily complete.
	Because any $\psi \in \cl{S}$ respects the idenitifcations with $M$ \eqref{eqn S respects the soliton identifications 1} and is an isometry (Definition \ref{Defn G_0(t_0)} \eqref{Defn G_0(t_0) S Invariant}), it follows that if $\psi_1 , \psi_2 \in \cl{S}$ and 
	$\psi_1 ( \cl{M}_\omega'') = \psi_2( \cl{M}_\omega'')$ for some $\omega \in A$, then $\psi_1 = \psi_2$.
	In other words, the action of $\cl{S}$ on $A$ is free.
	In particular, $\cl{S}$ must be a finite group.
\end{remark}

\begin{remark} \label{remark G_0(t_0) exists}
	It is not clear a priori that such families of metrics $G_{\mathbf0}(\Gamma_0, t_0)$ exist.
	Appendix \ref{App Rounding Out Cones} provides a detailed construction of such an assignment of metrics when $\cl{M}$ is the double of a suitable truncation of $M$ and $\cl{S} \cong \mathbb{Z}_2$ is the action that exchanges the two pieces of the double.
	The assignment of metrics $G_0(t_0)$ informally looks like the soliton metric $(1-t_0)\phi_{t_0}^* \ol{g}$ on each piece of the double but has been smoothed near the doubling boundary.
\end{remark}

For $\gamma_0 > 0$, let $\eta_{\gamma_0} : M \to [0,1]$ denote a compactly supported bump function
such that:
\begin{enumerate}
	\item $\eta_{\gamma_0} (x) = 1$ for all $x \in M$ such that $f(x) \le \frac{\gamma_0}{2 ( 1 - t_0)}$,
	\item $\supp \eta_{\gamma_0} \subset \left \{ x \in M : f(x) < \frac{\gamma_0}{1 - t_0} \right \}$, 
	\item $\overline{\{x \in M :  0 < \eta_{\gamma_0} (x) < 1 \}} \subset \left\{x \in M :  \frac{\gamma_0}{2(1 - t_0)}    < f(x) < \frac{\gamma_0}{1 - t_0}  \right\}$, and
	
	\item for all $m \in \mathbb{N}$, there exists $C$ depending only on $n, M, \ol{g},f, m$ such that, if $0 < 1 - t_0 \ll1$ is sufficiently small, then 
		$$ | \ol{\nabla}^m \eta_{\gamma_0} |_{\ol{g}} \le C.$$
\end{enumerate}
Note that $\eta_{\gamma_0}$ depends on $t_0$ as well but we will typically elide this dependence.
Such bump functions $\eta_{\gamma_0} : M \to [0,1]$ can be obtained by taking a suitable bump function $\hat \eta : ( 0 , \infty ) \to [0, 1]$ and letting 
	$$\eta_{\gamma_0} ( x ) = \hat \eta \left(  \frac{ 1 - t_0}{\gamma_0}  f(x) \right).$$
In particular, Equations \eqref{Soliton Eqns}, Corollary \ref{Cor nabla^2 f 0 Eigmode}, and the proof of Proposition \ref{Prop Eigmode Derivative Growth} adapted to the case of $\delta = 0$ imply derivative bounds for $f$,
which then yield the desired derivative bounds for $\eta_{\gamma_0}(x) = \hat \eta \left(  \frac{ 1 - t_0}{\gamma_0}  f(x) \right)$.

For all $\mathbf p = ( p_1, \dots , p_{K} ) \in \R^K$ with $| \mathbf p | \le \ol{p} (1 - t_0)^{| \lambda_*|}$, define a (possibly non-smooth) symmetric 2-tensor $G_{\mathbf p }(t_0) = G_{\mathbf p } ( \Gamma_0 , \gamma_0, t_0)$ on $\cl{M}$ by
	\begin{equation} \label{G_p(t_0) Defining Eqn}
	G_{\mathbf p} (t_0 ) \doteqdot G_{\mathbf 0 }(t_0 ) 	
	+ ( 1 - t_0) \iota^* \phi_{t_0}^* \left( \eta_{\gamma_0}     \sum_{j = 1}^{K } p_j  h_{j}   \right).
	\end{equation}
Note that a priori these $G_{\mathbf p}(t_0)$ need not be metrics on $\cl{M}$ nor even smooth.
To conclude this subsection, we show when the symmetric 2-tensors $G_{\mathbf p} (t_0)$ are (smooth) metrics on $\cl{M}$ and estimate their difference from $G_{\mathbf 0 } (t_0)$.
	
\begin{lem} \label{Lem Support of eta_gamma}
	If $\Gamma_0 \gg 1$ is sufficiently large (depending on $n, M, \ol{g}, f$),
	$0 < \gamma_0 \le 1$, 
	and $0 \le t_0 < 1$,
	then
		$$\supp \phi_{t_0}^* \eta_{\gamma_0} \subset \{x \in M :  f(x) < \Gamma_0 \}.$$
\end{lem}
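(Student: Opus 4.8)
The plan is to translate the claim, via the diffeomorphism $\phi_{t_0}$, into a pointwise statement about $\phi_{t_0}^* f$, and then prove that statement by contraposition using the differential inequality of Lemma \ref{Lem Flow Est 1}. Since $\phi_{t_0} : M \to M$ is a diffeomorphism, $\supp \big( \phi_{t_0}^* \eta_{\gamma_0} \big) = \phi_{t_0}^{-1}\big( \supp \eta_{\gamma_0} \big)$, and property (2) of the bump function $\eta_{\gamma_0}$ gives $\supp \eta_{\gamma_0} \subset \{ y \in M : f(y) < \gamma_0/(1-t_0) \}$. Hence
\[
	\supp \big( \phi_{t_0}^* \eta_{\gamma_0} \big) \subset \Big\{ x \in M : (\phi_{t_0}^* f)(x) < \tfrac{\gamma_0}{1 - t_0} \Big\},
\]
so it suffices to show that, for $\Gamma$ large depending only on $n, M, \ol{g}, f$, one has $(\phi_{t_0}^* f)(x) < \gamma_0/(1-t_0) \implies f(x) < \Gamma$. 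Taking the contrapositive and using $0 < \gamma_0 \le 1$, it is enough to prove that $f(x) \ge \Gamma$ forces $(\phi_{t_0}^* f)(x) \ge 1/(1-t_0)$.

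To establish this, fix $x$ with $f(x) \ge \Gamma$ and set $u(t) \doteqdot (\phi_t^* f)(x)$ for $t \in [0, t_0]$. Lemma \ref{Lem Flow Est 1} provides, on this interval, both $(1-t)\,\partial_t u \ge 0$ — so $u$ is nondecreasing and $u(t) \ge u(0) = f(x) \ge \Gamma$ — and the lower bound $(1-t)\,\partial_t u \ge u - C/u$, where $C = C(n, M, \ol{g}, f)$. Multiplying through by $2u > 0$ and using $\partial_t(u^2) = 2u\,\partial_t u$ linearizes this into $(1-t)\,\partial_t(u^2 - C) \ge 2(u^2 - C)$. Provided $\Gamma^2 \ge 2C$, we have $u^2 - C \ge u^2 - \Gamma^2/2 \ge u^2/2 > 0$ on all of $[0,t_0]$, so dividing by $u^2 - C$ and integrating $\int_0^{t_0} \tfrac{2}{1-t}\,dt = 2\log\tfrac{1}{1-t_0}$ yields
\[
	u(t_0)^2 - C \ge \frac{u(0)^2 - C}{(1-t_0)^2} = \frac{f(x)^2 - C}{(1-t_0)^2} \ge \frac{\Gamma^2}{2(1-t_0)^2},
\]
hence $(\phi_{t_0}^* f)(x) = u(t_0) \ge \Gamma / \big( \sqrt{2}\,(1-t_0) \big)$.

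It then remains only to choose $\Gamma \ge \sqrt{2}\,\max\{1, \sqrt{C}\}$, which depends only on $n, M, \ol{g}, f$: this makes $\Gamma/\sqrt{2} \ge 1$, so $(\phi_{t_0}^* f)(x) \ge 1/(1-t_0) \ge \gamma_0/(1-t_0)$, completing the contrapositive and the proof. I expect essentially no serious obstacle; the one point worth care — the "hard part," such as it is — is that a crude logarithmic Grönwall estimate from $(1-t)\partial_t u \ge \tfrac{1}{2}u$ only gives growth of order $(1-t_0)^{-1/2}$, which is too weak to dominate $\gamma_0/(1-t_0)$ as $t_0 \nearrow 1$; one must instead use the sharper form of Lemma \ref{Lem Flow Est 1} together with the substitution $w = u^2 - C$, which promotes the growth of $u^2$ to the correct order $(1-t_0)^{-2}$. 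The rest is routine ODE comparison and tracking the dependence of constants.
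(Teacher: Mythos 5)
Your proof is correct and takes essentially the same route as the paper's: both reduce the claim to the pullback $\phi_{t_0}^{-1}(\supp\eta_{\gamma_0})$ and then integrate the differential inequality of Lemma \ref{Lem Flow Est 1} (your substitution $w=u^2-C$ reproduces exactly the paper's integrated bound $\phi_{t_0}^*f \ge \sqrt{C+(f^2-C)e^{2\tau_0}}$). The only difference is cosmetic: the paper argues forward with a case split $f(x)\le\sqrt{C}$ versus $f(x)\ge\sqrt{C}$ and takes $\Gamma>\sqrt{C+1}$, while you argue by contraposition with $\Gamma\ge\sqrt{2}\max\{1,\sqrt{C}\}$.
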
	
\begin{proof}
	By Lemma \ref{Lem Flow Est 1}, 
	there exists a positive constant $C = C(n, M, \ol{g}, f) > 0$ such that 
		$$\max \left\{ 0, \phi_t^* f - \frac{ C}{ \phi_t^* f} \right\} \le ( 1 - t) \partial_t ( \phi_t^* f ) 
		\qquad \text{for all } t < 1.$$
	In terms of the reparametrized time coordinate $\tau = - \ln ( 1 - t)$, this estimate reads
		$$\max \left\{ 0, \phi_{t (\tau) } ^* f - \frac{ C}{ \phi_{t( \tau) }^* f} \right\} \le  \partial_\tau ( \phi_{t ( \tau) }^* f ) 
		\qquad		\text{for all } \tau \in \R $$
	where $t(\tau) = 1 - e^{- \tau}$.
	Integrating with respect to $\tau$ and using that $\phi_0 = Id_M$, it follows that 
		$$f(x) \le \phi_{t( \tau)}^* f(x) 
		\qquad \text{ for all } (x, \tau) \in M \times \R, \text{ and} $$
		$$\sqrt{ C + (  f(x)^2 - C ) e^{2 \tau} } \le \phi_\tau^* f (x)
		\qquad \text{ for all } (x, \tau) \in \left\{ x \in M : f(x) \ge \sqrt{C} \right\} \times \R.$$
	
	Assume $\Gamma_0 > \sqrt{ C + 1 }$.
	Observe
	\begin{gather*} \begin{aligned}
		\supp \phi_{t_0}^* \eta_{\gamma_0} 	
		& = \phi_{t_0}^{-1} ( \supp \eta_{\gamma_0} ) 	\\
		& \subset \phi_{t_0}^{-1} \{ x \in M : f \le \gamma_0 e^{\tau_0} \} \\
		& = \{ x \in M : f \circ  \phi_{t_0}(x)  \le \gamma_0 e^{\tau_0} \}	\\
		& = \{ x \in M : [\phi_{t_0}^* f ]  (x)  \le \gamma_0 e^{\tau_0} \}	.
	\end{aligned} \end{gather*}
	Let $x \in M$ be such that $[\phi_{t_0}^* f ]  (x)  \le \gamma_0 e^{\tau_0}$.
	If $f(x) \le \sqrt{C}$, then $f(x) < \Gamma_0$.
	If $f(x) \ge \sqrt{C}$, then 
		$$\sqrt{ C + (  f(x)^2 - C ) e^{2 \tau_0} } \le \phi_{t_0}^* f(x) \le \gamma_0 e^{\tau_0}$$
	which implies	
		$$  f(x) \le \sqrt{ C + \gamma_0^2 - C e^{-2 \tau_0} } \le \sqrt{ C + 1 - C e^{-2 \tau_0} }  \le \sqrt{ C + 1} < \Gamma_0.$$
	In either case, it follows that $f(x) < \Gamma_0$.
\end{proof}
\begin{remark}
	In practice, the parameters $\Gamma_0, \gamma_0, t_0$ will be taken so that Lemma \ref{Lem Support of eta_gamma} implies
		$$\supp \phi_{t_0}^* \eta_{\gamma_0} \subset \{ f < \Gamma_0 / 2 \}.$$
	We make this condition explicit in Assumption \ref{Assume Smooth Metric and eta_gamma Supp}.
\end{remark}

Before continuing, we make a few notational simplifications that will be used throughout the rest of the paper.

	We use the notation ``$A \lesssim B$" to mean that there exists a constant $C$ such that $A \le CB$.
	We shall write ``$A \lesssim_{a,b} B$" when the constant $C$ depends on $a$ and $b$.
	
	We say ``$A \sim B$" when $A \lesssim B$ and $B \lesssim A$.
	Similarly, ``$A \sim_{a,b} B$" means $A \lesssim_{a,b} B$ and $B \lesssim_{a,b} A$.

	We shall write ``$\{ f < A \}$" for example to mean either 
	$\{ x \in M : f(x) < A \}$ or $\{ x \in \cl{M} : f(x) < A \}$.
	It will often be clear from context whether we refer to the former or the latter, but we will write $\{ f < A \} \subset M$ for example for additional specificity.

\begin{prop} \label{Prop Difference of G_p from G_0}
	If $\Gamma_0 \gg 1$ is sufficiently large (depending on $n, M, \ol{g}, f$),
	$0 < \gamma_0 \le 1$, 
	and $0 \le t_0 < 1$,	
	then 
		$$| G_{\mathbf p} (t_0) - G_{\mathbf 0} (t_0) |_{G_{\mathbf 0} (t_0)} \lesssim_{n, M, \ol{g}, f, \lambda_*}  \ol{p}  \gamma_0^{|\lambda_*|}$$
	for all $| \mathbf p | \le \ol{p} ( 1 - t_0)^{|\lambda_*|} $.
\end{prop}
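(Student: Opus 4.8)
The plan is to estimate the tensor $G_{\mathbf p}(t_0) - G_{\mathbf 0}(t_0) = (1 - t_0) \iota^* \phi_{t_0}^* \big( \eta_{\gamma_0} \sum_{j=1}^K p_j h_j \big)$ pointwise in the $G_{\mathbf 0}(t_0)$-norm. First I would observe that on the support of $\phi_{t_0}^* \eta_{\gamma_0}$ — which by Lemma \ref{Lem Support of eta_gamma} (and the remark following it) lies in $\{ f < \Gamma_0/2 \}$ — the metric $G_{\mathbf 0}(t_0)$ agrees with $(1-t_0) \iota^* \phi_{t_0}^* \ol{g}$ by part \eqref{Defn G_0(t_0) Soliton Metric} of Definition \ref{Defn G_0(t_0)}. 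Since pointwise tensor norms are diffeomorphism-invariant and scale homogeneously, for $x$ in this region
\begin{align*}
	| G_{\mathbf p}(t_0) - G_{\mathbf 0}(t_0) |_{G_{\mathbf 0}(t_0)}(x)
	&= \Big| (1-t_0)\iota^*\phi_{t_0}^*\Big( \eta_{\gamma_0} \sum_{j} p_j h_j \Big) \Big|_{(1-t_0)\iota^*\phi_{t_0}^*\ol{g}}(x) \\
	&= \eta_{\gamma_0}\big(\tl x\big) \, \Big| \sum_{j} p_j h_j \Big|_{\ol{g}}\big(\tl x\big)
\end{align*}
where $\tl x = \phi_{t_0}(\iota(x)) \in M$; off the support the difference is zero. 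So it suffices to bound $|\sum_j p_j h_j|_{\ol{g}}$ on $\{ f < \Gamma_0 \} \subset M$.

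Next I would use the eigenmode growth bounds. By Proposition \ref{Prop Asymp Conical Implies Eigmode Growth Assumption} the shrinker $(M, \ol{g}, f)$ satisfies the eigenmode growth condition, so for each $j \le K$ and a fixed small $\delta > 0$ (chosen once, e.g.\ $\delta < |\lambda_*| - \max_j|\lambda_j|$ is \emph{not} needed — any fixed $\delta$ works since we only need a polynomial-in-$f$ bound), $|h_j|_{\ol{g}}(y) \le C_j f(y)^{\max\{-\lambda_j,0\} + \delta}$ for all $y \in M$, with $C_j = C_j(n, M, \ol{g}, f, \lambda_j, \delta)$. On the support of $\phi_{t_0}^*\eta_{\gamma_0}$ we have the sharper information $[\phi_{t_0}^* f](x) \le \gamma_0 e^{\tau_0} = \gamma_0/(1-t_0)$ (this is exactly the set containing the support, from the proof of Lemma \ref{Lem Support of eta_gamma}), i.e.\ $f(\tl x) \le \gamma_0/(1-t_0)$. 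Hence, using the hypothesis $|\mathbf p| \le \ol p (1 - t_0)^{|\lambda_*|}$,
\begin{align*}
	\Big| \sum_{j=1}^K p_j h_j \Big|_{\ol{g}}(\tl x)
	&\le |\mathbf p| \sum_{j=1}^K |h_j|_{\ol{g}}(\tl x)
	\le \ol p (1-t_0)^{|\lambda_*|} \sum_{j=1}^K C_j \Big( \frac{\gamma_0}{1-t_0} \Big)^{\max\{-\lambda_j,0\}+\delta}.
\end{align*}
For each $j \le K$ we have $\lambda_j \ge \lambda_K > \lambda_*$, so $\max\{-\lambda_j, 0\} \le -\lambda_* = |\lambda_*|$; choosing $\delta$ small enough that $\max\{-\lambda_j,0\}+\delta \le |\lambda_*|$ for every $j \le K$ (possible since there are finitely many $\lambda_j$ with $j\le K$ and each satisfies $\max\{-\lambda_j,0\} < |\lambda_*|$), the exponent on $(1-t_0)^{-1}$ is at most $|\lambda_*|$, which cancels against the $(1-t_0)^{|\lambda_*|}$ prefactor. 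What remains is a bound of the form $\ol p \cdot \sum_j C_j (\gamma_0)^{\max\{-\lambda_j,0\}+\delta} \cdot (1-t_0)^{|\lambda_*| - \max\{-\lambda_j,0\}-\delta}$; since $|\lambda_*| - \max\{-\lambda_j,0\}-\delta \ge 0$ and $1 - t_0 \le 1$, and since $\gamma_0 \le 1$ so $\gamma_0^{\max\{-\lambda_j,0\}+\delta} \le \gamma_0^{|\lambda_*|}$ (using again that the exponent is $\le |\lambda_*|$ and $\gamma_0 \le 1$), we obtain $|\sum_j p_j h_j|_{\ol g}(\tl x) \lesssim_{n,M,\ol g,f,\lambda_*} \ol p \, \gamma_0^{|\lambda_*|}$. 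Combining with the first step and $0 \le \eta_{\gamma_0} \le 1$ gives the claim.

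The only genuine subtlety — the "main obstacle" — is making sure the constant depends only on the allowed quantities: this requires that $\delta$ be chosen as a fixed function of $(n, M, \ol g, f, \lambda_*)$ (namely so that $\max\{-\lambda_j,0\} + \delta \le |\lambda_*|$ for all $j \le K$, which is fine because $K$ and the $\lambda_j$ are fixed by Assumption \ref{Assume Shrinker}), and that the finitely many eigenmode-growth constants $C_1,\dots,C_K$ then depend only on $(n, M, \ol g, f, \lambda_*)$. One also needs $\Gamma_0$ large enough and $\gamma_0, t_0$ in the stated ranges so that Lemma \ref{Lem Support of eta_gamma} applies and $\supp \phi_{t_0}^*\eta_{\gamma_0} \subset \{ f < \Gamma_0/2 \}$, placing the support inside the region where $G_{\mathbf 0}(t_0)$ is literally the (rescaled, pulled-back) soliton metric — this is what lets us pass freely between the $G_{\mathbf 0}(t_0)$-norm and the $\ol g$-norm at the corresponding point. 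Everything else is the routine homogeneity/diffeomorphism-invariance of pointwise tensor norms.
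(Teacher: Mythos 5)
Your overall route is the same as the paper's: localize the difference to $\supp\phi_{t_0}^*\eta_{\gamma_0}\subset\{f<\Gamma_0/2\}$ where $G_{\mathbf 0}(t_0)=(1-t_0)\iota^*\phi_{t_0}^*\ol{g}$, use diffeomorphism and scale invariance of the pointwise norm to reduce to $\eta_{\gamma_0}\bigl|\sum_j p_j h_j\bigr|_{\ol{g}}$, invoke the eigenmode growth condition, and pick $\delta$ so that $\max\{-\lambda_j,0\}+\delta\le|\lambda_*|$ for all $j\le K$. However, the final step fails as written: from $\gamma_0\le1$ and $a_j:=\max\{-\lambda_j,0\}+\delta\le|\lambda_*|$ you conclude $\gamma_0^{a_j}\le\gamma_0^{|\lambda_*|}$, but for a base in $(0,1]$ the map $a\mapsto\gamma_0^{a}$ is non\emph{in}creasing, so the inequality runs the other way. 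Indeed your penultimate expression equals $\ol{p}\,\gamma_0^{|\lambda_*|}\sum_j C_j\bigl(\tfrac{1-t_0}{\gamma_0}\bigr)^{|\lambda_*|-a_j}$, and the ratio $(1-t_0)/\gamma_0$ is unbounded over the allowed range of parameters (e.g. $t_0=0$, $\gamma_0\to0$), so whenever $a_j<|\lambda_*|$ the claimed bound $\lesssim\ol{p}\,\gamma_0^{|\lambda_*|}$ does not follow from what precedes it.

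The missing ingredient is the lower bound $\inf_M f>0$ from Lemma \ref{Lem Nonflat and f>0} (which uses that the shrinker is not flat), and this is precisely how the paper closes the estimate: it keeps the factor $f^{|\lambda_*|}e^{\lambda_*\tau_0}\le\gamma_0^{|\lambda_*|}$ exactly (no slack), and bounds the leftover factor $f^{\max\{-\lambda_j,0\}+\delta-|\lambda_*|}$, which has a strictly negative exponent, by the constant $(\inf_M f)^{\max\{-\lambda_j,0\}+\delta-|\lambda_*|}$ depending only on $(n,M,\ol{g},f,\lambda_*)$. Your argument can be repaired in the same spirit by a case split: if $\gamma_0 e^{\tau_0}<\inf_M f$ then $\supp\eta_{\gamma_0}=\emptyset$ and the difference vanishes identically; otherwise $(1-t_0)/\gamma_0\le(\inf_M f)^{-1}$ and the offending factor $\bigl(\tfrac{1-t_0}{\gamma_0}\bigr)^{|\lambda_*|-a_j}$ is bounded by an admissible constant. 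Without some such use of $\inf_M f>0$, the last inequality is simply false, so as written the proof has a genuine (though short-to-fix) gap.
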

\begin{proof}
	By Lemma \ref{Lem Support of eta_gamma}, we may assume $\supp \phi_{t_0}^* \eta_{\gamma_0} \subset \{ f \le \Gamma_0 \} $.
	It then follows that $G_{\mathbf p}(t_0) - G_{\mathbf 0 }(t_0)$ is supported in $\{ f \le \Gamma_0 \}$.
	Moreover, $G_{\mathbf 0 }(t_0) = (1 - t_0) \iota^* \phi_{t_0}^* \ol{g}$ in this region by Definition \ref{Defn G_0(t_0)} \eqref{Defn G_0(t_0) Soliton Metric}.
	Set $\tau_0 \doteqdot - \ln ( 1 - t_0) $.
	It follows that
	\begin{align*}
		 & | G_{\mathbf p} (t_0) - G_{\mathbf 0} (t_0) |_{G_{\mathbf 0} (t_0)}	\\
		={}& \left| ( 1- t_0) \iota^*  \phi_{t_0}^* \left( \eta_{\gamma_0}     \sum_{j = 1}^{K } p_j  h_{j}  
		  \right) \right|_{(1-t_0) \iota^* \phi_t^* \ol{g}}	\\
		={}& \left| \eta_{\gamma_0}     \sum_{j = 1}^{K } p_j  h_{j}  
		   \right|_{\ol{g}}	\\
		\le{}& \eta_{\gamma_0}  \sum_{j = 1}^{K } \ol{p} e^{\lambda_* \tau_0} | h_j |_{\ol{g}} .
	\end{align*}		
	By the eigenmode growth condition (Definition \ref{Defn Eigmode Growth Assumption} and Proposition \ref{Prop Asymp Conical Implies Eigmode Growth Assumption}),
	this quantity may then be bounded above by	
\begin{align*}		
		&\quad  \eta_{\gamma_0}  \sum_{j = 1}^{K } \ol{p} e^{\lambda_* \tau_0} C(n, M, \ol{g}, f, j, \delta) f^{\max \{ - \lambda_j , 0 \} + \delta}		\\
		%&& 	(\text{eigenmode growth assumption})\\
		&\lesssim_{n, M, \ol{g}, f, \lambda_*, \delta} \eta_{\gamma_0} f^{- \lambda_* }   \ol{p}  e^{\lambda_* \tau_0} 
			\sum_{j = 1}^{K } f^{\max \{ - \lambda_j , 0 \} + \delta + \lambda_*} 
		\\
%		&\lesssim_{n, M, \ol{g}, f, K} \eta_{\gamma_0}  ( |C_K | + \ol{p} ) e^{\lambda_K \tau_0} f^{-\lambda_K }	\\
		%&& (  -\lambda_j + \lambda_K \le 0 , f > 0) \\
		&\le \ol{p} \gamma_0^{| \lambda_* |} \sum_{j = 1}^{K } f^{\max \{ \lambda_* - \lambda_j + \delta, \lambda_* + \delta \} } .
	\end{align*}
	Since $\lambda_* < 0$ and $\lambda_* < \lambda_K \le \dots \le \lambda_1$,
	there exists $\delta = \delta(n, M, \ol{g}, f, \lambda_*) > 0$ such that
		$$ \lambda_* - \lambda_j + \delta < 0 \text{ and }  \lambda_* + \delta  < 0
		\qquad \text{for all } 1 \le j \le K.$$
	Together with the fact that $f > 0$ by Lemma \ref{Lem Nonflat and f>0}, it follows that
		$$\ol{p} \gamma_0^{| \lambda_* |} \sum_{j = 1}^{K } f^{\max \{ \lambda_* - \lambda_j + \delta, \lambda_* + \delta \} } 
		\lesssim_{n, M, \ol{g}, f,  \lambda_*} \ol{p} \gamma_0^{| \lambda_* |}.$$
\end{proof}

\begin{cor} \label{Cor Init Data is Metric}
	If 
	$\Gamma_0 \gg 1$ is sufficiently large (depending on $n, M, \ol{g}, f$),
	$0 < \gamma_0  \ll 1$ is sufficiently small (depending on $n, M, \ol{g}, f, \lambda_*$),
	$0 < \ol{p} \le 1$, and
	$0 \le t_0 < 1$, 
	then $G_{\mathbf p}(t_0)$ is a smooth $\cl{S}$-invariant metric on the closed manifold $\cl{M}$ for all $|\mathbf p | \le \ol{p} ( 1 - t_0)^{| \lambda_*|} $.
\end{cor}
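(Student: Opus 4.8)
The plan is to realize $G_{\mathbf p}(t_0)$ as a small smooth perturbation of $G_{\mathbf 0}(t_0)$ and to control its size via Proposition \ref{Prop Difference of G_p from G_0}. Recall that $G_{\mathbf 0}(t_0)$ is, by Definition \ref{Defn G_0(t_0)}, a smooth $\mathbb{Z}_2$-invariant Riemannian metric on the closed manifold $\cl{M} = \cl{M}_{100\Gamma_0}$, and that by \eqref{G_p(t_0) Defining Eqn}
$$
G_{\mathbf p}(t_0) - G_{\mathbf 0}(t_0) = (1 - t_0)\, \iota^* \phi_{t_0}^* \Big( \eta_{\gamma_0} \sum_{j=1}^{K} p_j\, h_j \Big).
$$

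First I would verify smoothness of $G_{\mathbf p}(t_0)$ on all of $\cl{M}$. Each eigenmode $h_j$ is smooth by classical elliptic regularity (as noted after Theorem \ref{Thm Op Spectrum}), and $\eta_{\gamma_0}$, $\phi_{t_0}$, and $\iota$ are smooth on each hemisphere, so the perturbation is a smooth symmetric $2$-tensor there. The point needing care is smoothness across the equator, and this is handled by support control: assuming $\Gamma_0 \gg 1$, Lemma \ref{Lem Support of eta_gamma} applied with $\Gamma = \Gamma_0/2$ gives $\supp \phi_{t_0}^* \eta_{\gamma_0} \subset \{ f < \Gamma_0/2 \}$, so the perturbation is supported in $\{ f < \Gamma_0/2 \} \subset \cl{M}$, which lies well inside the two hemispheres, away from the equator $\{ f = 100\Gamma_0 \}$. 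Hence $G_{\mathbf p}(t_0)$ coincides with the smooth metric $G_{\mathbf 0}(t_0)$ in a neighborhood of the equator and is therefore a smooth symmetric $2$-tensor everywhere on $\cl{M}$.

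It then remains to check positive-definiteness, which is exactly where Proposition \ref{Prop Difference of G_p from G_0} does the work: for $\Gamma_0 \gg 1$, $0 < \gamma_0 \le 1$, $0 \le t_0 < 1$, $\ol p \le 1$, and $|\mathbf p| \le \ol{p}(1-t_0)^{|\lambda_*|}$, it yields
$$
| G_{\mathbf p}(t_0) - G_{\mathbf 0}(t_0) |_{G_{\mathbf 0}(t_0)} \le C(n, M, \ol{g}, f, \lambda_*)\, \ol{p}\, \gamma_0^{|\lambda_*|} \le C(n, M, \ol{g}, f, \lambda_*)\, \gamma_0^{|\lambda_*|}.
$$
Since this constant depends only on $n, M, \ol{g}, f, \lambda_*$ and not on $t_0$, I would choose $\gamma_0 \ll 1$ (depending only on $n, M, \ol{g}, f, \lambda_*$) so that the right-hand side is at most $\tfrac12$ uniformly in $t_0$. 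In a $G_{\mathbf 0}(t_0)$-orthonormal frame the matrix of $G_{\mathbf p}(t_0)$ then differs from the identity by a symmetric matrix whose Frobenius norm, hence operator norm, is at most $\tfrac12$; its eigenvalues lie in $[\tfrac12, \tfrac32]$, so $G_{\mathbf p}(t_0)$ is positive-definite at every point of $\cl{M}$. I do not expect a genuine obstacle here: the corollary is essentially bookkeeping built on Proposition \ref{Prop Difference of G_p from G_0}, the only subtle points being (i) the support statement, which transfers smoothness of $G_{\mathbf 0}(t_0)$ across the equator to $G_{\mathbf p}(t_0)$, and (ii) the $t_0$-independence of all constants, so that single choices of $\Gamma_0$ and $\gamma_0$ suffice for all $0 \le t_0 < 1$ at once.
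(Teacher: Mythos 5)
Your proposal is correct and follows essentially the same route as the paper: support control via Lemma \ref{Lem Support of eta_gamma} to keep the perturbation away from the equator (hence smoothness of $G_{\mathbf p}(t_0)$ on all of $\cl{M}$), plus Proposition \ref{Prop Difference of G_p from G_0} with $\gamma_0$ small to guarantee positive-definiteness. The extra details you supply (the orthonormal-frame eigenvalue bound and the $t_0$-uniformity of the constants) are just the bookkeeping the paper leaves implicit.
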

\begin{proof}
	By Lemma \ref{Lem Support of eta_gamma}, we may assume $\supp \phi_{t_0}^* \eta_{\gamma_0} \subset \{ f \le \Gamma_0 \} \subset M'$.
	In particular, $\iota^* \phi_{t_0}^* \eta_{\gamma_0}^*$ has support on the interior of $\cl{M}'$.
	Hence, $G_{\mathbf p}(t_0)$ is a smooth symmetric 2-tensor since the eigenmodes $h_j$ are smooth as well.
	Proposition \ref{Prop Difference of G_p from G_0} implies that $G_{\mathbf  p}(t_0)$ is positive definite for all $| \mathbf p | \le \ol{p} ( 1 - t_0)^{| \lambda_* |} $, $\ol{p} \le 1$, $0 < \gamma_0 \ll 1$ sufficiently small, and $0 \le t_0 < 1$.
	
	Finally, for any $\psi \in \cl{S}$,
	\begin{multline*}
		\psi^* G_{\mathbf p}(t_0) 
		= \psi^* G_{\mathbf 0}(t_0) + (1 -t_0) \psi^* \iota^* \phi_{t_0}^* \left( \eta_{\gamma_0} \sum_{j=1}^K p_j h_j \right) \\
		=  G_{\mathbf 0}(t_0) + (1 -t_0)  \iota^* \phi_{t_0}^* \left( \eta_{\gamma_0} \sum_{j=1}^K p_j h_j \right)
		= G_{\mathbf p}(t_0)
	\end{multline*}
	by Definition \ref{Defn G_0(t_0)} \eqref{Defn G_0(t_0) S Invariant} and Equation \eqref{eqn S respects the soliton identifications 1}.
\end{proof}

\begin{assumption} \label{Assume Smooth Metric and eta_gamma Supp}
	Henceforth, we will implicitly assume that 
	$\Gamma_0 \gg 1$ is sufficiently large (depending on $n, M, \ol{g}, f$), 
	$0 < \gamma_0 \ll1$ is sufficiently small (depending on $n, M, \ol{g}, f, \lambda_*)$,
	$0 < \ol{p} \le 1$, and 
	$0 \le t_0 < 1$
	so that Corollary \ref{Cor Init Data is Metric} implies $G_{\mathbf p} (t_0)$ is a smooth $\cl{S}$-invariant metric on $\cl{M}$ for all $| \mathbf p | \le \ol{p} ( 1 - t_0)^{| \lambda_* |}$ and
	Lemma \ref{Lem Support of eta_gamma} implies $\supp \phi_{t_0}^* \eta_{\gamma_0} \subset \{ f < \Gamma_0 / 2 \}$.
\end{assumption}

\begin{remark} \label{Rmk Initially Non-Kahler}
	By perturbing the initial metrics $G_{\mathbf 0}(\Gamma_0, t_0)$ on $\cl{M} \setminus \ol{\cl{M}'} \neq \emptyset$
	and averaging over $\cl{S}$ via $G \mapsto \frac1{\# \cl{S}} \sum_{\psi \in \cl{S}} \psi^* G$,
	we may additionally assume without loss of generality that the metrics $G_{\mathbf p}(t_0)$ are non-K{\"a}hler.
\end{remark}

%%%%%%%%%%%%%%%%%%%%%%%%%%%%%%%%%%%%%%%%%%%%%%%%%%%%%%%%
\subsection{The Flow} \label{Subsect The Flow}
%%%%%%%%%%%%%%%%%%%%%%%%%%%%%%%%%%%%%%%%%%%%%%%%%%%%%%%%

We let $G_{\mathbf p}(t) = G_{\mathbf p} ( t; \Gamma_0, \gamma_0, t_0)$ denote the maximal solution to the Ricci flow
	$$\partial_t G_{\mathbf p} (t) = -2 Rc [ G_{\mathbf p} (t) ]$$
on $\cl{M}$ with initial data $G_{\mathbf p}(t_0)$ given by \eqref{G_p(t_0) Defining Eqn} at time $t = t_0$.
Let $T(\mathbf p ) = T(\mathbf p; \Gamma_0, \gamma_0, t_0)$ denote the minimum of $1$ and the maximal existence time of $G_{\mathbf p }( t ; \Gamma_0, \gamma_0 , t_0)$.

Let $\eta_{\Gamma_0} : M \to [0,1]$ be a compactly supported bump function such that:
\begin{enumerate}
	\item $\eta_{\Gamma_0} (x) = 1$ for all $x \in M$ such that $f(x) \le \frac{1}{2} \Gamma_0$,
	\item $\text{supp}(\eta_{\Gamma_0}) \subset \{ f < \Gamma_0  \}$,
	\item $\overline{\{ x \in M : 0 < \eta_{\Gamma_0}(x) < 1 \}} \subset \{ \frac{4}{6} \Gamma_0 < f < \frac{5}{6} \Gamma_0 \}	 \subset \{ \frac{1}{2} \Gamma_0  < f < \Gamma_0   \}$, 
	\item for all $m \in \mathbb{N}$, $| {}^{(1 - t) \phi_t^* \ol{g} } \nabla^m \eta_{\Gamma_0} |_{(1-t) \phi_t^* \ol{g} } \lesssim_{n, M, \ol{g}, f, m} 1 $,
	and
	\item $| {}^{(1 - t) \phi_t^* \ol{g} } \nabla \eta_{\Gamma_0} |_{(1-t) \phi_t^* \ol{g} } \lesssim_{n, M, \ol{g}, f} \Gamma_0^{-1/2} $.
\end{enumerate}
Such bump functions can be obtained by taking a suitable bump function $\hat \eta : (0, \infty) \to [0,1]$ and letting
	$$\eta_{\Gamma_0} (x) \doteqdot \hat \eta \left( \frac{ f(x)}{ \Gamma_0 } \right).$$
The derivative bounds (4), (5) follow from the fact $(1 - t) \phi_t^* \ol{g}$ converges to (a pullback of) the cone metric $g_{\cl{C}}$ on $\{ \frac{1}{2} \Gamma_0 < f < \Gamma_0 \}$ as $t \nearrow 1$ so long as $\Gamma_0 \gg 1$ is sufficiently large depending on $n, M, \ol{g}, f$ (see Proposition \ref{Prop 2.1 in KW15}).
The coarser derivative bounds in (4) suffice for most applications.
The finer derivative bounds (5) yield Lemma \ref{Lem difference from RF} in Section \ref{Sect Prelim Ests}, which in turn simplifies estimates later in the paper.

Recall the diffeomorphisms $\iota_\omega : \cl{M}'_\omega \to M' = \{ f < 100 \Gamma_0 \} \subset M$ for every $\omega \in A$.
Thus, the push-forwards
	$$(\iota_\omega)_* \left( G_{\mathbf p} (t) |_{\cl{M}'_\omega} \right) = ( \iota_\omega^{-1} )^*  \left( G_{\mathbf p} (t) |_{\cl{M}'_\omega} \right)$$
define Riemannian metrics on $M' \subset M$ for each $\omega \in A$.
Since $G_{\mathbf p }(t_0)$ is $\cl{S}$-invariant (Definition \ref{Defn G_0(t_0)} \eqref{Defn G_0(t_0) S Invariant}), the metrics $G_{\mathbf p } (t)$ are also $\cl{S}$-invariant on $\cl{M}$ for all $t \in [t_0, T( \mathbf p) )$.
Because $\cl{S}$ acts transitively on $A$ and respects the identifications $\iota_\omega $ \eqref{eqn S respects the soliton identifications 2}, it follows that in fact 
	$$(\iota_\omega)_* \left( G_{\mathbf p} (t) |_{\cl{M}'_\omega} \right) = ( \iota_{\tilde \omega})_* \left( G_{\mathbf p} (t) |_{\cl{M}'_{\tilde \omega}} \right) \qquad \forall \omega, \tilde \omega \in A.$$
Therefore, we may identify the metric $\iota_* G_{\mathbf p}(t) = \iota_* G_{\mathbf p}(t) |_{\cl{M}'}$ on $\bigsqcup_{\omega \in A} M'$ with a metric on $M'$, which will still be denoted as $\iota_* G_{\mathbf p}(t)$.
We extend these metrics $\iota_* G_{\mathbf p }(t)$ on $M' = \{ x \in M : f(x) < 100 \Gamma_0 \}$ to metrics $\acute G_{\mathbf p}(t)$ on all of $M$ by setting
\begin{equation}
	\acute G_{\mathbf p}(t) 
	\doteqdot \eta_{\Gamma_0} \iota_* G_{\mathbf p}(t) + (1- \eta_{\Gamma_0}) (1 - t) \phi_t^* \ol{g} \qquad \text{for all } t \in [t_0, T(\mathbf p) ).
\end{equation}
	
\begin{remark}	 \label{Rem acute G Observations}
We make some elementary observations before continuing:
\begin{itemize}
	\item At $t = t_0$,
	\begin{gather*} \begin{aligned}
		\acute G_{\mathbf p}(t_0) 
		&= \eta_{\Gamma_0} \iota_* G_{\mathbf 0}(t_0) 
		+ (1-\eta_{\Gamma_0} ) ( 1 - t_0)  \phi_{t_0}^* \ol{g} 	
		+ \eta_{\Gamma_0}  ( 1 - t_0)  \phi_{t_0}^* \left( \eta_{\gamma_0}     \sum_{j = 1}^{K } p_j  h_{j}   \right)
	\end{aligned} \end{gather*}
	Recall $\supp \phi_{t_0}^* \eta_{\gamma_0} \subset \{ f < \Gamma_0 / 2 \}$ by Assumption \ref{Assume Smooth Metric and eta_gamma Supp} and $\iota_* G_{\mathbf 0 } (t_0) = ( 1 - t_0)  \phi_{t_0}^* \ol{g}$ on $\{ f \le \Gamma_0 \} \supset \supp \eta_{\Gamma_0}$ by Definition \ref{Defn G_0(t_0)}.
	Therefore,
		$$\acute G_{\mathbf p}(t_0) 
			= (1 - t_0) \phi_{t_0}^* \ol{g} 
			+  ( 1 - t_0)  \phi_{t_0}^* \left( \eta_{\gamma_0}    \sum_{j = 1}^{K } p_j  h_{j}  \right)$$
		throughout $M$.
			
	\item On the set $\{ x \in M : \eta_{\Gamma_0}(x)  =  1 \}$,
		$$\acute G_{\mathbf p}(t) = \iota_* G_{\mathbf p}(t) 	\qquad
		\text{for all } t \in [t_0, T( \mathbf p ) ).$$
	
	\item On the set $\{ x \in M : \eta_{\Gamma_0}(x) =  0\}$,
		$$\acute G_{\mathbf p}(t) = ( 1 - t) \phi_{t}^* \ol{g}\qquad
		\text{for all } t \in [t_0, T( \mathbf p ) ).$$
		
	\item $\acute G_{\mathbf p}(t)$ does \emph{not} solve Ricci flow.
	However, on open subsets of $\eta_{\Gamma_0}^{-1} ( \{ 1 \} )$ and $\eta_{\Gamma_0}^{-1} ( \{ 0 \} )$, $ \acute G_{\mathbf p}(t)$ \emph{does} solve Ricci flow since it's equal to the Ricci flow solution $\iota_* G_{\mathbf p}(t)$ or $ (1 - t) \phi_t^* \ol{g}$.
	
	In general,
	\begin{equation} \label{eqn almost RF error term}
		\partial_t \acute G_{\mathbf p} + 2 Rc [ \acute G_{\mathbf p}(t)] 
		= 2 Rc [ \acute G_{\mathbf p}(t)] -2 \eta_{\Gamma_0} Rc[ \iota_* G_{\mathbf p}(t) ] - 2 (1-\eta_{\Gamma_0}) Rc [ \phi_t^* \ol{g} ] ,
	\end{equation}
	which is supported on the closure of $\{ x \in M : 0 < \eta_{\Gamma_0}(x) < 1 \}$.
\end{itemize}
\end{remark}

\begin{definition} \label{Diffeo Defns}
	Recall from Definition \ref{Defn phi} that, for $t \in (-\infty, 1)$, $ \phi( \cdot, t) = \phi_t : M \to M$ denote the soliton diffeomorphisms given as the solution to
	\begin{equation} \tag{\ref{phi Evol Eqn}}
		\partial_t \phi_t %= \frac{1}{1 - t} X \circ \phi_t 
		= \frac{1}{1 - t} \ol{\nabla} f \circ \phi_t 
		\qquad \text{ with initial condition } \phi_{0} = Id_M.
	\end{equation}
	Define one-parameter families of functions
	\begin{equation} \label{Phi Defn}
		\Phi_t, \tl{\Phi}_t: M \to M \qquad \text{ by } \Phi_t = \phi_t \circ \tilde{\Phi}_t
	\end{equation}
	where $\tl{\Phi}_t : M \to M$ solves the time-dependent harmonic map heat flow
	\begin{equation} \label{tlPhi Evol Eqn}
		\partial_t \tl{\Phi}_t = \Delta_{\acute{G}_{\mathbf{p}}(t), (1-t) \phi_t^* \ol{g} } \tl{\Phi}_t
		\qquad \text{ with initial condition }
		\tl{\Phi}_{t_0} = Id_M.
	\end{equation}
	Let $T_\Phi( \mathbf p) \in [t_0,  T( \mathbf p)]$ denote the maximal time for which $\tl{\Phi}_t$, or equivalently $\Phi_t$, exists, is unique, and is a diffeomorphism.
\end{definition}

\begin{remark}
	Given that $\tl{\Phi}_t$ is defined as a harmonic map heat flow on \emph{non-compact} manifolds, there is some subtlety to establishing its short-time existence and uniqueness.
	Appendix \ref{Appdix HarMapFlow} addresses such well-posedness issues in detail,
	and Subsection \ref{Subsect Must Exit Through clB} applies the Appendix \ref{Appdix HarMapFlow} results to our particular setting to prove Theorem \ref{Main Thm}.
\end{remark}

\begin{lem}
	For all $t \in (t_0, T_\Phi(\mathbf p) )$, 
	$\Phi_t : M \to M$ satisfies
	\begin{equation} \label{Phi Evol Eqn}
		 \partial_t \Phi_t = \Delta_{   \acute G_{\mathbf p}(t)  ,  \ol{g} }  \Phi_t+ \frac{1}{1 - t} \ol{\nabla} f \circ \Phi_t
		\qquad \text{ with } \quad \Phi_{t_0} = \phi_{t_0} : M \to M.
	\end{equation}
\end{lem}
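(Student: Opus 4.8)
The plan is to compute the time-derivative of $\Phi_t = \phi_t \circ \tilde\Phi_t$ via the chain rule and then convert the harmonic map Laplacian with target $(1-t)\phi_t^*\bar g$ into one with target $\bar g$. First I would differentiate the composition:
\[
\partial_t \Phi_t
= (\partial_t \phi_t)\circ \tilde\Phi_t
+ (d\phi_t)\big|_{\tilde\Phi_t}\big[ \partial_t \tilde\Phi_t \big]
= \frac{1}{1-t}\,\bar\nabla f \circ \phi_t \circ \tilde\Phi_t
+ (d\phi_t)\big|_{\tilde\Phi_t}\big[ \Delta_{\acute G_{\mathbf p}(t),\,(1-t)\phi_t^*\bar g}\tilde\Phi_t \big],
\]
using \eqref{phi Evol Eqn} and \eqref{tlPhi Evol Eqn}. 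The first term is already $\tfrac{1}{1-t}\bar\nabla f\circ\Phi_t$ since $\phi_t\circ\tilde\Phi_t = \Phi_t$, so it remains to identify the second term with $\Delta_{\acute G_{\mathbf p}(t),\bar g}\Phi_t$.

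The key step is the behavior of the harmonic map Laplacian under post-composition with a diffeomorphism and under scaling of the target metric. Recall that for a map $u:(N,k)\to(P,h)$ and a diffeomorphism $\psi:(P,h)\to(P,h')$, one has the tension-field identity $\tau_{k,h'}(\psi\circ u) = d\psi[\tau_{k,h}(u)]$ when $\psi$ is an isometry, and more generally $\Delta_{k,h}(\psi\circ u)$ relates to $\Delta_{k,\psi^*h}(u)$ pushed forward by $d\psi$; here $\psi = \phi_t$ and $h = \bar g$, $\psi^* h = \phi_t^*\bar g$. Moreover the tension field (the map Laplacian) is invariant under constant rescaling of the target metric, so $\Delta_{\acute G_{\mathbf p}(t),(1-t)\phi_t^*\bar g}\tilde\Phi_t = \Delta_{\acute G_{\mathbf p}(t),\phi_t^*\bar g}\tilde\Phi_t$ for each fixed $t$, since $(1-t)$ is a positive constant in the spatial variables. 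Combining these, $(d\phi_t)\big[\Delta_{\acute G_{\mathbf p}(t),\phi_t^*\bar g}\tilde\Phi_t\big] = \Delta_{\acute G_{\mathbf p}(t),\bar g}(\phi_t\circ\tilde\Phi_t) = \Delta_{\acute G_{\mathbf p}(t),\bar g}\Phi_t$, which is exactly what is needed. The initial condition follows immediately: at $t=t_0$, $\tilde\Phi_{t_0}=\mathrm{Id}_M$, so $\Phi_{t_0}=\phi_{t_0}\circ\mathrm{Id}_M = \phi_{t_0}$.

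The main obstacle is the careful bookkeeping of the diffeomorphism-equivariance of the harmonic map Laplacian: one must verify precisely how $\Delta_{g_1,g_2}$ (domain metric $g_1$, target metric $g_2$) transforms when the target is pulled back by a diffeomorphism and the target map is post-composed, and confirm that the formula $\Delta_{g_1, \psi^*g_2}(u) = (d\psi)^{-1}\big[\Delta_{g_1,g_2}(\psi\circ u)\big]$ holds with the conventions in use here (in particular that the domain metric $\acute G_{\mathbf p}(t)$ plays no role in this transformation since only the target geometry is altered). Once this naturality statement is pinned down — it is a standard but convention-sensitive fact about the tension field — the proof is a one-line substitution. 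A clean way to organize it is to note that the harmonic map heat flow equation $\partial_t\tilde\Phi_t = \Delta_{\acute G_{\mathbf p}(t),(1-t)\phi_t^*\bar g}\tilde\Phi_t$ is precisely the statement that $\tilde\Phi_t$ is, at each time, the identity-homotopic flow whose composition with $\phi_t$ solves the stated equation for $\Phi_t$; differentiating $\phi_t\circ\tilde\Phi_t$ and invoking the chain rule for the tension field (which introduces no extra curvature terms because $\phi_t$ is a diffeomorphism, not merely a smooth map) yields \eqref{Phi Evol Eqn} directly.
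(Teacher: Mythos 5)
Your proposal is correct and takes essentially the same route as the paper: differentiate $\Phi_t=\phi_t\circ\tilde{\Phi}_t$ by the chain rule, discard the constant factor $(1-t)$ in the target metric, and use the equivariance of the map Laplacian under $\phi_t$ --- the paper merely packages that last step through the Laplacian of the identity map (its precomposition and pushforward identities) rather than the post-composition identity $\Delta_{g_1,\psi^*g_2}(u)=(d\psi)^{-1}\bigl[\Delta_{g_1,g_2}(\psi\circ u)\bigr]$ you invoke. The one point to state more carefully is your closing justification: the absence of extra Hessian terms is because $\phi_t\colon (M,\phi_t^*\ol{g})\to(M,\ol{g})$ is an isometry (hence totally geodesic), not because $\phi_t$ is a diffeomorphism per se, which is precisely the convention-sensitive identity you flagged and should verify.
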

\begin{proof}
	The proof uses the following general facts about the Laplacian that hold for all real numbers $\lambda > 0$, all  Riemannian metrics $g, \ol{g}$ on $M$, all smooth functions $f : M \to \mathbb{R}$, and all diffeomorphisms $\psi : M \to M$ :
	\begin{gather}
		 \Delta_{g, \lambda \ol{g} } f = \Delta_{ g, \ol{g} } f = \lambda \Delta_{\lambda g, \ol{g}} f,
		 \tag{1} \label{Lapl Scale Codomain}	\\
		( \Delta_{g, \ol{g} } f) \circ \psi = \Delta_{\psi^* g, \ol{g} } ( f \circ \psi), \text{ and}
		\tag{2} \label{Lapl Precompose} 	\\
		\psi_* \Delta_{g, \ol{g} } Id_M = \Delta_{( \psi^{-1})^* g, ( \psi^{-1})^* \ol{g} } Id_M.
		\tag{3}  \label{Pushforward Lapl Id}
	\end{gather}

	It follows that, with $\acute G = \acute G_{\mathbf p }(t)$,
	\begin{align*}
		& \partial_t \Phi_t 	\\
		={}& \partial_t ( \phi_t \circ \tl{\Phi}_t )
		&& (\text{equation \eqref{Phi Defn}})	\\
		={}& \left\{ ( \phi_t)_* [ \partial_t \tl{\Phi}_t \circ \tl{\Phi}_t^{-1} ] \right\} \circ \Phi_t 
		+ ( \partial_t \phi_t) \circ \tl{\Phi}_t
		&& (\text{chain rule}) \\
		={}& \left\{ ( \phi_t)_* \left[ \left( \Delta_{\acute{G}, (1-t) \phi_t^* \ol{g} } \tl{\Phi}_t \right)  \circ \tl{\Phi}_t^{-1} \right] \right\} \circ \Phi_t 
		&& (\text{equation \eqref{tlPhi Evol Eqn}} ) \\
		& \quad
		+ \frac{1}{1 - t} \ol{\nabla} f \circ \phi_t \circ \tl{\Phi}_t
		&&( \text{equation \eqref{phi Evol Eqn}} ) \\
		={}& \left\{ ( \phi_t)_* \left[ \left( \Delta_{\acute{G}, \phi_t^* \ol{g} } \tl{\Phi}_t \right)  \circ \tl{\Phi}_t^{-1} \right] \right\} \circ \Phi_t 
		&& (\text{item \eqref{Lapl Scale Codomain}}) \\
		& \quad
		+ \frac{1}{1 - t} \ol{\nabla} f \circ \Phi_t 
		&& ( \text{equation \eqref{Phi Defn}}) 	\\
		={}& \left\{ ( \phi_t)_* \left[  \Delta_{(\tl{\Phi}_t^{-1})^* \acute{G}, \phi_t^* \ol{g} } Id_M  \right] \right\} \circ \Phi_t 
		+ \frac{1}{1 - t} \ol{\nabla} f \circ \Phi_t 
		&& (\text{item \eqref{Lapl Precompose}}) \\
		={}& \left\{    \Delta_{( \phi_t^{-1})^* (\tl{\Phi}_t^{-1})^* \acute{G},  \ol{g} } Id_M   \right\} \circ \Phi_t 
		+ \frac{1}{1 - t} \ol{\nabla} f \circ \Phi_t 
		&& (\text{item \eqref{Pushforward Lapl Id}}) 	\\
		={}& \left\{    \Delta_{( \Phi_t^{-1})^*  \acute{G},  \ol{g} } Id_M   \right\} \circ \Phi_t 
		+ \frac{1}{1 - t} \ol{\nabla} f \circ \Phi_t 
		&& (\text{equation \eqref{Phi Defn}}) 	\\
		={}&    \Delta_{  \acute{G},  \ol{g} } \Phi_t
		+ \frac{1}{1 - t} \ol{\nabla} f \circ \Phi_t 
		&& (\text{item \eqref{Lapl Precompose}}) 	.
	\end{align*}
\end{proof}

For all $t \in [t_0, T_\Phi( \mathbf p) )$, define rescaled and reparametrized metrics $g_{\mathbf p} ( t) = g_{\mathbf p}(t; \Gamma_0, \gamma_0, t_0)$ on $M$ by
	\begin{equation} \label{g Defn}
		g_{\mathbf p}(t ) = \frac{1}{1 - t} \left(\Phi_t^{-1} \right)^* \acute G_{\mathbf p} (t).
	\end{equation}

\begin{lem} \label{Lem g Evol Eqn}
	For all $ t \in ( t_0, T_{\Phi} ( \mathbf p, t_0 ) )$, $g = g_{\mathbf p}(t; t_0)$ satisfies
	\begin{equation} \label{g Evol Eqn}
		 (1 - t) \partial_t g	
		= -2 Rc [g] 
		+ \cl{L}_{B_{\ol{g}}(g)} g - \cl{L}_{\ol{\nabla} f} g + g 	
		+ \left(\Phi_t^{-1} \right)^* \left\{ \partial_t \acute G_{\mathbf p} + 2 Rc[ \acute G_{\mathbf p} ] \right\} ,
		%& \qquad  -2 \left(\Phi_t^{-1} \right)^* \left\{ \eta_{\Gamma_0} Rc [ G ] + (1- \eta_{\Gamma_0}) Rc [ \phi_t^* \ol{g} ] - Rc [ \acute G ] \right\}	
	\end{equation}
	where $B_{\ol{g}} (g)$ is the vector field on $M$ from DeTurck's trick defined by
	\begin{equation} \label{DeTurck Vector Field Eqn}	
		B_{\ol{g}} (g) = - \Delta_{g, \ol{g} } Id_M
	\end{equation}
	or, in local coordinates, 
		$$B_{\ol{g}} (g)^k = g^{ij} \left( \Gamma(g)_{ij}^k - \Gamma( \ol{g} )_{ij}^k \right).$$
\end{lem}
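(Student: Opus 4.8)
The plan is to differentiate the defining identity \eqref{g Defn}, $g = \frac{1}{1-t}(\Phi_t^{-1})^*\acute G_{\mathbf p}$, directly in $t$, organizing the computation around three ingredients: the product rule for the scalar factor $\frac{1}{1-t}$, the transport formula for pullbacks by the moving family of diffeomorphisms $\Phi_t^{-1}$, and the scaling relations \eqref{Lapl Scale Codomain}--\eqref{Pushforward Lapl Id} together with DeTurck's definition \eqref{DeTurck Vector Field Eqn}. Since $\partial_t \tfrac{1}{1-t} = \tfrac{1}{(1-t)^2}$, multiplying through by $1-t$ gives $(1-t)\partial_t g = g + \partial_t\big[(\Phi_t^{-1})^*\acute G_{\mathbf p}\big]$, so everything reduces to identifying the last term.

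The key preliminary step is to recognize $\Phi_t$ as the flow of an explicit time-dependent vector field. By \eqref{Phi Evol Eqn} together with item \eqref{Lapl Precompose} (applied with $\psi = \Phi_t$), one has $\partial_t \Phi_t = U_t \circ \Phi_t$, where $U_t \doteqdot \Delta_{(\Phi_t^{-1})^*\acute G_{\mathbf p},\, \ol g}\, Id_M + \tfrac{1}{1-t}\ol\nabla f$ is a vector field on $M$. Differentiating $\Phi_t\circ\Phi_t^{-1}=Id_M$ shows that $\Phi_t^{-1}$ is the flow of $-\Phi_t^* U_t$, and hence the standard transport formula for pullbacks, after moving the Lie derivative across via $(\Phi_t^{-1})^*(\Phi_t^* U_t) = U_t$, yields
$$\partial_t\big[(\Phi_t^{-1})^*\acute G_{\mathbf p}\big] = (\Phi_t^{-1})^*\big(\partial_t \acute G_{\mathbf p}\big) \;-\; \cl{L}_{U_t}\big[(\Phi_t^{-1})^*\acute G_{\mathbf p}\big].$$
This sign bookkeeping for the inverse flow, together with the powers of $(1-t)$ below, is the step I expect to demand the most care, and I would verify it by explicitly differentiating $\Phi_t\circ\Phi_t^{-1}=Id_M$.

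It then remains to simplify the two terms on the right. Since $(\Phi_t^{-1})^*\acute G_{\mathbf p} = (1-t)g$, item \eqref{Lapl Scale Codomain} gives $\Delta_{(1-t)g,\ol g}Id_M = \tfrac{1}{1-t}\Delta_{g,\ol g}Id_M$, so that $U_t = \tfrac{1}{1-t}\big(\Delta_{g,\ol g}Id_M + \ol\nabla f\big) = \tfrac{1}{1-t}\big(-B_{\ol g}(g) + \ol\nabla f\big)$ by \eqref{DeTurck Vector Field Eqn}; consequently $\cl{L}_{U_t}\big[(\Phi_t^{-1})^*\acute G_{\mathbf p}\big] = (1-t)\,\cl{L}_{U_t}g = -\cl{L}_{B_{\ol g}(g)}g + \cl{L}_{\ol\nabla f}g$. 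For the other term, write $\partial_t\acute G_{\mathbf p} = -2 Rc[\acute G_{\mathbf p}] + \big\{\partial_t\acute G_{\mathbf p} + 2 Rc[\acute G_{\mathbf p}]\big\}$ and use diffeomorphism invariance together with the scale invariance of the Ricci tensor to get $(\Phi_t^{-1})^* Rc[\acute G_{\mathbf p}] = Rc\big[(\Phi_t^{-1})^*\acute G_{\mathbf p}\big] = Rc[(1-t)g] = Rc[g]$. Substituting both computations back into $(1-t)\partial_t g = g + \partial_t\big[(\Phi_t^{-1})^*\acute G_{\mathbf p}\big]$ produces exactly \eqref{g Evol Eqn}. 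The only genuine obstacle is the combined sign-and-scaling bookkeeping through the transport formula and the Laplacian-scaling identities; the remainder is formal.
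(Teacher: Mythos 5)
Your proposal is correct and follows essentially the same route as the paper: both differentiate the definition $g = \tfrac{1}{1-t}(\Phi_t^{-1})^*\acute G_{\mathbf p}$, use the transport formula for pullbacks together with differentiation of $\Phi_t\circ\Phi_t^{-1}=Id_M$, identify the velocity field via \eqref{Phi Evol Eqn} and the identities \eqref{Lapl Scale Codomain}--\eqref{Lapl Precompose} with DeTurck's definition \eqref{DeTurck Vector Field Eqn}, and split $\partial_t\acute G_{\mathbf p}$ into $-2Rc[\acute G_{\mathbf p}]$ plus the error, using diffeomorphism- and scale-invariance of $Rc$. The only difference is cosmetic bookkeeping (your $U_t$ versus the paper's $(\Phi_t)_*[\partial_t\Phi_t^{-1}\circ\Phi_t]$), and your sign and scaling computations check out.
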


\begin{remark}
	The vector field $B_{\ol{g}} (g)$ is also given by the equation
		$$\ol{g} \left\langle B_{\ol{g}} (g) , \cdot \right\rangle
		= - \div_{g} \ol{g} + \frac{1}{2} d( \tr_{g} \ol{g}).$$
\end{remark}

\begin{proof}
	Differentiating equation \eqref{g Defn} with respect to $t$, it follows that
	\begin{align*}
		& \partial_t g	\\
		={}& \frac{1}{ (1-t)^2} ( \Phi_t^{-1} )^* \acute{G}_{\mathbf p}
		+ \frac{1}{ 1 - t} ( \Phi_t^{-1})^* \cl{L}_{  \partial_t \Phi_t^{-1} \circ \Phi_t } \acute{G}_{\mathbf p} 
		+ \frac{1}{ 1 - t} ( \Phi_t^{-1} )^* \partial_t \acute{G}_{\mathbf p}	\\
		&  - \frac{2}{ 1 - t} ( \Phi_t^{-1} )^* Rc[ \acute{G}_{\mathbf p} ]
		+ \frac{2}{ 1 - t} ( \Phi_t^{-1} )^* Rc[ \acute{G}_{\mathbf p} ] \\
		={}& \frac{1}{1-t} g 
		+ \frac{1}{ 1 - t}  \cl{L}_{ ( \Phi_t)_* [ \partial_t \Phi_t^{-1} \circ \Phi_t ] } ( \Phi_t^{-1})^* \acute{G}_{\mathbf p} 
		- \frac{2}{ 1 - t}  Rc[ ( \Phi_t^{-1} )^* \acute{G}_{\mathbf p} ]	\\
		&  + \frac{1}{ 1 - t} (\Phi_t^{-1})^* \left\{ \partial_t \acute{G}_{\mathbf p} + 2 Rc[ \acute{G}_{\mathbf p} ] \right\} \\
		={}& \frac{1}{1-t} g 
		+ \cl{L}_{ ( \Phi_t)_* [ \partial_t \Phi_t^{-1} \circ \Phi_t ] } g
		- \frac{2}{ 1 - t}  Rc[ g ]	\\
		&  + \frac{1}{ 1 - t} (\Phi_t^{-1})^* \left\{ \partial_t \acute{G}_{\mathbf p} + 2 Rc[ \acute{G}_{\mathbf p} ] \right\} .
	\end{align*}
	
	Next, note that differentiating both sides of $\Phi_t \circ \Phi_t^{-1} = Id_M$ with respect to $t$ implies
		$$( \Phi_t)_* [ \partial_t \Phi_t^{-1} \circ \Phi_t ] = - ( \partial_t  \Phi_t ) \circ \Phi_t^{-1}.$$
	Hence,
	\begin{align*}
		& ( \Phi_t)_* [ \partial_t \Phi_t^{-1} \circ \Phi_t ]	\\
		={}& - ( \partial_t  \Phi_t ) \circ \Phi_t^{-1}	\\
		={}& - ( \Delta_{\acute{G}_{\mathbf p} , \ol{g} } \Phi_t ) \circ \Phi_t^{-1} - \frac{1}{1-t} \ol{\nabla} f 
		&& (\text{equation \eqref{Phi Evol Eqn}}) \\
		={}& -  \Delta_{ (\Phi_t^{-1})^* \acute{G}_{\mathbf p} , \ol{g} } Id_M   - \frac{1}{1-t} \ol{\nabla} f 	
		&& (\text{item \eqref{Lapl Precompose} above}) \\
		={}& - \frac{1}{1 - t} \Delta_{ \frac{1}{1-t} (\Phi_t^{-1})^* \acute{G}_{\mathbf p} , \ol{g} } Id_M   - \frac{1}{1-t} \ol{\nabla} f 	
		&&  (\text{item \eqref{Lapl Scale Codomain} above})	\\
		={}& - \frac{1}{1 - t} \Delta_{ g , \ol{g} } Id_M   - \frac{1}{1-t} \ol{\nabla} f 	
		&& ( \text{equation \eqref{g Defn}} )\\
		={}& \frac{1}{1-t} B_{\ol{g}} (g) - \frac{1}{1 - t} \ol{\nabla} f .
	\end{align*}

	Therefore,
	\begin{align*}
		( 1 - t) \partial_t g 	
		={}&  g 
		+ (1-t) \cl{L}_{ ( \Phi_t)_* [ \partial_t \Phi_t^{-1} \circ \Phi_t ] } g
		- 2  Rc[ g ]	
		+  (\Phi_t^{-1})^* \left\{ \partial_t \acute{G}_{\mathbf p} + 2 Rc[ \acute{G}_{\mathbf p} ] \right\} \\
		={}& - 2  Rc[ g ]	
		+  \cl{L}_{B_{\ol{g}} (g)}  g - \cl{L}_{ \ol{\nabla} f} g
		+ g 
		+  (\Phi_t^{-1})^* \left\{ \partial_t \acute{G}_{\mathbf p} + 2 Rc[ \acute{G}_{\mathbf p} ] \right\} .
	\end{align*}
\end{proof}

\begin{cor} \label{Cor h(t) Evol Eqn}
	For all $ t \in ( t_0, T_{\Phi} ( \mathbf p ) )$, 
	the tensor
		$$h = h_{\mathbf p} (t; \Gamma_0, \gamma_0, t_0) \doteqdot g_{\mathbf p} (t ; \Gamma_0, \gamma_0, t_0) - \ol{g}$$
	satisfies an evolution equation given in local coordinates by
	\begin{gather} \label{h(t) Evol Eqn}   \begin{aligned}
		 ( 1 - t) \partial_t h_{ij}	
		={}& g^{ab} \ol{\nabla}_a \ol{\nabla}_b h_{ij} - \ol{\nabla}_{\ol{\nabla} f} h
		+ 2 \ol{R} \indices{_i^k_j^l} h_{kl}	\\
		&  + \ol{R} \indices{_{ja}^p_b} \left( \ol{g}_{ip} \tl{h}^{ab}  + \hat{h}^{ab} h_{ip} \right) 
		+ \ol{R} \indices{_{ia}^p_b} \left( \ol{g}_{jp} \tl{h}^{ab}  + \hat{h}^{ab} h_{jp} \right) 	\\
		& + g^{ab} g^{pq} ( \ol{\nabla} h * \ol{\nabla} h )_{abpqij}		
		+ \left(\Phi_t^{-1} \right)^* \left\{ \partial_t \acute G_{\mathbf p} + 2 Rc[ \acute G_{\mathbf p} ] \right\}	,
		%& \quad - 2 \left( \Phi_t^{-1} \right)^* \left \{ \eta_{\Gamma_0} Rc[G ] + (1- \eta_{\Gamma_0} Rc [ \phi_t^* \ol{g} ]- Rc [ \acute G ] \right \}	\\
	\end{aligned} 	\end{gather}
	where the tensors $\hat{h}, \tl{h}$ are defined by
	\begin{equation} \label{Inverse h Tensors}
		(g_{\mathbf p})^{ab} = \ol{g}^{ab} - \hat{h}^{ab}		\quad \text{and} \quad 
		\hat{h}^{ab} = \ol{g}^{ak} \ol{g}^{bl} h_{kl} + \tl{h}^{ab},
	\end{equation} 
	$( \ol{\nabla} h * \ol{\nabla} h )_{abpqij}$ here is given by
	\begin{align*}
		2 ( \ol{\nabla} h * \ol{\nabla} h )_{abpqij}
		={}& ( \ol{\nabla}_i h_{pa} ) ( \ol{\nabla}_j h_qb)	
		+ 2 (\ol{\nabla}_a h_{jp})(\ol{\nabla}_q h_{ib} )
		- 2 (\ol{\nabla}_a h_{jp} )( \ol{\nabla}_b h_{iq} )	\\
		& 
		- 2 ( \ol{\nabla}_j h_{pa} ) ( \ol{\nabla}_b h_{iq} )
		- 2 ( \ol{\nabla}_i h_{pa} ) ( \ol{\nabla}_b h_{jq} ),
	\end{align*}
	and the raising and lowering of indices on the curvature terms in equation \eqref{h(t) Evol Eqn} is done with respect to $\ol{g}$.
\end{cor}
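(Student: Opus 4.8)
\emph{Proof plan.} Since $\ol{g}$ does not depend on $t$ we have $\partial_t h_{\mathbf p} = \partial_t g_{\mathbf p}$, so the corollary amounts to rewriting the right-hand side of the evolution equation \eqref{g Evol Eqn} from Lemma~\ref{Lem g Evol Eqn} in terms of $h = g_{\mathbf p}(t) - \ol{g}$. The strategy is to expand each of the terms $-2Rc[g]$, $\mathcal{L}_{B_{\ol{g}}(g)}g$, $-\mathcal{L}_{\ol{\nabla}f}g$, $g$ of \eqref{g Evol Eqn} about $\ol{g}$, to eliminate $\ol{\nabla}^2 f$ everywhere using the soliton identity $\ol{\nabla}^2 f = \tfrac12\ol{g} - \ol{Rc}$ from \eqref{Soliton Eqns}, and then to observe that all resulting terms involving neither $h$ nor its derivatives cancel, as do all terms of the schematic forms $\ol{Rc}*h$ and $g*h$ with constant coefficient. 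The inhomogeneous term $\left(\Phi_t^{-1}\right)^*\{\partial_t \acute{G}_{\mathbf p} + 2Rc[\acute{G}_{\mathbf p}]\}$ is carried through untouched.

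First I would record the classical Ricci--DeTurck expansion of the gauge-fixed Ricci operator about the background $\ol{g}$. Writing $\Gamma(g)_{ij}^k - \Gamma(\ol{g})_{ij}^k = \tfrac12 g^{kl}(\ol{\nabla}_i h_{jl} + \ol{\nabla}_j h_{il} - \ol{\nabla}_l h_{ij})$, substituting into $Rc[g]$ and into $B_{\ol{g}}(g)$ from \eqref{DeTurck Vector Field Eqn}, and using $\ol{\nabla}\ol{g} = 0$ so that $\ol{\nabla}g = \ol{\nabla}h$, one obtains
\begin{equation*}
-2Rc[g]_{ij} + \left(\mathcal{L}_{B_{\ol{g}}(g)} g\right)_{ij}
= g^{ab}\ol{\nabla}_a\ol{\nabla}_b h_{ij}
- g^{ab}g_{ip}\,\ol{g}^{pq}\ol{R}_{jaqb}
- g^{ab}g_{jp}\,\ol{g}^{pq}\ol{R}_{iaqb}
+ g^{ab}g^{pq}(\ol{\nabla}h * \ol{\nabla}h)_{abpqij},
\end{equation*}
with $(\ol{\nabla}h*\ol{\nabla}h)_{abpqij}$ exactly the combination stated in the corollary; this is a standard identity, which I would either cite or verify by direct expansion. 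The principal term and the quadratic term are already in the desired form, so the only substantive work is the two curvature contractions. Substituting $g^{ab} = \ol{g}^{ab} - \hat{h}^{ab}$, $g_{ip} = \ol{g}_{ip} + h_{ip}$, and $\hat{h}^{ab} = \ol{g}^{ak}\ol{g}^{bl}h_{kl} + \tl{h}^{ab}$ from \eqref{Inverse h Tensors}, and using the pair-exchange symmetry of $\ol{R}$ together with the symmetry of $h$ --- which give, e.g., $\ol{g}^{ak}\ol{g}^{bl}h_{kl}\,\ol{R}_{jaib} = \ol{R}\indices{_i^k_j^l}h_{kl}$ and $\ol{g}^{ab}\ol{g}^{pq}h_{ip}\ol{R}_{jaqb} = \ol{g}^{kl}\ol{R}_{jl}h_{ik}$ --- one splits the two contractions as
\begin{equation*}
- g^{ab}g_{ip}\,\ol{g}^{pq}\ol{R}_{jaqb} - g^{ab}g_{jp}\,\ol{g}^{pq}\ol{R}_{iaqb}
= -2\ol{R}_{ij} - \ol{g}^{kl}\!\left(\ol{R}_{il}h_{kj} + \ol{R}_{jl}h_{ki}\right) + 2\ol{R}\indices{_i^k_j^l}h_{kl}
+ \ol{R}\indices{_{ja}^p_b}\!\left(\ol{g}_{ip}\tl{h}^{ab} + \hat{h}^{ab}h_{ip}\right)
+ \ol{R}\indices{_{ia}^p_b}\!\left(\ol{g}_{jp}\tl{h}^{ab} + \hat{h}^{ab}h_{jp}\right).
\end{equation*}

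Next I would expand the drift terms. Using $\mathcal{L}_{\ol{\nabla}f}\ol{g} = 2\ol{\nabla}^2 f$, $(\mathcal{L}_{\ol{\nabla}f}h)_{ij} = \ol{\nabla}_{\ol{\nabla}f}h_{ij} + h_{kj}\ol{\nabla}_i\ol{\nabla}^k f + h_{ik}\ol{\nabla}_j\ol{\nabla}^k f$, and then the soliton identity $\ol{\nabla}^2 f = \tfrac12\ol{g} - \ol{Rc}$ in both places, one gets
\begin{equation*}
-\mathcal{L}_{\ol{\nabla}f}g
= \left(-\ol{g}_{ij} + 2\ol{R}_{ij}\right) - \ol{\nabla}_{\ol{\nabla}f}h_{ij} - h_{ij} + \ol{g}^{kl}\!\left(\ol{R}_{il}h_{kj} + \ol{R}_{jl}h_{ki}\right),
\end{equation*}
while $g = \ol{g}_{ij} + h_{ij}$. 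Adding the three expanded blocks together with the unchanged error term, the $-2\ol{R}_{ij}$ from the curvature contractions cancels the $+2\ol{R}_{ij}$ produced by the soliton substitution, $-\ol{g}_{ij}+\ol{g}_{ij}$ cancels, $-h_{ij}+h_{ij}$ cancels, and the Ricci--$h$ terms $-\ol{g}^{kl}(\ol{R}_{il}h_{kj}+\ol{R}_{jl}h_{ki})$ cancel against the equal terms produced by the drift expansion; what survives is exactly \eqref{h(t) Evol Eqn}. I expect the main obstacle to be purely organizational: producing the Ricci--DeTurck expansion with the precise quadratic term $(\ol{\nabla}h*\ol{\nabla}h)_{abpqij}$ and correctly-signed curvature contractions, and then juggling the Riemann symmetries carefully enough to isolate exactly the Lichnerowicz piece $2\ol{R}\indices{_i^k_j^l}h_{kl}$ and the higher-order pieces involving $\tl{h}$ and $\hat{h}$; no new idea beyond the soliton equations \eqref{Soliton Eqns} and the standard DeTurck computation is required.
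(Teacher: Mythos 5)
Your proposal is correct and follows essentially the same route as the paper: start from Lemma \ref{Lem g Evol Eqn}, invoke the standard Ricci--DeTurck expansion of $-2Rc[g]+\cl{L}_{B_{\ol{g}}(g)}g$ about $\ol{g}$ (the paper cites Shi's Lemma 2.1 for exactly this identity, including the quadratic term $(\ol{\nabla}h*\ol{\nabla}h)_{abpqij}$), expand the two curvature contractions via \eqref{Inverse h Tensors}, and use the soliton equation \eqref{Soliton Eqns} to cancel the $\ol{Rc}_{ij}$, $\ol{g}_{ij}$, $h_{ij}$, and $\ol{Rc}*h$ terms while carrying the error term $(\Phi_t^{-1})^*\{\partial_t\acute G_{\mathbf p}+2Rc[\acute G_{\mathbf p}]\}$ along untouched. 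The only difference is cosmetic bookkeeping: you expand $-\cl{L}_{\ol{\nabla}f}\ol{g}=-\ol{g}+2\ol{Rc}$ directly, whereas the paper substitutes $-2\ol{Rc}=\cl{L}_{\ol{\nabla}f}\ol{g}-\ol{g}$ and then expands $\cl{L}_{\ol{\nabla}f}h$; the cancellations are identical.
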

\begin{proof}
	Throughout this proof, any raising or lowering of indices is done with respect to $\ol{g}$.
	However, we shall only raise or lower indices on terms involving the curvature $\ol{Rm}$.
	Throughout the proof, we shall also write $g$ instead of $g_{\mathbf p}(t; \Gamma_0, \gamma_0,  t_0)$ to simplify the notation.
	
	By \cite[Lemma 2.1]{Shi89},
	\begin{gather*} \begin{aligned}
		& \quad -2 Rc[g] + \cl{L}_{B_{\ol{g}} ( g) } g	\\
		&= g^{ab} \ol{\nabla}_a \ol{\nabla}_b g_{ij} 
		- g^{ab} g_{ip} \ol{R} \indices{_{ja}^p_b} - g^{ab} g_{jp} \ol{R} \indices{_{ia}^p_b}
		+ g^{ab} g^{pq} ( \ol{\nabla} g * \ol{\nabla} g )	\\
		& = g^{ab} \ol{\nabla}_a \ol{\nabla}_b h_{ij} 
		- g^{ab} g_{ip} \ol{R} \indices{_{ja}^p_b} - g^{ab} g_{jp} \ol{R} \indices{_{ia}^p_b}
		+ g^{ab} g^{pq} ( \ol{\nabla} h * \ol{\nabla} h )
		&& (\text{since } \ol{\nabla} \ol{g} = 0).
	\end{aligned} \end{gather*}
	Next, consider the tensors $\hat{h}, \tl{h}$ given as in equation \eqref{Inverse h Tensors} by
		$$g^{ab} = \ol{g}^{ab} - \hat{h}^{ab}		\quad \text{and} \quad 
		\hat{h}^{ab} = \ol{g}^{ak} \ol{g}^{bl} h_{kl} + \tl{h}^{ab}.$$
	%Note $\hat{h} \approx h $ and $\tl{h} = O( |h|^2)$.
	We expand the Riemann tensor terms using this notation to obtain
	\begin{gather*} \begin{aligned}
		& \quad 
		 g^{ab} g_{ip} \ol{R} \indices{_{ja}^p_b} \\
		& = ( \ol{g}^{ab} - \hat{h}^{ab} ) ( \ol{g}_{ip} + h_{ip} ) \ol{R} \indices{_{ja}^p_b} 	\\
		& = \ol{g}^{ab}\ol{g}_{ip} \ol{R} \indices{_{ja}^p_b} 	
		- \hat{h}^{ab} \ol{g}_{ip}\ol{R} \indices{_{ja}^p_b} 	
		+ \ol{g}^{ab} h_{ip} \ol{R} \indices{_{ja}^p_b} 	
		- \hat{h}^{ab} h_{ip}\ol{R} \indices{_{ja}^p_b} 	 \\
		& = \ol{Rc}_{ji}
		- \ol{g}_{ip} \ol{g}^{ak} \ol{g}^{bl} h_{kl} \ol{R} \indices{_{ja}^p_b} 	
		- \ol{g}_{ip} \tl{h}^{ab} \ol{R} \indices{_{ja}^p_b} 	
		+ h_{ip} \ol{Rc}_j^p
		- \hat{h}^{ab} h_{ip}\ol{R} \indices{_{ja}^p_b} 	 \\
		& = \ol{Rc}_{ji} 
		 - \ol{R} \indices{_j^k_i^l} h_{kl}
		+ h_{ip} \ol{Rc}_j^p
		- \ol{R} \indices{_{ja}^p_b} \left( \ol{g}_{ip} \tl{h}^{ab}  + \hat{h}^{ab} h_{ip} \right) .
	\end{aligned} \end{gather*}
	
	It follows that	
	\begin{gather*} \begin{aligned}
		& \quad (1 - t) \partial_t h	\\
		& = ( 1 - t) \partial_t g		\\
		& = -2 Rc[g] + \cl{L}_{B_{\ol{g}}(g)} g -  \cl{L}_X g+ g	
		+ \left( \Phi_t^{-1} \right)^* \left\{ \partial_t \acute G_{\mathbf p} + 2 Rc[ \acute G_{\mathbf p} ] \right\} 
		&& ( X \doteqdot \ol{\nabla} f)\\
		& = g^{ab} \ol{\nabla}_a \ol{\nabla}_b h_{ij} 
		- 2 \ol{Rc}_{ij} 
		+ 2 \ol{R} \indices{_i^k_j^l} h_{kl}
		-  \ol{Rc}_j^p h_{ip}		-  \ol{Rc}_i^p h_{jp}\\
		& \quad + \ol{R} \indices{_{ja}^p_b} \left( \ol{g}_{ip} \tl{h}^{ab}  + \hat{h}^{ab} h_{ip} \right) 
		+ \ol{R} \indices{_{ia}^p_b} \left( \ol{g}_{jp} \tl{h}^{ab}  + \hat{h}^{ab} h_{jp} \right) 		\\
		& \quad + g^{ab} g^{pq} ( \ol{\nabla} h * \ol{\nabla} h ) -  \cl{L}_X g+ g
		+ \left( \Phi_t^{-1} \right)^* \left\{ \partial_t \acute G_{\mathbf p} + 2 Rc[ \acute G_{\mathbf p} ] \right\} \\
		& = g^{ab} \ol{\nabla}_a \ol{\nabla}_b h_{ij} 
		+ 2 \ol{R} \indices{_i^k_j^l} h_{kl}
		-  \ol{Rc}_j^p h_{ip}		-  \ol{Rc}_i^p h_{jp}\\
		& \quad + \ol{R} \indices{_{ja}^p_b} \left( \ol{g}_{ip} \tl{h}^{ab}  + \hat{h}^{ab} h_{ip} \right) 
		+ \ol{R} \indices{_{ia}^p_b} \left( \ol{g}_{jp} \tl{h}^{ab}  + \hat{h}^{ab} h_{jp} \right) 		\\	
		& \quad + g^{ab} g^{pq} ( \ol{\nabla} h * \ol{\nabla} h ) -  \cl{L}_X g + \cl{L}_X \ol{g} + g - \ol{g}
		&& \left(\ol{Rc} + \frac{1}{2} \cl{L}_X \ol{g} = \frac{1}{2} \ol{g} \right)	\\
		& \quad + \left( \Phi_t^{-1} \right)^* \left\{ \partial_t \acute G_{\mathbf p} + 2 Rc[ \acute G_{\mathbf p} ] \right\} \\
		& = g^{ab} \ol{\nabla}_a \ol{\nabla}_b h_{ij} 
		+ 2 \ol{R} \indices{_i^k_j^l} h_{kl}	\\
		& \quad + \ol{R} \indices{_{ja}^p_b} \left( \ol{g}_{ip} \tl{h}^{ab}  + \hat{h}^{ab} h_{ip} \right) 
		+ \ol{R} \indices{_{ia}^p_b} \left( \ol{g}_{jp} \tl{h}^{ab}  + \hat{h}^{ab} h_{jp} \right) 		\\	
		& \quad + g^{ab} g^{pq} ( \ol{\nabla} h * \ol{\nabla} h )
		-  \ol{Rc}_j^p h_{ip}	-  \ol{Rc}_i^p h_{jp}-  \cl{L}_X h + h	\\
		& \quad + \left( \Phi_t^{-1} \right)^* \left\{ \partial_t \acute G_{\mathbf p} + 2 Rc[ \acute G_{\mathbf p} ] \right\}.
		\end{aligned} \end{gather*}
	Now, expand the Lie derivative term and use the soliton equations \eqref{Soliton Eqns} to compute that
	\begin{align*}
		\cl{L}_X h_{ij}	
		={}& \ol{\nabla}_X h_{ij} + h_{ip} \ol{\nabla}_j X^p + h_{jp} \ol{\nabla}_i X^p 	\\
		={}&  \ol{\nabla}_X h_{ij} + h_{ip} \ol{\nabla}_j \ol{\nabla}^p f + h_{jp} \ol{\nabla}_i \ol{\nabla}^p f	\\
		={}&  \ol{\nabla}_X h_{ij} - \ol{Rc}_j^p h_{ip}    - \ol{Rc}_i^p h_{jp}  + \frac{1}{2}  \ol{g}_j^p h_{ip} +  \frac{1}{2} \ol{g}_i^p h_{jp}	\\
		={}&  \ol{\nabla}_X h_{ij} - \ol{Rc}_j^p h_{ip}    - \ol{Rc}_i^p h_{jp}  + h_{ij} .
	\end{align*}
	It follows that
	\begin{gather*} \begin{aligned}
		( 1 - t) \partial_t h
		& = g^{ab} \ol{\nabla}_a \ol{\nabla}_b h_{ij} - \ol{\nabla}_X h
		+ 2 \ol{R} \indices{_i^k_j^l} h_{kl}	\\
		& \quad + \ol{R} \indices{_{ja}^p_b} \left( \ol{g}_{ip} \tl{h}^{ab}  + \hat{h}^{ab} h_{ip} \right) 
		+ \ol{R} \indices{_{ia}^p_b} \left( \ol{g}_{jp} \tl{h}^{ab}  + \hat{h}^{ab} h_{jp} \right) 		\\
		& \quad + g^{ab} g^{pq} ( \ol{\nabla} h * \ol{\nabla} h )		
 		+ \left( \Phi_t^{-1} \right)^* \left\{ \partial_t \acute G_{\mathbf p} + 2 Rc[ \acute G_{\mathbf p} ] \right\}.
	\end{aligned}	\end{gather*}
%	Some advantages of this form are that it expresses the evolution equation in a quasilinear form 
%	that captures simply all second order derivatives of $h$ that appear.
%	Note additionally that the second line in this equation has at least quadratic order dependence on $h$.
%	As before, the last line is identically zero on the set where $\eta \circ \Phi_t^{-1} \equiv 1$.
\end{proof}

\begin{remark} \label{Remark clE_2 Observations}
		The term
		$$\left( \Phi_t^{-1} \right)^* \left\{ \partial_t \acute G_{\mathbf p} + 2 Rc[ \acute G_{\mathbf p} ] \right\}$$
		captures how far $\acute{G} = \acute{G}_{\mathbf p}(t)$ is from being a Ricci flow.
		By \eqref{eqn almost RF error term}, it may also be written as
		$$(\Phi_t^{-1})^* \left\{  \partial_t \acute G + 2 Rc [ \acute G ] \right\} =
		-2(\Phi_t^{-1})^* \left \{ \eta_{\Gamma_0} Rc[\iota_* G ] + (1- \eta_{\Gamma_0}) Rc [ \phi_t^* \ol{g} ]- Rc [ \acute G ] \right \}$$
		where $G = G_{\mathbf p}(t )$.

		As noted in Remark \ref{Rem acute G Observations}
			$$\left \{ \eta_{\Gamma_0} Rc[ \iota_* G ] + (1- \eta_{\Gamma_0}) Rc [ \phi_t^* \ol{g} ]- Rc [ \acute G ] \right \}$$
		is supported on the closure of $\Omega =  \{x \in M:  0 < \eta_{\Gamma_0}(x) < 1 \}$.
		Hence, 
		$$ \supp \left( (\Phi_t^{-1})^* \left\{  \partial_t \acute G + 2 Rc [ \acute G ] \right\}  \right) \subset  \Phi_t \left( \ol{\Omega} \right) = \phi_t \left( \tl{\Phi}_t \left( \ol{\Omega} \right) \right).$$
		% is supported on $\Phi_t \left( \ol{\Omega} \right) = \phi_t \left( \tl{\Phi}_t \left( \ol{\Omega} \right) \right)$.
		Later, in Lemma \ref{Lem Drift of the Grafting Region}, we will estimate the location of this region in $M$ in order to obtain estimates on $(\Phi_t^{-1})^* \left\{  \partial_t \acute G + 2 Rc [ \acute G ] \right\} $.
		
%		\item Lemma \ref{Lem difference from RF} implies that 
%		\begin{gather*} \begin{aligned}
%			& \quad \left |  - 2 \left( \Phi_t^{-1} \right)^* 	\left \{ \eta Rc[G ] + (1- \eta) Rc [ \phi_t^* \ol{g} ]- Rc [ \acute G ] \right \} \right|_{g}	\\
%			& = \left | \partial_t \acute G + 2 Rc [ \acute G] \right|_{\frac{1}{1 - t} \acute G} 	\\
%			& = ( 1 - t) \left| \partial_t \acute G + 2 Rc [ \acute G] \right|_{ \acute G} 	\\
%			& \le (1-t)C
%		\end{aligned} \end{gather*}
\end{remark}

\begin{remark} \label{Remark tau}
	Given the derivatives $( 1 - t) \partial_t$ that appear in the above evolution equations, 
	it will be convenient to work with respect to the reparameterized time coordinate
		$$\tau \doteqdot - \ln ( 1 - t).$$
	We also denote $\tau_0 \doteqdot - \ln ( 1 - t_0)$ or equivalently $(1 - t_0) = e^{- \tau_0}$.
	Note that $(1 - t_0) \ll 1$ being sufficiently small is equivalent to $\tau_0 \gg1 $ being sufficiently large.
	
	Observe that $( 1 - t) \frac{ \partial}{ \partial t} = \frac{ \partial }{ \partial \tau}$.
	Henceforth, we may simplify notation by writing $h( \tau)$ or $\phi_{\tau}$ for example to denote $h( t( \tau) )$ or $\phi_{t ( \tau)}$, respectively, where $t(\tau) = 1 - e^{- \tau}$.

\end{remark}

%%%%%%%%%%%%%%%%%%%%%%%%%%%%%%%%%%%%%%%%%%%%%%%%%%%%%%%%
\subsection{Definition of the Box} \label{Subsect Defn of the Box}
%%%%%%%%%%%%%%%%%%%%%%%%%%%%%%%%%%%%%%%%%%%%%%%%%%%%%%%%
Recall from assumption \ref{Assume Shrinker} that 
we have fixed a $L^2_f$-orthonormal basis $\{ h_j \}_{j \in \mathbb{N}}$ of eigenmodes 
with corresponding eigenvalues $\{ \lambda_j \}_{j \in \mathbb{N}}$ 
for the weighted Lichnerowicz Laplacian $\ol{\Delta}_f + 2 \ol{Rm} = \ol{\Delta} - \ol{\nabla}_{\ol{\nabla} f } + 2 \ol{Rm}$.
Moreover, $\lambda_* < 0$ was chosen so that $\lambda_* \notin \{ \lambda_j \}_{j \in \mathbb{N}}$ 
and $K = K (n, M, \ol{g}, f, \lambda_*)$ denotes the index such that $\lambda_{K+1} < \lambda_* < \lambda_K$.

\begin{definition} \label{Defn Projs}
For general $h \in L^2_f(M)$, define projections
\begin{gather*} \begin{aligned}
	h_u \doteqdot & \sum_{j=1}^K ( h, h_k)_{L^2_f} h_k,	\text{ and}\\
	h_s \doteqdot & \sum_{j=K+1}^\infty ( h, h_k)_{L^2_f} h_k = h - h_u.	\\
\end{aligned} \end{gather*}
\end{definition}

\begin{definition} \label{Defn Box}
For $\lambda_*< 0$  as in assumption \ref{Assume Shrinker},
constants $ \mu_u, \mu_s, \epsilon_0, \epsilon_1, \epsilon_2 \in (0,1)$, and an interval $I \subset \R$, 
define 
	$$\cl{B} = \cl{B}[ \lambda_* , \mu_u, \mu_s, \epsilon_0, \epsilon_1, \epsilon_2,   I ]$$
to be the set of smooth sections $h$ of $\pi^* Sym^2 T^*M \to M \times I$ where $\pi : M \times I \to M$ is the projection map
such that:
\begin{enumerate}
	\item $h(\cdot, \tau)$ is compactly supported in $M$ for all $\tau \in I$,

	\item (Fine $L^2_f$ Estimates)
\begin{gather*} \begin{aligned}
	\left \| h_u( \cdot, \tau) \right \|_{L^2_f} &\le \mu_u e^{\lambda_* \tau}		
	&& \text{for all } \tau \in I, \\
	\left \| h_s( \cdot, \tau) \right \|_{L^2_f} &\le \mu_s e^{\lambda_* \tau}		
	&& \text{for all } \tau \in I, \\
\end{aligned} \end{gather*}
	and
	\item (Coarse $C^2$ Estimates)
\begin{align*}
	| h |_{\ol{g}} 
	&\le \epsilon_0	
	&& \text{for all } \tau \in I, \\
	| \ol{\nabla} h |_{\ol{g}} &\le \epsilon_1
	&& \text{for all } \tau \in I, \text{ and} \\
	| \ol{\nabla}^2 h |_{\ol{g}} &\le \epsilon_2
	&& \text{for all } \tau \in I. 
\end{align*}
\end{enumerate}
\end{definition}

With a slight abuse of notation, we will also write
	$$``\cl{B}[ \lambda_* , \mu_u, \mu_s, \epsilon_0, \epsilon_1, \epsilon_2 , [\tau(t_0), \tau(t_1) ) ]"$$
	$$ \text{as } ``\cl{B}[ \lambda_* , \mu_u, \mu_s, \epsilon_0, \epsilon_1, \epsilon_2 , [t_0, t_1) ]"$$
for example.

\begin{definition} \label{Defn clP}
	For $\lambda_*$ as in assumption \ref{Assume Shrinker};
	$\ol{p} > 0$;
	$\Gamma_0 \gg 1$ and $0 < \gamma_0 \ll 1$ as in assumption \ref{Assume Smooth Metric and eta_gamma Supp};
	$\mu_u, \mu_s, \epsilon_0, \epsilon_1, \epsilon_2 \in (0,1)$;
	and $0 \le t_0 < t_1 \le 1$;
	let 
		$$\cl{P} = \cl{P} [\lambda_*, \ol{p}, \Gamma_0,   \gamma_0, \mu_u, \mu_s, \epsilon_0, \epsilon_1, \epsilon_2, t_0,  t_1 ] \subset \R^K$$
	denote the set of $\mathbf p  = ( p_1, \dots, p_{K } ) \in \mathbb{R}^{K}$
	such that:
	\begin{enumerate}
	\item 	$| \mathbf p | \le \ol{p} e^{\lambda_* \tau_0}$,
	\item 	$T( \mathbf p ) \in[ t_1, 1]$, and
	\item there exists a smooth function $\tl{\Phi} : M \times [t_0, t_1) \to M$ 
	solving \eqref{tlPhi Evol Eqn} 
	\begin{equation*}
		\partial_t \tl{\Phi} = \Delta_{\acute G_{\mathbf p }(t), ( 1- t) \phi_t^* \ol{g} } \tl{\Phi}
		\text{ on } M \times (t_0, t_1)
		\qquad \text{with initial condition }
		\tl{\Phi}( \cdot, t_0) = Id_M
	\end{equation*}
	where
	$\tl{\Phi}_t = \tl{\Phi}( \cdot, t) : M \to M$ is a diffeomorphism for all $t \in [t_0, t_1)$ and 
		$$h_{\mathbf p }(t) \doteqdot \frac{1}{1-t} ( \Phi_t^{-1} )^* \acute G_{\mathbf p } (t) - \ol{g} 
		\in \cl{B}[ \lambda_* ,  \mu_u, \mu_s, \epsilon_0, \epsilon_1, \epsilon_2 , [t_0, t_1) ].$$
	\end{enumerate}
\end{definition}

%%%%%%%%%%%%%%%%%%%%%%%%%%%%%%%%%%%%%%%%%%%%%%%%%%%%%%%%
\section{Preliminary Estimates} \label{Sect Prelim Ests}
%%%%%%%%%%%%%%%%%%%%%%%%%%%%%%%%%%%%%%%%%%%%%%%%%%%%%%%%

The goal of this section is to estimate and control the error term
	$$( \Phi_t^{-1})^* \left\{ \partial_t \acute G_{\mathbf p} + 2 Rc[ \acute G_{\mathbf p} ] \right\}$$
that appears in \eqref{h(t) Evol Eqn}.
First, we obtain bounds for $\partial_t \acute G + 2 \acute{Rc}$.
Such estimates are obtained from Perelman's pseudolocality theorem applied to our setting.
Second, we estimate the location of the support of $( \Phi_t^{-1})^* \left\{ \partial_t \acute G_{\mathbf p} + 2 Rc[ \acute G_{\mathbf p} ] \right\}$.
As noted in Remark \ref{Remark clE_2 Observations}, such estimates are obtained through drift estimates on $\tl{\Phi}_t, \phi_t$ which are recorded in Lemma \ref{Lem Drift of the Grafting Region}.

We begin by recording the following version of Perelman's pseudolocality theorem (Theorem 10.1 in \cite{Perelman02})\footnote{See also \cite[Ch. 21]{ChowEtAl10} and \cite{Lu10}.}
and its consequences for our setting.

\begin{thm} [Perelman's Pseudolocality Theorem] \label{Thm Perelman's Pseudolocality}
	For any dimension $n$, there exist $\epsilon_n, \delta_n > 0$ with the following property:
	If $(M^n , g(t) )$ is a smooth Ricci flow on a closed manifold $M$ defined for $t \in [t_0, T)$ that, at initial time $t_0$, satisfies
	\begin{align*}
		|Rm|_{g(t_0)}(x) &\le \frac{1}{ r_0^2} \quad \text{ for all  } x \in B_{g(t_0)} ( x_0, r_0) \\
		\text{and } \text{Vol}_{g(t_0) } B_{g(t_0)} (x_0, r_0) &\ge ( 1 - \delta_n ) \omega_n r_0^n 
	\end{align*}
	for some $x_0 \in M$ and $r_0 > 0$,
	then
		$$|Rm|_{g(t)}(x,t) \le \frac{1}{\epsilon_n^2 r_0^2} \qquad \text{for all } x \in B_{g(t)} (x_0, \epsilon_n r_0), t \in [t_0, \min \{ T , t_0 +  (\epsilon_n r_0)^2 \} ).$$
\end{thm}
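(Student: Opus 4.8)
The plan is to invoke Perelman's original argument: this is \cite[Theorem 10.1]{Perelman02}, with detailed expositions in \cite[Ch.~21]{ChowEtAl10} and \cite{Lu10}, and for a Ricci flow on a \emph{closed} manifold no additional care about completeness or bounded curvature at infinity is needed. The only thing I would verify is that the formulation above matches those references after relabeling — and it does, since the hypotheses (a two-sided bound $|Rm|_{g(t_0)} \le r_0^{-2}$ on a metric ball together with an almost-Euclidean volume lower bound, both at the initial time) and the conclusion (an interior curvature bound $|Rm| \le (\epsilon_n r_0)^{-2}$ on a smaller concentric ball for a time of order $(\epsilon_n r_0)^2$) are exactly those appearing there.

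For orientation I would also recall the structure of the proof. One argues by contradiction: were no admissible pair $\epsilon_n, \delta_n$ to exist, there would be a sequence of closed Ricci flows $(M_k^n, g_k(t))$, points $x_k$, and scales $r_k > 0$ satisfying the hypotheses with $\epsilon_k, \delta_k \to 0$ yet violating the conclusion. A point-selection argument in space-time, followed by parabolic rescaling so that the offending curvature equals $1$, produces a sequence of flows whose initial data is Euclidean on ever larger scales while a point of curvature $1$ persists. The contradiction then comes from a localized form of the monotonicity of Perelman's $\mathcal{W}$-entropy (equivalently, a sharp localized logarithmic Sobolev inequality): the almost-Euclidean volume hypothesis forces the localized entropy to be nearly zero initially, this near-optimality is preserved up to controlled errors over the short time interval in question, and yet the presence of a definite amount of curvature would drive the same localized entropy negative by a fixed amount.

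The step I expect to be the main obstacle — and the reason I would not reproduce the proof in full — is the localization itself: controlling the boundary error terms generated by the spatial cutoff in the $\mathcal{W}$-monotonicity formula, and carefully tracking which points remain inside the region where the hypotheses provide information as the flow evolves. These are precisely the technical points carried out in \cite[\S10]{Perelman02} and the expositions cited above, so in the write-up I would simply quote the statement from there.
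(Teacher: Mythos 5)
Your proposal matches the paper exactly: the paper does not prove this theorem but simply quotes it as Perelman's pseudolocality theorem, citing \cite{Perelman02} (the curvature form, Theorem 10.3 there) together with the expositions in \cite[Ch.~21]{ChowEtAl10} and \cite{Lu10}, which is precisely what you propose. Your verification that the hypotheses and conclusion match after relabeling, and your outline of the contradiction/entropy-localization argument, are accurate but not needed beyond the citation.
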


\begin{prop}[Consequences of Pseudolocality] \label{Prop Pseudolocality App}
	Let $\Omega \Subset  \{ f > \Gamma_0/2 \} \subset \cl{M}$.\footnote{$U \Subset V$ means that the closure of $U$ is a compact subset of $V$.}
	If 
	$\Gamma_0 \gg 1$ is sufficiently large (depending on $n, M, \ol{g}, f$),
	then there exists $\cl{K}_0$ (depending on $n, M, \ol{g}, f, \Gamma_0, \Omega$) such that if
	$0 < 1 - t_0 \ll 1$ is sufficiently small (depending on $n, M, \ol{g}, f, \Gamma_0, \Omega$), 
	then
	for all $| \mathbf p | \le \ol{p} e^{\lambda_* \tau_0}$
		$$| Rm[ G_{\mathbf p } (t) ] |_{G_{\mathbf{p}}(t)} (x,t) \le \cl{K}_0 	\qquad \text{for all } (x,t) \in \Omega \times [t_0, T( \mathbf p ) ).$$
\end{prop}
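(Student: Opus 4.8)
The plan is to apply Perelman's pseudolocality theorem (Theorem \ref{Thm Perelman's Pseudolocality}) to the Ricci flow $G_{\mathbf p}(t)$ on the closed manifold $\cl M$. The key point is to verify the pseudolocality hypotheses at the initial time $t_0$ at every point $x_0$ whose $f$-value is large enough that a fixed-size metric ball around it lands in the region where $G_{\mathbf p}(t_0)$ agrees with (a pullback of) the soliton metric $(1-t_0)\phi_{t_0}^*\ol g$, which in turn — after the parabolic rescaling that identifies it with $\ol g$ near that point — looks like the cone metric $g_{\cl C}$. More precisely: by Assumption \ref{Assume Smooth Metric and eta_gamma Supp} the eigenmode perturbation is supported in $\{f<\Gamma_0/2\}$, and by Definition \ref{Defn G_0(t_0)}\eqref{Defn G_0(t_0) Soliton Metric} we have $G_{\mathbf p}(t_0)=(1-t_0)\iota^*\phi_{t_0}^*\ol g$ on $\{f\le \Gamma_0/2\}$. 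So on a neighborhood of $\Omega\Subset\{f>\Gamma_0/2\}$ — wait, one must be careful: $\Omega$ sits in $\{f>\Gamma_0/2\}$, the \emph{complement} of where the metric is exactly the soliton. The right statement is that $G_{\mathbf p}(t_0)$ equals the soliton metric on $\{f \le \Gamma_0\}$ (Definition \ref{Defn G_0(t_0)}\eqref{Defn G_0(t_0) Soliton Metric}) and on $\{f\ge\Gamma_0/2\}$ it converges smoothly to a pullback of $g_{\cl C}$ as $t_0\nearrow 1$ (Definition \ref{Defn G_0(t_0)}\eqref{Defn G_0(t_0) Cone Cgce}, together with item \eqref{Defn G_0(t_0) Curv Ests}); in either case, for $\Gamma_0\gg1$ and $1-t_0\ll1$, the rescaled metric $\frac{1}{1-t_0}G_{\mathbf p}(t_0)$ is, on a neighborhood of $\Omega$ of fixed $\ol g$-radius $\rho=\rho(n,M,\ol g,f)$, uniformly $C^2$-close to the soliton metric $\ol g$ which itself is $C^2$-close to a cone.

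First I would fix $\Omega\Subset\{f>\Gamma_0/2\}$ and choose $\rho>0$ (depending only on $n,M,\ol g,f$) small enough that for every $p\in M$ with $f(p)\ge\Gamma_0/2$, the $\ol g$-ball $B_{\ol g}(p,\rho)$ is contained in a region where (after restricting attention to one hemisphere) $\ol g$ is $C^2$-close to the cone metric — using Lemma \ref{Lem A.1 KW15}\eqref{Lem A.1 KW15 2}–\eqref{Lem A.1 KW15 3} and the quadratic curvature decay, plus Proposition \ref{Prop Potential Func Grows Quadratically} to translate $f$-levels into distances. Since cone metrics have, at scale $\rho$ around points far out, curvature $\le C\rho^2/\Gamma_0 \ll 1/\rho^2$ and volume ratio $\ge (1-\delta_n)\omega_n$ for $\Gamma_0$ large, and since $\frac{1}{1-t_0}G_{\mathbf p}(t_0)$ is $C^2$-close to $\ol g$ on $B_{\ol g}(p,\rho)$ (by \eqref{Defn G_0(t_0) Soliton Metric}, \eqref{Defn G_0(t_0) Curv Ests}, Proposition \ref{Prop Difference of G_p from G_0}, and the cone convergence), the metric $\frac1{1-t_0}G_{\mathbf p}(t_0)$ satisfies, with $r_0 := \rho/2$ say,
$$|Rm|_{\frac1{1-t_0}G_{\mathbf p}(t_0)} \le \frac{1}{r_0^2}, \qquad \mathrm{Vol}\,B_{\frac1{1-t_0}G_{\mathbf p}(t_0)}(p, r_0) \ge (1-\delta_n)\omega_n r_0^n$$
on $B(p,r_0)$ for every $p$ in a neighborhood of $\Omega$, provided $\Gamma_0\gg1$ and $1-t_0\ll1$. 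Pseudolocality (Theorem \ref{Thm Perelman's Pseudolocality}) applied to the \emph{rescaled} Ricci flow $\frac1{1-t_0}G_{\mathbf p}(t_0 + (1-t_0)\,\cdot\,)$, which is a genuine Ricci flow on the closed $\cl M$, then gives $|Rm| \le \frac{1}{\epsilon_n^2 r_0^2}$ on $B(p,\epsilon_n r_0)$ for rescaled time up to $\min\{\cdot, (\epsilon_n r_0)^2\}$.

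The remaining issue is that pseudolocality only yields a curvature bound for a short time after $t_0$ (of order $(1-t_0)(\epsilon_n r_0)^2$ in unrescaled time), whereas we need the bound on all of $[t_0, T(\mathbf p))$. I expect this to be the main obstacle, and I would resolve it by a bootstrapping/iteration: once we know $|Rm|_{G_{\mathbf p}(t)}\le \cl K$ on a slightly larger region $\Omega'\supset\Omega$ at some time $t_1 < T(\mathbf p)$, we can again verify the pseudolocality hypotheses at time $t_1$ at points of $\Omega$ — the curvature bound gives the required $|Rm|\le r_1^{-2}$ on a ball of definite size, and the non-collapsing needed for the volume ratio hypothesis comes either from Perelman's $\kappa$-non-collapsing for the flow on the closed manifold $\cl M$ combined with the curvature bound (to upgrade $\kappa$-non-collapsing to an almost-Euclidean volume ratio at a small enough scale), or, more simply, by comparison with the soliton metric whose geometry on $\Omega$ stays controlled. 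Re-applying pseudolocality advances the curvature bound by another definite amount of rescaled time; since the rescaled flow $G_j(t) := \frac{1}{1-t_j}G(t_j + t(1-t_j))$ lives on $[0,1)$ and the constants are uniform, finitely many steps — or, cleanly, a continuity/open-closed argument in $t$ — cover all of $[t_0, T(\mathbf p))$ with a single constant $\cl K_0 = \cl K_0(n,M,\ol g,f,\Omega,\Gamma_0)$. The uniformity in $\mathbf p$ is automatic since all inputs ($C^2$-closeness of $G_{\mathbf p}(t_0)$ to the soliton via Proposition \ref{Prop Difference of G_p from G_0}, curvature bounds, volume ratios) are uniform over $|\mathbf p|\le\ol p\,e^{\lambda_*\tau_0}$ once $\gamma_0\ll1$.
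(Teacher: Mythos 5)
There is a genuine gap, and it lies in the scale at which you apply pseudolocality. You rescale the flow by $\frac{1}{1-t_0}$ and verify the hypotheses of theorem \ref{Thm Perelman's Pseudolocality} at a \emph{fixed} radius $r_0=\rho/2$ in the rescaled metric. Even granting the hypotheses, the conclusion is then $|Rm|_{\tilde G}\le \frac{1}{\epsilon_n^2 r_0^2}$ for the rescaled metric, which translates to $|Rm|_{G_{\mathbf p}(t)}\le \frac{1}{(1-t_0)\,\epsilon_n^2 r_0^2}$ for the actual flow --- a bound that blows up as $t_0\nearrow 1$, whereas the proposition requires a constant $\cl K_0$ chosen before $t_0$ (depending only on $n,M,\ol g,f,\Omega,\Gamma_0$). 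Moreover it only holds for rescaled time $(\epsilon_n r_0)^2$, forcing the iteration you propose; but that iteration does not close: each pseudolocality step degrades the curvature bound by a factor $\epsilon_n^{-2}$, so the scale at which the hypotheses can be re-verified shrinks geometrically and the time increments $(\epsilon_n r_k)^2$ sum to a finite amount strictly less than the rescaled lifetime. The two patches you suggest for the volume hypothesis at later times are also problematic: $\kappa$-non-collapsing plus a curvature bound can be upgraded to $(1-\delta_n)$-almost-Euclidean volume only at scales depending on that (deteriorating) curvature bound, and ``comparison with the soliton metric whose geometry on $\Omega$ stays controlled'' is circular --- controlling the geometry of $G_{\mathbf p}(t)$ on $\Omega$ is exactly what is being proven. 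A secondary inaccuracy: on $\{f>\Gamma_0/2\}$ the rescaled metric $\frac{1}{1-t_0}G_{\mathbf p}(t_0)$ is \emph{not} $C^2$-close to $\ol g$; on $\{\Gamma_0/2<f\le\Gamma_0\}$ it is $\phi_{t_0}^*\ol g$ (the soliton pulled back from near infinity), and near the equator it is a fixed cone/cylinder metric magnified by $\frac{1}{1-t_0}$.

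The mechanism that makes the statement true is different, and it removes both the rescaling and the iteration. By definition \ref{Defn G_0(t_0)} (items (2)--(4)) together with assumption \ref{Assume Smooth Metric and eta_gamma Supp}, on $\{f>\Gamma_0/2\}$ the initial data satisfies $G_{\mathbf p}(t_0)=G_{\mathbf 0}(t_0)$ for all admissible $\mathbf p$, has \emph{unrescaled} curvature bounded by $C/\Gamma_0$, and stays in an arbitrarily small $C^0$-neighborhood of a fixed cone metric as $t_0\nearrow1$; in particular its geometry near $\Omega$ is controlled uniformly in $t_0$ at the original scale. One can therefore choose a fixed radius $r_0=r_0(n,M,\ol g,f,\Omega,\Gamma_0)$ with $C/\Gamma_0\le r_0^{-2}$, $B_{G_{\mathbf p}(t_0)}(x,r_0)\subset\{f>\Gamma_0/2\}$, and $\operatorname{Vol}B_{G_{\mathbf p}(t_0)}(x,r_0)\ge(1-\delta_n)\omega_n r_0^n$ for all $x\in\Omega$ and all $t_0$ close to $1$, and apply pseudolocality \emph{directly to the unrescaled flow} $G_{\mathbf p}(t)$. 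The whole remaining lifetime is then handled in a single application because $T(\mathbf p)-t_0\le 1-t_0\le(\epsilon_n r_0)^2$ once $1-t_0$ is small, yielding $|Rm|\le(\epsilon_n r_0)^{-2}=:\cl K_0$ on $\Omega\times[t_0,T(\mathbf p))$ with $\cl K_0$ independent of $t_0$ and $\mathbf p$. Your write-up is missing this use of the bound $T(\mathbf p)\le 1$ at the original scale, which is what eliminates both the $t_0$-dependence of the constant and the need for any bootstrapping.
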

\begin{proof}
	Let $ \epsilon_n, \delta_n  > 0$ be as in the statement of Perelman's pseudolocality Theorem \ref{Thm Perelman's Pseudolocality} above. 
	
	Throughout $\{ f > \Gamma_0 / 2\}$,
	$G_{\mathbf p } (t_0) = G_{\mathbf 0 } (t_0)$ for all $\mathbf p$
	and, for some $C = C(n, M, \ol{g}, f)$, 
	\begin{equation} \label{proof conseqs of pseudoloc eqn 1}
		|Rm[ G_{\mathbf p }(t_0) ] |_{G_{\mathbf p }(t_0) } \le \frac{C}{\Gamma_0}
	\end{equation}
	by Definition \ref{Defn G_0(t_0)} (\ref{Defn G_0(t_0) Curv Ests}) if $0 < 1 - t_0 \ll 1$ is sufficiently small.
	Additionally, Definition \ref{Defn G_0(t_0)} (\ref{Defn G_0(t_0) Time Invt}) \& (\ref{Defn G_0(t_0) Cone Cgce}) imply that the restriction of the metrics $G_{\mathbf p } (t_0)$ to $\{ f > \Gamma_0 / 2\}$ stay within an arbitrarily small $C^0$-neighborhood for all $0 < 1 - t_0 \ll 1$ sufficiently small.
	
	Since $\Omega \Subset \{ f > \Gamma_0 / 2 \}$, 
	there exists $0 < r_0 \ll 1$ sufficiently small 
	depending on $n, M , \ol{g}, f, \Gamma_0, \Omega$
	so that, for all $0 < 1 - t_0 \ll 1$ sufficiently small,
	\begin{equation*}
		\frac{C}{\Gamma_0} \le \frac{1}{r_0^2} \qquad \text{and} \qquad 
		B_{G_{\mathbf p } (t_0)}(x, r_0) \subset \{ f > \Gamma_0/2 \}
		\qquad \text{for all } x \in \Omega.
	\end{equation*}
	Moreover, by taking $0 < r_0 \ll 1$ possibly smaller (depending on $n, M, \ol{g}, f, \Gamma_0,\Omega$), we claim that additionally
	\begin{equation} \label{proof conseqs of pseudoloc eqn 2}
		\text{Vol} \, B_{G_{\mathbf p} (t_0)} (x, r_0) \ge ( 1 - \delta_n) \omega_n r_0^n
		\qquad \text{for all } x \in \Omega \text{ and all } 0 < 1-t_0 \ll 1.
	\end{equation}
	Indeed, if not, then there exists a sequence $x_i \in \Omega$, $t_i \nearrow 1$, and $r_i \searrow 0$ where \eqref{proof conseqs of pseudoloc eqn 2} fails.
	By the curvature estimates \eqref{proof conseqs of pseudoloc eqn 1} and non-collapsing condition (Definition \ref{Defn G_0(t_0)} \eqref{Defn G_0(t_0) Non-Collapsed}), the blow-up sequence $r_i^{-2} G_{\mathbf p}(t_i)$ based at $x_i$ has a subsequence which converges in the pointed $C^{1, \alpha}$-topology to Euclidean space $(\R^n, g_{Euc})$.\footnote{See for example \cite[proof of Lemma 11.4.9, p. 433]{Petersen16} and references therein for a detailed justification of this statement.}
	Thus, the inequality \eqref{proof conseqs of pseudoloc eqn 2} holds for $i \gg 1$, which contradicts the choice of the sequence.
	This contradiction proves \eqref{proof conseqs of pseudoloc eqn 2}.

	If $0 < 1 - t_0 \ll 1$ is also small enough so that
		$$T( \mathbf p) - t_0 \le 1 - t_0 \le \epsilon_n^2 r_0^2,$$
	then Perelman's pseudolocality Theorem \ref{Thm Perelman's Pseudolocality}
	applies for all $x_0 = x \in \Omega$ and implies that
		$$| Rm |(x,t)  \le \frac{1}{ \epsilon_n^2 r_0^2 } \qquad \text{for all } (x,t) \in \Omega \times [t_0, T(\mathbf p)).$$
	Taking $\cl{K}_0 = ( \epsilon_n r_0)^{-2}$ completes the proof.
\end{proof}

\begin{remark}
	As a corollary of Proposition \ref{Prop Pseudolocality App}, it follows that no singularity of the flow $\{ G_{\mathbf p }(t) \}_{t \in [t_0, T( \mathbf p))}$ 
	occurs on $\Omega \Subset \{ f > \Gamma_0 /2\} \subset \cl{M}$, so long as the parameters $\Gamma_0 \gg 1$ and $0 < 1- t_0 \ll 1$ are chosen appropriately and $|\mathbf p | \le \ol{p} e^{\lambda_* \tau_0}$.
\end{remark}

%%%%%%%%%%%%%%%%%%%%%%%%%%%%%%%%%%%%%%%%%%%%%%%%%%%%%%%%
\subsection{Estimates on the Grafting Region} \label{Subsect Ests on Grafting Region}
%%%%%%%%%%%%%%%%%%%%%%%%%%%%%%%%%%%%%%%%%%%%%%%%%%%%%%%%
Next, we obtain estimates on the \textit{grafting region}
\begin{equation} \label{defn grafting region}
	\Omega_{\eta_{\Gamma_0}} \doteqdot \{x \in M :  0 < \eta_{\Gamma_0}(x) < 1 \}.
\end{equation}
Recall from Subsection \ref{Subsect The Flow} that $\eta_{\Gamma_0}$ was chosen so that $\Omega_{\eta_{\Gamma_0}}  \subset \{ \frac{4}{6} \Gamma_0 < f < \frac{5}{6} \Gamma_0 \} \Subset \{  \frac{1}{2} \Gamma_0  < f < \Gamma_0 \} \subset M$.
Therefore, Proposition \ref{Prop Pseudolocality App} applied to $\Omega = \iota^{-1} \left( \bigsqcup_{\omega \in A} \Omega_{\eta_{\Gamma_0} } \right) $ implies that 
we may assume without loss of generality that
\begin{equation} \label{eqn curv ests for ests on grafting region}
	| Rm[G_{\mathbf p}(t)]  |_{G_{\mathbf p }(t) } (x,t) \le \cl{K} _0		
	\qquad \text{for all }  (x,t) \in \iota^{-1} \left( \bigsqcup_{\omega \in A}  \Omega_{\eta_{\Gamma_0}} \right)  \times [ t_0, T( \mathbf p) )
\end{equation}
for some $\cl{K}_0$ (depending on $n, M , \ol{g}, f, \Gamma_0$)
when $0 < 1 - t_0 \ll 1$ is sufficiently small (depending on $n, M, \ol{g}, f, \Gamma_0$).

We also note that Definition \ref{Defn G_0(t_0)} implies
	\begin{itemize}
		\item
		on $\ol{\iota^{-1} \left( \bigsqcup_{\omega \in A} \Omega_{\eta_{\Gamma_0}} \right) } \subset \cl{M}$
		\begin{equation} \label{eqn metric equality on grafting region at initi time}
			\iota^* \acute G_{\mathbf p}(t_0) = G_{\mathbf p }(t_0) =G_{\mathbf 0 }(t_0)  =  ( 1 - t_0)\iota^* \phi_{t_0}^* \ol{g} ,
		\end{equation}
		and
		\item 
		there exists an an embedding $\Psi : \{ x \in M : \Gamma_0 / 2 < f(x) < 32 \Gamma_0\} \to \cone_1(\Sigma)$ such that
			$$\acute{G}_{\mathbf p }( t_0) \xrightarrow[t_0 \nearrow 1]{C^\infty\left ( \ol{\Omega_{\eta_{\Gamma_0}  } } \right )} \Psi^* g_{\mathcal C}.$$
	\end{itemize}
We combine these facts to estimate how much $\acute G$ differs from a Ricci flow.
\begin{lem} \label{Lem difference from RF}
	There exists $C$ (depending only on $n, M, \ol{g}, f$) such that 
	if $0 < 1 - t_0 \ll 1$ is sufficiently small (depending on $n, M, \ol{g}, f, \Gamma_0$),
	then
	\begin{align*}
		\left|  \partial_t \acute G_{\mathbf p} + 2 Rc [ \acute G_{\mathbf p }(t)]  \right|_{\acute G_{\mathbf p}(t)} 
		&\le \frac{C}{\Gamma_0} 
		\quad \text{and}		\\
		\left|  {}^{\acute G_{\mathbf p}(t)} \nabla \left( \partial_t \acute G_{\mathbf p} + 2 Rc [ \acute G_{\mathbf p }(t)] \right)  \right|_{\acute G_{\mathbf p}(t)} 
		&\le \frac{C}{\Gamma_0^{3/2}}
	\end{align*}
	for all $x \in M$,  $t \in [t_0, T(\mathbf p) )$, and all $| \mathbf p | \le \ol{p} e^{\lambda_* \tau_0}$.
\end{lem}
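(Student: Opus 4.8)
The plan is to localize the discrepancy to the grafting region $\Omega_{\eta_{\Gamma_0}}$, to show that there the two interpolated metrics are $C^\infty$-close — at the cone scale $\sqrt{\Gamma_0}$ — to genuine Ricci flows with curvature of size $\Gamma_0^{-1}$, and then to exploit an exact cancellation of the second-order part so that the grafting fails to solve Ricci flow only by terms which are themselves $\lesssim\Gamma_0^{-1}$. Concretely, writing $G_1 \doteqdot G_{\mathbf p}(t)$, $G_2 \doteqdot (1-t)\phi_t^*\ol{g}$, $\eta \doteqdot \eta_{\Gamma_0}$ and $w \doteqdot G_1 - G_2$ (so that $\acute G_{\mathbf p} = G_2 + \eta w$), remark \ref{Rem acute G Observations} gives
$$\partial_t \acute G_{\mathbf p} + 2 Rc[\acute G_{\mathbf p}] = -2\big(\eta\,\mathcal R(w) - \mathcal R(\eta w)\big), \qquad \mathcal R(v) \doteqdot Rc[G_2 + v] - Rc[G_2],$$
and the left side is supported on $\ol{\Omega_{\eta_{\Gamma_0}}} \subset \{\tfrac{4}{6}\Gamma_0 < f < \tfrac{5}{6}\Gamma_0\}$.

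Next I would establish, on a fixed neighborhood of $\ol{\Omega_{\eta_{\Gamma_0}}}$ inside $\{\tfrac12\Gamma_0 < f < \Gamma_0\}$ and for all $t \in [t_0, T(\mathbf p))$ (all covariant derivatives and norms with respect to $G_2$): (i) $|\nabla^k Rm[G_2]| \lesssim_k \Gamma_0^{-1-k/2}$, which is the soliton curvature decay of lemma \ref{Lem Soliton Curv Decay}, rescaled by $(1-t)$ and combined with the bound $f\circ\phi_t \gtrsim f/(1-t)$ from the proof of lemma \ref{Lem Support of eta_gamma}; (ii) $|\nabla^k Rm[G_1]| \lesssim_k \Gamma_0^{-1-k/2}$, obtained by running the pseudolocality argument of proposition \ref{Prop Pseudolocality App} but at the cone scale $r_0 = c\sqrt{\Gamma_0}$ (small $c$) — at time $t_0$ one has $G_1 = G_2$ there and this metric converges to a pullback of $g_{\cl C}$ (definition \ref{Defn G_0(t_0)}), so the curvature and almost-Euclidean-volume hypotheses of theorem \ref{Thm Perelman's Pseudolocality} hold on $r_0$-balls; since $T(\mathbf p) - t_0 \le 1-t_0 \ll \Gamma_0 \sim r_0^2$ this gives $|Rm[G_1]| \lesssim \Gamma_0^{-1}$ on the whole interval, and Shi's local derivative estimates, with the scale-$\sqrt{\Gamma_0}$-bounded initial data $G_1(t_0) = G_2(t_0)$, upgrade it to all orders; (iii) mutual comparability $G_1 \sim G_2 \sim \acute G_{\mathbf p}$ with constant $1 + O(\Gamma_0^{-1})$, since $|\partial_t \log G_i| = 2|Rc[G_i]| \lesssim \Gamma_0^{-1}$ over time $\le 1-t_0 \ll \Gamma_0$ and $G_1(t_0) = G_2(t_0)$; and (iv), integrating $\partial_t w = -2Rc[G_1] + 2Rc[G_2]$ from $t_0$ (where $w$ and all its covariant derivatives vanish near the grafting region) and using (i)–(iii),
$$|\nabla^k w|_{G_2} \lesssim_k (1-t_0)\, \Gamma_0^{-1-k/2} \qquad \text{on the grafting region.}$$

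I would then rescale by $\Gamma_0^{-1}$ to unit cone scale. Since the Levi-Civita connection is scale-invariant and the norm of a $(0,m)$-tensor scales by $\Gamma_0^{m/2}$, the two asserted bounds become the scale-invariant statements $|\hat\eta\,\mathcal R(\hat w) - \mathcal R(\hat\eta \hat w)|_{\hat G_2} \lesssim 1$ and $|\hat\nabla(\cdots)|_{\hat G_2} \lesssim 1$, where (i),(ii),(iv) give $|\hat\nabla^k Rm[\hat G_i]| \lesssim_k 1$ and $|\hat\nabla^k \hat w|_{\hat G_2} \lesssim_k (1-t_0)\Gamma_0^{-1}$, while the bump-function estimates $|\ol\nabla\eta_{\Gamma_0}| \lesssim \Gamma_0^{-1/2}$ and $|\ol\nabla^k\eta_{\Gamma_0}| \lesssim 1$ from subsection \ref{Subsect The Flow} become $|\hat\nabla\hat\eta| \lesssim 1$ and $|\hat\nabla^k\hat\eta| \lesssim \Gamma_0^{k/2}$. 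Writing $\mathcal R(v) = Lv + Q(v)$ with $L = DRc|_{\hat G_2}$ a second-order linear operator whose coefficients are universal contractions of $\hat G_2^{-1}$ and $Rm[\hat G_2]$ (hence $O(1)$) and $Q$ the quadratic-and-higher remainder, the second-order term $\hat\eta\,\hat G_2^{ab}\hat\nabla_a\hat\nabla_b\hat w$ appears in both $\hat\eta L\hat w$ and $L(\hat\eta\hat w)$ and cancels, so $\hat\eta L\hat w - L(\hat\eta\hat w) = -[L,\hat\eta\,\cdot\,]\hat w$ is first order in $\hat w$ with coefficients built from $\hat\nabla\hat\eta,\hat\nabla^2\hat\eta$; using $|\hat\nabla^{\le 2}\hat w| \lesssim (1-t_0)\Gamma_0^{-1}$ this is $\lesssim (|\hat\nabla\hat\eta| + |\hat\nabla^2\hat\eta|)(1-t_0)\Gamma_0^{-1} \lesssim 1-t_0$, and likewise $\hat\eta Q(\hat w) - Q(\hat\eta\hat w) \lesssim (1-t_0)^2$ since $|\hat\nabla^{\le 2}(\hat\eta\hat w)| \lesssim 1-t_0$. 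Hence the error is $\lesssim 1-t_0 \le 1$ with a constant depending only on $n,M,\ol g,f$; differentiating once more picks up at worst $|\hat\nabla^3\hat\eta| \lesssim \Gamma_0^{3/2}$ against $|\hat\nabla^{\le 1}\hat w| \lesssim (1-t_0)\Gamma_0^{-1}$, i.e.\ a term $\lesssim \Gamma_0^{1/2}(1-t_0)$, which is $\lesssim 1$ once $1-t_0 \le \Gamma_0^{-1/2}$ — permissible since the lemma lets $1-t_0$ be small depending on $\Gamma_0$. Unwinding the rescaling yields the two stated estimates.

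The main obstacle is step (ii): showing that the perturbed flow $G_{\mathbf p}(t)$ genuinely carries curvature of size $\Gamma_0^{-1}$ — with derivatives bounded at the matching scale — uniformly over $[t_0, T(\mathbf p))$ and over all admissible $\mathbf p$. This forces one to invoke pseudolocality at the cone scale $\sqrt{\Gamma_0}$ rather than at scale $O(1)$, which is legitimate only because the rescaled near-equator initial metric converges to flat $\R^n$ (so $\sqrt{\Gamma_0}$-balls are almost Euclidean), and then to feed the resulting curvature bound into Shi's estimates with nontrivial initial data in order to reach $t=t_0$. Everything else — the soliton curvature decay, the ODE comparisons in (iii),(iv), and the algebra of the cancellation — is routine once (ii) is in hand.
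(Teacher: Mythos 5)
Your argument is correct in outline, but it takes a genuinely different and more quantitative route than the paper's. The paper never uses the decomposition $\partial_t\acute G_{\mathbf p}+2Rc[\acute G_{\mathbf p}]=-2\bigl(\eta\,\mathcal R(w)-\mathcal R(\eta w)\bigr)$, nor any scale-invariant curvature bound at later times: it writes $\partial_t\acute G_{\mathbf p}=-2\eta_{\Gamma_0}Rc[G_{\mathbf p}]-2(1-\eta_{\Gamma_0})Rc[(1-t)\phi_t^*\ol g]$ and bounds each Ricci term, together with $Rc[\acute G_{\mathbf p}(t)]$ itself, by twice its value at $t=t_0$; the $\Gamma_0^{-1}$ (and, for one derivative, $\Gamma_0^{-3/2}$ via $|\ol\nabla\eta_{\Gamma_0}|\lesssim\Gamma_0^{-1/2}$) smallness is imported entirely from the construction of the initial data, definition \ref{Defn G_0(t_0)} (\ref{Defn G_0(t_0) Curv Ests}), while pseudolocality (proposition \ref{Prop Pseudolocality App}) at a fixed small scale supplies only \emph{some} finite bound $\cl K_0$, which with Shi's estimates and the initial regularity keeps all four metrics in a small $C^3$-neighborhood of $G_{\mathbf 0}(t_0)$ once $1-t_0\ll1$. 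Your route instead requires $|Rm[G_{\mathbf p}(t)]|\lesssim\Gamma_0^{-1}$ with matching derivative bounds uniformly on $[t_0,T(\mathbf p))$, obtained from pseudolocality at scale $c\sqrt{\Gamma_0}$, plus $|\nabla^{\le k}w|\lesssim(1-t_0)\Gamma_0^{-1-k/2}$; this buys a stronger conclusion (an extra factor of roughly $1-t_0$) at the cost of machinery the paper avoids. Two caveats: the almost-Euclidean volume hypothesis at scale $c\sqrt{\Gamma_0}$ does not follow from ``convergence to flat $\R^n$'' --- the rescaled metric converges to the cone over $\Sigma$, which is not flat --- but it does hold for $c$ small depending on $(\Sigma,g_\Sigma)$, since at distance $\sim\sqrt{\Gamma_0}$ from the vertex the injectivity radius is $\gtrsim\sqrt{\Gamma_0}$ and curvature times radius squared is $O(c^2)$; and given $|\hat\nabla^{\le2}\hat w|\lesssim(1-t_0)\Gamma_0^{-1}$, the second-order cancellation you emphasize is unnecessary, since $\hat\eta L\hat w$ and $L(\hat\eta\hat w)$ are each already admissible, so the commutator structure is a convenience rather than the crux.
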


\begin{proof}
	Throughout this proof, we shall omit the subscript $\mathbf p$ to simplify notation.
	First, note that by Remark \ref{Remark clE_2 Observations}
		$$\partial_t \acute G = - 2 Rc [ \acute G] \qquad \text{ on } M \setminus \ol{\Omega_{\eta_{\Gamma_0}}}$$
	so it suffices to estimate $\partial_t \acute G + 2 Rc [ \acute G]$ on $\ol{\Omega_{\eta_{\Gamma_0}}}$.
	
	Throughout the proof, set 
		$$\Omega \doteqdot \iota^{-1} \left( \bigsqcup_{\omega \in A} \Omega_{\eta_{\Gamma_0}} \right) \subset \cl{M}.$$
	By \eqref{eqn curv ests for ests on grafting region},
	$|Rm[G(t)] |_{G(t)}(x,t) \le \cl{K}_0$ for all  $(x,t) \in \ol{\Omega} \times [t_0, T( \mathbf p) )$, which implies that 
		$$| G(t) - G(t_0) |_{G(t) } \le C_n \cl{K}_0 ( t - t_0) \le C_n \cl{K}_0 ( 1 - t_0) \qquad 
		\text{on } \ol{\Omega } \times [ t_0, T(\mathbf p ) ).$$
	In particular,
	for any $C^0 ( \ol{\Omega } )$-neighborhood of $G(t_0)$, $G(t)$ remains in that neighborhood for all $t \in [t_0, T(\mathbf p) )$ if $0 < 1 - t_0 \ll 1$ is sufficiently small.
	
	By applying Proposition \ref{Prop Pseudolocality App} to a slightly larger neighborhood $\Omega'$ containing $\ol{\Omega}$,
	the curvature bounds $| Rm |_G \le \cl{K}_0$ on $\Omega' \times [ t_0, T( \mathbf p) )$,
	Shi's local derivative estimates \cite{Shi89},\footnote{See also \cite[Theorem 14.16]{ChowEtAl07}.}
	and the regularity of the initial data $G(t_0)$ on $\{ \Gamma_0/ 2 < f < \Gamma_0 \}$
	imply that 
	for all $m \in \mathbb{N}$ there exists $\cl{K}_m$ (depending only on $n, M, \ol{g}, f, \Gamma_0, \cl{K}_0$)
	such that 	
		$$| {}^{G(t)} \nabla^m Rm |_{G(t) } \le \cl{K}_m 		\qquad \text{ on } \ol{\Omega} \times [ t_0, T( \mathbf p) ).$$
	It follows that, for any $m \in \mathbb{N}$,
	$G(t)$ can be made to stay in an arbitrarily small $C^m( \ol{ \Omega } )$-neighborhood 
	of $G(t_0)$ for all $t \in [t_0, T(\mathbf p))$
	by taking $0 < T(\mathbf p ) - t_0 \le 1 - t_0 \ll 1$ sufficiently small.

	Therefore, for all $t \in [t_0, T(\mathbf p ))$,
	$\iota_* G(t), \iota_* G(t_0), ( 1- t) \phi_t^* \ol{g}, (1- t_0) \phi_{t_0}^* \ol{g}$ can all be made to stay within an arbitrarily small $C^3( \ol{\Omega_{\eta_{\Gamma_0}} })$-neighborhood of each other by taking $0 <  1- t_0 \ll 1$ sufficiently small depending on $n, M, \ol{g}, f, \Gamma_0$.
	Hence,
		$$\acute G (t) - \acute G (t_0) 
		=  \eta_{\Gamma_0} \iota_* \left[ G( t) - G( t_0) \right] 
		+ (1- \eta_{\Gamma_0}) \left[ (1-t) \phi_t^* \ol{g} - (1-t_0) \phi_{t_0}^* \ol{g} \right]$$
	can be made $C^3$-small on $\ol{ \Omega_{\eta_{\Gamma_0}} } \times [ t_0, T( \mathbf p) )$
	by taking $0 < 1 - t_0 \ll 1$ sufficiently small.
	Recall also \eqref{eqn metric equality on grafting region at initi time} that $\iota_* G(t_0) = \acute G(t_0) = (1-t_0) \phi_{t_0}^* \ol{g}$ on $\ol{\Omega_{\eta_{\Gamma_0}}}$.
	In particular, $0 < 1 - t_0 \ll 1$ sufficiently small implies that on $\ol{\Omega_{\eta_{\Gamma_0}} } \times [t_0, T( \mathbf p ))$
	\begin{gather*}
		| Rc [ \acute G(t) ] |_{\acute G(t)} \le 2  | Rc [ \acute G(t_0) ] |_{\acute G(t_0)} 
		= 2 | Rc [ G_{\mathbf 0 } (t_0) ] |_{ G_{\mathbf 0 } (t_0) } \lesssim_{n, M, \ol{g}, f} \Gamma_0^{-1}
		\quad \text{and} 
		\\
		| {}^{\acute G(t)} \nabla Rc [ \acute G(t) ] |_{\acute G(t)} 
		\le 2 | {}^{\iota^* \acute G_{\mathbf 0} (t_0)} \nabla Rc [ G_{\mathbf 0 } (t_0) ] |_{ \iota^* \acute G_{\mathbf 0 } (t_0) } 
		\lesssim_{n, M, \ol{g}, f} \Gamma_0^{-3/2}
	\end{gather*}
	where we have used Definition \ref{Defn G_0(t_0)} (\ref{Defn G_0(t_0) Curv Ests}) in the last inequality of each line above.
	
	Similarly, $0 < 1 - t_0 \ll 1$ sufficiently small implies that on $\ol{\Omega_{\eta_{\Gamma_0} } } \times [t_0, T( \mathbf p ))$
	\begin{align*}
		& \quad \left| \partial_t \acute G \right|_{\acute G(t)}	\\
		&= \left|  \eta_{\Gamma_0} \iota_* \partial_t G	
		+ ( 1 - \eta_{\Gamma_0} ) \partial_t \left( ( 1- t) \phi_t^* \ol{g} \right)  \right|_{\acute G(t)}	\\
		&= \left| - 2 \eta_{\Gamma_0} \iota_* Rc[ G(t) ] 
		-2 ( 1 - \eta_{\Gamma_0} ) Rc \left[ ( 1- t) \phi_t^* \ol{g} \right]  \right|_{\acute G(t)}	\\
		&\le 2 \left|  Rc[ G(t) ] \right|_{\iota^* \acute G(t)}
		+ 2 \left| Rc[ ( 1 - t) \phi_t^* \ol{g} ] \right|_{\acute G(t)}		\\
		&\le 4 \left| Rc[ G(t_0) ] \right|_{ \iota^* \acute G(t_0) }
		+ 4 \left| Rc[ ( 1 - t_0) \phi_{t_0}^* \ol{g} ] \right|_{\acute G(t_0) }		\\
		&= 8 \left| Rc[ G_{\mathbf 0}(t_0) ] \right|_{  G_{\mathbf 0} (t_0) }	\\
		&\lesssim_{n, M, \ol{g}, f} \Gamma_0^{-1} ,
	\end{align*}
	and similarly
	\begin{align*}
		& \quad \left| {}^{\acute G(t)} \nabla \partial_t \acute G \right|_{\acute G(t)}	\\
		&= \left|  {}^{\acute G(t)} \nabla \left( -2 \eta_{\Gamma_0} \iota_* Rc[ G(t)] 
		-2 ( 1 - \eta_{\Gamma_0} )  Rc[ ( 1-t)\phi_t^* \ol{g} ]  \right) \right|_{\acute G(t)}	\\
		&\lesssim_n  \left| {}^{\acute G(t)} \nabla \eta_{\Gamma_0} \right|_{\acute G(t)}
		\left( \left| Rc[ G(t) ] \right|_{\iota^* \acute G(t)}+  \left| Rc[ ( 1 - t) \phi_t^* \ol{g} ] \right|_{\acute G(t)} \right) \\
		& \qquad +  \left| {}^{\iota^* \acute G(t)} \nabla Rc[ G(t) ] \right|_{\iota^* \acute G(t)}
		+  \left|  {}^{\acute G(t)} \nabla Rc[ ( 1 - t) \phi_t^* \ol{g} ] \right|_{\acute G(t)}	\\
		&\le 2 \left| {}^{\acute G(t_0)} \nabla \eta_{\Gamma_0} \right|_{\acute G(t_0)}
		\left( \left| Rc[ G(t_0) ] \right|_{\iota^* \acute G(t_0)}+  \left| Rc[ ( 1 - t_0) \phi_{t_0}^* \ol{g} ] \right|_{\acute G(t_0)} \right) \\
		& \qquad + 2 \left| {}^{\iota^* \acute G(t_0)} \nabla Rc[ G(t_0) ] \right|_{\iota^* \acute G(t_0) }
		+  2\left|  {}^{\acute G(t_0)} \nabla Rc[ ( 1 - t_0) \phi_{t_0}^* \ol{g} ] \right|_{\acute G(t_0)}	\\
		&= 4 \left| {}^{\acute G_{\mathbf 0}(t_0)} \nabla \eta_{\Gamma_0} \right|_{\acute G_{\mathbf 0}(t_0)}
		 \left| Rc[ G_{\mathbf 0}(t_0) ] \right|_{\iota^* \acute G_{\mathbf 0}(t_0)}  
		+ 4 \left| {}^{\iota^* \acute G_{\mathbf 0}(t_0)} \nabla Rc[ G_{\mathbf 0}(t_0) ] \right|_{\iota^* \acute G_{\mathbf 0}(t_0)} \\
		&\lesssim_{n, M, \ol{g}, f} \left| {}^{\acute G_{\mathbf 0}(t_0)} \nabla \eta_{\Gamma_0} \right|_{\acute G_{\mathbf 0}(t_0)} \frac{ 1}{\Gamma_0} + \frac{1}{\Gamma_0^{3/2} }\\
		&\lesssim_{n, M, \ol{g}, f} \Gamma_0^{-3/2}.
	\end{align*}
\end{proof}

\begin{lem}[Drift of the Grafting Region] \label{Lem Drift of the Grafting Region}
	Assume $0 \le t_0 < t_1 \le T( \mathbf p )$,
	$\tl{\Phi} : M \times [ t_0, t_1 ) \to M$ is a smooth solution of the harmonic map heat flow
		$$\partial_t \tl{\Phi}_t = \Delta_{\acute G_{\mathbf p}(t), ( 1 - t) \phi_t^* \ol{g} } \tl{\Phi} 
		\qquad \text{with initial condition } \tl{\Phi}( \cdot, t_0) = Id_M,$$
	$\tl{\Phi}_t = \tl{\Phi}(\cdot, t) : M \to M$ is a diffeomorphism for all $t \in [t_0, t_1)$,
	and 
		$$h = h_{\mathbf p}(t) = 
		\frac{1}{1 - t} ( \Phi_t^{-1})^* \acute G_{\mathbf p }(t) - \ol{g}
		\in \cl{B} [ \lambda_*, \mu_u, \mu_s, \epsilon_0, \epsilon_1, \epsilon_2 , [t_0, t_1) ].$$
		
	If $\Gamma_0 \gg 1$ is sufficiently large (depending on $n, M, \ol{g}, f$),
	$0 < \epsilon_0 , \epsilon_1 \ll 1$ are sufficiently small (depending on $n$),
	and $0 < 1 - t_0 \ll 1$ is sufficiently small (depending on $n, M, \ol{g}, f, \Gamma_0$),
	then, for all $t \in [t_0, t_1)$,
	\begin{align*}
		\tl{\Phi}_t ( \ol{ \Omega_{\eta_{\Gamma_0}}} ) \subset{} & \{ \Gamma_0 / 2 < f < \Gamma_0 \},	\\
		\Phi_t ( \ol{ \Omega_{\eta_{\Gamma_0}}} ) \subset{} & \{ (1 - t)^{-1} < f < \Gamma_0(1 - t)^{-1}  \},	\\
		\supp \left( ( \Phi_t^{-1} )^* \left\{ \partial_t \acute G + 2 Rc[ \acute G ] \right\} \right)
		\subset{}& \{ (1 - t)^{-1} < f < \Gamma_0(1 - t)^{-1}  \},
		\text{ and}	\\
		\supp h \subset{}& \{ f < \Gamma_0 ( 1 - t)^{-1} \}.
	\end{align*}
\end{lem}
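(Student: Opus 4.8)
The plan is to reduce all four inclusions to a single statement about the harmonic map heat flow $\tl{\Phi}_t$: that it moves points near the grafting region $\ol{\Omega_{\eta_{\Gamma_0}}}$ by only a small $\ol{g}$-distance, and that it restricts to $\mathrm{Id}_M$ on a neighborhood of the ``frozen'' region $\{f\ge\Gamma_0\}$, where $\acute G_{\mathbf p}(t)$ already coincides with $(1-t)\phi_t^*\ol{g}$. Granting that, the remaining work is bookkeeping with the soliton-flow distortion estimate extracted from the proof of Lemma \ref{Lem Support of eta_gamma}, which gives $\tfrac{1}{\sqrt 2}\,f(y)\,e^{\tau}\le f(\phi_t(y))\le f(y)\,e^{\tau}$ whenever $f(y)\gtrsim\Gamma_0\gg1$, with $e^{\tau}=(1-t)^{-1}$.

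For that single statement I would argue as follows. Since $h=h_{\mathbf p}(t)\in\cl{B}$, the metric $g_{\mathbf p}(t)=\ol{g}+h$ is uniformly $C^1$-close to $\ol{g}$ (this is where $\epsilon_0,\epsilon_1\ll1$ enter), which keeps $\tl{\Phi}_t$ inside the well-posedness regime of Appendix \ref{Appdix HarMapFlow}. Definition \ref{Defn G_0(t_0)} makes the $\eta_{\Gamma_0}$-interpolation trivial on $\{f\ge\tfrac56\Gamma_0\}$, so $\acute G_{\mathbf p}(t)\equiv(1-t)\phi_t^*\ol{g}$ there, while on a neighborhood of $\ol{\Omega_{\eta_{\Gamma_0}}}\subset\{\tfrac46\Gamma_0<f<\tfrac56\Gamma_0\}$ Lemma \ref{Lem difference from RF}, together with the pseudolocality bounds of Proposition \ref{Prop Pseudolocality App} and the convergence of the data to the cone metric, force $\acute G_{\mathbf p}(t)$ to be $C^3$-close to $(1-t)\phi_t^*\ol{g}$, with the closeness tending to $0$ as $1-t_0\to0$. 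In both cases $\mathrm{Id}_M$ solves, resp.\ nearly solves, $\partial_t\Psi=\Delta_{\acute G_{\mathbf p}(t),(1-t)\phi_t^*\ol{g}}\Psi$, so the stability estimates of Appendix \ref{Appdix HarMapFlow}, fed the initial condition $\tl{\Phi}_{t_0}=\mathrm{Id}_M$ and the short span $t-t_0\le1-t_0\ll1$, yield both $\tl{\Phi}_t=\mathrm{Id}_M$ on $\{f\ge\tfrac56\Gamma_0\}$ and $\sup_{t\in[t_0,t_1)}\dist_{\ol{g}}(\tl{\Phi}_t(y),y)\to0$ as $1-t_0\to0$, uniformly for $y$ in a fixed neighborhood of $\ol{\Omega_{\eta_{\Gamma_0}}}$.

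With this in hand the inclusions follow. On $\ol{\Omega_{\eta_{\Gamma_0}}}$ one has $|\ol{\nabla}f|_{\ol{g}}^2=f-\ol{R}\le f\lesssim\Gamma_0$, so the displacement bound gives $|f(\tl{\Phi}_t(y))-f(y)|\lesssim\sqrt{\Gamma_0}\cdot o(1)<\tfrac16\Gamma_0$ once $1-t_0$ is small, hence $f(\tl{\Phi}_t(y))\in(\tfrac12\Gamma_0,\Gamma_0)$, the first inclusion. Applying $\phi_t$ and the distortion estimate to $z=\tl{\Phi}_t(y)\in\{\tfrac12\Gamma_0<f<\Gamma_0\}$ (legitimate since $\Gamma_0\gg1$) gives $f(\Phi_t(y))=f(\phi_t(z))\in\big(\tfrac{\Gamma_0}{2\sqrt 2}(1-t)^{-1},\,\Gamma_0(1-t)^{-1}\big)\subset\big((1-t)^{-1},\Gamma_0(1-t)^{-1}\big)$, the second inclusion, and the third is then immediate from Remark \ref{Remark clE_2 Observations}, since $(\Phi_t^{-1})^*\{\partial_t\acute G_{\mathbf p}+2Rc[\acute G_{\mathbf p}]\}$ is supported on $\Phi_t(\ol{\Omega_{\eta_{\Gamma_0}}})$. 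For the fourth, it suffices to check $g_{\mathbf p}(t)=\ol{g}$ on $\{f\ge\Gamma_0(1-t)^{-1}\}$: if $f(x)\ge\Gamma_0(1-t)^{-1}$ then $z:=\phi_t^{-1}(x)$ satisfies $f(x)=(\phi_t^*f)(z)\le f(z)(1-t)^{-1}$, so $f(z)\ge\Gamma_0$; hence $\tl{\Phi}_t=\mathrm{Id}_M$ on a neighborhood of $z$ and $\acute G_{\mathbf p}(t)=(1-t)\phi_t^*\ol{g}$ there, so $\Phi_t^{-1}=\phi_t^{-1}$ near $x$ and $g_{\mathbf p}(t)_x=\tfrac{1}{1-t}\big((\phi_t^{-1})^*\acute G_{\mathbf p}(t)\big)_x=\big((\phi_t^{-1})^*\phi_t^*\ol{g}\big)_x=\ol{g}_x$.

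The hard part is the second step: the control of $\tl{\Phi}_t$. The uniform displacement estimate near the grafting region and, especially, the rigidity assertion ``$\tl{\Phi}_t=\mathrm{Id}_M$ exactly on a neighborhood of $\{f\ge\Gamma_0\}$'' rather than merely ``$C^0$-close'' are the substantive points, and they rest entirely on the short-time well-posedness and stability theory for the harmonic map heat flow on the noncompact manifold $M$ developed in Appendix \ref{Appdix HarMapFlow}, applied in the regime where $h_{\mathbf p}(t)\in\cl{B}$ keeps all relevant quantities small.
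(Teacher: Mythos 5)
Your handling of the first three inclusions is essentially the paper's argument: the containment $\tl{\Phi}_t(\ol{\Omega_{\eta_{\Gamma_0}}})\subset\{\Gamma_0/2<f<\Gamma_0\}$ comes from the $C^0$ and $C^1$ smallness of $h$ (membership in $\cl{B}$) fed into the local drift control of lemma \ref{Lem Local Drift Control}; the second inclusion is the same integration of lemma \ref{Lem Flow Est 1} that you quote from the proof of lemma \ref{Lem Support of eta_gamma}; and the third is immediate from remark \ref{Remark clE_2 Observations}. (Two cosmetic remarks: the displacement bound of appendix \ref{Appdix HarMapFlow} is with respect to the rescaled metric $(1-t_0)\phi_{t_0}^*\ol{g}$, not $\ol{g}$, and the paper avoids your gradient-of-$f$ conversion by instead bounding from below the $(1-t_0)\phi_{t_0}^*\ol{g}$-distance from $\Omega_{\eta_{\Gamma_0}}$ to the complement of $\{\Gamma_0/2<f<\Gamma_0\}$.)

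The genuine gap is the rigidity claim $\tl{\Phi}_t\equiv \mathrm{Id}_M$ on $\{f\ge\tfrac{5}{6}\Gamma_0\}$, on which your entire proof of the fourth inclusion rests. Nothing in appendix \ref{Appdix HarMapFlow} yields this: propositions \ref{Prop Existence HarMapFlow}, \ref{Prop Uniqueness HarMapFlow} and lemmas \ref{Lem Global Drift Control}, \ref{Lem Local Drift Control} give existence, global uniqueness, and displacement bounds of order $\epsilon\sqrt{1-t_0}$ — never exact coincidence with the identity on a subregion. Moreover the claim is false in general: the harmonic map heat flow is parabolic and has infinite propagation speed, so although $\mathrm{Id}_M$ is a stationary solution of the \emph{local} equation on the region where $\acute G_{\mathbf p}(t)=(1-t)\phi_t^*\ol{g}$, the actual solution at a point of that region evolves by $\Delta_{\acute G_{\mathbf p}(t),(1-t)\phi_t^*\ol{g}}\tl{\Phi}$ evaluated on the actual map, which ceases to be the identity instantaneously once $\tl{\Phi}$ moves in the adjacent grafting region; there is no finite-speed or unique-continuation mechanism forcing exact equality, only (at best) quantitative closeness. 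Consequently your computation $g_{\mathbf p}(t)_x=\ol{g}_x$ for $f(x)\ge\Gamma_0(1-t)^{-1}$, which needs $\Phi_t^{-1}=\phi_t^{-1}$ together with its differential near $x$, is not established. The paper does not argue this way: it derives the support statement for $h$ from remarks \ref{Rem acute G Observations} and \ref{Remark clE_2 Observations}, the definition of $h$ in corollary \ref{Cor h(t) Evol Eqn}, and the $\phi_t$-distortion estimate already used for the second inclusion, with no appeal to $\tl{\Phi}_t$ being the identity anywhere. If you insist on routing the argument through $\tl{\Phi}_t$, you must replace ``equals $\mathrm{Id}_M$'' by a closeness statement, and then you only get smallness of $h$ outside $\{f<\Gamma_0(1-t)^{-1}\}$, not the exact vanishing asserted in the lemma.
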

\begin{proof}
	Recall 
		$$\Omega_{\eta_{\Gamma_0}} \subset \left\{ \frac{4}{6} \Gamma_0 < f < \frac{5}{6} \Gamma_0 \right\} \Subset \Omega' \doteqdot \{ \Gamma_0 / 2 < f < \Gamma_0 \} \Subset M$$
	and $(1 - t) \phi_t^* \ol{g}$ solves Ricci flow on $M \times (-\infty, 1)$.
	If $\Gamma_0 \gg 1$ is sufficiently large, 
	$(1 - t) \phi_t^* \ol{g}$ has uniformly bounded curvature on $\Omega' \times [ 0, 1)$ and
	converges to a pullback of the cone metric on $\Omega'$ as $t \nearrow 1$.
	In particular, for all $t_0 \in [0,1)$, $\dist_{(1 - t_0) \phi_{t_0}^* \ol{g} } ( \Omega_{\eta_{\Gamma_0}} , M \setminus \Omega')$ is bounded below by a constant depending only on $n, M, \ol{g}, f, \Gamma_0$.
	
	Since $h \in \cl{B}[ \lambda_* , \mu_u, \mu_s , \epsilon_0, \epsilon_1,\epsilon_2 , [t_0, t_1) ]$,
		$$\left| (\tl{\Phi}_t^{-1})^*\acute G_{\mathbf p }(t) - ( 1 -t) \phi_t^* \ol{g} \right|_{( 1 - t) \phi_t^* \ol{g}}
		= \left| \frac{1}{1 - t} (\Phi_t^{-1})^*\acute G_{\mathbf p }(t) -  \ol{g} \right|_{ \ol{g}}
		= \left| h_{\mathbf p }(t) \right|_{\ol{g}} 
		\le \epsilon_0$$
	and similarly
		$$\sqrt{1 - t} \left| {}^{(1 - t) \phi_t^* \ol{g} } \nabla \left(  (\tl{\Phi}_t^{-1})^*\acute G_{\mathbf p }(t) - ( 1 -t) \phi_t^* \ol{g} \right) \right|_{( 1 - t) \phi_t^* \ol{g}}
		=  | \ol{\nabla} h_{\mathbf p}(t) |_{\ol{g}} \le \epsilon_1.$$
	If $\epsilon_0 + \epsilon_1 \ll 1$ is sufficiently small depending on $n$
	and $0 < 1 - t_0 \ll 1$ is sufficiently small depending on $n, M, \ol{g}, f, \Gamma_0$,
	then the local drift estimates of Lemma \ref{Lem Local Drift Control} imply
		$$\tl{\Phi}_t (x) \in \Omega' = \{ \Gamma_0 /2 < f < \Gamma_0 \} 	\qquad \text{for all } x \in \ol{ \Omega_{\eta_{\Gamma_0}} }, t \in [t_0, t_1)$$
	since $\tl{\Phi}_{t_0} (x) = x \in  \ol{ \Omega_{\eta_{\Gamma_0}}}$.
	This proves the first claim.
	
	For the remainder of the proof, we follow Remark \ref{Remark tau} and work in terms of $\tau = -\ln( 1 - t)$.
	To prove the next claim, 
	it suffices to show that
		$$\phi_\tau ( \{ \Gamma_0 / 2 < f < \Gamma_0 \} ) \subset \{ e^\tau < f < \Gamma_0 e^\tau \}$$
	when $\Gamma_0 \gg 1$ is sufficiently large depending on $n, M, \ol{g}, f$.
	
	By Lemma \ref{Lem Flow Est 1}, there exists a constant $C = C(n, M, \ol{g}, f)$ such that 
		$$\phi_\tau^* f - \frac{C}{ \phi_\tau^* f} \le \partial_\tau ( \phi_\tau^* f) \le \phi_\tau^* f
		\qquad \text{ for all } \tau \in \R.$$
	Assume $\Gamma_0 /2 > \sqrt{C+1}$ and 
	let $x \in M$ such that $\sqrt{C+1} < \Gamma_0 / 2 < f(x) < \Gamma_0.$
	Integrating the inequalities for $\partial_\tau ( \phi_\tau^* f)$ from $0$ to $\tau$ and using that $\phi_0 = Id_M$  then implies
		$$		f( \phi_\tau (x) ) = (\phi_\tau^* f) (x) \le  f(x) e^\tau < \Gamma_0 e^\tau		\qquad \text{and}$$
		$$ f( \phi_\tau(x)) \ge \sqrt{ C + ( f(x)^2 - C ) e^{2 \tau } } > \sqrt{ C + e^{2 \tau} } \ge e^\tau.$$
	This completes the proof that
		$$\Phi_t ( \ol{ \Omega_{\eta_{\Gamma_0}}} ) \subset \{ (1 - t)^{-1} < f < \Gamma_0(1 - t)^{-1}  \}.$$
	
	The remaining two claims then follow from Remarks \ref{Rem acute G Observations}, \ref{Remark clE_2 Observations}, and the definition of $h$ as in Corollary \ref{Cor h(t) Evol Eqn}.
\end{proof}

%%%%%%%%%%%%%%%%%%%%%%%%%%%%%%%%%%%%%%%%%%%%%%%%%%%%%%%%%%
\section{Preservation \& Improvement of $C^2$ Bounds} \label{Sect Preserve and Improve C^2 Bounds}
%%%%%%%%%%%%%%%%%%%%%%%%%%%%%%%%%%%%%%%%%%%%%%%%%%%%%%%%%%

In what follows, we consider $h = h_{\mathbf p }$ as a function of the reparametrized time variable $\tau = - \ln ( 1 - t)$ and write its evolution equation \eqref{h(t) Evol Eqn} as
\begin{equation} \label{h(tau) Evol Eqn}
	\partial_\tau h = \ol{\Delta}_f h + 2 \ol{Rm} [h] + \cl{E}_1(\tau) + \cl{E}_2(\tau)
\end{equation}
where
	\begin{align} \label{clE_1 Eqn}
	\begin{split}
	 \mathcal{E}_1(\tau)  \doteqdot{}&  
		 \ol{R} \indices{_{ja}^p_b} \left( \ol{g}_{ip} \tl{h}^{ab}  + \hat{h}^{ab} h_{ip} \right) 
		+ \ol{R} \indices{_{ia}^p_b} \left( \ol{g}_{jp} \tl{h}^{ab}  + \hat{h}^{ab} h_{jp} \right) 	\\		
		&  + g^{ab} g^{pq} ( \ol{\nabla} h * \ol{\nabla} h )	 -\hat{h}^{ab} \ol{\nabla}_a \ol{\nabla}_b h	
		\qquad \text{and} 	
	\end{split} \\
		\label{clE_2 Eqn}
	  \mathcal{E}_2(\tau (t)) \doteqdot{}& ( \Phi_t^{-1} )^* \{ \partial_t \acute G + \acute{Rc} \}.
	\end{align}
In light of Lemmas \ref{Lem difference from RF} and \ref{Lem Drift of the Grafting Region}, we shall often assume
	$$| \cl{E}_2 |_{\ol{g}} \le \frac{C_0}{\Gamma_0} e^{- \tau}, \,
			| \ol{\nabla} \cl{E}_2 |_{\ol{g}} \le \frac{C_0}{\Gamma_0^{3/2}} e^{- \frac{3}{2}\tau}, \text{ and } 
			\supp \cl{E}_2 \subset \{ e^\tau \le f \le \Gamma_0 e^\tau \}$$
where $C_0  = C_0 ( n, M, \ol{g}, f) \ge 1$ is a constant that depends on $n, M, \ol{g}, f$.

In this section, we aim to show the following theorem which says that $h$ cannot exit through the sides of the box $\cl{B}$ defined by the coarse $C^2$ bounds:
\begin{thm} \label{Thm C^2 Bounds Preserved}
	Assume $\lambda_*<0$ is taken as in assumption \ref{Assume Shrinker},
	$\ol{p} \in (0,1]$ is taken as in assumption \ref{Assume Smooth Metric and eta_gamma Supp}, and that 
	$h= h_{\mathbf p }$ solves the equation 
	\begin{equation} \tag{\ref{h(t) Evol Eqn}}
		\partial_\tau h = \ol{\Delta}_f h + 2 \ol{Rm} [h] + \cl{E}_1 + \cl{E}_2
		\qquad \text{ on } M \times (\tau_0, \tau_1 )
	\end{equation}
	where $| \mathbf p | \le \ol{p} e^{\lambda_* \tau_0}$.
	Assume 
		\begin{gather*}
			\| h \|_{L^\infty ( M \times [ \tau_0, \tau_1] )} \le \epsilon < 1,	\\
			h(\tau_0) = \eta_{\gamma_0}  \sum_{j = 1}^{K} p_j h_j ,
			\\
			\| h (\tau)  \|_{L^2_f (M) } \le \mu e^{\lambda_* \tau} ,  \\
			| \cl{E}_2 |_{\ol{g}} \le \frac{C_0}{\Gamma_0} e^{- \tau}, \,
			| \ol{\nabla} \cl{E}_2 |_{\ol{g}} \le \frac{C_0}{\Gamma_0^{3/2}} e^{- \frac{3}{2}\tau}, \text{ and } 
			\supp \cl{E}_2 \subset \{ e^\tau \le f \le \Gamma_0 e^\tau \}
		\end{gather*}
	where $C_0 = C_0 ( n, M, \ol{g}, f) \ge 1$ depends only on $n, M, \ol{g} , f$.
	
	Let $0 < \delta \le \epsilon$.
	If $0 < \ol{p} \ll 1$ is sufficiently small (depending on $n, M, \ol{g},f, \lambda_*$),
	$0 < \epsilon \ll 1$ is sufficiently small (depending on $n, M, \ol{g}, f, \lambda_*$),
	$0 < \mu \ll 1$ is sufficiently small (depending on $n, M, \ol{g}, f, \lambda_*$), 
	$0 < \gamma_0 \ll 1$ is sufficiently small (depending on $n, M, \ol{g}, f, \lambda_* , \delta$),
	$\tau_0 \gg 1$ is sufficiently large (depending on $n, M, \ol{g}, f, \lambda_*, \gamma_0, \ol{p}, \epsilon , \delta$),
	then $h$ satisfies the estimates 
		$$| h | + | \ol{\nabla} h | + | \ol{\nabla}^2 h| \le W f^{| \lambda_*|} e^{\lambda_* \tau} \text{ and}$$
		$$| h | + | \ol{\nabla} h | + | \ol{\nabla}^2 h| \le W' \delta$$
	throughout $M \times [ \tau_0 , \tau_1 ]$
	where $W = W(n, M, \ol{g}, f, \lambda_* )$
	and $W' = W'( n, M, \ol{g}, f)$.
\end{thm}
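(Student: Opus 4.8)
The plan is to bootstrap the coarse hypotheses --- the $L^\infty$ bound $\epsilon$, the $L^2_f$ decay $\mu e^{\lambda_*\tau}$, the explicit initial data $h(\tau_0)=\eta_{\gamma_0}\sum_{j\le K}p_j h_j$, and the $\cl{E}_2$ estimates --- into the sharp pointwise weighted $C^2$ bounds, by combining parabolic interior estimates in a rescaled picture with a weighted maximum-principle comparison. The features of \eqref{h(tau) Evol Eqn} to exploit are that its principal part is $\ol{\Delta}_f+2\ol{Rm}$, that $\cl{E}_1$ is quadratic in $(h,\ol{\nabla}h)$ and contains $\ol{\nabla}^2 h$ only multiplied by the small factor $\hat h\sim h$, and that $\cl{E}_2$ is a small source --- $|\cl{E}_2|\le C_0\Gamma_0^{-1}e^{-\tau}$, $|\ol{\nabla}\cl{E}_2|\le C_0\Gamma_0^{-3/2}e^{-3\tau/2}$ (Lemma \ref{Lem difference from RF}) --- supported in the shell $\{e^\tau\le f\le\Gamma_0 e^\tau\}$, so that $\cl{E}_2\equiv 0$ on any fixed region $\{f\le\Lambda\}$ once $\tau_0$ is large, and so that on $\supp\cl{E}_2$ one has $e^{-\tau}\sim f^{-1}$. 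Since the geometry of $(M,\ol{g})$ degenerates at infinity, every local parabolic estimate is applied after rescaling $\ol{g}$ by $f(p)^{-1}$ about the base point $p$ and rescaling time accordingly, exactly as in the proof of Proposition \ref{Prop Eigmode Derivative Growth}; Lemma \ref{Lem Soliton Curv Decay} guarantees that in those coordinates all coefficients of the rescaled equation and their derivatives are bounded uniformly in $p$, and $\inf_M f>0$ (Lemma \ref{Lem Nonflat and f>0}) keeps the rescaling bounded below.

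I would first prove the $C^0$ estimate $|h|_{\ol{g}}\le W_0 f^{|\lambda_*|}e^{\lambda_*\tau}$, splitting $M$ into a fixed compact core $\{f\le\Lambda\}$ and the complementary outer region. In the core, $\cl{E}_2$ vanishes for $\tau_0$ large, the weight $e^{-f}$ is bounded above and below, so $\|h(\tau)\|_{L^2(\{f\le\Lambda\})}\lesssim\|h(\tau)\|_{L^2_f}\le\mu e^{\lambda_*\tau}$; feeding this into interior parabolic ($L^2\to L^\infty$, De Giorgi--Nash--Moser, Appendix \ref{Holder Ests in Eucl Space}, Lemma \ref{Lem Lin Int Est}) estimates, with the quadratic $\cl{E}_1$ terms absorbed using $\epsilon$ small together with the interior gradient estimate, gives $|h|\lesssim\mu e^{\lambda_*\tau}\lesssim f^{|\lambda_*|}e^{\lambda_*\tau}$ there (using $\inf_M f>0$). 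In the outer region $\{f\ge\Lambda\}$ I would compare $v=|h|_{\ol{g}}$ against a barrier of the form $\psi=W_0\bigl(f^{|\lambda_*|}-Bf^{|\lambda_*|-1}\bigr)e^{\lambda_*\tau}\asymp W_0 f^{|\lambda_*|}e^{\lambda_*\tau}$: using $\ol{\Delta}_f f=\tfrac n2-f$ (Proposition \ref{Prop Lapl f Eqn}) and $|\ol{\nabla}f|^2=f-R$, one computes $\ol{\Delta}_f(f^a e^{\lambda_*\tau})=(-a+O(f^{-1}))f^a e^{\lambda_*\tau}$ with $a=|\lambda_*|=-\lambda_*$, which makes $f^a e^{\lambda_*\tau}$ only a \emph{borderline} (in fact $O(f^{-1})$-wrong-sign) supersolution of $\partial_\tau-\ol{\Delta}_f$, but choosing $B=B(n,\lambda_*,\ldots)$ large and $\Lambda>2B$ turns the $f^{a-1}$-correction into a genuine positive slack; this slack dominates the perturbations $|2\ol{Rm}[h]|\lesssim\Lambda^{-1}v$, $|\cl{E}_1|\lesssim\epsilon\,(\text{lower order})$, and $|\cl{E}_2|\le C_0\Gamma_0^{-1}e^{-\tau}\lesssim C_0\Gamma_0^{-1}f^{a-1}e^{\lambda_*\tau}$ on its support (using $e^{-\tau}\sim f^{-1}$, $\Gamma_0$ large). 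The comparison is started from $|h(\tau_0)|\le C\ol{p}\,\gamma_0^{|\lambda_*|}\lesssim f^{|\lambda_*|}e^{\lambda_*\tau_0}$ on $\{f\le\gamma_0 e^{\tau_0}\}$ --- which follows from the eigenmode growth condition (Definition \ref{Defn Eigmode Growth Assumption}, Proposition \ref{Prop Asymp Conical Implies Eigmode Growth Assumption}) since $\max_{j\le K}\{-\lambda_j,0\}<|\lambda_*|$ --- and from the interface data at $\{f=\Lambda\}$ supplied by the core estimate.

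With the $C^0$ bound in hand, the rescaled interior estimates of the first paragraph upgrade it: at any $(p,\tau)$, $|\ol{\nabla}h|+|\ol{\nabla}^2 h|(p,\tau)\lesssim\sup_{Q(p,\tau)}|h|+(\text{contribution of }\cl{E}_1,\cl{E}_2)$ over a rescaled parabolic cylinder $Q(p,\tau)$ on which, by the $C^0$ bound and the quadratic growth of $f$, $|h|\lesssim f(p)^{|\lambda_*|}e^{\lambda_*\tau}$, the $\cl{E}_1$ contribution being reabsorbed using $\epsilon$ small and $|\cl{E}_2|,|\ol{\nabla}\cl{E}_2|$ estimated by Lemma \ref{Lem difference from RF}; this gives $|h|+|\ol{\nabla}h|+|\ol{\nabla}^2 h|\le Wf^{|\lambda_*|}e^{\lambda_*\tau}$ with $W=W(n,M,\ol{g},f,\lambda_*)$. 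Finally, for the uniform estimate $\le W'\delta$: on the region $\{f\le\delta^{1/|\lambda_*|}e^\tau\}$ the weighted estimate already yields $Wf^{|\lambda_*|}e^{\lambda_*\tau}\le W\delta$; on the remaining part of $\supp h\subset\{f\le\Gamma_0 e^\tau\}$ (Lemma \ref{Lem Drift of the Grafting Region}) one runs instead a separate, cruder maximum-principle (or Duhamel) comparison observing that $h$ there is driven only by $\cl{E}_2$, of size $C_0\Gamma_0^{-1}e^{-\tau}\le C_0\Gamma_0^{-1}e^{-\tau_0}$, and by data inherited from smaller $f$ --- all of which is $\le\delta$ once $\gamma_0$ is small and $\tau_0$ large depending on $\delta$ --- so the conclusion follows with $W'=W'(n,M,\ol{g},f)$, the same rescaled interior estimates controlling $|\ol{\nabla}h|,|\ol{\nabla}^2 h|$.

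The step I expect to be the main obstacle is the outer-region comparison: the natural barrier $f^{|\lambda_*|}e^{\lambda_*\tau}$ is precisely critical for $\partial_\tau-\ol{\Delta}_f$, so the argument must genuinely exploit the smallness of the perturbations there ($|\ol{Rm}|\lesssim\Lambda^{-1}$, $\cl{E}_1=O(\epsilon)\times$ lower order, $\cl{E}_2$ localized with $e^{-\tau}\sim f^{-1}$ on its support) and the precise form of the initial data, and it must be carried out simultaneously for $|h|$, $|\ol{\nabla}h|$, and $|\ol{\nabla}^2 h|$ so that the quadratic terms in $\cl{E}_1$ can be reabsorbed rather than treated as an uncontrolled source; keeping the constants $W,W'$ dependent only on the asserted data, and in particular independent of $\delta$ and of the terminal time $\tau_1$, is where the bookkeeping is heaviest.
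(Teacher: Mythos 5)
Your overall architecture is the same as the paper's: a compact core $\{f\le\Lambda\}$ controlled by the $L^2_f$ decay hypothesis plus local parabolic estimates, an exterior region controlled by a comparison against $e^{\lambda_*\tau}\bigl(Af^{|\lambda_*|}-Bf^{|\lambda_*|-1}\bigr)$ together with a separate flat comparison of size $\delta$ at the outermost scale (the paper's lemma \ref{Lem Barrier Large Scale} uses $\delta^2-B_1/f$ and glues the two supersolutions by taking a minimum on an overlap $\{\gamma_-e^\tau\le f\le\gamma_+e^\tau\}$), and finally soliton-rescaled interior estimates (pulling back by $\phi_s$, as in proposition \ref{Prop Eigmode Derivative Growth} and lemmas \ref{Lem Int Est Intermediate Scale}, \ref{Lem Int Est Large Scale}) to upgrade $C^0$ to $C^2$. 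Your identification of the borderline weight and of the role of the $-Bf^{|\lambda_*|-1}$ correction matches lemma \ref{Lem Barrier Intermediate Scale} exactly.

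There is, however, a genuine gap precisely at the step you flag as the main obstacle. In the exterior $C^0$ comparison you write ``$|\cl{E}_1|\lesssim\epsilon\,(\text{lower order})$'', but the terms $g^{ab}g^{pq}(\ol{\nabla}h*\ol{\nabla}h)$ and $\hat h^{ab}\ol{\nabla}_a\ol{\nabla}_b h$ in $\cl{E}_1$ are not lower order relative to $|h|$ at a contact point: a scalar maximum-principle comparison for $|h|$ or $|h|^2$ gives no control of $|\ol{\nabla}h|$ (note $\ol{\nabla}|h|^2=2\langle h,\ol{\nabla}h\rangle$ can vanish while $|\ol{\nabla}h|$ is large), so these terms cannot be dominated by the barrier's $f^{|\lambda_*|-1}$ slack without the very gradient bound you are trying to prove. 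Your fallback --- running the comparison simultaneously for $|h|$, $|\ol{\nabla}h|$, $|\ol{\nabla}^2h|$ --- is neither needed nor likely to close (the differentiated equations produce curvature-commutator and cross terms with no favorable sign). The missing idea is the paper's lemma \ref{Lem |h| Evol Eqn}: writing $2\langle h,\,g^{ab}\ol{\nabla}_a\ol{\nabla}_b h\rangle=g^{ab}\ol{\nabla}_a\ol{\nabla}_b|h|^2-2g^{ab}\langle\ol{\nabla}_a h,\ol{\nabla}_b h\rangle$, the second-order part of $\cl{E}_1$ is absorbed into the quasilinear principal part $g^{ab}\ol{\nabla}_a\ol{\nabla}_b|h|^2$, and all quadratic gradient contributions collect into $-2(1-2C_n\epsilon)|\ol{\nabla}h|^2\le 0$ once $\epsilon$ is small, i.e.\ they enter with a favorable sign and are simply discarded; only then does the scalar inequality for $|h|^2$ close, after which the $C^0$ bound feeds into the rescaled interior estimates to produce $|\ol{\nabla}h|,|\ol{\nabla}^2h|$, as you intend. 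A secondary, smaller omission: the matching of your weighted comparison and your $\delta$-comparison across the moving interface $f\approx\delta^{1/|\lambda_*|}e^\tau$ is left unspecified; the paper handles it by verifying that the two explicit supersolutions cross in the correct order at $f=\gamma_\pm e^\tau$ and taking their minimum, yielding one Lipschitz supersolution on $\{\Upsilon\le f\le\Gamma_0 e^\tau\}$, and it also needs (and uses) initial-time versions of the interior estimates together with the $C^{4n+2}$ smallness of $h(\tau_0)$ on the core, a point your core-region sketch passes over.
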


The proof is based on partitioning $M$ into three regions or scales $\{ f < \Gamma \}$, $\{ \Gamma < f < \gamma e^\tau \}$, and $\{ \gamma e^\tau < f < \Gamma e^\tau \}$,
with $\Gamma \gg 1$ sufficiently large and $0 < \gamma \ll 1$ sufficiently small.
For regions $\{ \Gamma < f < \gamma e^\tau \}$ and $\{ \gamma e^\tau < f < \Gamma e^\tau \}$, 
barriers provide $C^0$ estimates for $h$ and interior estimates bootstrap these $C^0$ estimates to $C^2$ estimates.
The estimates in the former region $\{ \gamma e^\tau < f < \Gamma e^\tau \}$ are simplified by the fact that $\cl{E}_2$ is supported on $\{ \gamma e^\tau < f < \Gamma e^\tau \}$ for suitable choices of the parameters $\gamma, \Gamma$.
To estimate $h$ on $\{ f < \Gamma \}$, we use local derivative estimates and the $L^2_f$ decay of $h$.

%%%%%%%%%%%%%%%%%%%%%%%%%%%%%%%%%%%%%%%%%%%%%%%%%%%%%%%%%%
\subsection{Local Estimates}
%%%%%%%%%%%%%%%%%%%%%%%%%%%%%%%%%%%%%%%%%%%%%%%%%%%%%%%%%%
\begin{lem}[Local Interior Estimates: $C^0_{loc}$ to $C^m_{loc}$] \label{Local Int Est C^0 to C^m_loc}
	Let $m \in \mathbb{N}$, $\Omega \Subset \Omega' \Subset M$, and $\tau_0 \le \tau_1 < \tau_2$.
	Assume 
		$$\partial_\tau h = \ol{\Delta}_f h + 2 \ol{Rm} [h] + \cl{E}_1 + \cl{E}_2	\qquad
		\text{on } \Omega' \times (\tau_0, \tau_2)$$
	and that $\cl{E}_2 \equiv 0$ on $\Omega' \times [\tau_0, \tau_2]$.

	If $\tau_1 > \tau_0$, then there exists $\epsilon > 0$ (depending on $n, m$) and $C$ (depending on $n, M , \ol{g}, f, m, \sup_{\Omega'} f,  \dist_{\ol{g}} ( \Omega, M \setminus \Omega' ) , \tau_1 - \tau_0$) 
	such that 
	\begin{gather*}
		\| h \|_{L^\infty( \Omega' \times [\tau_0, \tau_2])} < \epsilon	 \implies	
		\| h \|_{C^m( \Omega \times [\tau_1, \tau_2])} \le C \| h \|_{L^\infty( \Omega' \times [\tau_0, \tau_2])}.
	\end{gather*}
		
	If $\tau_1 = \tau_0$,
	then there exists $\epsilon > 0$ (depending on $n, m$) and
	$C$ (depending on $n, M, \ol{g}, f, m,\sup_{\Omega'} f, \dist_{\ol{g}} ( \Omega, M \setminus \Omega' )$)	
	such that
	 \begin{gather*}
	 	\| h \|_{L^\infty ( \Omega' \times [ \tau_0, \tau_2] ) } + \| h \|_{C^{m+1} ( \Omega' \times \{ \tau_0 \} ) } < \epsilon \implies	\\
		\| h \|_{C^m ( \Omega \times [ \tau_0, \tau_2] )} \le 
		C \left( \| h \|_{L^\infty( \Omega' \times [\tau_0, \tau_2])} + \| h \|_{C^{m+1} ( \Omega' \times \{ \tau_0 \} ) } \right).
	\end{gather*}
\end{lem}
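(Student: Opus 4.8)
The plan is to reduce this parabolic interior estimate to the standard interior H\"older/Schauder estimates for linear parabolic systems recorded in Appendix~\ref{Holder Ests in Eucl Space}, after freezing the lower-order structure and passing to Euclidean coordinates. The equation \eqref{h(tau) Evol Eqn} on $\Omega' \times (\tau_0, \tau_2)$ is, once $\cl{E}_2 \equiv 0$, a quasilinear parabolic system for $h$:
\begin{equation*}
	\partial_\tau h_{ij} = g^{ab}(h) \ol{\nabla}_a \ol{\nabla}_b h_{ij} + \cl{Q}(h, \ol{\nabla} h)
\end{equation*}
where $g^{ab} = \ol{g}^{ab} - \hat h^{ab}$ is uniformly elliptic provided $\| h\|_{L^\infty} < \epsilon$ is small (depending on $n$), and $\cl{Q}$ is the collection of curvature, $\ol{\nabla} h * \ol{\nabla} h$, and first-order terms in \eqref{clE_1 Eqn} together with $-\ol{\nabla}_{\ol{\nabla}f} h$ and $2\ol{Rm}[h]$. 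All coefficients of $\cl{Q}$ (namely $\ol{g}$, $\ol{Rm}$, $\ol{\nabla} f$ and their $\ol{g}$-covariant derivatives of all orders) are controlled on $\Omega'$ in terms of $n, M, \ol{g}, f, \sup_{\Omega'} f$ using the soliton curvature decay (lemma~\ref{Lem Soliton Curv Decay}), corollary~\ref{Cor nabla^2 f 0 Eigmode}, and proposition~\ref{Prop Potential Func Grows Quadratically}; since $\Omega' \Subset M$ is precompact these are just bounded-geometry bounds.

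First I would fix a finite cover of $\Omega$ by coordinate balls of definite radius $r = r(n, M, \ol{g}, f, \dist_{\ol{g}}(\Omega, M\setminus\Omega'))$ such that each concentric ball of radius $2r$ lies in $\Omega'$; after possibly passing to a local cover and pulling back via the $\ol{g}$-exponential map (exactly as in the proof of proposition~\ref{Prop Eigmode Derivative Growth}), work on $B_{2r} \times [\tau_0, \tau_2] \subset \R^n \times \R$ with the equation written in these coordinates. In coordinates the system reads $\partial_\tau h = a^{ab} \partial_a \partial_b h + b^a \partial_a h + c\, h + (\text{nonlinear in } h, \partial h)$, with $a^{ab}(h)$ uniformly elliptic once $\| h\|_{L^\infty} < \epsilon(n)$ and with coefficient norms bounded in terms of the stated quantities. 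Then I would bootstrap: treating $h$ as a solution of a linear equation with coefficients $a^{ab}(h)$ and right-hand side that is a universal polynomial in $h, \partial h$, the $L^\infty$ bound $\|h\|_{L^\infty} < \epsilon$ together with interior $L^p$/Schauder theory (Appendix~\ref{Holder Ests in Eucl Space}) upgrades $h$ on the slightly smaller parabolic cylinder to $C^{1+\alpha}$, hence the right-hand side becomes $C^\alpha$, hence $h$ is $C^{2+\alpha}$, and iterating $m$ times (each step shrinking the cylinder by a fixed fraction, $m$ steps total) gives $\| h \|_{C^m} \le C \| h\|_{L^\infty}$ on $\Omega \times [\tau_1, \tau_2]$ with $C$ depending on the advertised data and on $\tau_1 - \tau_0$ (which sets how far into the interior in time one must go). For the case $\tau_1 = \tau_0$ one instead invokes the boundary-in-time (initial data) version of the Schauder estimates: the hypothesis also controls $\| h\|_{C^{m+1}(\Omega' \times \{\tau_0\})}$, so the bootstrap applies up to the initial slice and yields $\| h\|_{C^m(\Omega \times [\tau_0,\tau_2])}$ bounded by $\| h\|_{L^\infty(\Omega'\times[\tau_0,\tau_2])} + \|h\|_{C^{m+1}(\Omega'\times\{\tau_0\})}$, now with $C$ independent of $\tau_2 - \tau_0$ since there is no interior-in-time loss.

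The only genuinely delicate point is the smallness threshold $\epsilon = \epsilon(n,m)$: one needs $\|h\|_{L^\infty} < \epsilon$ small enough that $g^{ab}(h)$ stays, say, within $\tfrac14$ of $\ol{g}^{ab}$ in operator norm so that ellipticity constants are universal throughout the iteration, and one must check that each bootstrap step only requires control of $\| h\|_{C^{k}}$ for $k$ strictly less than the target, so that the quadratic-gradient term $g^{ab}g^{pq}(\ol\nabla h * \ol\nabla h)$ — which is the one nonlinearity of top gradient order — is handled by the already-established lower-order Schauder bound rather than producing a circular estimate. I would also remark that the passage to a local cover is harmless because all estimates are interior and the covering map is a local isometry, and that all constants depend on $M, \ol{g}, f$ only through bounded-geometry quantities on $\Omega'$, which is why $\sup_{\Omega'} f$ enters (it bounds the curvature and its derivatives via lemma~\ref{Lem Soliton Curv Decay}). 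No new ideas beyond standard parabolic regularity are needed; the bookkeeping of which norms feed which step is the main thing to get right.
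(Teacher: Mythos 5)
Your proposal follows the same overall route as the paper: at each point of $\Omega$ one fixes a ball of definite radius (the paper gets the radius from Klingenberg's theorem so that $B_{\ol{g}}(p,2r)$ has no conjugate points, and in the case $\tau_1>\tau_0$ also takes $r\le \tfrac12\sqrt{\tau_1-\tau_0}$ so the backward parabolic cylinders fit inside $\Omega'\times[\tau_0,\tau_2]$), passes to a local cover, pulls back by the $\ol{g}$-exponential map, and invokes the Euclidean interior estimates of Appendix~\ref{Holder Ests in Eucl Space}, with coefficient bounds coming from the soliton identities and $\sup_{\Omega'}f$. The one substantive difference is how the nonlinearity is handled: the paper simply applies the \emph{nonlinear} interior estimates, proposition~\ref{Prop Nonlin Int Est} (for $\tau_1>\tau_0$) and proposition~\ref{Prop Nonlin Int Est+} (for $\tau_1=\tau_0$), with $F=0$, whose hypotheses are tailored exactly to the structure $f_1\cdot u+f_2\cdot\nabla u+f_3\cdot\nabla u\otimes\nabla u+f_4\cdot u\otimes\nabla^2 u$ and whose conclusion needs only smallness of $\|u\|_{C^0}$. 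Your hand-made Schauder ladder, by contrast, has a weak first rung: the linear estimates lemma~\ref{Lem Lin Int Est} and lemma~\ref{Lem Lin Int Est+} require the coefficients to be bounded in $C^{2m-2,\alpha}$, whereas at the start of your iteration $a^{ab}(h)=\ol g^{ab}-\hat h^{ab}$ and the right-hand side are controlled only in $L^\infty$, and for \emph{systems} there is no Krylov--Safonov/De~Giorgi--Nash step to get from $L^\infty$ to $C^{1,\alpha}$ (nor is any $L^p$ theory stated in the appendix). This is precisely the circularity you flag but do not resolve; it is closed in the appendix by the weighted-norm iteration in the proof of proposition~\ref{Prop Nonlin Int Est}, which uses smallness of $\|u\|_{C^0}$ to absorb the $u\otimes\nabla^2u$ term while $h$ is a priori smooth. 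So the cleanest repair of your argument is to quote propositions~\ref{Prop Nonlin Int Est} and~\ref{Prop Nonlin Int Est+} directly (as the paper does) rather than iterate the linear lemmas; with that substitution your proof is the paper's proof.
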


\begin{proof}
	By assumption,
	\begin{gather*}   \begin{aligned}
		\partial_\tau h_{ij}	
		={}& \ol{\Delta} h_{ij} - \ol{\nabla}_{\ol{\nabla} f} h
		+ 2 \ol{R} \indices{_i^k_j^l} h_{kl}	\\
		&  + \ol{R} \indices{_{ja}^p_b} \left( \ol{g}_{ip} \tl{h}^{ab}  + \hat{h}^{ab} h_{ip} \right) 
		+ \ol{R} \indices{_{ia}^p_b} \left( \ol{g}_{jp} \tl{h}^{ab}  + \hat{h}^{ab} h_{jp} \right) 	\\
		& + g^{ab} g^{pq} ( \ol{\nabla} h * \ol{\nabla} h )_{abpqij}		
		- \hat h^{ab} \ol{\nabla}_a \ol{\nabla}_b h
	\end{aligned} 	\end{gather*}
	on $\Omega' \times (\tau_0, \tau_2) $.
	
	In the case where $\tau_1 > \tau_0$, define
		$$r_0 \doteqdot \frac{1}{2} \min \left\{ \dist_{\ol{g}} ( \Omega, M \setminus \Omega' ) , \sqrt{ \tau_1 - \tau_0 } \right\} > 0.$$
	By Klingenberg's Theorem, there exists $0 < r \le r_0$ depending on $n$ and $\sup_M |\ol{Rm}|_{\ol{g}} < \infty$ such that, for all $p \in M$, $B_{\ol{g}} ( p, 2r)$ contains no points conjugate to $p$.
		
	At each point $p \in \Omega$ and each $\tau \in [ \tau_1, \tau_2]$, consider the domains $B_{\ol{g}} ( p, r ) \times [ \tau - r^2, \tau ] \subset B_{\ol{g}} ( p, 2 r ) \times [ \tau - (2 r)^2, \tau ] \subset \Omega' \times [ \tau_0, \tau_2]$.
	By passing to a local cover if necessary, we may assume the exponential map at $p$ is a diffeomorphism to $B_{\ol{g}} ( p, 2r)$.
	After pulling back under this diffeomorphism, $h$ satisfies a differential equation for which Proposition \ref{Prop Nonlin Int Est} applies with $F = 0$.
	Since $\ol{Rc} + \ol{\nabla}^2 f = \frac{1}{2} \ol{g}$ and $\ol{R} + | \ol{\nabla} f|^2_{\ol{g}} = f$, the coefficients in this differential equation and their derivatives can be bounded in terms of $\ol{Rm}$, its derivatives $\ol{\nabla}^l \ol{Rm}$, and $\sup_{\Omega'} f$.
	Proposition \ref{Prop Nonlin Int Est} with $F = 0$ thus yields the desired result.
	
	In the case where $\tau_1 = \tau_0$, define
		$$r_0 \doteqdot \frac{1}{2} \dist_{\ol{g}} ( \Omega, M \setminus \Omega' ) > 0.$$
	Again, take $0 < r \le r_0$ depending on $n$ and $\sup_M | \ol{Rm} |_{\ol{g}} < \infty$ such that, for all $p \in M$, $B_{\ol{g}} ( p, 2r)$ contains no points conjugate to $p$.
	
	At each point $p \in \Omega$ and $\tau \in [ \tau_0 + (2r)^2, \tau_2]$, we can invoke the previous case with $\tau_1 = \tau_0 + (2r)^2 > \tau_0$.
	If instead $\tau \in [ \tau_0, \tau_0 + (2r)^2 ] \cap [ \tau_0, \tau_2] \doteqdot [ \tau_0, \tau_2']$, the result follows from similar logic as the previous case using instead the domains $B_{\ol{g}} ( p, r ) \times [ \tau_0, \tau_2' ] \subset B_{\ol{g}} ( p , 2r) \times [ \tau_0 , \tau_2' ] \subset \Omega' \times [\tau_0, \tau_2]$ and Proposition \ref{Prop Nonlin Int Est+} with $F = 0$. 
\end{proof}

\begin{lem}[Local Interior Integral Estimates] \label{Local Int Integral Est+}
	Let $m \in \mathbb{N}$, $\Omega \Subset \Omega' \Subset M$,
	and $\tau_0 \le \tau_1 < \tau_2$.
	Assume 
		$$\partial_\tau h = \ol{\Delta}_f h + 2 \ol{Rm} [h] + \cl{E}_1 + \cl{E}_2	\qquad
		\text{on } \Omega' \times (\tau_0, \tau_2)$$
	and that $\cl{E}_2 \equiv 0$ on $\Omega' \times [\tau_0, \tau_2]$.
	
	If $\tau_1 > \tau_0$, then there exists $\epsilon > 0$ (depending only on $n, m$)
	and $C$ (depending on $n, M , \ol{g}, f, m , \Omega, \Omega', \tau_1 - \tau_0$) 
	such that
	\begin{gather*}
		\| h \|_{C^{m+1}( \Omega' \times [\tau_0, \tau_2] )} < \epsilon
		  \implies \\
		\sup_{\tau \in [ \tau_1, \tau_2] } \| \nabla^{m}  h(\tau)   \|_{L^2_f ( \Omega ) } 
		+ \| h  \|_{L^2 H^{m+1}_f ( \Omega \times [ \tau_1, \tau_2] ) }	
		\le C 
		 \| h  \|_{L^2 H^m_f ( \Omega' \times [ \tau_0, \tau_2]) }.
	\end{gather*}
		
	If $\tau_1 = \tau_0$, then there exists $\epsilon > 0$ (depending only on $n, m$) and 
	$C$ (depending only on $n, M, \ol{g}, f, m , \Omega, \Omega' $) such that
	\begin{gather*}
		\| h \|_{C^{m+1}( \Omega' \times [\tau_0, \tau_2] )} < \epsilon \
		\implies \\
		\sup_{\tau \in [ \tau_0, \tau_2] } \| \nabla^m  h( \tau)   \|_{L^2_f ( \Omega  ) }
		+ \| h   \|_{L^2 H^{m+1}_f ( \Omega \times [ \tau_0, \tau_2] ) } 	\\
		\le C   \left( \| h( \tau_0 )  \|_{H^m_f ( \Omega' ) } + 
		 \| h  \|_{L^2 H^m_f ( \Omega' \times [ \tau_0, \tau_2]) } \right).
	\end{gather*}
\end{lem}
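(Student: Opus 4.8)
The plan is to prove this by a localized $L^2_f$ energy estimate. Since $\Omega' \Subset M$ is precompact, the weight $e^{-f}$ is bounded above and below on $\Omega'$, so on subsets of $\Omega'$ the weighted norms $L^2_f, H^k_f$ are equivalent to the unweighted ones (with constants depending on $\sup_{\Omega'} f$), and $\ol{\Delta}_f = \ol{\Delta} - \ol{\nabla}_{\ol{\nabla} f}$ is a smooth, uniformly elliptic operator on $\Omega'$ whose coefficients and all their derivatives are bounded in terms of $n, M, \ol{g}, f, m, \Omega, \Omega'$ (using $\ol{Rc} + \ol{\nabla}^2 f = \frac12 \ol{g}$, $\ol{R} + |\ol{\nabla}f|^2 = f$, and the curvature bounds of Lemma \ref{Lem Soliton Curv Decay}). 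The hypothesis $\|h\|_{C^{m+1}(\Omega'\times[\tau_0,\tau_2])} < \epsilon$ guarantees that $g^{ab} = \ol{g}^{ab} - \hat h^{ab}$ is uniformly elliptic and that $\hat h, \tl h$ and their derivatives up to order $m+1$ are small, so that $\cl{E}_1$ acts as a lower-order perturbation; and $\cl{E}_2 \equiv 0$ on $\Omega'\times[\tau_0,\tau_2]$ by hypothesis, so it plays no role. The statement is then essentially standard interior $L^2$ parabolic regularity, and I would carry it out as follows.

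Fix a spatial cutoff $\chi \in C_c^\infty(\Omega')$ with $\chi \equiv 1$ on $\Omega$, and, when $\tau_1 > \tau_0$, a temporal cutoff $\psi \in C^\infty(\R)$ with $\psi \equiv 0$ on $(-\infty, \tau_0 + \tfrac13(\tau_1-\tau_0)]$, $\psi \equiv 1$ on $[\tau_1, \infty)$, and $|\psi'| \lesssim (\tau_1-\tau_0)^{-1}$; when $\tau_1 = \tau_0$ take $\psi \equiv 1$. Commute $\ol{\nabla}^m$ through \eqref{h(tau) Evol Eqn} and differentiate the energy $E(\tau) = \int_M \psi^2\chi^2 |\ol{\nabla}^m h|^2 e^{-f}\,dV_{\ol{g}}$. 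The principal term $2\int \psi^2\chi^2\langle \ol{\nabla}^m \ol{\Delta}_f h, \ol{\nabla}^m h\rangle e^{-f}$ is integrated by parts using the weighted formula $\int \langle S, \div_f T\rangle e^{-f} = -\int\langle \ol{\nabla}S,\ol{\nabla}T\rangle e^{-f}$; this yields the good term $-2\int \psi^2\chi^2|\ol{\nabla}^{m+1}h|^2 e^{-f}$ plus commutators (curvature times $\ol{\nabla}^{\le m}h$) bounded by $C\int\psi^2\chi^2|\ol{\nabla}^{\le m}h|^2 e^{-f}$ and boundary terms carrying $\ol{\nabla}\chi$ or $\ol{\nabla}f$, which are absorbed by Young's inequality --- a small multiple of $|\ol{\nabla}^{m+1}h|^2$ into the good term, the rest into $\int_{\Omega'}|\ol{\nabla}^{\le m}h|^2 e^{-f}$. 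The term $\cl{E}_1$ (schematically $\ol{Rm}*(\ol{g}*\tl h + \hat h * h) + g^{-1}*g^{-1}*(\ol{\nabla}h*\ol{\nabla}h) - \hat h * \ol{\nabla}^2 h$), after applying $\ol{\nabla}^m$ and pairing against $\ol{\nabla}^m h$, contributes terms that are either products with at most $m+1$ derivatives on $h$ and small or bounded coefficients --- absorbed into the good term and $\int_{\Omega'}|\ol{\nabla}^{\le m}h|^2 e^{-f}$ --- or the single top-order term $\hat h * \ol{\nabla}^{m+2}h$ coming from $\hat h * \ol{\nabla}^2 h$, which I would handle by one further integration by parts so that the resulting top-order contribution is $\|\hat h\|_{L^\infty(\Omega')}\int\psi^2\chi^2|\ol{\nabla}^{m+1}h|^2 e^{-f}$ (absorbed, since $\|\hat h\|_{L^\infty(\Omega')}$ is small) together with lower-order terms where the extra derivative has fallen on $\ol{\nabla}\hat h$, on $e^{-f}$ (producing $\ol{\nabla}f$, bounded on $\Omega'$), or on $\chi$. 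The upshot is a differential inequality
$$\frac{d}{d\tau}E(\tau) + \int_M \psi^2\chi^2|\ol{\nabla}^{m+1}h|^2 e^{-f}\,dV_{\ol{g}} \;\le\; C\,(1 + |\psi'(\tau)|)\,\|h(\tau)\|_{H^m_f(\Omega')}^2 .$$

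Integrating this inequality in $\tau$ from $\tau_0$ to an arbitrary $\tau \in [\tau_1, \tau_2]$ gives the result: when $\tau_1 > \tau_0$ the boundary term $E(\tau_0)$ vanishes since $\psi(\tau_0) = 0$, and $\int_{\tau_0}^{\tau_2}|\psi'|\,\|h(\tau)\|_{H^m_f(\Omega')}^2\,d\tau \lesssim (\tau_1-\tau_0)^{-1}\|h\|_{L^2 H^m_f(\Omega'\times[\tau_0,\tau_2])}^2$; when $\tau_1 = \tau_0$ the boundary term is $E(\tau_0) \le \|h(\tau_0)\|_{H^m_f(\Omega')}^2$. Since $\chi \equiv 1$ on $\Omega$ and $\int_{\tau_0}^{\tau_2}\|h(\tau)\|_{H^m_f(\Omega')}^2\,d\tau = \|h\|_{L^2 H^m_f(\Omega'\times[\tau_0,\tau_2])}^2$, taking the supremum over $\tau$ yields both claimed estimates with $C$ having the stated dependencies. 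The main technical obstacle is the bookkeeping for the quasilinear term $\hat h * \ol{\nabla}^2 h$ in $\cl{E}_1$: after $m$ covariant derivatives it carries $\ol{\nabla}^{m+2}h$, one order above what $E$ naturally controls, so the extra integration by parts above is essential, and one must check that the resulting top-order contribution genuinely comes multiplied by the small factor $\|\hat h\|_{L^\infty(\Omega')}$ rather than by a merely bounded constant, so that it can be absorbed into the dissipative term.
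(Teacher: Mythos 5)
Your proposal is correct and follows essentially the same route as the paper: a cutoff-localized weighted energy estimate for $\ol{\nabla}^m h$, with the quasilinear term $\hat h * \ol{\nabla}^2 h$ recast in ($\div_f$-)divergence form and integrated by parts once more so that its top-order contribution carries the small factor $\|\hat h\|_{L^\infty(\Omega')}$ and is absorbed into the dissipative term, the remaining terms being Young-absorbed or bounded by $\|h\|_{H^m_f(\Omega')}^2$. The paper's proof differs only cosmetically (a single space-time cutoff $\eta$ instead of $\psi\chi$, and the divergence-form rewriting done before applying $\ol{\nabla}^m$ rather than after).
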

\begin{proof}
	For the proof we will omit overlines and it will be understood that all derivatives and curvatures are taken with respect to $\ol{g}$.
	
	We first show that $h$ and its derivatives satisfy evolution equations of the form
		$$\partial_\tau \nabla^m h = \Delta_f \nabla^m h + \sum_{l = 0}^{m+1} B_l^m * \nabla^l h
		- \div_f ( \hat h * \nabla^{m+1} h ) $$
	where the tensors $B_l^m$ satisfy certain estimates specified later in the proof.
	
	First, recall that by equation \eqref{h(tau) Evol Eqn} with $\cl{E}_2 \equiv 0$ on $\Omega'$
	\begin{gather*}
	\begin{aligned}
		\partial_\tau h ={}& \Delta_f h - \hat{h}^{ab} \nabla_a \nabla_b h  + 2 Rm[h] \\
		& + (g+h)^{-1}  (g+h)^{-1} ( \nabla h * \nabla h ) + Rm * \tl{h} + Rm * \hat{h} * h	,
	\end{aligned} \\
		\text{where } (g+h)^{ab} = \ol{g}^{ab} - \hat{h}^{ab}		\ \text{and} \
		\hat{h}^{ab} = \ol{g}^{ak} \ol{g}^{bl} h_{kl} + \tl{h}^{ab}.
	\end{gather*}	
	
	This evolution equation can be rewritten as
	\begin{equation} \label{h Evol Eqn with B}
		\partial_\tau h = \Delta_f h  + B^0_1 * \nabla h + B^0_0 *  h  - \div_f ( \hat h * \nabla h) 
	\end{equation}
	where
	\begin{gather*}
		(\hat h * \nabla h)\indices{^i_{jk}} = \hat{h}^{ip} \nabla_p h_{jk}	,	\\
		B^0_1 \doteqdot \nabla h - \hat{h} * \nabla h - \hat{h} * \hat{h} * \nabla h + \nabla \hat{h} - \nabla f * \hat{h}, 
		\text{ and}	\\
		B^0_0 \doteqdot  2 Rm* + Rm * \hat{h} + Rm * T			\\ \text{ where } 
		T^{akbl}  \doteqdot \ol{g}^{ak} ( \ol{g} + h )^{bl} -  \ol{g}^{ak} \ol{g}^{bl}  \text{ so that } 
		 \tl{h}^{ab} = T^{akbl}  h_{kl}.		
	\end{gather*}

	Observe that if $0 < \epsilon \ll 1$ is sufficiently small depending only on $n$,
	then there exists a constant $C_0$ depending on $n, M, \ol{g}, f, \Omega'$ such that
	\begin{gather*} \begin{aligned}
		\| h(\tau) \|_{C^1 ( \Omega' ) } < \epsilon
		\implies{}& \| B^0_1\|_{L^\infty( \Omega')} + \| B^0_0\|_{L^\infty ( \Omega')} 
		< C_0 	.
	\end{aligned}	\end{gather*}
	
	Taking derivatives $\nabla^m$ of the evolution equation \eqref{h Evol Eqn with B} for $h$ yields
	\begin{align*}
		&\partial_\tau \nabla^m h \\
		={}& \nabla^m \Delta_f h + \nabla^m ( B^0_1 * \nabla h ) + \nabla^m ( B^0_0 * h) 
		- \nabla^m \div_f ( \hat{h}^{ip} \nabla_p \cl{H}_{jk} )	\\
		={}& \Delta_f \nabla^m h + \sum_{l=0}^m \nabla^l Rm * \nabla^{m-l} h	\\
		&+ \sum_{l = 1}^m \nabla f * \nabla^{l-1} Rm * \nabla^{m-l} h	
		+ \sum_{l =1}^m \nabla^{l+1} f * \nabla^{m - l + 1} h	\\
		&+ \sum_{l = 0}^m \nabla^l B^0_1 * \nabla^{m - l + 1 }  h
		+ \sum_{l = 0}^m \nabla^l B^0_0 * \nabla^{m- l  }  h		\\
		&- \div_f \left( \sum_{l=0}^m \nabla^l \hat{h} * \nabla^{m - l + 1} h \right)
		+ \sum_{l = 0}^{m-1} \nabla^l Rm * \nabla^{m -1- l} ( \hat{h} * \nabla h )	.
	\end{align*}
	By the definition of $B_0^0$ and $B_0^1$, this equation may be rewritten as
		$$\partial_\tau h = \Delta_f \nabla^m h + \sum_{l = 0}^{m+1} B_l^m * \nabla^l h
		- \div_f ( \hat h * \nabla^{m+1} h ) $$
	where the tensors $B_l^m$ are continuous functions of $h, \nabla h, \dots , \nabla^{m+1} h$, and $x \in M$.
	Hence, if $0 < \epsilon \ll 1$ is sufficiently small depending on $n, m$, there exists a constant $C_m$ depending on $n, M, \ol{g}, f, m, \Omega'$ such that
	\begin{gather*}
		\| h(\tau) \|_{C^{m+1}( \Omega' ) } < \epsilon
		\implies
		\sum_{l = 0}^{m+1} \| B^m_l\|_{L^\infty( \Omega')}  < C_m 	.
	\end{gather*}	
		
	Now, assume we are in the case where $\tau_1 > \tau_0$.
	Take a bump function $\eta : M \times \R \to [0,1]$ with
		$$\eta \equiv 1 \text{ on } \Omega \times [ \tau_1, \tau_2] 
		\, \text{ and } \, \supp( \eta ) \subset \Omega' \times [ \tau_0, +\infty ) $$
	and assume $\eta$ is suitably smooth so that its derivatives $\nabla^m \eta$ are bounded by constants that depend only on $n, M, \ol{g}, m, \Omega, \Omega', \tau_1 - \tau_0$.
	
	From the evolution equation for $\nabla^m h$, we now estimate the time derivative of 
		$$\int \eta^2 | \nabla^m h |^2 e^{-f} = \int_M \eta^2 | \nabla^m h |^2 e^{-f} dV_{\ol{g}}.$$
	\begin{gather*} \begin{aligned}
		&  \frac{d}{d \tau} \frac{1}{2} \int \eta^2  | \nabla^m h |^2 e^{-f}
		\\
		={}& \int \eta^2 \langle \nabla^m h, \partial_\tau \nabla^m h \rangle e^{-f} 
		+ \int \eta \eta_\tau | \nabla^m h |^2 e^{-f}		
		\\
		={}& \int \eta^2 \langle \nabla^m h , \Delta_f \nabla^m h \rangle e^{-f}
			+ \int \eta \eta_\tau | \nabla^m h |^2 e^{-f}		\\
		&  + \int \eta^2 \langle \nabla^m h, B^m_{m+1} * \nabla^{m+1} h \rangle e^{-f}
		+ \sum_{l=0}^{m} \int \eta^2 \langle \nabla^m h, B^m_{l} * \nabla^{l} h \rangle e^{-f}		\\
		&  - \int \eta^2 \langle \nabla^m h, \div_f ( \hat{h} * \nabla^{m+1} h) \rangle e^{-f}	\\
		\le{}& - \int \eta^2 | \nabla^{m+1} h |^2 e^{-f} +  C(n,m) \int \eta | \nabla \eta | | \nabla^m h | |\nabla^{m+1} h | e^{-f}
		+ \int \eta \eta_\tau | \nabla^m h |^2 e^{-f}		\\
		&  +C(n,m)  \int \eta^2 | \nabla^m h | | B^m_{m+1}| | \nabla^{m+1} h | e^{-f}
		+ C(n,m) \sum_{l=0}^{m} \int \eta^2 | \nabla^m h | | B^m_{l} | |\nabla^{l} h |  e^{-f}		\\
		&  + C(n,m) \int \eta^2 | \nabla^{m+1} h | | \hat h | | \nabla^{m+1} h | e^{-f}
		+ C(n,m) \int 2 \eta | \nabla \eta | | \nabla^{m} h | | \hat h | | \nabla^{m+1} h | e^{-f}.
	\end{aligned} \end{gather*}	
	Invoking Young's inequality, for any $\delta > 0$, the above expression can be bounded by
	\begin{gather*} \begin{aligned}	
		&  \frac{d}{d \tau} \frac{1}{2} \int \eta^2  | \nabla^m h |^2 e^{-f}
		\\
		\le{}& - \int \eta^2 | \nabla^{m+1} h |^2 e^{-f} 
		+ \delta \int \eta^2 | \nabla^{m+1} h  |^2 e^{-f} + \frac{C(n,m) }{4 \delta} \int | \nabla \eta|^2 | \nabla^m h  |^2 e^{-f} \\	
		& + \int \eta \eta_\tau | \nabla^m h |^2 e^{-f}	\\
		& + \delta \int \eta^2 | \nabla^{m+1} h |^2 e^{-f} 
		+ \frac{C(n,m)}{4 \delta} \| B^m_{m+1}(\tau) \|^2_{L^\infty( \Omega' )} \int \eta^2 | \nabla^m h |^2 e^{-f}	\\
		&  + C(n, m) \left(  \sum_{l=0}^m \| B^m_l(\tau)  \|_{L^\infty( \Omega' )} \right)  \| h (\tau) \|^2_{H^m_f( \Omega' )}	\\
		&  + C(n, m) \| \hat h(\tau) \|_{L^\infty( \Omega' ) }  \int \eta^2 | \nabla^{m+1} h |^2 e^{-f} 
		+ C(n, m)  \| \hat h(\tau) \|_{L^\infty( \Omega' ) } \int | \nabla \eta |^2 | \nabla^m h |^2  e^{-f}		\\
		\le{}& \left( - 1 + 2\delta + C(n,m) \| \hat h(\tau) \|_{L^\infty ( \Omega' )} \right)
		 \int \eta^2 | \nabla^{m+1} h |^2 e^{-f} \\
		& + C(n,m) \left( \frac{ \| \nabla \eta \|_\infty^2}{ \delta} + \| \eta \eta_\tau \|_\infty + \frac{  \| B^m_{m+1} \|_{L^\infty( \Omega' )}^2 }{ \delta}  + \| \hat h  \|_{L^\infty( \Omega' )} \| \nabla \eta \|_\infty^2	 \right) 
		  \int_{\Omega' } | \nabla^m h |^2 e^{-f} 	\\
		&  + C(n,m)  \left(  \sum_{l=0}^m \| B^m_l \|_{L^\infty( \Omega' )} \right)  \| h \|^2_{H^m_f ( \Omega' )}.
	\end{aligned} \end{gather*}
	Take $0 < \epsilon \ll 1$ sufficiently small depending on $n, m$ so that
		$$\| h (\tau) \|_{C^{m+1} ( \Omega')} < \epsilon $$
	implies
	\begin{gather*}
		C(n,m) \| \hat h(\tau)  \|_{L^\infty ( \Omega' ) } \le \frac{1}{2} \quad \text{ and } \quad
		\sum_{l = 0}^{m + 1} \| B^m_l \|_{L^\infty( \Omega' ) } \le C(n, M, \ol{g}, f, m , \Omega' ). 
	\end{gather*}	
	Taking $\delta = 1/8$, it then follows that 
	\begin{gather*} 
		\frac{d}{d\tau } \int \eta^2 | \nabla^m h |^2 e^{-f} dV
		+ \frac{1}{4} \int \eta^2 | \nabla^{m+1} h |^2 e^{-f}	
		\le C \sum_{l = 0}^m \int_{\Omega'} | \nabla^l h |^2 e^{-f} dV
	\end{gather*}
	for some constant $C = C( n, M, \ol{g}, f, m, \Omega', \Omega , \tau_1 - \tau_0 )$
	whenever $\| h(\tau)  \|_{C^{m+1} ( \Omega' ) } < \epsilon.$
	Integrating this estimate from $\tau_0$ to $\tau \in [\tau_1, \tau_2]$ 
	under the assumption that $\| h \|_{C^{m+1}( \Omega' \times [ \tau_0, \tau_2 ] ) } < \epsilon$
	then implies
	\begin{align*}
		& \quad \, \int_\Omega | \nabla^m h (\tau)|^2 e^{-f} dV
		+ \frac{1}{4} \int_{\tau_1}^\tau \int_\Omega | \nabla^{m+1} h |^2 e^{-f} dV d \tl{\tau} 	\\
		&\le \int_M \eta^2 | \nabla^m h (\tau)|^2 e^{-f} dV
		+ \frac{1}{4} \int_{\tau_0}^\tau \int_M \eta^2 | \nabla^{m+1} h |^2 e^{-f} dV d \tl{\tau} 	
		&& ( \text{choice of $\eta$} )\\
		&\le
		C \iint_{\Omega' \times [ \tau_0 , \tau_2 ]} \sum_{l = 0}^m | \nabla^l h |^2 e^{-f} dV d \tl{\tau} .
	\end{align*}
	Since $\tau \in [ \tau_1, \tau_2]$ was arbitrary, it follows that $\| h \|_{C^{m+1} ( \Omega' \times [ \tau_0, \tau_2] )} < \epsilon $ implies
	\begin{gather*}
		\sup_{\tau \in [ \tau_1, \tau_2] } \| \nabla^m h \|_{L^2_f( \Omega ) } 
		+ \| h \|_{L^2 H^{m+1}_f( \Omega \times [ \tau_1, \tau_2 ] ) }	
		\le C \| h  \|_{L^2 H^m_f ( \Omega' \times [ \tau_0, \tau_2] ) }.
	\end{gather*}
	for some $C$ depending on $n, M, \ol{g}, f, m, \Omega', \Omega, \tau_1 - \tau_0$.

	In the case where $\tau_1 = \tau_0$, take instead the bump function $\eta : M \to [ 0,1]$ to be independent of time $\tau$ with
		$$\eta \equiv 1 \text{ on } \Omega \text{ and } \supp ( \eta) \subset \Omega' .$$
	and assume $\eta$ is suitably smooth so that its derivatives $\nabla^m \eta$ are bounded by constants that depend only on $n, M , \ol{g}, m , \Omega, \Omega'$.
	Then the same estimate as above (with $\partial_\tau \eta = 0$) applies to show that 
	\begin{gather*}
		\frac{d}{d\tau } \int \eta^2 | \nabla^m h |^2 e^{-f} dV
		+ \frac{1}{4}  \int \eta^2 | \nabla^{m+1} h |^2 e^{-f} dV	
		\le C \sum_{l = 0}^m \int_{\Omega'} | \nabla^l h |^2 e^{-f} dV
	\end{gather*}
	for some constant $C = C( n, M, \ol{g}, f, m, \Omega', \Omega  )$
	whenever $\| h(\tau)  \|_{C^{m+1} ( \Omega' ) } < \epsilon.$
	Integrating from $\tau_0$ to $\tau \in [\tau_0, \tau_2]$, it follows that
	\begin{gather*}	 \begin{aligned}
		 & \int_{\Omega} | \nabla^m h (\tau) |^2 e^{-f}
		 + \frac{1}{4}  \int_{\tau_0}^\tau \int_{\Omega} | \nabla^{m+1} h |^2 e^{-f} dV d \tl{\tau}	\\
		 \le{}&  \int_{\Omega'}  | \nabla^m h (\tau_0) |^2 e^{-f}
		+  C \int_{\tau_0}^\tau \int_{\Omega'} \sum_{l=0}^m | \nabla^l h |^2 e^{-f} dV \, d \tl{\tau}
	\end{aligned} 	\end{gather*}
	whenever $\| h  \|_{C^{m+1} ( \Omega'  \times [ \tau_0, \tau_2] ) } < \epsilon$ as desired.
\end{proof}

\begin{lem} \label{Lem Local L^2_f to C^2 Est}
	Let $\Gamma > 0$ and $\tau_0 < \tau_1$.
	Assume
		$$\partial_\tau h = \ol{\Delta}_f h + 2 \ol{Rm} [ h] + \cl{E}_1 + \cl{E}_2
		\qquad \text{ on } M \times ( \tau_0, \tau_1 )$$
	and that $ \cl{E}_2 = 0 $ on $\{ f < 2 \Gamma \} \times [ \tau_0, \tau_1]$.
	Assume also that 
	\begin{gather*} \begin{aligned}
		\| h \|_{L^\infty ( M \times [\tau_0 , \tau_1] ) } &< \epsilon, \\
		\| h( \tau )  \|_{L^2_f ( M) } &\le \mu e^{\lambda_* \tau }	
		&& \text{ for all } \tau \in [\tau_0, \tau_1],
		\text{ and} \\
		\sum_{l = 0}^{4n+2}  | \ol{\nabla}^l h |_{\ol{g}} (x, \tau_0)  
		&\le  C_0 \ol{p} e^{\lambda_* \tau_0}
		&& \text{ for all } x \in \{ f < 2 \Gamma \}, \tau = \tau_0.
	\end{aligned} \end{gather*}
		
	If
	$\Gamma \gg 1$ is sufficiently large (depending on $n, M, \ol{g}, f$),
	$0 < \epsilon \ll 1$ is sufficiently small (depending on $n, M ,\ol{g}, f, \Gamma$), 
	$\tau_0 \gg 1$ is sufficiently large (depending on $n, M, \ol{g}, f, \lambda_* , \Gamma, C_0 \ol{p}$),
	then there exists $C_1$ (depending on $n, M, \ol{g}, f, \lambda_*, \Gamma$) such that
		$$ \sum_{l=0}^2 | \ol{\nabla}^l h |_{\ol{g}}(x, \tau)   
		\le C_1 (\mu +  C_0\ol{p}  )  e^{\lambda_* \tau} 
		\quad \text{ for all } (x, \tau) \in \{ f < \Gamma \}  \times [ \tau_0, \tau_1].$$
\end{lem}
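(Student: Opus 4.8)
The idea of the proof is a parabolic bootstrap: on the precompact region $\{f < 2\Gamma\}$ — precompact because $f$ is proper by proposition \ref{Prop Potential Func Grows Quadratically} — we use the local interior estimates of lemmas \ref{Local Int Est C^0 to C^m_loc} and \ref{Local Int Integral Est+} to promote the spatially integrated, time-decaying control $\|h(\tau)\|_{L^2_f} \le \mu e^{\lambda_* \tau}$ into pointwise $C^2$ control on $\{f < \Gamma\}$ that decays at the same rate. Fix once and for all a chain of nested open sets $\{f < \Gamma\} = \Omega_N \Subset \Omega_{N-1} \Subset \cdots \Subset \Omega_0 \Subset \{f < 2\Gamma\}$, say $\Omega_k = \{f < (1 + \tfrac{N-k}{N+1})\Gamma\}$, with $N$ large enough that $H^N$ of a precompact domain embeds in $C^2$ (any $N > \tfrac{n}{2} + 2$ works, and $N \le 4n+2$ suffices for everything below). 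On each $\Omega_k$ the weight $e^{-f}$ is bounded above and below, so $H^j_f(\Omega_k)$ is equivalent to the ordinary $H^j(\Omega_k)$ and the classical Sobolev embedding applies. Since $\cl{E}_2 \equiv 0$ on $\{f < 2\Gamma\}$, the $\cl{E}_2$-term plays no role on these domains, and only the quasilinear structure $\partial_\tau h = \ol{\Delta}_f h + 2\ol{Rm}[h] + \cl{E}_1$ — handled internally by lemmas \ref{Local Int Est C^0 to C^m_loc} and \ref{Local Int Integral Est+} — is relevant.

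\emph{Step 1: upgrading $L^\infty$-smallness to $C^N$-smallness.} The hypotheses of lemma \ref{Local Int Integral Est+} require $\|h\|_{C^{m+1}} < \epsilon$, whereas we only know $\|h\|_{L^\infty} < \epsilon$, so I would first apply the $\tau_1 = \tau_0$ case of lemma \ref{Local Int Est C^0 to C^m_loc} with $m = 4n+1$, $\Omega' = \{f < 1.99\Gamma\}$, $\Omega = \Omega_0$, and $\tau_2 = \tau_1$. Its constant does not depend on $\tau_1 - \tau_0$, and by the initial-data hypothesis $\|h(\tau_0)\|_{C^{4n+2}(\{f < 2\Gamma\})} \lesssim_{n,M,\ol{g},f,\Gamma} C_0 \ol{p}\, e^{\lambda_* \tau_0}$. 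Hence $\|h\|_{C^{4n+1}(\Omega_0 \times [\tau_0, \tau_1])} \le C(n, M, \ol{g}, f, \Gamma)\, (\epsilon + C_0 \ol{p}\, e^{\lambda_* \tau_0})$, which can be made below the smallness threshold of lemma \ref{Local Int Integral Est+} once $\epsilon$ and $\ol{p}$ are small and $\tau_0$ is large. There is no circularity, as this step uses only lemma \ref{Local Int Est C^0 to C^m_loc}.

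\emph{Step 2: iterating the integral estimate with a time split.} Fix $\tau^* \in [\tau_0, \tau_1]$ and set $I^* \doteqdot [\max\{\tau_0, \tau^* - 1\}, \tau^*]$. When $\tau^* \ge \tau_0 + 1$, so $I^* = [\tau^* - 1, \tau^*]$, apply the $\tau_1 > \tau_0$ (``look-back'') case of lemma \ref{Local Int Integral Est+} repeatedly, passing from $\Omega_{k-1}$ to $\Omega_k$ and shrinking the time interval by a fixed amount at each step; the input is $\|h\|_{L^2 H^0_f(\Omega_0 \times I^*)}^2 = \int_{\tau^*-1}^{\tau^*} \|h(\tau)\|_{L^2_f}^2\, d\tau \le \mu^2 e^{-2\lambda_*} e^{2\lambda_* \tau^*}$, and since each iteration gains a spatial derivative while losing only a fixed amount of time and domain, after finitely many steps $\sup_{\tau \in [\tau^* - 1/2,\, \tau^*]} \|h(\tau)\|_{H^N_f(\{f < \Gamma\})} \le C(n, M, \ol{g}, f, \lambda_*, \Gamma)\, \mu\, e^{\lambda_* \tau^*}$. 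When $\tau^* \in [\tau_0, \tau_0 + 1]$, so $I^* = [\tau_0, \tau^*]$, use instead the $\tau_1 = \tau_0$ case; now each iteration also needs $\|h(\tau_0)\|_{H^m_f}$, which by the initial-data hypothesis is $\lesssim_{\Gamma} C_0 \ol{p}\, e^{\lambda_* \tau_0}$ for every $m \le 4n+2$, yielding $\|h(\tau^*)\|_{H^N_f(\{f < \Gamma\})} \le C(n, M, \ol{g}, f, \lambda_*, \Gamma)\,(\mu + C_0 \ol{p})\, e^{\lambda_* \tau_0}$; and $e^{\lambda_* \tau_0} \le e^{|\lambda_*|} e^{\lambda_* \tau^*}$ since $0 \le \tau^* - \tau_0 \le 1$ here. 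In both cases $\|h(\tau^*)\|_{H^N_f(\{f < \Gamma\})} \le C_1 (\mu + C_0 \ol{p})\, e^{\lambda_* \tau^*}$, and Sobolev embedding on the precompact set $\{f < \Gamma\}$ converts this into $\sum_{l=0}^2 |\ol{\nabla}^l h|_{\ol{g}}(x, \tau^*) \le C_1 (\mu + C_0 \ol{p})\, e^{\lambda_* \tau^*}$ for all $x \in \{f < \Gamma\}$; as $\tau^* \in [\tau_0, \tau_1]$ was arbitrary, this is the claim, with $C_1 = C_1(n, M, \ol{g}, f, \lambda_*, \Gamma)$.

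\emph{Main obstacle.} The delicate point is obtaining the sharp rate $e^{\lambda_* \tau}$ rather than merely $e^{\lambda_* \tau_0}$: parabolic smoothing costs roughly a unit of time, forcing the split into the ``look-back'' regime $\tau \ge \tau_0 + 1$, where no information about $h(\tau_0)$ enters and the local $L^2_f$ mass over $[\tau-1,\tau]$ is $O(e^{2\lambda_* \tau})$, and the initial slab $\tau \le \tau_0 + 1$, where the hypothesis that $h(\tau_0)$ is controlled in $C^{4n+2}$ (not just $C^0$) on $\{f < 2\Gamma\}$ is precisely what the iteration consumes. A secondary subtlety is the order of operations in Step 1: one must first spend the $L^\infty$-smallness together with the initial regularity to make $h$ small in $C^N$ on an intermediate region before lemma \ref{Local Int Integral Est+}, whose hypotheses demand $C^{m+1}$-smallness, can be invoked at all.
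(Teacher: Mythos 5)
Your proposal is correct and follows essentially the same route as the paper's proof: the same chain of nested sublevel sets of $f$ inside $\{f<2\Gamma\}$, the same split into the look-back regime $\tau\ge\tau_0+1$ (fed only by the $L^2_f$ decay over a unit time window) and the initial slab $\tau\le\tau_0+1$ (fed by the $C^{4n+2}$ initial data via the $\tau_1=\tau_0$ versions of the lemmas), the same iteration of lemma \ref{Local Int Integral Est+} after securing $C^{m+1}$-smallness from lemma \ref{Local Int Est C^0 to C^m_loc}, and the same Sobolev embedding on the precompact region at the end. The only difference is cosmetic bookkeeping: you establish the $C^{4n+1}$-smallness once on the whole time interval using the $\tau_1=\tau_0$ case plus the initial regularity, whereas the paper does it separately in each time regime.
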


\begin{proof}
	Consider the domains
		$$\Omega_i \doteqdot \left \{ x \in M : f(x) < \left( 1 + \frac{i}{4n+3} \right) \Gamma \right \} 
		\qquad \text{for } i = 0, 1, \dots, 4n+3.$$	
	If $\Gamma \gg 1$ is sufficiently large depending on $n, M, \ol{g} , f$, then 
	$$\emptyset \ne \{ f < \Gamma \} = \Omega_0 \Subset \Omega_1 \Subset \dots \Subset \Omega_{4n+3} = \{ f < 2 \Gamma \} \Subset M.$$
	Note that $\cl{E}_2 \equiv 0$ on $\Omega_{4n+3} \times [ \tau_0, \tau_1]$ by assumption.
	Observe that the Sobolev inequality\footnote{See \cite{Aubin98} for example.} on $\Omega_0 = \{ f < \Gamma \}$ implies
	\begin{equation} \label{Local Sobolev Ineq+}
		\sup_{x \in \Omega_0} \sum_{l = 0}^{2} 
		| \ol{\nabla}^l h | (x, \tau) 
		\lesssim_{n, M, \ol{g}, f, \Gamma} \| h(\tau) \|_{H^{4n}_f ( \Omega_0 ) } 
		\quad \text{for all } \tau \in [\tau_0, \tau_1].
	\end{equation}
	Hence, it suffices to prove
		$$\| h(\tau) \|_{H^{4n}_f( \Omega_0 ) } \lesssim_{n, M, \ol{g}, f, \lambda_*, \Gamma}
		(\mu +   C_0 \ol{p} ) e^{\lambda_* \tau} 
		\qquad \text{for all } \tau \in [ \tau_0, \tau_1 ].$$
	There are two cases to consider according to whether or not $\tau \ge \tau_0 + 1$ as well.
	
	Consider the first case where $\tau \in [ \tau_0 + 1, +\infty) \cap [ \tau_0 , \tau_1] $.
	Then
	\begin{equation*}
		\| h (\tau) \|_{H^{4n}_f ( \Omega_0 ) }
		\le \| h  \|_{L^\infty H^{4n}_f ( \Omega_1  \times [ \tau - \frac{1}{4n+3} , \tau ]  ) }
	\end{equation*}
	Observe that if $0 < \epsilon \ll 1$ is sufficiently small depending on $n$, then Lemma \ref{Local Int Est C^0 to C^m_loc} implies that for all $i \in \{ 2, 3, \dots, 4n+2 \}$
	\begin{align*}
		\| h \|_{C^{4n+3 - i} ( \Omega_i \times [ \tau - \frac{i}{4n+3}, \tau] ) }
		&\le \| h \|_{C^{4n+1} ( \Omega_{4n+2} \times [ \tau - \frac{ 4n+2 }{4n+3} , \tau ] ) }	\\
		&\lesssim_{n, M, \ol{g}, f, \Gamma} \| h \|_{L^\infty ( \Omega_{4n+3} \times [ \tau - \frac{ 4n+3 }{4n+3} , \tau ] ) }	
		&& (\text{Lemma \ref{Local Int Est C^0 to C^m_loc}} )\\
		&\le \epsilon.
	\end{align*}
	In particular, if $0 < \epsilon \ll 1$ is sufficiently small depending on $n, M, \ol{g}, f, \Gamma$, 
	then Lemma \ref{Local Int Integral Est+} applies repeatedly to show
	\begin{align*}
		&\| h (\tau) \|_{H^{4n}_f ( \Omega_0 ) }	\\
		\le& \| h  \|_{L^\infty H^{4n}_f ( \Omega_1  \times [ \tau - \frac{1}{4n+3} , \tau ]  ) } 	\\
		\lesssim&_{n, M, \ol{g}, f, \Gamma} \| h \|_{L^2 H^{4n}_f ( \Omega_2 \times [ \tau - \frac{2 }{4n+3} , \tau] )}
		&& ( \text{Lemma \ref{Local Int Integral Est+}} )	\\
		\lesssim&_{n, M, \ol{g}, f,  \Gamma } \| h \|_{L^2 H^{4n-1}_f ( \Omega_3 \times [ \tau - \frac{3}{4n+3} , \tau] )}
		&& ( \text{Lemma \ref{Local Int Integral Est+}} )	\\
		& \vdots	\\
		\lesssim&_{n, M, \ol{g}, f, \Gamma} 
		\| h \|_{L^2 H^0_f ( \Omega_{4n+2} \times [ \tau - \frac{4n+2}{4n+3} , \tau] )}
		&& ( \text{Lemma \ref{Local Int Integral Est+}} )	\\
		=& \| h \|_{L^2 L^2_f ( \Omega_{4n+2} \times [ \tau - \frac{4n+2}{4n+3} , \tau] )} \\
		\le& \| h \|_{L^\infty L^2_f ( \Omega_{4n+2} \times [ \tau - \frac{4n+2}{4n+3} , \tau] )}
		&& \left( \frac{4n+2}{4n+3} \le 1 \right)	\\ 
		\lesssim&_{\lambda_*}  \mu e^{\lambda_* \tau} .
	\end{align*}
	
	Now consider the second case where $\tau \in [ \tau_0, \tau_0 + 1 ] \cap [ \tau_0, \tau_1] = [ \tau_0, \min \{ \tau_0 +1, \tau_1 \} ]$. Denote $\tau_1' \doteqdot \min \{ \tau_0 + 1, \tau_1 \}$.
	Note that
		$$\| h \|_{C^{4n+2} ( \Omega_{4n+3} \times \{ \tau_0 \} ) } \le C_0 \ol{p} e^{\lambda_* \tau_0}$$
	by assumption.
	Hence, if $0 < \epsilon \ll 1$ is sufficiently small depending on $n$
	and $\tau_0 \gg 1$ is sufficiently large depending on $n, \lambda_*, C_0 \ol{p}$, 
	then Lemma \ref{Local Int Est C^0 to C^m_loc} implies that
	for all $i \in \{ 2, 3, \dots, 4n+2 \}$
	\begin{align*}
		\| h \|_{C^{4n+3 - i} ( \Omega_i \times [ \tau_0, \tau_1' ] ) }
		&\le \| h \|_{C^{4n+1} ( \Omega_{4n+2} \times [ \tau_0, \tau_1' ] ) }	\\
		&\lesssim_{n, M, \ol{g}, f, \Gamma}
		 \| h \|_{L^\infty ( \Omega_{4n+3} \times [ \tau_0, \tau_1' ] ) }	
		+ \| h \|_{C^{4n+2} ( \Omega_{4n+3} \times \{ \tau_0 \} ) }\\
		&\le \epsilon +   C_0 \ol{p} e^{\lambda_* \tau_0} 	.
	\end{align*}
	In particular, if $0 < \epsilon \ll 1$ is sufficiently small depending on $n, M, \ol{g}, f, \Gamma$
	and $\tau_0 \gg 1$ is sufficiently large depending on $n, M, \ol{g}, f, \lambda_*, \Gamma, C_0 \ol{p}$
	then Lemma \ref{Local Int Integral Est+} applies repeatedly to show
	\begin{align*}
		&\| h (\tau) \|_{H^{4n}_f ( \Omega_0 ) }	\\
		\le& \| h  \|_{L^\infty H^{4n}_f ( \Omega_1  \times [ \tau_0 , \tau_1' ]  ) } 	\\
		\lesssim&_{n, M, \ol{g}, f, \Gamma} 
		\| h(\tau_0) \|_{H^{4n}_f ( \Omega_2  ) } 
		+ \| h \|_{L^2 H^{4n}_f ( \Omega_2 \times [ \tau_0 ,  \tau_1' ] ) }
		&& ( \text{Lemma \ref{Local Int Integral Est+}} )\\
		\lesssim&_{n, M, \ol{g}, f, \Gamma} 
		\| h(\tau_0) \|_{H^{4n}_f ( \Omega_3  ) } 
		+ \| h \|_{L^2 H^{4n-1}_f ( \Omega_3 \times [ \tau_0 ,  \tau_1' ] ) }
		&& ( \text{Lemma \ref{Local Int Integral Est+}} )\\
		& \vdots \\
		\lesssim&_{n, M, \ol{g}, f, \Gamma} 
		\| h(\tau_0) \|_{H^{4n}_f ( \Omega_{4n+2}  ) } 
		+ \| h \|_{L^2 H^{0}_f ( \Omega_{4n+2} \times [ \tau_0 ,  \tau_1' ] ) }
		&& ( \text{Lemma \ref{Local Int Integral Est+}} )\\
		\le&  	\| h(\tau_0) \|_{H^{4n}_f ( \Omega_{4n+2}  ) } 
		+ \| h \|_{L^\infty L^2_f ( \Omega_{4n+2} \times [ \tau_0 ,  \tau_1' ] ) } \\
		\lesssim&_{n, M, \ol{g}, f, \Gamma} C_0 \ol{p} e^{\lambda_* \tau_0} 
		+ \mu e^{\lambda_* \tau_0} \\
		\lesssim&_{\lambda_*}
		C_0 \ol{p} e^{\lambda_* \tau} 
		+ \mu e^{\lambda_* \tau} .
	\end{align*}
\end{proof}

%%%%%%%%%%%%%%%%%%%%%%%%%%%%%%%%%%%%%%%%%%%%%%%%%%%%%%%%%%
\subsection{Large Scale Barrier}
%%%%%%%%%%%%%%%%%%%%%%%%%%%%%%%%%%%%%%%%%%%%%%%%%%%%%%%%%%

\begin{lem} \label{Lem |h| Evol Eqn}
	Assume $h$ satisfies \eqref{h(tau) Evol Eqn} on some spacetime domain $\Omega \times (\tau_0, \tau_1) \subset M \times ( \tau_0, \tau_1)$.
	If $0 < \epsilon \ll 1$ is sufficienly small (depending on $n$), then
		$$| h |_{\ol{g}}^2 \le \epsilon 	\qquad \text{ on } \Omega \times ( \tau_0, \tau_1)$$
	implies
	\begin{gather} \label{|h| Evol Eqn}  \begin{aligned} 
		\partial_\tau | h |_{\ol{g}}^2 
		&\le g^{ab} \ol{\nabla}_a \ol{\nabla}_b | h |_{\ol{g}}^2 - \ol{\nabla}_{\ol{\nabla} f} | h |_{\ol{g}}^2 
		+ 4 C_n ( 1 + \epsilon ) |\ol{Rm}|_{\ol{g}} | h |_{\ol{g}}^2 + | h|_{\ol{g}} | \cl{E}_2 |_{\ol{g}} \\
		&\le ( 1 + C_n | h |_{\ol{g}} ) \ol{\Delta} | h |_{\ol{g}}^2 - \ol{\nabla}_{\ol{\nabla} f} | h |_{\ol{g}}^2 + 4 C_n ( 1 + \epsilon ) |\ol{Rm}|_{\ol{g}} | h |_{\ol{g}}^2 + | h|_{\ol{g}} | \cl{E}_2 |_{\ol{g}} 
	\end{aligned}	\end{gather}
	on $\Omega \times ( \tau_0, \tau_1)$.
\end{lem}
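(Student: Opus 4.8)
The plan is to differentiate $|h|_{\ol{g}}^2$ directly, using that the background metric $\ol{g}$ is independent of $\tau$, so that $\partial_\tau |h|_{\ol{g}}^2 = 2\langle h, \partial_\tau h\rangle_{\ol{g}}$, and then substitute the evolution equation. It is convenient to use \eqref{h(tau) Evol Eqn} written in the form of \eqref{h(t) Evol Eqn}, so that the principal part of $\ol{\Delta}_f h + 2\ol{Rm}[h] + \cl{E}_1$ appears as $g^{ab}\ol{\nabla}_a\ol{\nabla}_b h - \ol{\nabla}_{\ol{\nabla} f}h + 2\ol{R}\indices{_i^k_j^l}h_{kl}$, together with the curvature corrections involving $\hat h, \tl h$ and the quadratic gradient term $g^{ab}g^{pq}(\ol{\nabla}h*\ol{\nabla}h)$. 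Contracting with $2h$ and invoking the two Bochner-type identities
$$2\langle h, g^{ab}\ol{\nabla}_a\ol{\nabla}_b h\rangle = g^{ab}\ol{\nabla}_a\ol{\nabla}_b |h|_{\ol{g}}^2 - 2g^{ab}\langle \ol{\nabla}_a h, \ol{\nabla}_b h\rangle, \qquad 2\langle h, \ol{\nabla}_{\ol{\nabla} f} h\rangle = \ol{\nabla}_{\ol{\nabla} f}|h|_{\ol{g}}^2$$
produces exactly the leading terms $g^{ab}\ol{\nabla}_a\ol{\nabla}_b |h|_{\ol{g}}^2 - \ol{\nabla}_{\ol{\nabla} f}|h|_{\ol{g}}^2$ of \eqref{|h| Evol Eqn}, plus the good term $-2g^{ab}\langle \ol{\nabla}_a h, \ol{\nabla}_b h\rangle$, which is a nonpositive quadratic form since $g$ is positive definite once $|h|_{\ol{g}}^2 \le \epsilon \ll 1$.

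The next step is to absorb the remaining terms. Once $|h|_{\ol{g}}^2 \le \epsilon$ is small enough depending only on $n$, one has $|g^{-1}|_{\ol{g}} \lesssim_n 1$ and $|\hat h|_{\ol{g}}, |\tl h|_{\ol{g}} \le C_n |h|_{\ol{g}}$. Hence: (i) the main curvature term satisfies $4\ol{R}\indices{_i^k_j^l}h_{kl}h^{ij} \le 4C_n |\ol{Rm}|_{\ol{g}}|h|_{\ol{g}}^2$, while the curvature corrections of $\cl{E}_1$ contracted with $h$ carry an extra factor $|h|_{\ol{g}} \le \sqrt{\epsilon}$ and are therefore bounded by $4C_n\epsilon |\ol{Rm}|_{\ol{g}}|h|_{\ol{g}}^2$; together these give the term $4C_n(1+\epsilon)|\ol{Rm}|_{\ol{g}}|h|_{\ol{g}}^2$. (ii) The quadratic gradient term of $\cl{E}_1$ contracted with $h$, together with the $\hat h$-contribution to $-2g^{ab}\langle \ol{\nabla}_a h, \ol{\nabla}_b h\rangle$, are each $\le C_n |h|_{\ol{g}}|\ol{\nabla} h|_{\ol{g}}^2$, so the combined contribution of all terms quadratic in $\ol{\nabla} h$ is $\le (-2 + C_n\sqrt{\epsilon})|\ol{\nabla} h|_{\ol{g}}^2 \le 0$ and may be discarded. (iii) The inhomogeneous term gives $2\langle h, \cl{E}_2\rangle \le 2|h|_{\ol{g}}|\cl{E}_2|_{\ol{g}}$. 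Collecting these estimates yields the first inequality of \eqref{|h| Evol Eqn}.

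The second inequality follows from the first by the algebraic identity $g^{ab} = \ol{g}^{ab} - \hat h^{ab}$, which gives $g^{ab}\ol{\nabla}_a\ol{\nabla}_b |h|_{\ol{g}}^2 = \ol{\Delta}|h|_{\ol{g}}^2 - \hat h^{ab}\ol{\nabla}_a\ol{\nabla}_b |h|_{\ol{g}}^2$, and then bounding the correction via $|\hat h|_{\ol{g}} \le C_n |h|_{\ol{g}}$ and the sign/size control on $\ol{\nabla}^2|h|_{\ol{g}}^2$ available at the (spatial maximum) points where the inequality is applied in later sections. The computation is entirely routine; the only care required is to fix a single threshold $\epsilon = \epsilon(n)$ small enough that the positive-definiteness of $g$, the bounds on $g^{-1}$, and all three absorption steps (i)--(iii) hold simultaneously. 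There is no genuine obstacle here — the lemma is a bookkeeping step organizing the error terms of \eqref{h(tau) Evol Eqn} into a form suitable for the maximum-principle arguments of Section \ref{Sect Preserve and Improve C^2 Bounds}.
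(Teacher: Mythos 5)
Your proof is correct and follows essentially the same route as the paper's: differentiate $|h|_{\ol{g}}^2$, insert the coordinate form \eqref{h(t) Evol Eqn}, apply the two Bochner-type identities, use $|\hat h|_{\ol{g}} \lesssim_n |h|_{\ol{g}}$ and $|\tl{h}|_{\ol{g}} \lesssim_n |h|_{\ol{g}}^2$ to absorb the curvature corrections, discard the (nonpositive, after absorption) gradient terms, and bound $\langle h, \cl{E}_2\rangle_{\ol{g}}$ by Cauchy--Schwarz. The only differences are cosmetic: your $2|h|_{\ol{g}}|\cl{E}_2|_{\ol{g}}$ versus the stated $|h|_{\ol{g}}|\cl{E}_2|_{\ol{g}}$ is a harmless constant, and your hedged passage from $g^{ab}\ol{\nabla}_a\ol{\nabla}_b|h|_{\ol{g}}^2$ to $(1+C_n|h|_{\ol{g}})\ol{\Delta}|h|_{\ol{g}}^2$ mirrors the paper's own terse step, which likewise just invokes $g^{ab}=\ol{g}^{ab}-\hat h^{ab}$ and the bound on $\hat h$ (the inequality being used downstream only at comparison points in the maximum-principle arguments).
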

\begin{proof}
	If $ 0 < \epsilon \ll 1$ is sufficiently small depending on $n$, then 
	$| h |_{\ol{g}}^2 \le \epsilon$ implies $g = \ol{g} + h$ and $\ol{g}$ are comparable.
	Moreover, \eqref{Inverse h Tensors} implies
	\begin{equation} \label{hat h, tilde h Ests}
		| \hat h |_{\ol{g}} \lesssim_n | h |_{\ol{g}} \qquad \text{ and } \qquad
		| \tilde h |_{\ol{g}} \lesssim_n | h |_{\ol{g}}^2 
	\end{equation}
	when $0 < \epsilon \ll 1$ is sufficiently small depending on $n$.
	
	We compute the evolution equation for $| h|^2_{\ol{g}}$ using \eqref{h(tau) Evol Eqn}:
	\begin{gather*} \begin{aligned}
		& \partial_\tau | h |^2	\\
		={}& 2 \langle h, \partial_\tau h \rangle_{\ol{g}}	\\
		={}& 2 \ol{g}^{ip} \ol{g}^{jq} h_{pq} g^{ab} \ol{\nabla}_a \ol{\nabla}_b h_{ij}
		- 2 \langle h, \ol{\nabla}_{\ol{\nabla} f} h \rangle_{\ol{g}} 
		+ 4 \ol{R} \indices{_i^k_j^l} h_{kl} h_{pq} \ol{g}^{ip} \ol{g}^{jq}		\\
		& \qquad
		+ 2 \ol{R} \indices{_{ja}^p_b} \left( \ol{g}_{ip} \tl{h}^{ab}  + \hat{h}^{ab} h_{ip} \right) h_{pq} \ol{g}^{ip} \ol{g}^{jq}
		+ 2 \ol{R} \indices{_{ia}^p_b} \left( \ol{g}_{jp} \tl{h}^{ab}  + \hat{h}^{ab} h_{jp} \right) 	h_{pq} \ol{g}^{ip} \ol{g}^{jq}	\\
		& \qquad
		+ 2 g^{ab} g^{pq} ( \ol{\nabla} h * \ol{\nabla} h )_{abpqij}	h_{pq} \ol{g}^{ip} \ol{g}^{jq}
		+ \langle h, \cl{E}_2 \rangle_{\ol{g}} \\
		={}& 
		g^{ab} \ol{\nabla}_a \ol{\nabla}_b | h |_{\ol{g}}^2 - 2 g^{ab} \langle \ol{\nabla}_a h, \ol{\nabla}_b h \rangle_{\ol{g}}
		- \ol{\nabla}_{\ol{\nabla} f} | h|^2_{\ol{g}} + 4 \ol{R}^{pkql}h_{kl} h_{pq}	\\
		& \qquad
		+ 2 \ol{R} \indices{_{ja}^p_b} \left( \ol{g}_{ip} \tl{h}^{ab}  + \hat{h}^{ab} h_{ip} \right) h_{pq} \ol{g}^{ip} \ol{g}^{jq}
		+ 2 \ol{R} \indices{_{ia}^p_b} \left( \ol{g}_{jp} \tl{h}^{ab}  + \hat{h}^{ab} h_{jp} \right) 	h_{pq} \ol{g}^{ip} \ol{g}^{jq}	\\
		& \qquad
		+ 2 g^{ab} g^{pq} ( \ol{\nabla} h * \ol{\nabla} h )_{abpqij}	h_{pq} \ol{g}^{ip} \ol{g}^{jq}
		+ \langle h, \cl{E}_2 \rangle_{\ol{g}} .
	\end{aligned} \end{gather*}	
	Invoking \eqref{hat h, tilde h Ests}, it follows that 	
	\begin{gather*} \begin{aligned}
		& \partial_\tau | h |^2 \\
		 \le{}& g^{ab} \ol{\nabla}_a \ol{\nabla}_b | h |_{\ol{g}}^2 - 2 ( 1 - C_n \epsilon) | \ol{\nabla} h |_{\ol{g}}^2
		- \ol{\nabla}_{\ol{\nabla} f} | h|^2_{\ol{g}} + 4 C_n | \ol{Rm} |_{\ol{g}} | h |_{\ol{g}}^2  
		+ 4 C_n | \ol{Rm} |_{\ol{g}} | h|_{\ol{g}}^3		\\
%		&& (\hat{h}, \tl{h} \text{ estimates}) \\
		& \qquad
		+ C_n | h|_{\ol{g}} | \ol{\nabla} h |_{\ol{g}}^2
		+ | h|_{\ol{g}} | \cl{E}_2 |_{\ol{g}}	\\
		={}& g^{ab} \ol{\nabla}_a \ol{\nabla}_b | h |_{\ol{g}}^2 - 2 ( 1 - 2C_n \epsilon) | \ol{\nabla} h |_{\ol{g}}^2
		- \ol{\nabla}_{\ol{\nabla} f} | h|^2_{\ol{g}} + 4 C_n ( 1 + \epsilon)  | \ol{Rm} |_{\ol{g}} | h |_{\ol{g}}^2 
		+ | h|_{\ol{g}} | \cl{E}_2 |_{\ol{g}}	.
	\end{aligned} 	\end{gather*}
	The first inequality in \eqref{|h| Evol Eqn} follows from assuming $\epsilon$ is sufficiently small so that $ 1 - 2 C_n \epsilon > 0$.
	\eqref{hat h, tilde h Ests} then implies the second inequality in \eqref{|h| Evol Eqn}.
\end{proof}

\begin{lem} \label{Lem Barrier Intermediate Scale}	
	For $A, B, \kappa > 0$, let
		$$u \doteqdot e^{-\kappa \tau} \left( A f^{\kappa} - B f^{\kappa - 1} \right).$$
	Then $u$ is a supersolution of
		$$\partial_\tau u \ge ( 1 + C_n \sqrt{u} ) \ol{\Delta} u - \ol{\nabla}_{ \ol{\nabla} f } u + 4 C_n ( 1 + \epsilon) | \ol{Rm} |_{\ol{g}} u $$
		$$\text{ on the region } \{ (x, \tau) \in M \times [\tau_0, \infty) :  \Gamma < f(x) < \gamma e^\tau \}$$
	if 
		$$ B  - A \left\{   \kappa \left( \kappa + \frac{n}{2} \right)  + 4 C_n ( 1 + \epsilon) \left( \sup_{M} |\ol{Rm} |_{\ol{g}} f \right) \right\} \ge \omega > 0, $$
		$$\Gamma \gg 1 \text{ is sufficiently large depending on } n , B,\kappa , \omega  \text{ and }$$
		$$0 < \gamma \ll 1 \text{ is sufficiently small depending on } n, M, \ol{g}, A, B, \kappa, \omega.$$
		
	Additionally, 
		$$u > C e^{- \kappa \tau} f^{\kappa} \quad \text{on } \{ f > \Gamma \}		\qquad
		\text{if } C + \frac{B}{\Gamma} \le A.$$
	In particular, $u > C e^{- \kappa \tau} f^{\kappa}$ throughout $\{ f > \Gamma \}$
	if $A > C$ and $\Gamma$ is sufficiently large (depending on $A,B,C$).
\end{lem}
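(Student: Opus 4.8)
The plan is to write $u = e^{-\kappa\tau} v(f)$ with $v(s) \doteqdot A s^{\kappa} - B s^{\kappa-1}$ and to verify the differential inequality by matching powers of $f$. Since $u$ depends only on $f$ and $\tau$, one has $\partial_\tau u = -\kappa u$, $\ol{\nabla}_{\ol{\nabla} f} u = e^{-\kappa\tau} v'(f) |\ol{\nabla} f|^2_{\ol{g}}$, and $\ol{\Delta} u = e^{-\kappa\tau}\big( v''(f) |\ol{\nabla} f|^2_{\ol{g}} + v'(f) \ol{\Delta} f \big)$. The soliton equations \eqref{Soliton Eqns} and proposition \ref{Prop Lapl f Eqn} give $|\ol{\nabla} f|^2_{\ol{g}} = f - \ol{R}$ and $\ol{\Delta} f = \frac{n}{2} - \ol{R}$, hence $\ol{\Delta} f - |\ol{\nabla} f|^2_{\ol{g}} = \frac{n}{2} - f$, and $\ol{R} \ge 0$ (proposition \ref{Prop Nonneg Scalar Curv}) gives $0 \le |\ol{\nabla} f|^2_{\ol{g}} \le f$. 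Substituting,
\begin{equation*}
\partial_\tau u - \ol{\Delta} u + \ol{\nabla}_{\ol{\nabla} f} u = e^{-\kappa\tau}\Big( -\kappa v(f) - v''(f) |\ol{\nabla} f|^2_{\ol{g}} + v'(f)\big( f - \tfrac{n}{2} \big) \Big).
\end{equation*}

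Next I would bound the right-hand side below on $\{ f > \Gamma \}$ by splitting on the sign of $v''(f)$. If $v''(f) \ge 0$, then $v''(f) |\ol{\nabla} f|^2_{\ol{g}} \le v''(f) f$, so the expression in parentheses is $\ge -\kappa v(f) - v''(f) f + v'(f)(f - \frac{n}{2})$; if $v''(f) < 0$, then $v''(f) |\ol{\nabla} f|^2_{\ol{g}} \le 0$, so it is $\ge -\kappa v(f) + v'(f)(f - \frac{n}{2})$. In either case, expanding $v, v', v''$ the $f^{\kappa}$ terms cancel identically, the coefficient of $f^{\kappa-1}$ equals $B - A\kappa(\kappa - 1 + \frac{n}{2})$ or $B - \frac{n}{2} A\kappa$ respectively --- each of which is $\ge B - A\kappa(\kappa + \frac{n}{2})$ --- and the coefficient of $f^{\kappa-2}$ depends only on $n, B, \kappa$. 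The hypothesis on $B$ (rearranged as $B - A\kappa(\kappa + \frac{n}{2}) \ge \omega + 4 C_n(1+\epsilon) A \sup_M |\ol{Rm}|_{\ol{g}} f$) then gives, on $\{ f > \Gamma \}$,
\begin{equation*}
\partial_\tau u - \ol{\Delta} u + \ol{\nabla}_{\ol{\nabla} f} u \ge e^{-\kappa\tau}\Big( \big( \omega + 4 C_n (1+\epsilon) A \sup_M |\ol{Rm}|_{\ol{g}} f \big) f^{\kappa-1} - c_1 f^{\kappa-2} \Big)
\end{equation*}
for some $c_1 = c_1(n, B, \kappa) \ge 0$, and choosing $\Gamma \gg 1$ depending on $n, B, \kappa, \omega$ absorbs the $f^{\kappa-2}$ term into $\frac{\omega}{2} e^{-\kappa\tau} f^{\kappa-1}$.

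It then remains to dispose of the curvature term and the $\sqrt{u}$ term. For $\Gamma$ large (in particular $\Gamma > B/A$) one has $0 < u \le A e^{-\kappa\tau} f^{\kappa}$ on $\{ f > \Gamma\}$, so the curvature term satisfies $4 C_n (1+\epsilon) |\ol{Rm}|_{\ol{g}} u \le 4 C_n (1+\epsilon)\big( \sup_M |\ol{Rm}|_{\ol{g}} f \big) A e^{-\kappa\tau} f^{\kappa-1}$, which exactly cancels the matching positive term above. On $\{ f < \gamma e^{\tau} \}$ one has $e^{-\kappa\tau} f^{\kappa} < \gamma^{\kappa}$, hence $u < A \gamma^{\kappa}$, while the formula for $\ol{\Delta} u$ together with lemma \ref{Lem Soliton Curv Decay} gives $|\ol{\Delta} u| \le C_2 e^{-\kappa\tau} f^{\kappa-1}$ on $\{ f > \Gamma \}$ for some $C_2 = C_2(n, M, \ol{g}, f, A, B, \kappa)$; thus $C_n \sqrt{u}\, |\ol{\Delta} u| \le C_n \sqrt{A}\, \gamma^{\kappa/2} C_2 e^{-\kappa\tau} f^{\kappa-1}$ can be made $\le \frac{\omega}{4} e^{-\kappa\tau} f^{\kappa-1}$ by taking $\gamma$ small depending on $n, M, \ol{g}, A, B, \kappa, \omega$. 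Combining the three bounds gives $\partial_\tau u - (1 + C_n \sqrt{u}) \ol{\Delta} u + \ol{\nabla}_{\ol{\nabla} f} u - 4 C_n (1+\epsilon) |\ol{Rm}|_{\ol{g}} u \ge \frac{\omega}{4} e^{-\kappa\tau} f^{\kappa-1} > 0$ on $\{ \Gamma < f < \gamma e^{\tau} \}$, the claimed supersolution inequality. The last assertion is immediate from $u - C e^{-\kappa\tau} f^{\kappa} = e^{-\kappa\tau} f^{\kappa-1}\big( (A - C) f - B \big)$, which is positive on $\{ f > \Gamma \}$ precisely when $(A - C)\Gamma \ge B$, i.e.\ $C + \frac{B}{\Gamma} \le A$, and automatically once $\Gamma \ge \frac{B}{A - C}$ when $A > C$. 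The step requiring the most care is not any single estimate but the bookkeeping that keeps $\Gamma$ dependent only on $n, B, \kappa, \omega$: this works because the hypothesis forces $A \le B/\big(\kappa(\kappa + \frac{n}{2})\big)$, so every stray factor of $A$ in the lower-order coefficients may be re-expressed through $B, \kappa, n$.
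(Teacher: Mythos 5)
Your proposal is correct and follows essentially the same route as the paper's proof: write $u$ as $e^{-\kappa\tau}$ times a function of $f$, use $\ol{\Delta}_f f = \tfrac{n}{2} - f$, $0 \le |\ol{\nabla} f|^2 \le f$ to reduce to coefficient bookkeeping in powers of $f$, bound the curvature term via $u \le A e^{-\kappa\tau} f^{\kappa}$ and $\sup_M |\ol{Rm}|_{\ol{g}} f$, control the $\sqrt{u}\,\ol{\Delta}u$ term by $\sqrt{u} \le \sqrt{A}\,\gamma^{\kappa/2}$ on $\{ f < \gamma e^{\tau} \}$, and absorb the $f^{\kappa-2}$ remainders by taking $\Gamma$ large and $\gamma$ small; the final positivity claim is the same one-line factorization as in the paper.

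One small imprecision: your parenthetical requirement $\Gamma > B/A$ (to get $u > 0$) cannot be subsumed into a $\Gamma$ depending only on $n, B, \kappa, \omega$, because the hypothesis only gives an \emph{upper} bound $A \le B/\bigl(\kappa(\kappa + \tfrac{n}{2})\bigr)$, whereas bounding $B/A$ from above would require a \emph{lower} bound on $A$. This does not affect the core estimate, since $u \le A e^{-\kappa\tau} f^{\kappa}$ and the curvature bound hold regardless of the sign of $u$, and positivity of $u$ (needed only so that $\sqrt{u}$ is meaningful) is handled by the lemma's separate final assertions, whose constants do depend explicitly on $A, B, C$ --- which is also how the paper treats it.
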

\begin{proof}
	Throughout this proof, we omit overlines.	
	Recall by Proposition \ref{Prop Lapl f Eqn} that $\Delta_f f = \frac{ n}{2} - f$.
	It follows that
		$$ \nabla f^{\kappa} = \kappa f^{\kappa - 1} \nabla f,$$
		$$\nabla_i \nabla_j f^{\kappa} = \kappa f^{\kappa - 1} \nabla_i \nabla_j f + \kappa ( \kappa - 1) f^{\kappa - 2} \nabla_i f \nabla_j f,$$
	\begin{align*}
		\Delta f^{\kappa} &= \kappa f^{\kappa-1} \Delta f + \kappa ( \kappa - 1) f^{\kappa-2} | \nabla f |^2	\\
		 &= \frac{n}{2} \kappa f^{\kappa - 1} - R \kappa f^{\kappa - 1} + \kappa ( \kappa - 1) f^{\kappa - 1} \frac{ | \nabla f|^2}{f},
	\end{align*}
	and so
	\begin{align*}
		\Delta_f f^{\kappa} 
		&= \kappa f^{\kappa-1} \Delta f + \kappa ( \kappa - 1) f^{\kappa-2} | \nabla f |^2
		- \kappa f^{\kappa - 1} |\nabla f|^2	\\
		&= \kappa f^{\kappa - 1} \Delta_f f + \kappa ( \kappa - 1) f^{\kappa-2} | \nabla f |^2	\\
		&= \frac{n}{2} \kappa f^{\kappa -1} - \kappa f^{\kappa } 
		+ \kappa ( \kappa - 1) f^{\kappa-1}  \frac{| \nabla f |^2	}{f} .
	\end{align*}
	
	In particular,
	\begin{multline}
		\Delta_f u
		= - \kappa u + e^{- \kappa \tau } f^{\kappa - 1} \left[ A \frac{n}{2} \kappa - B + A \kappa ( \kappa - 1) \frac{| \nabla f|^2}{f} \right] 	\\
		- B e^{- \kappa \tau} f^{\kappa - 2}  \left[ \frac{n}{2} ( \kappa - 1) + ( \kappa - 1) ( \kappa - 2) \frac{ | \nabla f|^2}{f} \right].
	\end{multline}
	
	A straightforward computation reveals
	\begin{gather*} \begin{aligned}
		&\quad \partial_\tau u 
		- ( 1 + C_n \sqrt{u} ) \Delta u + \nabla_{\nabla f} u - 4 C_n ( 1 + \epsilon ) | Rm| u	\\
		& =  \partial_\tau u - \Delta_f u - 4 C_n( 1 + \epsilon)  |Rm| u - C_n \sqrt{u} \Delta u	\\
		& = - \kappa u + \kappa u + e^{- \kappa \tau} f^{\kappa - 1} \left[ B - A \kappa \frac{n}{2} - A \kappa ( \kappa - 1) \frac{ |\nabla f |^2}{f} \right] \\
		& \qquad 
		+ B e^{- \kappa \tau} f^{\kappa - 2}  \left[ \frac{n}{2} ( \kappa - 1) + ( \kappa - 1) ( \kappa - 2) \frac{ | \nabla f|^2}{f} \right]	\\
		& \qquad - 4 C_n( 1+ \epsilon)  | Rm | e^{- \kappa \tau} ( A f^{\kappa} - B f^{\kappa - 1}  ) \\
		& \qquad - C_n \sqrt{ u } \Delta u		\\
		& \ge e^{- \kappa \tau} f^{\kappa -1} \left[ B - A \kappa \frac{n}{2} - A \kappa ( \kappa - 1) \frac{ |\nabla f |^2}{f} \right] \\
		& \qquad + B e^{- \kappa \tau} f^{\kappa - 2}  \left[ \frac{n}{2} ( \kappa - 1) + ( \kappa - 1) ( \kappa - 2) \frac{ | \nabla f|^2}{f} \right]	\\
		& \qquad - 4 C_n( 1+ \epsilon)  | Rm | e^{- \kappa \tau}  A f^{\kappa}  \\
		& \qquad - C_n \sqrt{ u } A e^{- \kappa \tau} f^{\kappa - 1} 
		\left[ \frac{n}{2} \kappa - R \kappa + \kappa ( \kappa - 1) \frac{ | \nabla f|^2}{f} \right]		\\
		& \qquad + C_n \sqrt{ u } B e^{- \kappa \tau} f^{\kappa - 2} 
		\left[ \frac{n}{2} (\kappa-1) - R (\kappa - 1) + (\kappa-1) ( \kappa - 2) \frac{ | \nabla f|^2}{f} \right]		.
	\end{aligned} \end{gather*}
	Next, note that
		$$0 \le \frac{ | \nabla f |^2 }{f} \le 1$$
	by \eqref{Soliton Eqns} and Lemma \ref{Lem Nonflat and f>0}.
	Moreover, by the definition of $u$,
		$$u \le A e^{- \kappa \tau } f^\kappa .$$
	These inequalities imply
	\begin{gather*} \begin{aligned} 
		&  \partial_\tau u 
		- ( 1 + C_n \sqrt{u} ) \Delta u + \nabla_{\nabla f} u - 4 C_n ( 1 + \epsilon ) | Rm| u	\\
		 \ge{}& e^{- \kappa \tau} f^{\kappa - 1} \left[ B - A \kappa \frac{n}{2} - A \kappa^2 \right] 	\\
		&  + B e^{- \kappa \tau} f^{\kappa - 2} \left[- \frac{ n}{2}  - 3 \kappa \right]	\\
		&  - 4 C_n ( 1 + \epsilon) (\sup_M f |Rm |) A e^{- \kappa \tau} f^{\kappa-1} 	\\
		&  + C_n \sqrt{A} e^{- \kappa \tau / 2} f^{\kappa/2} A e^{- \kappa \tau} f^{\kappa - 1} \left[  -\frac{n}{2} \kappa - \kappa^2 \right]	\\
		&  + C_n \sqrt{A} e^{- \kappa \tau / 2} f^{\kappa/2} B e^{- \kappa \tau} f^{\kappa - 2} \left[ - \frac{n}{2} - R \kappa - 3 \kappa \right]	\\
		\ge {}& e^{-\kappa \tau} f^{\kappa - 1}
		\left[ B - A \kappa \frac{n}{2} - A \kappa^2 - 4C_n ( 1 + \epsilon) ( \sup_M f | Rm| ) A \right]	\\
		& - e^{-\kappa \tau} f^{\kappa - 1} \frac{B}{\Gamma}  \left[ \frac{n}{2} + 3 \kappa \right] 	
		- e^{-\kappa \tau} f^{\kappa - 1} \gamma^{\kappa / 2} C_n A^{3/2} \left[ \frac{n}{2} \kappa + \kappa^2 \right] \\
		&- e^{-\kappa \tau} f^{\kappa - 1} \gamma^{\kappa / 2} \frac{B}{\Gamma}  C_n \sqrt{A} \left[ \frac{n}{2} + R \kappa + 3 \kappa \right].
	\end{aligned} \end{gather*}
	This quantity is positive if 
		$$ B  - A \left\{   \kappa \left( \kappa + \frac{n}{2} \right)  + 4 C_n ( 1 + \epsilon) \left( \sup_{M} |Rm |_{g} f \right) \right\} \ge \omega > 0, $$
	$\Gamma \gg 1 $ is sufficiently large depending on $n, B, \kappa, \omega$,
	and $0 < \gamma \ll 1$ is sufficiently small depending on $n, M, \ol{g}, A, B, \kappa, \omega$.
	
	The final two statements of the lemma follow from straightforward estimates using the assumption that $f > \Gamma$.
\end{proof}

\begin{lem} \label{Lem Barrier Large Scale}
	For $a, B > 0$, let
		$$u \doteqdot a - \frac{B}{f}.$$
	Then $u$ is a supersolution of
		$$\partial_\tau u \ge ( 1 + C_n \sqrt{u} ) \ol{\Delta} u - \ol{\nabla}_X u + 4 C_n ( 1 + \epsilon ) |\ol{Rm}|_{\ol{g}} u + \frac{C_0}{\Gamma} e^{-\tau} \sqrt{u}$$
	$$\text{ on the region } \{ (x, \tau) \in M \times [ \tau_0, \infty) : \gamma e^\tau < f(x) < \Gamma e^\tau \} $$
	if $$B > 4 C_n ( 1 + \epsilon ) ( \sup f | Rm | ) a + C_0 \sqrt{a} $$
	and $\tau_0 \gg 1$ is sufficiently large (depending on $n, M, \ol{g}, f, C_0, \epsilon, \gamma, a, B$).
	
	Additionally,
		$$u > c \text{ on } \{ f > \gamma e^\tau \ge \gamma e^{\tau_0} \}
		\qquad \text{if } \frac{B}{\gamma} e^{- \tau_0} < a - c.$$
	In particular, for any $a > c$, $u > c$ throughout $\{ f > \gamma e^\tau > \gamma e^{\tau_0} \}$ if $\tau_0 \gg 1$ is sufficiently large (depending on $a , B, c, \gamma$).
\end{lem}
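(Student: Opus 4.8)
The plan is to follow the proof of Lemma~\ref{Lem Barrier Intermediate Scale} almost verbatim, specializing the power of $f$ to $\kappa = -1$. Since $u = a - B/f$ has no explicit $\tau$-dependence, $\partial_\tau u \equiv 0$; writing $\ol{\Delta}_f = \ol{\Delta} - \ol{\nabla}_{\ol{\nabla} f}$ and recalling that $X = \ol{\nabla} f$, the asserted supersolution inequality is equivalent to
\[
	-\ol{\Delta}_f u - C_n \sqrt{u}\, \ol{\Delta} u - 4 C_n(1+\epsilon) |\ol{Rm}|_{\ol{g}}\, u - \frac{C_0}{\Gamma} e^{-\tau}\sqrt{u} \ \ge\ 0
	\qquad \text{on } \{ \gamma e^\tau < f < \Gamma e^\tau \}.
\]
Note first that on this region $f > \gamma e^\tau \ge \gamma e^{\tau_0}$, so for $\tau_0 \gg 1$ we have $0 < a/2 \le u \le a$, in particular $\sqrt{u} \le \sqrt{a}$ is well defined.

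Next I would compute, exactly as in Lemma~\ref{Lem Barrier Intermediate Scale} with $\kappa = -1$ (using $\ol{\Delta}_f f = \tfrac{n}{2} - f$ from Proposition~\ref{Prop Lapl f Eqn}),
\[
	\ol{\Delta}_f u = -B\, \ol{\Delta}_f(f^{-1}) = \tfrac{nB}{2} f^{-2} - B f^{-1} - 2B f^{-3} |\ol{\nabla} f|^2_{\ol{g}},
\]
so that $-\ol{\Delta}_f u \ge B f^{-1} - \tfrac{nB}{2} f^{-2}$; the term $B f^{-1}$ is what will drive the argument. I would then bound the remaining terms crudely, using $u \le a$, the bounds $0 \le |\ol{\nabla} f|^2_{\ol{g}} \le f$ (from \eqref{Soliton Eqns} and Lemma~\ref{Lem Nonflat and f>0}), $0 \le R \lesssim_{n,M,\ol{g},f} f^{-1}$ and $\sup_M f|\ol{Rm}|_{\ol{g}} < \infty$ (Lemma~\ref{Lem Soliton Curv Decay}), and the constraint $f < \Gamma e^\tau$, which gives $\tfrac{C_0}{\Gamma} e^{-\tau} \le C_0 f^{-1}$. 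These give $|\ol{\Delta} u| \lesssim_{n,M,\ol{g},f} B f^{-2}$, $4 C_n(1+\epsilon)|\ol{Rm}|_{\ol{g}} u \le 4 C_n(1+\epsilon)\big(\sup_M f|\ol{Rm}|_{\ol{g}}\big) a\, f^{-1}$, and $\tfrac{C_0}{\Gamma} e^{-\tau}\sqrt{u} \le C_0\sqrt{a}\, f^{-1}$.

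Collecting the $f^{-1}$ contributions, the left-hand side of the displayed inequality is bounded below by
\[
	f^{-1}\Big[\, B - 4 C_n(1+\epsilon)\big(\sup_M f|\ol{Rm}|_{\ol{g}}\big) a - C_0\sqrt{a}\,\Big] \ -\ C\, f^{-2},
\]
with $C = C(n,M,\ol{g},f,a,B)$, where the bracketed quantity equals a positive number $\omega$ precisely by the hypothesis $B > 4 C_n(1+\epsilon)(\sup_M f|\ol{Rm}|_{\ol{g}}) a + C_0\sqrt{a}$. Since $f > \gamma e^\tau \ge \gamma e^{\tau_0}$ forces $f^{-2} \le (\gamma e^{\tau_0})^{-1} f^{-1}$, choosing $\tau_0 \gg 1$ large enough (depending on $n,M,\ol{g},f,C_0,\epsilon,\gamma,a,B$, as $\omega$ depends only on these) makes the bound $\ge \tfrac{\omega}{2} f^{-1} > 0$, establishing the supersolution property.

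For the last two assertions, $u = a - B/f > c$ is equivalent to $f > B/(a-c)$; on $\{ f > \gamma e^\tau \ge \gamma e^{\tau_0}\}$ we have $f > \gamma e^{\tau_0}$, and the hypothesis $\tfrac{B}{\gamma} e^{-\tau_0} < a-c$ rearranges to $\gamma e^{\tau_0} > B/(a-c)$, so $u > c$ there; and when $a > c$ this inequality holds once $\tau_0$ is sufficiently large depending on $a,B,c,\gamma$. The only mildly delicate point — the one I expect to require the most care — is confirming that every error term genuinely decays like $f^{-2}$ (or faster) rather than $f^{-1}$, which is exactly where the decay $R \lesssim f^{-1}$ and the finiteness of $\sup_M f|\ol{Rm}|_{\ol{g}}$ from Lemma~\ref{Lem Soliton Curv Decay} enter; otherwise the computation is entirely parallel to that in Lemma~\ref{Lem Barrier Intermediate Scale}.
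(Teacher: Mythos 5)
Your proposal is correct and follows essentially the same route as the paper: a direct computation of $\ol{\Delta}_f u$ via $\ol{\Delta}_f f = \tfrac{n}{2}-f$, crude bounds using $0\le|\ol{\nabla} f|^2\le f$, $\sup_M f|\ol{Rm}|<\infty$, and $f<\Gamma e^\tau$, isolating the $f^{-1}$ term whose coefficient is exactly the hypothesis on $B$, and absorbing the $f^{-2}$ remainders using $f>\gamma e^{\tau_0}$ with $\tau_0$ large (the paper phrases this as the $\kappa=0$ case of the intermediate-scale barrier computation, but the content is identical). The only nitpick is cosmetic: the relevant specialization of Lemma \ref{Lem Barrier Intermediate Scale} is $\kappa=0$ rather than $\kappa=-1$, though your explicit formula for $\ol{\Delta}_f(f^{-1})$ is correct, so nothing in the argument is affected.
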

\begin{proof}
	We omit overlines throughout this proof.
	By mimicking the computations in the previous proof with $\kappa = 0$, it follows that
	\begin{gather*} \begin{aligned}
		&  \Delta_f u + 4C_n ( 1 + \epsilon ) | Rm | u + C_n \sqrt{ u } \Delta u	 + \frac{C_0}{\Gamma} e^{-\tau} \sqrt{ u} \\
		={}& \frac{ B n}{2} f^{-2} - B f^{-1} - 2 B f^{-2} \frac{ | \nabla f|^2}{f} 	\\
		&  + 4 (1+ \epsilon) C_n |Rm| a - 4 C_n ( 1 + \epsilon ) |Rm| \frac{B}{f}	\\
		&  - C_n \sqrt{ a - B/f} B \left\{ - \frac{n}{2} f^{-2} + R f^{-2} + 2 f^{-2} \frac{ |\nabla f|^2}{f} \right\} \\	
		&  + \frac{C_0}{\Gamma} e^{-\tau} \sqrt{ a - B/ f} \\
		\le{}& \frac{ B n}{2} f^{-2} - B f^{-1} + 4 C_n (1 + \epsilon) | Rm | a + B C_n \frac{ n}{2} f^{-2} \sqrt{ a } 
		+ \frac{C_0}{\Gamma} e^{-\tau} \sqrt{a}\\
		\le{}& f^{-1} \left[ - B + 4 C_n (1 + \epsilon ) \left( \sup f |Rm| \right) a \right] 
		+ f^{-2} \left[ \frac{B n}{2} + B C_n \frac{n}{2} \sqrt{ a} \right] 
		+ C_0 f^{-1} \sqrt{a} \\
		\le{}& f^{-1} \left[ - B + 4 C_n (1 + \epsilon ) \left( \sup f |Rm| \right) a + C_0 \sqrt{a} \right] 
		+ \frac{1}{\gamma e^{\tau_0}} f^{-1} \left[ \frac{B n}{2} + B C_n \frac{n}{2} \sqrt{ a} \right] .
	\end{aligned} \end{gather*}
	This quantity can be made negative by taking $B > 4 C_n ( 1 + \epsilon ) ( \sup f | Rm | ) a + C_0 \sqrt{a} $ and $\tau_0 \gg 1$ sufficiently large depending on $n, M, \ol{g}, f, C_0 , \epsilon , \gamma , a, B$.
	
	The final two statements of the lemma follow from straightforward estimates using the assumption that $f > \gamma e^{\tau_0}$.
\end{proof}

%%%%%%%%%%%%%%%%%%%%%%%%%%%%%%%%%%%%%%%%%%%%%%%%%%%%%%%%%%
\subsection{Large Scale Interior Estimates}
%%%%%%%%%%%%%%%%%%%%%%%%%%%%%%%%%%%%%%%%%%%%%%%%%%%%%%%%%%
The barriers from the previous subsection provide $C^0$ control of $h$ at large scales.
In this subsection, we prove interior estimates that permit us to bootstrap these $C^0$ bounds up to $C^2$ bounds on a smaller domain so long as the initial data is suitably smooth.
These interior estimates are based on interior estimates from Appendix \ref{Holder Ests in Eucl Space}.

\begin{lem} \label{Lem Int Est Intermediate Scale}
	Assume
		$$\partial_\tau h = \ol{\Delta}_f h + 2 \ol{Rm} [h] + \cl{E}_1  \qquad \text{on }
		\{ (x, \tau) \in M \times ( \tau_0, \tau_1) : \Gamma < f(x) < \gamma e^{\tau} \}.$$

	Assume also that 
		$$| h |_{\ol{g}} \le C_0 e^{- \kappa \tau} f^\kappa	\qquad 
		\text{on } \{ (x, \tau) \in M \times [ \tau_0, \tau_1] : \Gamma \le f(x) \le \gamma e^{\tau} \}$$
	and
		$$\sum_{m = 1}^3 | \ol{\nabla}^{m} h |_{\ol{g}} \le C_0 e^{- \kappa \tau_0 } f^\kappa	\qquad
		\text{on } \{ x \in M : \Gamma \le f(x) \le \gamma e^{\tau_0} \}.$$
	If $\Gamma \gg 1$ is sufficiently large (depending on $n, M, \ol{g}, f$) and $C_0 \gamma^\kappa \ll 1$ is sufficiently small (depending on $n, M, \ol{g}, f$) then
	\begin{multline*}
		| \ol{\nabla}^{m} h |_{\ol{g}} + | \ol{\nabla}^{m} h |_{\ol{g}} 
		\lesssim_{n, M, \ol{g}, f, \kappa} C_0 e^{- \kappa \tau} f^\kappa 		\\
		\text{on } \{ (x, \tau) \in M \times [ \tau_0, \tau_1 ] : 4 \Gamma \le f(x) \le \frac{1}{4} \gamma e^\tau \}.
	\end{multline*}
\end{lem}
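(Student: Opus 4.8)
The plan is to establish the bound pointwise: fix $(p_*, \tau_*) \in M \times [\tau_0, \tau_1]$ with $4\Gamma \le f(p_*) \le \tfrac14 \gamma e^{\tau_*}$, set $\rho := f(p_*)$, and show $|\ol{\nabla} h|_{\ol{g}}(p_*, \tau_*) + |\ol{\nabla}^2 h|_{\ol{g}}(p_*, \tau_*) \lesssim_{n, M, \ol{g}, f, \kappa} C_0 e^{-\kappa \tau_*} \rho^\kappa$. Because $\Gamma \gg 1$, the factors $4$ and $\tfrac14$ give precisely the room needed for a parabolic neighborhood of $(p_*, \tau_*)$ of small fixed extent to stay inside $\{ \Gamma < f < \gamma e^\tau \}$, where the equation $\partial_\tau h = \ol{\Delta}_f h + 2 \ol{Rm}[h] + \cl{E}_1$ holds (here $\cl{E}_2 \equiv 0$, since $\supp \cl{E}_2 \subset \{ e^\tau \le f \}$ is disjoint from $\{ f < \gamma e^\tau \}$). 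The obstruction to applying the interior estimates of appendix \ref{Holder Ests in Eucl Space} directly is twofold: the region is noncompact and the drift coefficient $\ol{\nabla} f$ has $\ol{g}$-norm of order $\sqrt{f} \sim \sqrt{\rho}$, which is large.

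Both difficulties are removed by first pulling $h$ back by the soliton diffeomorphisms, exactly as in the proof of proposition \ref{Prop Eigmode Derivative Growth}. In the $\tau$-parametrization, $\phi_\tau$ is the flow of the fixed vector field $\ol{\nabla} f$, hence a one-parameter group. Setting $\cl{H} := \phi_\tau^* h$ and $\check{g}(\tau) := \phi_\tau^* \ol{g}$, the identity $-\ol{\nabla}_{\ol{\nabla} f} h + \cl{L}_{\ol{\nabla} f} h = h - \ol{Rc} * h$ on symmetric $2$-tensors (a consequence of $\ol{Rc} + \ol{\nabla}^2 f = \tfrac12 \ol{g}$, as used in corollary \ref{Cor h(t) Evol Eqn}) converts the drift into a bounded zeroth-order term:
$$ \partial_\tau \cl{H} = \check{\Delta} \cl{H} + 2 \check{Rm}[\cl{H}] + \cl{H} - \check{Rc} * \cl{H} + \check{\cl{E}}_1, $$
valid wherever $\phi_\tau(x) \in \{ \Gamma < f < \gamma e^\tau \}$; here $\check{\cl{E}}_1$ is the same quadratic expression in $(\cl{H}, \check{\nabla} \cl{H})$ as $\cl{E}_1$, with all curvatures and contractions taken with respect to $\check{g}$.

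I would then rescale and localize. Put $x_* := \phi_{-\tau_*}(p_*)$ and work on the parabolic cylinder $Q := B_{\check{g}(\tau_*)}(x_*, 2\delta) \times [\max\{\tau_0, \tau_* - \delta^2\}, \tau_*]$ for a small fixed $\delta = \delta(n)$. On $Q$ the metrics $\check{g}(\tau)$ are uniformly equivalent (they evolve by $\partial_\tau \check{g} = \check{g} - 2\check{Rc}$ with $|\check{Rm}|_{\check{g}} \lesssim \rho^{-1}$ by lemma \ref{Lem Soliton Curv Decay}); the potential $f \circ \phi_\tau$ stays comparable to $\rho$ via the estimate $e^{-\delta^2}\rho \le f \circ \phi_\tau \le \rho$ coming from lemma \ref{Lem Flow Est 1}, which also confines $Q$ to $\{ \Gamma < f < \gamma e^\tau \}$; and, crucially, because $(M, \ol{g})$ is asymptotically conical with quadratic curvature decay, $\rho^{-1}\check{g}$ has curvature and covariant-derivative norms $\lesssim 1$ and conjugate radius $\gtrsim 1$ on $Q$, uniformly in $(p_*, \tau_*)$. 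After passing to a local cover if necessary, pulling back by $\exp^{\check{g}(\tau_*)}_{x_*}$, and dividing the metric by $\rho$, the equation for $\cl{H}$ becomes a quasilinear parabolic system on a fixed Euclidean parabolic cylinder with coefficients (and their spatial derivatives up to the order needed) bounded independently of the point, whose quasilinear part $\check{\cl{E}}_1$ is a genuine perturbation since $|\cl{H}|_{\check{g}} = |h|_{\ol{g}} \circ \phi_\tau \le C_0 e^{-\kappa\tau} f^\kappa \lesssim C_0 \gamma^\kappa \ll 1$. Now proposition \ref{Prop Nonlin Int Est} applies when $\tau_* \ge \tau_0 + \delta^2$, and proposition \ref{Prop Nonlin Int Est+} when $\tau_0 \le \tau_* < \tau_0 + \delta^2$, the latter using the hypothesis $\sum_{m=1}^3 |\ol{\nabla}^m h|_{\ol{g}} \le C_0 e^{-\kappa\tau_0} f^\kappa$ at $\tau = \tau_0$, transported to $\cl{H}$ by the isometry $\phi_{\tau_0}$. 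Either way this gives $|\check{\nabla}^m \cl{H}|_{\check{g}}(x_*, \tau_*) \lesssim_m \sup_Q C_0 e^{-\kappa\tau}(f \circ \phi_\tau)^\kappa \lesssim_m C_0 e^{-\kappa\tau_*}\rho^\kappa$ for $m = 1, 2$; since $\phi_\tau : (M, \check{g}(\tau)) \to (M, \ol{g})$ is an isometry, $|\check{\nabla}^m \cl{H}|_{\check{g}}(x_*, \tau_*) = |\ol{\nabla}^m h|_{\ol{g}}(p_*, \tau_*)$, and the claim follows on recalling $\rho = f(p_*)$ and that $(p_*, \tau_*)$ was arbitrary.

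I expect the main obstacle to be the scaling bookkeeping in the third step: verifying that after the $\phi$-pullback and the $\rho^{-1}$-rescaling all coefficients of the localized equation — the zeroth-order term, the curvature terms, and especially the quasilinear term $\check{\cl{E}}_1$, together with their spatial derivatives up to the order required for $C^2$ estimates — are controlled uniformly in $(p_*, \tau_*)$. This is where the asymptotically conical hypothesis (through lemma \ref{Lem Soliton Curv Decay}), the control of $f \circ \phi_\tau$ over the short $\tau$-window (lemma \ref{Lem Flow Est 1}), and the smallness $C_0 \gamma^\kappa \ll 1$ all enter; it is the analogue of the bookkeeping in proposition \ref{Prop Eigmode Derivative Growth}, with the added complication that $h$ genuinely solves a parabolic equation and that the nonlinearity $\cl{E}_1$ must be tracked throughout.
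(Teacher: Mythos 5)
Your proposal follows essentially the same route as the paper's proof: pull $h$ back by the soliton diffeomorphisms so that the unbounded drift $\ol{\nabla}_{\ol{\nabla} f}h$ is absorbed (the paper uses the parabolically rescaled pullback $\cl{H}(s)=(1-s)\phi_s^*h$ with the Ricci-flow background $(1-s)\phi_s^*\ol{g}$, whereas you use the unscaled pullback with background $\check g(\tau)=\phi_\tau^*\ol g$ and pick up the harmless extra zeroth-order term $+\cl{H}-\check{Rc}*\cl{H}$ via the soliton identity --- the two are equivalent up to reparametrization), then confine a unit-size parabolic cylinder inside $\{\Gamma<f<\gamma e^\tau\}$ using lemma \ref{Lem Flow Est 1} and the margins $4\Gamma$, $\tfrac14\gamma e^\tau$, use $C_0\gamma^\kappa\ll1$ to treat the quasilinear part as a perturbation, and apply proposition \ref{Prop Nonlin Int Est} away from $\tau_0$ and proposition \ref{Prop Nonlin Int Est+} (with the $C^3$ hypothesis on $h(\tau_0)$) near $\tau_0$.

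The one step that does not work as written is ``dividing the metric by $\rho$.'' If you keep the cylinder $B_{\check g(\tau_*)}(x_*,2\delta)\times[\tau_*-\delta^2,\tau_*]$ and the time variable $\tau$ but replace $\check g$ by $\rho^{-1}\check g$, the second-order coefficient in the localized equation is of size $\rho$, so the uniform parabolicity hypothesis $\Lambda^{-1}\le a^{ij}\le\Lambda$ of propositions \ref{Prop Nonlin Int Est} and \ref{Prop Nonlin Int Est+} fails as $\rho\to\infty$; and if instead you rescale genuinely parabolically (lengths by $\rho^{-1/2}$, time by $\rho^{-1}$), the unit cylinders you would need have $\check g$-radius of order $\sqrt{\rho}$, which leaves the annulus $\{\Gamma<f<\gamma e^\tau\}$ (already when $\rho\sim 4\Gamma$), so the hypotheses on $h$ and the equation are not available there. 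The fix is simply to omit the rescaling: for $f>\Gamma$ the curvature of $\check g$ and all its covariant derivatives are bounded (indeed decay, by lemma \ref{Lem Soliton Curv Decay}) and the conjugate radius is bounded below, so the interior estimates apply directly at unit scale, exactly as in the paper's proof and in proposition \ref{Prop Eigmode Derivative Growth}. With that step removed, your argument is correct.
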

\begin{proof}
	The unbounded coefficient $\ol{\nabla} f$ of the drift term $\ol{\nabla}_{\ol{\nabla} f } h$ precludes us from applying Proposition \ref{Prop Nonlin Int Est} or Proposition \ref{Prop Nonlin Int Est+} directly.
	To circumvent this issue, we appeal to a rescaling argument.
	
	Fix $\tau_* \in ( \tau_0, \tau_1]$.
	Consider the family $\cl{H}(s)$ of symmetric 2-tensors on $M$ given by
		$$\cl{H}(s) \doteqdot ( 1 - s) \phi_s^* \left[ h( \tau_* - \ln ( 1- s) ) \right]$$
	where $\phi_s: M \to M$ is defined as the family of diffeomorphisms solving \eqref{phi Evol Eqn}, that is
		$$\partial_s \phi_s = \frac{1}{1 - s} \ol{\nabla} f \circ \phi_s \qquad \text{ with }  
		\qquad \phi_0 = Id_M : M \to M.$$

	Then $\cl{H}(s)$ satisfies the evolution equation
	\begin{gather*}  \begin{aligned}
		& \quad \partial_s \cl{H} 	
		\\
		& = - \frac{1}{1-s} \cl{H} 
		+ (1-s) \frac{1}{1-s} \phi_s^* \partial_\tau h 
		+ (1-s)  \phi_{s}^* \cl{L}_{\partial_s \phi_s \circ \phi_s^{-1} } h		
		\\
		& = - \frac{1}{1-s} \cl{H} 
		+ \phi_s^* \partial_\tau h 
		+ \phi_{s}^* \cl{L}_{\ol{\nabla} f } h	
		\\
		& = - \frac{1}{1-s} \cl{H} 
		+  \phi_s^*
		\left\{ g^{ab} \ol{\nabla}_a \ol{\nabla}_b h - \ol{\nabla}_{\ol{\nabla} f} h + 2 \ol{Rm}[h] 	\right.	\\
		& \qquad + \ol{Rm}_{jab}^p \left( \ol{g}_{ip} \tl{h}^{ab} + \hat{h}^{ab} h_{ip} \right)	
		+ \ol{Rm}_{iab}^p \left( \ol{g}_{jp} \tl{h}^{ab} + \hat{h}^{ab} h_{jp} \right)				\\
		& \qquad + g^{ab} g^{pq} ( \ol{\nabla} h * \ol{\nabla} h)_{abpqij}  \\
		& \qquad \left. + \ol{\nabla}_{\ol{\nabla} f} h + h_{ip} \ol{\nabla}_j \ol{\nabla}^p f + h_{pj} \ol{\nabla}_i \ol{\nabla}^p f \right\}.
	\end{aligned} \end{gather*}
	Let $\check g \doteqdot (1-s) \phi_s^* \ol{g}$. Then 
	\begin{gather*} \begin{aligned}	
		& \quad \partial_s \cl{H}	\\
		& = ( (1-s) \phi_s^* g)^{ab} \check \nabla_a \check \nabla_b \cl{H} + 2 \check{Rm} [ \cl{H} ] + \check{Rc} * \cl{H} 	\\
		& \qquad +  ( (1-s) \phi_s^* g)^{ab} ( (1-s) \phi_s^* g)^{pq} ( \check \nabla \cl{H} * \check \nabla \cl{H} )_{abpqij}			\\
		& \qquad +  \frac{ 1}{1-s} \check{Rm}_{jab}^p \check{g}_{ip} ( \phi_s^* \tl{h} )^{ab} 
		+ \frac{1}{1-s} \check{Rm}_{jab}^p ( \phi_s^* \hat{h})^{ab} \cl{H}_{ip}		\\
		& \qquad +  \frac{ 1}{1-s} \check{Rm}_{iab}^p \check{g}_{jp} ( \phi_s^* \tl{h} )^{ab} 
		+ \frac{1}{1-s} \check{Rm}_{iab}^p ( \phi_s^* \hat{h})^{ab} \cl{H}_{jp}		
		\\
		& = ( (1-s) \phi_s^* g)^{ab} \check \nabla_a \check \nabla_b \cl{H} + 2 \check{Rm} [ \cl{H} ] + \check{Rc} * \cl{H} 	\\
		& \qquad + ( (1-s) \phi_s^* g)^{ab} ( (1-s) \phi_s^* g)^{pq} ( \check \nabla \cl{H} * \check \nabla \cl{H} )_{abpqij}	\\
		& \qquad +  \frac{ 1}{1-s} \check{Rm}_{jab}^p \check{g}_{ip} ( \phi_s^* \tl{h} )^{ab} 
		+ \frac{1}{1-s} \check{Rm}_{jab}^p ( \phi_s^* \hat{h})^{ab} \cl{H}_{ip}		\\
		& \qquad +  \frac{ 1}{1-s} \check{Rm}_{iab}^p \check{g}_{jp} ( \phi_s^* \tl{h} )^{ab} 
		+ \frac{1}{1-s} \check{Rm}_{iab}^p ( \phi_s^* \hat{h})^{ab} \cl{H}_{jp}		.	
	\end{aligned} \end{gather*}
	From the definition of $\tl{h}$ and $\hat h$ in \eqref{Inverse h Tensors} as
		$$g^{ab} = \ol{g}^{ab} - \hat{h}^{ab} \qquad \text{ and } \qquad  \tl{h}^{ab} = \hat{h}^{ab} - \ol{g}^{ak} \ol{g}^{bl} h_{kl},$$
	it follows that 
		$$\frac{ 1}{1 -s} ( \phi_s^* \tl{h} )^{ab} = \tl{\cl{H}}^{ab}		\qquad \text{ and } \qquad 
		\frac{ 1}{1 -s} ( \phi_s^* \hat{h} )^{ab} = \hat{\cl{H}}^{ab}$$
	where $\tl{\cl{H}}, \hat{\cl{H}}$ are defined by
		$$( ( 1- s) \phi_s^* g) ^{ab} = \check{g}^{ab} - \hat{\cl{H}}^{ab} \qquad \text{ and } \qquad 
		 \tl{\cl{H}}^{ab} = \hat{\cl{H}}^{ab} - \check{g}^{ak} \check{g}^{bl} \cl{H}_{kl}.$$
	Note also that $\check g(s) = ( 1 - s) \phi_s^* \ol{g}$ is the Ricci flow solution corresponding to the soliton metric $\check g(0) = \ol{g}$
	and thus remains smooth for all $s \in (-\infty , 1)$.
	Additionally,
		$$( 1 - s) \phi_s^* g = ( 1 - s) \phi_s^* \ol{g} + ( 1 - s) \phi_s^* h = \check g(s) +  \cl{H}(s).$$
	Therefore, the evolution equation for $\cl{H}$ may be written as
	\begin{gather} \label{clH Evol Eqn 1} \begin{aligned}
		\partial_s \cl{H}
		={}& \check \Delta \cl{H} + 2 \check{Rm} [ \cl{H} ] + \check{Rc} * \cl{H}	\\
		& - \hat{\cl{H}}^{ab} \check \nabla_a \check \nabla_b \cl{H}
		 + ( \check g - \hat{\cl{H}} )^{ab} ( \check g - \hat{\cl{H}} )^{pq}  ( \check \nabla \cl{H} * \check \nabla \cl{H} )_{abpqij}	\\
		&+   \check{Rm}_{jab}^p \check{g}_{ip}  \tl{\cl{H}}^{ab} 
		+  \check{Rm}_{jab}^p   \hat{\cl{H}}^{ab} \cl{H}_{ip}		
		 +   \check{Rm}_{iab}^p \check{g}_{jp}   \tl{\cl{H}}^{ab} 
		+  \check{Rm}_{iab}^p   \hat{\cl{H}}^{ab} \cl{H}_{jp}	.
	\end{aligned}	\end{gather}

	Let $s_0 = s_0 ( \tau_*, \tau_0) < 0$ be defined so that 
		$$\tau_* - \ln ( 1 - s_0) = \tau_0$$
	and set $s_0' \doteqdot \max \{ s_0, -1 \}$. 
	Recall from Lemma \ref{Lem Flow Est 1} that
		$$0 \le ( 1 - s) \partial_s  \phi_t^* f \le \phi_s^* f		\qquad \text{for all } s < 1.$$
	Integrating this derivative estimate from $s \le 0$ to $0$ yields
		$$ \frac{1}{1 - s} f( x) \le f( \phi_s (x) ) \le f(x)		\qquad \text{ for all } s \le 0.$$
	It follows that if 	
		$$(x,s) \in \left\{ 2 \Gamma < f(x) < \frac{1}{2} \gamma e^{\tau_*} \right\} \times [ s_0', 0 ]$$
	then 
	\begin{gather*}
		f( \phi_s (x) ) \ge \frac{1}{1 - s} f(x) >  \frac{2}{ 1- s} \Gamma \ge \Gamma	\qquad \text{and } \\
		f( \phi_s( x) ) \le f(x) < \frac{1}{2} \gamma e^{\tau_*} \le \gamma e^{ \tau_* - \ln ( 1 - s) }.
	\end{gather*}
	In other words,
	\begin{gather*}
		(x,s) \in \left\{ 2 \Gamma < f(x) < \frac{1}{2} \gamma e^{\tau_*} \right\} \times [ s_0', 0 ] \quad \text{ implies}	\\
		( \phi_s(x), \tau_* - \ln ( 1 - s) ) \in \{ (y, \tau) \in M \times \R : \Gamma < f(y) < \gamma e^\tau \} ,
	\end{gather*}
	the region where the assumed bounds on $h$ apply.
	Therefore,
		$$| \cl{H} |_{\check g  } (x, s)  
		= \left| h  \right|_{\ol{g}} ( \phi_s (x), \tau_* - \ln ( 1 - s) ) 
		\le C_0 e^{\kappa ( \tau_* - \ln ( 1 - s) ) } f ( \phi_s( x) )^\kappa
		\le C_0 \gamma^\kappa$$
	for all $(x, s) \in \{ 2 \Gamma < f(x) < \frac{1}{2} \gamma e^{\tau_*} \} \times [ s_0', 0 ]$.

	Since $f$ grows quadratically at infinity, we may assume $\Gamma \gg 1$ is sufficiently large depending on $n, M, \ol{g}, f$ so that, for any $p \in M$,
		$$f( p) > \Gamma \implies B_{\ol{g}} ( p, 1) \subset \left \{ \frac{1}{2} f(p) < f < 2 f(p) \right \}.$$ 
	Thus, the radius 1 neighborhood of $\{ 4 \Gamma < f < \frac{1}{4} \gamma e^{\tau_*} \}$ with respect to $\ol{g}$ is contained in $\{ 2 \Gamma < f < \frac{1}{2} \gamma e^{\tau_*} \}$, that is
		$$\bigcup_{x \in \{ 4 \Gamma < f < \frac{1}{4} \gamma e^{\tau_*} \} } B_{\ol{g}} ( x, 1) \subset
			\left\{ 2 \Gamma < f < \frac{1}{2} \gamma e^{\tau_*} \right\}.$$
	Since $\ol{Rm}$ decays quadratically at infinity, we may additionally assume $\Gamma \gg 1$ is sufficiently large depending on $n, M, \ol{g}, f$ so that, for all $p \in M$ with $f(p) > \Gamma$, $B_{\ol{g}} (p,1)$ contains no points conjugate to $p$.
			
	Let $p_* \in M$ such that $4 \Gamma < f(p_*) < \frac{1}{4} \gamma e^{\tau_*}$.
	After possibly passing to a local cover and pulling back to $\R^n$ using the exponential map based at $p_*$, 
	Proposition \ref{Prop Nonlin Int Est} or Proposition \ref{Prop Nonlin Int Est+} applies to equation \eqref{clH Evol Eqn 1} on the unit ball in $\R^n$.
	Note that the terms in the equation \eqref{clH Evol Eqn 1} are bounded independently of $p_*$ and $\tau_*$,
	so the resulting constants will depend only on $n, M, \ol{g}, f$.
	There are two possible cases to consider depending on whether or not $s_0 \ge -1$.
	
	In the first case where $ s_0 \ge -1$,
	Proposition \ref{Prop Nonlin Int Est+} with $F = 0$ implies
	that if $C_0 \gamma^\kappa \ll 1$ is sufficiently small depending on $n, M, \ol{g}, f$,
	there exists $C_1 = C_1( n, M, \ol{g}, f)$ such that
	\begin{align*}
		& | \ol{\nabla} h |_{\ol{g}} ( p_*, \tau_*) + | \ol{\nabla}^2 h|_{\ol{g}} ( p_*, \tau_*) 	\\
		={}& | \check \nabla \cl{H} |_{\check g}(p_* , s= 0) + | \check{\nabla}^2 \cl{H} |_{\check g} ( p_* , s = 0) 	\\
		\le{}& C_1 \left(  \sup_{ B_{\ol{g}}( p_*, r_0 ) \times [s_0, 0] } | \cl{H} |(x,s)
		+  \sup_{B_{\ol{g}}( p_*, r_0 ) } \sum_{m =1}^3 | \check{\nabla}^{m} \cl{H} |(x, s_0)  	\right) \\
		\le{}& C_1 \left(  \sup_{ \{ \frac{1}{2} f( p_*) < f < 2 f( p_*) \} \times [s_0, 0] } | \cl{H} |(x,s)
		+  \sup_{ \{ \frac{1}{2} f( p_*) < f < 2 f( p_*) \} } \sum_{m =1}^3 | \check{\nabla}^{m} \cl{H} |(x, s_0)  	\right) \\
		\le{}& C_1 \left(  \sup_{ \{ \frac{1}{4} f( p_*) < f < 4 f( p_*) \} \times [\tau_0, \tau_*] } | h |_{\ol{g}} (x,s)
		+  \sup_{ \{ \frac{1}{4} f( p_*) < f < 4 f( p_*) \} } \sum_{m =1}^3 \frac{1}{ ( 1 - s_0)^{m/2} } | \ol{\nabla}^m h |_{\ol{g}}(x, \tau_0)  	\right) \\
		\le{}& 4 C_1 \sup_{ \{ \frac{1}{4} f( p_*) < f < 4 f( p_*) \} \times [\tau_0, \tau_*] } C_0 e^{- \kappa \tau} f^\kappa	\\
		\lesssim& {}_{\kappa} C_1 C_0 e^{- \kappa \tau_*} f( p_*)^\kappa 
	\end{align*}
	where in the last line we used the fact that $s_0  \ge  -1$ implies $\tau_* - \tau_0 = \ln ( 1 - s_0 ) \le \ln ( 2)$.
	
	In the second case where $s_0 < -1$, Proposition \ref{Prop Nonlin Int Est} with $F = 0$ implies 
	that if $C_0 \gamma^\kappa \ll 1$ is sufficiently small depending on $n, M, \ol{g}, f$, 
	there exists $C_1 = C_1( n, M, \ol{g}, f)$ such that 
	$$| \check \nabla \cl{H} |_{\check g}(p_* , s= 0) + | \check{\nabla}^2 \cl{H} |_{\check g} ( p_* , s = 0) 	
		\le C_2   \sup_{B_{\ol{g}}( p_*, r_0 ) \times [-r_0^2, 0] } | \cl{H} |(x,s)$$
	The remainder of the proof of the second case follows by similar logic as in the proof of the first case.
	
	In both cases,
		$$| \ol{\nabla} h |_{\ol{g}} ( p_*, \tau_*) + | \ol{\nabla}^2 h|_{\ol{g}} ( p_*, \tau_*) 	 
		\lesssim_{n, M, \ol{g}, f, \kappa} C_0 e^{- \kappa \tau_*} f( p_*)^\kappa$$
	when $C_0 \gamma^\kappa \ll 1$ is sufficiently small depending on $n, M, \ol{g}, f$.
	Since $\tau_* \in ( \tau_0 ,\tau_1]$ and $p_* \in \{ 4 \Gamma < f < \frac{1}{4} \gamma e^{\tau_*} \}$ were arbitrary and the constants depend only on $n, M, \ol{g}, f, \kappa$, the statement of the lemma follows.
\end{proof}

\begin{lem} \label{Lem Int Est Large Scale}
	Assume	
		$$\partial_\tau h = \ol{\Delta}_f h + 2 \ol{Rm} [h] + \cl{E}_1 + \cl{E}_2		\qquad
		 \text{ on } M \times (\tau_0, \tau_1 ).$$
	Assume also that
		$$| h |_{\ol{g}}(x, \tau)  \le \delta 	\qquad
		\text{ on } \{ (x,\tau) \in M \times [ \tau_0, \tau_1 ] : \gamma e^{\tau} < f(x) \},$$
		$$\sum_{m = 1}^3 | \ol{\nabla}^m h |_{\ol{g}}(x, \tau_0)  \le \delta		\qquad
		\text{ on } \{ x \in M : \gamma e^{\tau_0 } \le f(x) \},$$
	and		
		$$| \cl{E}_2 |_{\ol{g}}(x,\tau)  + | \ol{\nabla} \cl{E}_2 |_{\ol{g}}(x, \tau) \le \delta		\qquad
		\text{ on } \{ (x,\tau) \in M \times [ \tau_0, \tau_1 ] : \gamma e^{\tau} < f(x) \}.$$
		
	If $\delta \ll 1$ is sufficiently small (depending on $n, M, \ol{g}, f$) 
	and $\tau_0 \gg 1$ is sufficiently large (depending on $n, M, \ol{g}, f, \gamma$),
	then 
		$$| \ol{\nabla} h |_{\ol{g}} (x, \tau) + | \ol{\nabla}^2 h |_{\ol{g}} (x, \tau) \lesssim_{n, M, \ol{g}, f} \delta	\qquad
		\text{ on } \{ (x, \tau) \in M \times [ \tau_0, \tau_1 ] : 4 \gamma e^{\tau} < f(x) \}.$$
\end{lem}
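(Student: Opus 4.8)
The plan is to mimic the proof of lemma \ref{Lem Int Est Intermediate Scale}; the only new feature is the presence of the inhomogeneous term $\cl{E}_2$, and this causes no difficulty because, by hypothesis, $\cl{E}_2$ and $\ol{\nabla}\cl{E}_2$ are bounded by $\delta$ on the region in question. As there, the unbounded coefficient $\ol{\nabla} f$ of the drift term $\ol{\nabla}_{\ol{\nabla} f} h$ prevents a direct application of the interior estimates of appendix \ref{Holder Ests in Eucl Space}, so I would first pass through the self-similar rescaling. Fixing $\tau_* \in (\tau_0, \tau_1]$ and a point $p_* \in M$ with $4\gamma e^{\tau_*} < f(p_*)$, set
$$\cl{H}(s) \doteqdot (1-s)\,\phi_s^*\!\left[ h\big(\tau_* - \ln(1-s)\big) \right], \qquad \check{g}(s) \doteqdot (1-s)\,\phi_s^*\ol{g},$$
with $\phi_s$ solving \eqref{phi Evol Eqn}, where $s$ ranges over $[s_0', 0]$, $s_0 < 0$ is defined by $\tau_* - \ln(1-s_0) = \tau_0$, and $s_0' \doteqdot \max\{s_0, -1\}$. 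Exactly as in the derivation of \eqref{clH Evol Eqn 1}, $\cl{H}$ then satisfies an equation of the form \eqref{clH Evol Eqn 1} together with the extra source term $F(s) \doteqdot \phi_s^*\!\left[\cl{E}_2(\tau_* - \ln(1-s))\right]$, while $\check g(s)$ is the Ricci flow with $\check g(0) = \ol g$, which is smooth with uniformly bounded curvature and derivatives on $[-1,0]$.

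Since constant rescalings of a metric leave its Levi-Civita connection unchanged and $1-s \in [1,2]$ for $s \in [-1,0]$, and since $e^{\tau_* - \ln(1-s)} = e^{\tau_*}/(1-s) \le e^{\tau_*}$, the bounds $|\cl{E}_2|_{\ol g} + |\ol{\nabla}\cl{E}_2|_{\ol g} \le \delta$ on $\{\gamma e^\tau < f\}$ yield $|F|_{\check g} + |\check\nabla F|_{\check g} \le \delta$ at points $(x,s)$ with $\phi_s(x) \in \{\gamma e^\tau < f\}$. To see that such points are the relevant ones, note that by lemma \ref{Lem Flow Est 1}, $0 \le (1-s)\partial_s(\phi_s^* f) \le \phi_s^* f$, so integrating from $s \le 0$ to $0$ gives $\tfrac{1}{1-s} f(x) \le f(\phi_s(x)) \le f(x)$; hence $f(x) > 2\gamma e^{\tau_*}$ and $s \in [s_0', 0]$ force $f(\phi_s(x)) \ge f(x)/(1-s) > 2\gamma\, e^{\tau_* - \ln(1-s)}$, so $(\phi_s(x), \tau_* - \ln(1-s)) \in \{\gamma e^\tau < f\}$, and when $s = s_0$ it lies in $\{\gamma e^{\tau_0} \le f\}$. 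Consequently $|\cl{H}|_{\check g}(x,s) = |h|_{\ol g}(\phi_s(x), \tau_* - \ln(1-s)) \le \delta$ on $\{2\gamma e^{\tau_*} < f\} \times [s_0', 0]$, and $\sum_{m=1}^3 |\check{\nabla}^m \cl{H}|_{\check g}(\cdot, s_0) \lesssim_{n,M,\ol g,f} \delta$ on $\{2\gamma e^{\tau_*} < f\}$.

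Because $f$ grows quadratically (proposition \ref{Prop Potential Func Grows Quadratically}) and $\ol{Rm}$ decays quadratically at infinity (lemma \ref{Lem A.1 KW15}), I would choose $\tau_0 \gg 1$ large (depending on $n,M,\ol g,f,\gamma$) so that every $p$ with $f(p) > \gamma e^{\tau_0}$ satisfies $B_{\ol g}(p,1) \subset \{\tfrac12 f(p) < f < 2 f(p)\}$ and $B_{\ol g}(p,1)$ contains no point conjugate to $p$; then the radius-$1$ $\ol g$-neighborhood of $\{4\gamma e^{\tau_*} < f\}$ lies inside $\{2\gamma e^{\tau_*} < f\}$. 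Passing to a local cover and pulling back by $\exp_{p_*}$, I would apply proposition \ref{Prop Nonlin Int Est} (in the case $s_0 < -1$) or proposition \ref{Prop Nonlin Int Est+} (in the case $s_0 \ge -1$, using the $C^3$ bound on $\cl H$ at $s = s_0$) to equation \eqref{clH Evol Eqn 1} with inhomogeneous term $F$ on the unit ball in $\R^n$; this is admissible once $\delta \ll 1$ is small enough (depending on $n,M,\ol g,f$) that the quasilinear terms $\hat{\cl H}*\check{\nabla}^2\cl H$ and $\check\nabla\cl H * \check\nabla\cl H$ in \eqref{clH Evol Eqn 1} have small coefficients, and the constants produced depend only on $n,M,\ol g,f$ since the coefficients of \eqref{clH Evol Eqn 1} and of $F$ are bounded independently of $p_*$ and $\tau_*$. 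Translating back through $\cl H(0) = h(\tau_*)$ and $\check g(0) = \ol g$, this gives
$$|\ol{\nabla} h|_{\ol g}(p_*,\tau_*) + |\ol{\nabla}^2 h|_{\ol g}(p_*,\tau_*) = |\check\nabla\cl H|_{\check g}(p_*,0) + |\check{\nabla}^2\cl H|_{\check g}(p_*,0) \lesssim_{n,M,\ol g,f} \delta,$$
and since $\tau_* \in (\tau_0,\tau_1]$ and $p_* \in \{4\gamma e^{\tau_*} < f\}$ were arbitrary, the lemma follows.

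The step I expect to demand the most care --- though it is bookkeeping rather than a genuine obstacle --- is confirming that the rescaled source $F$, and where needed the rescaled $C^3$ initial data for $\cl H$ at $s=s_0$, inherit the bound $\lesssim_{n,M,\ol g,f}\delta$: this works precisely because the argument is confined to the unit $s$-interval $[s_0', 0]$, on which the scaling factors $(1-s)^{\pm 1}$ and $(1-s)^{-3/2}$ remain comparable to $1$, and because the nested regions $\{4\gamma e^\tau < f\} \Subset \{2\gamma e^\tau < f\} \Subset \{\gamma e^\tau < f\}$ are respected by the soliton flow $\phi_s$ for $s \le 0$, again by lemma \ref{Lem Flow Est 1}.
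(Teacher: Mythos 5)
Your proposal is correct and follows essentially the same route as the paper: the same self-similar rescaling $\cl{H}(s) = (1-s)\phi_s^* h(\tau_* - \ln(1-s))$ with $s \in [s_0',0]$, the same use of lemma \ref{Lem Flow Est 1} to show the relevant regions are preserved, the same treatment of the pulled-back source $\phi_s^*\cl{E}_2$ with scaling factors controlled on the unit $s$-interval, and the same application of propositions \ref{Prop Nonlin Int Est} and \ref{Prop Nonlin Int Est+}. No further comment is needed.
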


\begin{proof}
	The proof is analogous to that of Lemma \ref{Lem Int Est Intermediate Scale}.
	We outline the argument nonetheless.
	
	Fix $\tau_* \in ( \tau_0, \tau_1]$
	and consider
		$$\cl{H}( s) \doteqdot ( 1 - s) \phi_s^* h( \tau_* - \ln ( 1 - s) ).$$
	Then $\cl{H}(s)$ satisfies the evolution equation 	
	\begin{gather} \label{clH Evol Eqn 2} \begin{aligned}
		\partial_s \cl{H}
		={}& \check \Delta \cl{H} + 2 \check{Rm} [ \cl{H} ] + \check{Rc} * \cl{H}	
		+ \phi_s^* \cl{E}_2 \\
		& - \hat{\cl{H}}^{ab} \check \nabla_a \check \nabla_b \cl{H}
		 + ( \check g - \hat{\cl{H}} )^{ab} ( \check g - \hat{\cl{H}} )^{pq}  ( \check \nabla \cl{H} * \check \nabla \cl{H} )_{abpqij}	\\
		&+   \check{Rm}_{jab}^p \check{g}_{ip}  \tl{\cl{H}}^{ab} 
		+  \check{Rm}_{jab}^p   \hat{\cl{H}}^{ab} \cl{H}_{ip}		
		 +   \check{Rm}_{iab}^p \check{g}_{jp}   \tl{\cl{H}}^{ab} 
		+  \check{Rm}_{iab}^p   \hat{\cl{H}}^{ab} \cl{H}_{jp}	
	\end{aligned}	\end{gather}
	where $\check g(s) = ( 1 - s) \phi_s^* \ol{g}$.
	
	Let $s_0 = s_0( \tau_*, \tau_0) < 0$ be defined by
		$$\tau_* - \ln ( 1 - s_0) = \tau_0$$
	and set $s_0' \doteqdot \max \{ s_0 , -1 \}$.
	Then
	\begin{gather*}
		(x,s) \in \{ f > 2 \gamma e^{\tau_*} \} \times [ s_0', 0 ] \quad \text{ implies}	\\
		( \phi_s(x), \tau_* - \ln ( 1 - s) ) \in \{ (y, \tau) \in M \times \R :   f(y) >  \gamma e^\tau \}.
	\end{gather*}
	Hence,
	\begin{gather*}
		| \cl{H} |_{\check g} (x, s) \le \delta,
		\qquad
		| \phi_s^* \cl{E}_2 |_{\check g} (x, s) 
		= \frac{1}{1 - s} | \cl{E}_2 |_{\ol{g}} (\phi_s(x) , \tau_* - \ln ( 1 - s) ) \le \delta	,\\
		\text{and} \quad 
		| \check{\nabla} \phi_s^* \cl{E}_2 |_{\check g} (x, s) 
		= \frac{1}{(1 - s)^{3/2}} | \ol{\nabla} \cl{E}_2 |_{\ol{g}} (\phi_s(x) , \tau_* - \ln ( 1 - s) )
		\le \delta
	\end{gather*}
	for all $(x,s) \in \{   f(x)  > 2  \gamma e^{\tau_*} \} \times [ s_0', 0 ]$.

	Assume $\tau_0 \gg 1$ is sufficiently large depending on $n, M, \ol{g}, f, \gamma$ so that
		$$f( p ) > \gamma e^{\tau_0} 
		\implies
		B_{\ol{g} } (p, 1 ) \subset \left \{ f > \frac{1}{2} f(p) \right \}$$
	and $f(p) > \gamma e^{\tau_0}$ implies $B_{\ol{g}} ( p, 1) $ contains no points conjugate to $p$.
	
	As in the proof of Lemma \ref{Lem Int Est Intermediate Scale},
	Propositions \ref{Prop Nonlin Int Est} and \ref{Prop Nonlin Int Est+} may be applied to \eqref{clH Evol Eqn 2} to deduce that if $\delta \ll 1$ is sufficiently small depending on $n, M, \ol{g}, f$,
	then
		$$| \ol{\nabla} h |_{\ol{g}} (p_*, \tau_* ) + | \ol{\nabla}^2 h |_{\ol{g}} (p_*, \tau_* )
		\lesssim_{n, M, \ol{g}, f} \delta$$
	for all $p_* \in M$ with $f(p_*) > 4 \gamma e^{\tau_*}$.
\end{proof}

%%%%%%%%%%%%%%%%%%%%%%%%%%%%%%%%%%%%%%%%%%%%%%%%%%%%%%%%%%
\subsection{Proof of Theorem \ref{Thm C^2 Bounds Preserved}}
%%%%%%%%%%%%%%%%%%%%%%%%%%%%%%%%%%%%%%%%%%%%%%%%%%%%%%%%%%

\begin{proof}[Proof of Theorem \ref{Thm C^2 Bounds Preserved}.]
	We first seek to prove a $C^0$ bound.
	The proof of Proposition \ref{Prop Difference of G_p from G_0} 
	shows that there exists $C_1 = C_1(n, M, \ol{g}, f, \lambda_*) \ge 1 \ge \ol{p}$ such that 
		$$\left|  \sum_{j = 1}^{K} p_j h_j   \right| 
		\le C_1 \ol{p} e^{\lambda_* \tau_0} f^{- \lambda_*} 
		\le C_1 e^{\lambda_* \tau_0} f^{- \lambda_*} $$
	for all $| \mathbf p | \le \ol{p} e^{\lambda_* \tau_0}$.
	
	By Lemma \ref{Lem |h| Evol Eqn}, $| h |_{\ol{g}} : M \times [\tau_0, \tau_1] \to \R$ satisfies the differential inequality
	\begin{align}	
		\partial_\tau | h|_{\ol{g}}^2 \le{}&  
		( 1 + C_n | h |_{\ol{g}} ) \ol{\Delta} | h |_{\ol{g}}^2 - \ol{\nabla}_{ \ol{\nabla} f } | h |_{\ol{g}}^2 + 4 C_n ( 1 + \epsilon ) | \ol{Rm} |_{\ol{g}} | h|_{\ol{g}}^2 +  | \cl{E}_2|_{\ol{g}} | h |_{\ol{g}} 	\label{|h|^2 Evol Eqn 0} \\
		\le{}& ( 1 + C_n | h |_{\ol{g}} ) \ol{\Delta} | h |_{\ol{g}}^2 - \ol{\nabla}_{ \ol{\nabla} f } | h |_{\ol{g}}^2 + 4 C_n ( 1 + \epsilon ) | \ol{Rm} |_{\ol{g}} | h|_{\ol{g}}^2 +  \frac{C_0}{\Gamma_0 } e^{-\tau} | h |_{\ol{g}} \label{|h|^2 Evol Eqn 1}
	\end{align}
	on $M \times ( \tau_0, \tau_1)$ so long as $0 < \epsilon \ll 1$ is sufficiently small depending on $n$.
	Moreover, the assumption that $\supp \cl{E}_2 \subset \{ e^\tau \le f \le \Gamma_0 e^\tau \}$
	implies
	\begin{equation}	\label{|h|^2 Evol Eqn 2}
		\partial_\tau | h|_{\ol{g}}^2 \le 
		( 1 + C_n | h |_{\ol{g}} ) \ol{\Delta} | h |_{\ol{g}}^2 - \ol{\nabla}_{ \ol{\nabla} f } | h |_{\ol{g}}^2 + 4 C_n ( 1 + \epsilon ) | \ol{Rm} |_{\ol{g}} | h|_{\ol{g}}^2 
	\end{equation}
	on $\{ (x, \tau) \in M \times (\tau_0, \tau_1) : f(x) < e^\tau \}$.
		
	We aim to use Lemmas \ref{Lem Barrier Intermediate Scale} and \ref{Lem Barrier Large Scale} to construct a supersolution for $|h|_{\ol{g}}^2$ outside of some compact subset of $M$.
	To this end, let $0 < \delta \le \epsilon$ and define
	\begin{gather*}
		A = A(n, M, \ol{g}, f, \lambda_*) \doteqdot C_1^2  + 1 >  0		\\
		B_0 = B_0(n, M, \ol{g}, f, \lambda_*) \doteqdot 
		A \left\{ 2 | \lambda_* | \left( 2 | \lambda_* | + \frac{n}{2} \right) + 4 C_n ( 1 + 1 ) \sup_M \left(| \ol{Rm} |_{\ol{g}} f \right) \right\} +1> 0, \\
		a \doteqdot \delta^2 \in (0, \epsilon^2 ] \subset (0, 1),	\text{ and }\\
		B_1 = B_1(n, M, \ol{g}, f) \doteqdot 4 C_n (1 +1) \left( \sup_M | \ol{Rm} |_{\ol{g}} f \right) + C_0 > 0.
	\end{gather*}
	By definition,
	\begin{gather*} \begin{aligned}
		&B _0- A  \left\{2  | \lambda_* | \left( 2 | \lambda_* | + \frac{n}{2} \right) + 4 C_n ( 1 + \epsilon ) \sup_M (| \ol{Rm} | f) \right\} 	\\
		\ge{}& B _0- A  \left\{2  | \lambda_* | \left( 2 | \lambda_* | + \frac{n}{2} \right) + 4 C_n ( 1 + 1 ) \sup_M (| \ol{Rm} | f) \right\}	\\
		={}& 1,
	\end{aligned} \end{gather*}
	and
	\begin{gather*} \begin{aligned}
		B_1 = 4 C_n (1 +1) \left( \sup_M | \ol{Rm} |_{\ol{g}} f \right)  + C_0 
		> 4 C_n ( 1 + \epsilon ) ( \sup f | Rm| ) a + C_0 \sqrt{a}.
	\end{aligned}	\end{gather*}
	
	Additionally, define
	\begin{gather*}	 \begin{aligned}
		\gamma_+ &\doteqdot \left( \frac{2 \delta^2}{A} \right)^{1 /(2 | \lambda_* |)} 
		\lesssim_{n, M, \ol{g}, f, \lambda_*} \epsilon^{1 /|\lambda_*| }  \text{ and} \\
		\gamma_- &\doteqdot \left( \frac{\delta^2}{2A} \right)^{1 / (2| \lambda_* |)} 
		\lesssim_{n, M, \ol{g}, f, \lambda*} \epsilon^{1 / |\lambda_*|}.
	\end{aligned} \end{gather*}
	Note that $0 < \gamma_- \le \gamma_+ \le 1$.
	
	By Lemma \ref{Lem Barrier Intermediate Scale} with $\kappa = - 2 \lambda_* > 0$,
	$e^{2 \lambda_* \tau} ( A f^{- 2 \lambda_* } - B_0 f^{- 2 \lambda_* -1} )$ is a supersolution of equation \eqref{|h|^2 Evol Eqn 2} on the domain $\{ \Upsilon < f < \gamma_+ e^\tau \}$ 
	for some large constant $\Upsilon = \Upsilon ( n, M, \ol{g}, f, \lambda_* )$ that depends only on $n, M, \ol{g}, f, \lambda_*$
	and so long as $\gamma_+ \ll 1$ is sufficiently small depending on $n, M, \ol{g}, f, \lambda_*$.
	In fact, since $\gamma_+ \lesssim_{n, M, \ol{g}, f, \lambda_*} \epsilon^{1 /| \lambda_* |}$, it instead suffices to assume $\epsilon \ll 1$ is sufficiently small depending on $n, M, \ol{g},f , \lambda_*$.
	Additionally, 
		$$e^{2\lambda_* \tau} ( A f^{-2\lambda_* } - B_0 f^{- 2\lambda_* -1} ) 
		\ge e^{2\lambda_* \tau} f^{2| \lambda_* |} ( A - B_0 / \Upsilon ) 
		\qquad \text{ on } \{ \Upsilon \le f \le \gamma_+ e^\tau \}.$$
	In particular, by taking $\Upsilon$ possibly larger depending on $n, M, \ol{g}, f, \lambda_*$, we may assume that 
		$$A - B_0 / \Upsilon > A - 1 = C_1^2  > 0,$$
	that Lemma \ref{Lem Local L^2_f to C^2 Est} holds with $\Gamma = \Upsilon$,
	and that Lemma \ref{Lem Int Est Intermediate Scale} holds with $\Gamma = \frac{1}{4} \Upsilon$.
	
	Similarly, by Lemma \ref{Lem Barrier Large Scale}, $a - B_1/f$ is a positive supersolution of equation \eqref{|h|^2 Evol Eqn 1}
	on the domain $\{ \gamma_- e^\tau < f < \Gamma_0 e^{\tau} \}$
	if $\tau_0 \gg 1$ is sufficiently large depending on $n, M, \ol{g} , f, \lambda_* ,  \epsilon, \delta$.
	%technically depends on C_0 too but C_0 = C_0(n, M, \ol{g}, f)
	
	Next, we claim that if $\tau_0 \gg 1$ is sufficiently large depending on $n, M, \ol{g}, f, \lambda_*, \delta$
	and $\tau \ge \tau_0$, then
	\begin{gather*} \begin{aligned}
		a - B_1/ f &< e^{2 \lambda_* \tau} f^{2 |\lambda_*|} ( A - B_0 / f) \text{ when } f = \gamma_+ e^\tau	\text{ and}\\
		a - B_1/ f &> e^{2 \lambda_* \tau} f^{2 |\lambda_*|} ( A - B_0 / f) \text{ when } f = \gamma_- e^\tau.
	\end{aligned}	\end{gather*}
	
	For the first inequality, 
	\begin{align*}
		a - B_1/f
		& \le a		&& ( B_1, f > 0)\\
		&= \delta^2	\\
		&= \frac{1}{2} A \gamma_+^{2 |\lambda_*|} 
			&& \left( \gamma_+ = (2 \delta^2 / A)^{1/ (2 |\lambda_*|)} \right)	\\
		&< A \gamma_+^{2 |\lambda_*|} - \frac{B_0}{\gamma_+ e^\tau } \gamma_+^{2|\lambda_*|}
		&& (\tau \ge \tau_0 \gg 1)	\\
		&= \left( A - \frac{B_0}{f} \right) e^{2 \lambda_* \tau} f^{2|\lambda_* |}
		&& ( f = \gamma_+ e^\tau ).
	\end{align*}
	
	For the second inequality,
	\begin{align*}
		( A - B_0 / f ) e^{2 \lambda_* \tau } f^{2 |\lambda_*|}
		&\le A e^{2 \lambda_* \tau} f^{2 |\lambda_*|}		&& ( B_0, f > 0 ) \\
		&= A \gamma_-^{2 |\lambda_*|} 		&& ( f = \gamma_- e^\tau )	\\
		&= \frac{\delta^2 }{2}		&& \left( \gamma_- = (\delta^2 / (2A) )^{1/ (2|\lambda_*|)} \right) \\
		&< \delta^2 - \frac{B_1}{\gamma_- e^\tau }		&& ( \tau \ge \tau_0 \gg 1)	\\
		&= a - \frac{B_1}{f}		&& ( f = \gamma_- e^\tau ) .
	\end{align*}
	
	Having proven the claim, it then follows that the function $u : M \times [\tau_0, \infty) \to \R$ given by
	\begin{equation*}
		u =  \begin{cases}
			\left(A - \frac{B_0}{f} \right) e^{2 \lambda_* \tau} f^{2 | \lambda_* |}, & \text{if } f \le \gamma_- e^\tau, 	\\
			\min \left\{ \left(A - \frac{B_0}{f} \right) e^{ 2 \lambda_* \tau} f^{2 | \lambda_* |}, a - \frac{B_1}{f} \right\}, 
				& \text{if } \gamma_- e^\tau \le f \le \gamma_+ e^\tau,	\\
			a - \frac{B_1}{f}, & \text{if } \gamma_+ e^\tau \le f ,
		\end{cases}
	\end{equation*}
	is a locally Lipschitz continuous function which is also positive $u > 0$ on $\{ \Upsilon \le f \}$
	when $\tau_0 \gg 1$ is sufficiently large (depending on $n, M, \ol{g}, f, \lambda_* , \epsilon, \delta$).
	Moreover, $u$ is a supersolution of equation \eqref{|h|^2 Evol Eqn 0} on $\{ \Upsilon \le f \le \Gamma_0 e^\tau \}$.
	Indeed, $u$ equals supersolutions on $\{ \Upsilon < f \le \gamma_- e^\tau \} \cup \{ \gamma_+ e^\tau \le f < \Gamma_0 e^\tau \}$,
	and, on the region $\{ \gamma_- e^\tau \le f \le \gamma_+ e^\tau \}$, $u$ is a minimum of two supersolutions which is again a supersolution.

	Notice that $\Upsilon = \Upsilon ( n, M, \ol{g}, f , \lambda_* )$ depends only on $n, M, \ol{g}, f, \lambda_*$.
	Since the eigenmodes $h_j$ are smooth,
	there exists $C_2 = C_2 ( n, M, \ol{g}, f , \lambda_* )$ such that, for all  $x \in \{ f < 2 \Upsilon \}$,
	\begin{align*}
		& \sum_{l = 0}^{4n+2} | \ol{\nabla}^l  h  |_{\ol{g}} ( x, \tau_0) \\
		={}& \sum_{l = 0}^{4n+2} | \ol{\nabla}^l \sum_{j=1}^{K} p_j h_j |_{\ol{g}} ( x, \tau_0) 
		&& \left( \tau_0 \gg 1 \implies 2 \Upsilon \le \frac{\gamma_0}{2} e^{\tau_0} \right)	\\
		\le{}& \ol{p} e^{\lambda_* \tau_0} \sum_{j = 1}^{K} \sum_{l =0}^{4n+2} | \ol{\nabla}^l h_j |	\\
		\le{}& C_2 \ol{p} e^{\lambda_* \tau_0} 
		&& ( \text{smoothness of the $h_j$} ) .
	\end{align*}
	By Lemma \ref{Lem Local L^2_f to C^2 Est}, if $\epsilon \ll 1$ is sufficiently small depending on $n, M, \ol{g},f , \lambda_*$ and
	$\tau_0 \gg 1$ is sufficiently large depending on $n, M, \ol{g}, f, \lambda_*, \ol{p}$,
	then
	\begin{align*}
		\sum_{l = 0}^2 | \ol{\nabla}^l h  |_{\ol{g}} (x, \tau) &\lesssim_{n, M, \ol{g}, f, \lambda_* }
		 ( \mu +  C_2 \ol{p} ) e^{\lambda_* \tau} 	\\
		 &\lesssim_{n, M, \ol{g}, f, \lambda_*}  ( \mu +  C_2 \ol{p} ) e^{\lambda_* \tau} f^{|\lambda_*|}
	\end{align*}
	for all $(x, \tau) \in \{ f < \Upsilon \} \times [ \tau_0, \tau_1]$.
	Let $C_3 = C_3 (n, M, \ol{g}, f, \lambda_*)$ be a constant such that 
		$$\sum_{l = 0}^2 | \ol{\nabla}^l  h |_{\ol{g}} (x, \tau) 
		\le C_3 ( \mu +  C_2 \ol{p} ) e^{\lambda_* \tau} f^{| \lambda_* |} $$
	for all $(x, \tau) \in \{ f < \Upsilon \} \times [ \tau_0, \tau_1]$.
	
	Since $C_1, C_2, C_3$ depend only on $n, M, \ol{g}, f, \lambda_*$,
	it follows that
		$$C_3 ( \mu + C_2 \ol{p} ) \le C_1 < \sqrt{ A - B_0 / \Upsilon} $$
	if $0 < \mu , \ol{p} \ll 1$ are sufficiently small depending on $n, M, \ol{g}, f, \lambda_*$.
	Under this condition,
		$$| h |^2_{\ol{g}} (x , \tau) < u(x, \tau)		\qquad 
		\text{ for all } (x, \tau) \in \{ f = \Upsilon \} \times [ \tau_0, \tau_1].$$
	Note additionally that
		$$| h |^2_{\ol{g}} (x , \tau) = 0 <  u(x, \tau)		\qquad 
		\text{ for all } (x, \tau) \in \{ f \ge \Gamma_0 e^{\tau} \}.$$
	
	Finally, for all $x \in \{ \Upsilon \le f  \}$,
	\begin{align*}
		 | h|^2_{\ol{g}} (x, \tau_0)	
		={}& \eta^2_{\gamma_0} \left| \sum_{j = 1}^{ K} p_j h_j  \right|^2	\\
		\le{}& C_1^2  e^{2\lambda_* \tau_0} f^{- 2\lambda_* }	\\
		<{}& ( A - B_0 / \Upsilon ) e^{2\lambda_* \tau_0} f^{- 2\lambda_* }	\\
		\le{}& ( A - B_0 / f ) e^{2\lambda_* \tau_0} f^{- 2\lambda_* },
	\end{align*}
	and, for all $x \in \{ f \ge \gamma_- e^{\tau} \}$,
	\begin{align*}
		| h|^2_{\ol{g}} (x, \tau_0	)
		={}& \eta^2_{\gamma_0} \left| \sum_{j = 1}^{ K } p_j h_j  \right|^2	\\
		\le{}& \eta^2_{\gamma_0} C_1^2  e^{2 \lambda_* \tau_0} f^{-2 \lambda_* }		\\
		\le{}& C_1^2  \gamma_0^{2 | \lambda_* |}
		&& ( \supp \eta_{\gamma_0} \subset \{ f \le \gamma_0 e^\tau \}	 ) \\
		\le{}& \frac{\delta^2}{2} 
		&& ( \gamma_0 \ll 1)	\\
		<{}& a - \frac{B_1}{f} 		&& ( \tau \ge \tau_0 \gg 1 ) 
	\end{align*}	
	if $0 < \gamma_0 \ll 1$ is sufficiently small depending on $n, M, \ol{g}, f, \lambda_*, \delta$
	and $\tau_0 \gg 1$ is sufficiently large depending on $n, M, \ol{g}, f, \lambda_*, \delta$.
	Therefore, $| h^2|_{\ol{g}} ( x, \tau_0 ) \le u(x, \tau_0)$ for all $x \in \{ \Upsilon \le f \le \Gamma_0 e^{\tau} \}$.
	
	Since $u$ is a supersolution and $| h |^2_{\ol{g}}$ a subsolution of equation \eqref{|h|^2 Evol Eqn 0}, 
	the maximum principle then implies that $| h |^2_{\ol{g}} \le u$ for all $(x, \tau) \in \{ \Upsilon \le f \le \Gamma_0 e^\tau \} \times [ \tau_0, \tau_1]$.
	Combined with the fact that $h \equiv 0 $ on $\{ f \ge \Gamma_0 e^\tau \}$ and the bound
		$$| h |_{\ol{g}} (x ,\tau) \le C_1  e^{\lambda_* \tau} f^{- \lambda_*}		\qquad
		\text{ for all } (x, \tau) \in \{ f \le \Upsilon \} \times [ \tau_0, \tau_1 ],$$
	it then follows that 
	\begin{align*}
		| h |_{\ol{g}} (x, \tau) &\lesssim_{n, M, \ol{g}, f, \lambda_* }  e^{\lambda_* \tau} f^{- \lambda_* } \text{ and} \\
		| h |_{\ol{g}} (x, \tau) &\le \delta 
	\end{align*}
	throughout $M \times [ \tau_0, \tau_1]$ so long as $\tau_0 \gg 1$ is sufficiently large depending on $n, M, \ol{g}, f, \lambda_*, \delta$.
	
	Now, we use the interior estimates to improve this $C^0$ bound to a $C^2$ bound.
	Since $\lambda_* < \min \{  \lambda_K, 0 \} = \min_{j \le K} \min \{  \lambda_j, 0 \}$,
	there exists some positive $\delta' = \delta' (n, M, \ol{g}, f, \lambda_* ) > 0$ such that 
	$\max \{ -\lambda_j, 0 \} + \delta' < |\lambda_*|$ for all $j \le K$.
	The initial data thus satisfies the estimate
	\begin{align*}
		& \quad \sum_{l = 0}^{3} | \ol{\nabla}^l h |_{\ol{g}} (x, \tau_0) 	\\
		&= \sum_{l = 0}^{3} \left| \ol{\nabla}^l 
		\left( \eta_{\gamma_0}  \sum_{j = 1}^{K} p_j h_j   \right)
		\right|_{\ol{g}} 	\\
		&\lesssim \left( \sum_{l = 0}^3 \left| \ol{\nabla}^l \eta_{\gamma_0} \right| \right)
		\left( \sum_{l = 0}^3  \sum_{j = 1}^{K } |p_j| | \ol{\nabla}^l h_j |   \right)	\\
		&\lesssim_{n, M, \ol{g},f, \lambda_*}  \left( \sum_{l = 0}^3 \left| \ol{\nabla}^l \eta_{\gamma_0} \right| \right)
		\left( \sum_{j = 1}^{K} |p_j| f^{\max \{ - \lambda_j, 0\} + \delta'}   \right) 	
		&& (\text{Proposition \ref{Prop Eigmode Derivative Growth}})		\\
		&\lesssim_{n, M, \ol{g},f, \lambda_*}  \left( \sum_{l = 0}^3 \left| \ol{\nabla}^l \eta_{\gamma_0} \right| \right)
		  \ol{p}  e^{\lambda_* \tau_0} f^{| \lambda_* |}
		%&& (\lambda_* < \min \{ 0, \lambda_j \} \text{ for all } j \le K)		\\
		&& (\text{choice of $\delta'$})	\\
		&\lesssim_{n, M, \ol{g}, f}  
		\ol{p} e^{\lambda_* \tau_0} f^{| \lambda_* |}	\\
		&\le  e^{\lambda_* \tau_0} f^{| \lambda_* |}.
	\end{align*}
	Thus, there exists some $0 < \gamma_1 \ll 1$ sufficiently small depending on $n, M, \ol{g}, f, \lambda_*$ so that Lemma \ref{Lem Int Est Intermediate Scale} applies with $\Gamma = \frac{1}{4} \Upsilon$ and $\gamma = 4 \gamma_1$ to show that
		$$| \ol{\nabla} h |_{\ol{g}} + | \ol{\nabla}^2 h |_{\ol{g}} \lesssim_{n, M, \ol{g}, f, \lambda_*} 
		 e^{\lambda_* \tau} f^{|\lambda_* |} 
		\qquad \text{ on } \{ \Upsilon \le f \le \gamma_1 e^{\tau} \} .$$
	
	The above estimate on the initial data also shows
	\begin{align*}
		\sum_{l = 0}^{3} | \ol{\nabla}^l h |_{\ol{g}} (x, \tau_0) 	
		\lesssim&_{n, M, \ol{g}, f, \lambda_*} 
		\left( \sum_{l = 0}^3 \left| \ol{\nabla}^l \eta_{\gamma_0} \right| \right)
		 \ol{p}  e^{\lambda_* \tau_0} f^{| \lambda_* |}	\\
		\lesssim&_{n, M, \ol{g}, f}  \ol{p}  \gamma_0^{| \lambda_* |}	\\
		\le{}&  \gamma_0^{| \lambda_* |} .	
	\end{align*}
	Hence, if $\gamma_0 \ll 1$ is sufficiently small depending on $n, M , \ol{g}, f, \lambda_*, \delta$ then
		$$ \sum_{l = 0}^{3} | \ol{\nabla}^l h |_{\ol{g}} (x, \tau_0) 	\le \delta.
		\qquad \text{for all } (x, \tau) \in M \times \{ \tau_0 \}.$$
	Moreover,
	\begin{gather*}
		| \cl{E}_2 | + | \ol{\nabla} \cl{E}_2 |
		\le \frac{C_0}{\Gamma_0} e^{- \tau_0} + \frac{C_0}{ \Gamma_0^{3/2} } e^{- \frac{3}{2} \tau_0} 
		\le \delta
	\end{gather*}
	if $\tau_0 \gg 1$ is sufficiently large depending on $n, M, \ol{g}, f, \delta$.
	Therefore, Lemma \ref{Lem Int Est Large Scale} applies to show that 
		$$| \ol{\nabla} h | + | \ol{\nabla}^2 h | \lesssim_{n, M, \ol{g}, f} \delta
		\qquad \text{ on } \{ \gamma_1 e^\tau \le f \}$$
	if $0 < \delta \le \epsilon \ll 1$ is sufficiently small depending on $n, M, \ol{g}, f$
	and $\tau_0 \gg 1$ is sufficiently large depending on $n, M, \ol{g} , f, \lambda_*, \delta$.
	
	Together with the local $C^2$ estimate
		$$\sum_{l = 0}^2 | \ol{\nabla}^l h | \lesssim_{n, M, \ol{g}, f, \lambda_*}  e^{\lambda_* \tau} f^{| \lambda_* |} 
		\qquad \text{ on } \{ f \le \Upsilon \},$$
	it therefore follows that
	\begin{align*}
		\sum_{l =0}^2 | \ol{\nabla}^l h |_{\ol{g}} (x, \tau) &\lesssim_{n, M, \ol{g}, f, \lambda_* } 
		 e^{\lambda_* \tau} f^{| \lambda_* |} \text{ and} \\
		\sum_{l = 0}^2 | \ol{\nabla}^l h |_{\ol{g}} (x, \tau) &\lesssim_{n, M, \ol{g}, f}  \delta 
	\end{align*}
	throughout $M \times [ \tau_0, \tau_1 ]$ if $\tau_0 \gg 1$ is sufficiently large depending on $n, M, \ol{g}, f, \lambda_* , \delta$.
	This completes the proof.
\end{proof}

%%%%%%%%%%%%%%%%%%%%%%%%%%%%%%%%%%%%%%%%%%%%%%%%%%%%%%%%
\section{Behavior of $L^2_f(M)$ Quantities} \label{Sect Behavior of L^2_f}
%%%%%%%%%%%%%%%%%%%%%%%%%%%%%%%%%%%%%%%%%%%%%%%%%%%%%%%%

In this section, we explore the dynamics of the $L^2_f(M)$ estimates included in Definition \ref{Defn Box} of $\cl{B}$.

\begin{lem}[$L^2_f$ Estimates for the Error Terms $\cl{E}_1, \cl{E}_2$] \label{Lem L^2_f Ests for Errors}
	Assume
		$$|h|_{\ol{g}} + | \ol{\nabla} h |_{\ol{g}} + | \ol{\nabla}^2 h |_{\ol{g}} \le W f^{|\lambda_* | } e^{\lambda_* \tau} \qquad (\text{for some } W \ge 1),$$
		$$|h|_{\ol{g}} + | \ol{\nabla} h |_{\ol{g}} + | \ol{\nabla}^2 h |_{\ol{g}} \le \epsilon ,$$
		$$| \cl{E}_2 |_{\ol{g} } \le C_0 e^{-\tau} \qquad \text{ and } \qquad 
		\supp \cl{E}_2 \subset \{ f \ge e^{ \tau} \}$$
	throughout $ M \times [ \tau_0, \tau_1]$.
	
	If $0 < \epsilon \ll 1$ is sufficiently small (depending on $n$)
	and $\tau_0 \gg 1$ is sufficiently large (depending on $n, M, \ol{g},f, \lambda_*, C_0$)
	then there exists a constant $C_1$ (depending on $n, M, \ol{g}, f, \lambda_*$) such that
		$$\| \cl{E}_1(\tau) \|_{L^2_f} + \| \cl{E}_2(\tau) \|_{L^2_f} \le C_1 W^2 e^{2 \lambda_* \tau} 
		\qquad \text{for all } \tau \in [ \tau_0, \tau_1].$$
\end{lem}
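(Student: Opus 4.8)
The plan is to estimate the two error terms separately, using the pointwise control on $h$ together with the weighted-volume and moment estimates from Section~\ref{Sect Prelims on Shrinkers}. For $\cl{E}_2$, the key observation is that it is supported on $\{ f \ge e^\tau \}$, so the integral $\int_M |\cl{E}_2|^2_{\ol{g}} e^{-f} dV_{\ol{g}}$ is actually an integral over the far region where $e^{-f}$ is exponentially small. Using $|\cl{E}_2|_{\ol{g}} \le C_0 e^{-\tau}$ and proposition \ref{Prop Weighted Vol Decay} with $f_0 = e^\tau$, I would bound
\begin{align*}
	\| \cl{E}_2(\tau) \|_{L^2_f}^2
	&\le C_0^2 e^{-2\tau} \int_{ \{ f \ge e^\tau \} } e^{-f} dV_{\ol{g}}	\\
	&\le C_0^2 e^{-2\tau} \cdot C e^{\frac{n-2}{2} \tau} e^{- \frac{1}{8} e^\tau}	\\
	&\lesssim_{n, M, \ol{g}, f} C_0^2 e^{-2\tau} 	,
\end{align*}
which, since $\tau \ge \tau_0 \gg 1$, is far smaller than $e^{4\lambda_* \tau}$ (recall $\lambda_* < 0$); hence $\| \cl{E}_2(\tau) \|_{L^2_f} \le e^{2\lambda_* \tau}$ once $\tau_0$ is large depending on $n, M, \ol{g}, f, \lambda_*, C_0$.

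For $\cl{E}_1$, I would use the explicit expression \eqref{clE_1 Eqn}: schematically $\cl{E}_1 = \ol{Rm} * (\hat h + \tl h) * (\text{stuff}) + g^{-1} g^{-1} (\ol{\nabla} h * \ol{\nabla} h) - \hat h * \ol{\nabla}^2 h$, where by \eqref{hat h, tilde h Ests} one has $|\hat h|_{\ol{g}} \lesssim_n |h|_{\ol{g}}$ and $|\tl h|_{\ol{g}} \lesssim_n |h|^2_{\ol{g}}$ (valid since $\epsilon \ll 1$ makes $g$ and $\ol{g}$ comparable). Thus every term of $\cl{E}_1$ is pointwise bounded, using boundedness of $\ol{Rm}$, by a sum of terms of the form $|h|_{\ol{g}}(|h|_{\ol{g}} + |\ol{\nabla} h|_{\ol{g}} + |\ol{\nabla}^2 h|_{\ol{g}})$, hence by $C_n (W f^{|\lambda_*|} e^{\lambda_* \tau})^2 = C_n W^2 f^{2|\lambda_*|} e^{2\lambda_* \tau}$. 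Then
\begin{align*}
	\| \cl{E}_1(\tau) \|_{L^2_f}^2
	&\le C_n W^4 e^{4\lambda_* \tau} \int_M f^{4 |\lambda_*|} e^{-f} dV_{\ol{g}}	\\
	&\lesssim_{n, M, \ol{g}, f, \lambda_*} W^4 e^{4 \lambda_* \tau}	,
\end{align*}
where the integral is finite by the moment estimate (proposition \ref{Prop Moment Ests}). Taking square roots gives $\| \cl{E}_1(\tau) \|_{L^2_f} \lesssim_{n,M,\ol{g},f,\lambda_*} W^2 e^{2\lambda_* \tau}$. Combining the two bounds and absorbing constants yields the claimed estimate with $C_1 = C_1(n,M,\ol{g},f,\lambda_*)$, after noting $W \ge 1$ so the $\cl{E}_2$ contribution $e^{2\lambda_*\tau} \le W^2 e^{2\lambda_*\tau}$.

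The only mildly delicate point — the ``main obstacle,'' such as it is — is making sure the pointwise bound on $\cl{E}_1$ really only involves products of at least two factors each controlled by $W f^{|\lambda_*|} e^{\lambda_* \tau}$ (so that one gets $W^2 e^{2\lambda_* \tau}$ and a fixed power $f^{2|\lambda_*|}$ that is moment-integrable against $e^{-f}$), rather than, say, a single undifferentiated factor times a bounded coefficient. Inspecting \eqref{clE_1 Eqn} term by term: the curvature terms carry one factor of $h$ (via $\tl h$ or $\hat h$) and one more factor of $h$ explicitly, the gradient-squared term carries $|\ol{\nabla} h|^2$, and the $\hat h * \ol{\nabla}^2 h$ term carries $|h| |\ol{\nabla}^2 h|$; so every term is genuinely quadratic in the $C^2$-norm of $h$, and the argument closes. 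One should also double-check the constant $C_0$ appearing here is consistent with (or can be taken to dominate) the one from lemma \ref{Lem difference from RF}, which is the source of the hypothesis $|\cl{E}_2|_{\ol{g}} \le C_0 e^{-\tau}$.
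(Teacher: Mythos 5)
Your argument is essentially the paper's: the $\cl{E}_2$ bound via its support in $\{f\ge e^{\tau}\}$ and the weighted volume decay (proposition \ref{Prop Weighted Vol Decay}), and the $\cl{E}_1$ bound via the pointwise quadratic estimate $|\cl{E}_1|_{\ol{g}}\lesssim_{n,M,\ol{g},f}(|h|+|\ol{\nabla}h|+|\ol{\nabla}^2h|)^2$ (using $\epsilon\ll1$ to control $\hat h,\tl h$) combined with the moment estimate (proposition \ref{Prop Moment Ests}). The only structural difference is that the paper splits the $\cl{E}_1$ integral into $\{f\le e^{\tau}\}$ (where it uses the $Wf^{|\lambda_*|}e^{\lambda_*\tau}$ bound) and $\{f>e^{\tau}\}$ (where it uses the $\epsilon$ bound plus volume decay), whereas you apply the $Wf^{|\lambda_*|}e^{\lambda_*\tau}$ bound globally; since that hypothesis holds on all of $M\times[\tau_0,\tau_1]$ and $f^{4|\lambda_*|}e^{-f}$ is integrable, your single-region version is legitimate and mildly simpler.

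One slip in the $\cl{E}_2$ step: after correctly reaching $\|\cl{E}_2\|_{L^2_f}^2\le C\,C_0^2\,e^{-2\tau}e^{\frac{n-2}{2}\tau}e^{-\frac{1}{8}e^{\tau}}$, you simplify to $\lesssim C_0^2e^{-2\tau}$ and then compare this with $e^{4\lambda_*\tau}$. That last comparison is false in general: if $|\lambda_*|\ge\frac12$ then $e^{4\lambda_*\tau}\le e^{-2\tau}$, so $C_0^2e^{-2\tau}$ does not become smaller than $e^{4\lambda_*\tau}$ no matter how large $\tau_0$ is. The fix is simply not to discard the factor $e^{-\frac18 e^{\tau}}$: the double-exponential decay dominates any fixed exponential rate, so the line before your simplification is $\le e^{4\lambda_*\tau}$ for $\tau\ge\tau_0$ with $\tau_0$ large depending on $n,M,\ol{g},f,\lambda_*,C_0$, exactly as in the paper. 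With that correction the proof closes as claimed.
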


\begin{proof}
	Recall that $\cl{E}_1$ is defined in \eqref{clE_1 Eqn} as 
	\begin{multline} \tag{\ref{clE_1 Eqn}}
	 \mathcal{E}_1(\tau)  \doteqdot
		 \ol{R} \indices{_{ja}^p_b} \left( \ol{g}_{ip} \tl{h}^{ab}  + \hat{h}^{ab} h_{ip} \right) 
		+ \ol{R} \indices{_{ia}^p_b} \left( \ol{g}_{jp} \tl{h}^{ab}  + \hat{h}^{ab} h_{jp} \right) 	\\		
		  + g^{ab} g^{pq} ( \ol{\nabla} h * \ol{\nabla} h )	 -\hat{h}^{ab} \ol{\nabla}_a \ol{\nabla}_b h		.
	\end{multline}
	If $0 < \epsilon \ll 1$ is sufficiently small depending on $n$, then
		$$| \cl{E}_1 |_{\ol{g}} \lesssim_n ( 1 + | \ol{Rm} |_{\ol{g}} ) \left( |h|_{\ol{g}} + | \ol{\nabla} h |_{\ol{g}} + | \ol{\nabla}^2 h |_{\ol{g}} \right)^2
		\lesssim_{n, M, \ol{g}, f} \left( |h|_{\ol{g}} + | \ol{\nabla} h |_{\ol{g}} + | \ol{\nabla}^2 h |_{\ol{g}} \right)^2$$
	throughout $M \times [\tau_0, \tau_1]$.
	Therefore,
	\begin{gather*} \begin{aligned}
		& \qquad \| \cl{E}_1 \|_{L^2_f}^2 	\\
		& = \int_M | \cl{E}_1 |^2_{\ol{g} } e^{-f} dV_{\ol{g}} 	\\
		& = \int_{ \{ f \le  e^{\tau} \} }  | \cl{E}_1 |^2_{\ol{g} } e^{-f} dV_{\ol{g}} 
		+ \int_{ \{ f >  e^\tau \} }   | \cl{E}_1 |^2_{\ol{g} } e^{-f} dV_{\ol{g}}	\\
		& \lesssim_{n, M, \ol{g}, f} \int_{ \{ f \le  e^\tau \} } W^4 f^{ 4 | \lambda_* |} e^{ 4 \lambda_* \tau} e^{-f} dV_{\ol{g}}	\\
		& \qquad + \int_{ \{ f >  e^\tau \} }   \epsilon^4 e^{-f} dV_{\ol{g}}	\\
		%&& (\epsilon \ll 1)  \\
		& \le W^4 e^{4 \lambda_* \tau}  \int_M  f^{ 4 | \lambda_* |}  e^{-f} dV_{\ol{g}}
		+ \epsilon^4 \int_{ \{ f >  e^\tau \} }  e^{-f} dV_{\ol{g}}		\\
	\end{aligned} \end{gather*}
	The moment estimates for $f$ in Proposition \ref{Prop Moment Ests} imply that 
		$$\int_M f^{4 | \lambda_* | } e^{-f} dV_{\ol{g}} \le C(n, M, \ol{g}, f, \lambda_* ).$$
	Additionally, the weighted volume decay in Proposition \ref{Prop Weighted Vol Decay} implies
		$$\int_{ \{ f >  e^\tau \} }  e^{-f} dV_{\ol{g}} 
		\lesssim C(n, M, \ol{g}, f) (  e^\tau )^{\frac{n}{2} -1}  e^{-  \frac{1}{8} (  e^\tau) }
		\le  e^{4 \lambda_*  \tau}$$
	if $\tau \ge \tau_0 \gg 1$ is sufficiently large depending on $n, M, \ol{g}, f, \lambda_*$.
	The estimate for $\cl{E}_1$ then follows.
	
	We now estimate $\cl{E}_2$.
	\begin{gather*} \begin{aligned}
		 \| \cl{E}_2 \|_{L^2_f}^2	
		& = \int_M | \cl{E}_2 |_{\ol{g}}^2 e^{-f} dV_{\ol{g}}		\\
		& \le C_0^2 \int_{\supp \cl{E}_2 } e^{-2 \tau} e^{-f} dV_{\ol{g}}	\\	
		%&& (\text{pointwise estimate for $\cl{E}_2$})			\\
		& \le C_0^2 e^{-2 \tau} \int_{ \{f \ge  e^\tau  \}} e^{-f} dV_{\ol{g}}
		&& (\supp \cl{E}_2 \subset \{ f \ge  e^\tau \} ) 	\\
		& \lesssim_{n, M, g, f} C_0^2 e^{-2 \tau} (  e^\tau)^{\frac{n}{2}-1} e^{- \frac{1}{8} (  e^\tau ) } 
		&& ( \text{Proposition \ref{Prop Weighted Vol Decay}} )	\\
		& \le 	W^4 e^{4 \lambda_* \tau}		
		&& ( \tau \ge \tau_0 \gg 1) .
	\end{aligned} \end{gather*}
\end{proof}

\begin{lem}[Evolutions of Projections] \label{Lem Evols of Projs}
	Assume $h$ is a smooth section of $\pi^* Sym^2 T^*M \to M \times [\tau_0, \tau_1]$ such that
		$$h(\cdot , \tau) \text{ is compactly supported in } M  \text{ for all } \tau \in [ \tau_0, \tau_1],$$
		$$\partial_\tau h = \ol{\Delta}_f h + 2 \ol{Rm} [ h] + \cl{E}_1 + \cl{E}_2 \qquad \text{ on } M \times ( \tau_0, \tau_1),$$
		$$\text{and } \| \cl{E}_1+ \cl{E}_2 \|_{L^2_f}  \le C e^{2 \lambda_* \tau}.$$
	Then the projections $ \pi_u h = h_u, \pi_s = h_s$ as in Definition \ref{Defn Projs} satisfy
	\begin{gather*} \begin{aligned}
		\frac{d}{d \tau} \| e^{- \lambda_* \tau} h_u \|_{L^2_f} 
		& \ge (\lambda_K - \lambda_*) \| e^{- \lambda_* \tau} h_u \|_{L^2_f}
		- C e^{\lambda_* \tau_0} 
		&&\text{and} \\
		\frac{d}{d \tau} \| e^{- \lambda_* \tau} h_s \|_{L^2_f} 
		& \le (\lambda_{K+1} - \lambda_* ) \| e^{- \lambda_* \tau} h_s \|_{L^2_f}
		+ C e^{\lambda_* \tau_0} .
	\end{aligned} \end{gather*}
\end{lem}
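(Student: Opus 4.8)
The plan is to diagonalize the linear part of the evolution in the $L^2_f$-orthonormal eigenbasis $\{h_j\}_{j\in\mathbb{N}}$ of $\ol{\Delta}_f + 2\ol{Rm}$ from theorem \ref{Thm Op Spectrum} and track the Fourier coefficients $a_j(\tau) \doteqdot (h(\tau), h_j)_{L^2_f}$. Since $h(\cdot,\tau)$ is smooth with compact support it lies in $D(\ol{\Delta}_f)$, so we may differentiate under the integral; using $\partial_\tau h = \ol{\Delta}_f h + 2\ol{Rm}[h] + \cl{E}_1 + \cl{E}_2$ and the self-adjointness of $\ol{\Delta}_f + 2\ol{Rm}$ we obtain
\[ \frac{d}{d\tau} a_j = \lambda_j a_j + E_j, \qquad E_j(\tau) \doteqdot (\cl{E}_1(\tau) + \cl{E}_2(\tau), h_j)_{L^2_f}. \]
The hypothesis $\| \cl{E}_1 + \cl{E}_2 \|_{L^2_f} \le C e^{2 \lambda_* \tau}$ together with Bessel's inequality bounds the error contribution in each block: $\sum_{j=1}^K E_j^2 \le \| \cl{E}_1+\cl{E}_2 \|_{L^2_f}^2$ and $\sum_{j=K+1}^\infty E_j^2 \le \| \cl{E}_1+\cl{E}_2 \|_{L^2_f}^2$.

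Next I would differentiate the squared norms. The quantity $\|h_u\|_{L^2_f}^2 = \sum_{j=1}^K a_j^2$ is a finite sum, and for $\|h_s\|_{L^2_f}^2$ it is cleanest to write $\|h_s\|_{L^2_f}^2 = \|h\|_{L^2_f}^2 - \|h_u\|_{L^2_f}^2$ (each term manifestly smooth in $\tau$) so as to avoid differentiating an infinite series directly. A short computation using $\frac{d}{d\tau} a_j = \lambda_j a_j + E_j$ and Parseval gives
\begin{gather*}
	\frac{d}{d\tau}\|h_u\|_{L^2_f}^2 = 2\sum_{j=1}^K \lambda_j a_j^2 + 2\sum_{j=1}^K a_j E_j, \\
	\frac{d}{d\tau}\|h_s\|_{L^2_f}^2 = 2\sum_{j=K+1}^\infty \lambda_j a_j^2 + 2\sum_{j=K+1}^\infty a_j E_j .
\end{gather*}
Since $\lambda_j \ge \lambda_K$ for $j \le K$ and $\lambda_j \le \lambda_{K+1}$ for $j \ge K+1$ (the eigenvalues are nonincreasing), and since $\big|\sum a_j E_j\big| \le \|h_u\|_{L^2_f}\| \cl{E}_1+\cl{E}_2 \|_{L^2_f}$ resp.\ $\|h_s\|_{L^2_f}\| \cl{E}_1+\cl{E}_2 \|_{L^2_f}$ by Cauchy--Schwarz and Bessel, we deduce
\begin{gather*}
	\frac{d}{d\tau}\|h_u\|_{L^2_f}^2 \ge 2\lambda_K \|h_u\|_{L^2_f}^2 - 2\|h_u\|_{L^2_f}\| \cl{E}_1+\cl{E}_2 \|_{L^2_f}, \\
	\frac{d}{d\tau}\|h_s\|_{L^2_f}^2 \le 2\lambda_{K+1} \|h_s\|_{L^2_f}^2 + 2\|h_s\|_{L^2_f}\| \cl{E}_1+\cl{E}_2 \|_{L^2_f}.
\end{gather*}

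Dividing by $2\|h_u\|_{L^2_f}$ (resp.\ $2\|h_s\|_{L^2_f}$) converts these into differential inequalities for the norms themselves, and multiplying through by $e^{-\lambda_*\tau}$ yields
\[ \frac{d}{d\tau}\|e^{-\lambda_*\tau}h_u\|_{L^2_f} \ge (\lambda_K - \lambda_*)\|e^{-\lambda_*\tau}h_u\|_{L^2_f} - e^{-\lambda_*\tau}\| \cl{E}_1+\cl{E}_2 \|_{L^2_f}, \]
together with the analogous upper bound for $h_s$ with $\lambda_{K+1}$ in place of $\lambda_K$. Finally $e^{-\lambda_*\tau}\| \cl{E}_1+\cl{E}_2 \|_{L^2_f} \le C e^{\lambda_*\tau} \le C e^{\lambda_*\tau_0}$ because $\lambda_* < 0$ and $\tau \ge \tau_0$, which gives exactly the claimed inequalities. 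The only point requiring care is that $\tau \mapsto \|h_u\|_{L^2_f}$ and $\tau \mapsto \|h_s\|_{L^2_f}$ need not be classically differentiable at their zeros; since each squared norm is smooth and nonnegative, its square root is locally Lipschitz, so the inequalities hold in the almost-everywhere (Dini) sense, which is all the Wa{\.z}ewski retraction argument of section \ref{Sect Proof of Main Thm} will use --- alternatively one works with $\sqrt{\|h_u\|_{L^2_f}^2 + \delta^2}$ and lets $\delta \searrow 0$. This regularity bookkeeping, rather than any estimate, is the main (minor) obstacle.
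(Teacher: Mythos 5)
Your proof is correct and follows essentially the same route as the paper: the paper works with the projections $h_u, h_s$ directly, using that they commute with $\cl{A} = \ol{\Delta}_f + 2\ol{Rm}$ and that $(\cl{A}h_u,h_u)_{L^2_f}\ge\lambda_K\|h_u\|^2_{L^2_f}$ and $(\cl{A}h_s,h_s)_{L^2_f}\le\lambda_{K+1}\|h_s\|^2_{L^2_f}$, which is exactly the spectral estimate you recover by expanding in the eigenbasis, followed by the same Cauchy--Schwarz bound on the error and the observation $e^{\lambda_*\tau}\le e^{\lambda_*\tau_0}$. Your closing remarks on differentiability of the norms at their zeros (and on avoiding termwise differentiation of the infinite series) address a bookkeeping point the paper passes over silently and do not change the argument.
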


\begin{proof}
	To simplify notation, write
	\begin{gather*}	
		\partial_\tau h = \ol{\Delta}_f h + 2 \ol{Rm} [ h] + \cl{E}_1 + \cl{E}_2 = \cl{A} h + \cl{E}	\\
		\text{where } 
		\cl{A} = \ol{\Delta}_f + 2 \ol{Rm}
		\quad \text{ and } \quad
		\cl{E} = \cl{E}_1 + \cl{E}_2.
	\end{gather*}
	Note that the projections $\pi_u, \pi_s$ commute with $\cl{A}$ when applied to smooth compactly supported tensors.
	
	It follows that
%	(Estimates for $\| h_u \|_{L^2_f}, \| h_s \|_{L^2_f}$)
	\begin{gather*} \begin{aligned}
		& \qquad \frac{d}{d\tau} \left(e^{-2 \lambda_* \tau} \| h_u \|^2_{L^2_f}\right)	\\
		& = -2 \lambda_* e^{-2 \lambda_* \tau} \| h_u \|^2_{L^2_f} 
		+ 2 e^{- 2 \lambda_* \tau} \| h_u \|_{L^2_f} ( \cl{A} h_u , h_u )_{L^2_f}
		+ 2 e^{- 2 \lambda_* \tau} ( \cl{E} , h_u)_{L^2_f}	\\
		& \ge -2 \lambda_* e^{-2 \lambda_* \tau} \| h_u \|^2_{L^2_f} 
		+ 2 e^{- 2 \lambda_* \tau} \| h_u \|^2_{L^2_f} \lambda_K \\
	%	&& ( \cl{A} \ge \inf \lambda_k \text{ on } \pi_u( L^2_f) )	\\
		& \qquad 
		- 2 e^{- 2 \lambda_* \tau} \| \cl{E} \|_{L^2_f}   \|  h_u \|_{L^2_f}	\\
		& =  2 e^{- 2 \lambda_* \tau} \| h_u \|^2_{L^2_f} (\lambda_K - \lambda_* ) 
		- 2 e^{- 2 \lambda_* \tau} \| \cl{E} \|_{L^2_f}   \|  h_u \|_{L^2_f}	\\
	\end{aligned} \end{gather*}
	Note that 
	$\lambda_K - \lambda_*   > 0$
	by assumption \ref{Assume Shrinker}
	and
		$$\frac{ d}{d\tau} e^{-2 \lambda_* \tau} \| h_u \|^2_{L^2_f} 
		= 2 e^{- \lambda_* \tau} \|  h_u \|_{L^2_f} \frac{d}{d \tau} \| e^{- \lambda_* \tau} h_u \|_{L^2_f} .$$
	Therefore,
	\begin{gather*} \begin{aligned}
		\frac{d}{d \tau} \| e^{ - \lambda_* \tau} h_u \|_{L^2_f} 
		& \ge  (\lambda_K - \lambda_*)    \| e^{- \lambda_* \tau} h_u \|_{L^2_f} 
		 - e^{ - \lambda_* \tau}  \| \cl{E} \|_{L^2_f}	\\
		 & \ge (\lambda_K - \lambda_*)   \| e^{- \lambda_* \tau} h_u \|_{L^2_f} 
		 - C e^{  \lambda_* \tau}	\\
		 %&& (\text{$\cl{E}$ estimates})\\		
		 & \ge (\lambda_K - \lambda_*)   \| e^{- \lambda_* \tau} h_u \|_{L^2_f} 
		 - C e^{  \lambda_* \tau_0}.	 
	\end{aligned} \end{gather*}

	A similar computation shows that 
	\begin{align*}
		\frac{ d}{d\tau} \| e^{- \lambda_* \tau} h_s \|_{L^2_f}
		 &\le 
		(\lambda_{K+1} - \lambda_* )  \| e^{- \lambda_* \tau} h_s \|_{L^2_f} 
		+ e^{ - \lambda_* \tau} \| \cl{E} \|_{L^2_f} 	\\
		&\le 
		(\lambda_{K+1} - \lambda_*)  \| e^{- \lambda_* \tau} h_s \|_{L^2_f} 
		+ C e^{  \lambda_* \tau_0} .
	\end{align*}
\end{proof}

\begin{lem} \label{Lem Immediately Exits}
	Let $\tau_0 <  \tau_1^+$.
	Assume $h$ is a smooth section of $\pi^* Sym^2 T^*M \to M \times [\tau_0, \tau_1^+)$ such that
	\begin{gather*}
		h(\tau) \doteqdot h(\cdot, \tau) \text{ is a compactly supported in } M \text{ for all }  \tau \in [\tau_0, \tau_1^+ ),	\\
		\partial_\tau h = \ol{\Delta}_f h + 2 \ol{Rm} [ h] + \cl{E}_1 + \cl{E}_2 \qquad \text{ on } M \times (\tau_0, \tau_1^+),	\\
		\text{and } \| \cl{E}_1+ \cl{E}_2 \|_{L^2_f}  \le C e^{2 \lambda_* \tau}.
	\end{gather*}
	For any $\mu_u, \mu_s > 0$,
	if $\tau_0 \gg 1$ is sufficiently large (depending on $n, M, \ol{g}, f, \lambda_*, C, \mu_u, \mu_s$)
	then the following two implications hold:
	\begin{enumerate}
	\item
	(Immediately Exits on Unstable Side)\\
	For all $\tau_1 \in [\tau_0, \tau_1^+)$,
		$$\| h_u(\tau_1)  \|_{L^2_f} = \mu_u e^{\lambda_* \tau_1} \implies 
		\| h_u( \tau ) \|_{L^2_f} > \mu_u e^{\lambda_* \tau}$$
		$$ \text{ for all $\tau$ in a neighborhood of $\tau_1$ with $\tau > \tau_1$},$$ 
	and	
	\item
	(Can't Exit through Stable Side)
		$$\| h_s ( \tau_0) \|_{L^2_f} \le \mu_s e^{\lambda_* \tau_0} 
		\implies
		\| h_s(\tau) \|_{L^2_f} \le \mu_s e^{\lambda_* \tau} 		\qquad \text{for all } \tau \in [\tau_0, \tau_1^+).$$
	\end{enumerate}
\end{lem}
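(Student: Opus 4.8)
The plan is to reduce both implications to the pair of differential inequalities furnished by Lemma \ref{Lem Evols of Projs}, whose hypotheses ($h$ smooth, $h(\cdot,\tau)$ compactly supported, and $\| \cl{E}_1 + \cl{E}_2 \|_{L^2_f} \le C e^{2\lambda_* \tau}$) are exactly those assumed here. Writing $u(\tau) \doteqdot \| e^{-\lambda_* \tau} h_u(\tau) \|_{L^2_f}$ and $s(\tau) \doteqdot \| e^{-\lambda_* \tau} h_s(\tau) \|_{L^2_f}$, that lemma gives
\begin{equation*}
	\frac{d}{d\tau} u \ge (\lambda_K - \lambda_*)\, u - C e^{\lambda_* \tau_0}
	\qquad \text{and} \qquad
	\frac{d}{d\tau} s \le (\lambda_{K+1} - \lambda_*)\, s + C e^{\lambda_* \tau_0} .
\end{equation*}
The first preliminary point I would record is a regularity remark: since $h$ is smooth and compactly supported in space, the coefficients $\tau \mapsto (h(\tau), h_j)_{L^2_f}$ are smooth, hence $u^2$ and $s^2$ are smooth in $\tau$ and so $u$ (respectively $s$) is smooth on any subinterval on which it is strictly positive; this lets one use the displayed inequalities pointwise wherever the relevant quantity is nonzero. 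The second preliminary point is a choice of $\tau_0$: because $\lambda_K - \lambda_* > 0 > \lambda_{K+1} - \lambda_*$ by Assumption \ref{Assume Shrinker}, and because $\lambda_* < 0$ forces $C e^{\lambda_* \tau_0} \to 0$ as $\tau_0 \to \infty$, I would take $\tau_0$ large enough (depending on $n, M, \ol{g}, f, \lambda_*, C, \mu_u, \mu_s$) that $C e^{\lambda_* \tau_0} < (\lambda_K - \lambda_*)\mu_u$ and $C e^{\lambda_* \tau_0} < (\lambda_* - \lambda_{K+1})\mu_s$.

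For implication (1), suppose $u(\tau_1) = \mu_u$. Then $u > 0$ near $\tau_1$, so $u$ is differentiable there, and the $u$-inequality together with the choice of $\tau_0$ gives $u'(\tau_1) \ge (\lambda_K - \lambda_*)\mu_u - C e^{\lambda_* \tau_0} > 0$. Hence $u$ is strictly increasing at $\tau_1$, so $u(\tau) > u(\tau_1) = \mu_u$ — equivalently $\| h_u(\tau) \|_{L^2_f} > \mu_u e^{\lambda_* \tau}$ — for all $\tau$ in a right-neighborhood of $\tau_1$, which is the claim.

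For implication (2), I would run a first-exit-time argument. Assume $s(\tau_0) \le \mu_s$ and, toward a contradiction, that $s(\tau_2) > \mu_s$ for some $\tau_2 \in (\tau_0, \tau_1^+)$. Let $\tau_* \doteqdot \sup\{ \tau \in [\tau_0, \tau_2] : s(\tau) \le \mu_s \}$; continuity forces $\tau_* \in [\tau_0, \tau_2)$ with $s(\tau_*) = \mu_s$ and $s > \mu_s > 0$ on $(\tau_*, \tau_2]$. On that interval $s$ is smooth and the $s$-inequality with the choice of $\tau_0$ gives $s' \le (\lambda_{K+1} - \lambda_*) s + C e^{\lambda_* \tau_0} < (\lambda_{K+1} - \lambda_*)\mu_s + (\lambda_* - \lambda_{K+1})\mu_s = 0$, so $s$ is strictly decreasing on $(\tau_*, \tau_2]$ and $s(\tau_2) < s(\tau_*) = \mu_s$, a contradiction. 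Hence $s(\tau) \le \mu_s$, i.e. $\| h_s(\tau) \|_{L^2_f} \le \mu_s e^{\lambda_* \tau}$, for all $\tau \in [\tau_0, \tau_1^+)$.

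The argument is essentially routine ODE comparison once Lemma \ref{Lem Evols of Projs} is available; the only genuinely delicate point I anticipate is the regularity of $\tau \mapsto \| h_u(\tau) \|_{L^2_f}$ and $\tau \mapsto \| h_s(\tau) \|_{L^2_f}$ at instants where they might vanish, which I would sidestep exactly as above by passing to the smooth squared norms and observing that at the instants relevant to each implication the norm in question is bounded below by $\mu_u e^{\lambda_* \tau_1} > 0$, respectively $\mu_s e^{\lambda_* \tau} > 0$.
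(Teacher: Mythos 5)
Your proposal is correct and follows essentially the same route as the paper: both reduce the two implications to the differential inequalities of Lemma \ref{Lem Evols of Projs}, use $\lambda_{K+1} < \lambda_* < \lambda_K$, and take $\tau_0$ large so that $C e^{\lambda_* \tau_0}$ is dominated by $(\lambda_K - \lambda_*)\mu_u$ and $(\lambda_* - \lambda_{K+1})\mu_s$. The only difference is that you spell out the first-exit-time and regularity details (differentiability of the norms where they are positive) that the paper leaves implicit, which is fine.
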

\begin{proof}
	(Immediately Exits on Unstable Side)	\\
	At a time $\tau_1 \ge \tau_0$ when
		$$\| h_u(  \tau_1) \|_{L^2_f} = \mu_u e^{\lambda_* \tau_1},$$
	Lemma \ref{Lem Evols of Projs} implies
	\begin{gather*} \begin{aligned}
		 \left. \frac{ d }{d\tau} \right|_{\tau = \tau_1} \| e^{- \lambda_* \tau} h_u \|_{L^2_f} 	
		 \ge{}& (\lambda_K - \lambda_* ) \| e^{- \lambda_* \tau_1} h_u \|_{L^2_f} - C  e^{ \lambda_* \tau_0} 	\\
		={}& ( \lambda_K - \lambda_* )  \mu_u - C e^{ \lambda_* \tau_0} 	\\
		>{}& 0	&& (\lambda_* < \lambda_K )
	\end{aligned} \end{gather*}
	if $\tau_0 \gg 1$ is sufficiently large depending on $n, M, \ol{g}, f, \lambda_*, C, \mu_u$. \\

	(Can't Exit through Stable Side)\\
	At a time $\tau_1 \ge \tau_0$ when 
		$$\| h_s(\tau_1) \| = \mu_s e^{\lambda_* \tau_1},$$
	Lemma \ref{Lem Evols of Projs} implies
	\begin{gather*} \begin{aligned}
		\left. \frac{ d }{d\tau} \right|_{\tau = \tau_1} \| e^{- \lambda_* \tau} h_s \|_{L^2_f} 	
		 \le{}& (\lambda_{K+1} - \lambda_* ) \| e^{ - \lambda_* \tau} h_s \|_{L^2_f} + C  e^{\lambda_* \tau_0} 	\\
		={}&  (\lambda_{K+1} - \lambda_* ) \mu_s + C  e^{\lambda_* \tau_0} 	\\
		<{}& 0	&& ( \lambda_{K+1} < \lambda_* )
	\end{aligned} \end{gather*}
	if $\tau_0 \gg 1$ is sufficiently large depending on $n, M, \ol{g}, f, \lambda_*, C, \mu_s$.
	The statement follows.
\end{proof}

%%%%%%%%%%%%%%%%%%%%%%%%%%%%%%%%%%%%%%%%%%%%%%%%%%%%%%%%%
\section{Proof of Theorem \ref{Main Thm}} \label{Sect Proof of Main Thm}
%%%%%%%%%%%%%%%%%%%%%%%%%%%%%%%%%%%%%%%%%%%%%%%%%%%%%%%%%

In this section, we combine the results of the prior sections to complete the proof of Theorem \ref{Main Thm}.
First, we record some estimates on the $L^2_f$ and $C^m$ norms of the initial data $h_{\mathbf p }(t_0)$ that will be used in the remaining susbsections.
Second, we establish well-posedness results for the harmonic map heat flow solution $\tl{\Phi}$ that hold so long as $h = \frac{1}{1-t} ( \Phi_t^{-1})^* \acute G - \ol{g}$ remains small in a $C^2$ sense.
Informally, these well-posedness results imply that, at the first time $\mathbf p$ exits the set $\cl{P}$, it must be because $h_{\mathbf p}$ fails to satisfy the estimates defining $\cl{B}$ and not because the harmonic map heat flow solutions $\tl{\Phi}$ fail to exist.
Finally, we make rigorous the Wa{\.z}ewski box argument used to prove Theorem \ref{Main Thm}.

%%%%%%%%%%%%%%%%%%%%%%%%%%%%%%%%%%%%%%%%%%%%%%%%%%%%%%%%%%
\subsection{Estimates on the Initial Data}
%%%%%%%%%%%%%%%%%%%%%%%%%%%%%%%%%%%%%%%%%%%%%%%%%%%%%%%%%%

\begin{lem}[$L^2_f$ Estimates at $t_0$] \label{Lem L^2_f Ests at t_0}
	If $| \mathbf p| \le \ol{p}e^{\lambda_* \tau_0} \le e^{\lambda_* \tau_0}$ and 
	$\tau_0 \gg 1$ is sufficiently large (depending on $n,  M, \ol{g}, f , \gamma_0 $), 
	then the following estimates hold:
	\begin{align}
		\label{L^2_f Est at t_0 Unstable}
		\left| (h_{\mathbf p } ( t_0), h_j )_{L^2_f}  - p_j \right|
		&\lesssim_{n, M, \ol{g}, f, \lambda_*}  e^{ - \frac{ \gamma_0}{100} e^{\tau_0} } 
		&& \text{for } j \le K, \text{ and}
		\\
		\label{L^2_f Est at t_0 Stable}
		\| \pi_s h_{\mathbf p }(t_0) \|_{L^2_f}
		 &\lesssim_{n, M, \ol{g}, f, \lambda_*}  e^{ - \frac{ \gamma_0}{100} e^{\tau_0} } .
	\end{align}
\end{lem}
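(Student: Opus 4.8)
The plan is to unwind the definition of the initial data $h_{\mathbf p}(t_0)$, which by remark \ref{Rem acute G Observations} equals $\eta_{\gamma_0}\sum_{j=1}^K p_j h_j$ as a symmetric $2$-tensor on $M$ (recall $\tilde\Phi_{t_0}=\mathrm{Id}$ and $\Phi_{t_0}=\phi_{t_0}$, so $h_{\mathbf p}(t_0)=\frac{1}{1-t_0}(\phi_{t_0}^{-1})^*\acute G_{\mathbf p}(t_0)-\ol g = \eta_{\gamma_0}\sum p_j h_j$). Then for $j\le K$ I would write
\begin{align*}
	(h_{\mathbf p}(t_0),h_j)_{L^2_f} - p_j
	= \sum_{k=1}^K p_k\big(\eta_{\gamma_0}h_k,h_j\big)_{L^2_f} - p_j
	= \sum_{k=1}^K p_k\big((\eta_{\gamma_0}-1)h_k,h_j\big)_{L^2_f},
\end{align*}
using the orthonormality $(h_k,h_j)_{L^2_f}=\delta_{kj}$. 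Since $\eta_{\gamma_0}-1$ is supported in $\{f\ge \frac{\gamma_0}{2}e^{\tau_0}\}$ (by property (1) of the bump function $\eta_{\gamma_0}$ and $1-t_0=e^{-\tau_0}$), the whole inner product is an integral over $\{f\ge\frac{\gamma_0}{2}e^{\tau_0}\}$, and similarly $\pi_s h_{\mathbf p}(t_0) = h_{\mathbf p}(t_0) - \pi_u h_{\mathbf p}(t_0)$, so $\|\pi_s h_{\mathbf p}(t_0)\|_{L^2_f}\le \|h_{\mathbf p}(t_0)\|_{L^2_f(\{f\ge\frac{\gamma_0}{2}e^{\tau_0}\})} + \big(\sum_j |(h_{\mathbf p}(t_0),h_j)_{L^2_f}-p_j|^2\big)^{1/2}$ by the previous estimate, reducing everything to a tail bound.

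Next I would estimate the tail. Using the eigenmode growth condition (definition \ref{Defn Eigmode Growth Assumption}, valid by proposition \ref{Prop Asymp Conical Implies Eigmode Growth Assumption}), $|h_j|_{\ol g}\lesssim_{n,M,\ol g,f,\lambda_*} f^{\max\{-\lambda_j,0\}+\delta}$, and together with $|\mathbf p|\le e^{\lambda_*\tau_0}\le 1$ and Cauchy–Schwarz, for both quantities it suffices to bound
\begin{align*}
	\int_{\{f\ge\frac{\gamma_0}{2}e^{\tau_0}\}} f^{N} e^{-f}\,dV_{\ol g}
\end{align*}
for a fixed power $N=N(n,M,\ol g,f,\lambda_*)$. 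This is handled exactly as in proposition \ref{Prop Weighted Vol Decay}: pulling back to the asymptotic cone via lemma \ref{Lem A.1 KW15}(2) and proposition \ref{Prop Potential Func Grows Quadratically}, the integral is controlled by $\int_{c\sqrt{\gamma_0 e^{\tau_0}}}^\infty r^{n-1+2N}e^{-r^2/8}\,dr \lesssim (\gamma_0 e^{\tau_0})^{n/2-1+N}e^{-\frac18\gamma_0 e^{\tau_0}}$, which for $\tau_0\gg1$ (depending on $n,M,\ol g,f,\gamma_0$, and through $N$ on $\lambda_*$) is $\lesssim e^{-\frac{\gamma_0}{100}e^{\tau_0}}$, since the polynomial prefactor and the slightly smaller exponent $\frac18$ versus $\frac1{100}$ absorb everything once $e^{\tau_0}$ is large. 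Absorbing the extra $\|\mathbf p\|$-weighted constants gives both \eqref{L^2_f Est at t_0 Unstable} and \eqref{L^2_f Est at t_0 Stable}.

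The only real subtlety — and the step I would be most careful about — is the reduction at the very start: making sure $h_{\mathbf p}(t_0)$ is genuinely $\eta_{\gamma_0}\sum p_j h_j$ on $M$ and that this tensor is compactly supported, so that the $L^2_f$ inner products and projections of definitions \ref{Defn Projs} and \ref{Defn Box} make sense; this follows from assumption \ref{Assume Smooth Metric and eta_gamma Supp} (which guarantees $\supp\phi_{t_0}^*\eta_{\gamma_0}\subset\{f<\Gamma_0/2\}$) together with remark \ref{Rem acute G Observations}. Everything after that is a tail estimate of the same flavor as proposition \ref{Prop Weighted Vol Decay} and proposition \ref{Prop Moment Ests}, so no new ideas are needed; one just has to track that the constants depend only on the allowed quantities and that the threshold $\tau_0\gg1$ depends on $\gamma_0$ (which is harmless since $\gamma_0$ is fixed before $\tau_0$ in the hierarchy of constants).
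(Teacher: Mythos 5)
Your proposal is correct and follows essentially the same route as the paper: identify $h_{\mathbf p}(t_0)=\eta_{\gamma_0}\sum_{k\le K}p_kh_k$, use orthonormality to reduce both estimates to the tail integral of $\big|\sum_{k\le K}p_kh_k\big|^2e^{-f}$ over $\{f\ge\tfrac{\gamma_0}{2}e^{\tau_0}\}$, and control that tail via the eigenmode growth condition together with the moment and weighted-volume-decay estimates. One small slip in the stable part: since $\pi_s$ annihilates $\sum_{k\le K}p_kh_k$, the clean bound is $\|\pi_s h_{\mathbf p}(t_0)\|_{L^2_f}\le\big\|(\eta_{\gamma_0}-1)\sum_{k\le K}p_kh_k\big\|_{L^2_f}$ (the paper's step), rather than your triangle-inequality version whose first term $\|h_{\mathbf p}(t_0)\|_{L^2_f(\{f\ge\frac{\gamma_0}{2}e^{\tau_0}\})}$ is not literally an upper bound for this quantity; the correction is immediate and is covered by the same tail estimate you prove.
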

\begin{proof}
	First, observe that
	\begin{gather*}
		h_{\mathbf p }(t_0) 
		= \frac{ 1}{ 1 - t_0} (\phi_{t_0}^{-1} )^* \acute G_{\mathbf p} (t_0) - \ol{g}	
		= \eta_{\gamma_0} \sum_{i = 1}^{K} p_i h_i
		%&& ( \text{Remark \ref{Rem acute G Observations}} ).	
	\end{gather*}
	by Remark \ref{Rem acute G Observations}.
	Hence, for any $j \in \mathbb{N}$,
	\begin{gather} \label{Decomp of h_p(t_0)} 
	\begin{aligned}
		( h_{\mathbf p } (t_0) , h_j )_{L^2_f } 
		&= \sum_{i = 1}^{K} p_i ( \eta_{\gamma_0} h_i , h_j )_{L^2_f}	\\
		&= \sum_{i = 1}^{K} p_i (  h_i , h_j )_{L^2_f}	
		 + \sum_{i = 1}^{K} p_i ( (\eta_{\gamma_0} - 1) h_i , h_j )_{L^2_f}	\\
		&= \sum_{i = 1}^{K} p_i \delta_{ij}
		 + \sum_{i = 1}^{K} p_i ( (\eta_{\gamma_0} - 1) h_i , h_j )_{L^2_f},
	\end{aligned} \end{gather}
	where here $\delta_{\cdot \cdot}$ denotes the Kronecker delta function.
	For any $j \in \mathbb{N}$ and any $i \le K$,
	\begin{align*}
		& \quad \left| ( ( \eta_{\gamma_0}  - 1 ) h_i , h_j )_{L^2_f} \right|	\\
		&\le \int_{\supp ( \eta_{\gamma_0}  - 1 ) } | \eta_{\gamma_0}  - 1 | | \langle h_i, h_j \rangle | e^{-f} dV \\
		&\le \int_{ \{ f \ge \frac{1}{2} \gamma_0 e^{\tau_0}  \} }  |  h_i | | h_j  | e^{-f} dV \\
		&\le \left( \int_{ \{ f \ge \frac{1}{2} \gamma_0 e^{\tau_0} \} } | h_i |^2 e^{-f} dV \right)^{1/2}
		 \| h_j \|_{L^2_f(M) }	
		&& ( \text{Cauchy-Schwartz} ) 	\\
		&\le \left( \int_{ \{ f \ge \frac{1}{2} \gamma_0 e^{\tau_0} \} } | h_i |^2 e^{-f} dV \right)^{1/2} 
		&& ( \| h_j \|_{L^2_f(M) }	=1).
	\end{align*}	
	Using the eigenmode growth condition (Definition \ref{Defn Eigmode Growth Assumption}) with $\delta = 1$,
	$| h_i | \le C_i f^{ \max \{ -\lambda_i, 0 \} + 1 }$ and 
	the integral may then be estimated by 
	\begin{gather} \label{L^2_f Est of h_i at Infty}
	\begin{aligned}	
		& \quad  \left( \int_{ \{ f \ge \frac{1}{2} \gamma_0 e^{\tau_0} \} } | h_i |^2 e^{-f} dV \right)^{1/2} \\
		&\le \left( \int_{ \{ f \ge \frac{1}{2} \gamma_0 e^{\tau_0} \} } C_i^2 f^{ 2 \max \{ - \lambda_i, 0 \} + 2} e^{-f} dV \right)^{1/2}	\\
	%	&& ( \| h_j \|_{L^2_f(M) } = 1, \text{Eigenmode growth assumption} )\\
		&\lesssim_{n, M, \ol{g}, f, \lambda_*}
		 \left( \int_{ \{ f \ge \frac{1}{2} \gamma_0 e^{\tau_0} \} }  f^{ 2 \max \{ -\lambda_K, 0 \} + 2 } e^{-f} dV \right)^{1/2}	
		 && ( i \le K = K(n, M, \ol{g}, f, \lambda_* ) ) \\
		&\le \left( \int_{ M }  f^{ 4 \max \{ - \lambda_K , 0 \} + 4 } e^{-f} dV \right)^{1/4}	
		\left( \int_{ \{ f \ge \frac{1}{2} \gamma_0 e^{\tau_0} \} }   e^{-f} dV \right)^{1/4}
		&& ( \text{Cauchy-Schwartz} ) 	\\
		&\lesssim_{n, M, \ol{g}, f, \lambda_*} \left( \frac{1}{2} \gamma_0 e^{\tau_0} \right)^{ \frac{n}{8} - \frac{1}{4} } e^{ - \frac{ 1}{32} \left( \frac{1}{2} \gamma_0 e^{\tau_0} \right) }
		&& ( \text{Propositions \ref{Prop Moment Ests} and \ref{Prop Weighted Vol Decay}} )	\\
		&\le e^{- \frac{ 1}{100} \gamma_0 e^{\tau_0} }	
		&& (\tau_0 \gg 1).
	\end{aligned} \end{gather}
	Recalling \eqref{Decomp of h_p(t_0)}, it follows that for any $j \in \mathbb{N}$,
	\begin{gather} \label{Eqn Doubly Exp Decay}
	\begin{aligned}
		 \left| ( h_{\mathbf p } (t_0) , h_j )_{L^2_f } 
		- \sum_{i = 1}^{K} p_i \delta_{ij}  \right|	
		&= \left| \sum_{i = 1}^{K} p_i ( (\eta_{\gamma_0} - 1) h_i , h_j )_{L^2_f}
		  \right|	\\
		&\le  \ol{p}  e^{\lambda_* \tau_0} \sum_{i =1}^{K}
		 \left| ( (\eta_{\gamma_0} - 1) h_i , h_j )_{L^2_f}\right|\\
		&\le \sum_{i =1}^{K}
		 \left| ( (\eta_{\gamma_0} - 1) h_i , h_j )_{L^2_f}\right|
		 %&& ( \tau_0 \gg 1)
		 \\
		&\lesssim_{n, M, \ol{g}, f , \lambda_*} e^{- \frac{1}{100} \gamma_0 e^{\tau_0}} 
		&& ( \tau_0 \gg 1),
	\end{aligned}
	\end{gather}
	and estimate \eqref{L^2_f Est at t_0 Unstable}  then follows from taking $j \le K$.
	
	To prove \eqref{L^2_f Est at t_0 Stable}, we estimate as follows:
	\begin{gather*} \begin{aligned}
		 \| \pi_s h_{\mathbf p} (t_0) \|_{L^2_f} 	
		&=  \left \| \pi_s \left( h_{\mathbf p} (t_0) - \sum_{j = 1}^{K} p_j h_j \right) \right \|_{L^2_f}	\\ 
		& \le \left\| h_{\mathbf p }(t_0) - \sum_{j=1}^{K} p_j h_j  \right\|_{L^2_f}	\\
		&= \left \| (  \eta_{\gamma_0} -1 ) \left( \sum_{j = 1}^{K} p_j h_j  \right) \right \|_{L^2_f}	\\
		&\le \sum_{j = 1}^{K}  \ol{p}  e^{\lambda_* \tau_0} \| (  \eta_{\gamma_0}  - 1) h_j \|_{L^2_f}	\\	
		&\le \sum_{j = 1}^K \| (  \eta_{\gamma_0}  - 1) h_j \|_{L^2_f}	
		&& ( \tau_0 \gg 1) 	\\
		&\le \sum_{j= 1}^K \left( \int_{\{ f \ge \frac{1}{2} \gamma_0 e^{\tau_0} \} } | h_j|^2 e^{-f} dV \right)^{1/2}	\\
	%	&& \left(\supp ( \eta_{\gamma_0} - 1 ) \subset \left \{ f \ge \frac{1}{2} \gamma_0 e^{\tau_0} \right \} \right) \\
		&\lesssim_{n, M, \ol{g}, f, \lambda_*} e^{- \frac{1}{100} \gamma_0 e^{\tau_0}}
		&& \eqref{L^2_f Est of h_i at Infty}. 
	\end{aligned} \end{gather*}
\end{proof}

\begin{lem}[$C^m$ Estimates at $t_0$]  \label{Lem C^m Ests at t_0}
	For any $m \in \mathbb{N}$ there exists a constant $C$ (depending on $n, M, \ol{g}, f, \lambda_*, m$) such that 
		$$| \ol{\nabla}^m h_{\mathbf p } (t_0) |_{\ol{g}} \le C \ol{p} \gamma_0^{| \lambda_K |}$$
	if $0 < 1 - t_0 \ll1 $ is sufficiently small (depending on $n, M, \ol{g}, f, \gamma_0, m$).
\end{lem}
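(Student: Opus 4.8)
The plan is to differentiate the explicit formula for the initial perturbation term by term. By remark \ref{Rem acute G Observations},
$$h_{\mathbf p}(t_0) = \frac{1}{1-t_0}\left(\phi_{t_0}^{-1}\right)^* \acute G_{\mathbf p}(t_0) - \ol g = \eta_{\gamma_0}\sum_{j=1}^K p_j h_j,$$
so the Leibniz rule for $\ol\nabla$ gives
$$\left|\ol\nabla^m h_{\mathbf p}(t_0)\right|_{\ol g} \lesssim_m \sum_{j=1}^K |p_j| \sum_{l=0}^m \left|\ol\nabla^l \eta_{\gamma_0}\right|_{\ol g}\,\left|\ol\nabla^{m-l} h_j\right|_{\ol g}.$$
First I would substitute the ingredients that are already available: the bump-function derivative bounds $\left|\ol\nabla^l \eta_{\gamma_0}\right|_{\ol g} \lesssim_{n,M,\ol g,f,l} 1$ (valid once $0 < 1-t_0 \ll 1$, from subsection \ref{Subsect The Initial Data}) together with $\supp \eta_{\gamma_0}\subset\{f < \gamma_0 e^{\tau_0}\}$; the eigenmode derivative growth estimate $\left|\ol\nabla^{m-l} h_j\right|_{\ol g} \lesssim_{n,M,\ol g,f,j,\delta,m} f^{\max\{-\lambda_j,0\}+\delta}$ from proposition \ref{Prop Eigmode Derivative Growth}, for a small $\delta > 0$ to be fixed; and $|p_j| \le |\mathbf p| \le \ol p\, e^{\lambda_* \tau_0}$ (assumption \ref{Assume Smooth Metric and eta_gamma Supp}). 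These yield
$$\left|\ol\nabla^m h_{\mathbf p}(t_0)\right|_{\ol g} \lesssim_{n,M,\ol g,f,\lambda_*,\delta,m} \ol p\, e^{\lambda_* \tau_0}\sum_{j=1}^K f^{\max\{-\lambda_j,0\}+\delta}\qquad\text{on }\{f<\gamma_0 e^{\tau_0}\},$$
and $h_{\mathbf p}(t_0)\equiv 0$ elsewhere.

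The remaining step is exponent bookkeeping. Since $\lambda_1 \ge \lambda_2 \ge \cdots$, for every $j \le K$ one has $\max\{-\lambda_j,0\} \le \max\{-\lambda_K,0\} \le |\lambda_K|$, and since $\lambda_K > \lambda_*$ with $\lambda_* < 0$ one has $\max\{-\lambda_K,0\} < |\lambda_*|$. I would therefore fix $\delta = \delta(n,M,\ol g,f,\lambda_*) > 0$ small enough that $a_j \doteqdot \max\{-\lambda_j,0\}+\delta < |\lambda_*|$ for all $j \le K$. On $\{f<\gamma_0 e^{\tau_0}\}$ (where we may assume $\gamma_0 e^{\tau_0} \ge 1$), the fact that $a_j > 0$ gives $f^{a_j} \le (\gamma_0 e^{\tau_0})^{a_j}$, hence
$$e^{\lambda_* \tau_0} f^{a_j} \le \gamma_0^{a_j}\, e^{(a_j - |\lambda_*|)\tau_0} = \gamma_0^{|\lambda_K|}\cdot\gamma_0^{\,a_j - |\lambda_K|}\, e^{(a_j-|\lambda_*|)\tau_0}.$$
Because $a_j - |\lambda_*| < 0$, the exponential factor decays in $\tau_0$, while $\gamma_0^{\,a_j - |\lambda_K|}$ is a constant depending only on $\gamma_0$ and the spectral data; thus the product $\gamma_0^{\,a_j - |\lambda_K|}\,e^{(a_j-|\lambda_*|)\tau_0}$ is at most $1$ once $\tau_0$ is large enough depending on $\gamma_0$ (and on $n,M,\ol g,f,\lambda_*$) --- equivalently, once $0 < 1-t_0 \ll 1$ is sufficiently small depending on $n,M,\ol g,f,\gamma_0$. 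Summing the finitely many terms $j = 1,\dots,K$ then gives $\left|\ol\nabla^m h_{\mathbf p}(t_0)\right|_{\ol g} \le C\,\ol p\,\gamma_0^{|\lambda_K|}$ with $C = C(n,M,\ol g,f,\lambda_*,m)$, and including the extra smallness of $1-t_0$ needed for the $\eta_{\gamma_0}$ bounds gives the dependence asserted in the statement.

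I do not expect a genuine obstacle: the one place requiring care is the final bookkeeping, namely choosing $\delta$ strictly below the spectral gap $|\lambda_*| - \max\{-\lambda_K,0\}$ and observing that converting $e^{\lambda_* \tau_0}(\gamma_0 e^{\tau_0})^{a_j}$ into a multiple of $\gamma_0^{|\lambda_K|}$ forces $\tau_0$ --- hence the smallness of $1-t_0$ --- to depend on $\gamma_0$, which is precisely what the hypothesis of the lemma permits. Every other ingredient is quoted directly from earlier sections.
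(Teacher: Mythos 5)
Your proof is correct and rests on the same skeleton as the paper's: write $h_{\mathbf p}(t_0)=\eta_{\gamma_0}\sum_{j\le K}p_jh_j$, apply the Leibniz rule with the $\eta_{\gamma_0}$ derivative bounds, invoke proposition \ref{Prop Eigmode Derivative Growth}, and choose $\delta$ strictly below the spectral gap so that $\max\{-\lambda_j,0\}+\delta<|\lambda_*|$ for $j\le K$. The only divergence is the final bookkeeping. The paper factors $f^{\max\{-\lambda_j,0\}+\delta}=f^{\max\{-\lambda_j,0\}+\delta+\lambda_*}\,f^{-\lambda_*}$, absorbs the negative power using $\inf_M f>0$ (lemma \ref{Lem Nonflat and f>0}), and then uses $\supp\eta_{\gamma_0}\subset\{f\le\gamma_0e^{\tau_0}\}$ to land on the bound $C\,\ol{p}\,\gamma_0^{|\lambda_*|}$ — which is in fact the form invoked later (e.g.\ in the proof of lemma \ref{Lem t_1^* Well-Defined}); the $|\lambda_K|$ in the displayed statement appears to be a slip. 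You instead bound $f^{a_j}\le(\gamma_0e^{\tau_0})^{a_j}$ on the support and spend the decaying factor $e^{(a_j-|\lambda_*|)\tau_0}$ to convert $\gamma_0^{a_j}$ into $\gamma_0^{|\lambda_K|}$, so you prove the statement as literally written; the cost is that your required smallness of $1-t_0$ depends on $\lambda_*$ (through $K$, $\delta$, and the exponents), which is not in the statement's dependence list, though it is harmless since $\lambda_*$ is fixed once and for all in assumption \ref{Assume Shrinker}. Either route is fine; the paper's version avoids that extra $\tau_0$-dependence, yours matches the stated exponent.
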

\begin{proof}
	The proof is similar to the proof of Proposition \ref{Prop Difference of G_p from G_0} but we provide the details nonetheless.
	
	At $t = t_0$,
		$$h_{\mathbf p }(t_0) = 
		\frac{1}{1 - t_0} (\phi_{t_0}^{-1})^* \acute G_{\mathbf p }(t_0) - \ol{g}
		= \eta_{\gamma_0}   \sum_{j = 1}^{K} p_j h_j  $$
	by Remark \ref{Rem acute G Observations}.
	Let $m \in \mathbb{N}$. By Proposition \ref{Prop Eigmode Derivative Growth}, it follows that
	\begin{align*}
		& \quad | \ol{\nabla}^m h_{\mathbf p}(t_0) |_{\ol{g}}	\\
		&\lesssim_{n,m} \sum_{l = 0}^m | \ol{\nabla}^{m - l} \eta_{\gamma_0} |_{\ol{g}}
		\left( \ol{p} e^{\lambda_* \tau_0} \sum_{j = 1}^{K} | \ol{\nabla}^l h_j|_{\ol{g}} \right)	\\
		& \le \sum_{l = 0}^m | \ol{\nabla}^{m - l} \eta_{\gamma_0} |_{\ol{g}}
		\left(  \ol{p} e^{\lambda_* \tau_0} \sum_{j = 1}^{K}  C(n, M, \ol{g}, f, j, \delta,  l) f^{ \max \{ - \lambda_j, 0 \} + \delta} 
		  \right)  	\\
		& \lesssim_{n, M, \ol{g}, f, \lambda_*, \delta, m}  \sum_{l = 0}^m | \ol{\nabla}^{m - l} \eta_{\gamma_0} |_{\ol{g}}
		\left( \ol{p} e^{\lambda_* \tau_0} \sum_{j = 1}^{K}   f^{\max \{ - \lambda_j, 0 \} + \delta} \right)  .
	\end{align*}
	Since $\lambda_* < \min \{ \lambda_K, 0 \} = \min_{j \le K} \min \{ \lambda_j, 0 \}$,
	there exists positive $\delta = \delta ( n, M, \ol{g}, f, \lambda_*)$ such that 
	$\lambda_* + \delta < \min \{ \lambda_K, 0 \} = \min_{j \le K} \min \{ \lambda_j, 0 \}$.
	Combined with the fact that $f$ is bounded below on $M$ (Lemma \ref{Lem Nonflat and f>0}), we can then estimate
	\begin{align*}
		 &\quad \,   \sum_{l = 0}^m | \ol{\nabla}^{m - l} \eta_{\gamma_0} |_{\ol{g}}
		\left( \ol{p} e^{\lambda_* \tau_0} \sum_{j = 1}^{K}   f^{ \max \{ - \lambda_j, 0 \} + \delta } \right)  		\\
		&= \sum_{l = 0}^m | \ol{\nabla}^{m - l} \eta_{\gamma_0} |_{\ol{g}}
		\left( \ol{p} e^{\lambda_* \tau_0}\sum_{j = 1}^{K} 
		  f^{\max \{ - \lambda_j, 0 \} + \delta + \lambda_* } f^{- \lambda_*} \right)  		\\
		&\lesssim_{n, M, \ol{g}, f, \lambda_*}
		\sum_{l = 0}^m | \ol{\nabla}^{m - l} \eta_{\gamma_0} |_{\ol{g}}
		 \ol{p}    f^{- \lambda_*} e^{\lambda_* \tau_0}		\\
		&\le \sum_{l = 0}^m | \ol{\nabla}^{m - l} \eta_{\gamma_0} |_{\ol{g}}
		   \ol{p}    \gamma_0^{|\lambda_*|} 
%		&& ( \supp \eta_{\gamma_0} \subset \{ f \le \gamma_0 e^{\tau_0} \} )		\\
	\end{align*}
	where the last inequality follows from the fact that $\supp \eta_{\gamma_0} \subset \{ f \le \gamma_0 e^{\tau_0} \}$ and $- \lambda_* = | \lambda_*|$.
	The derivative estimates for $\eta_{\gamma_0}$ (see Subsection \ref{Subsect The Initial Data}) then complete the proof.
\end{proof}

%%%%%%%%%%%%%%%%%%%%%%%%%%%%%%%%%%%%%%%%%%%%%%%%%%%%%%%%%%
\subsection{Must Exit Through $\cl{B}$} \label{Subsect Must Exit Through clB}
%%%%%%%%%%%%%%%%%%%%%%%%%%%%%%%%%%%%%%%%%%%%%%%%%%%%%%%%%%

In this subsection, we establish well-posedness results for the harmonic map heat flow solution $\tl{\Phi}$ that hold so long as $h = \frac{1}{1-t} ( \Phi_t^{-1})^* \acute G - \ol{g}$ remains small in a $C^2$ sense.
Informally, these well-posedness results imply that, at the first time $\mathbf p$ exits the set $\cl{P}$ (given by Definition \ref{Defn clP}), it must be because $h_{\mathbf p}$ fails to satisfy the estimates defining $\cl{B}$ and not because the harmonic map heat flow solutions $\tl{\Phi}$ fail to exist nor because we've reached the singular time of $G_{\mathbf p}$.

\begin{lem} \label{Lem t_1^* Well-Defined}
	If $\ol{p} \le 1$,
	$\Gamma_0 \gg 1$ is sufficiently large (depending on $n, M, \ol{g}, f$),
	$0 < \epsilon_1 \ll 1$ is sufficiently small (depending on $n$),
	$0 < \epsilon_0 \ll 1$ is sufficiently small (depending on $n, M, \ol{g}, f, \epsilon_1, \epsilon_2$),
	$0 < \gamma_0 \ll 1$ is sufficiently small (depending on $n, M, \ol{g}, f, \lambda_* , \epsilon_0$),
	and $\tau_0 \gg 1$ is sufficiently large (depending on $n, M, \ol{g}, f, \lambda_*, \Gamma_0, \gamma_0$),
	then, 
	for all $| \mathbf p| \le \ol{p} e^{\lambda_* \tau_0}$, 
	there exists $t_1 = t_1(\mathbf p )  > t_0$ such that 
	$$\mathbf p \in \cl{P} [ \lambda_* , \ol{p}, \Gamma_0, \gamma_0, \mu_u = 2, \mu_s =2, \epsilon_0, \epsilon_1, \epsilon_2, t_0, t_1 ].$$
	
	In particular, for all $| \mathbf p | \le \ol{p} e^{\lambda_* \tau_0}$,
		$$t_1^* = t_1^*(\mathbf p)  \doteqdot \sup \{ t_1 \in (t_0, 1] : \mathbf p \in \cl{P} [ \lambda_* , \ol{p}, \Gamma_0, \gamma_0, \mu_u = 2, \mu_s =2, \epsilon_0, \epsilon_1, \epsilon_2, t_0, t_1 ] \} $$
	is well-defined and $t_1^* > t_0$.
\end{lem}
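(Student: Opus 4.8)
The plan is to show that, for a fixed $\mathbf p$ with $|\mathbf p| \le \ol p e^{\lambda_* \tau_0}$, the harmonic map heat flow $\tl\Phi_t$ exists and is a diffeomorphism on a short time interval $[t_0, t_1)$, and that on this interval the resulting tensor $h_{\mathbf p}(t)$ lies in the box $\cl{B}[\lambda_*, 2, 2, \epsilon_0, \epsilon_1, \epsilon_2, [t_0,t_1)]$. Combined with the fact that $T(\mathbf p) > t_0$ (finite-time singularities of Ricci flow on a closed manifold do not occur instantaneously), this gives membership in $\cl{P}[\lambda_*, \ol p, \Gamma_0, \gamma_0, 2, 2, \epsilon_0, \epsilon_1, \epsilon_2, t_0, t_1]$ for some $t_1 > t_0$, and then $t_1^*$ is a supremum of a nonempty set of reals bounded above by $1$, hence well-defined with $t_1^* \ge t_1 > t_0$.

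First I would invoke the short-time well-posedness of the harmonic map heat flow from appendix \ref{Appdix HarMapFlow}: since $\acute G_{\mathbf p}(t)$ and $(1-t)\phi_t^*\ol g$ are smooth and uniformly equivalent near $t_0$ (with uniformly bounded geometry, as $\acute G_{\mathbf p}(t_0)$ differs from $(1-t_0)\phi_{t_0}^*\ol g$ only through the compactly supported correction term, which is small by proposition \ref{Prop Difference of G_p from G_0}), the flow $\tl\Phi_t$ solving \eqref{tlPhi Evol Eqn} with $\tl\Phi_{t_0} = Id_M$ exists, is unique, and is a diffeomorphism for $t$ in a nonempty interval $[t_0, t_1')$. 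Next, on this interval $h_{\mathbf p}(t) = \frac{1}{1-t}(\Phi_t^{-1})^*\acute G_{\mathbf p}(t) - \ol g$ is well-defined, smooth, and compactly supported in $M$ for each $t$ (by lemma \ref{Lem Drift of the Grafting Region}, or more simply because outside a large region $\acute G_{\mathbf p}(t) = (1-t)\phi_t^*\ol g$ and $\Phi_t = \phi_t$, so $h_{\mathbf p}(t) \equiv 0$ there). At $t = t_0$ we have $h_{\mathbf p}(t_0) = \eta_{\gamma_0}\sum_{j=1}^K p_j h_j$ by remark \ref{Rem acute G Observations}, so lemmas \ref{Lem L^2_f Ests at t_0} and \ref{Lem C^m Ests at t_0} and proposition \ref{Prop Difference of G_p from G_0} give
\begin{gather*}
\|\pi_u h_{\mathbf p}(t_0)\|_{L^2_f} \le |\mathbf p| + C e^{-\frac{\gamma_0}{100}e^{\tau_0}} \le \ol p\, e^{\lambda_* \tau_0} + C e^{-\frac{\gamma_0}{100}e^{\tau_0}} < e^{\lambda_* \tau_0},\\
\|\pi_s h_{\mathbf p}(t_0)\|_{L^2_f} \le C e^{-\frac{\gamma_0}{100}e^{\tau_0}} < e^{\lambda_* \tau_0},\\
|h_{\mathbf p}(t_0)|_{\ol g} + |\ol\nabla h_{\mathbf p}(t_0)|_{\ol g} + |\ol\nabla^2 h_{\mathbf p}(t_0)|_{\ol g} \lesssim_{n,M,\ol g,f,\lambda_*} \ol p\, \gamma_0^{|\lambda_*|} < \tfrac{1}{2}\min\{\epsilon_0, \epsilon_1, \epsilon_2\}
\end{gather*}
once $\gamma_0$ is small (depending on $\epsilon_0, \epsilon_1, \epsilon_2$) and $\tau_0$ large. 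So at $t_0$ all the defining inequalities of $\cl{B}[\lambda_*, 2, 2, \epsilon_0, \epsilon_1, \epsilon_2, \{t_0\}]$ hold with strict inequality (in fact with factor $2$ to spare on the $L^2_f$ bounds).

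Finally, since $h_{\mathbf p}(t)$ depends continuously on $t$ in every relevant norm on the (shrinking, but for short time fixed) support, and all of the box inequalities hold strictly at $t_0$, there exists $t_1 \in (t_0, \min\{t_1', T(\mathbf p)\})$ such that $h_{\mathbf p}(t) \in \cl{B}[\lambda_*, 2, 2, \epsilon_0, \epsilon_1, \epsilon_2, [t_0, t_1))$ and $\tl\Phi_t$ is a diffeomorphism for all $t \in [t_0, t_1)$; the condition $T(\mathbf p) \in [t_1, 1]$ holds because $T(\mathbf p) > t_0$ and we may shrink $t_1$ below $T(\mathbf p)$. This verifies the three conditions of definition \ref{Defn clP}, so $\mathbf p \in \cl{P}[\lambda_*, \ol p, \Gamma_0, \gamma_0, 2, 2, \epsilon_0, \epsilon_1, \epsilon_2, t_0, t_1]$, proving the first claim; the ``in particular'' statement is then immediate. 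The main obstacle I anticipate is the short-time well-posedness of $\tl\Phi_t$ on the noncompact manifold $M$ — this is exactly what appendix \ref{Appdix HarMapFlow} is set up to handle, and I would quote it rather than reprove it, being careful to check that the hypotheses there (bounded geometry of $\acute G_{\mathbf p}(t)$ and $(1-t)\phi_t^*\ol g$, and their uniform equivalence near $t_0$) are met using proposition \ref{Prop Difference of G_p from G_0} and the estimates of definition \ref{Defn G_0(t_0)}.
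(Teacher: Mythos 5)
Your skeleton matches the paper's: $T(\mathbf p)>t_0$ by short-time existence of the Ricci flow, short-time existence of $\tl{\Phi}$ from proposition \ref{Prop Existence HarMapFlow} (whose hypotheses the paper checks via proposition \ref{Prop Difference of G_p from G_0}, the curvature estimates near the equator, and --- a point you omit --- lemma \ref{Lem difference from RF} for the hypothesis on $|\partial_t\acute G_{\mathbf p}+2Rc[\acute G_{\mathbf p}]|$), together with the initial-data estimates of lemmas \ref{Lem L^2_f Ests at t_0} and \ref{Lem C^m Ests at t_0}.

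The genuine gap is the final step, where you propagate all the box inequalities from $t_0$ to a short interval ``by continuity in every relevant norm on the \dots support''. On the noncompact manifold $M$ such continuity is only available once you know $h_{\mathbf p}(t)$ is supported in a fixed compact set for $t$ near $t_0$, and neither of your justifications for that works as stated: the claim that $\Phi_t=\phi_t$ (i.e. $\tl{\Phi}_t=Id_M$) outside a large region is unjustified, since the harmonic map heat flow is parabolic and $\tl{\Phi}_t$ does not remain the identity outside the region where $\acute G_{\mathbf p}$ and $(1-t)\phi_t^*\ol{g}$ differ; and lemma \ref{Lem Drift of the Grafting Region} cannot be quoted directly, because its hypothesis is precisely that $h_{\mathbf p}\in\cl{B}$ on the interval (including compact support and the $C^0$, $C^1$ bounds), which is what you are trying to establish --- circular as written. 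The paper avoids this by a specific order of operations: the global $C^0$ bound $|h|\le\epsilon_0$ on the whole short interval comes as part of the \emph{conclusion} of proposition \ref{Prop Existence HarMapFlow}, not from continuity; the $C^1,C^2$ bounds are then obtained on the noncompact region $\{f>4\gamma_0 e^\tau\}$ from the interior estimates of lemma \ref{Lem Int Est Large Scale} (using the $C^3$ smallness of $h(\tau_0)$ from lemma \ref{Lem C^m Ests at t_0} and $\epsilon_0$ small relative to $\epsilon_1,\epsilon_2$), with continuity and compactness invoked only on the compact complement after shrinking $\tau_1$; only with these $C^0$--$C^2$ bounds in hand does the argument of lemma \ref{Lem Drift of the Grafting Region} yield $\supp h(\tau)\subset\{f\le\Gamma_0 e^\tau\}$; and only then is continuity in $\tau$ (now legitimate, the support being uniformly compact) used to pass from the time-$\tau_0$ bounds $\|\pi_u h\|_{L^2_f},\|\pi_s h\|_{L^2_f}\le\tfrac32 e^{\lambda_*\tau_0}$ to the bounds with $\mu_u=\mu_s=2$ on a possibly smaller interval. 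To repair your proposal you need to insert this interior-estimate and support-control step (or an equivalent) rather than appeal to continuity of global norms on $M$.
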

\begin{proof}
	First note that $T( \mathbf p ) > t_0$ follows automatically from the short-time existence of the Ricci flow $G_{\mathbf p}(t)$.
	
	The existence of a harmonic map heat flow solution $\tl{\Phi}$ will follow from Proposition \ref{Prop Existence HarMapFlow} after we confirm that $\acute G_{\mathbf p}(t)$ and $(1 - t) \phi_t^* \ol{g}$ satisfy the assumptions of the proposition.
	Since $G_{\mathbf p}(t)$ has bounded curvature for short time, 
	$G_{\mathbf p}(t)$ remains in an arbitrarily small $C^{12}(M)$-neighborhood of $G_{\mathbf p }(t_0)$ for all $t \in [t_0, t_1]$ if $0 < t_1 - t_0 \ll 1$ is sufficiently small.
	Similarly, $(1 - t)\phi_t^* \ol{g}$ remains in an arbitrarily small $C^{12}(M)$-neighborhood of $(1 - t_0) \phi_{t_0}^* \ol{g}$ for all $t \in [t_0, t_1]$ if $0< t_1 - t_0 \ll 1$ is sufficiently small.
	Combined with the derivative bounds for $\eta_{\Gamma_0}$, it follows that
		$$\acute G_{\mathbf p }(t) = \eta_{\Gamma_0} \iota_* G_{\mathbf p }(t) + ( 1 - \eta_{\Gamma_0}) ( 1 - t) \phi_t^* \ol{g}$$
	remains in an arbitrarily small $C^{12}(M)$-neighborhood of $\acute G_{\mathbf p }(t_0)$ 
	for all $t \in [t_0, t_1]$ if $0< t_1 - t_0 \ll 1$ is sufficiently small.
	Thus, the derivatives of order at most $10$ of the curvature of $\acute G_{\mathbf p }(t)$ 
	are uniformly bounded for $t \in [t_0, t_1]$ if $0 < t_1 - t_0 \ll 1$ is sufficiently small.
	When $(1 - t_0) \ll 1$ is sufficiently small depending on $n, M, \ol{g}, f, \Gamma_0$, Lemma \ref{Lem difference from RF} implies uniform bounds on 
		$$\left| \partial_t \acute G_{\mathbf p }( t)+ 2 Rc[ \acute G_{\mathbf p } (t) ] \right|_{\acute G_{\mathbf p } (t) } $$
	for all $t \in [t_0, t_1 ] \subset [ t_0, T(\mathbf p ) )$.
	Finally, $\tl{\Phi}_{t_0} = Id_M$ is a diffeomorphism and
	\begin{gather*}
		 \left| \acute G_{\mathbf p }(t_0) - ( 1 - t_0) \phi_{t_0}^* \ol{g} \right|_{(1 - t_0) \phi_{t_0}^* \ol{g}} 	
		= \left| \eta_{\gamma_0} \sum_{j = 1}^{K} p_j h_j  \right|_{ \ol{g}} 	
		\lesssim_{n, M, \ol{g}, f, \lambda_*}  \ol{p}  \gamma_0^{| \lambda_*|}
	\end{gather*}
	by the proof of Proposition \ref{Prop Difference of G_p from G_0}.
	When additionally $\ol{p} \le 1$ and $\gamma_0 \ll 1$ is sufficiently small depending on $n, M, \ol{g}, f, \lambda_*,  \epsilon_0$,
	this estimate implies 
		$$\left| \acute G_{\mathbf p }(t_0) - ( 1 - t_0) \phi_{t_0}^* \ol{g} \right|_{(1 - t_0) \phi_{t_0}^* \ol{g}} 	
		\le \frac{1}{2} \epsilon_0.$$
	If $\epsilon_0 \ll 1$ is sufficiently small depending on $n$, then Proposition \ref{Prop Existence HarMapFlow}  applies to give the short-time existence of a smooth family of diffeomorphisms $\tl{\Phi} : M \times [ t_0, t_1 ) \to M$ solving the harmonic map heat flow with initial condition $\tl{\Phi}_{t_0} = Id_M$.
	Additionally, Proposition \ref{Prop Existence HarMapFlow} states that this $\tl{\Phi}$ may be chosen such that
		$$\left| \frac{1}{1- t}( \Phi_t^{-1} )^*\acute G_{\mathbf p }(t) -  \ol{g} \right|_{ \ol{g}} 
		=\left| ( \tl{\Phi}_t^{-1} )^*\acute G_{\mathbf p }(t) - ( 1 - t) \phi_{t}^* \ol{g} \right|_{(1 - t) \phi_{t}^* \ol{g}} 	
		\le  \epsilon_0 	$$
	on $M \times [ t_0, t_1).$
	
	Recall $\tau = - \ln ( 1 - t), \tau_0 = - \ln ( 1 - t_0)$, and denote $\tau_1 = - \ln ( 1 - t_1) < \infty $.
	By Corollary \ref{Cor h(t) Evol Eqn},
	$h(\tau) = h_{\mathbf p}(t(\tau))$ is a smooth solution of
	\begin{equation}	\tag{\ref{h(tau) Evol Eqn}}
		\partial_\tau h = \ol{\Delta}_f h + 2 \ol{Rm}[h] + \cl{E}_1 + \cl{E}_2	
		\qquad \text{ on } M \times (\tau_0, \tau_1) 
	\end{equation}
	where $\cl{E}_1, \cl{E}_2$ are defined as in \eqref{clE_1 Eqn}, \eqref{clE_2 Eqn} respectively.
	By Lemma \ref{Lem C^m Ests at t_0}, we may assume
		$$\sum_{m = 1}^{3} | \ol{\nabla}^m h(\tau_0) |_{\ol{g}}
		 \le  C(n, M, \ol{g}, f, \lambda_*) \ol{p} \gamma_0^{| \lambda_*|} \le \epsilon_0$$
	if $\ol{p} \le 1$, 
	$0 < \gamma_0 \ll 1$ is sufficiently small depending on $n, M, \ol{g}, f, \lambda_*, \epsilon_0$,
	and $\tau_0 \gg 1$ is sufficiently large depending on $n, M, \ol{g}, f, \gamma_0$.
	Since $| h(\tau) |_{\ol{g}} \le \epsilon_0$ for $\tau \in [\tau_0, \tau_1)$,
	Lemma \ref{Lem Int Est Large Scale} with $\delta = \epsilon_0$ therefore implies that 
		$$| \ol{\nabla} h|_{\ol{g}}(x, \tau) + | \ol{\nabla}^2 h|_{\ol{g}}(x, \tau) \lesssim_{n, M, \ol{g}, f} \epsilon_0
		\qquad \text{ on } \{ (x, \tau) \in M \times [\tau_0, \tau_1) : 4 \gamma_0 e^\tau < f(x) \}$$
	if also $\epsilon_0 \ll 1$ is sufficiently small depending on $n, M, \ol{g}, f$.
	Since $h$ is smooth and the complement of $\{ 4 \gamma_0 e^{\tau_1} < f \} \subset M$ is compact, it follows that, after possibly taking $\tau_1$ smaller, in fact
		$$| \ol{\nabla} h|_{\ol{g}} + | \ol{\nabla}^2 h|_{\ol{g}} \lesssim_{n, M, \ol{g}, f} \epsilon_0
		\qquad \text{ on } M \times [\tau_0, \tau_1).$$
	In particular, 
	\begin{align*}	
		| \ol{\nabla} h|_{\ol{g}} &\le \epsilon_1	 \text{ on } M \times [ \tau_0, \tau_1) \text{ and }	\\
		| \ol{\nabla}^2 h|_{\ol{g}} &\le \epsilon_2	 \text{ on } M \times [ \tau_0, \tau_1)
	\end{align*}
	if $0 < \epsilon_0 \ll 1$ is sufficiently small depending on $n, M, \ol{g}, f, \epsilon_1, \epsilon_2$.
	
	With these $C^2$ estimates for $h$, the proof of Lemma \ref{Lem Drift of the Grafting Region} now shows that 
		$$\supp h(\tau) \subset \{ f \le \Gamma_0 e^\tau \} \qquad \text{ for all } \tau \in [\tau_0, \tau_1)$$
	so long as $\Gamma_0 \gg 1$ is sufficiently large depending on $n, M, \ol{g}, f$,
	$0 < \epsilon_0, \epsilon_1 \ll 1$ are sufficiently small depending on $n$,
	and $\tau_0 \gg 1$ is sufficiently large depending on $n, M, \ol{g}, f, \Gamma_0$.
	In particular, $h(\tau)$ is compactly supported for all $\tau \in [ \tau_0, \tau_1)$.
	
	Finally, Lemma \ref{Lem L^2_f Ests at t_0} ensures that, at $\tau_0$,
	\begin{gather*}
		\| \pi_u h_{\mathbf p }( \tau_0) \|_{L^2_f} \le \frac{3}{2} e^{\lambda_* \tau_0}
		\quad 	\text{ and } 	\quad
		\| \pi_s h_{\mathbf p }( \tau_0) \|_{L^2_f} \le \frac{3}{2}  e^{\lambda_* \tau_0}	
	\end{gather*}
	if $\ol{p} \le 1$ and $\tau_0 \gg 1$ is sufficiently large depending on $n, M, \ol{g}, f, \lambda_* , \gamma_0$.
	Since $h_{\mathbf p }$ is smooth and supported in $\{ f \le \Gamma_0 e^{\tau_1} \}$ for all $\tau \in [ \tau_0, \tau_1)$, 
	these $L^2_f$ quantities are continuous in $\tau$.
	Therefore, after possibly taking $t_1 - t_0$ or equivalently $\tau_1 - \tau_0$ smaller, it follows that
	for all $\tau \in [\tau_0, \tau_1)$,
	\begin{gather*}
		\| \pi_u h_{\mathbf p }( \tau) \|_{L^2_f} \le  2 e^{\lambda_* \tau}	
		\quad \text{ and } \quad 
		\| \pi_s h_{\mathbf p }( \tau) \|_{L^2_f} \le 2 e^{\lambda_* \tau}	.
	\end{gather*}
	This completes the proof.
\end{proof}

\begin{remark}	\label{Rem t_1^* = inf}
	By Definition \ref{Defn clP} of $\cl{P}$, $t_1 < t_1'$ implies
		\begin{align*}
		&\cl{P} [ \lambda_* , \ol{p} , \Gamma_0, \gamma_0, \mu_u , \mu_s, \epsilon_0, \epsilon_1, \epsilon_2 , t_0, t_1' ]	\\
		\subset{}& \cl{P} [ \lambda_* , \ol{p} , \Gamma_0, \gamma_0, \mu_u , \mu_s, \epsilon_0, \epsilon_1, \epsilon_2 , t_0, t_1 ]	.
		\end{align*}
	Thus, $t_1^*( \mathbf p )$ may also be written as
	\begin{multline*}	
		t_1^*( \mathbf p ) \\
		= \inf (\{ t_1 \in (t_0, 1] : \mathbf p \notin \cl{P} [ \lambda_* , \ol{p}, \Gamma_0, \gamma_0, \mu_u = 2,  \mu_s =2, \epsilon_0, \epsilon_1, \epsilon_2, t_0, t_1 ] \}  \cup \{ 1 \}).
	\end{multline*}
\end{remark}

\begin{lem} \label{Lem Must Exit Through clB}
	Assume the constants $\ol{p}, \Gamma_0, \gamma_0,  \epsilon_0, \epsilon_1, \epsilon_2, \tau_0$ are taken sufficiently large or small as in the assumptions of Lemma \ref{Lem t_1^* Well-Defined} so that $t_1^*( \mathbf p)$ is well-defined.
	Additionally, assume that
	$0 < \epsilon_1, \epsilon_2 \ll 1$ are sufficiently small (depending on $n, M, \ol{g},f$).
	
	If $t_1^*( \mathbf p ) < 1$, then 
	$t_1^* < T( \mathbf p )$ and
	there exists $t_1^+ \in ( t_1^*, T)$ such that 
	there exists a unique smooth solution $\tl{\Phi} : M \times [t_0, t_1^+) \to M$ to the harmonic map heat flow 
	satisfying
	\begin{enumerate}
		\item $\tl{\Phi}_t = \tl{\Phi}( \cdot, t) : M \to M$ is a diffeomorphism for all $t \in [t_0, t_1^+)$, and 
		\item with $\Phi_t = \phi_t \circ \tl{\Phi}_t$,
			$$h_{\mathbf p }(t) = \frac{1}{1 - t} (\Phi_t^{-1})^* \acute G_{\mathbf p }(t) - \ol{g} 
			\in \cl{B}[ \lambda_*, \mu_u = 4, \mu_s = 4, 2 \epsilon_0, 2 \epsilon_1, 2 \epsilon_2, [t_0, t_1^+) ]$$
			where $\cl{B}$ is defined as in Definition \ref{Defn Box}.
	\end{enumerate}
	
	Moreover, 
	for all $t \in [t_0, t_1^+ )$,
	\begin{align*}
		\tl{\Phi}_t ( \ol{ \Omega_{\eta_{\Gamma_0}}} ) \subset{} & \{ \Gamma_0 / 2 < f < \Gamma_0 \},	\\
		\Phi_t ( \ol{ \Omega_{\eta_{\Gamma_0}}} ) \subset{} & \{ (1 - t)^{-1} < f < \Gamma_0(1 - t)^{-1}  \}, 
		\text{ and}	\\
		h_{\mathbf p }(t) = 0 		\, \text{ on } & \{ f \ge \Gamma_0 ( 1 - t)^{-1} \}
	\end{align*}
	where recall $\Omega_{\eta_{\Gamma_0}} = \{ x \in M : 0 < \eta_{\Gamma_0} (x) < 1 \}$ as in Subsection \ref{Subsect Ests on Grafting Region}.
\end{lem}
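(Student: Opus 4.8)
The plan is to treat this as a continuation and gluing argument. By definition of $t_1^*$ (and the nestedness recorded in remark \ref{Rem t_1^* = inf}), $\mathbf p \in \cl{P}[\lambda_*, \ol p, \Gamma_0, \gamma_0, 2, 2, \epsilon_0, \epsilon_1, \epsilon_2, t_0, t_1]$ for every $t_1 < t_1^*$; hence $\tl{\Phi}$ exists and is a diffeomorphism on $[t_0, t_1^*)$, and $h_{\mathbf p}(t) \in \cl{B}[\lambda_*, 2, 2, \epsilon_0, \epsilon_1, \epsilon_2, [t_0, t_1^*)]$. The strategy is to push each of these ingredients — the Ricci flow $G_{\mathbf p}$, then the harmonic map heat flow $\tl{\Phi}$, then box membership of $h_{\mathbf p}$ — a definite amount past $t_1^*$, using continuity in $\tau$ to absorb the slack between the tight constants $(2,2,\epsilon_0,\epsilon_1,\epsilon_2)$ and the relaxed ones $(4,4,2\epsilon_0,2\epsilon_1,2\epsilon_2)$.

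First I would show $t_1^* < T(\mathbf p)$. Membership in $\cl{P}$ forces $T(\mathbf p) \ge t_1$ for all $t_1 < t_1^*$, so $T(\mathbf p) \ge t_1^*$. For strictness: the bound $|h_{\mathbf p}|_{\ol g} + |\ol\nabla h_{\mathbf p}|_{\ol g} + |\ol\nabla^2 h_{\mathbf p}|_{\ol g} \le \epsilon_0 + \epsilon_1 + \epsilon_2$ on $M \times [\tau_0, \tau_1^*)$ makes $g_{\mathbf p}(t) = \ol g + h_{\mathbf p}(t)$ uniformly $C^2$-close to $\ol g$, hence of uniformly bounded curvature; since $\acute G_{\mathbf p}(t) = (1-t)\Phi_t^* g_{\mathbf p}(t)$ and $1 - t_1^* > 0$, the metrics $\acute G_{\mathbf p}(t)$ — and so $G_{\mathbf p}(t)$ on $\{\eta_{\Gamma_0} = 1\} \supset \{f \le \Gamma_0/2\}$ — have uniformly bounded curvature on $[t_0, t_1^*)$. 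On the compact set $\{f \ge \Gamma_0/2\} \subset \cl{M}$, proposition \ref{Prop Pseudolocality App} (applied to a relatively compact neighborhood, equivalently with $\Gamma_0$ replaced by $\Gamma_0/2$) supplies a uniform curvature bound on $[t_0, T(\mathbf p))$. By $\mathbb{Z}_2$-symmetry these two bounds combine to $\sup_{\cl{M} \times [t_0, t_1^*)} |Rm[G_{\mathbf p}(t)]| < \infty$, so $G_{\mathbf p}$ extends smoothly past $t_1^*$; as $t_1^* < 1$ this gives $t_1^* < T(\mathbf p)$.

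Second I would extend $\tl{\Phi}$ past $t_1^*$. Since $t_1^* < T(\mathbf p)$, Shi's local estimates together with the regularity of the grafted data make $\acute G_{\mathbf p}(t)$ and $(1-t)\phi_t^* \ol g$ smooth with uniformly bounded geometry (all derivatives of curvature, plus the bound on $|\partial_t \acute G_{\mathbf p} + 2 Rc[\acute G_{\mathbf p}]|$ from lemma \ref{Lem difference from RF}) on $M \times [t_0, t_1^* + \delta_0]$ for some $\delta_0 > 0$. On $[t_0, t_1^*)$ the diffeomorphisms $\tl{\Phi}_t$ solve the harmonic map heat flow with $|h_{\mathbf p}(t)|_{\ol g} \le \epsilon_0$, so the regularity theory of appendix \ref{Appdix HarMapFlow} lets $\tl{\Phi}$ extend smoothly to $t = t_1^*$ as a diffeomorphism with $|h_{\mathbf p}(t_1^*)|_{\ol g} \le \epsilon_0 = \tfrac12(2\epsilon_0)$. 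Proposition \ref{Prop Existence HarMapFlow} applied with initial time $t_1^*$ then produces a unique smooth $\tl{\Phi}$ on $[t_1^*, t_1^* + \delta_1)$, a diffeomorphism, with $|h_{\mathbf p}(t)|_{\ol g} \le 2\epsilon_0$ there; by uniqueness this agrees at $t_1^*$ with the previous solution, yielding $\tl{\Phi}$ on $[t_0, t_1^* + \delta_1)$, a diffeomorphism, with $|h_{\mathbf p}|_{\ol g} \le 2\epsilon_0$ throughout.

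Third I would verify membership in the relaxed box and the three localization statements. With $|h_{\mathbf p}|_{\ol g} \le 2\epsilon_0$ on $[t_0, t_1^* + \delta_1)$, lemma \ref{Lem Int Est Large Scale} (with $\delta = 2\epsilon_0$) gives $|\ol\nabla h_{\mathbf p}|_{\ol g} + |\ol\nabla^2 h_{\mathbf p}|_{\ol g} \lesssim_{n,M,\ol g,f} \epsilon_0$ on $\{4\gamma_0 e^\tau < f\}$; on the compact complement $\{f \le 4\gamma_0 e^\tau\}$, smoothness of $h_{\mathbf p}$ and the tight bounds at $\tau_1^*$ give, by continuity, $|\ol\nabla h_{\mathbf p}|_{\ol g} \le 2\epsilon_1$ and $|\ol\nabla^2 h_{\mathbf p}|_{\ol g} \le 2\epsilon_2$ on $[t_0, t_1^+)$ for any $t_1^+ \in (t_1^*, t_1^* + \delta_1)$ close enough to $t_1^*$ — using that $\epsilon_0$ is small relative to $\epsilon_1, \epsilon_2$ on the non-compact part. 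These $C^2$ bounds feed the drift argument of lemma \ref{Lem Drift of the Grafting Region}, which yields $\tl{\Phi}_t(\ol{\Omega_{\eta_{\Gamma_0}}}) \subset \{\Gamma_0/2 < f < \Gamma_0\}$, $\Phi_t(\ol{\Omega_{\eta_{\Gamma_0}}}) \subset \{(1-t)^{-1} < f < \Gamma_0(1-t)^{-1}\}$, and $h_{\mathbf p}(t) = 0$ on $\{f \ge \Gamma_0(1-t)^{-1}\}$; in particular each $h_{\mathbf p}(t)$ is compactly supported, so $e^{-\lambda_* \tau}\|\pi_u h_{\mathbf p}(\tau)\|_{L^2_f}$ and $e^{-\lambda_*\tau}\|\pi_s h_{\mathbf p}(\tau)\|_{L^2_f}$ are continuous and bounded by $2 < 4$ at $\tau_1^*$, hence $\le 4$ on $[t_0, t_1^+)$ after shrinking $t_1^+$ once more. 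This establishes $h_{\mathbf p}(t) \in \cl{B}[\lambda_*, 4, 4, 2\epsilon_0, 2\epsilon_1, 2\epsilon_2, [t_0, t_1^+)]$ and completes the proof. The step I expect to be the main obstacle is the second one: making rigorous the smooth extension of the \emph{noncompact} harmonic map heat flow across $t_1^*$ and the quantitative restart just beyond it, which is precisely where the well-posedness theory of appendix \ref{Appdix HarMapFlow} and the bounded-geometry bookkeeping (contingent on $t_1^* < T(\mathbf p)$) carry the weight; everything else is continuity together with estimates already proved.
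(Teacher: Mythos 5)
Your steps one and three are essentially the paper's argument: the box $C^2$ bounds give uniform curvature bounds for $\acute G_{\mathbf p}(t)$ where $\eta_{\Gamma_0}=1$, pseudolocality (proposition \ref{Prop Pseudolocality App}) covers the region near the equator, and since $G_{\mathbf p}$ is a closed Ricci flow with curvature bounded up to $t_1^*$ one gets $t_1^*<T(\mathbf p)$; and the passage to the relaxed box constants uses lemma \ref{Lem Int Est Large Scale}, compactness plus continuity on $\{f\le 4\gamma_0 e^\tau\}$, the drift lemma \ref{Lem Drift of the Grafting Region}, and continuity of the $L^2_f$ projections, exactly as in the paper (which routes this through ``similar logic as lemma \ref{Lem t_1^* Well-Defined}'').

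The gap is in your step two. You assert that ``the regularity theory of appendix \ref{Appdix HarMapFlow} lets $\tl{\Phi}$ extend smoothly to $t=t_1^*$ as a diffeomorphism,'' but the appendix contains only short-time existence (proposition \ref{Prop Existence HarMapFlow}), drift control, and uniqueness; none of these gives convergence of $\tl{\Phi}_t$ as $t\nearrow t_1^*$, let alone that the limit map of the \emph{noncompact} flow is still a diffeomorphism of $M$ --- injectivity and surjectivity of the limit are precisely the delicate points, and they do not follow just from $C^0$ closeness of $(\tl{\Phi}_t^{-1})^*\acute G_{\mathbf p}$ to $(1-t)\phi_t^*\ol{g}$. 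The paper avoids this entirely: after establishing bounded geometry on $[t_0,T']$ with $T'>t_1^*$ (which needs $t_1^*<T(\mathbf p)$ first, as you have), it restarts the harmonic map heat flow at a time $t_1^-<t_1^*$ where $\tl{\Phi}_{t_1^-}$ is already known to be a diffeomorphism, uses that the existence time $\delta$ in proposition \ref{Prop Existence HarMapFlow} is uniform in $t_1^-$, chooses $t_1^-$ with $t_1^-+\delta>t_1^*$, and then identifies the new solution with the old one on the overlap via proposition \ref{Prop Uniqueness HarMapFlow}; that identification in turn requires first deriving the gradient bound $\sqrt{1-t}\,\bigl| {}^{(1-t)\phi_t^*\ol{g}}\nabla \tilde h\bigr|\lesssim \epsilon_0+\epsilon_1$ for the restarted solution by interior estimates, a step missing from your sketch. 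If you replace your ``extend to $t_1^*$ and restart there'' scheme by this restart-before-$t_1^*$ and glue-by-uniqueness argument, the rest of your outline goes through.
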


\begin{proof}
	To simplify the notation throughout this proof, we shall write
	\begin{gather*}
		\cl{B} [ \lambda_*, \mu_u = 2,  \mu_s= 2, \epsilon_0, \epsilon_1,\epsilon_2 , I ]	\quad 
		\text{as} \quad  \cl{B} [ \mu_{u,s} = 2, \epsilon_{0,1,2}, I ].
	\end{gather*}
	
	We first establish curvature estimates for $G_{\mathbf p }(t)$ and $\acute G_{\mathbf p}(t)$.
	Let $t_1 \in [t_0,  t_1^*(\mathbf p) )$.
	It follows from the definition of $t_1^*( \mathbf p )$ and $\cl{P}$ (Definition \ref{Defn clP}) that
	there exists $\tl{\Phi} : M \times [ t_0, t_1 ) \to M$ which is a smooth solution to the harmonic map heat flow \eqref{tlPhi Evol Eqn}
	such that $\tl{\Phi}_t : M \to M$ is a diffeomorphism for all $t \in [t_0, t_1)$ and
	$h_{\mathbf p } \in \cl{B}[  \mu_{u,s} = 2 , \epsilon_{0,1,2} , [t_0, t_1 ) ]$.

	Recall from the Definition \ref{Defn clP} of $\cl{P}$ that $h(t) = h_{\mathbf p}(t)$ therefore satisfies the estimates
		$$| \ol{\nabla}^{l} h(t)|_{\ol{g}} \le \epsilon_l 		\qquad \text{for all } t \in [t_0, t_1),  0 \le l \le 2.$$
	Hence, if $\epsilon_0 + \epsilon_1 + \epsilon_2 \ll 1$ is sufficiently small depending on $n, M, \ol{g}$, 
	then $g_{\mathbf p}(t) = \ol{g} + h_{\mathbf p}(t)$ is a family of complete metrics 
	such that 
		$$\sup_{M \times [ t_0, t_1) } |  Rm_{ g_{\mathbf p }(t)} |_{g_{\mathbf p }(t)} 
		\le 2 \sup_{M} |  \ol{Rm} |_{\ol{g}}. $$
	Since $t_1 < t_1^*( \mathbf p ) < 1$ and the $\tl{\Phi}_t$ are diffeomorphisms,
	$\acute G_{\mathbf p }(t) = ( 1- t) (\phi_t \circ \tl{\Phi}_t)^* g_{\mathbf p }(t)$ will also be a family of complete metrics with curvature bounds
	\begin{equation} \label{Must Exit Through B Curv Bounds}
		\sup_{M \times [ t_0, t_1) } |  Rm_{ \acute G_{\mathbf p }(t)} |_{\acute G_{\mathbf p }(t)} 
		\le 2 \sup_{M \times [t_0, t_1) } \frac{1}{1 - t} |  \ol{Rm} |_{\ol{g}} 	
		=  \frac{2}{1 - t_1^*( \mathbf p ) }\sup_{M  }  |  \ol{Rm} |_{\ol{g}}  < \infty.
	\end{equation}
	
	Let $U \subset M$ be the interior of the set where $\eta_{\Gamma_0} = 1$ and note that $\ol{ \{ f < \Gamma_0 / 2 \} } \subset U$ by construction of $\eta_{\Gamma_0}$.
	By construction, $\acute G_{\mathbf p }(t) = \iota_* G_{\mathbf p}(t)$ on $U \times [ t_0, t_1)$. 
	From the curvature bounds \eqref{Must Exit Through B Curv Bounds} combined with the pseudolocality Proposition \ref{Prop Pseudolocality App},
	it follows that there exists $\cl{K}_0$ depending only on $n, M, \ol{g}, f, \Gamma_0, t_1^*( \mathbf p )$ such that 
		$$\sup_{\cl{M} \times [ t_0, t_1) } | Rm_{ G_{\mathbf p }(t)} |_{G_{\mathbf p }(t)} 
		\le \cl{K}_0 < \infty$$
	as long as
	$\Gamma_0 \gg 1$ is sufficiently large depending on $n , M , \ol{g}, f$ and
	$0 < 1 - t_0 \ll 1$ is sufficiently small depending on $n, M, \ol{g}, f, \Gamma_0$.
	Because $G_{\mathbf p }(t)$ is a closed Ricci flow solution, 
	these curvature bounds imply the Ricci flow solution $G_{\mathbf p}(t)$ exists for slightly larger times.
	In other words, $T( \mathbf p) > t_1$
	and, by continuity,
		$$\sup_{\cl{M} \times [ t_0, t_1] } | Rm_{ G_{\mathbf p }(t)} |_{G_{\mathbf p }(t)} 
		\le \cl{K}_0.$$
	Note that $\cl{K}_0$ depends on $t_1^*( \mathbf p )$ but not $t_1$.
	Since $t_1 < t_1^*( \mathbf p)$ was arbitrary and $G_{\mathbf p}(t)$ is a closed Ricci flow solution,
	it follows that
		$$\sup_{\cl{M} \times [ t_0, t_1^*( \mathbf p) ] } | Rm_{ G_{\mathbf p }(t)} |_{G_{\mathbf p }(t)} 
		\le \cl{K}_0$$	
	and thus $ t_1^*( \mathbf p ) < T( \mathbf p ) $.	
	
	Derivative estimates for Ricci flow then imply that there exist
	constants $\cl{K}_1, \dots, \cl{K}_{12}$ (depending on $n$ and $\cl{K}_0$)
	and $T' \in (t_1^*( \mathbf p) , T( \mathbf p ) )$ such that
		$$\sup_{\cl{M} \times [ t_0, T' ] } | \nabla^m Rm_{ G_{\mathbf p }(t)} |_{G_{\mathbf p }(t)} 
		\le 2 \cl{K}_m$$	
	for all $0 \le m \le 12$.
	Taking $\cl{K}_0, \dots, \cl{K}_{12}$ possibly larger depending on $n, M, \ol{g}, f, t_1^*$ and $T' - t_1^*$ possibly smaller, we may additionally assume the soliton also satisfies
	$$\sup_{M \times [ t_0, T' ] } | \nabla^m Rm_{(1- t) \phi_t^* \ol{g}} |_{( 1 - t) \phi_t^* \ol{g}} 
		\le 2 \cl{K}_m$$	
	for all $0 \le m \le 12$.
	If $0 < 1- t_0 \ll 1$ is sufficiently small depending on $n, M, \ol{g}, f, \Gamma_0$,
	derivative estimates for $\eta_{\Gamma_0}$ and the fact that 
		$$\acute G_{\mathbf p }(t) = \eta_{\Gamma_0} \iota_* G + ( 1 - \eta_{\Gamma_0} )(1 -t)\phi_t^* \ol{g},$$
	then imply that the curvature of $\acute G_{\mathbf p } (t)$ and its derivatives of order $\le 10$ are uniformly bounded on $M \times [ t_0, T']$.

	Next, we establish the existence of the harmonic map heat flow solution $\tl{\Phi}$ on the time interval $[ t_0, t_1^+ )$.
	Again, let $t_1 < t_1^*(\mathbf p)$ and let
	$\tl{\Phi} : M \times [ t_0, t_1 ) \to M$ be a smooth solution to the harmonic map heat flow
	such that $\tl{\Phi}_t : M \to M$ is a diffeomorphism for all $t \in [t_0, t_1)$ and
	$h_{\mathbf p } \in \cl{B}[  \mu_{u,s} = 2, \epsilon_{0,1,2} , [t_0, t_1 ) ]$.
	Let $t_1^- \in ( t_0,  t_1)$.
	Note that
		$$\sup_M \left| (\tl{\Phi}_{t_1^-}^{-1} )^* \acute G_{\mathbf p}(t_1^- ) - ( 1 - t_1^-) \phi_{t_1^-}^* \ol{g} \right|_{(1 - t_1^-) \phi_{t_1^-}^* \ol{g}}
		= \sup_M \left| h_{\mathbf p }(t_1^-)  \right|_{\ol{g}}
		\le \epsilon_0.$$
	By Lemma \ref{Lem difference from RF} and the fact that the curvature of $\acute G_{\mathbf p} (t)$ has derivatives which are uniformly bounded up to order $10$ on the time interval $[ t_1^- , T' ] \subset [t_0, T']$, 
	the short-time existence Proposition \ref{Prop Existence HarMapFlow} applies to yield a smooth family of diffeomorphisms $\tl{\tl{\Phi}} : M \times [t_1^-, t_1^- + \delta ] \to M$ solving
		$$\partial_t \tl{\tl{\Phi}} = \Delta_{\acute G_{\mathbf p }(t), ( 1- t) \phi_t^* \ol{g} } \tl{ \tl{\Phi}}
		\qquad \text{ with initial condition } \tl{ \tl{\Phi} }_{t_1^-} = \tl{ \Phi}_{t_1^-}$$
	so long as $\epsilon_0 \ll 1$ is sufficiently small depending on $n$.
	Moreover, Proposition \ref{Prop Existence HarMapFlow} implies 
	$\delta  \in ( 0, T' - t_1^*]$ can be taken to be independent of $t_1^-$ and $t_1$
	and such that 
	\begin{equation} \label{C^0 Bounds on Extension Eqn}
		\left| \left( \tl{ \tl{\Phi}}_{t}^{-1} \right)^* \acute G_{\mathbf p }(t) - ( 1 - t) \phi_t^* \ol{g} \right|_{(1-t) \phi_t^* \ol{g}}
		\le 2 \epsilon_0 		
		\qquad \text{ on } M \times [ t_1^- , t_1^- + \delta ].
	\end{equation}
	
	Now, fix $t_1^-$ and $t_1$ sufficiently close to $t_1^*( \mathbf p )$ so that $t_1^- + \delta > t_1^*( \mathbf p)$ and consider the map $\tl{ \tl{\Phi}}$ as above.
	We shall use the uniqueness result for the harmonic map heat flow (Proposition \ref{Prop Uniqueness HarMapFlow}) to argue that in fact $\tl{ \tl{\Phi}} = \tl{\Phi}$ on the intersection of their domains and thereby obtain the desired extension to the time interval $[t_0, t_1^+ )$ for some $t_1^+ \in (t_1^*,  t_1^- + \delta)$.
	Suppose for the sake of contradiction that 
		$$\tl{ \Phi}(x,t) \ne \tl{ \tl{\Phi}} (x,t) 
		\qquad \text{ for some } (x, t) \in M \times [ t_1^-, t_1 )$$
	and consider
		$$t_2^* =  \inf \{ t \in [t_1^- , t_1 ) : \text{there exists } x \in M \text{ such that } \tl{ \Phi}(x,t) \ne \tl{ \tl{\Phi}} (x,t)  \} \in [t_1^- , t_1 ).$$
	Since $T' < 1$, $(1 - t) \phi_t^* \ol{g}$ has uniform curvature bounds and injectivity radius lower bounds on $M \times [ t_0, T'] \supset [ t_1^-, t_1 ] $.
	Let 
		$$\tl{h}(t) = (\tl{\Phi}_t^{-1} )^* \acute G_{\mathbf p }(t) - (1 - t) \phi_t^* \ol{g} 
		\quad \text{and} \quad
		\tl{ \tilde h}(t) = \left(\tl{\tilde \Phi}_t^{-1} \right)^* \acute G_{\mathbf p }(t) - (1 - t) \phi_t^* \ol{g} .$$
	Then
		$$\sup_{M \times [t_0, t_1)} \left| \tl{h}(t) \right|_{(1 - t) \phi_t^* \ol{g} } \le \epsilon_0,
		\qquad 
		\sup_{M \times [t_1^-, t_1^- + \delta] } \left| \tl{\tilde h}(t) \right|_{(1 - t) \phi_t^* \ol{g} } \le 2\epsilon_0,$$
		$$\text{and } \sup_{M \times [t_0, t_1)} \left| {}^{(1-t) \phi_t^* \ol{g} } \nabla \tl{h}(t) \right|_{(1 - t) \phi_t^* \ol{g} } \le \frac{\epsilon_1}{\sqrt{ 1  - t} }.$$	
	By similar logic as in the proof of Lemma \ref{Lem t_1^* Well-Defined}, one obtains derivative estimates for $\tl{\tl{h}}$ of the form
		$$ \left| {}^{(1-t) \phi_t^* \ol{g} } \nabla \tl{ \tl{h}}(t) \right|_{(1 - t) \phi_t^* \ol{g} } \lesssim_{n, M, \ol{g}, f} \frac{\epsilon_0 + \epsilon_1}{\sqrt{ 1  - t} }$$	
	for all $t$ in a neighborhood of $t_2^*$
	if $0 < \epsilon_0, \epsilon_1 \ll 1$ are sufficiently small depending on $n, M, \ol{g}, f$ and 
	$1 - t_0 \ll 1$ is sufficiently small depending on $n, M, \ol{g}, f$.
	If $0 < \epsilon_0 , \epsilon_1 \ll 1$ are sufficiently small depending on $n, M, \ol{g}, f$,
	then Proposition \ref{Prop Uniqueness HarMapFlow} implies that $\tl{\Phi}(x,t) = \tl{ \tl{\Phi}}(x,t)$ for all $x \in M$ and all $t$ in a neighborhood of $t_2^*$, which contradicts the definition of $t_2^*$.
	This contradiction implies $\tl{ \Phi} = \tl{ \tl{\Phi}}$ on the intersection of their domains, and we thereby obtain the desired extension to the time interval $[t_0, t_1^- + \delta ) \supsetneq [t_0, t_1^* )$ by gluing.

	Note in particular that 
	\begin{align*}
		\sup_{M \times [ t_0, t_1^- + \delta) }
		\left| h_{\mathbf p }(t) \right|_{\ol{g}}
		&=
		\sup_{M \times [ t_0, t_1^- + \delta)} 
		\left|  \frac{1}{1 -t} (\Phi_t^{-1})^* \acute G_{\mathbf p}(t) - \ol{g} \right|_{\ol{g}}	\\
		&=
		\sup_{M \times [ t_0, t_1^- + \delta)} 
		\left|   (\tl{\Phi}_t^{-1})^* \acute G_{\mathbf p}(t) -(1- t) \phi_t^* \ol{g} \right|_{(1 - t) \phi_t^* \ol{g}}		\\
		&\le 2 \epsilon_0
	\end{align*}
	by equation \eqref{C^0 Bounds on Extension Eqn}.
	Similar logic as in the proof of Lemma \ref{Lem t_1^* Well-Defined} then implies that in fact, for some $t_1^+ \in (t_1^* , t_1^- + \delta )$,
	$$h_{\mathbf p }(t) = \frac{1}{1 -t} (\Phi_t^{-1})^* \acute G_{\mathbf p}(t) - \ol{g} \in 
	\cl{B}[ \mu_{u,s} = 4, 2 \epsilon_0, 2 \epsilon_1, 2 \epsilon_2, [t_0, t_1^+) ]$$
	if the constants $\ol{p}, \Gamma_0, \gamma_0, \epsilon_0, \epsilon_1, \epsilon_2, \tau_0$ are taken sufficiently large or small as in the assumptions of Lemma \ref{Lem t_1^* Well-Defined}.
	This completes the proof of the existence statement for $\tl{\Phi}$.

	The uniqueness statement follows from Proposition \ref{Prop Uniqueness HarMapFlow}.
	Finally, the last statement of the lemma is a direct consequence of Lemma \ref{Lem Drift of the Grafting Region}.
\end{proof}

%%%%%%%%%%%%%%%%%%%%%%%%%%%%%%%%%%%%%%%%%%%%%%%%%%%%%%%%%%
\subsection{Completing the Proof of \ref{Main Thm}}
%%%%%%%%%%%%%%%%%%%%%%%%%%%%%%%%%%%%%%%%%%%%%%%%%%%%%%%%%%

In this subsection, we complete the proof of Theorem \ref{Main Thm} through the use of a Wa{\.z}ewski box argument.

\begin{assumption} \label{Assume for Contradiction}
	Throughout this subsection or equivalently the remainder of the paper, 
	we assume that the constants $\ol{p}, \Gamma_0, \gamma_0,  \epsilon_0, \epsilon_1, \epsilon_2, \tau_0$
	sufficiently large or small so that Lemmas \ref{Lem t_1^* Well-Defined} and \ref{Lem Must Exit Through clB} apply.
	In other words,
	$0 < \ol{p} \le 1$,
	$ \Gamma_0 \gg 1$ is sufficiently large (depending on $n, M, \ol{g}, f$),
	$0 < \epsilon_1, \epsilon_2 \ll 1$ are sufficiently small (depending on $n, M, \ol{g}, f$),
	$0 < \epsilon_0 \ll 1$ is sufficiently small (depending on $n, M, \ol{g}, f, \epsilon_1, \epsilon_2 $),
	$0 < \gamma_0 \ll 1$ is sufficiently small (depending on $n, M, \ol{g}, f, \lambda_*, \epsilon_0$), and
	$\tau_0 \gg 1$ is sufficiently large (depending on $n, M, \ol{g}, f, \lambda_*, \Gamma_0, \gamma_0$).
	
	Also, throughout this subsection, suppose for the sake of contradiction that $t_1^*( \mathbf p ) < 1$ for all $| \mathbf p| \le \ol{p} e^{\lambda_* \tau_0}$.
	We will arrive at a contradiction at the end of this subsection in the proof of \ref{Main Thm}.
\end{assumption}

For each $\mathbf p$, consider the smooth family of diffeomorphisms $\tl{\Phi}: M \times [ t_0, t_1^+( \mathbf p ) ) \to M$ as in Lemma \ref{Lem Must Exit Through clB} and let
	$$h_{\mathbf p }(t) = \frac{1}{1 -t} ( \Phi_t^{-1})^* \acute G_{\mathbf p }(t) - \ol{g}
	\qquad \text{ for all } t \in [t_0, t_1^+( \mathbf p )  ) .$$
	
\begin{lem} \label{Lem t_mu Well-Defined}
	Let $| \mathbf p | \le \ol{p} e^{\lambda_* \tau_0}$.
	For all $0 < \mu_u, \mu_s < 2$ and all $t_2 \in ( t_1^* (\mathbf p ), t_1^+( \mathbf p ))$,
	$$h_{\mathbf p }  \notin 
	\cl{B}[ \lambda_*, \mu_u   , \mu_s , \epsilon_0, \epsilon_1, \epsilon_2,  [t_0, t_2 ] ].$$
\end{lem}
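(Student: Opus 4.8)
The plan is to argue by contradiction, using the definition of $t_1^*(\mathbf p)$ as an infimum (remark \ref{Rem t_1^* = inf}) together with the openness of the conditions defining membership in $\cl{P}$. Suppose, for some $0 < \mu_u, \mu_s < 2$ and some $t_2 \in (t_1^*(\mathbf p), t_1^+(\mathbf p))$, that $h_{\mathbf p} \in \cl{B}[\lambda_*, \mu_u, \mu_s, \epsilon_0, \epsilon_1, \epsilon_2, [t_0, t_2]]$. Since $\mu_u, \mu_s < 2$, this in particular gives $h_{\mathbf p} \in \cl{B}[\lambda_*, 2, 2, \epsilon_0, \epsilon_1, \epsilon_2, [t_0, t_2]]$ as well (the set $\cl{B}$ is monotone in the constants $\mu_u, \mu_s$). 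By lemma \ref{Lem Must Exit Through clB}, on $[t_0, t_1^+(\mathbf p))$ there is a unique smooth solution $\tl\Phi$ of the harmonic map heat flow with $\tl\Phi_t$ a diffeomorphism for each $t$, and we have $T(\mathbf p) > t_1^*(\mathbf p)$, so $t_2 < t_1^+(\mathbf p) < T(\mathbf p)$ and condition (2) of definition \ref{Defn clP} (that $T(\mathbf p) \in [t_2, 1]$) holds. Also condition (1), $|\mathbf p| \le \ol p\, e^{\lambda_* \tau_0}$, holds by hypothesis. Hence all three conditions of definition \ref{Defn clP} are met with $t_1 = t_2$ and $\mu_u = \mu_s = 2$, so $\mathbf p \in \cl{P}[\lambda_*, \ol p, \Gamma_0, \gamma_0, 2, 2, \epsilon_0, \epsilon_1, \epsilon_2, t_0, t_2]$ with $t_2 > t_1^*(\mathbf p)$, contradicting the definition of $t_1^*(\mathbf p)$ as the supremum of such $t_1$.

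The one subtlety to address carefully is that, strictly speaking, the definition of $\cl{P}$ requires a smooth $\tl\Phi : M \times [t_0, t_2) \to M$ solving \eqref{tlPhi Evol Eqn} with $\tl\Phi_t$ a diffeomorphism for all $t \in [t_0, t_2)$, together with $h_{\mathbf p}(t) \in \cl{B}[\ldots, [t_0, t_2)]$; since $t_2 < t_1^+(\mathbf p)$, the restriction of the $\tl\Phi$ from lemma \ref{Lem Must Exit Through clB} to $[t_0, t_2)$ furnishes exactly such a solution, and $h_{\mathbf p} \in \cl{B}[\ldots, [t_0, t_2]]$ restricts to $\cl{B}[\ldots, [t_0, t_2)]$. (Here one uses that all the conditions defining $\cl{B}$ — compact support, the $L^2_f$ bounds, the $C^2$ bounds — are conditions imposed pointwise in $\tau \in I$, hence stable under shrinking the interval.) So the membership $\mathbf p \in \cl{P}[\ldots, t_0, t_2]$ genuinely follows, completing the contradiction.

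I expect the main (minor) obstacle to be purely bookkeeping: making sure the indices $\mu_u, \mu_s$ and the closed-versus-half-open interval distinction line up between the hypothesis on $h_{\mathbf p}$, the output of lemma \ref{Lem Must Exit Through clB}, and the definition of $\cl{P}$ — there is no hard analysis here, since all the analytic content (existence and uniqueness of $\tl\Phi$ past $t_1^*$, the curvature bounds, the fact that $t_1^* < T(\mathbf p)$) has already been extracted in lemma \ref{Lem Must Exit Through clB}. The argument is therefore essentially a one-paragraph deduction from the definitions once lemma \ref{Lem Must Exit Through clB} is invoked.
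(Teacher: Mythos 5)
Your proposal is correct and follows essentially the same route as the paper: assume membership in $\cl{B}[\lambda_*,\mu_u,\mu_s,\epsilon_0,\epsilon_1,\epsilon_2,[t_0,t_2]]$, use monotonicity in $\mu_u,\mu_s$ and restriction to $[t_0,t_2)$ to conclude $\mathbf p \in \cl{P}[\lambda_*,\ol{p},\Gamma_0,\gamma_0,2,2,\epsilon_0,\epsilon_1,\epsilon_2,t_0,t_2]$ with $t_2 > t_1^*(\mathbf p)$, contradicting the definition of $t_1^*(\mathbf p)$. Your extra bookkeeping (verifying conditions (1)--(3) of definition \ref{Defn clP} via lemma \ref{Lem Must Exit Through clB}) is exactly what the paper leaves implicit, so there is no gap.
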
	
\begin{proof}
	Suppose not.
	Then there exists $0 < \mu_u, \mu_s < 2$ and $t_2 \in ( t_1^*, t_1^+ )$
	such that
	\begin{align*}
		h_{\mathbf p } &\in \cl{B} [ \lambda_* , \mu_u, \mu_s, \epsilon_0, \epsilon_1, \epsilon_2, [t_0, t_2] ]	
		\\
		&\subset  \cl{B} [ \lambda_* , 2, 2, \epsilon_0, \epsilon_1, \epsilon_2, [t_0, t_2) ].
	\end{align*}
	Hence,
		$$\mathbf p \in \cl{P} [\lambda_*, \ol{p}, \Gamma_0, \gamma_0, 2, 2, \epsilon_0, \epsilon_1, \epsilon_2, t_0, t_2 ]$$
	and $t_2 > t_1^*( \mathbf p )$, which contradicts the definition of $t_1^*( \mathbf p )$.
\end{proof}
		
For $0 < \mu_u , \mu_s\le  2$, define
\begin{equation}
	t_{\mu_u, \mu_s}^* ( \mathbf p ) 
	= \inf \{ t_2 \in [t_0, t_1^+ ( \mathbf p ) ) :
	h_{\mathbf p } \notin \cl{B} [ \lambda_*, \mu_u , \mu_s, \epsilon_0, \epsilon_1, \epsilon_2, [t_0, t_2] ] \}
\end{equation}
as the first time $h_{\mathbf p}$ exits the set $\cl{B}$ defined with parameters $\mu_u, \mu_s$.
By Lemma \ref{Lem t_mu Well-Defined}, $t_{\mu_u, \mu_s}^* ( \mathbf p )$ is well-defined
and 
	$$t_{\mu_u, \mu_s}^*( \mathbf p ) \le t_{2,2}^* ( \mathbf p ) =  t_1^*( \mathbf p )  <t_1^+( \mathbf p ) < 1.$$
Moreover,
	$$h_{\mathbf p} \in \cl{B} [ \lambda_* , \mu_u, \mu_s, \epsilon_0, \epsilon_1, \epsilon_2, [t_0, t_{\mu_u, \mu_s}^* ( \mathbf p )] ].$$
Indeed, this follows from the fact that $h_{\mathbf p}$ is smooth, its support can be controlled by Lemma \ref{Lem Must Exit Through clB}, and that Definition \ref{Defn Box} of $\cl{B}$ is defined by closed conditions.

For the closed ball $\ol{B_{\ol{p} e^{\lambda_* \tau_0} }} \subset \R^K$ of radius $\ol{p} e^{\lambda_* \tau_0}$ centered at the origin,
we consider the map
\begin{gather*}
	\cl{F} : \ol{B_{\ol{p} e^{\lambda_* \tau_0} }} \to \R^K	\qquad \text{ given by}	\\
	\cl{F}(\mathbf p ) 
	= 
	\left( \big( h_{\mathbf p }(t_{\mu_u, \mu_s}^*(\mathbf p) )  , h_1 \big)_{L^2_f(M)}, 
	\dots ,
	\big( h_{\mathbf p }(t_{\mu_u, \mu_s}^*( \mathbf p) )  , h_{K} \big)_{L^2_f(M)} \right).
\end{gather*}

\begin{lem} \label{Lem Homotopic to Id on Bndry}
	Assume $0 < \mu_u < \frac{1}{2} \ol{p}$ and 
	$\tau_0 \gg 1$ is sufficiently large (depending on $n, M, \ol{g}, f, \lambda_* , \gamma_0, \mu_u$).
	Consider the closed annulus $\ol{A} \subset \R^K$ given by
		$$\ol{A} = \{ \mathbf p \in \R^K : 2 \mu_u e^{\lambda_* \tau_0} \le | \mathbf p | \le \ol{p} e^{\lambda_* \tau_0} \}.$$
		
	For all $\mathbf p \in \ol{A}$ and any $0 < \mu_s < 2$, $t_{\mu_u, \mu_s}^* ( \mathbf p ) = t_0$.
	Moreover, $\cl{F}\left( \ol{A} \right) \subset \R^K \setminus \{ \mathbf 0 \}$ and the restricted map
		$$\cl{F}|_{\ol{A}} : \ol{A} \to \R^K \setminus \{ \mathbf 0 \}$$
	is homotopic to the identity $Id : \ol{A} \to \ol{A} \subset \R^K \setminus \{ 0 \}$.
\end{lem}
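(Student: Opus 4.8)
The plan is to use the fact that on the annulus $\ol A$ the parameter $\mathbf p$ is large enough that $h_{\mathbf p}$ already fails the unstable $L^2_f$-bound in the definition of $\cl B$ at the initial time $\tau_0$; this will force $t^*_{\mu_u,\mu_s}(\mathbf p)=t_0$ on $\ol A$, after which $\cl F|_{\ol A}$ is manifestly a tiny perturbation of the identity and a straight-line homotopy concludes the argument.

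\emph{Step 1: $t^*_{\mu_u,\mu_s}(\mathbf p)=t_0$ for $\mathbf p\in\ol A$.} Recall from remark \ref{Rem acute G Observations} that $h_{\mathbf p}(t_0)=\eta_{\gamma_0}\sum_{j=1}^{K}p_jh_j$, and that by the $L^2_f$-orthonormality of $\{h_j\}$ and definition \ref{Defn Projs}, $\|\pi_u h_{\mathbf p}(t_0)\|_{L^2_f}^2=\sum_{j=1}^{K}(h_{\mathbf p}(t_0),h_j)_{L^2_f}^2$. I would combine this with estimate \eqref{L^2_f Est at t_0 Unstable} of lemma \ref{Lem L^2_f Ests at t_0} to get
$$\Bigl|\ \|\pi_u h_{\mathbf p}(t_0)\|_{L^2_f}-|\mathbf p|\ \Bigr|\le\Bigl(\sum_{j=1}^{K}\bigl|(h_{\mathbf p}(t_0),h_j)_{L^2_f}-p_j\bigr|^2\Bigr)^{1/2}\lesssim_{n,M,\ol g,f,\lambda_*}e^{-\frac{\gamma_0}{100}e^{\tau_0}}.$$
For $\mathbf p\in\ol A$ one has $|\mathbf p|\ge 2\mu_u e^{\lambda_*\tau_0}$, and since the error above decays doubly exponentially in $\tau_0$ whereas $\mu_u e^{\lambda_*\tau_0}$ decays only exponentially, choosing $\tau_0\gg1$ (depending on $n,M,\ol g,f,\lambda_*,\gamma_0,\mu_u$) yields $\|\pi_u h_{\mathbf p}(t_0)\|_{L^2_f}>\mu_u e^{\lambda_*\tau_0}$. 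Thus $h_{\mathbf p}$ violates one of the $L^2_f$-bounds of definition \ref{Defn Box} already at $\tau=\tau_0$, so $h_{\mathbf p}\notin\cl B[\lambda_*,\mu_u,\mu_s,\epsilon_0,\epsilon_1,\epsilon_2,[t_0,t_2]]$ for every $t_2\in[t_0,t_1^+(\mathbf p))$, whence $t^*_{\mu_u,\mu_s}(\mathbf p)=t_0$. (The hypotheses $0<\mu_s<2$ and $\mu_u<\tfrac12\ol p<2$ guarantee that $t^*_{\mu_u,\mu_s}$ is defined, by lemma \ref{Lem t_mu Well-Defined}, and $\mu_u<\tfrac12\ol p$ also makes $\ol A$ a nondegenerate closed annulus.)

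\emph{Step 2: $\cl F|_{\ol A}$ and the homotopy.} Since $t^*_{\mu_u,\mu_s}(\mathbf p)=t_0$ on $\ol A$, the map there is $\cl F(\mathbf p)=\bigl((h_{\mathbf p}(t_0),h_1)_{L^2_f},\dots,(h_{\mathbf p}(t_0),h_K)_{L^2_f}\bigr)$, which is affine, hence continuous, in $\mathbf p$; writing $\cl F(\mathbf p)=\mathbf p+\cl E(\mathbf p)$, estimate \eqref{L^2_f Est at t_0 Unstable} gives $|\cl E(\mathbf p)|\lesssim_{n,M,\ol g,f,\lambda_*}e^{-\frac{\gamma_0}{100}e^{\tau_0}}$. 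Enlarging $\tau_0$ exactly as in Step 1, I may assume $|\cl E(\mathbf p)|<\mu_u e^{\lambda_*\tau_0}\le\tfrac12|\mathbf p|$ for all $\mathbf p\in\ol A$, so that $|\cl F(\mathbf p)|\ge|\mathbf p|-|\cl E(\mathbf p)|\ge\tfrac12|\mathbf p|>0$; hence $\cl F(\ol A)\subset\R^{K}\setminus\{\mathbf 0\}$. Finally, define $H:\ol A\times[0,1]\to\R^{K}$ by $H(\mathbf p,s)\doteqdot(1-s)\mathbf p+s\,\cl F(\mathbf p)=\mathbf p+s\,\cl E(\mathbf p)$; it is continuous, with $H(\cdot,0)=Id$ and $H(\cdot,1)=\cl F|_{\ol A}$, and $|H(\mathbf p,s)|\ge|\mathbf p|-|\cl E(\mathbf p)|\ge\tfrac12|\mathbf p|>0$ on $\ol A\times[0,1]$, so $H$ lands in $\R^{K}\setminus\{\mathbf 0\}$ and is the desired homotopy.

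\emph{Main obstacle.} The one point that requires care is the competition of scales in Step 1: lemma \ref{Lem L^2_f Ests at t_0} must be quoted with its \emph{doubly} exponential gain, since this is precisely what makes the error negligible compared to $\mu_u e^{\lambda_*\tau_0}$ once $\tau_0$ is large (depending on the indicated constants), and hence what promotes the immediate-exit behaviour from holding pointwise in $\mathbf p$ to holding uniformly over the whole annulus. Everything else is soft point-set topology.
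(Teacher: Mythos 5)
Your proposal is correct and follows essentially the same route as the paper: it uses the doubly exponential estimate of lemma \ref{Lem L^2_f Ests at t_0} to show $\|\pi_u h_{\mathbf p}(t_0)\|_{L^2_f} > \mu_u e^{\lambda_*\tau_0}$ on $\ol{A}$ (hence immediate exit, $t^*_{\mu_u,\mu_s}(\mathbf p)=t_0$), and then the straight-line homotopy $\mathbf p + s(\cl{F}(\mathbf p)-\mathbf p)$ with the same estimate to keep the image away from the origin. The only cosmetic difference is that you phrase the first step via the reverse triangle inequality in $\ell^2$ and note the linearity of $\cl F$ on $\ol A$, whereas the paper bounds $\|\pi_u h_{\mathbf p}(t_0)\|_{L^2_f}$ from below directly; these are interchangeable.
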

\begin{proof}
	Let $2 \mu_u e^{\lambda_* \tau_0} \le |\mathbf p | \le \ol{p} e^{\lambda_* \tau_0} $.
	Then
	\begin{align*}
		\| \pi_u h_{\mathbf p }(t_0) \|_{L^2_f}
		&= \sqrt{ \sum_{j = 1}^{K} ( h_{\mathbf p}( t_0) , h_j )_{L^2_f}^2 }	\\
		&\ge | \mathbf p | - C(n, M, \ol{g}, f, \lambda_*) e^{- \frac{\gamma_0}{100} e^{\tau_0}}	
		&& ( \text{Lemma \ref{Lem L^2_f Ests at t_0}} )	\\
		&\ge 2 \mu_ue^{\lambda_* \tau_0} - C(n, M, \ol{g}, f, \lambda_*) e^{- \frac{\gamma_0}{100} e^{\tau_0}}	\\
		&> \mu_ue^{\lambda_* \tau_0}
	\end{align*}	
	if $\tau_0 \gg 1$ is sufficiently large depending on $n, M, \ol{g}, f, \lambda_* , \gamma_0, \mu_u$.
	$\| \pi_u h_{\mathbf p}(t_0) \|_{L^2_f} > \mu_u e^{\lambda_* \tau_0}$
	therefore implies 
	$$h_{\mathbf p }(t_0) \notin \cl{B} [ \lambda_* , \mu_u, \mu_s, \epsilon_0, \epsilon_1 , \epsilon_2, \{ t_0 \} ]$$
	and thus $t_{\mu_u, \mu_s}^* ( \mathbf p )  = t_0$.
	
	Now, consider the straight-line homotopy from the identity to $\cl{F}$, that is
		$$\cl{F}_s ( \mathbf p ) \doteqdot \mathbf p + s ( \cl{F}(\mathbf p ) - \mathbf p ) ,
		\qquad  (s \in [0,1]).$$
	If $\tau_0 \gg 1$ is sufficiently large depending on $n, M, \ol{g}, f, \lambda_* ,\gamma_0, \mu_u$, then
	for any $\mathbf p \in \ol{A}$ and $s \in [0, 1]$,
	\begin{align*}
		& \quad \,     \left| \mathbf p + s( \cl{F}( \mathbf p ) - \mathbf p)  \right|	\\
		&= 
		\left| \mathbf p + s \left( \big( h_{\mathbf p }(t_0 )  , h_1 \big)_{L^2_f(M)} - p_1, 
		\dots ,
		\big( h_{\mathbf p }(t_0 )  , h_{K} \big)_{L^2_f(M)} - p_{K } \right)  \right|	\\
		&\ge | \mathbf p | - s \left| \left( \big( h_{\mathbf p }(t_0 )  , h_1 \big)_{L^2_f(M)} - p_1, 
		\dots ,
		\big( h_{\mathbf p }(t_0 )  , h_{K} \big)_{L^2_f(M)} - p_{K} \right)  \right|	\\
		&\ge 	2 \mu_u e^{\lambda_* \tau_0} 
		- C(n, M , \ol{g}, f, \lambda_*) e^{- \frac{ \gamma_0}{100} e^{\tau_0}}	
		&& \eqref{L^2_f Est at t_0 Unstable}		\\
		&> 0 && ( \tau_0 \gg 1).
	\end{align*}
	Therefore, $\cl{F}( \ol{A} ) = \cl{F}_{s = 1} ( \ol{A} ) \subset \R^K \setminus \{ 0 \}$
	and the straight-line homotopy $\cl{F}_s$ gives a homotopy from the identity $Id: \ol{A} \to \ol{A} \subset \R^K \setminus \{ \mathbf 0 \}$ to $\cl{F}|_{\ol{A}} : \ol{A} \to \R^K \setminus \{ \mathbf 0 \}$.
\end{proof}

\begin{lem} \label{Lem Exits Unstable Side}
	If $0 < \ol{p} \ll 1$ is sufficiently small (depending on $n, M, \ol{g}, f, \lambda_*$),
	$0 < \epsilon_0 \ll 1$ is sufficiently small (depending on $n, M, \ol{g}, f, \lambda_*$),
	$0 < \mu_u, \mu_s \ll 1$ are sufficiently small (depending on $n, M, \ol{g}, f, \lambda_*$),
	$0 < \gamma_0 \ll 1$ is sufficiently small (depending on $n, M, \ol{g}, f, \lambda_*, \epsilon_0, \epsilon_1, \epsilon_2$),
	and 
	$\tau_0 \gg 1$ is sufficiently large (depending on $n, M, \ol{g}, f, \lambda_*, \ol{p}, \epsilon_0, \epsilon_1, \epsilon_2, \gamma_0, \mu_u, \mu_s $),
	then
		$$t_{\mu_u, 1}^* ( \mathbf p ) = t_{\mu_u, \mu_s}^* ( \mathbf p) < t_1^* ( \mathbf p )$$
		for all $|\mathbf p | \le \ol{p} e^{\lambda_* \tau_0}$.	
\end{lem}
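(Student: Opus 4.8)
The plan is to show that, for each $|\mathbf p| \le \ol{p}\,e^{\lambda_*\tau_0}$, the first exit of $h_{\mathbf p}$ from the box $\cl{B}[\lambda_*,\mu_u,\mu_s,\epsilon_0,\epsilon_1,\epsilon_2,\cdot]$ occurs strictly before $t_1^*(\mathbf p)$ and is forced by the unstable $L^2_f$ bound alone. Write $t^* = t_{\mu_u,\mu_s}^*(\mathbf p)$ and $\tau^* = -\ln(1-t^*)$. By assumption \ref{Assume for Contradiction} and lemma \ref{Lem Must Exit Through clB}, $\tl{\Phi}$ — hence $h = h_{\mathbf p}$ — exists, is smooth, and is compactly supported on $M\times[t_0,t_1^+(\mathbf p))$ with $t_0 \le t^* \le t_1^*(\mathbf p) < t_1^+(\mathbf p) < T(\mathbf p)$, and, as recorded just before this lemma, $h \in \cl{B}[\lambda_*,\mu_u,\mu_s,\epsilon_0,\epsilon_1,\epsilon_2,[t_0,t^*]]$. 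Since $\mu_u,\mu_s \ll 1$, on $[t_0,t^*]$ we have $\|h(\tau)\|_{L^2_f} \le \mu\,e^{\lambda_*\tau}$ with $\mu \doteqdot (\mu_u^2+\mu_s^2)^{1/2} \ll 1$, and $\|h\|_{L^\infty} \le \epsilon_0 < 1$.

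First I would apply Theorem \ref{Thm C^2 Bounds Preserved} on $M\times[\tau_0,\tau^*]$: its hypotheses hold, the initial condition $h(\cdot,\tau_0) = \eta_{\gamma_0}\sum_{j\le K}p_j h_j$ by remark \ref{Rem acute G Observations}, and the estimates $|\cl{E}_2|_{\ol{g}} \le \tfrac{C_0}{\Gamma_0}e^{-\tau}$, $|\ol{\nabla}\cl{E}_2|_{\ol{g}} \le \tfrac{C_0}{\Gamma_0^{3/2}}e^{-3\tau/2}$ and $\supp\cl{E}_2 \subset \{e^\tau \le f \le \Gamma_0 e^\tau\}$ by lemmas \ref{Lem difference from RF} and \ref{Lem Drift of the Grafting Region}. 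Choosing $\delta \le \epsilon_0$ with $W'\delta < \tfrac12\min\{\epsilon_0,\epsilon_1,\epsilon_2\}$, Theorem \ref{Thm C^2 Bounds Preserved} gives on $M\times[\tau_0,\tau^*]$ both $|h|+|\ol{\nabla}h|+|\ol{\nabla}^2h| \le W f^{|\lambda_*|}e^{\lambda_*\tau}$ and $|h|+|\ol{\nabla}h|+|\ol{\nabla}^2h| \le W'\delta$, so the three coarse $C^2$ constraints of $\cl{B}$ hold there with a fixed margin. Feeding the first bound into lemma \ref{Lem L^2_f Ests for Errors} yields $\|\cl{E}_1+\cl{E}_2\|_{L^2_f} \le C\,e^{2\lambda_*\tau}$ on $[\tau_0,\tau^*]$ with $C=C(n,M,\ol{g},f,\lambda_*)$; since $\|\pi_s h_{\mathbf p}(\tau_0)\|_{L^2_f} \lesssim e^{-\frac{\gamma_0}{100}e^{\tau_0}} < \mu_s e^{\lambda_*\tau_0}$ for $\tau_0 \gg 1$ (lemma \ref{Lem L^2_f Ests at t_0}), lemma \ref{Lem Immediately Exits}(2) gives $\|h_s(\tau)\|_{L^2_f} \le \mu_s e^{\lambda_*\tau}$ on $[\tau_0,\tau^*]$, with a strictly negative deficit–derivative at any point of equality; in particular the stable constraint cannot be the mechanism by which $h$ leaves $\cl{B}$.

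Next I would identify the exit and conclude. If $\|h_u(\tau_0)\|_{L^2_f} \ge \mu_u e^{\lambda_*\tau_0}$ then $h(\cdot,\tau_0)$ fails the unstable bound, either strictly or — by lemma \ref{Lem Immediately Exits}(1) — immediately afterward, so $t^* = t_0 = t_{\mu_u,1}^*(\mathbf p)$ (the unstable threshold is the same for both boxes) and $t_0 < t_1^*(\mathbf p)$ by lemma \ref{Lem t_1^* Well-Defined}. Otherwise $\|h_u(\tau_0)\|_{L^2_f} < \mu_u e^{\lambda_*\tau_0}$, and, using also the strict bounds above and lemma \ref{Lem C^m Ests at t_0} (with $\gamma_0 \ll 1$), $h(\cdot,\tau_0)$ lies strictly inside $\cl{B}[\lambda_*,\mu_u,\mu_s,\ldots,\{\tau_0\}]$, so $\tau^* > \tau_0$. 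At $\tau^*$ the coarse $C^2$ constraints hold with margin, the stable constraint is controlled, and $h(\cdot,\tau^*)$ is compactly supported (lemma \ref{Lem Must Exit Through clB}); were the unstable constraint also satisfied at $\tau^*$, all constraints would persist on $[t_0,t)$ for $t$ slightly past $t^*$ by continuity of the relevant quantities (and lemma \ref{Lem Immediately Exits}(2) for the stable one in case of equality), contradicting minimality of $t^*$. Hence $\|h_u(\tau^*)\|_{L^2_f} = \mu_u e^{\lambda_*\tau^*}$. Since $\mu_u,\mu_s < 2$ and $|h|+|\ol{\nabla}h|+|\ol{\nabla}^2h|(\cdot,\tau^*) \le W'\delta < \tfrac12\min\{\epsilon_0,\epsilon_1,\epsilon_2\}$, every defining condition of $\cl{B}[\lambda_*,2,2,\epsilon_0,\epsilon_1,\epsilon_2,\cdot]$ holds strictly at $\tau^*$; by continuity these persist on $[t_0,t^*+\delta']$ for some $\delta'>0$ with $t^*+\delta' < t_1^+(\mathbf p) < T(\mathbf p)$, which together with the existence and diffeomorphism property of $\tl{\Phi}$ from lemma \ref{Lem Must Exit Through clB} shows $\mathbf p \in \cl{P}[\lambda_*,\ol{p},\Gamma_0,\gamma_0,2,2,\epsilon_0,\epsilon_1,\epsilon_2,t_0,t^*+\delta']$, so $t_1^*(\mathbf p) \ge t^*+\delta' > t^*$. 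Finally, $\cl{B}[\lambda_*,\mu_u,\mu_s,\ldots] \subseteq \cl{B}[\lambda_*,\mu_u,1,\ldots]$ (as $\mu_s\le 1$) gives $t_{\mu_u,1}^*(\mathbf p) \ge t^*$, while lemma \ref{Lem Immediately Exits}(1) at $\tau^*$ (its error hypothesis holding on a right-neighborhood of $\tau^*$ by continuity) forces $\|h_u(\tau)\|_{L^2_f} > \mu_u e^{\lambda_*\tau}$ just past $\tau^*$, hence $t_{\mu_u,1}^*(\mathbf p) \le t^*$; so $t_{\mu_u,1}^*(\mathbf p) = t^* = t_{\mu_u,\mu_s}^*(\mathbf p) < t_1^*(\mathbf p)$.

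The main obstacle — and the reason the argument is organized around $t_{\mu_u,\mu_s}^*(\mathbf p)$ rather than the interval $[t_0,t_1^+(\mathbf p))$ furnished by lemma \ref{Lem Must Exit Through clB} — is the hypothesis $\mu \ll 1$ in Theorem \ref{Thm C^2 Bounds Preserved}: the weak box $\cl{B}[\lambda_*,4,4,2\epsilon_0,2\epsilon_1,2\epsilon_2,\cdot]$ only controls $\|h\|_{L^2_f}$ by a constant of size comparable to $6\,e^{\lambda_*\tau}$, so the improved $C^2$ bound and the weighted-$L^2$ error estimate are available only up to and including $t_{\mu_u,\mu_s}^*(\mathbf p)$. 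Consequently the upgrade of the coarse $C^2$ bounds to a fixed margin, and with it the identification of the unstable bound as the exit mechanism, must all be done on $[t_0,t_{\mu_u,\mu_s}^*(\mathbf p)]$, and the strict membership in $\cl{B}[\lambda_*,2,2,\ldots]$ just beyond that time — which is exactly what delivers $t_{\mu_u,\mu_s}^*(\mathbf p) < t_1^*(\mathbf p)$ — must be extracted purely from continuity together with the persistence statements (existence, diffeomorphism property, support control, and $T(\mathbf p) > t_1^+(\mathbf p)$) of lemma \ref{Lem Must Exit Through clB}.
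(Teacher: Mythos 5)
Your argument is correct and follows essentially the same route as the paper's proof: apply theorem \ref{Thm C^2 Bounds Preserved} on $[\tau_0,\tau(t^*_{\mu_u,\mu_s})]$ with $\delta=\min\{\epsilon_0,\epsilon_1,\epsilon_2\}/(2W')$, feed the resulting weighted $C^2$ bound into lemma \ref{Lem L^2_f Ests for Errors} and lemmas \ref{Lem L^2_f Ests at t_0}, \ref{Lem Immediately Exits} so that every defining constraint except the unstable one holds with a strict margin, and then use smoothness and compact support (lemma \ref{Lem Must Exit Through clB}) to push membership in the relaxed boxes slightly past the exit time, which yields both $t^*_{\mu_u,\mu_s}<t_1^*$ and $t^*_{\mu_u,1}=t^*_{\mu_u,\mu_s}$. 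Your only deviation is cosmetic---pinning down $\|\pi_u h_{\mathbf p}(\tau^*)\|_{L^2_f}=\mu_u e^{\lambda_*\tau^*}$ and invoking lemma \ref{Lem Immediately Exits}(1) rather than reading the exit mechanism off the strict margins and the upward-closedness of the exit condition as the paper implicitly does---and the place where you extend the error hypothesis to a right-neighborhood of $\tau^*$ ``by continuity'' is fine but deserves one explicit line (compact support and smoothness extend the weighted $C^2$ bound with constant $2W$, hence the $L^2_f$ bound on $\cl{E}_1+\cl{E}_2$, slightly past $\tau^*$).
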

\begin{proof}
	We first show $t_{\mu_u, \mu_s}^* ( \mathbf p) < t_1^* ( \mathbf p )$.
	Observe that the assumptions of Theorem \ref{Thm C^2 Bounds Preserved} hold 
	with $\tau_1 = \tau ( t_{\mu_u, \mu_s}^* ) = - \ln ( 1 - t_{\mu_u, \mu_s}^* )$,
	$\epsilon = \epsilon_0$,
	and $\mu = \sqrt{ \mu_u^2 +  \mu_s^2 }.$
	Let $W = W(n, M, \ol{g}, f, \lambda_*)$ and $W' = W' ( n, M, \ol{g}, f) \ge 1$ be as in Theorem \ref{Thm C^2 Bounds Preserved} and set $\delta = \frac{ \min \{ \epsilon_0, \epsilon_1, \epsilon_2 \} }{2 W'} \in ( 0, \epsilon_0]$.
	Then Theorem \ref{Thm C^2 Bounds Preserved} implies
	\begin{gather}
		\label{Exits Unstable C^2 Est 1}
		| h_{\mathbf p } | + | \ol{\nabla} h_{\mathbf p } | + | \ol{\nabla}^2 h_{\mathbf p } | \le W f^{| \lambda_* |} e^{\lambda_* \tau} 	\text{ and } \\
		\label{Exits Unstable C^2 Est 2}
		| h_{\mathbf p } | + | \ol{\nabla} h_{\mathbf p } | + | \ol{\nabla}^2 h_{\mathbf p } | \le W' \delta = \frac{1}{2} \min \{ \epsilon_0, \epsilon_1, \epsilon_2 \}	
	\end{gather}
	throughout $M \times [ \tau_0, \tau(t_{\mu_u, \mu_s}^*) ]$
	so long as 
	$0 < \ol{p} \ll 1$ is sufficiently small depending on $n , M, \ol{g}, f, \lambda_*$;
	$0 < \epsilon = \epsilon_0 \ll 1$ is sufficiently small depending on $n, M, \ol{g}, f, \lambda_*$;
	$0 < \mu_u, \mu_s \ll1 $ are sufficiently small depending on $n, M, \ol{g}, f, \lambda_*$;
	$0 < \gamma_0 \ll 1$ is sufficiently small depending on $n, M, \ol{g}, f, \lambda_*, \epsilon_0, \epsilon_1, \epsilon_2$;
	and $\tau_0 \gg 1 $ is sufficiently large depending on $n, M, \ol{g}, f, \lambda_*, \ol{p} , \epsilon_0, \epsilon_1, \epsilon_2, \gamma_0$.
	
	In particular, \eqref{Exits Unstable C^2 Est 2} implies
		$$h_{\mathbf p } \in \cl{B} \left[ \lambda_* , \mu_u, \mu_s, \frac{1}{2} \epsilon_0, \frac{1}{2} \epsilon_1, \frac{1}{2} \epsilon_2 , [ t_0, t_{\mu_u, \mu_s}^* ] \right].$$
	By Lemma \ref{Lem Must Exit Through clB}, for all $t \in [t_0, t_1^+ )$,
		$$\supp h_{\mathbf p} (t) \subset \{ f \le \Gamma_0 ( 1 - t)^{-1} \} 
		\subset \{ f \le \Gamma_0 ( 1 - t_1^+)^{-1} \} \Subset M$$
	since $t_1^+ < 1$.
	Since $h$ is smooth,
	it follows that, for all $t_2$ in a neighborhood of $t_{\mu_u, \mu_s}^*$,
	\begin{gather} \label{h in clB Est}
	\begin{aligned} 
		h_{\mathbf p } &\in \cl{B} \left[ \lambda_*, \frac{3}{2} \mu_u, \frac{3}{2} \mu_s, 
		\frac{3}{4} \epsilon_0	, \frac{3}{4} \epsilon_1, \frac{3}{4} \epsilon_2, [ t_0, t_2) \right] 	\\
		&\subset \cl{B} [ \lambda_*, 2, 2, 
		 \epsilon_0	,  \epsilon_1,  \epsilon_2, [ t_0, t_2) ]  		
	\end{aligned} \end{gather}
	if $0 < \mu_u, \mu_s \le \frac{4}{3}$.
	Thus, $t_1^*( \mathbf p ) \ge t_2 > t_{\mu_u, \mu_s}^* ( \mathbf p )$.
	
	Next, we prove that $t_{\mu_u, 1}^* = t_{\mu_u, \mu_s}^*$.
	First, note that Lemma \ref{Lem difference from RF}, Lemma \ref{Lem Must Exit Through clB}, and equations \eqref{Exits Unstable C^2 Est 1} and \eqref{Exits Unstable C^2 Est 2} 
	imply the assumptions of Lemma \ref{Lem L^2_f Ests for Errors} hold.
	Thus, 
		$$\| \cl{E}_1 \|_{L^2_f} + \| \cl{E}_2 \|_{L^2_f} \lesssim_{n, M, \ol{g}, f, \lambda_*} e^{2 \lambda_* \tau}$$
	for all $\tau \in [ \tau_0, \tau( t_{\mu_u, \mu_s}^* ) ]$ 
	when $0 < \epsilon_0 \ll 1$ is sufficiently small depending on $n$ and $\tau_0 \gg 1$ is sufficiently large depending on $n, M, \ol{g}, f, \lambda_*, \gamma_0$.
	Additionally, Lemma \ref{Lem L^2_f Ests at t_0} implies that,
	if $\tau_0 \gg 1$ is sufficiently large depending on $n, M, \ol{g}, f, \lambda_*, \gamma_0, \mu_s$, then
	\begin{align*}
		\| \pi_s h_{\mathbf p }(\tau_0) \|_{L^2_f} &\le \frac{2}{3} \mu_s e^{\lambda_* \tau_0}.
	\end{align*}
	Lemma \ref{Lem Immediately Exits} on $[ \tau_0, \tau( t_{\mu_u, \mu_s}^* ) ]$ 
	now implies 
	\begin{align*}
		\| \pi_s h_{\mathbf p }(\tau) \|_{L^2_f} &\le \frac{2}{3} \mu_s e^{\lambda_* \tau}
	\end{align*}		
	for all $\tau \in [\tau_0, \tau( t_{\mu_u, \mu_s}^* ) ]$.
	By smoothness of $h$, it now follows that \eqref{h in clB Est} can be improved to
	\begin{gather} \label{h in clB Est+} \begin{aligned}
		h_{\mathbf p } &\in \cl{B} \left[ \lambda_*, 2 , \frac{3}{4} \mu_s, \frac{3}{4} \epsilon_0, \frac{3}{4} \epsilon_1, \frac{3}{4} \epsilon_2, t_0 ,t_2 \right]	\\
		&\subset \cl{B} \left[ \lambda_*, 2 ,  \mu_s,  \epsilon_0, \epsilon_1,  \epsilon_2, t_0 ,t_2 \right]
	\end{aligned}	\end{gather}
	for all $t_2$ in a neighborhood of $t_{\mu_u, \mu_s}^*$.
	Therefore, $t_{\mu_u, \mu_s}^* = t_{\mu_u, 1}^*$.
\end{proof}

\begin{lem} \label{Lem Image Off 0}
	Under the same assumptions as Lemma \ref{Lem Exits Unstable Side},
	$\cl{F} : \ol{B_{\ol{p} e^{\lambda_* \tau_0} } } \to \R^K$ is continuous and
		$$\cl{F}(\mathbf p ) \ne \mathbf 0 \qquad \text{for all } | \mathbf p | \le \ol{p} e^{\lambda_* \tau_0}.$$
\end{lem}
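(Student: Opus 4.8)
The plan is to read off from lemma \ref{Lem Exits Unstable Side} that the trajectory $h_{\mathbf p}$ leaves $\cl{B}$ cleanly through the unstable $L^2_f$ face, so that $\cl{F}(\mathbf p)$ has norm exactly $\mu_u e^{\lambda_* \tau(t_{\mu_u,\mu_s}^*(\mathbf p))}>0$, and then to upgrade the transversality of this exit (lemma \ref{Lem Immediately Exits}(1)) together with the smooth dependence of everything on $\mathbf p$ into continuity of $\cl{F}$. Since $\{h_j\}_{j\in\mathbb{N}}$ is $L^2_f$-orthonormal, by definition of $\pi_u$ one has $|\cl{F}(\mathbf p)|^2=\sum_{j=1}^K (h_{\mathbf p}(t_{\mu_u,\mu_s}^*(\mathbf p)),h_j)_{L^2_f}^2=\|\pi_u h_{\mathbf p}(t_{\mu_u,\mu_s}^*(\mathbf p))\|_{L^2_f}^2$, so $\cl{F}(\mathbf p)\neq\mathbf 0$ is equivalent to $\|\pi_u h_{\mathbf p}(t_{\mu_u,\mu_s}^*(\mathbf p))\|_{L^2_f}>0$, and continuity of $\cl{F}$ reduces to joint continuity of $(\mathbf p,t)\mapsto h_{\mathbf p}(t)$ in $L^2_f$ together with continuity of $\mathbf p\mapsto t_{\mu_u,\mu_s}^*(\mathbf p)$.

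For the nonvanishing, write $t^*=t_{\mu_u,\mu_s}^*(\mathbf p)$ and $\tau^*=\tau(t^*)$. If $t^*=t_0$, then $h_{\mathbf p}(t_0)\notin\cl{B}[\lambda_*,\mu_u,\mu_s,\epsilon_0,\epsilon_1,\epsilon_2,\{t_0\}]$; by lemma \ref{Lem L^2_f Ests at t_0} the stable bound $\|\pi_s h_{\mathbf p}(t_0)\|_{L^2_f}$ is doubly-exponentially small, and by lemma \ref{Lem C^m Ests at t_0} all the $C^2$ quantities at $t_0$ are $\lesssim\ol{p}\gamma_0^{|\lambda_K|}$, so for the chosen $\gamma_0,\tau_0$ these conditions hold strictly and the only possible failure is $\|\pi_u h_{\mathbf p}(t_0)\|_{L^2_f}>\mu_u e^{\lambda_*\tau_0}>0$. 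If instead $t^*>t_0$, then the proof of lemma \ref{Lem Exits Unstable Side} (equations \eqref{h in clB Est} and \eqref{h in clB Est+}) shows that at $t^*$ the $C^2$ bounds and the stable-$L^2_f$ bound hold with a definite margin (improved by the factor $\tfrac34$); since $\cl{B}$ is defined by closed conditions and $h_{\mathbf p}\notin\cl{B}[\dots,[t_0,t_2]]$ for every $t_2>t^*$, the only condition that can be tight at $t^*$ is the unstable one, i.e. $\|\pi_u h_{\mathbf p}(t^*)\|_{L^2_f}=\mu_u e^{\lambda_*\tau^*}>0$. In either case $|\cl{F}(\mathbf p)|\ge\mu_u e^{\lambda_*\tau^*}>0$.

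For continuity, the first ingredient is that $G_{\mathbf p}(t)$ depends smoothly on $\mathbf p$, hence so do $\acute G_{\mathbf p}(t)$ and, through the well-posedness of the harmonic map heat flow in lemma \ref{Lem Must Exit Through clB}, $\tl\Phi$ and $\Phi$, on the common existence interval $[t_0,t_1^+(\mathbf p))$, which by lemma \ref{Lem Must Exit Through clB} contains $t_{\mu_u,\mu_s}^*(\mathbf p)$; combined with the uniform support control $\supp h_{\mathbf p}(t)\subset\{f\le\Gamma_0(1-t)^{-1}\}$ from the same lemma, this yields continuity of the $L^2_f$ inner products in $(\mathbf p,t)$. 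The second ingredient is continuity of $\mathbf p\mapsto t_{\mu_u,\mu_s}^*(\mathbf p)$: by lemma \ref{Lem Exits Unstable Side} the exit occurs through the unstable face, and by lemma \ref{Lem Immediately Exits}(1) the quantity $\|e^{-\lambda_*\tau}\pi_u h_{\mathbf p}\|_{L^2_f}$ (which is $C^1$ in $\tau$ since $h_{\mathbf p}$ is smooth with controlled support) has strictly positive $\tau$-derivative at $\tau^*$, so $\|\pi_u h_{\mathbf p}(\tau)\|_{L^2_f}>\mu_u e^{\lambda_*\tau}$ for $\tau$ slightly larger than $\tau^*$ while $h_{\mathbf p}$ remains in $\cl{B}$ on $[\tau_0,\tau^*]$; feeding this strict behaviour through the first ingredient to nearby $\mathbf p'$ gives both upper and lower semicontinuity of the exit time by the standard Ważewski "strong exit set" argument. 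Continuity of $\cl{F}$ then follows by composition.

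The only genuinely delicate point is the lower semicontinuity of the exit time $t_{\mu_u,\mu_s}^*$, the classical subtlety of Ważewski-type arguments: one must rule out that, for $\mathbf p'$ near $\mathbf p$, the trajectory $h_{\mathbf p'}$ merely grazes $\partial\cl{B}$ strictly before $t_{\mu_u,\mu_s}^*(\mathbf p)$. This is precisely what the strictness built into lemma \ref{Lem Exits Unstable Side}, namely that the $C^2$ faces and the stable $L^2_f$ face are strictly slack along the trajectory, together with the strict transversality of lemma \ref{Lem Immediately Exits}(1), are designed to prevent. A secondary technical point, which is what the uniform compact-support control in lemma \ref{Lem Must Exit Through clB} is for, is ensuring that the $L^2_f$ functionals entering the definition of $\cl{B}$ and of $\cl{F}$ are continuous on the $t$-dependent common interval of existence of the flows.
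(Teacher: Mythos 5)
Your proposal is correct and follows essentially the same route as the paper: the exit must occur through the unstable $L^2_f$ face (using the margins from \eqref{h in clB Est}, \eqref{h in clB Est+} and $t_{\mu_u,\mu_s}^*=t_{\mu_u,1}^*$), giving $|\cl{F}(\mathbf p)|\ge\mu_u e^{\lambda_*\tau(t^*)}>0$, and continuity comes from the strict entering/exiting behaviour of lemma \ref{Lem Immediately Exits} together with continuous dependence on $\mathbf p$, which sandwiches the exit time exactly as in the paper. Your extra case split at $t^*=t_0$ and the explicit discussion of lower semicontinuity are harmless refinements of the same argument, not a different method.
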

\begin{proof}
	By the definition of $t_{\mu_u, \mu_s}^* =t_{\mu_u, 1}^*$ and \eqref{h in clB Est} in the proof of Lemma \ref{Lem Exits Unstable Side},
	\begin{gather*}	\begin{aligned}
		h_{\mathbf p } &\in \cl{B} [\lambda_*, \mu_u,  1, \epsilon_0, \epsilon_1, \epsilon_2 , [ t_0, t_{\mu_u, \mu_s}^* ] ],	\\
		h_{\mathbf p } &\notin \cl{B} [\lambda_*, \mu_u, 1, \epsilon_0, \epsilon_1, \epsilon_2 , [ t_0, t_2 ] ],
		 \text{ and } \\
		h_{\mathbf p} &\in \cl{B} [\lambda_*, 2,2, \epsilon_0, \epsilon_1, \epsilon_2 , [ t_0, t_2 ] ]
	\end{aligned} \end{gather*}
	for all $t_2 $ in a neighborhood of $t_{\mu_u, \mu_s}^*$ with $t_2 > t_{\mu_u, \mu_s}^*$.
	By Definition \ref{Defn Box} of $\cl{B}$ and the smoothness of $h_{\mathbf p}$, it follows that
	\begin{equation} \label{F(p) = mu_u}
		|\cl{F}( \mathbf p ) |= \| \pi_u h_{\mathbf p }( t_{\mu_u, \mu_s}^* ) \|_{L^2_f} = \mu_u e^{\lambda_* \tau(t_{\mu_u, \mu_s}^*)}  > 0.
	\end{equation}		

	For the continuity of $\cl{F}$, 	
	observe that \eqref{F(p) = mu_u} and Lemma \ref{Lem Immediately Exits} implies 
	that for arbitrary $\tau', \tau''$ in a neighborhood of $\tau(t_{\mu_u, \mu_s}^*)$ 
	with $\tau' < \tau(t_{\mu_u, \mu_s}^*) < \tau''$ 
		$$\| \pi_u h_{\mathbf p }(\tau' ) \|_{L^2_f} < \mu_u e^{\lambda_* \tau'}
		\qquad \text{ and }		\qquad 
		\| \pi_u h_{\mathbf p }(\tau'' ) \|_{L^2_f} > \mu_u e^{\lambda_* \tau''}.$$
	\eqref{h in clB Est+} therefore implies that 	
	$h_{\mathbf p}( \tau')$ is in the interior of
	$\cl{B} [ \lambda_* , \mu_u, \mu_s, \epsilon_0, \epsilon_1, \epsilon_2 , \{ \tau' \} ]$
	and $h_{\mathbf p}( \tau'')$ is in the interior of the complement of
	$\cl{B} [ \lambda_* , \mu_u, \mu_s, \epsilon_0, \epsilon_1, \epsilon_2 , \{ \tau'' \} ]$.
	The same holds for all $\mathbf p'$ in a neighborhood of $\mathbf p$ by continuity and thus
	$t( \tau') \le t_{\mu_u, \mu_s}^*( \mathbf p ') \le t( \tau'')$.
	Continuity of $\cl{F}$ then follows from the definition of $\cl{F}$.
\end{proof}

We may now prove main theorem of the paper.

\begin{proof}[Proof of Theorem \ref{Main Thm}]
	By Lemmas \ref{Lem Homotopic to Id on Bndry}--\ref{Lem Image Off 0} with parameters chosen sufficiently large or small as in the assumptions of these lemmas,
	$\cl{F} : \ol{B_{\ol{p} e^{\lambda_* \tau_0} } } \to \R^K \setminus \{ 0 \}$
	is a continuous function whose restriction to $\ol{A} = \{ \mathbf p \in \R^K : 2 \mu_u e^{\lambda_* \tau_0} \le | \mathbf p | \le \ol{p} e^{\lambda_* \tau_0} \}$
	is homotopic to the identity $Id : \ol{A} \to \ol{A} \subset \R^K \setminus \{ 0 \}$.
	However, such a map cannot exist as $Id : \ol{A} \to \ol{A} \subset \R^K \setminus \{  0 \} $ is not null-homotopic.
	Therefore, the assumption (in assumption \ref{Assume for Contradiction}) that $t_1^*( \mathbf p) < 1$ for all $| \mathbf p | \le \ol{p} e^{\lambda_* \tau_0}$ must be false.
	Hence, there exists $\mathbf p^*$ with $| \mathbf p^* | \le \ol{p} e^{\lambda_* \tau_0}$ such that
		$$1 = t_1^*( \mathbf p^*) 
		=  \sup \{ t_1 \in (t_0, 1] : \mathbf p \in \cl{P} [ \lambda_* , \ol{p}, \Gamma_0, \gamma_0, \mu_u = 2, \mu_s =2, \epsilon_0, \epsilon_1, \epsilon_2, t_0, t_1 ] \} .$$

	Therefore, there exists a sequence $t^{(k)} \nearrow 1$ and smooth families of diffeomorphisms $\tl{\Phi}^{(k)} : M \times [ t_0, t^{(k)} ) \to M$ solving the harmonic map heat flow \eqref{tlPhi Evol Eqn} such that
		$$h_{\mathbf p^*} ( t) = \frac{1}{1 - t} ((\Phi_t^{(k)})^{-1})^* \acute G_{\mathbf p^* }(t) - \ol{g} 
		\in \cl{B} [ \lambda_*, \mu_u = 2, \mu_s = 2, \epsilon_0, \epsilon_1, \epsilon_2, t_0, t^{(k)} ].$$
	With $0 < \epsilon_0, \epsilon_1 \ll 1$ sufficiently small depending on $n$,
	Proposition \ref{Prop Uniqueness HarMapFlow} implies that, for any $k, k'$,  $\tl{\Phi}^{(k)} = \tl{\Phi}^{(k')}$ on the intersection of their domains.
	Hence, the $\tl{\Phi}^{(k)}$ paste to define a smooth solution $\tl{\Phi} : M \times [ t_0, 1) \to M$ of the harmonic map heat flow \eqref{tlPhi Evol Eqn} which is a diffeomorphism on each time slice and satisfies
		$$h_{\mathbf p^*} ( t) = \frac{1}{1 - t} (\Phi_t^{-1})^* \acute G_{\mathbf p^* }(t) - \ol{g} 
		\in \cl{B} [ \lambda_*, \mu_u = 2, \mu_s = 2, \epsilon_0, \epsilon_1, \epsilon_2, t_0, 1 ].$$
	By Definition \ref{Defn Box} of $\cl{B}$, $h_{\mathbf p^*}(t)$ remains in a small $C^2(M)$-neighborhood of $0$ for all $t$ and converges in $L^2_f(M)$ to $0$ as $t \nearrow 1$.
	By similar logic as in the beginning of the proof of Lemma \ref{Lem Exits Unstable Side}, Theorem \ref{Thm C^2 Bounds Preserved} applies to show 
	\begin{equation*}
		| h_{\mathbf p^*}(\tau) | + | \ol{\nabla} h_{\mathbf p^*}(\tau) | + | \ol{\nabla}^2 h_{\mathbf p^*}(\tau) | \lesssim_{n, M, \ol{g}, f, \lambda_*} f^{| \lambda_* |} e^{\lambda_* \tau}
		\qquad 	\text{for all $\tau \in [\tau_0 , \infty)$.}
	\end{equation*}
	Therefore, $h_{\mathbf p^*}(t)$ converges in $C^2_{loc}(M)$ to $0$ as $t \nearrow 1$.
	The local interior estimates in Lemma \ref{Local Int Est C^0 to C^m_loc} then bootstrap this to convergence in $C^\infty_{loc}(M)$.

	Recall from Remark \ref{Rem acute G Observations} and the choice of $\eta_{\Gamma_0}$ that
		$$ \iota^* \acute G_{\mathbf p^*} (t)= G_{\mathbf p^*}(t)  \quad \text{ on }
		 \left\{ f < \frac{4}{6} \Gamma_0 \right\} \subset \iota^{-1}\left( \bigsqcup_{\omega \in A} \{ x \in M : \eta_{\Gamma_0} (x) = 1 \} \right) \subset \cl{M}.$$
	We claim that, for any $t \in [t_0, 1)$,
	\begin{equation} \label{Exhaustion}	
		( \phi_t \circ \tl{\Phi}_t \circ \iota ) \left( \left\{  f < \frac{4}{6} \Gamma_0 \right\} \right) 
		\supset \{ f < ( 1 - t)^{-1} \}.
	\end{equation}
	Indeed, let $t \in [t_0, 1)$ and let $x \in M$ with $f(x) \ge \frac{4}{6} \Gamma_0$.	
	By similar logic as in the beginning of the proof of Lemma \ref{Lem Drift of the Grafting Region}, 
	the local drift estimate of Lemma \ref{Lem Local Drift Control} can be applied to deduce 
		$$f( \tl{\Phi}_{t} (x) ) \ge \frac{1}{2} \Gamma_0 $$
	so long as $\Gamma_0 \gg 1$ is sufficiently large depending on $n, M, \ol{g}, f$;
	$0 < \epsilon_0, \epsilon_1 \ll 1$ are sufficiently small depending on $n$;
	and $0 < 1 - t_0 \ll 1$ is sufficiently small depending on $n, M, \ol{g}, f, \Gamma_0$.
	Therefore, for all $t \in [t_0, 1)$,
		$$\tl{\Phi}_t \left( \left\{ x \in M : f(x) \ge \frac{4}{6} \Gamma_0 \right\} \right) \subset \left\{ x \in M : f(x) \ge \frac{1}{2} \Gamma_0 \right\}.$$
	The proof of Lemma \ref{Lem Drift of the Grafting Region}  shows
		$$\phi_t ( \{ f \ge \Gamma_0  / 2 \} ) \subset \{ f \ge ( 1 - t)^{-1} \}
		\qquad \text{ for all } t \in [0, 1)$$	
	when $\Gamma_0 \gg 1$ is sufficiently large depending on $n, M, \ol{g}, f$.
	Hence, 
		$$(\phi_t \circ \tl{\Phi}_t) \left( \left\{ f \ge \frac{4}{6} \Gamma_0 \right\} \right)
		\subset \phi_t  \left( \left\{ f \ge \frac{1}{2} \Gamma_0 \right\} \right)
		\subset \{ f \ge ( 1 - t)^{-1} \}
		\quad \text{ for all } t \in [t_0, 1).$$
	Taking complements and using the fact that $\phi_t, \tl{\Phi}_t$ are bijections for all $t \in [t_0, 1)$
	then implies
		$$(\phi_t \circ \tl{\Phi}_t \circ \iota) \left( \left\{ f < \frac{4}{6} \Gamma_0 \right\} \right)
		= (\phi_t \circ \tl{\Phi}_t) \left( \left\{ f < \frac{4}{6} \Gamma_0 \right\} \right)
		\supset \{ f < ( 1 - t)^{-1} \}$$
	for all $t \in[ t_0, 1)$.
	This proves the claim \eqref{Exhaustion}.
	
	\eqref{Exhaustion}, the uniform $C^2(M)$ bounds on $h_{\mathbf p^*}(t)$, and the fact that $h_{\mathbf p^*}(t)$ converges to $0$ in $C^\infty_{loc}(M)$ as $t \nearrow 1$ 
	together show that
	the curvature of $G_{\mathbf p^*}(t)$ blows up at the type I rate on $\left\{ f < \frac{4}{6} \Gamma_0 \right\} \subset \cl{M}$, that is,
		$$0 < \limsup_{t \nearrow 1} \sup_{x \in \{ f < \frac{4}{6} \Gamma_0 \} \subset \cl{M} }
		(1 - t) | Rm_{G_{\mathbf p^*}} |_{G_{\mathbf p^*}} (x,t) < \infty,$$
	and also complete the proof of the second statement of Theorem \ref{Main Thm}
	with $U = \left\{ f < \frac{4}{6} \Gamma_0 \right\}$.
	Note that Proposition \ref{Prop Pseudolocality App} additionally applies to give curvature bounds on $\cl{M} \setminus \left\{ f < \frac{4}{6} \Gamma_0 \right\}$ and confirm the singularity is local.
	
	To conclude, we check that Cheeger-Gromov convergence follows.
	Let $t_j$ be a sequence of times with $t_j \nearrow 1$ and let $y_\infty \in M$.
	By \eqref{Exhaustion}, 
	the sets $(\phi_{t_j} \circ \tl{\Phi}_{t_j} \circ \iota) \left( \left\{ f < \frac{4}{6} \Gamma_0 \right\} \right) \subset M$
	give an exhaustion of $M$
	and there exists a sequence $x_j \in \cl{M}$ such that $x_j \in \{f < \frac{4}{6} \Gamma_0 \}$ and 
	$(\phi_{t_j} \circ \tl{\Phi}_{t_j} \circ \iota)(x_j) = y_\infty$ for all $j$ large enough.
	Consider the sequence of parabolically rescaled Ricci flows
		$$G_{\mathbf p^*}^{(j)}(t) = \frac{1}{1 - t_j} G_{\mathbf p^*} ( t_j + t ( 1 - t_j) )
		\qquad \text{ on } \cl{M} \times \left[ 1 - \frac{1- t_0}{1 - t_j} , 1 \right).$$	
	The fact that $h_{\mathbf p^*}(t)$ converges to $0$ in $C^\infty_{loc}(M)$ as $t \nearrow 1$ implies
		$$( \cl{M}, G_{\mathbf p^*}^{(j)}(0), x_j ) \xrightarrow[j \nearrow \infty]{} ( M, \ol{g}, y_\infty )$$
	in the pointed Cheeger-Gromov sense
	with the maps $(\phi_{t_j} \circ \tl{\Phi}_{t_j} \circ \iota)^{-1}$ defined on $(\phi_{t_j} \circ \tl{\Phi}_{t_j} \circ \iota) \left( \left\{ f < \frac{4}{6} \Gamma_0 \right\} \right) \subset M$.
	The type I curvature blow up rate of $(\cl{M}, G_{\mathbf p^*}(t))$ implies that, for any $-\infty < a < b < 1$, the sequence of parabolically rescaled Ricci flows $\left(\cl{M}, G^{(j)}_{\mathbf p^*} (t) \right)$ has curvature bounds on $\cl{M} \times [a,b]$ that are uniform in $j$.
	By \cite[Chapter 3, Section 2]{ChowEtAl07}, this then gives sufficient compactness to deduce the \emph{flows} $\left(\cl{M}, G^{(j)}_{\mathbf p^*} (t), x_j\right)$ subsequentially converge to the flow $(M , (1-t) \phi_t^* \ol{g}, y_\infty)$ in the pointed Cheeger-Gromov sense.
\end{proof}

%%%%%%%%%%%%%%%%%%%%%%%%%%%%%%%%%%%%%%%%%%%%%%%%%%%%%%%%%%
\appendix
%%%%%%%%%%%%%%%%%%%%%%%%%%%%%%%%%%%%%%%%%%%%%%%%%%%%%%%%%%

%%%%%%%%%%%%%%%%%%%%%%%%%%%%%%%%%%%%%%%%%%%%%%%%%%%%%%%%%
\section{Rounding Out Cones} \label{App Rounding Out Cones}
%%%%%%%%%%%%%%%%%%%%%%%%%%%%%%%%%%%%%%%%%%%%%%%%%%%%%%%%%

Recall from Definition \ref{Defn Cone} that
	$$ \cone_R ( \Sigma) = ( R, \infty) \times \Sigma \quad \text{and} \quad
	g_{\cone} = dr^2 + r^2 g_{\Sigma}$$
where $(\Sigma, g_\Sigma)$ is a closed Riemannian manifold.
The goal of this appendix is to prove Proposition \ref{prop Main Result of Appdix A} which explicitly constructs manifolds $\cl{M}$ as in Subsection \ref{Subsect The Manifold} and metrics $G_{\mathbf 0}(t_0)$ on $\cl{M}$ satisfying Definition \ref{Defn G_0(t_0)} for any given (smooth, complete) asymptotically conical shrinker $(M, \ol{g}, f)$.
Informally, $\cl{M}$ is the double of a suitably large subset of $M$ and has a $\cl{S} \cong \mathbb{Z}_2$ symmetry given by interchanging the two pieces of the double.
The metrics $G_{\mathbf 0 }(t_0)$ are $\mathbb{Z}_2$-invariant metrics that, on each piece of the double, spatially interpolate from the soliton metric $(1 - t_0) \phi_{t_0}^* \ol{g}$ to the cone metric $g_{\cone}$ to a cylindrical metric of large radius.
The spatial interpolations can be done without increasing the curvature too much or collapsing the volume of balls along the interpolation.

Throughout this appendix, $(M, \ol{g}, f)$ continues to denote a smooth, complete, asymptotically conical, gradient shrinking Ricci soliton.
The first proposition here comes from \cite[Proposition 2.1]{KotschwarWang15} and its proof adapted to the case of multiple ends and with the diffeomorphisms $\Psi_t$ specified further.

\begin{prop} [Proposition 2.1 of \cite{KotschwarWang15}] \label{Prop 2.1 in KW15}
	Let $(M, \ol{g}, f)$ be a shrinker which is asymptotic to the (regular) cone 
	$\left( \cone( \Sigma), g_{\cone} \right)$ along an end $V$ (see Definition \ref{Defn Asymply Conical}).
	
	Then there exists $R_0 > 0$
	and a smooth family of maps $\Psi_t =  \phi_t \circ \Psi : \cone_{R_0} ( \Sigma)  \to V$
	such that
	\begin{enumerate}
		\item For all $t \in [0, 1)$, $\Psi_t =  \phi_t \circ \Psi $ is a diffeomorphism onto its image and
		$\Psi_t ( \cone_{R_0} ( \Sigma) )$ is an end of $M$,
		
		\item $(1 - t) \Psi_t^* \ol{g} = ( 1- t) \Psi^*  \phi_t^* \ol{g}$ extends smoothly as $t \nearrow 1$ to $g_{\cone}$ on $\cl{C}_{R_0}( \Sigma)$, and
		
		\item the function $\Psi^* f : \cone_{R_0} \to \R$ satisfies
			$$r^2 - \frac{N_0}{r^2} \le 4\Psi^* f \le r^2 + \frac{N_0}{r^2}.$$
	\end{enumerate}
	
	In particular, 
		$$ ( 1 - t) \Psi^* \phi_t^* \ol{g} \xrightarrow[t \nearrow 1]{ C^\infty_{loc} ( \cone_{R_0}( \Sigma), g_{\cone} ) }		g_{\cone}.$$
\end{prop}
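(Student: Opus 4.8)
The statement to prove is Proposition~\ref{Prop 2.1 in KW15} as a whole, i.e. the existence of $R_0 > 0$ and a smooth family of maps $\Psi_t = \phi_t \circ \Psi : \cone_{R_0}(\Sigma) \to V$ satisfying (1)--(3) and the concluding $C^\infty_{loc}$ convergence. Since this is a (mild) restatement of \cite[Proposition 2.1]{KotschwarWang15}, the plan is to reduce to that result and then upgrade it to the form stated here. First I would recall the result of \cite{KotschwarWang15}: for a shrinker asymptotic to $(\cone(\Sigma), g_\cone)$ along $V$, there exists a diffeomorphism $\Psi : \cone_{R_0}(\Sigma) \to V$ (for some $R_0 > 0$) such that the rescaled pullbacks $(1-t)\Psi^*\phi_t^*\ol g$ converge smoothly and locally uniformly to $g_\cone$ as $t \nearrow 1$, together with the quadratic two-sided bound $r^2 - N_0 r^{-2} \le 4\Psi^* f \le r^2 + N_0 r^{-2}$. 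The key analytic input there is that $\phi_t$ acts on the cone essentially as the dilation $\rho_{\sqrt{1-t}}$ up to lower-order corrections, so that the self-similar scaling built into $\phi_t$ exactly cancels the $(1-t)$ factor in the limit, and the asymptotically conical hypothesis (definition~\ref{Defn Asymply Conical}) controls the error.

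\textbf{Key steps.} The steps, in order, would be: (i) invoke \cite[Proposition 2.1]{KotschwarWang15} to obtain $R_0$, the embedding $\Psi$, the smooth extension of $(1-t)\Psi^*\phi_t^*\ol g$ to $g_\cone$ at $t = 1$, and the potential estimate (3); (ii) set $\Psi_t \doteqdot \phi_t \circ \Psi$ and observe directly from the definition \ref{Defn phi} of $\phi_t$ (which exists and is a diffeomorphism by \cite{Zhang09} since $(M,\ol g)$ is complete) that $\Psi_t$ is a composition of diffeomorphisms, hence a diffeomorphism onto its image; (iii) argue that $\Psi_t(\cone_{R_0}(\Sigma))$ is an end of $M$ --- here one uses that $\phi_t$ is a diffeomorphism of $M$ carrying the end $\Psi(\cone_{R_0}(\Sigma))$ of $M$ to another unbounded connected component of the complement of a compact set, combined with lemma~\ref{Lem Flow Est 1} (or rather its proof, which shows $\phi_t^* f \ge f$ for $t \le 0$ and the analogous monotonicity, so that $\phi_t$ moves points toward or away from the compact core in a controlled way and preserves the end structure); (iv) identify $(1-t)\Psi_t^*\ol g = (1-t)\Psi^*\phi_t^*\ol g$ and quote the convergence from step (i) to get (2) and the final $C^\infty_{loc}$ statement; (v) note (3) is unchanged since $\Psi^* f$ is the same function. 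Most of this is bookkeeping once \cite{KotschwarWang15} is granted.

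\textbf{Main obstacle.} The genuinely substantive point --- and the one I would expect to require the most care --- is establishing the smooth convergence $(1-t)\Psi^*\phi_t^*\ol g \to g_\cone$ in $C^\infty_{loc}$, i.e. not merely $C^2_{loc}$ as literally supplied by definition~\ref{Defn Asymply Conical}, but in every $C^k$. This is where lemma~\ref{Lem Soliton Curv Decay} is essential: the $C^2$ asymptotic conicality of a \emph{shrinker} automatically bootstraps to decay of all derivatives of curvature (via \cite[proposition 2.1(3)]{KotschwarWang15}), and combining this with the soliton equation \eqref{Soliton Eqns} and elliptic/parabolic regularity promotes the convergence to $C^\infty_{loc}$. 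Concretely, one writes the evolution of $\Psi^*\phi_t^*\ol g$ under the self-similar Ricci flow (proposition~\ref{Prop RF from Shrinker}), rescales parabolically by $\sqrt{1-t}$, uses the all-order curvature bounds from lemma~\ref{Lem Soliton Curv Decay} and the comparability of $\Psi^*\ol g$ with $g_\cone$ on $\cone_{R_0}$ (lemma~\ref{Lem A.1 KW15}(2)), and applies interior estimates (as in lemma~\ref{Lem Lin Int Est} / Shi-type estimates) to control all spatial derivatives uniformly; the quadratic decay of curvature then forces the limit as $t \nearrow 1$ to be exactly $g_\cone$. If $\Sigma$ has several components one simply runs the argument on each end and takes the disjoint union, as in the remark after definition~\ref{Defn Asymply Conical}.
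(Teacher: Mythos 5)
Your route is the same as the paper's: the paper gives no independent proof of this proposition, stating only that it is \cite[Proposition 2.1]{KotschwarWang15} ``and its proof adapted to the case of multiple ends and with the diffeomorphisms $\Psi_t$ specified further,'' which is exactly your plan of invoking the cited result and then doing the bookkeeping for $\Psi_t=\phi_t\circ\Psi$, the end structure, and the per-component argument. Your ``main obstacle'' (upgrading to $C^\infty_{loc}$) is in fact already contained in the cited Kotschwar--Wang proposition (whose proof uses Shi-type estimates), so granting step (i) there is nothing left to bootstrap; just be careful not to re-derive it via lemma \ref{Lem Soliton Curv Decay}, which the paper itself proves by citing \cite[Proposition 2.1(3)]{KotschwarWang15}.
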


\begin{remark}
	By taking $R_0$ slightly larger in Proposition \ref{Prop 2.1 in KW15}, we may assume without loss of generality that the conclusions apply on the set $\ol{\cone_R( \Sigma) } = [R_0, \infty) \times \Sigma$.
	Proposition \ref{Prop 2.1 in KW15} also extends naturally to the case of finitely many asymptotically conical ends.
\end{remark}

Let $( \Sigma^{n-1}, g_\Sigma)$ be a closed Riemannian manifold.
Consider the warped product 
	$$I \times_\psi \Sigma		\quad \text{ with metric } dr^2 + \psi(r)^2 g_\Sigma$$
where $I \subset \R$ is an interval with coordinate $r$.

Observe that $\psi \equiv r$ gives a cone metric and a constant warping function $\psi \equiv C$ gives a cylinder metric.
The next result says how to interpolate between these two metrics without increasing the curvature too much.

\begin{lem} \label{Lem Warped Product Cone to Cyl}
	Let $(\Sigma^{n-1}, g_\Sigma)$ be a closed Riemannian manifold
	and let $R > 0$.
	There exists a smooth function $\psi_R  : (0, \infty) \to (0, \infty)$, $\psi_R(r)$,
	such that
	$\psi_R$ is non-decreasing,
	$$\psi_R(r) \left\{ \begin{array}{ll}
		= r, 		& \text{ if } 0 < r \le R,	\\
		\in [ R, 2R], & \text{ if } R < r < 3R,	\\
		= 2R, 	& \text{ if } 3R \le r,	\\
	\end{array} \right.$$
	and the curvature of $(0, \infty) \times_{\psi_R} \Sigma$ with the associated warped product metric $g = g_R = dr^2 + \psi_R(r)^2 g_\Sigma$
	satisfies the estimate
		$$| {}^g \nabla^m Rm[g] |_g \lesssim_{n, \Sigma, g_\Sigma, m} \frac{1}{R^{2+m}} \qquad \text{on } \cone_{R/2} (\Sigma).$$
	Additionally, there exists $c = c(n, \Sigma, g_\Sigma) > 0$ such that
		$$Vol_{g_R} ( B_{g_R} (x, 1)) \ge c > 0 \qquad \forall x \in \cone_{R/2} (\Sigma), \, \forall R \ge 1.$$
\end{lem}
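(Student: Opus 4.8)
The plan is to construct $\psi_R$ explicitly from a single model profile by scaling. First I would build a fixed smooth function $\psi_1 : (0,\infty) \to (0,\infty)$ that is non-decreasing, equals $r$ on $(0,1]$, takes values in $[1,2]$ on $(1,3)$, and equals $2$ on $[3,\infty)$, and moreover is constructed so that the transition is ``gentle'' — concretely, I would take $\psi_1(r) = 1 + \int_1^r \chi(s)\,ds$ for a suitable smooth cutoff $\chi$ supported in $(1,3)$ with $0 \le \chi \le 1$ and $\int \chi = 1$, adjusted so that $\psi_1$ stays in $[1,2]$ and all derivatives $\psi_1^{(k)}$ are bounded by constants depending only on $k$. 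Then I would set $\psi_R(r) \doteqdot R\,\psi_1(r/R)$. One checks immediately from the chain rule that $\psi_R$ has the three-case behavior claimed (the breakpoints $1,3$ scale to $R,3R$), that it is non-decreasing, and that $\psi_R^{(k)}(r) = R^{1-k}\psi_1^{(k)}(r/R)$, so $|\psi_R^{(k)}| \lesssim_{k} R^{1-k}$ uniformly, while $\psi_R \sim R$ and $\psi_R' \in [0,1]$ throughout.

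Next I would invoke the standard formulas for the curvature of a warped product $g = dr^2 + \psi(r)^2 g_\Sigma$. The sectional curvatures are built from $-\psi''/\psi$ (mixed planes containing $\partial_r$) and $(K_\Sigma - (\psi')^2)/\psi^2$ (planes tangent to the $\Sigma$-factor, where $K_\Sigma$ denotes sectional curvatures of $g_\Sigma$). Schematically, $Rm[g]$ is a sum of terms of the shape $\frac{\psi''}{\psi}$, $\frac{(\psi')^2}{\psi^2}$, and $\frac{Rm[g_\Sigma]}{\psi^2}$ contracted with $g$ and $g_\Sigma$. On the region $\cone_{R/2}(\Sigma) = \{ r > R/2\}$ we have $\psi_R(r) \ge \psi_R(R/2) \gtrsim R$, so each such term is bounded: $|\psi_R''/\psi_R| \lesssim R^{-1}/R = R^{-2}$, $|(\psi_R')^2/\psi_R^2| \lesssim R^{-2}$, and $|Rm[g_\Sigma]|_{g_\Sigma}/\psi_R^2 \lesssim_{\Sigma,g_\Sigma} R^{-2}$. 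Hence $|Rm[g]|_g \lesssim_{n,\Sigma,g_\Sigma} R^{-2}$.

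For the derivative bounds, I would differentiate these curvature expressions using $\nabla = {}^g\nabla$. Each covariant derivative either falls on a factor of $\psi_R$ and its $r$-derivatives (gaining a factor $\psi_R^{(j+1)}/\psi_R^{(j)} \sim R^{-1}$ in size relative to the previous order, since $\psi_R^{(k)} \sim R^{1-k}$), or falls on $Rm[g_\Sigma]$ or the metric (the Christoffel symbols of $g$ involve $\psi_R'/\psi_R \lesssim R^{-1}$ and $\psi_R\psi_R' \lesssim R$, but the latter always appears contracted against a $\psi_R^{-2}$, so net factors are $O(R^{-1})$ per derivative), or produces a factor of $\nabla^j Rm[g_\Sigma]$ over $\psi_R^{2+j}$, which is $\lesssim_{\Sigma,g_\Sigma,j} R^{-2-j}$. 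An induction on $m$, organizing the terms by this counting, gives $|\nabla^m Rm[g]|_g \lesssim_{n,\Sigma,g_\Sigma,m} R^{-(2+m)}$ on $\{r > R/2\}$. The main obstacle — and the only genuinely delicate bookkeeping — is making this last induction clean: one must verify that no derivative can produce a term growing faster than $R^{-(2+m)}$, which comes down to the uniform scaling $\psi_R^{(k)} = R^{1-k}\psi_1^{(k)}(\cdot/R)$ together with the fact that every occurrence of a positive power of $\psi_R$ in a Christoffel symbol is cancelled against the $\psi_R^{-2}$ in the curvature; I would package this by writing the curvature and its derivatives in terms of the rescaled variable $\rho = r/R$, where the metric becomes $R^2(d\rho^2 + \psi_1(\rho)^2 g_\Sigma)$ and curvature scales homogeneously of degree $-2$, so that $|\nabla^m Rm|$ in the rescaled picture is bounded by a constant depending only on $n,\Sigma,g_\Sigma,m$ on $\{\rho > 1/2\}$, and then undo the scaling to recover the $R^{-(2+m)}$ factor.
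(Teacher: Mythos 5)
Your proposal is correct and ultimately takes the same route as the paper: build a fixed unit-scale profile $\psi_1$ (equal to $r$ on $(0,1]$, constant $2$ on $[3,\infty)$), bound $Rm[g_1]$ and its covariant derivatives on $\cone_{1/2}(\Sigma)$ using smoothness of $\psi_1$ and closedness of $\Sigma$, and then obtain $g_R = R^2 g_1$ by the substitution $\tilde r = Rr$, so that the homogeneous scaling of curvature gives the factor $R^{-(2+m)}$ — your direct warped-product computation and induction are superfluous once you invoke this rescaling, which is exactly the paper's argument. One cosmetic slip: the explicit formula $\psi_1(r) = 1 + \int_1^r \chi(s)\,ds$ with $\chi$ supported in $(1,3)$ gives $\psi_1 \equiv 1$ (slope $0$) near $r=1$, which is incompatible with $\psi_1(r)=r$ on $(0,1]$; take instead $\psi_1(r)=\int_0^r \chi(s)\,ds$ with $\chi$ smooth, $\chi \equiv 1$ on $(0,1]$, $\chi$ supported in $[0,3)$, and $\int_0^\infty \chi = 2$.
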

\begin{proof}
	There exists a smooth, non-decreasing function $\psi_1 : (0, \infty) \to (0 , \infty)$
	such that
	$$\psi_1(r) \left\{ \begin{array}{ll}
		= r, 		& \text{ if } 0 < r \le 1,	\\
		\in [ 1, 2], & \text{ if } 1 < r < 3, \text{ and}	\\
		= 2, 	& \text{ if } 3 \le r.	\\
	\end{array} \right.$$	
	Since $\psi_1$ is smooth and $\Sigma$ is closed, for each $m$ there exists a constant $C_m$ so that
		$$\left| \frac{d^m \psi_1}{dr^m}  \right| \le C_m		\qquad \text{ and } \qquad
		\left| {}^\Sigma \nabla^m Rm_\Sigma \right|_\Sigma \le C_m .$$
	It follows that the warped product metric
		$$g_1 \doteqdot dr^2 + \psi_1(r)^2 g_\Sigma$$
	satisfies curvature estimates of the form
		$$| {}^{g_1} \nabla^m Rm[ g_1 ] |_{g_1} \le C(n, \Sigma, g_\Sigma, m) \qquad \text{on } \cone_{1/2} (\Sigma).$$
		
	By scaling, the warped product metric
		$$g_R \doteqdot R^2 g_1 = R^2 dr^2 + [ R \psi_1 (r) ]^2 g_\Sigma
		= d \tilde{r}^2 + \left[ R \psi_1 \left( \tilde{r} / R \right) \right]^2 g_\Sigma 		\qquad ( \tilde{r} \doteqdot R r )$$
	has curvature bounds and warping function $\psi_R ( \tilde{r} ) \doteqdot R \psi_1 ( \tilde{r} / R )$ as claimed in the statement of the lemma.

	It remains to show $g_R$ satisfies the volume lower bound claimed in the statement of the lemma.
	First, observe that there exists $c = c(n, \Sigma, g_\Sigma) >0$ such that
		$$Vol_{g_1} B_{g_1}(x, 1/4) \ge c > 0 \qquad \forall x \in \cone_{1/2}(\Sigma),$$
	and $B_{g_1}(x, 1/4) \subset \cone_{1/4}(\Sigma)$ for all $x \in \cone_{1/2}(\Sigma)$.
	By curvature bounds for $g_1$ on $\cone_{1/4}(\Sigma)$ and volume monotonicity, it follows that 	
		$$Vol_{g_1} B_{g_1}(x, r) \ge c' r^n \qquad \forall x \in \cone_{1/2}(\Sigma), \, \forall 0 < r \le \frac14$$
	for some constant $c' = c'(n, \Sigma, g_\Sigma) > 0$.
	Therefore, by scaling $g_R = R^2 g_1$, 
	\begin{multline*}
		Vol_{g_R} B_{g_R} (x, 1)
		\ge Vol_{g_R} B_{g_R} (x, 1/4) 
		= R^n Vol_{g_1} B_{g_1} \left( x, \frac 1{4R} \right) \\
		\ge R^n c' \left( \frac1 {4R} \right)^n = \frac{c'}{4^n} > 0 
	\end{multline*}
	for all $x \in \cone_{R/2}(\Sigma)$ and all $R \ge 1$.
	This completes the proof.
\end{proof}

The next lemma looks at an interpolation between the metrics from Proposition \ref{Prop 2.1 in KW15} and Lemma \ref{Lem Warped Product Cone to Cyl}. It provides estimates for this new metric.

\begin{lem} \label{Lem Interpolated Metric on Cone Appdix}
Let $(M^n, \ol{g}, f)$ be a shrinker which is asymptotic to the cone $(\cone(\Sigma), g_{\cone}) $.
Consider $R_0>0$ and $\Psi_t = \Psi \circ \phi_t$ as in Proposition \ref{Prop 2.1 in KW15}.
Let $R_1, R_2, R_3 \in \R$ be such that
	$$0 < R_0 < R_1 < R_2  < R_2 + 2 < R_3.$$
Let $\eta(r) : (0, \infty) \to [0,1]$ be a smooth bump function that decreases from 1 to 0 over the interval $(R_1, R_2)$.
For all $0 \le t_0 < 1$, consider the metric $\check{G}(t_0)$ on $\cone_{R_0}( \Sigma)$ given by 
	$$\check G(t_0)  = \eta ( 1 - t_0) \Psi^* \phi_{t_0}^* \ol{g} + ( 1 - \eta) g_{R_3}$$
where $g_{R_3}$ is the warped product metric from Lemma \ref{Lem Warped Product Cone to Cyl}.

	Then
	\begin{enumerate}
	\item \label{lem interp metric on cone appdix cgce to cone}
	$$\check G(t_0) \xrightarrow[t_0 \nearrow 1]{} g_{\cone}
	\quad \text{in } C^\infty \left( \cone_{R_0}(\Sigma) \setminus \ol{ \cone_{R_3}( \Sigma ) } , g_\cone \right),$$

	\item \label{lem interp metric on cone appdix curv ests}
	for all $m \in \mathbb{N}$,
	if $0 < 1 - t_0 \ll 1$ is sufficiently small, %$t_0$ depends on $m$ here
	then
	$\check G(t_0)$ satisfies the curvature estimate
		$$| \check \nabla^m \check {Rm} |_{\check G(t_0) } \lesssim_{n, \Sigma, g_{\Sigma}, m} \max \left\{ \frac{1}{r^{2 + m} }, \frac{1}{R_3^{2 + m} } \right\}
		\qquad \text{on } \cone_{R_0}( \Sigma) ,$$
	and
	
	\item \label{lem interp metric on cone appdix non-collapsed}
	there exists $c = c(n, \Sigma, g_\Sigma) > 0$ such that 
		$$Vol_{\check G(t_0)} B_{\check G(t_0)}(x, 1) \ge c > 0 \qquad \text{for all } x \in \cone_{R_0 +1}(\Sigma)$$
	for all $0 < 1-t_0 \ll 1$ sufficiently small.
	\end{enumerate}
\end{lem}
\begin{proof}
	To simplify notation, for any $R < R'$, let $\cone_{R}^{R'}( \Sigma)$ denote the annular region
		$$\cone_{R}^{R'}( \Sigma) \doteqdot \cone_R ( \Sigma) \setminus \ol{ \cone_{R'} ( \Sigma ) } = (R, R') \times \Sigma$$
	and $\ol{ \cone_{R}^{R'} ( \Sigma) }$ its closure in $\cone(\Sigma)$, 
		$$\ol{ \cone_{R}^{R'} ( \Sigma) } = [R, R'] \times \Sigma.$$

	First, note that $\check G(t_0) = g_{\cone}$ on $\ol{ \cone_{R_2}^{R_3} (\Sigma) }$ by construction.
	Thus, it suffices to prove convergence to $g_{\cone}$ on the region $\ol{ \cone_{R_0}^{R_2}( \Sigma) }$.
	Here, we can write
	$$\check G(t_0) =  \eta ( 1 - t_0) \Psi^* \phi_{t_0}^* \ol{g} + ( 1 - \eta) g_{R_3}
	= \eta ( g_{\cone}  + h(t_0)  ) + (1 - \eta) g_{\cone}
	= g_{\cone} + \eta h( t_0) $$
	where
		$$h (t_0) = ( 1 - t_0) \Psi^* \phi_{t_0}^* \ol{g} - g_{\cone}.$$
	By Proposition \ref{Prop 2.1 in KW15},
	$(1-t_0) \Psi^* \phi_{t_0}^* \ol{g}$ converges to $g_{\cone}$ in $C^\infty_{loc}( \ol{\cone_{R_0}( \Sigma) },  g_{\cone} ) $ as $t_0 \nearrow 1$. 
	Hence, $\eta h(t_0)$ converges to $0$ in $C^\infty( \ol{ \cone_{R_0}^{R_2}( \Sigma) } , g_{\cone} ) $ as $t_0 \nearrow 1$.
	This proves the first conclusion \eqref{lem interp metric on cone appdix cgce to cone}.
	
	Next, we show the curvature estimates \eqref{lem interp metric on cone appdix curv ests}.
	Observe that, by construction, $\check G(t_0) \equiv g_{R_3}$ on the set $\ol{\cone_{R_2}( \Sigma) } \subset \{ (r, \sigma)  \in \cone( \Sigma) : \eta(r) = 0 \}$.
	Therefore, the curvature estimates from Lemma \ref{Lem Warped Product Cone to Cyl} imply that
		$$\left| \check \nabla^m \check{Rm} \right| \lesssim_{n, \Sigma, g_\Sigma, m} \frac{1}{R_3^{2 + m}}
		 %< \frac{1}{R_0^{2 + m}}		
		\qquad \text{on } \ol{ \cone_{R_3} (\Sigma) }.$$
	
	It remains to obtain the curvature estimate on $ \cone_{R_0}^{R_3}( \Sigma) $.
	Note that the corresponding curvature estimate holds for $g_{\cone}$, that is,
		$$\left| {}^{g_{\cone}} \nabla^m Rm_{\cone} \right|_{g_{\cone}} (r, \sigma)
		\lesssim_{n, \Sigma, g_\Sigma,  m} \frac{1}{r^{2+m}} 
		%\le \frac{1}{R_0^{2 + m} } 
		\qquad \text{for all  } (r, \sigma) \in \ol{\cone_{R_0}(\Sigma) }.$$
	$\check G(t_0)$ $g_{\cone}$-smoothly converges to $g_{\cone}$ on $\cone_{R_0}^{R_3} ( \Sigma)$.
	It therefore follows that, for any $m \in \mathbb{N}$, if $0 < 1- t_0 \ll 1$ is sufficiently small, then 
		$$| \check \nabla^m \check {Rm} |_{\check G(t_0) }(r, \sigma) 
		\lesssim_{n, \Sigma,  g_\Sigma, m} \frac{1}{r^{2+m}}
		%\le \frac{1}{R_0^{2 + m} }
		\qquad \text{for all } (r, \sigma) \in \cone_{R_0}^{R_3} ( \Sigma) .$$
	This completes the proof of the curvature estimates \eqref{lem interp metric on cone appdix curv ests}.

	Finally, we show the volume lower bound \eqref{lem interp metric on cone appdix non-collapsed}.
	Since the cone $(\cone_{R_0}(\Sigma), g_\cone )$ satisfies a similar volume lower bound and $\check{G}(t_0)$ converges to $g_\cone$ in $\cone_{R_0}^{R_3} (\Sigma)$, it follows that there exists $c = c(n, \Sigma, g_\Sigma) > 0$ such that
	\begin{equation} \label{proof lem interp metric on cone appdix eqn 1}
		Vol_{\check G(t_0)} B_{\check G(t_0)} (x,1) \ge c >0 \qquad \text{for all } x \in \cone_{R_0+1}(\Sigma)
	\end{equation}
	for all $0 < 1 - t_0 \ll 1$.
	Since $R_3 > R_2 +2 \ge 1$, Lemma \ref{Lem Warped Product Cone to Cyl} implies there exists $c' = c'(n,\Sigma, g_\Sigma)>0$ such that
	\begin{multline} \label{proof lem interp metric on cone appdix eqn 2}
		Vol_{\check G(t_0)} B_{\check G(t_0)}(x,1) = Vol_{g_{R_3}} B_{g_{R_3}} (x,1) \ge c' > 0 \\ \text{for all } x \in \cone_{\max \{ R_3/2, R_2 +1 \} } (\Sigma).
	\end{multline}
	$2 < R_2 +2 < R_3$ implies $\max\{ R_3/2, R_2 +1 \} < R_3 -1$, and thus \eqref{proof lem interp metric on cone appdix eqn 1} and \eqref{proof lem interp metric on cone appdix eqn 2} imply 
	$$Vol_{\check G(t_0)} B_{\check G(t_0)} (x,1) \ge \min \{ c , c'\} > 0 \qquad \text{for all } x \in \cone_{R_0+1} (\Sigma).$$
	This completes the proof of item \eqref{lem interp metric on cone appdix non-collapsed} in the statement of the lemma.
\end{proof}

Let $\check G(t_0) , \Psi, R_0$ be as in Lemma \ref{Lem Interpolated Metric on Cone Appdix} and
consider now $(\Psi^{-1} )^* \check G(t_0)$ on $\Psi( \cone_{R_0} ( \Sigma)  ) \subset M$.
This metric can be extended by $( 1 - t_0) \phi_{t_0}^* \ol{g}$ on the complement of $\Psi ( \cone_{R_0} ( \Sigma)  )$ to yield a Riemannian metric on $M$.
We denote this metric by $G(t_0)$.

We first record the following lemma comparing regions in $M$ and $\cone_{R_0}$.

\begin{lem} \label{Lem f r Region Inclusion}
	If $R \gg 1$ is sufficiently large (depending on $n, M, \ol{g}, f, N_0, R_0$, and $\Psi$ from Proposition \ref{Prop 2.1 in KW15}), then
		$$\left \{ x \in M : f(x) > \frac{R^2}{2} \right \} \subset \Psi ( \cone_R ) \subset \left\{ x \in M : f(x) > \frac{R^2}{8} \right\}.$$
\end{lem}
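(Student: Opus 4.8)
The plan is to use the quadratic growth bounds on the potential function $f$ pulled back to the cone, as recorded in Proposition \ref{Prop 2.1 in KW15}(3), namely
\[
	r^2 - \frac{N_0}{r^2} \le 4 \Psi^* f \le r^2 + \frac{N_0}{r^2}
	\qquad \text{on } \cone_{R_0}(\Sigma),
\]
together with the observation that $\Psi : \cone_{R_0}(\Sigma) \to V$ is a diffeomorphism onto the end $V = \Psi(\cone_{R_0}(\Sigma))$ of $M$. For the right-hand inclusion, I would take a point $x \in \Psi(\cone_R)$ with $R > R_0$, write $x = \Psi(r,\sigma)$ with $r > R$, and estimate $4 f(x) = 4 \Psi^* f(r,\sigma) \ge r^2 - N_0/r^2 > R^2 - N_0/R^2$. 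Provided $R$ is large enough that $N_0/R^2 < R^2/2$ (which holds once $R^4 > 2 N_0$), this gives $4 f(x) > R^2/2$, i.e. $f(x) > R^2/8$, which is the desired containment $\Psi(\cone_R) \subset \{ f > R^2/8 \}$.

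For the left-hand inclusion I need the contrapositive-style argument: if $x \in M$ satisfies $f(x) > R^2/2$, then $x$ must lie in $\Psi(\cone_R)$. The first step is to note that for $R$ sufficiently large, every such $x$ lies in the end $V = \Psi(\cone_{R_0}(\Sigma))$; this is because $f$ is proper (it grows quadratically by Proposition \ref{Prop Potential Func Grows Quadratically} and $\inf_M f > 0$ by Lemma \ref{Lem Nonflat and f>0}), so the sublevel set $\{ f \le R^2/2 \}$ is compact and, for $R$ large, contains the compact complement $M \setminus V$; hence $\{ f > R^2/2 \} \subset V$. Then write $x = \Psi(r, \sigma)$ with $r > R_0$ and use the upper bound $4 f(x) = 4 \Psi^* f(r,\sigma) \le r^2 + N_0/r^2$. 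From $f(x) > R^2/2$ we get $r^2 + N_0/r^2 > 2 R^2$. If $r \le R_0$ this is impossible for $R$ large (the left side is bounded), so $r > R_0$; more importantly I want to deduce $r > R$. Suppose for contradiction $r \le R$; then $r^2 + N_0/r^2 \le R^2 + N_0/R_0^2$ (using $r \ge R_0$ to bound the second term, once we know $r > R_0$, or more carefully bounding $N_0/r^2$ on $r \in (R_0, R]$), and for $R$ large enough that $R^2 + N_0/R_0^2 < 2R^2$, i.e. $R^2 > N_0/R_0^2$, this contradicts $r^2 + N_0/r^2 > 2R^2$. Hence $r > R$, so $x \in \Psi(\cone_R)$, establishing $\{ f > R^2/2 \} \subset \Psi(\cone_R)$.

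The main obstacle — really the only delicate point — is organizing the "largeness of $R$" conditions so they are consistent and depend only on the allowed quantities $n, M, \ol{g}, f, N_0, R_0, \Psi$: we need simultaneously $R^4 > 2 N_0$ (for the right inclusion), $\{ f \le R^2/2 \} \supset M \setminus V$ (a properness/compactness condition depending on $M, \ol{g}, f, \Psi, R_0$), and $R^2 > N_0 / R_0^2$ plus enough room to absorb the $N_0/r^2$ term on $(R_0, R]$ (for the left inclusion). All of these are satisfied once $R$ exceeds an explicit threshold determined by $N_0$, $R_0$, and the geometry near the end, so I would simply state that and collect the finitely many lower bounds on $R$ into a single "$R \gg 1$ sufficiently large" hypothesis. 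I do not expect any genuine difficulty beyond this bookkeeping; the estimates themselves are immediate from Proposition \ref{Prop 2.1 in KW15}(3).
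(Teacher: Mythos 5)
Your proposal is correct and follows essentially the same route as the paper: both inclusions come from the two-sided bound $r^2 - N_0/r^2 \le 4\Psi^*f \le r^2 + N_0/r^2$ of Proposition \ref{Prop 2.1 in KW15}(3), combined with the observation that $\{f > R^2/2\}$ lies in the end $\Psi(\cone_{R_0})$ once $R$ is large (the paper phrases this via a constant $\Gamma_*$ bounding $f$ on the compact complement of the end, you via compactness of $M \setminus V$), and the same largeness conditions $R^4 \gtrsim N_0$ and $R^2 \ge N_0/R_0^2$. The only differences are cosmetic (your argument for $r > R$ is by contradiction, the paper's is direct), so there is nothing to fix.
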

\begin{proof}
	We first claim that there exists $\Gamma_* \in \R$ such that
		$$\{ x \in M : f(x) > \Gamma_* \} \subset  \Psi ( \cone_{R_0} ).$$
	By Proposition \ref{Prop 2.1 in KW15}, $\Psi ( \cone_{R_0} )$ is a union of the ends of $M$.
	Hence, there exists $\Omega \Subset M$ such that $M \setminus \Omega \subset \Psi( \cone_{R_0} )$.
	Since $\Omega \Subset M$, there exists $\Gamma_* \in \R$ such that 
		$$\Omega \subset \{ x \in M : f(x) \le  \Gamma_* \}.$$
	It follows that 
		$$\{ x \in M : f(x) > \Gamma_* \} \subset M \setminus \Omega \subset \Psi ( \cone_{R_0} ).$$

	Now, let $x \in \cone_R$.
	By Proposition \ref{Prop 2.1 in KW15},
	\begin{gather*}
		f( \Psi(x) ) 
		\ge \frac{1}{4} \left( r(x)^2 - \frac{N_0}{r(x)^2} \right) 
		> \frac{1}{4} \left( R^2 - \frac{N_0}{R^2} \right)
		\ge \frac{R^2}{8} 
	\end{gather*}
	if $R \ge (2N_0^2)^{1/4}$.
	Hence, if $R \ge (2N_0^2)^{1/4}$, then
		$$ \Psi( \cone_R ) \subset \left \{ x \in M : f(x) > \frac{R^2}{8}  \right \}.$$
		
	For the other inclusion, assume $x \in M$ and $f(x) > \frac{R^2}{2}$.
	If $\frac{R^2}{2} > \Gamma_*$, then $x = \Psi(x')$ for some $x' \in \cone_{R_0}$.
	It follows from Proposition \ref{Prop 2.1 in KW15} that
	\begin{gather*}
		r(x')^2 + \frac{N_0}{R_0^2}
		> r(x')^2 + \frac{N_0}{r(x')^2}
		\ge 4 f ( \Psi (x') )
		= 4 f (x)
		> 2 R^2.
	\end{gather*}
	If also $R^2  \ge \frac{N_0}{R_0^2}$, then
		$$r(x') > \sqrt{2R^2 - \frac{N_0}{R_0^2} } \ge R.$$
	It follows that for such $R$,
	$\{ x \in M : f(x) > \frac{R^2}{2} \} \subset \Psi ( \cone_R ).$
\end{proof}

\begin{prop} \label{Prop Interpolated Metric on M Appdix}
	Consider the metric $G(t_0)$ on $M$ which consists of $( \Psi^{-1})^* \check G(t_0)$ extended by $( 1- t_0) \phi_{t_0}^* \ol{g}$.
	If $\frac{1}{4 \sqrt{2} } R_3 = \frac{1}{2} R_2 = R_1 \gg 1$ is sufficiently large (depending on $n, M, \ol{g}, f, N_0, R_0, \Psi$),
	then the following hold:
	\begin{enumerate}
		\item \label{Time Invt Appdix}
		 for all $0 \le t_0, t_0' < 1$, 
			$$G(t_0) = G(t_0')		\qquad \text{on } 
			%\Psi ( \cone_{R_2} ) \supset \{ f > \frac{R_2^2}{2} \}=
			\left\{ x \in M : f(x) > 2 R_1^2 \right\},$$
			
		\item \label{Soliton Metric Appdix}
			$$G(t_0) = ( 1 - t_0) \phi_{t_0}^* \ol{g} 
			\qquad \text{on } \left \{ x \in M : f(x)  \le \frac{ R_1^2}{8} \right \},$$
			
		\item \label{Cgce to Cone Appdix}
		$G(t_0)$ smoothly converges to $( \Psi^{-1})^* g_{\cone}$ in the region 
			$$\left \{x \in M :  \frac{R_1^2}{16} < f(x) <  4 R_1^2 \right \} \subset \Psi ( \cone_{R_0} )$$
			%= \{ \frac{R_1^2}{16} < f <  \frac{R_3^2}{8} \} \subset \Psi ( \cone_{R_0}^{R_3} )$$
		as $t_0 \nearrow 1$,
			
		\item \label{Curv Est Appdix}
		for any $m \in \mathbb{N}$, if $0 < 1 - t_0 \ll 1$ is sufficiently small then 
			$$ | \nabla^m Rm |_{G(t_0) } \lesssim_{n , \Sigma, g_\Sigma, m} \frac{1}{R_1^{m + 2} } 
			\qquad \text{on } \{ x \in M : f(x) > R_1^2 / 16 \},$$
		and
		\item \label{Non-Collapsing Est Appdix}
		there exists $c = c(n, \Sigma, g_\Sigma) > 0$ such that
			$$Vol_{G(t_0)} B_{G(t_0)} (x, 1) \ge c > 0 \qquad \text{for all } x \in \{ x \in M : f(x) > R_1^2 /16 \} $$
		for all $0 < 1-t_0 \ll 1$ sufficiently small.
	\end{enumerate}
\end{prop}

\begin{proof}
	Item \eqref{Time Invt Appdix} follows from the fact that, for all $0 \le t_0, t_0' < 1$,
		$$\check G( t_0) = \check G( t_0')		\qquad \text{ on } \cone_{R_2}$$
	by construction and the fact that
		$$\Psi ( \cone_{R_2}  ) 
		\supset \left \{ x \in M : f(x) > \frac{R_2^2}{2} \right \} 
		= \{ x \in M : f(x) > 2 R_1^2 \} $$
	by Lemma \ref{Lem f r Region Inclusion} if $R_1 \gg 1$ is sufficiently large.
	
	For item \eqref{Soliton Metric Appdix}, observe that
		$$\check G(t_0) = ( 1 - t_0) \Psi^* \phi_{t_0}^* \ol{g} 
		\qquad \text{on } \cone_{R_0}^{R_1} $$
	by construction.
	It then follows from the definition of $G(t_0)$ and Lemma \ref{Lem f r Region Inclusion} that, if $R_1 \gg 1$ is sufficiently large, then
		$$G(t_0) \equiv ( 1 - t_0) \phi_{t_0}^* \ol{g} 
		\qquad \text{on }  \left\{ x \in M : f(x)  \le \frac{ R_1^2}{8} \right\}.$$
			
	Next, we prove item \eqref{Cgce to Cone Appdix}.
	By Lemma \ref{Lem Interpolated Metric on Cone Appdix}, 
	$\check G(t_0)$ smoothly converges to $g_{\cone}$ on $\cone_{R_0}^{R_3} = \cone_{R_0}^{4 \sqrt{2} R_1}$.
	If $R_1 \gg 1$ is sufficiently large, then $\cone_{R_0}^{4 \sqrt{2} R_1} \supset \cone_{R_1 / \sqrt{8} }^{4 \sqrt{2} R_1 }$.
	Lemma \ref{Lem f r Region Inclusion} then implies that 
	$ G(t_0)$ smoothly converges to $( \Psi^{-1})^* g_{\cone}$ on 
		$$\Psi ( \cone_{R_1 / \sqrt{8} }^{4 \sqrt{2} R_1} )  \supset \left\{ x \in M : \frac{R_1^2}{16} < f(x) < 4 R_1^2 \right\}$$
	as $t_0 \nearrow 1$.
	
	Next, we prove item \eqref{Curv Est Appdix}.
	By Lemma \ref{Lem Interpolated Metric on Cone Appdix}, 
	for any $m \in \mathbb{N}$, if $0 < 1 - t_0 \ll 1$ is sufficiently small then
	$\check G(t_0)$ satisfies the curvature estimate 
		$$| \check \nabla^m \check {Rm} |_{\check G(t_0) } \lesssim_{n, \Sigma, g_\Sigma, m}  \frac{1}{R_1^{2 + m} } 	
		\qquad \text{on } \cone_{R_1 / \sqrt{8} } = \cone_{R_3/ 16 }.$$
	The corresponding estimate
		$$| \nabla^m Rm |_{G(t_0) } \lesssim_{n, \Sigma, g_\Sigma,  m} \frac{1}{R_1^{2+m}}$$
	therefore holds on 
		$$\Psi ( \cone_{R_1 / \sqrt{8} } ) \supset \left \{ x \in M : f(x) > \frac{R_1^2}{16} \right \}$$
	by Lemma \ref{Lem f r Region Inclusion} if $R_1 \gg 1$ is sufficiently large.

	Finally, we prove item \eqref{Non-Collapsing Est Appdix}.
	If $R_1 \ge \sqrt 8 ( R_0 +1)$, then $\cone_{R_0+1}(\Sigma) \supset \cone_{R_1/ \sqrt 8}(\Sigma)$ and 
	Lemma \ref{Lem Interpolated Metric on Cone Appdix} implies there exists $c = c(n, \Sigma, g_\Sigma)>0$ such that
		$$Vol_{\check G(t_0)} B_{\check G(t_0)} (x, 1) \ge c > 0 \qquad \text{for all } x \in \cone_{R_1 / \sqrt 8} (\Sigma)$$
	for all $0 < 1 -t_0  \ll 1$.
	Thus, by Lemma \ref{Lem f r Region Inclusion}, the corresponding estimate
	holds on 
		$$\Psi ( \cone_{R_1 / \sqrt{8} } ) \supset \left \{ x \in M : f(x) > \frac{R_1^2}{16} \right \}$$
	if $R_1 \gg 1$ is sufficiently large.
	This proves item \eqref{Non-Collapsing Est Appdix} and completes the proof of the proposition.
\end{proof}

\begin{prop} \label{prop Main Result of Appdix A}
	Let $(M, \ol{g}, f)$ be a smooth, complete, asymptotically conical, gradient, shrinking Ricci soliton which is asymptotic to the cone $( \cone(\Sigma), g_\cone)$.
	Then there exists a closed manifold $\cl{M}$ satisfying the conditions of Subsection \ref{Subsect The Manifold} with $\cl{S} \cong \mathbb{Z}_2$ and $A = \{ 0,1\}$, and there exists a family of metrics $G_{\mathbf 0}(t_0) = G_{\mathbf 0}(\Gamma_0, t_0)$ on $\cl{M}$ satisfying Definition \ref{Defn G_0(t_0)}.
\end{prop}

\begin{proof}
	Fix $R_0 > 0$ and
		$$\Psi : \cone_{R_0}(\Sigma) \xrightarrow[]{\cong} V \subset M$$
	a diffeomorphism as in Proposition \ref{Prop 2.1 in KW15}.
	For any $R > R_0$, define
		$$M_R \doteqdot (M \setminus V) \cup \left( \Psi( (R_0, R) \times \Sigma ) \right) \subset M.$$
	Note that $M_R \subset M$ is an open submanifold with closure 
		$$\ol{M}_R = ( M \setminus V ) \cup \left( \Psi( (R_0, R] \times \Sigma ) \right) \subset M$$
	which is a compact manifold with boundary $\partial \ol{M}_R \cong \Sigma$.
	Let $\cl{M}_R$ be the double of $\ol{M}_R$, that is,
		$$\cl{M}_R \doteqdot (\ol{M}_R \times \{0\} ) \bigcup_{\substack{(x, 0) \sim (x, 1) \\ \text{ for } x \in \partial \ol{M}_R}} ( \ol{M}_R \times \{1 \} ).$$
	Then $\cl{M}_R$ inherits a topology and smooth structure from $\ol{M}_R$ which makes $\cl{M}_R$ into a closed manifold.
	Additionally, the map $\cl{M}_R \to \cl{M}_R$ given by $(x, i \mod 2) \mapsto (x, i+1 \mod 2)$ is a diffeomorphism of $\cl{M}_R$ that generates a subgroup $\cl{S}_R \le \text{Diff}(\cl{M}_R)$ such that $\cl{S}_R \cong \mathbb{Z}_2$ as groups.

	There exists a diffeomorphism from $(R, R+1)$ to $(R, \infty)$ which is the identity near $R$, and this induces a diffeomorphism $\iota_{R+1}'' : M_{R+1} \to M$ for any $R > R_0$.
	Note that $\iota_{R+1}''$ is not the inclusion map $M_{R+1} \subset M$, but $\iota_{R+1}''$ can be chosen so that it restricts to the inclusion map on $M_R$.
	Thus, for any $R > R_0$, $M_{R+2}$ contains a diffeomorphic copy of $M$ given by $M_{R+1}\subset M_{R+2}$ with diffeomorphism $\iota''_{R+1} : M_{R+1} \to M$.
	Hence, $\cl{M}_{R+2}$ contains two disjoint diffeomorphic copies $\cl{M}''_{R+2} = \cl{M}''_{R+2, 0} \sqcup \cl{M}''_{R+2, 1} \subset \cl{M}_{R+2}$ of $M$ corresponding to the $\iota''_{R+1} : M_{R+1} \xrightarrow[]{\cong} M$ in each piece of the double.
	Clearly, the diffeomorphisms in $\cl{S}_{R+2} \le \text{Diff}(\cl{M}_{R+2})$ preserve these identifications with $M$.

	Note that all this topological data $M_R, \cl{M}_R, \cl{S}_R, \iota''_R$ depends on $R > R_0$.
	However, because $(R_0, R)$ is diffeomorphic to $(R_0, R')$ for any $R, R' \in (R_0, \infty)$,
	it follows that $M_R \cong M_{R'}$, $\ol{M}_R \cong \ol{M}_{R'}$, and $\cl{M}_R \cong \cl{M}_{R'}$ for all $R, R' \in ( R_0, \infty)$.
	Thus, there exists a \emph{fixed} closed manifold $\cl{M}$ and diffeomorphisms $F_{R} : \cl{M} \xrightarrow[]{\cong} \cl{M}_{R}$ for all $R > R_0$.
	Moreover, the diffeomorphisms $F_{R+2} : \cl{M} \to \cl{M}_{R+2}$ can be chosen such that 
		$$\cl{M}'' \doteqdot \cl{M}''_0 \sqcup \cl{M}''_1 \doteqdot  F_{R+2}^{-1} ( \cl{M}''_{R+2, 0} ) \sqcup F_{R+2}^{-2} ( \cl{M}''_{R+2,1} ) \subset \cl{M} \text{ is indepedent of $R > R_0$,}$$
		$$\text{and } \iota_{R+1}'' \circ F_{R+2}|_{\cl{M}''_{R+2}} = \iota_{R'+1}'' \circ F_{R'+2}|_{\cl{M}''_{R'+2}} \quad \forall R , R' > R_0.$$
	For example, take $\cl{M} = \cl{M}_{R_0+3}$, use the relation
	$$\iota_{R_0+2}'' \circ F_{R_0+3}|_{\cl{M}''_{R_0+3}} = \iota_{R+1}'' \circ F_{R+2}|_{\cl{M}''_{R+2}}$$
	to define $F_{R+2}$ on $\cl{M}''_{R+2} \subset \cl{M}_{R+2}$, and extend this to a diffeomorphism of $\cl{M}_{R+2}$ to $\cl{M}_{R_0+3}$ using a suitable diffeomorphism between the intervals $(R+1, R+2)$ and $(R_0+2,R_0 +3)$.

	Therefore, by pulling back the $\iota''_{R+1}$ to the manifold $\cl{M}$ via the diffeomorphisms $F_{R+2} : \cl{M} \to \cl{M}_{R+2}$, we have a closed manifold $\cl{M}$ with two disjoint diffeomorphic copies $\cl{M}'' = \cl{M}''_0 \sqcup \cl{M}''_1$ of $M$ and \emph{fixed} diffeomorphisms $\iota_i'' : \cl{M}_i'' \to M$ $(i = 0, 1)$ which are independent of $R>R_0$.
	Since the $\mathbb{Z}_2$-action on $\cl{M}_{R+2}$ interchanges the two pieces of the double, this action also pulls back by $F_{R+2}$ to give a $\mathbb{Z}_2$-action on $\cl{M}$ by diffeomorphisms which is independent of $R>R_0$, interchanges the $\cl{M}_0''$ and $\cl{M}_1''$, and respects the identifications $\iota_i''$.
	In summary, we have a closed manifold $\cl{M}$ that satisfies the conditions of Subsection \ref{Subsect The Manifold} with $\cl{S} \cong \mathbb{Z}_2$.

	Let $G(t_0) = G(R_1, t_0)$ denote the metrics on $M$ from Proposition \ref{Prop Interpolated Metric on M Appdix}.
	Consider the restriction of $G(R_1, t_0)$ to $\ol{M}_{R+2}$ where $R = 12 \sqrt 2 R_1$.
	Then, in a neighborhood of $\partial \ol{M}_{R+2}$, $G(R_1, t_0)$ is a cylindrical metric after pulling back by $\Psi$.
	It follows that $G(R_1, t_0)$ induces a smooth $\mathbb{Z}_2$-invariant metric on the double $\cl{M}_{R+2}$ which we still denote by $G(R_1, t_0)$.
	Using that $\iota_{R+1}'' : M_{R+1} \to M$ restricts to the inclusion map on $M_R$, it now follows from Proposition \ref{Prop Interpolated Metric on M Appdix} and Lemma \ref{Lem f r Region Inclusion} that, after pulling these metrics back by $F_{R+2} : \cl{M} \xrightarrow[]{\cong} \cl{M}_{R+2}$ (with $R = 12 \sqrt 2 R_1$), we obtain a family of metrics $G_{\mathbf 0}(\Gamma_0, t_0) \doteqdot F^*_{48 \sqrt {\Gamma_0}+2} G(\sqrt{8 \Gamma_0}, t_0)$ on $\cl{M}$ satisfying Definition \ref{Defn G_0(t_0)} for all $R_1 = \sqrt{8 \Gamma_0} \gg 1$ sufficiently large depending on $n, M, \ol{g}, f, N_0, R_0, \Psi$.
	Note that since $N_0, R_0, \Psi$ were fixed and depend on $n, M, \ol{g}, f$,
	this dependence reduces to all $\Gamma_0 \gg 1$ sufficiently large depending only on $n, M, \ol{g}, f$.
	This completes the proof.
\end{proof}

%%%%%%%%%%%%%%%%%%%%%%%%%%%%%%%%%%%%%%%%%%%%%%%%%%%%%%%%%
\section{Harmonic Map Heat Flow} \label{Appdix HarMapFlow}
%%%%%%%%%%%%%%%%%%%%%%%%%%%%%%%%%%%%%%%%%%%%%%%%%%%%%%%%%

Throughout this section, we establish results for the harmonic map heat flow solutions appropriate to our setting.

The next result establishes short-time existence for the harmonic map heat flow.
It is a restatement of \cite[Proposition A.9]{BamlerKleiner18} in form more appropriate for our applications.

\begin{prop}[Short-time existence of the harmonic map heat flow] \label{Prop Existence HarMapFlow}
	For all $n \in \mathbb{N}$ and all $ 0 < \epsilon < \epsilon' \ll1$ sufficiently small depending only on $n$ 
	the following holds:
	
	If $(M^n, g(t))_{t \in [t_0, T] }, (\ol{M}^n, \ol{g}(t))_{t \in [t_0, T] }$ are smooth families of Riemannian manifolds and 
	$F_{t_0} : M \to \ol{M}$ is a smooth function such that
	\begin{enumerate}
		\item $(M, g(t_0))$ and $(\ol{M} , \ol{g}(t_0) )$ are complete,
		
		\item For all integers $0 \le m \le 10$, there exist constants $C_m \in \mathbb{R}$ such that 
			$$| {}^{g(t)} \nabla^m Rm_{g(t)} |_{g(t)}(x) \le C_m 		
			\quad \text{for all } (x,t) \in M \times [ t_0, T] \text{ and}$$
			$$| {}^{\ol{g}(t)} \nabla^m Rm_{\ol{g}(t)} |_{\ol{g}(t)}(x) \le C_m 		
			\quad \text{for all } (x,t) \in \ol{M} \times [ t_0, T],$$
			
		\item $(\ol{M} , \ol{g}(t) )_{t \in [t_0, T]}$ evolves by Ricci flow 
			$$\partial_t \ol{g} = - 2 Rc_{ \ol{g} }
				\qquad \text{ on } \ol{M} \times ( t_0, T) ,$$	
			
		\item there exists a constant $C_0'$ such that
			$$| \partial_t g + 2 Rc_{g(t)} |_{g(t)} \le C_0' 
			\qquad \text{for all } (x,t) \in M \times [ t_0, T] $$
		and
		\item $F_{t_0} : M \to \ol{M}$ is a diffeomorphism with
			$$| ( F_{t_0}^{-1} )^* g(t_0) - \ol{g}(t_0) |_{\ol{g}(t_0)} \le \epsilon,$$ 
	\end{enumerate}
	then, for some $T_F$ with $t_0 < T_F \le T$,
	there exists a smooth solution $F : M \times [t_0, T_F] \to \ol{M}$
	to the harmonic map heat flow 
		$$\partial_t F = \Delta_{g(t), \ol{g}(t)} F \qquad 
		\text{ with initial condition } F(\cdot, t_0) = F_{t_0}$$
	such that
	$F(\cdot , t) \doteqdot F_t : M \to \ol{M}$ is a diffeomorphism for all $t \in [t_0, T_F]$
	and
		$$| ( F_t^{-1} )^* g(t)  - \ol{g}(t) |_{\ol{g}(t)} \le \epsilon'
		\qquad \text{on } \ol{M} \times [ t_0, T_F].$$
	 
	 Moreover, 
	 $T_F \in (t_0, T ]$ can be chosen such that 
	 $T_F \ge  \min\{ t_0 + \tau, T \}$
	 where $\tau = \tau( n, \epsilon, \epsilon', C_0, C_0') > 0$ is a positive constant depending only on $n, \epsilon, \epsilon', C_0, C_0'$ 
	 and such that
	 	$$\sup_{\ol{M}} | ( F_{T_F}^{-1} )^* g(T_F)  - \ol{g}(T_F) |_{\ol{g}(T_F)} < \epsilon'
		\quad \implies \quad T_F = T.$$
\end{prop}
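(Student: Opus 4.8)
The statement to prove is a short-time existence result for the harmonic map heat flow, Proposition~\ref{Prop Existence HarMapFlow}, whose final clause asserts that one can obtain a lower bound $T_F \ge \min\{t_0 + \tau, T\}$ on the existence time and a continuation criterion: if the metric distortion $|(F_{T_F}^{-1})^* g(T_F) - \ol g(T_F)|_{\ol g(T_F)}$ stays strictly below $\epsilon'$ at the final time, then in fact $T_F = T$. I would present this as a consequence of Bamler--Kleiner \cite[Proposition A.24]{BamlerKleiner18}, with the bulk of the work being a translation between their hypotheses and the ones stated here.

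The plan is as follows. First I would recall the statement of \cite[Proposition A.24]{BamlerKleiner18} verbatim (or close to it) and identify the correspondence between its quantitative hypotheses --- bounds on the curvature and its derivatives for both flows, the ``almost Ricci flow'' bound $|\partial_t g + 2Rc|_g \le C_0'$, completeness, and the initial $C^0$-closeness $|(F_{t_0}^{-1})^* g(t_0) - \ol g(t_0)|_{\ol g(t_0)} \le \epsilon$ --- and ours. The key point is that their proposition produces, under a smallness threshold $\epsilon$ depending only on $n$, a solution on a time interval whose length is controlled below by a constant depending only on the dimension and the structural constants $C_0, C_0', \epsilon, \epsilon'$; this is exactly the assertion $T_F \ge \min\{t_0 + \tau, T\}$ with $\tau = \tau(n,\epsilon,\epsilon',C_0,C_0')$. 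Second, I would establish the continuation criterion by a standard open--closed argument: let $T_F$ be the supremum of times $t_1 \in (t_0, T]$ for which a smooth solution $F$ exists on $[t_0, t_1]$ with $F_t$ a diffeomorphism and $|(F_t^{-1})^* g(t) - \ol g(t)|_{\ol g(t)} \le \epsilon'$ on $[t_0, t_1]$. If $T_F < T$ and the distortion at $T_F$ is strictly below $\epsilon'$, then $(M, g(T_F))$ and $(\ol M, \ol g(T_F))$ still satisfy all the hypotheses of the proposition (the curvature bounds persist on $[T_F, T]$ by assumption, $\ol g$ still evolves by Ricci flow, and $|\partial_t g + 2Rc|_g \le C_0'$ still holds), and the initial map $F_{T_F}: M \to \ol M$ is a diffeomorphism with $|(F_{T_F}^{-1})^* g(T_F) - \ol g(T_F)|_{\ol g(T_F)} < \epsilon' \le \epsilon'$; by choosing $\epsilon$ slightly smaller than $\epsilon'$ in the original application (which is permitted since both are free small parameters depending only on $n$), one can arrange $|(F_{T_F}^{-1})^* g(T_F) - \ol g(T_F)|_{\ol g(T_F)} \le \epsilon$. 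Applying the existence proposition again starting at time $T_F$ then produces an extension of $F$ to $[t_0, T_F + \tau']$ for some $\tau' > 0$, with the distortion staying $\le \epsilon'$, contradicting the maximality of $T_F$. Hence either $T_F = T$ or the distortion equals $\epsilon'$ somewhere approaching $T_F$; equivalently, if it stays strictly below $\epsilon'$ at $T_F$, then $T_F = T$.

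The uniqueness needed to glue the extension to the original solution (so that the pasted map is genuinely smooth and solves the flow on the whole interval) follows from the uniqueness statement for the harmonic map heat flow, Proposition~\ref{Prop Uniqueness HarMapFlow}, which applies since the relevant metrics have bounded geometry and the distortion estimates give the derivative control its hypotheses require --- this is exactly the kind of gluing carried out in the proof of Lemma~\ref{Lem Must Exit Through clB}, so I would simply cite that argument pattern. I would also note the minor bookkeeping point that the threshold ``$0 < \epsilon < \epsilon' \ll 1$'' in the statement is precisely what allows the re-application at time $T_F$: the output bound $\epsilon'$ at an interior time is strong enough to serve as a valid input bound $\epsilon$ for the next step.

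The main obstacle is essentially notational/structural rather than analytic: it is verifying that the constants in \cite[Proposition A.24]{BamlerKleiner18} depend only on the data listed (and in particular that the existence-time lower bound $\tau$ does not secretly depend on, say, injectivity radius lower bounds not present in our hypotheses). Bamler--Kleiner's proposition is stated with bounded-geometry hypotheses, and one must check that the curvature-and-derivative bounds $C_0,\dots,C_{10}$ together with the near-Ricci-flow bound $C_0'$ and completeness suffice to extract a uniform parabolic scale on which the implicit function / inverse function argument runs; the curvature derivative bounds up to order $10$ are exactly what give Shi-type estimates and hence uniform control of the parabolic neighborhoods, so this should go through, but it requires care to state which of their hypotheses are subsumed by ours. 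Once that translation is pinned down, the open--closed continuation argument is routine.
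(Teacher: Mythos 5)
Your overall route is the paper's route: the paper proves Proposition \ref{Prop Existence HarMapFlow} in a single step, as a direct application of \cite[Proposition A.24]{BamlerKleiner18} after exactly the kind of dictionary of constants you describe ($C=C_0$, $C'=\max_{0\le m\le 10}C_m T^{m/2}$, $\delta=C_0'$, $\eta_0=\epsilon$, $\eta_1=\epsilon'$, together with a choice of the intermediate parameter $\eta'=\tfrac12\sup_{\ol M}|(F_{T_F}^{-1})^*g(T_F)-\ol g(T_F)|_{\ol g(T_F)}+\tfrac12\epsilon'$). In particular, both the lower bound $T_F\ge\min\{t_0+\tau,T\}$ and the criterion ``distortion strictly below $\epsilon'$ at $T_F$ implies $T_F=T$'' are read off directly from the statement of A.24 via that choice of $\eta'$; no continuation argument is carried out in the paper, and no injectivity-radius hypothesis is needed beyond what is listed, since the translated hypotheses are precisely those of A.24.

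Your re-derivation of the continuation criterion by an open--closed argument is therefore extra work, and as written it has two soft spots. First, the step ``by choosing $\epsilon$ slightly smaller than $\epsilon'$ in the original application one can arrange $|(F_{T_F}^{-1})^*g(T_F)-\ol g(T_F)|_{\ol g(T_F)}\le\epsilon$'' is a non sequitur: shrinking the input threshold of the first application does not force the distortion at $T_F$ below it. The correct move is to re-apply the proposition at time $T_F$ with the new parameter pair consisting of the attained distortion (which is strictly less than $\epsilon'$) and $\epsilon'$ itself; this is admissible precisely because the statement holds for every pair $0<\epsilon<\epsilon'\ll1$. Second, you lean on Proposition \ref{Prop Uniqueness HarMapFlow} to glue, but its hypotheses (a uniform injectivity radius lower bound for $\ol g(t)$ and weighted $C^1$ control $\sqrt{1-t}\,|\ol\nabla h_i|_{\ol g}\le\epsilon$ of the distortions) are not among the hypotheses of the present proposition; in the paper that uniqueness statement is only invoked later, in Lemma \ref{Lem Must Exit Through clB}, where those extra bounds are available from the box $\cl{B}$. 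In fact no uniqueness is needed for your construction: the extension is started from the terminal value $F_{T_F}$ of the old solution, so the two pieces concatenate, with smoothness across $t=T_F$ supplied by parabolic regularity. With those repairs your argument closes, but it reproves a clause that \cite[Proposition A.24]{BamlerKleiner18} already supplies.
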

\begin{proof}
	This proposition is a direct consequence of \cite[Proposition A.9]{BamlerKleiner18} with
	$C = C_0$,
	$C' = \max_{0 \le m \le 10} C_m T^{m/2}$,
	$\delta = C_0'$,
	$\eta_0 =  \epsilon$,
	$\eta_1 =  \epsilon'$,
	$\eta' = \frac{1}{2} \sup_{\ol{M}}  | ( F_{T_F}^{-1} )^* g(T_F)  - \ol{g}(T_F) |_{\ol{g}(T_F)}  + \frac{1}{2} \epsilon'$,
	$\ol{\chi} = F_{t_0} $,
	$\chi_t  = F_{t_0 + t}$,
	$g'(t) = g(t_0 + t)$,
	$g(t) = \ol{g}(t_0 + t)$, and
	$T = T - t_0$.
\end{proof}

\begin{remark}
	In practice, $\ol{g}(t)$ will be the soliton solution $\ol{g}(t) = ( 1 - t) \phi_t^* \ol{g}$,
	$g(t)$ will be $\acute G_{\mathbf p }(t)$,
	and $F = \tl{\Phi}$. 
\end{remark}

\begin{lem}[Drift Control] \label{Lem Global Drift Control}
	Let $0 \le t_0 < T \le 1$.
	Let $(M^n, g(t))_{t \in [t_0, T]}$ and $( \ol{M}^n, \ol{g}(t) )_{t \in [t_0, T]}$ be smooth families of complete Riemannian manifolds and let $( F_t : M \to \ol{M} )_{t \in [t_0, T]} $ be a smooth family of diffeomorphisms such that
	$$ \partial_t \ol{g} = -2 \ol{Rc}	\quad \text{and} \quad
	\partial_t F_t = \Delta_{g, \ol{g}} F_t.$$
	Let
		$$h \doteqdot (F_t^{-1} )^* g(t) - \ol{g}(t).$$
	Assume that
	 \begin{gather*}
		 \sup_{\ol{M} \times [t_0, T]} | \ol{Rm} |_{\ol{g}} \le \ol{ \cl{K}} < \infty, \\
		\sup_{ (x,t) \in \ol{M} \times [t_0, T] } | h  |_{\ol{g}}(x,t) \le \epsilon, \text{ and}	\\
		\sup_{ (x,t) \in \ol{M} \times [t_0, T] } \sqrt{ 1 - t} | \ol{\nabla} h  |_{\ol{g}} (x,t) \le \epsilon.
	\end{gather*}
	If $0 < \epsilon \ll 1$ is sufficiently small depending only on $n$,
	then there exists $C$ depending only on $n$ and $\ol{\cl{K}}$ such that 
		$$d_{\ol{g}(t_0)} ( F_t(x), F_{t_0}(x) ) \le C \epsilon \left( \sqrt{ 1 - t_0} - \sqrt{ 1 - t} \right) \le C \epsilon \sqrt{1 - t_0}$$
	for all $(x,t) \in M \times [ t_0, T].$
\end{lem}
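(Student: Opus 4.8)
The plan is to estimate the displacement $d_{\ol g(t_0)}(F_t(x), F_{t_0}(x))$ by integrating the speed of the curve $t \mapsto F_t(x)$, which is controlled by the harmonic map heat flow equation. First I would observe that the velocity of this curve is $\partial_t F_t = \Delta_{g(t), \ol g(t)} F_t$, so that its $\ol g(t)$-length element satisfies
\[
|\partial_t F_t|_{\ol g(t)}(x) = |\Delta_{g(t), \ol g(t)} F_t|_{\ol g(t)}(x).
\]
The key point is to bound this quantity in terms of $\epsilon$ and $(1-t)^{-1/2}$. Pulling back by $F_t$, the tension field $\Delta_{g(t), \ol g(t)} F_t$ corresponds, at the level of the pulled-back metric $g_t := (F_t^{-1})^* g(t) = \ol g(t) + h$, to the vector field $-B_{\ol g(t)}(g_t)$ from DeTurck's trick, which in local coordinates is a contraction of $g_t^{-1}$ with the difference of Christoffel symbols $\Gamma(g_t) - \Gamma(\ol g(t))$. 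This difference is a first-order expression in $h$, schematically $g_t^{-1} * \ol\nabla h$, so pointwise $|B_{\ol g(t)}(g_t)|_{\ol g(t)} \lesssim_n |\ol\nabla h|_{\ol g(t)}$ once $|h|_{\ol g(t)} \le \epsilon \ll 1$ guarantees $g_t$ and $\ol g(t)$ are uniformly comparable. Here $\ol\nabla$ denotes the Levi-Civita connection of $\ol g(t)$.

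Next I would insert the hypothesis $\sqrt{1-t}\,|\ol\nabla h|_{\ol g(t)} \le \epsilon$ to get
\[
|\partial_t F_t|_{\ol g(t)}(x) \lesssim_n \frac{\epsilon}{\sqrt{1-t}} \qquad \text{for all } (x,t) \in M \times [t_0, T].
\]
Since the relevant time interval has $T \le 1$ and the curvature of $\ol g(t)$ is uniformly bounded by $\ol{\cl K}$ on $\ol M \times [t_0, T]$, the metrics $\ol g(t)$ for $t$ in this interval are all uniformly equivalent — indeed $\partial_t \ol g = -2\ol{Rc}$ with $|\ol{Rc}|_{\ol g(t)} \le (n-1)\ol{\cl K}$ gives $e^{-2(n-1)\ol{\cl K}(T-t_0)} \ol g(t_0) \le \ol g(t) \le e^{2(n-1)\ol{\cl K}(T-t_0)}\ol g(t_0)$, and $T - t_0 \le 1$ — so up to a factor depending only on $n$ and $\ol{\cl K}$ we may measure the speed with respect to $\ol g(t_0)$ throughout. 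Therefore
\[
d_{\ol g(t_0)}(F_t(x), F_{t_0}(x)) \le \int_{t_0}^t |\partial_s F_s|_{\ol g(t_0)}(x)\, ds \lesssim_{n, \ol{\cl K}} \int_{t_0}^t \frac{\epsilon}{\sqrt{1-s}}\, ds = 2\epsilon\left(\sqrt{1-t_0} - \sqrt{1-t}\right),
\]
which is the claimed bound, with the final inequality $\sqrt{1-t_0} - \sqrt{1-t} \le \sqrt{1-t_0}$ being trivial.

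The main obstacle I anticipate is making the pointwise estimate $|\Delta_{g(t),\ol g(t)} F_t|_{\ol g(t)} \lesssim_n |\ol\nabla h|_{\ol g(t)}$ fully rigorous: one must carefully identify the tension field of $F_t$ with (minus) the DeTurck vector field $B_{\ol g(t)}(g_t)$ of the pulled-back metric via the identity $(\Delta_{g,\ol g} F)\circ F^{-1} = \Delta_{(F^{-1})^*g, \ol g} \mathrm{Id} = -B_{\ol g}((F^{-1})^*g)$ (cf. items \eqref{Lapl Precompose} and \eqref{DeTurck Vector Field Eqn} in the main text), and then expand $B_{\ol g}(g_t)^k = g_t^{ij}(\Gamma(g_t)^k_{ij} - \Gamma(\ol g)^k_{ij})$ using $\ol\nabla \ol g = 0$ to see it is genuinely first order in $h = g_t - \ol g$ with no zeroth-order term. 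The comparability of $g_t$ with $\ol g(t)$ (hence bounds on $g_t^{-1}$) follows from $|h|_{\ol g(t)} \le \epsilon$ for $\epsilon$ small depending only on $n$. A minor secondary point is that the constant absorbing the equivalence of the $\ol g(t)$ across $[t_0,T]$ depends on $\ol{\cl K}$ (and $n$), which is exactly what the statement allows; no dependence on $t_0, T$ themselves enters because $T - t_0 \le 1$.
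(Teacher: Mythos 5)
Your proposal is correct and follows essentially the same route as the paper: identify $\partial_t F_t$ (pushed to $\ol{M}$) with minus the DeTurck vector field $B_{\ol{g}}(\ol{g}+h)$, bound it pointwise by $C_n|\ol{\nabla}h|_{\ol{g}} \le C_n\epsilon/\sqrt{1-t}$ using $|h|\le\epsilon$ small, and integrate the speed, absorbing the uniform equivalence of the metrics $\ol{g}(t)$ (from $|\ol{Rm}|\le\ol{\cl{K}}$ and $T-t_0\le 1$) into a constant depending only on $n,\ol{\cl{K}}$. The only cosmetic difference is that you bound the distance by the length of the curve directly, while the paper differentiates $d_{\ol{g}(t_0)}(F_s(x),F_{t_0}(x))$ in $s$, which amounts to the same estimate.
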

\begin{proof}
	For any $\ol{x} \in \ol{M}$, observe
		$$\partial_t F_t \circ F_t^{-1}( \ol{x} )  = - B_{\ol{g} } ( \ol{g} + h ) ( \ol{x} ) $$
	where $B_{\ol{g}} ( \ol{g} + h)$ is defined by \eqref{DeTurck Vector Field Eqn} in Lemma \ref{Lem g Evol Eqn}.
	If $0 < \epsilon \ll 1$ is sufficiently small depending only on $n$, then
		$$| \partial_t F_t \circ F_t^{-1} |_{\ol{g}} ( \ol{x} )  
		= | B_{\ol{g}} ( \ol{g} + h) |_{\ol{g}} ( \ol{x} ) 
		\lesssim_n | \ol{\nabla} h |_{\ol{g}} ( \ol{x} ) 
		\le \frac{ \epsilon}{ \sqrt{ 1 - t} }.$$	
	Since the $F_t : M \to \ol{M}$ are diffeomorphisms, 
		$$| \partial_t F_t (x) |_{\ol{g} } \lesssim_n \frac{ \epsilon}{ \sqrt{ 1- t}}
		\qquad \text{for all } x \in M.$$
	It follows that
	\begin{align*}
		  d_{\ol{g}(t_0)} ( F_t(x), F_{t_0} (x) ) 
		&= \int_{t_0}^t \frac{d}{ds}  d_{\ol{g}(t_0)} ( F_s(x), F_{t_0} (x) )ds	\\
		&= \int_{t_0}^t  \langle \nabla d , \partial_s F_s \rangle_{\ol{g}(t_0)}  ds	\\
		&\le  \int_{t_0}^t  \left| \partial_s F_s \right|_{\ol{g}(t_0)}  ds		.
%		&& ( \text{Cauchy-Schwartz} ) \\
	\end{align*}
	Since the Ricci flow solution $\ol{g}(t)$ has uniformly bounded curvature for $t \in [t_0, T] \subset [0, 1]$,
	the metrics $\ol{g}(t)$ are comparable for all $t \in [t_0, T]$ with
		$$\ol{g}(t') \lesssim_{n, \ol{K}} \ol{g} (t) \lesssim_{n, \ol{K}} \ol{g}(t')
		\qquad \text{for all } t, t' \in [t_0, T]$$
	(see for example \cite[proof of Theorem 8.1]{Hamilton95}).
	Hence,
	\begin{align*}
		d_{\ol{g}(t_0)} ( F_t(x), F_{t_0} (x) ) 
		&\le  \int_{t_0}^t  \left| \partial_s F_s \right|_{\ol{g}(t_0)}  ds		\\
		&\lesssim_{n, \ol{\cl{K}}}  \int_{t_0}^t  \left| \partial_s F_s \right|_{\ol{g}(s)}  ds	\\
%		&&( \text{metric distortion estimates} ) \\
		&\lesssim_n \int_{t_0}^t \frac{  \epsilon}{ \sqrt{ 1 - s}} ds		\\
		&=  \epsilon \left( 2 \sqrt{ 1 - t_0} - 2 \sqrt{ 1 - t} \right)	\\
		&\le 2  \epsilon \sqrt{ 1 - t_0}. 	
	\end{align*}	
\end{proof}

\begin{remark}
	The choice to use the distance function at time $t_0$ in the above Lemma \ref{Lem Global Drift Control} was arbitrary.
	In fact, since $\ol{Rm}$ is uniformly bounded and $0 \le T - t_0 \le 1$ in Lemma \ref{Lem Global Drift Control}, 
	the distance functions are all comparable, that is
		$$d_{\ol{g}(t)}(\ol{x}, \ol{y} ) \sim_{n, \ol{\cl{K}} } d_{\ol{g}(t') }(\ol{x}, \ol{y} )		
		\qquad \text{for all } \ol{x}, \ol{y} \in \ol{M}  \text{ and } t, t' \in [t_0, T],$$
	by \cite[Theorem 17.1]{Hamilton95}.
\end{remark}

Unfortunately, the drift control estimates of the previous lemma do not suffice for some of our purposes, and we need the following drift estimate where curvature bounds are assumed only on a subset $\Omega'$ of the manifold $\ol{M}$.

\begin{lem}[Local Drift Control] \label{Lem Local Drift Control}
	Let $0 \le t_0 < T \le 1$.
	Let $(M^n, g(t))_{t \in [t_0, T]}$ and $( \ol{M}^n, \ol{g}(t) )_{t \in [t_0, T]}$ be smooth families of complete Riemannian manifolds and let $( F_t : M \to \ol{M} )_{t \in [t_0, T]} $ be a smooth family of diffeomorphisms such that
	$$ \partial_t \ol{g} = -2 \ol{Rc}	\quad \text{and} \quad
	\partial_t F_t = \Delta_{g, \ol{g}} F_t.$$
	Let
		$$h \doteqdot (F_t^{-1} )^* g(t) - \ol{g}(t)$$
	and $\Omega \Subset \Omega' \subset \ol{M}$.
	Assume that
	 \begin{gather*}
		 \sup_{\Omega' \times [t_0, T]} | \ol{Rm} |_{\ol{g}} \le \ol{ \cl{K}} < \infty, \\
		\sup_{ (x,t) \in \ol{M} \times [t_0, T] } | h  |_{\ol{g}}(x,t) \le \epsilon, \text{ and}	\\
		\sup_{ (x,t) \in \ol{M} \times [t_0, T] } \sqrt{ 1 - t} | \ol{\nabla} h  |_{\ol{g}} (x,t) \le \epsilon.
	\end{gather*}
	If $0 < \epsilon \ll 1$ is sufficiently small depending only on $n$
	and $0 <  1 - t_0 \ll 1$ is sufficiently small depending on $n, \ol{ \cl{K}}, \dist_{\ol{g}(t_0)} ( \Omega, \ol{M} \setminus \Omega' )$,
	then
		$$F_{t_0}(x) \in \Omega \implies F_t (x) \in \Omega' \text{ for all } t \in [t_0, T].$$
\end{lem}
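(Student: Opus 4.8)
The plan is to localize the global drift estimate of Lemma~\ref{Lem Global Drift Control} by replacing the genuine metrics $\ol{g}(t)$ with auxiliary metrics that agree with $\ol{g}(t)$ on $\Omega'$ but have globally bounded curvature, and then run a continuity argument showing the trajectory $t \mapsto F_t(x)$ cannot leave $\Omega'$ before time $T$. First I would set $d_0 \doteqdot \dist_{\ol{g}(t_0)}(\Omega, \ol{M}\setminus\Omega')$ and, using the comparability of the metrics $\ol{g}(t)$ for $t$ in a short interval (which follows from the curvature bound on $\Omega'$ via Hamilton's estimates, cf.\ \cite{Hamilton95}), observe that distances in $\Omega'$ measured with respect to $\ol{g}(t)$ for any $t \in [t_0,T]$ are comparable to those at $t_0$ with constants depending only on $n, \ol{\cl{K}}$, provided $1-t_0 \ll 1$.

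The core step is the pointwise velocity bound. As in the proof of Lemma~\ref{Lem Global Drift Control}, at any point $\ol{x} \in \ol{M}$ where the trajectory still lies in $\Omega'$ one has $\partial_t F_t \circ F_t^{-1}(\ol{x}) = -B_{\ol{g}}(\ol{g}+h)(\ol{x})$, and since $B_{\ol{g}}(\ol{g}+h)$ depends (for $\epsilon \ll 1$ depending on $n$) only on $\ol{g}$, $\ol{g}^{-1}$, and $\ol{\nabla}h$, the bound $|\partial_t F_t(x)|_{\ol{g}(t)} \lesssim_n |\ol{\nabla}h|_{\ol{g}(t)}(F_t(x)) \le \epsilon/\sqrt{1-t}$ holds as long as $F_t(x) \in \Omega'$ — crucially this only needs the curvature bound and the smallness of $h$ \emph{along the trajectory}, hence on $\Omega'$. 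Then I would define $t^* \doteqdot \sup\{ t \in [t_0,T] : F_s(x) \in \Omega' \text{ for all } s \in [t_0,t]\}$, which is positive since $F_{t_0}(x) \in \Omega \Subset \Omega'$, and integrate the velocity bound from $t_0$ to any $t < t^*$ to get
\begin{align*}
	\dist_{\ol{g}(t_0)}(F_t(x), F_{t_0}(x))
	&\lesssim_{n,\ol{\cl{K}}} \int_{t_0}^t |\partial_s F_s(x)|_{\ol{g}(s)}\, ds \\
	&\lesssim_n \int_{t_0}^t \frac{\epsilon}{\sqrt{1-s}}\, ds
	\le 2\epsilon\sqrt{1-t_0}.
\end{align*}
If $1 - t_0$ is small enough that $2\epsilon\sqrt{1-t_0} < d_0$, this estimate forces $F_t(x)$ to remain within $\ol{g}(t_0)$-distance $d_0$ of $F_{t_0}(x) \in \Omega$, hence inside $\Omega'$, for all $t < t^*$; by continuity $F_{t^*}(x) \in \Omega'$ as well, and since it lies strictly inside $\Omega'$ the supremum defining $t^*$ cannot be less than $T$. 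Therefore $t^* = T$ and $F_t(x) \in \Omega'$ for all $t \in [t_0,T]$.

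The main obstacle — and the point requiring the most care — is that the comparability of the metrics $\ol{g}(t)$ on $\Omega'$ is only a \emph{local} statement: Hamilton's estimates give $\ol{g}(t) \sim \ol{g}(t_0)$ pointwise on $\Omega'$, but to compare \emph{distances} between points of $\Omega'$ one must know that near-minimizing paths stay in $\Omega'$, which is not automatic. The clean way around this is to not compare distances directly but to work entirely with the pointwise metric comparison: bound $|\partial_s F_s(x)|_{\ol{g}(t_0)} \lesssim_{n,\ol{\cl{K}}} |\partial_s F_s(x)|_{\ol{g}(s)}$ pointwise (valid as long as $F_s(x) \in \Omega'$), and then the length of the trajectory curve $s \mapsto F_s(x)$ in the $\ol{g}(t_0)$-metric is controlled by $2C\epsilon\sqrt{1-t_0}$; since this curve starts in $\Omega$, it cannot reach $\ol{M}\setminus\Omega'$ without having $\ol{g}(t_0)$-length at least $d_0$. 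Choosing $1-t_0$ small enough (depending on $n, \ol{\cl{K}}, d_0$) that $2C\epsilon\sqrt{1-t_0} < d_0$ completes the argument without ever needing global distance comparability.
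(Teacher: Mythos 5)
Your proposal is correct and follows essentially the same route as the paper: bound the velocity $|\partial_t F_t(x)|$ via the DeTurck vector field $B_{\ol{g}}(\ol{g}+h)$ by $C_n\epsilon/\sqrt{1-t}$, use the curvature bound on $\Omega'$ only for the \emph{pointwise} comparison of $\ol{g}(t_0)$ and $\ol{g}(s)$ along the trajectory while it stays in $\Omega'$, integrate to bound the traveled $\ol{g}(t_0)$-distance by $2C\epsilon\sqrt{1-t_0}$, and conclude by a first-exit-time argument once $2C\epsilon\sqrt{1-t_0} < \dist_{\ol{g}(t_0)}(\Omega,\ol{M}\setminus\Omega')$. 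The paper phrases the endgame as a contradiction at the infimum of exit times rather than your supremum-of-good-times continuity argument, but these are the same mechanism, and your resolution of the local-versus-global distance comparability issue (working with the length of the trajectory curve in the fixed metric) is exactly what the paper does.
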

\begin{proof}
	Suppose for the sake of contradiction that there exists $x \in M$ and $t \in [t_0, T]$ such that 
	$F_{t_0} (x) \in \Omega$ and $F_t (x) \notin \Omega'$.
	Then we can define
		$$t_* \doteqdot \inf \{ t \in [t_0, T] : F_t(x) \notin \Omega' \}.$$
	It follows that
		$$d_{\ol{g}(t_0)} ( F_{t_0} (x) , F_{t_*}(x) ) \ge d_{\ol{g}(t_0)} ( \Omega , \ol{M} \setminus \Omega' ) > 0.$$
		
	By the same argument as in the beginning of the proof of Lemma \ref{Lem Global Drift Control}, 
		$$| \partial_t F_t  |_{\ol{g}} (F_t(x)) \lesssim_n \frac{ \epsilon } { \sqrt{ 1 - t} }$$
	for all $t \in [t_0, T]$ if $0 < \epsilon \ll 1$ is sufficiently small depending on $n$.
	Hence,
	\begin{align*}
		d_{\ol{g}(t_0)} ( F_{t_*}(x), F_{t_0} (x) ) 
		&= \int_{t_0}^{t_*} \frac{d}{ds}  d_{\ol{g}(t_0)} ( F_s(x), F_{t_0} (x) )ds	\\
		&= \int_{t_0}^{t_*}  \langle \nabla d , \partial_s F_s(x) \rangle_{\ol{g}(t_0)}  ds	\\
		&\le  \int_{t_0}^{t_*}  \left| \partial_s F_s \right|_{\ol{g}(t_0)} (F_s(x)) ds		.
%		&& ( \text{Cauchy-Schwartz} ) \\
	\end{align*}
	By definition of $t_*$, $F_s(x) \in \Omega'$ for all $s \in [t_0, t_*)$.
	Since the Ricci flow solution $\ol{g}(t)$ has uniformly bounded curvature on $\Omega' \times [t_0, T] \subset \Omega' \times [0,1]$,
	the metrics $\ol{g}(t)$ at $F_t(x) \in \ol{M}$ are comparable for all $t \in [t_0, t_*]$ with
		$$\ol{g}(t') \lesssim_{n, \ol{ \cl{K}}} \ol{g} (t) \lesssim_{n, \ol{ \cl{K}}} \ol{g}(t')
		\qquad \text{at } F_t(x) \in \ol{M}, \text{ for all } t, t' \in [t_0, t_*]$$
	(see for example \cite[proof of Theorem 8.1]{Hamilton95}).
	Hence,
	\begin{align*}
		d_{\ol{g}(t_0)} ( F_{t_*}(x), F_{t_0} (x) ) 
		&\le  \int_{t_0}^{t_*}  \left| \partial_s F_s \right|_{\ol{g}(t_0)} ( F_s(x))  ds		\\
		&\lesssim_{n, \ol{\cl{K}}}  \int_{t_0}^{t_*}  \left| \partial_s F_s \right|_{\ol{g}(s)}  (F_s(x))ds	\\
%		&&( \text{metric distortion estimates} ) \\
		&\lesssim_n \int_{t_0}^t \frac{  \epsilon}{ \sqrt{ 1 - s}} ds		\\
		&=  \epsilon \left( 2 \sqrt{ 1 - t_0} - 2 \sqrt{ 1 - t_*} \right)	\\
		&\le 2  \epsilon \sqrt{ 1 - t_0}. 	
	\end{align*}	
	In particular, if $\epsilon < 1$ and $0 < 1 - t_0 \ll 1$ is sufficiently small depending on $n, \ol{\cl{K}}, d_{\ol{g}(t_0)} ( \Omega, \ol{M} \setminus \Omega' )$, then 
		$$d_{\ol{g}(t_0)} ( F_{t_*}(x), F_{t_0} (x) ) < d_{\ol{g}(t_0)} ( \Omega, \ol{M} \setminus \Omega' ) \le d_{\ol{g}(t_0)} ( F_{t_*}(x), F_{t_0} (x) ),$$
	a contradiction.
\end{proof}

\begin{prop}[Uniqueness of the Harmonic Map Heat Flow Solution] \label{Prop Uniqueness HarMapFlow}
	Let $0 \le t_0 < T \le 1$.
	Let $(M^n, g(t))_{t \in [t_0, T]}$ and $( \ol{M}^n, \ol{g}(t) )_{t \in [t_0, T]}$ be smooth families of complete Riemannian manifolds such that $\ol{g}(t)$ evolves by Ricci flow
	$ \partial_t \ol{g} = -2 \ol{Rc}.$
	Let $ F_1, F_2 : M \times [t_0, T] \to \ol{M} $ be smooth functions such that
	\begin{gather*}
	\begin{aligned}
		\partial_t F_1 &= \Delta_{g, \ol{g} } F_1
		&& \text{on } M \times (t_0, T), \\
		\partial_t F_2 &= \Delta_{g, \ol{g} } F_2
		&& \text{on } M \times (t_0, T),  \\	
		F_1(x, t_0) &= F_2(x,t_0) 
		&& \text{for all } x \in M, \text{ and} \\
	\end{aligned} \\
		F_1( \cdot, t), F_2(\cdot , t) : M \to \ol{M}
		\text{ are diffeomorphisms for all } t \in [t_0, T].
	\end{gather*}
	Let
		$$h_1 \doteqdot ( F_1( \cdot , t)^{-1} )^* g - \ol{g} \qquad \text{ and } \qquad 
		h_2 \doteqdot ( F_2( \cdot , t)^{-1} )^* g - \ol{g}.$$
	Assume that 
	\begin{gather*}
		\sup_{\ol{M} \times [t_0, T] } | \ol{Rm} |_{\ol{g}} \le \ol{ \cl{K} } < \infty,	\qquad 
		\inf_{\ol{M} \times [t_0, T] } \inj(\ol{M} , \ol{g}(t) ) \ge i_0 > 0,	\\
		\sup_{\ol{M} \times [t_0, T] } \left( | h_1 |_{\ol{g}} (x,t) +  | h_2 |_{\ol{g}}(x,t) \right) \le \epsilon , 
		\text{ and} \\
		\sup_{\ol{M} \times [t_0, T] } \sqrt{ 1 - t} \left(| \ol{\nabla} h_1 |_{\ol{g}}(x,t)  +  | \ol{\nabla} h_2 |_{\ol{g}}(x,t) \right) \le \epsilon.
	\end{gather*}
	If $0 < \epsilon \ll 1$ is sufficiently small depending only on $n$, 
	 then $F_1(x,t) = F_2(x,t)$ for all $(x,t) \in M \times [t_0, T]$.
\end{prop}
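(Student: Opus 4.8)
The plan is to reduce the statement to a maximum-principle argument for a scalar parabolic differential inequality on the target $(\ol{M}, \ol{g}(t))$, which by hypothesis has bounded curvature and a uniform lower injectivity-radius bound, hence bounded geometry. First I would apply the drift control of Lemma \ref{Lem Global Drift Control} to $F_1$ and to $F_2$ separately; combined with $F_1(\cdot,t_0)=F_2(\cdot,t_0)$ this gives $\dist_{\ol{g}(t)}(F_1(x,t),F_2(x,t)) \lesssim_n \epsilon\sqrt{1-t_0}$ for all $(x,t)$. Taking $\epsilon \ll 1$ (depending on $n$ and, implicitly, on $i_0$), this keeps $F_2(x,t)$ inside the injectivity-radius ball about $F_1(x,t)$, so the vector field $V(x,t) \doteqdot \exp_{F_1(x,t)}^{-1}(F_2(x,t)) \in \Gamma(F_1^*T\ol{M})$ along $F_1$ is well-defined, smooth, and uniformly small, with $V\equiv 0$ equivalent to $F_1\equiv F_2$. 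Equivalently, one may push everything to $\ol{M}$ via $y=F_1(x,t)$ and work with the bounded function $u(y,t) \doteqdot \dist_{\ol{g}(t)}\big(y,(F_2\circ F_1(\cdot,t)^{-1})(y)\big)^2$ on $\ol{M}$, which vanishes at $t=t_0$.

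Next I would derive the evolution inequality for $u$ (or $|V|^2$). Interpolating $F_1$ and $F_2$ by the geodesic homotopy $\sigma\mapsto F_\sigma$ and differentiating the harmonic map heat flow equations $\partial_t F_i = \Delta_{g(t),\ol{g}(t)} F_i$, one finds that $V$, read covariantly along $F_1$, satisfies a parabolic equation whose zeroth-order term is of Riemann-curvature type, $\ol{Rm}(F_\sigma)*dF_\sigma*dF_\sigma*V$, controlled by $\ol{\cl{K}}$ and by the $C^0$-smallness of $h_1,h_2$ (which makes $|dF_i|_{\ol{g}}$ of order $1$), and whose first-order coefficients are \emph{bounded} because reparametrizing by $F_1^{-1}$ is precisely the harmonic-map (DeTurck) gauge for $F_1$: the gauge-dependent term $g^{ij}\Gamma(g)_{ij}^k$ cancels against the motion $\partial_t F_1^{-1}\circ F_1 = -B_{\ol{g}}\big((F_1^{-1})^*g\big)$. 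Passing to $u$ on $\ol{M}$, the spatial operator is uniformly elliptic and $C^1$-close to $\Delta_{\ol{g}}$ by the $C^1$-smallness of $h_1$, the residual nonlinear gradient terms are absorbed into the good term $-|\ol{\nabla} V|^2$ using $\epsilon\ll 1$, and the coefficients that involve $\ol{\nabla}h_i$ carry at worst a factor $(1-t)^{-1/2}$, which is integrable on $[t_0,T]\subset[t_0,1]$. The upshot is an inequality of the form $\partial_t u \le \Delta_{\ol{g}(t)} u + \langle Z(t),\ol{\nabla}u\rangle + c(t)\,u$ on $\ol{M}\times(t_0,T]$ with $u$ bounded, $u(\cdot,t_0)\equiv 0$, and $\int_{t_0}^{T}\big(\sup_{\ol{M}}|Z|+\sup_{\ol{M}}|c|\big)\,dt<\infty$.

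Finally I would invoke a maximum principle for bounded subsolutions of linear uniformly parabolic equations on a complete manifold with bounded geometry, in the spirit of Karp--Li and Ecker--Huisken: using the lower injectivity-radius bound and the curvature bound on $\ol{M}$ to construct a cutoff exhaustion with $\ol{g}$-controlled gradient and Laplacian, a standard localized Gr\"onwall estimate forces $u\equiv 0$ on $\ol{M}\times[t_0,T]$, hence $F_1\equiv F_2$. (This is also essentially the companion uniqueness statement to the existence result of \cite{BamlerKleiner18} used in Proposition \ref{Prop Existence HarMapFlow}.) I expect the main obstacle to be the second step: isolating the gauge-covariant structure of the difference equation so that all first-order coefficients stay bounded despite our only having $C^1$ control on $h_1,h_2$, and verifying that the residual $(1-t)^{-1/2}$ singularities in those coefficients are genuinely time-integrable rather than merely borderline. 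The drift estimates of Lemmas \ref{Lem Global Drift Control} and \ref{Lem Local Drift Control}, the bounded-geometry hypotheses on $\ol{M}$, and the smallness of $\epsilon$ are exactly the inputs that make this work.
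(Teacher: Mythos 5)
Your proposal follows the same skeleton as the paper's proof: compare $F_1$ and $F_2$ through the squared distance in the target, use the injectivity radius bound so that $d_{\ol{g}(t)}^2$ is twice differentiable (with controlled Hessian) near the diagonal, use the drift control of lemma \ref{Lem Global Drift Control} to keep the pair in that neighborhood, and conclude with a maximum principle for a bounded subsolution. The implementation differs, though, and your choice makes life harder than necessary. The paper keeps $u(x,t)=\tfrac12 d^2_{\ol{g}(t)}(F_1(x,t),F_2(x,t))$ as a function on the \emph{domain} $M$ and computes $(\partial_t-\Delta_{g(t)})u$ directly: because $F_1$ and $F_2$ solve the \emph{same} harmonic map heat flow with the same $g(t)$ and $\ol{g}(t)$, the first-order terms $(\nabla d)(\Delta_{g,\ol g}F_1\oplus\Delta_{g,\ol g}F_2)$ coming from $\partial_t u$ cancel identically against the corresponding terms in $\Delta_{g}u$, the Hessian term has a favorable sign by the positivity of $\nabla^2 d$ near the diagonal, and the $\partial_t\ol g=-2\ol{Rc}$ contribution is bounded by Hamilton's estimate, giving $(\partial_t-\Delta_g)u\le C_n\ol{\cl K}u$ with \emph{no} $\ol\nabla h_i$, no DeTurck vector field, and no $(1-t)^{-1/2}$ coefficients anywhere. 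Your "main obstacle" -- controlling the first-order, gauge-dependent terms after pushing everything to $\ol M$ via $F_1^{-1}$ (or working with $V=\exp^{-1}_{F_1}(F_2)$) -- is a complication your reparametrization creates and the paper's domain-side computation avoids entirely.

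The one genuine gap concerns the dependence of $\epsilon$. You apply lemma \ref{Lem Global Drift Control} from $t_0$ over the whole interval and need $C\epsilon\sqrt{1-t_0}$ below the injectivity scale, so your $\epsilon$ must be small depending on $i_0$ (and on $\ol{\cl K}$ through the drift constant), which you acknowledge; but the proposition asserts $\epsilon=\epsilon(n)$ only, and this is not cosmetic: in the application (lemma \ref{Lem Must Exit Through clB} and the proof of theorem \ref{Main Thm}) the target is the rescaled soliton flow on a time interval approaching the singular time, where $\ol{\cl K}\sim(1-T')^{-1}$ blows up and $i_0\to 0$, while $\epsilon_0,\epsilon_1$ must be fixed in advance uniformly in $\mathbf p$; an $\epsilon$ depending on $i_0,\ol{\cl K}$ would not close that argument. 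The paper removes this dependence by arguing at the maximal coincidence time $t_0'$: there $u(\cdot,t_0')\equiv 0$, and the drift estimate over a \emph{short} interval $[t_0',t_0'+\tau]$, with $\tau$ (not $\epsilon$) chosen small depending on $n,\ol{\cl K},i_0,\epsilon$, keeps $(F_1,F_2)$ in the good neighborhood of the diagonal; the only smallness required of $\epsilon$ is the purely dimensional one needed in the drift lemma to bound the DeTurck vector field by $|\ol\nabla h|$. Your proof is repaired by the same device -- restart at the first time of separation rather than at $t_0$ -- after which the rest of your outline goes through.
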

\begin{proof}
	Suppose for the sake of contradiction that
		$$t_0' \doteqdot \max \{ t' \in [t_0, T] : F_1( \cdot , t ) = F_2(\cdot , t) \text{ for all } t \in [t_0, t'] \} < T.$$
	Consider the function $u : M \times [ t_0', T] \to \mathbb{R}$ given by the squared distance of $F_1$ to $F_2$, that is,
		$$u (x, t ) = \frac{1}{2} d_{\ol{g}(t)}^2 ( F_1(x,t), F_2(x,t) ).$$
	Note $u( x ,t_0') = 0$ for all $x \in M$.
	
	The uniform injectivity radius lower bound implies that there exists a sufficiently small neighborhood $U = B_{\ol{g}(t_0)} (D, r) $ of the diagonal $D \subset \ol{M} \times \ol{M}$ where the squared distance function $(\ol{x},\ol{y},t) \mapsto d_{\ol{g}(t)}^2 ( \ol{x}, \ol{y} )$ is twice differentiable.
	Additionally, the uniform curvature bounds
		$$\sup_{\ol{M} \times [ t_0, T] } | \ol{Rm} | \le \ol{\cl{K}}$$	
	yield estimates on the Hessian $\nabla^2 d = {}^{\ol{M} \times \ol{M}} \nabla^2 d$
	which in particular imply
	that the Hessian $\nabla^2 d$ is positive semidefinite on $U = B_{\ol{g}(t_0)} (D, r) \subset \ol{M} \times \ol{M}$ after possibly taking $r$ smaller.
%	see for example Lemma 27 on p. 175 or Lemma 50 on p. 318 of Petersen's Riemannian Geometry textbook
	By the drift control estimate Lemma \ref{Lem Global Drift Control}, there exists $0 < \tau \ll 1$ sufficiently small depending on $n, \ol{\cl{K}}, i_0, \epsilon$
	such that, for all $t \in [t_0' , t_0' + \tau] \subset [t_0 , T]$,
	the image of $x \mapsto ( F_1( x, t), F_2(x ,t) )$ stays within this neighborhood $U$.

	We can now compute and estimate the evolution equation for $u$ on $M \times ( t_0' , t_0' + \tau )$ in terms of the induced connections:
	\begin{align*}
	\partial_t u 
	={}& d_{\ol{g}}  ( F_1, F_2 ) \cdot\left[ ( \partial_t d_{\ol{g}(t)} ) |_{ (F_1, F_2) }
	+ (\nabla d )( \partial_t F_1 \oplus \partial_t F_2 )	\right]\\
	={}& d_{\ol{g}} ( F_1, F_2 )\cdot \left[ ( \partial_t d_{\ol{g}(t)} )|_{ (F_1, F_2) }
	+ (\nabla d) ( \Delta_{g, \ol{g}} F_1 \oplus \Delta_{g, \ol{g}}  F_2 ) \right]	,\\
	\nabla_j u 
	={}& d_{\ol{g}} (F_1, F_2)\cdot \left[  (\nabla d) |_{(F_1, F_2)} ( \nabla_j F_1 \oplus \nabla_j F_2 ) \right], \text{ and}	\\ 
	\nabla_i \nabla_j u 
		={}& \left[  (\nabla d)  ( \nabla_i F_1 \oplus \nabla_i F_2 ) \right]
		\left[  (\nabla d)  ( \nabla_j F_1 \oplus \nabla_j F_2 ) \right]	\\
		&+ d_{\ol{g}} (F_1, F_2) \cdot \left[  (\nabla d)  ( \nabla_i \nabla_j F_1 \oplus \nabla_i \nabla_j F_2 ) \right]	\\
		&+ d_{\ol{g}} (F_1, F_2) \cdot \left[  (\nabla^2 d) ( \nabla_i F_1 \oplus \nabla_i F_2,  \nabla_j F_1 \oplus \nabla_j F_2 ) \right].	
	\end{align*}
	
	It follows that
	\begin{gather} \label{Uniqueness Proof u Evol Eqn} \begin{aligned}
		( \partial_t - \Delta_g ) u	
		={}& d_{\ol{g}} (F_1, F_2) \cdot ( \partial_t d_{\ol{g}(t)} )|_{ (F_1, F_2) }
		 - | ( \nabla d) ( \nabla F_1 \oplus \nabla F_2 ) |_{g}^2	\\
		& - d_{\ol{g}} (F_1, F_2) \cdot g^{ij} \left[  (\nabla^2 d) ( \nabla_i F_1 \oplus \nabla_i F_2,  \nabla_j F_1 \oplus \nabla_j F_2 ) \right].	
	\end{aligned}	\end{gather}

	It follows from \cite[Lemma 17.3]{Hamilton95} that
		$$d_{\ol{g}} ( F_1, F_2) \cdot ( \partial_t d_{\ol{g}(t)} ) (F_1, F_2) 
		\le \| \ol{Rc} \|_{C^0( \ol{M} )  }   d_{\ol{g}} ( F_1, F_2)^2
		\le C_n  \ol{\cl{K}} u .$$
	 Writing the last term in the evolution equation \eqref{Uniqueness Proof u Evol Eqn} in terms of a $g$-orthonormal basis $\{ e_i \}$,
	 the estimate
	 \begin{align*}
	 	&- d_{\ol{g}} (F_1, F_2) \cdot g^{ij} \left[  (\nabla^2 d) ( \nabla_i F_1 \oplus \nabla_i F_2,  \nabla_j F_1 \oplus \nabla_j F_2 ) \right]	\\
	 	={}&-d_{\ol{g}} (F_1, F_2) \cdot \sum_{i} 
		\nabla^2 d \left(  \nabla_{i} F_1 \oplus \nabla_{i} F_2 , \nabla_{i} F_1 \oplus \nabla_{i} F_2 \right) \\
	 	\le{}& 0  
	 \end{align*}
	holds since $\nabla^2 d$ is positive semidefinite on $U$.

	Applying these estimates to \eqref{Uniqueness Proof u Evol Eqn}, it follows that 
		$$( \partial_t - \Delta_g) u \le C_n \ol{ \cl{K}} u
		\qquad \text{on } M \times (t_0' , t_0' + \tau) .$$
	The drift control estimate Lemma \ref{Lem Global Drift Control} implies additionally that $u$ is uniformly bounded.
	Therefore, $u : M \times [ t_0', t_0' + \tau] \to \mathbb{R}$ is a bounded subsolution of
	\begin{align*}
		\partial_t u &\le \Delta_g u + C_n \ol{\cl{K}} u 
		&& \text{ on } M \times ( t_0' , t_0' + \tau), \\
		u( \cdot  , t_0') &= 0
		&& \text{ on } M . 
	\end{align*}
	The maximum principle then implies that $u \equiv 0 $ on $M \times [t_0', t_0' + \tau]$.
	The definition of $u$ then implies that $F_1(x,t) = F_2(x,t)$ for all $(x,t) \in M \times [ t_0', t_0' + \tau]$, 
	which contradicts the choice of $t_0'$.
\end{proof}

%%%%%%%%%%%%%%%%%%%%%%%%%%%%%%%%%%%%%%%%%%%%%%%%%%%%%%%%%
\section{H{\"o}lder Estimates in Euclidean Space} \label{Holder Ests in Eucl Space}
%%%%%%%%%%%%%%%%%%%%%%%%%%%%%%%%%%%%%%%%%%%%%%%%%%%%%%%%%

In this appendix, we provide the a priori H{\"o}lder estimates for parabolic equations on domains in Euclidean space.
The content here closely follows that of \cite[Subsection 2.5]{Bamler14} and \cite[Section 4]{Appleton18}.
While the results in this appendix are stated only for scalar-valued functions $u$ for simplicity, all the results in this appendix hold for vector-valued functions $u$ and the proofs carry over mutatis mutandis.

If $\Omega \subset \R^n \times \R$ is some parabolic neighborhood (e.g. $\Omega = B_r ( 0 ) \times [0, T]$) and
$m \in \mathbb{N}$, then $C^{2m}( \Omega)$ denotes the space of functions $u (x, t)$ on $\Omega$ such that $\nabla^{i} \partial_t^j u$ is continuous and $\sup_\Omega | \nabla^i \partial_t^j u | < \infty$
for all $j \in \mathbb{N}$ and multi-indices $i$ with $|i| + 2j \le m$.
Let $\alpha \in (0,1)$. We define weighted H{\"o}lder norms $\| \cdot \|_{C^{2m, \alpha}( \Omega) }$ which are invariant under parabolic dilation:
Assume
	$$r_\Omega = \min \{ r  : \Omega \subset B_r(p) \times [ t - r^2 , t] \text{ for some } (p,t) \in \R^n \times \R \} < \infty,$$
then
	$$\| u \|_{C^{2m, \alpha} ( \Omega) } \doteqdot \sum_{i + 2j \le 2m} 
	r_{\Omega}^{|i| + 2j} \left( \| \nabla^i \partial_t^j u \|_{C^0 ( \Omega) } + r_\Omega^{\alpha} [ \nabla^i \partial_t^j u ]_{\alpha, \alpha/2}\right) .$$
$C^{2m, \alpha}(\Omega)$ then refers to the space of functions $u(x,t) \in C^{2m}(\Omega)$ with $\| u \|_{C^{2m, \alpha} ( \Omega) } < \infty$.

Throughout $L$ will denote an  linear second order operator of the form
	$$Lu = a^{ij} (x,t) \nabla_i \nabla_j u + b^{i}(x,t) \nabla_i u + c(x,t) u.$$
We first recall some interior H{\"o}lder estimates for inhomogeneous linear parabolic equations.

\begin{lem} \label{Lem Lin Int Est}
	Let $r > 0$.
	Consider the parabolic neighborhoods $\Omega = B_r \times [ -r^2, 0]$ and $\Omega' = B_{2r} \times [ -4r^2, 0]$.
	Assume that $u \in C^{2m, \alpha} ( \Omega')$ satisfies the equation
	\begin{gather*}
		(\partial_t - L )u = f \qquad \text{on } \Omega'
	\end{gather*}
	where 
		$$Lu = a^{ij} (x,t) \nabla_i \nabla_j u + b^{i}(x,t) \nabla_i u + c(x,t) u,$$
	\begin{gather*}
		\frac{1}{\Lambda} \le a^{ij} \le \Lambda, \qquad
		\| a^{ij} \|_{C^{2m-2, \alpha} ( \Omega') } \le \Lambda,	\\
		\| b^i \|_{C^{2m-2, \alpha} ( \Omega') } \le r^{-1} \Lambda, 		\qquad
		\| c \|_{C^{2m-2, \alpha} ( \Omega') } \le r^{-2} \Lambda,
	\end{gather*}
	and $f \in C^{2m-2, \alpha}( \Omega')$.
	Then
		$$\| u \|_{C^{2m, \alpha} ( \Omega) } 
		\le C \left( r^2 \| f \|_{C^{2m-2 , \alpha} ( \Omega' ) } + \| u \|_{C^0 ( \Omega') } \right)$$
	where $C = C( n, m, \alpha, \Lambda) $ depends only on $n, m , \alpha, \Lambda$.
\end{lem}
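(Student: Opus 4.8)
\emph{Proof plan.} The first reduction is to the case $r=1$ by parabolic rescaling. Under the dilation $(x,t)\mapsto(rx,r^2t)$ the cylinders $\Omega,\Omega'$ become $B_1\times[-1,0]\Subset B_2\times[-4,0]$, the equation $(\partial_t-L)u=f$ is preserved with $a^{ij}$, $rb^i$, $r^2c$ in place of the coefficients (so the ellipticity constant and the $C^{2m-2,\alpha}$ bound $\Lambda$ are unchanged), $f$ is replaced by $r^2f$, and every norm $\|\cdot\|_{C^{k,\alpha}}$ appearing in the statement is by construction invariant. Hence it suffices to prove
$\|u\|_{C^{2m,\alpha}(B_1\times[-1,0])}\le C\big(\|f\|_{C^{2m-2,\alpha}(B_2\times[-4,0])}+\|u\|_{C^0(B_2\times[-4,0])}\big)$
with $C=C(n,m,\alpha,\Lambda)$, using that $u\in C^{2m,\alpha}$ is assumed a priori (so only the estimate, not regularity, is at issue).

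The base case $m=1$ is the classical interior Schauder estimate for uniformly parabolic operators with $C^{0,\alpha}$ coefficients: $(\partial_t-L)u=f$ on a parabolic cylinder $Q'$ implies $\|u\|_{C^{2,\alpha}(Q)}\le C(\|f\|_{C^{0,\alpha}(Q')}+\|u\|_{C^0(Q')})$ for $Q\Subset Q'$, with $C$ depending only on $n,\alpha,\Lambda$ and the geometry of the pair $(Q,Q')$ (see \cite{Bamler14}, \cite{Appleton18} and the references therein). To pass from this statement to the specific pair $B_1\times[-1,0]\Subset B_2\times[-4,0]$ one covers the smaller cylinder by finitely many small parabolic sub-cylinders, each contained with a fixed margin in $B_2\times[-4,0]$, applies the estimate on each, and sums; the number of pieces and the margin depend only on $n$.

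For $m\ge2$ I would bootstrap along a finite decreasing chain of parabolic cylinders $B_2\times[-4,0]=Q^{(0)}\Supset Q^{(1)}\Supset\cdots\Supset Q^{(2m-2)}=B_1\times[-1,0]$. Assuming inductively that $\|u\|_{C^{k,\alpha}(Q^{(k-2)})}$ is bounded by the right-hand side (with $k=2$ the previous paragraph), differentiate the equation by a spatial derivative $\partial_{x_\ell}$: the function $v=\partial_{x_\ell}u$ solves $(\partial_t-L)v=\tilde f$ with $\tilde f=\partial_{x_\ell}f+(\partial_{x_\ell}a^{ij})\nabla_i\nabla_j u+(\partial_{x_\ell}b^i)\nabla_i u+(\partial_{x_\ell}c)u$. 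Using the coefficient bounds $\|a^{ij}\|_{C^{2m-2,\alpha}},\|b^i\|_{C^{2m-2,\alpha}},\|c\|_{C^{2m-2,\alpha}}\le\Lambda$ together with the inductive bound on $\|u\|_{C^{k,\alpha}}$, one checks $\|\tilde f\|_{C^{k-2,\alpha}(Q^{(k-2)})}$ is controlled by $\|f\|_{C^{2m-2,\alpha}}+\|u\|_{C^0}$; applying the base-case Schauder estimate to $v$ on $Q^{(k-1)}\Subset Q^{(k-2)}$ then yields a $C^{k+1,\alpha}$ bound for $u$ (spatial derivatives), and time derivatives are recovered from $\partial_t u=Lu+f$, which costs two orders and is consistent with the parabolic-scaling norm. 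After finitely many such steps one obtains $\|u\|_{C^{2m,\alpha}(B_1\times[-1,0])}\le C\big(\|f\|_{C^{2m-2,\alpha}(B_2\times[-4,0])}+\|u\|_{C^{2m-2,\alpha}(B_2\times[-4,0])}\big)$. Finally, a standard parabolic interpolation argument — $\|u\|_{C^{2m-2,\alpha}}\le\epsilon\|u\|_{C^{2m,\alpha}}+C_\epsilon\|u\|_{C^0}$ on nested cylinders, iterated over the chain so that the top-order term is absorbed — replaces the $C^{2m-2,\alpha}$ norm of $u$ on the right by $\|u\|_{C^0}$, enlarging the constant to one still depending only on $n,m,\alpha,\Lambda$. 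Undoing the rescaling restores the factor $r^2$ on $\|f\|$ and completes the proof. I expect the only genuinely hard input to be the base-case interior parabolic Schauder estimate, which is classical and invoked by citation; everything else (the scaling reduction, the differentiation/bootstrap, the covering and interpolation) is routine bookkeeping, the one subtlety being the interpolation step needed to reduce the right-hand side to $\|f\|_{C^{2m-2,\alpha}}$ and $\|u\|_{C^0}$.
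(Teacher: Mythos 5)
Your proposal is correct and follows essentially the same route as the paper: the paper's proof simply cites the classical interior parabolic Schauder estimate (\cite[Theorem 8.11.1]{Krylov96}) for the case $m=1$ and observes that the case $m>1$ follows by differentiating the equation, which is exactly your base case plus bootstrap, with the scaling reduction and interpolation being routine bookkeeping already built into the dilation-invariant norms.
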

\begin{proof}
	For $m = 1$, this lemma is identical to \cite[Theorem 8.11.1]{Krylov96}.
	The result for $m > 1$ follows from differentiating the evolution equation of $u$.
\end{proof}

\begin{lem} \label{Lem Lin Int Est+}
	Let $2r \ge s > 0$.
	Consider the parabolic neighborhoods $\Omega = B_r \times [ -s^2, 0]$ and $\Omega' = B_{2r} \times [ -s^2, 0] $.
	Assume that $u \in C^{2m, \alpha} ( \Omega')$ satisfies the equation
	\begin{gather*}
		(\partial_t - L )u = f \qquad \text{on } \Omega', \text{ and} \\
		u( \cdot, 0) = u_0 \qquad \text{on } B_{2r} \times \{ 0 \}
	\end{gather*}
	where 
		$$Lu = a^{ij} (x,t) \nabla_i \nabla_j u + b^{i}(x,t) \nabla_i u + c(x,t) u,$$
	\begin{gather*}
		\frac{1}{\Lambda} \le a^{ij} \le \Lambda, \qquad
		\| a^{ij} \|_{C^{2m-2, \alpha} ( \Omega') } \le \Lambda,	\\
		\| b^i \|_{C^{2m-2, \alpha} ( \Omega') } \le r^{-1} \Lambda, 		\qquad
		\| c \|_{C^{2m-2, \alpha} ( \Omega') } \le r^{-2} \Lambda,
	\end{gather*}
	$f \in C^{2m-2, \alpha}( \Omega')$, and $u_0 \in C^{2 m, \alpha} ( B_{2r}) $.
	Then 
		$$\| u \|_{C^{2m, \alpha} ( \Omega) } 
		\le C \left( r^2 \| f \|_{C^{2m-2 , \alpha} ( \Omega' ) } 
				+ \| u \|_{C^0 ( \Omega') } + \| u_0 \|_{C^{2m, \alpha} ( B_{2r} ) } \right)$$
	where $C = C( n, m, \alpha, \Lambda) $ depends only on $n, m , \alpha, \Lambda$.
\end{lem}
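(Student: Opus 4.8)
The plan is to derive Lemma~\ref{Lem Lin Int Est+} from the same circle of ideas underlying Lemma~\ref{Lem Lin Int Est}, after a reduction to vanishing Cauchy data. First I would reduce to the case $u_0 \equiv 0$: regarding $u_0 \in C^{2m,\alpha}(B_{2r})$ as a $t$-independent function on $\Omega'$ and setting $v \doteqdot u - u_0$, the fact that $\partial_t u_0 = 0$ gives $(\partial_t - L)v = f + L u_0 \doteqdot \tilde f$ on $\Omega'$ with $v(\cdot,0) = 0$ on $B_{2r}$. The product rule in the weighted H\"older spaces and the hypothesized bounds on $a^{ij}, b^i, c$ yield $r^2 \|L u_0\|_{C^{2m-2,\alpha}(\Omega')} \lesssim_{n,m,\alpha,\Lambda} \|u_0\|_{C^{2m,\alpha}(B_{2r})}$, and since trivially $\|u_0\|_{C^{2m,\alpha}(\Omega)} = \|u_0\|_{C^{2m,\alpha}(B_r)} \le \|u_0\|_{C^{2m,\alpha}(B_{2r})}$ and $\|v\|_{C^0(\Omega')} \le \|u\|_{C^0(\Omega')} + \|u_0\|_{C^{2m,\alpha}(B_{2r})}$, it suffices to prove $\|v\|_{C^{2m,\alpha}(\Omega)} \lesssim_{n,m,\alpha,\Lambda} r^2 \|\tilde f\|_{C^{2m-2,\alpha}(\Omega')} + \|v\|_{C^0(\Omega')}$.

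Next I would treat the case $m = 1$ with vanishing Cauchy data. After the parabolic rescaling $(x,t) \mapsto (x/r, t/r^2)$, which leaves every weighted norm and every structural bound invariant, one may assume $r = 1$. The desired estimate is then the standard localized Schauder estimate for a uniformly parabolic operator \emph{up to the initial time-slice}: the interior estimate requires room only in the spatial directions and on the side of the time interval opposite the slice carrying the Cauchy data — both supplied by enlarging $B_r$ to $B_{2r}$, since $\Omega$ and $\Omega'$ share the same time interval — and it requires no room across the data slice itself. This is the Cauchy-problem analogue of the result quoted in the proof of Lemma~\ref{Lem Lin Int Est} and can be extracted from \cite{Krylov96}; it gives $\|v\|_{C^{2,\alpha}(\Omega)} \lesssim_{n,\alpha,\Lambda} \|\tilde f\|_{C^{0,\alpha}(\Omega')} + \|v\|_{C^0(\Omega')}$.

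For $m > 1$ I would bootstrap by differentiating the equation, exactly as in the proof of Lemma~\ref{Lem Lin Int Est}. For multi-indices $i$ and $j \in \mathbb{N}$ with $|i| + 2j \le 2m-2$, the function $w \doteqdot \nabla^i \partial_t^j v$ solves an equation $(\partial_t - L)w = \tilde f_{ij}$ of the same structure on a slightly smaller cylinder, where $\tilde f_{ij}$ collects derivatives of $\tilde f$ of order $\le 2m-2-(|i|+2j)$ together with commutator terms pairing derivatives of $a^{ij},b^i,c$ of order $\le 2m-2$ with derivatives of $v$ of order $\le |i|+2j+1$; all of these are controlled in $C^{0,\alpha}$ by the coefficient hypotheses, interpolation, a covering of $\Omega$ by cylinders $\Omega'' \Subset \Omega'$, and the lower-order estimates already proved. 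The Cauchy data of $w$ on $\{t=0\}$ is equally under control: all spatial derivatives of $v$ vanish on $B_{2r} \times \{0\}$ since $v$ does, so $\partial_t^j v|_{t=0}$ is a universal polynomial expression in $\tilde f$ and its derivatives restricted to that slice. Feeding each $w$ into the $m=1$ estimate of the previous step and summing over $i,j$ produces $\|v\|_{C^{2m,\alpha}(\Omega)} \lesssim_{n,m,\alpha,\Lambda} r^2\|\tilde f\|_{C^{2m-2,\alpha}(\Omega')} + \|v\|_{C^0(\Omega')}$, which combined with Step~1 is the assertion; the vector-valued case is identical, as noted at the beginning of this appendix.

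The main obstacle I expect is bookkeeping rather than conceptual: keeping the $r$-weights in the weighted H\"older norms consistent through the subtraction of $u_0$ and through the successive differentiations, so that the final constant depends only on $n, m, \alpha, \Lambda$, and pinning down precisely the localized Schauder estimate up to the initial surface — as opposed to the fully interior one used in Lemma~\ref{Lem Lin Int Est} — in the literature.
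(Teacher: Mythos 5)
Your proposal is correct and is essentially the paper's own argument: the paper proves the lemma (for $m=1$) by applying the up-to-the-initial-slice Schauder estimate of \cite[exercise 9.2.5]{Krylov96} to $u - u_0$ together with \cite[remark 8.11.2]{Krylov96}, and handles $m>1$ by differentiating the equation — exactly your reduction to vanishing Cauchy data followed by the boundary Schauder estimate and bootstrap.
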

\begin{proof}
	For $m = 1$, the result follows from \cite[Exercise 9.2.5]{Krylov96} applied to $u - u_0$ combined with \cite[Remark 8.11.2]{Krylov96}.
	The result for $m > 1$ follows from differentiation.
\end{proof}

\begin{prop} \label{Prop Nonlin Int Est}
	Let $r > 0$.
	Consider the parabolic neighborhoods $\Omega = B_r \times [ -r^2 , 0]$ and $\Omega' = B_{2r} \times [ -4r^2 , 0]$.
	
	Assume that $u \in C^2 ( \Omega' )$ satisfies the equation
	\begin{gather*} \begin{aligned}
		( \partial_t - L ) u = Q[u] + F	
		={}& r^{-2} f_1( r^{-1} x, r^{-2} t, u ) \cdot u + r^{-1} f_2( r^{-1} x, r^{-2} t, u) \cdot \nabla u	\\
		&+ f_3 ( r^{-1} x, r^{-2} t, u) \cdot \nabla u \otimes \nabla u + f_4( r^{-1} x, r^{-2} t, u ) \cdot u \otimes \nabla^2 u	\\
		&+ F(x,t) 
	\end{aligned} \end{gather*}
	where $f_1, \dots, f_4$ are smooth functions such that $f_2, \dots , f_4$ can be paired with the tensors $u \otimes \nabla u, \nabla u \otimes \nabla u, u \otimes \nabla^2 u$ respectively.
	Assume that the linear operator $L$ has the form
		$$Lu = a^{ij}(x,t) \partial_{ij}^2 u + b^i(x,t) \partial_i u + c( x, t) u$$
	and that the coefficients satisfy the following bounds for $m \ge 1$ and $\alpha \in (0, 1)$:
	\begin{gather*}
		\frac{1}{\Lambda } \le a^{ij} \le \Lambda,	\qquad
		\| a^{ij} \|_{C^{2m-2,\alpha} ( \Omega') } \le \Lambda, \\
		\| b^{i} \|_{C^{2m-2,\alpha} ( \Omega') } \le r^{-1} \Lambda, \qquad
		\| c \|_{C^{2m-2,\alpha} ( \Omega') } \le r^{-2} \Lambda.
	\end{gather*}
	Then there are constants $\epsilon, C > 0$ depending only on $n, m, \alpha, \Lambda$ and the $f_i$ such that 
	\begin{gather*}
		 \| u \|_{C^0 ( \Omega' ) } + r^{2} \| F \|_{C^{2m - 2, \alpha} ( \Omega' ) } \le \epsilon	\implies	\\
		 \| u \|_{C^{2m, \alpha} ( \Omega)} \le C \left(  \| u \|_{C^0 ( \Omega' ) } + r^{2} \| F \|_{C^{2m - 2, \alpha} ( \Omega' ) } \right).
	\end{gather*}
\end{prop}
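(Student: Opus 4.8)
The strategy is the standard bootstrap/fixed-point scheme for quasilinear parabolic equations on a Euclidean ball: rescale to make $r=1$, treat the nonlinear terms $Q[u]$ as a perturbation, absorb them into the right-hand side, and iterate the linear Schauder estimate (Lemma \ref{Lem Lin Int Est}) on a nested family of shrinking parabolic cylinders. First I would reduce to the case $r=1$ by the parabolic rescaling $\tilde u(x,t) = u(rx, r^2 t)$, which is precisely what the scaling-invariant norms $\|\cdot\|_{C^{2m,\alpha}}$ are designed to accommodate; all the hypotheses on $a^{ij}, b^i, c$ and the structure of $Q[u]$ are invariant under this rescaling, so I may assume $r=1$ and write $\Omega = B_1 \times [-1,0]$, $\Omega' = B_2 \times [-4,0]$.

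\textbf{Key steps.} With $r=1$, I would interpolate between $\Omega$ and $\Omega'$ by a chain of cylinders $\Omega = \Omega_{2m} \Subset \Omega_{2m-1} \Subset \cdots \Subset \Omega_0 = \Omega'$ and argue by induction on the order of regularity. The base step uses a De Giorgi--Nash--Moser-type estimate (or, since we already assume $u \in C^2$, a direct energy/maximum-principle argument) to pass from the $C^0$ bound to a $C^\alpha$ bound on $\Omega_1$; here the smallness $\|u\|_{C^0(\Omega')} + \|F\|_{C^{2m-2,\alpha}(\Omega')} \le \epsilon$ enters to control the nonlinear coefficients $f_i(\cdot,\cdot,u)$, which depend on $u$ — one observes that for $\|u\|_{C^0}$ small the arguments of the $f_i$ lie in a fixed compact set, so the $f_i$ and finitely many of their derivatives are uniformly bounded. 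For the inductive step, suppose $\|u\|_{C^{2k,\alpha}(\Omega_k)} \le C_k(\|u\|_{C^0(\Omega')} + \|F\|_{C^{2m-2,\alpha}(\Omega')})$ has been proven; then I would view $u$ as solving the linear equation $(\partial_t - L)u = G$ with $G \doteqdot Q[u] + F$, and estimate $\|G\|_{C^{2k-2,\alpha}(\Omega_k)}$ by the product rule: each term of $Q[u]$ is a smooth function of $u$ times $u$, $\nabla u$, or $\nabla^2 u$, so $\|Q[u]\|_{C^{2k-2,\alpha}} \lesssim P(\|u\|_{C^{2k,\alpha}}) \cdot \|u\|_{C^{2k,\alpha}}$ for a polynomial $P$, and in particular is $\le C \epsilon \|u\|_{C^{2k,\alpha}}$ by the smallness hypothesis and the term $f_4 \cdot u \otimes \nabla^2 u$ carrying an explicit factor of $u$. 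Applying Lemma \ref{Lem Lin Int Est} on $\Omega_{k+1} \Subset \Omega_k$ then gives $\|u\|_{C^{2k+2,\alpha}(\Omega_{k+1})} \le C(\|G\|_{C^{2k,\alpha}(\Omega_k)} + \|u\|_{C^0(\Omega_k)})$, and the nonlinear contribution to $\|G\|$ is bounded by $C\epsilon\|u\|_{C^{2k+2,\alpha}(\Omega_k)} + (\text{lower order})$; choosing $\epsilon$ small enough (depending only on $n,m,\alpha,\Lambda$ and the $f_i$) lets me absorb the top-order term $C\epsilon \|u\|_{C^{2k+2,\alpha}}$ into the left side after one further shrinking of the cylinder, closing the induction. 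After $m$ steps one reaches $\Omega_{2m} = \Omega$ with the claimed bound.

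\textbf{Main obstacle.} The delicate point is the absorption argument: the naive Schauder estimate gives $\|u\|_{C^{2k+2,\alpha}}$ on a smaller domain in terms of $\|u\|_{C^{2k,\alpha}}$ on a larger one \emph{plus} the nonlinear term, which a priori involves $\|u\|_{C^{2k+2,\alpha}}$ at the \emph{same} (or higher) order through $f_4 \cdot u\otimes\nabla^2 u$ when $k+1 \le m$. One must be careful to arrange the nesting of cylinders and the order of differentiation so that each differentiated equation is genuinely linear in the top-order unknown with a coefficient that is $O(\epsilon)$ in the relevant norm, using a standard interpolation inequality $\|\nabla^j u\|_{C^\beta} \le \eta \|u\|_{C^{2k+2,\alpha}} + C_\eta \|u\|_{C^0}$ to handle the genuinely lower-order pieces of $Q[u]$. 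Once the smallness of $\epsilon$ is pinned down to absorb the $C\epsilon\|u\|_{C^{2k+2,\alpha}}$ term, the rest is routine; the dependence of $\epsilon$ and $C$ on $n, m, \alpha, \Lambda$ and the profile functions $f_i$ is exactly what this bookkeeping produces. (The statement and proof for vector-valued $u$ is identical, as noted in the appendix preamble, since all the estimates invoked are componentwise or tensorial.)
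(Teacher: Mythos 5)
Your overall architecture (rescale to $r=1$, nested parabolic cylinders, Schauder iteration via Lemma \ref{Lem Lin Int Est}, smallness to tame $Q[u]$) matches the spirit of the paper's argument, and you correctly single out the dangerous point. But the step you offer to resolve it does not work as stated, and it is exactly where the paper's proof (which follows Bamler's Proposition 2.5, with the details written out for Proposition \ref{Prop Nonlin Int Est+}) does something structurally different. The Schauder estimate produces
$\| u \|_{C^{2k+2,\alpha}(\Omega_{k+1})} \le C\bigl( \| Q[u]+F \|_{C^{2k,\alpha}(\Omega_k)} + \| u \|_{C^0(\Omega_k)} \bigr)$,
and the top-order contribution of $f_4\cdot u\otimes\nabla^2 u$ to the right-hand side is $C\epsilon\, \| u \|_{C^{2k+2,\alpha}(\Omega_k)}$ measured on the \emph{larger} cylinder $\Omega_k$, while the quantity you control on the left lives on the \emph{smaller} cylinder $\Omega_{k+1}$. "Absorbing into the left side after one further shrinking" is therefore not a valid operation: no finite number of shrinkings closes the inequality, and if you iterate over infinitely many cylinders whose gaps shrink, the Schauder constant blows up with the gap, so the naive iteration $x_{j+1}\le C_j\epsilon x_j + K_j$ with $C_j\to\infty$ does not obviously converge. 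To make your route rigorous you would need either a Simon-type absorption lemma (a supremum over a continuum of radii of gap-weighted norms, together with the qualitative finiteness of that supremum), or precisely the bookkeeping the paper uses.

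For comparison, the paper does no induction on the order of derivatives at all: it keeps the full $C^{2m,\alpha}$ norm throughout, introduces $\theta$-weighted norms $\|\cdot\|^{(\theta)}_{C^{2m,\alpha}}$ on the infinite nested family $\Omega_k = B_{r_k}\times\ldots$ with $r_k = 2-2^{-k}$, $\theta_k\to 0$, and derives the recursion
$a_k \le C\bigl(\theta_{k+1}a_{k+1} + a_{k+1}^2 + a_{k+1}^{2m} + a_{k+1}^{2m+1} + H\bigr)$, $H = \|u\|_{C^0(\Omega')} + r^2\|F\|_{C^{2m-2,\alpha}(\Omega')}$.
Note the mechanism of smallness there: the weights exactly compensate the degenerating Schauder constants, the linear term carries the small factor $\theta_{k+1}$, and the nonlinearity is controlled not through an $\epsilon$-small top-order coefficient but through the fact that every term of $Q[u]$ is at least quadratic in $(u,\nabla u,\nabla^2 u)$, so after normalizing $b_k = a_k/H$ all nonlinear terms carry powers of $H\le\epsilon$. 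The recursion is then closed by a downward induction starting from the known limit $b_\infty = a_\infty/H \le 1$, which is the ingredient your sketch has no substitute for. Separately, your base step via De Giorgi--Nash--Moser is not needed in the paper's scheme and is itself delicate here because of the quadratic gradient term $f_3\cdot\nabla u\otimes\nabla u$ and the quasilinear term $u\otimes\nabla^2 u$; the paper sidesteps any low-regularity step by working with the $C^{2m,\alpha}$ quantities directly (their qualitative finiteness being supplied by the smooth solutions to which the proposition is applied). So the proposal identifies the right obstacle but leaves the decisive absorption argument unproven; as written it is a genuine gap.
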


The proof of Proposition \ref{Prop Nonlin Int Est} follows verbatim the proof of Proposition 2.5 in \cite{Bamler14}
with
	$$H = \| u \|_{C^0 ( \Omega' ) } + r^{2} \| F \|_{C^{2m - 2, \alpha} ( \Omega' ) }$$
instead of $H = \| u \|_{C^0 ( \Omega') }$.
The upcoming proof of Proposition \ref{Prop Nonlin Int Est+} uses similar logic and we instead provide the full details there.

\begin{prop}	\label{Prop Nonlin Int Est+}
	Let $2r \ge s > 0$.
	Consider the parabolic neighborhoods
	$\Omega  = B_r \times [0, s^2]$ and $\Omega'  = B_{2r} \times [0, s^2]$.
	
	Assume that $u \in C^{2} ( \Omega' )$ satisfies the equation
	\begin{gather*}
	\begin{aligned}
		(\partial_t - L ) u = Q[u] + F	
		={}& r^{-2} f_1( r^{-1} x, r^{-2} t, u ) \cdot u + r^{-1} f_2( r^{-1} x, r^{-2} t, u) \cdot \nabla u	\\
		&+ f_3 ( r^{-1} x, r^{-2} t, u) \cdot \nabla u \otimes \nabla u + f_4( r^{-1} x, r^{-2} t, u ) \cdot u \otimes \nabla^2 u	\\
		&+ F(x,t) 
		\\
		\text{ and } u( \cdot, 0 ) ={}& u_0,
	\end{aligned} \end{gather*}
	where $f_1, \dots, f_4$ are smooth functions such that $f_2, \dots , f_4$ can be paired with the tensors $ \nabla u, \nabla u \otimes \nabla u, u \otimes \nabla^2 u$ respectively.
	Assume that the linear operator $L$ has the form
		$$Lu = a^{ij}(x,t) \partial_{ij}^2 u + b^i(x,t) \partial_i u + c( x, t) u$$
	and that the coefficients satisfy the following bounds for $m \ge 1$ and $\alpha \in (0, 1)$:
	\begin{gather*}
		\frac{1}{\Lambda } \le a^{ij} \le \Lambda,	\qquad
		\| a^{ij} \|_{C^{2m-2,\alpha} ( \Omega') } \le \Lambda, \\
		\| b^{i} \|_{C^{2m-2,\alpha} ( \Omega') } \le r^{-1} \Lambda, \qquad
		\| c \|_{C^{2m-2,\alpha} ( \Omega') } \le r^{-2} \Lambda.
	\end{gather*}
	
	Then there are constants $\epsilon, C > 0$ depending only on $n, m, \alpha, \Lambda,$ and the $f_i$ such that
	\begin{gather*}
		 \| u \|_{C^0 ( \Omega' ) } + \| u_0 \|_{C^{2m , \alpha} ( B_{2r} ) } + r^{2} \| F \|_{C^{2m - 2, \alpha} ( \Omega') } \le \epsilon \implies	\\
		\| u \|_{C^{2m, \alpha} ( \Omega ) } 
		\le C \left( \| u \|_{C^0 ( \Omega' ) } + \| u_0 \|_{C^{2m, \alpha} ( B_{2r} ) }
				+ r^{2} \| F \|_{C^{2m - 2, \alpha} ( \Omega') } \right).
	\end{gather*}
\end{prop}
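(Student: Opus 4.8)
The plan is to follow the proof of Proposition~\ref{Prop Nonlin Int Est} — which itself runs verbatim as the proof of [Bamler14, Prop.~2.5] — with two structural changes forced by the presence of initial data. First, the nested parabolic neighborhoods used in the bootstrap will all contain the bottom face $\{t=0\}$, so that at every stage the linear Schauder estimate \emph{with} initial data, Lemma~\ref{Lem Lin Int Est+}, is applied in place of the pure interior estimate Lemma~\ref{Lem Lin Int Est}. Second, the quantity controlling everything is $H := \|u\|_{C^0(\Omega')} + \|u_0\|_{C^{2m,\alpha}(B_{2r})} + r^2\|F\|_{C^{2m-2,\alpha}(\Omega')}$ rather than just $\|u\|_{C^0(\Omega')}$; the goal is $\|u\|_{C^{2m,\alpha}(\Omega)}\le CH$ once $H\le\epsilon$. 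By the parabolic rescaling $x\mapsto rx$, $t\mapsto r^2t$, under which all of these norms are invariant, I would reduce to $r=1$, $\Omega=B_1\times[0,s^2]$, $\Omega'=B_2\times[0,s^2]$, $0<s\le 2$; I would also first record, by standard parabolic regularity applied to $u\in C^2(\Omega')$ together with the smoothness of the $f_i$, that $u\in C^{2m,\alpha}_{\mathrm{loc}}(\Omega')$, so that every norm appearing in the iteration is a priori finite (this step is purely qualitative and produces no quantitative bound).

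The second step is to split off the quasilinear term. I would rewrite the equation as $(\partial_t-\hat L)u=\hat Q[u]+F$, where $\hat L u:=Lu+(f_4(x,t,u)\cdot u)^{ij}\,\partial^2_{ij}u$ absorbs the second-order nonlinearity and $\hat Q[u]:=f_1(x,t,u)\cdot u+f_2(x,t,u)\cdot\nabla u+f_3(x,t,u)\cdot\nabla u\otimes\nabla u$ involves only $u$ and $\nabla u$. Since $\|u\|_{C^0}\le H$ is small, the leading coefficient $\hat a^{ij}:=a^{ij}+(f_4\cdot u)^{ij}$ remains uniformly elliptic with constant, say, $2\Lambda$, and once a Hölder bound on $u$ is in hand its $C^{2m-2,\alpha}$-norm is controlled in terms of $\Lambda$, the $f_i$, and that bound. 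This is the mechanism through which the smallness hypothesis enters.

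The heart of the argument is a finite induction along a chain $\Omega'\supset\Omega_0\supset\Omega_1\supset\cdots\supset\Omega_m=\Omega$ of parabolic neighborhoods of the form $B_{\rho_k}\times[0,s^2]$, each compactly contained in its predecessor, proving $\|u\|_{C^{2k,\alpha}(\Omega_k)}\le C_kH$ for $k=0,1,\dots,m$ (reading $C^{0,\alpha}$ when $k=0$). The base case $k=0$ is a Krylov--Safonov-type interior parabolic Hölder estimate for the uniformly parabolic equation $(\partial_t-\hat L)u=\hat Q[u]+F$ on $\Omega_0$, in which the only genuinely nonlinear contribution to the right-hand side, the quadratic gradient term $f_3\cdot\nabla u\otimes\nabla u$, is controlled by the interpolation inequality $\|\nabla u\|_{C^0}^2\lesssim\|u\|_{C^0}\,\|u\|_{C^{2,\alpha}}$ on a slightly larger neighborhood together with the smallness of $\|u\|_{C^0}$, which permits its absorption. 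For the inductive step $k\to k+1$, I would differentiate the equation $2k$ times, apply Lemma~\ref{Lem Lin Int Est+} passing from $\Omega_k$ to $\Omega_{k+1}$ with inhomogeneous term the $\le 2k$-th order derivatives of $\hat Q[u]+F$ and with initial data the corresponding derivatives of $u_0$ — this is where $\|u_0\|_{C^{2m,\alpha}}$ enters — then expand $\hat Q[u]$ by the product and chain rules and estimate each resulting term either by $\|u\|_{C^{2k,\alpha}(\Omega_k)}\le C_kH$ (the inductive hypothesis) or, for terms carrying close to $2(k+1)$ derivatives on $u$, by $\delta\,\|u\|_{C^{2(k+1),\alpha}}+C_\delta(\text{lower order})$ via interpolation. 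The $\delta$-terms together with the leftover top-order piece $f_4\cdot u\cdot\nabla^{2(k+1)}u$ are moved to the left and absorbed using $\|u\|_{C^0}\le H\ll 1$; taking $\epsilon$ small enough, depending only on $n,m,\alpha,\Lambda$ and the $f_i$, makes all of these absorptions legitimate. The case $k=m$ is the asserted estimate.

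The hard part, and the place where the hypotheses are genuinely used, is precisely the quasilinear second-order term $f_4\cdot u\otimes\nabla^2 u$: one cannot simply treat the full nonlinearity $Q[u]$ as a fixed inhomogeneous term and apply Schauder once, because $\nabla^2 u$ is a priori only $C^0$, not $C^{2m-2,\alpha}$. Keeping that term on the left as a perturbation of $L$ — whose effect on the leading coefficient, and after differentiation on the highest-order term, is multiplied by the small factor $u$ and is therefore absorbable — is the device that closes the estimate, and it is exactly why the a priori smallness $\|u\|_{C^0}+\|u_0\|_{C^{2m,\alpha}}+r^2\|F\|_{C^{2m-2,\alpha}}\le\epsilon$ is indispensable.
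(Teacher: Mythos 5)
Your route is genuinely different from the paper's. The paper (following Bamler) never touches the operator: it treats the \emph{entire} nonlinearity $Q[u]$, including the quasilinear piece $f_4\cdot u\otimes\nabla^2 u$, as an inhomogeneous term, and closes the estimate by an \emph{infinite} iteration over the shrinking domains $\Omega_k=B_{r_k}\times[0,s^2]$, $r_k=2-2^{-k}$, using $\theta$-weighted norms $\|\cdot\|^{(\theta_k)}_{C^{2m,\alpha}}$: the weight produces the recursion $a_k\le C(\theta_k^2\|Q[u]\|^{(\theta_{k+1})}_{C^{2m-2,\alpha}(\Omega_{k+1})}+H)$, the quasilinear term enters only quadratically in $a_{k+1}$, and since $\theta_k\to0$ and $\lim_k a_k=\|u\|_{C^0}\le H$ one can run a backward induction to bound $a_0$. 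No absorption into the operator, no interpolation, and no Krylov--Safonov input is needed — only lemmas \ref{Lem Lin Int Est} and \ref{Lem Lin Int Est+}. Your plan instead freezes $f_4(x,t,u)\,u$ into the second-order coefficients and runs a \emph{finite} induction on the derivative order with Schauder plus interpolation; that is a legitimate and arguably more classical quasilinear strategy, and the qualitative $C^{2m,\alpha}_{\mathrm{loc}}$ regularity step you add is a good touch (the paper silently assumes finiteness of the iterated norms).

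Two concrete soft spots, though, where the argument as written does not close. First, the absorption steps compare norms on different domains: the Schauder estimate gives $\|u\|_{C^{2(k+1),\alpha}(\Omega_{k+1})}$ on the left, while the interpolation terms $\delta\|u\|_{C^{2(k+1),\alpha}}$ (and the base-case term $\|\nabla u\|_{C^0}^2\lesssim\|u\|_{C^0}\|u\|_{C^{2,\alpha}}$) live on the larger domain $\Omega_k$, so you cannot simply ``move them to the left and absorb.'' You need an additional device — the standard iteration lemma over a continuum of radii $1\le\rho<\rho'\le2$ applied to $A(\rho)=\|u\|_{C^{2(k+1),\alpha}(B_\rho\times[0,s^2])}$, or interior norms weighted by powers of the distance to the lateral boundary — and this is exactly the difficulty the paper's $\theta$-weighted infinite iteration is engineered to bypass. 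Second, your base case invokes a Krylov--Safonov-type estimate, but for an operator whose coefficients are only measurable/bounded (since at that stage $u$ is only $C^0$-controlled) Krylov--Safonov yields a Hölder exponent $\beta=\beta(n,\Lambda)$ that need not be $\ge\alpha$; as stated, the $k=0$ claim $\|u\|_{C^{0,\alpha}}\le C_0H$ is not what that theorem gives. This is repairable by an extra bootstrap (get $C^{0,\beta}$, hence $\hat a\in C^{0,\beta}$, apply Schauder with exponent $\beta$ to reach $C^{2,\beta}\subset C^{0,\alpha}$, then restart with exponent $\alpha$), but the step must be inserted; alternatively one can dispense with Krylov--Safonov altogether by handling the first level the way the paper handles every level. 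With these two repairs your proof goes through; without them the absorption and the base case are genuine gaps.
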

\begin{proof}
	We adapt the proofs of \cite[Proposition 2.5]{Bamler14} and \cite[Lemma 4.4]{Appleton18}.
	Begin by introducing a weighted norm for any $\theta \in ( 0, 1]$ and domain $\Omega$ as
	$$\| u \|^{(\theta)}_{C^{2m, \alpha} ( \Omega) } \doteqdot \sum_{i + 2j \le 2m} 
	(r_{\Omega} \theta) ^{i + 2j} \left( \| \nabla^i \partial_t^j u \|_{C^0 ( \Omega) } + (r_\Omega \theta)^{\alpha} [ \nabla^i \partial_t^j u ]_{\alpha, \alpha/2}\right) .$$	
	This weighted norm with $\theta = 1$ equals the $C^{2m, \alpha}$ norm, that is $\| \cdot \|^{(1)}_{C^{2m, \alpha} ( \Omega) } = \| \cdot \|_{C^{2m, \alpha} ( \Omega) }$. 
	
	Choosing any $x \in \R^n$ such that $B_{\theta r}(x) \subset B_{r}$
	and applying
	Lemma \ref{Lem Lin Int Est} on $B_{\theta r} (x) \times [ t - (\theta r)^2, t]$ for all $t \in [ (2 \theta r)^2, s^2 ]$ and
	Lemma \ref{Lem Lin Int Est+} on $B_{\theta r} (x) \times  [0, (2 \theta r)^2]$ we deduce:
	
	In the setting of Lemma \ref{Lem Lin Int Est+},
	\begin{multline} \label{Lin Int Est+ theta}
		\| u \|_{C^{2m, \alpha} ( B_r \times [ 0, s^2]) }^{(\theta)}
		\le C \left( (r \theta)^2 \| f \|^{( \theta)}_{C^{2m-2, \alpha} ( B_{(1 + \theta) r} \times [ 0, s^2] )} \right.\\
		\left. + \| u \|^{(\theta)}_{C^{0} ( B_{(1 + \theta) r} \times [ 0, s^2] )}	
		 + \| u_0 \|^{(\theta)}_{C^{2m, \alpha} ( B_{(1 + \theta)r} )} \right).
	\end{multline}
	
	By scaling, we shall henceforth assume without loss of generality that $r = 1$. % and so $s \le 2$.
	Any constants $C$ in the rest of the proof will depend on $n, m, \alpha, \Lambda$ and the $f_i$.
	
	Define 
		$$H \doteqdot  \| u \|_{C^0 ( B_2 \times [0, s^2]  ) } + \| u_0 \|_{C^{2m , \alpha} ( B_{2} ) } +  \| F \|_{C^{2m - 2, \alpha} ( B_2 \times [ 0, s^2] ) } ,$$
		$$r_k \doteqdot \sum_{i = 0}^k 2^{-i} = 2 - 2^{-k},	\qquad
		\theta_k \doteqdot \frac{r_{k+1}}{r_k} - 1,	\quad \text{ and } \quad
		\Omega_k \doteqdot B_{r_k} \times [ 0, s^2].$$
	Note that $1 \le r_k \le 2$ and $s \le 2 \le 2 r_k$.
	\eqref{Lin Int Est+ theta} applied to $\Omega = \Omega_k$ implies
	\begin{equation} \label{a_k Eqn}
		a_k \doteqdot \| u \|_{C^{2m, \alpha} ( \Omega_k)}^{(\theta_k)}	
		\le C \left(  \theta_k^2 \| Q[u] \|_{C^{2m - 2, \alpha} ( \Omega_{k+1} ) }^{( \theta_{k+1})} 
			+ H \right).
	\end{equation}
	Additionally, because $\lim_{k \to \infty} \theta_k = 0$, $a_\infty \doteqdot \lim_{k \to \infty} a_k 	= \| u \|_{C^0 ( B_2 \times [ 0, s^2] ) } \le H$. 
	
	We now estimate $Q[u]$ in terms of $u$.
	First, observe that for all $i = 1, \dots, 4$,
		$$\| f_i (x,t, u) \|_{C^{2m - 2, \alpha} ( \Omega_{k+1} )}^{ (\theta_{k+1} ) }
		\le C \left( 1 + \left( \| u \|_{C^{2m - 2, \alpha} ( \Omega_{k+1} )}^{ (\theta_{k+1} ) } \right)^{2m - 1} \right)
		\le C \left( 1 + a_{k+1}^{2m -1} \right).$$
	Hence,
		$$\| f_1 \cdot u \|_{C^{2m - 2, \alpha} ( \Omega_{k+1} )}^{ (\theta_{k+1} ) }
		\le C \| f_1 \|_{C^{2m - 2, \alpha} ( \Omega_{k+1} )}^{ (\theta_{k+1} ) } \|u \|_{C^{2m - 2, \alpha} ( \Omega_{k+1} )}^{ (\theta_{k+1} ) }
		\le C ( a_{k+1} + a_{k+1}^{2m } ), $$
	and similarly
	\begin{align*}
		\| f_2 \cdot \nabla u \|_{C^{2m - 2, \alpha} ( \Omega_{k+1} )}^{ (\theta_{k+1} ) }
		&\le \| f_2 \|_{C^{2m - 2, \alpha} ( \Omega_{k+1} )}^{ (\theta_{k+1} ) } \theta_{k+1}^{-1} \| u \|_{C^{2m, \alpha} ( \Omega_{k+1} )}^{ (\theta_{k+1} ) }	\\
		&\le C \theta_{k+1}^{-1} (  a_{k+1} + a_{k+1}^{2m} ),
		\\
		\| f_3 \cdot \nabla u \otimes \nabla u \|_{C^{2m - 2, \alpha} ( \Omega_{k+1} )}^{ (\theta_{k+1} ) }
		&\le C\theta_{k+1}^{-2}( a_{k+1}^2 + a_{k+1}^{2m+1} ), \text{ and}
		\\
		\| f_3 \cdot  u \otimes \nabla^2 u \|_{C^{2m - 2, \alpha} ( \Omega_{k+1} )}^{ (\theta_{k+1} ) }
		&\le C \theta_{k+1}^{-2} ( a_{k+1}^2 + a_{k+1}^{2m+1} ).
	\end{align*}
	Therefore, using that $0 < \theta_{k+1} \le 1$, 
		$$\| Q[u] \|_{C^{2m - 2, \alpha} ( \Omega_{k+1} )}^{ (\theta_{k+1} ) } \le C
		\left( 	\theta_{k+1}^{-1} a_{k+1} + \theta_{k+1}^{-2} a_{k+1}^2 + \theta_{k+1}^{-1} a_{k+1}^{2m}
		+ \theta_{k+1}^{-2} a_{k+1}^{2m+1}			\right),$$
	and so, by \eqref{a_k Eqn},
		$$a_k \le C \left( \theta_{k+1} a_{k+1} +  a_{k+1}^2 + a_{k+1}^{2m}
		+  a_{k+1}^{2m+1}	 + H \right).$$
	Defining $b_k \doteqdot a_k  / H$, it follows that 
	\begin{equation} \label{b_k Recursion Eqn}
		b_k \le C_0 \left( \theta_{k+1} b_{k+1} +  H b_{k+1}^2 + H^{2m-1} b_{k+1}^{2m}
		+  H^{2m} b_{k+1}^{2m+1}	 + 1 \right).
	\end{equation}
	
	Assume without loss of generality that $C_0 > 1$ 
	and set $\epsilon = \frac{1}{16 C_0^2}$.
	Choose $k_0$ such that $\theta_{k+1} \le \epsilon$ for all $k \ge k_0$.
	Then for all $k \ge k_0$, $H \le \epsilon$ implies
		$$b_k \le
		\frac{1}{16} b_{k+1} + \frac{1}{16 C_0} b_{k+1}^2 + \frac{1}{16^{2m-1} C_0^{4m-3} } b_{k+1}^{2m} 
		+ \frac{1}{16^{2m} C_0^{4m-1} } b_{k+1}^{2m+1} + C_0.$$
	Thus, $b_{k+1} < 2C_0$ implies $b_k < 2C_0$ as well.
	By induction and the fact that $\lim_{k\to\infty} b_k = a_\infty / H \le 1 < 2 C_0$,
	it then follows that $b_{k_0} < 2 C_0$.
	
	Using \eqref{b_k Recursion Eqn}, the bound $b_{k_0} < 2 C_0$ yields a bound for $b_0$, say $b_0 \le C'$.
	Hence, $a_0 \le C' H$ which completes the proof.
\end{proof}

%%%%%%%%%%%%%%%%%%%%%%%%%%%%%%%%%%%%%%%%%%%%%%%%%%%%%%%%%
\section{Constants} \label{App Constants}
%%%%%%%%%%%%%%%%%%%%%%%%%%%%%%%%%%%%%%%%%%%%%%%%%%%%%%%%%
The following list summarizes the parameters used throughout the proof of \ref{Main Thm} for the reader's convenience.
It is ordered so that any constant depends only on the ones above it.

\begin{itemize}
	\item $(M^n,\ol{g}, f)$ is the smooth, complete, asymptotically conical, shrinking, gradient Ricci soliton fixed at the outset.
	
	\item $\lambda_* < 0$ governs the $L^2_f$ decay rate. $\lambda_*$ is chosen so that $\lambda_* \notin \{ \lambda_j : j \in \mathbb{N} \} = \sigma( \ol{\Delta}_f + 2 \ol{Rm} )$.
	
	\item $K = K (n, M, \ol{g}, f, \lambda_*) \in \mathbb{N}$ denotes the index of the eigenvalue $\lambda_K \in \sigma( \ol{\Delta}_f + 2 \ol{Rm} )$ such that $\lambda_{K+1} < \lambda_* < \lambda_K$.
	
	\item $\Gamma_0 \gg 1$ is a parameter that governs the transition from the diffeomorphic copies of $M$ and its soliton metric $(1-t_0) \phi_{t_0}^* \ol{g}$ to the rest of the closed manifold $\cl{M}$.
	
	\item $0 < \ol{p} \ll 1$ controls how much of the lower eigenmodes $\{ h_j : j \le K \}$ the initial perturbation $h_{\mathbf p }(t_0)$ contains via $| \mathbf p | \le \ol{p} e^{\lambda_* \tau_0}$.
	
	\item $0 < \epsilon_0, \epsilon_1, \epsilon_2 \ll 1$ govern the global $C^2$ estimates for $h_{\mathbf p}$. $0 < \epsilon_0 \ll 1$ may also be taken sufficiently small depending on $\epsilon_1, \epsilon_2$.
	
	\item $0 < \gamma_0 \ll 1$ is another macro level parameter used to distinguish scales smaller than $\Gamma_0$. $\gamma_0$ governs the support of the initial perturbation $h_{\mathbf p}(t_0)$ at initial time $t_0$.
	
	\item $0 < \mu_u, \mu_s \ll 1$ govern the $L^2_f$ decay estimates of $h_{\mathbf p }$.
	
	\item $t_0 \in [0, 1)$ with $0 < 1- t_0 \ll 1$ or equivalently $\tau_0 = - \ln ( 1 - t_0) \gg 1$ is the initial time of the Ricci flow $G_{\mathbf p }$.
\end{itemize}

%%%%%%%%%%%%%%%%%%%%%%%%%%%%%%%%%%%%%%%%%%%%%%%%%%%%%%%%%
%Bibliography
%%%%%%%%%%%%%%%%%%%%%%%%%%%%%%%%%%%%%%%%%%%%%%%%%%%%%%%%%

\bibliographystyle{alpha}
\bibliography{parab4}

\end{document}